\newif\ifarxiv
\newtheorem{theorem}{Theorem}
\newtheorem{proposition}{Proposition}
\newtheorem{lemma}{Lemma}
\newtheorem{definition}{Definition}
\newtheorem{corollary}{Corollary}
\theoremstyle{remark}
\newtheorem{remark}{Remark}
\newtheorem{example}{Example}
\numberwithin{equation}{section}
\numberwithin{proposition}{section}
\numberwithin{lemma}{section}
\numberwithin{corollary}{section}
\numberwithin{theorem}{section}
\numberwithin{definition}{section}
\numberwithin{remark}{section}
\title{\Huge\textbf{Nonautonomous Linear Systems: Exponential Dichotomy and its Applications}}
\author{\textsc{\'Alvaro Casta\~neda \& Gonzalo Robledo}}
\begin{document}

\frontmatter
\maketitle
\tableofcontents
\mainmatter

\chapter*{Preface}

The first purpose of this work is to 
provide a friendly introduction to the theory of nonautonomous linear systems of ordinary differential equations, the property of exponential dichotomy and its corresponding spectral theory. The second purpose of this work is disseminate the linearization results carried out by the authors in a nonautonomous framework.

The actual structure of this work is a consequence of several elective courses (2014, 2016, 2019, 2021 and 2023) carried out by the authors for undergraduate and graduated students at the Department of Mathematics of the Universidad de Chile. The monography assumes a good knowledge of multivariate calculus, linear algebra and ordinary differential equations.

%%%%%%%%%%%%%%%%%%%%%%%%%%%%%%%%%%%%
% Give credit where credit is due. %
% Say thanks!                      %
%%%%%%%%%%%%%%%%%%%%%%%%%%%%%%%%%%%%
\section*{Acknowledgements}
\begin{itemize}
\item  We want to express our appreciation and gratitude to our students Basti\'an Nu\~nez, Karla Catal\'an, N\'estor Jara, Valdo Lagos, Fernanda Torres, Ricardo Torres, Sebasti\'an Rivera and David Urrutia, who
read preliminary versions and made important remarks.
\item A special word of thanks goes to the \textit{joyeuse bande} of postdoctorants: Ignacio Huerta, Claudio Gallegos
and Heli Elorreaga, who read several intermediate versions of this work.  

\item The hospitality of Pablo Monz\'on (Universidad de la Rep\'ublica, Montevideo -- Uruguay), Christian P\"otzsche
(Alpen--Adria Universit\"at, Klagenfurt -- Austria) and Pablo Amster (Universidad de Buenos Aires -- Argentina) was fundamental
to work on several parts of this monography, we are deeply grateful with our colleagues.

%\item I'll also like to thank Gummi\footnote{\url{http://gummi.midnightcoding.org/}} developers and LaTeXila\footnote{\url{http://projects.gnome.org/latexila/}} development team for their awesome \LaTeX{} editors.
%\item I'm deeply indebted my parents, colleagues and friends for their support and encouragement.
\end{itemize}
%\mbox{}\\
%\noindent Amber Jain \\
%\noindent %\url{http://amberj.devio.us/}

%%%%%%%%%%%%%%%%
% NEW CHAPTER! %
%%%%%%%%%%%%%%%%
\

%\nomenclature[ainf]{$\inf \varnothing$}{$\inf \varnothing=+\infty$}%
%\onehalfspacing
%\nomenclature[ainf]{$BC(\mathbb{R},\mathbb{R}^{n})$}{$$}%
%\nomenclature[ainf]{$C^{1}(\Omega,\mathbb{R}^{n})$}{$$}%
%\printnomenclature

%\nomenclature[ainf]{$P^{*}$}{$$}%
%\nomenclature[ainf]{$C^{1}(\Omega,\mathbb{R}^{n})$}{$$}%

%\nomenclature[ainf]{$C_{\omega}(\mathbb{R},\mathbb{R}^{n})$}{$$}
%%%%%%%%%%%%%%%%%%%%%%%%%%

\section*{Notation}

\subsection*{Chapter 1}
\noindent\begin{tabularx}{\textwidth}{@{} l X @{}}
\toprule
$J$ & a time interval, $J\subseteq \mathbb{R}$.\\
$M_n(\mathbb{R})$ & the space of $n\times n$ real matrices.\\
$|x|$ & a norm of $x\in \mathbb{R}^n$.\\
$\|A\|$ & the operator norm of $A\in M_n(\mathbb{R})$ induced by $|\cdot|$.\\
$\det A$ & the determinant of $A\in M_n(\mathbb{R})$.\\
$\textnormal{Tr} A$ & the trace of $A\in M_n(\mathbb{R})$.\\
$I$ & the identity matrix in $M_n(\mathbb{R})$.\\
$\mathfrak{M}_n$ & the set of continuous and bounded matrix functions $A(\cdot)\colon J\to M_n(\mathbb{R})$.\\
$\dot x = A(t)x$ & a linear nonautonomous system on $J$.\\
$X(t)$ & a fundamental matrix of $\dot x = A(t)x$.\\
$X(t,s)$ & the transition matrix $X(t,s):=X(t)X^{-1}(s)$, for $t,s\in J$.\\
$K,\alpha$ & positive constants appearing in growth/decay estimates.\\
\bottomrule
\end{tabularx}

\subsection*{Chapter 2}
\noindent\begin{tabularx}{\textwidth}{@{} l X @{}}
\toprule
$BC(\mathbb{R},\mathbb{R}^n)$ & the space of bounded continuous functions $\mathbb{R}\to \mathbb{R}^n$.\\
$C_{\omega}(\mathbb{R},\mathbb{R}^n)$ & the space of $\omega$--periodic continuous functions $\mathbb{R}\to \mathbb{R}^n$.\\
$|x|$ & a norm of $x\in \mathbb{R}^n$.\\
$\|A\|$ & the operator norm of $A\in M_n(\mathbb{R})$ induced by $|\cdot|$.\\
$\operatorname{rank}A$ & the rank of a matrix $A$.\\
$\textnormal{Tr}A $ & the trace of $A\in M_n(\mathbb{R})$.\\
$\mathcal{B},\mathcal{D}$ & function spaces used in the notion of admissibility on $J$.\\
$P$ & a (constant) projection, $P^2=P$.\\
$P(t)$ & a family of projections (time--dependent projector).\\
$K,\alpha$ & constants in the exponential dichotomy estimates.\\
$\mathcal{V}(t_0)$ & forward--bounded set: 
$$
\{\xi:\ t\mapsto |X(t,t_0)\xi|\ \text{is bounded on }[t_0,+\infty)\}.
$$\\
$\mathcal{W}(t_0)$ & backward--bounded set: 
$$
\{\xi:\ t\mapsto |X(t,t_0)\xi|\ \text{is bounded on }(-\infty,t_0]\}.
$$
\\
$\mathcal{G}(t,s)$ & Green function associated with an exponential dichotomy.\\
$E^{s},E^{u}$ & stable and unstable subspaces in the hyperbolic (autonomous) setting.\\
\bottomrule
\end{tabularx}

\subsection*{Chapter 3}
\noindent\begin{tabularx}{\textwidth}{@{} l X @{}}
\toprule
$\mathbb{R}_{0}^{+}$ & the half--line $[0,+\infty)$.\\
$\mathbb{R}_{0}^{-}$ & the half--line $(-\infty,0]$.\\
$T,\theta$ & constants in the uniform noncriticality condition.\\
$P_{+},P_{-}$ & constant projections associated with exponential dichotomies on $\mathbb{R}_{0}^{+}$ and $\mathbb{R}_{0}^{-}$.\\
$i(A)$ & the index of a linear system (defined via $P_{+}$ and $P_{-}$).\\
$\mathcal{V}$ & forward--bounded set: $\{\xi:\ \sup_{t\ge 0}|X(t,0)\xi|<+\infty\}$.\\
$\mathcal{W}$ & backward--bounded set: $\{\xi:\ \sup_{t\le 0}|X(t,0)\xi|<+\infty\}$.\\
\bottomrule
\end{tabularx}

\subsection*{Chapter 4}
\noindent\begin{tabularx}{\textwidth}{@{} l X @{}}
\toprule
$\dot x = A(t)x$ & a linear system on $J$.\\
$\dot y = B(t)y$ & a linear system kinematically similar to $\dot x = A(t)x$.\\
$S(t)$ & an invertible (Lyapunov) change of variables relating two systems.\\
$A^{*}$ & Hermitian transpose (conjugate transpose) of a complex matrix.\\
$A^{T}$ & transpose of a real matrix.\\
$A>0$ (resp.\ $A\ge 0$) & positive definite (resp.\ positive semidefinite) matrix.\\
\bottomrule
\end{tabularx}

\subsection*{Chapter 5}
\noindent\begin{tabularx}{\textwidth}{@{} l X @{}}
\toprule
$\lambda$ & spectral parameter in shifted systems.\\
$\Sigma(A)$ & exponential dichotomy spectrum on $\mathbb{R}$.\\
$\Sigma^{+}(A)$ & future exponential dichotomy spectrum on $\mathbb{R}_{0}^{+}$.\\
$\Sigma^{-}(A)$ & past exponential dichotomy spectrum on $\mathbb{R}_{0}^{-}$.\\
$\rho(A)$ & resolvent set $\rho(A)=\mathbb{R}\setminus\Sigma(A)$.\\
$\rho^{+}(A)$ & resolvent set $\rho^{+}(A)=\mathbb{R}\setminus\Sigma^{+}(A)$.\\
$\rho^{-}(A)$ & resolvent set $\rho^{-}(A)=\mathbb{R}\setminus\Sigma^{-}(A)$.\\
$P_{\lambda}$ & projection associated with the exponential dichotomy of the shifted system at $\lambda$.\\
$C_{\lambda}$ & maximal open interval (component) of the resolvent containing $\lambda$.\\
$\mathcal{S}^{\lambda}$ & stable set for the $\lambda$--shifted dynamics (forward bounded with weight $e^{-\lambda t}$).\\
$\mathcal{U}^{\lambda}$ & unstable set for the $\lambda$--shifted dynamics (backward bounded with weight $e^{-\lambda t}$).\\
\bottomrule
\end{tabularx}

\subsection*{Chapter 6}
\noindent\begin{tabularx}{\textwidth}{@{} l X @{}}
\toprule
$H(t,u)$ & homeomorphism defining topological equivalence (Palmer).\\
$G(t,u)$ & inverse map, $G(t,\cdot)=H(t,\cdot)^{-1}$.\\
$f(t,y)$ & nonlinear perturbation in $y' = A(t)y + f(t,y)$.\\
$M,c$ & constants in bounded growth: $\|X(t,s)\|\le Me^{c|t-s|}$.\\
$\mu$ & bound on the perturbation: $|f(t,y)|\le \mu$.\\
$\gamma$ & Lipschitz constant: $|f(t,y)-f(t,y')|\le \gamma|y-y'|$.\\
\bottomrule
\end{tabularx}

\subsection*{Chapter 7}
\noindent\begin{tabularx}{\textwidth}{@{} l X @{}}
\toprule
$J$ & a (typically half--line) interval where equivalence is studied.\\
$C^{r}$ topological equivalence & $H(t,\cdot)$ is a $C^{r}$--diffeomorphism and derivatives are continuous in $(t,u)$.\\
$D^{k}H,\ D^{k}G$ & partial derivatives up to order $k\le r$ (with respect to the space variable).\\
\bottomrule
\end{tabularx}

\chapter{Preliminaries}

\section{Linear nonautonomous systems}
This chapter is devoted to briefly recalling basic results about the linear system of ordinary differential equations
\begin{equation} 
\label{LA}
\dot{x}=A(t)x
\end{equation}
where $x$ is a column vector of $\mathbb{R}^n$, $J\ni t\mapsto A(t)\in M_{n}(\mathbb{R})$ is a continuous valued matrix function and $J\subseteq \mathbb{R}$ is a no degenerate interval. In general, but not always, we will consider intervals as $J=(-\infty,t_{0}]$, $J=[t_{0},+\infty)$ or $J=\mathbb{R}$.

This system has been deeply studied in a myriad of classical \textit{cold war} works from both sides of the iron courtain as \cite{Adrianova,Dalecki,Demidovich,Erugin,Halanay,Nemit} and \cite{Bellman1,Bellman2,Brauer,Coddington,Coppel1,Conti,Dangelo,Hahn,Hartman}. There also exists more recent monographs as \cite{Berthelin,Graef,Hespanha,Hinrichsen,Khalil,Sideris}. Nevertheless, we will review a short list of essential topics and results in order to self contain this work.
\subsection{Fundamental matrix and transition matrix}

\begin{definition}
A \textbf{fundamental matrix} of \eqref{LA} is a matrix valued function
$t\mapsto X(t)\in M_{n}(\mathbb{R})$ whose columns $\{x_{1}(t),\ldots,x_{n}(t)\}$ are a basis of solutions for the system \eqref{LA} for any $t\in J$.
\end{definition}

The study of fundamental matrices implies recalling facts 
about matrix valued functions and its derivatives. In fact,
for any matrix function $J\in t\mapsto U(t)$, not necessarily a 
fundamental matrix, we will say that $U$ is differentiable at $t$ if and only if the following limit exists 
\begin{displaymath}
\dot{U}(t)=\frac{d}{dt}U(t):=\lim\limits_{h\to 0}\frac{1}{h}[U(t+h)-U(t)],
\end{displaymath}
which must be understood as a componentwise limit.

\begin{lemma}
\label{dimf}
If a matrix function $J\ni t\to U(t)$ is derivable and invertible for any $t\in J$, then its inverse is derivable on $J$ with derivate given by
\begin{equation}
\label{dimf2}
\frac{d}{dt}U^{-1}(t)= -U^{-1}(t)\dot{U}(t)U^{-1}(t).
\end{equation}
\end{lemma}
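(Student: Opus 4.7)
The plan is to compute the difference quotient for $U^{-1}$ directly rather than implicitly differentiating $U(t)U^{-1}(t)=I$, so as not to presuppose differentiability of $U^{-1}$ and to obtain it as part of the argument.

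First I would record the algebraic identity obtained from inserting $U^{-1}(t+h)U(t+h)=I$ and $U(t)U^{-1}(t)=I$:
\begin{displaymath}
U^{-1}(t+h)-U^{-1}(t)=U^{-1}(t+h)\bigl[U(t)-U(t+h)\bigr]U^{-1}(t).
\end{displaymath}
Dividing by $h\neq 0$ yields
\begin{displaymath}
\frac{U^{-1}(t+h)-U^{-1}(t)}{h}=-U^{-1}(t+h)\cdot\frac{U(t+h)-U(t)}{h}\cdot U^{-1}(t),
\end{displaymath}
which is the key formula from which everything else falls out.

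Next I would establish continuity of $t\mapsto U^{-1}(t)$ at the point $t$. Since $U$ is differentiable it is continuous, and by hypothesis $\det U(t)\neq 0$ on $J$; the entrywise Cramer formula $U^{-1}=(\det U)^{-1}\operatorname{adj}(U)$ expresses the entries of $U^{-1}$ as rational functions of the entries of $U$ with nonvanishing denominator, so $U^{-1}$ is continuous on $J$. In particular $U^{-1}(t+h)\to U^{-1}(t)$ as $h\to 0$.

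Finally I would pass to the limit $h\to 0$ in the displayed difference quotient. Using continuity of $U^{-1}$ on the left factor, differentiability of $U$ on the middle factor, and the fact that matrix multiplication is jointly continuous (each entry of the product depends polynomially on the entries of the factors), the right-hand side converges to $-U^{-1}(t)\dot U(t)U^{-1}(t)$. This simultaneously proves that $U^{-1}$ is differentiable at $t$ and yields formula \eqref{dimf2}. The only delicate point worth naming is the continuity of $U^{-1}$, since without it one could not move the limit inside the triple product; once that is in hand the rest is routine.
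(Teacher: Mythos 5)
Your proposal is correct and follows essentially the same route as the paper: the same algebraic identity for the difference quotient $U^{-1}(t+h)-U^{-1}(t)=U^{-1}(t+h)[U(t)-U(t+h)]U^{-1}(t)$, continuity of $U^{-1}$ (which the paper asserts and leaves as an exercise, while you justify it via the adjugate formula), and passage to the limit. The extra care about continuity of $U^{-1}$ is a welcome but minor refinement of the paper's argument.
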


\begin{proof}
If $U$ is derivable on $J$, it follows that $U$ is continuous on $J$.
This fact, combined with the invertibility of $U$, also implies the continuity
of $U^{-1}$ on $J$. Now, we can see that the following limit exists
\begin{displaymath}
\begin{array}{rcl}
\displaystyle \frac{d}{dt}U^{-1}(t) & = & \displaystyle \lim\limits_{h\to 0}\frac{1}{h}\left[U^{-1}(t+h)-U^{-1}(t)\right]\\\\
&=&\lim\limits_{h\to 0}\frac{1}{h}U^{-1}(t+h)\left[I-U(t+h)U^{-1}(t)\right]\\\\
&=& \displaystyle \lim\limits_{h\to 0}U^{-1}(t+h)\frac
{1}{h}\left[U(t)-U(t+h)\right]U^{-1}(t)
\end{array}
\end{displaymath}
and the identity (\ref{dimf2}) follows.
\end{proof}

\begin{remark}
\label{R2}
For any $t\in J$, it follows that any fundamental matrix $X(t)$ of (\ref{LA}) verifies the identity:
\begin{equation}
\label{DMF}
\dot{X}(t)=A(t)X(t).
\end{equation}

In fact, we can write $X(t)=[x_{1}(t)\, x_{2}(t)\, \cdots \, x_{n}(t)]$ 
and observe that:
\begin{displaymath}
\begin{array}{rcl}
\dot{X}(t)&=& [\dot{x}_{1}(t)\, \dot{x}_{2}(t)\, \cdots \, \dot{x}_{n}(t)]\\\\
&=&[A(t)x_{1}(t)\, A(t)x_{2}(t)\, \cdots \, A(t)x_{n}(t)]\\\\
&=&A(t)[x_{1}(t)\, x_{2}(t)\, \cdots \, x_{n}(t)]\\\\
&=&A(t)X(t).
\end{array}
\end{displaymath}
\end{remark}

For example, let us consider the linear system
\begin{equation}
\label{EXMY}
\dot{x}=\left [\begin{array}{cc}
-1 + \frac{3}{2} \cos^2(t) & 1 - \frac{3}{2} \cos(t) \sin(t)\\
-1 - \frac{3}{2}\cos(t) \sin(t) & -1 + \frac{3}{2} \sin^2(t)
\end{array}
\right]x,
\end{equation}
which have been studied in \cite{MY}. It straightforward to verify that
\begin{displaymath}
X(t)= \left[\begin{array}{cc}
e^{\frac{t}{2}} \cos(t) & e^{-t}\sin(t)\\
-e^{\frac{t}{2}} \sin(t) & e^{-t} \cos(t)
\end{array}\right]
\end{displaymath}
is a fundamental matrix of the above linear system.

In the autonomous case, any course of ordinary differential equations provides a method to construct
a fundamental matrix $X(t)=e^{At}$ by using the eigenvalues and eigenvectors of
$A$. Nevertheless, and contrarily to the autonomous case, there is no algorithm applicable to every 
linear system (\ref{LA}) that allows finding a basis of solutions. Furthermore, the system (\ref{EXMY}) is an iconic example illustrating the subtleties and technicalities of the nonautonomous case. In fact, notice that the eigenvalues of its corresponding matrix $A(t)$ have negative real part but the first column of $X(t)$ is a unstable solution.

\begin{remark}
\label{R3}
If $X(t)$ is a fundamental matrix associated to (\ref{LA}), for any $t\in J$ it is easy to verify the following facts:
\begin{itemize}
\item[a)] If $C\in M_{n}(\mathbb{R})$ is nonsingular, then $Y(t)=X(t)C$ is also a fundamental matrix associated to (\ref{LA}).
\item[b)] We always can choose a fundamental matrix $Y(t)$ satisfying $Y(0)=I$. In fact, we can use $C=X^{-1}(0)$ in the above property.  This matrix $Y(t)$ is usually called a \textit{normalized} fundamental matrix.
\item[c)] If $J=\mathbb{R}$ and $A(-t)=-A(t)$ for any $t\in J$, then $Y(t)=X(-t)$ is also a fundamental matrix associated to (\ref{LA}).
%\item[d)] If $J=\mathbb{R}$ and $A(-t)=A(t)$ for any $t\in J$, then $Y(t)=-X(-t)$ is also a fundamental matrix associated to (\ref{LA}),
\item[d)] By its own definition we have that
for any $t\in J$, the fundamental matrix $X(t)$ is invertible since its columns are linearly independent vectors of $\mathbb{R}^{n}$. As $X(t)$ is invertible and derivable, it is followed by Lemma \ref{dimf} that $X^{-1}(t)$ is also derivable
and (\ref{dimf2}) combined with (\ref{DMF}) implies the identity:
\begin{equation}
\label{DMF2}
\frac{d}{dt}X^{-1}(t)=-X^{-1}(t)A(t).    
\end{equation}
\end{itemize}
\end{remark}

The next result is a direct consequence of the identity (\ref{DMF2}):
\begin{lemma}
\label{pitaron}
The map $J\ni t\mapsto X(t)X^{-1}(t_{0})c$ with $t_{0}\in J$ and $c\in \mathbb{R}^{n}$ is  being the unique solution of the
initial value problem:
\begin{equation}
\label{Cauchy}
\left\{\begin{array}{rcl}
\dot{x}&=&A(t)x\\
x(t_{0})&=&c.
\end{array}\right.
\end{equation}
\end{lemma}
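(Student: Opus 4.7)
The plan is to verify existence by direct substitution, then establish uniqueness via the ``constancy trick'' using the identity (\ref{DMF2}) just derived.

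First I would define $\varphi(t):=X(t)X^{-1}(t_{0})c$ and check that it satisfies both parts of (\ref{Cauchy}). For the initial condition, plugging $t=t_{0}$ gives $\varphi(t_{0})=X(t_{0})X^{-1}(t_{0})c=Ic=c$. For the differential equation, I would differentiate componentwise and use Remark \ref{R2}, namely identity (\ref{DMF}), to write
\begin{displaymath}
\dot{\varphi}(t)=\dot{X}(t)X^{-1}(t_{0})c=A(t)X(t)X^{-1}(t_{0})c=A(t)\varphi(t),
\end{displaymath}
since $X^{-1}(t_{0})c$ is a constant vector. This settles existence.

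For uniqueness, let $\psi$ be any solution of (\ref{Cauchy}) on $J$ and consider the auxiliary map $y(t):=X^{-1}(t)\psi(t)$. Thanks to Lemma \ref{dimf} and part d) of Remark \ref{R3}, $X^{-1}$ is differentiable on $J$ with derivative given by (\ref{DMF2}), so the product rule yields
\begin{displaymath}
\dot{y}(t)=\Bigl(\tfrac{d}{dt}X^{-1}(t)\Bigr)\psi(t)+X^{-1}(t)\dot{\psi}(t)=-X^{-1}(t)A(t)\psi(t)+X^{-1}(t)A(t)\psi(t)=0.
\end{displaymath}
Hence $y$ is constant on $J$, and evaluating at $t=t_{0}$ gives $y(t)\equiv X^{-1}(t_{0})c$; multiplying by $X(t)$ yields $\psi(t)=X(t)X^{-1}(t_{0})c=\varphi(t)$.

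There is no real obstacle here: the argument is a routine consequence of (\ref{DMF}) and (\ref{DMF2}). The only subtle point worth noting is that the uniqueness proof avoids invoking Picard--Lindel\"of and instead leverages the derivative formula for $X^{-1}$, which keeps the exposition self-contained with what has already been established in the chapter.
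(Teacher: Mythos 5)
Your proof is correct, but your uniqueness argument follows a genuinely different route from the paper's. The paper takes any second solution $y$, forms the difference $u=X(t)X^{-1}(t_{0})c-y(t)$, integrates the equation $\dot u=A(t)u$, $u(t_{0})=0$, and then applies a (two-sided) Gronwall-type inequality from Appendix C, with the small artifice of inserting a sequence $c_{n}\to 0$ so that the lemma is applicable, to conclude $u\equiv 0$ on both sides of $t_{0}$. You instead exploit the already-proved identity (\ref{DMF2}): for any solution $\psi$ the map $t\mapsto X^{-1}(t)\psi(t)$ has vanishing derivative on the interval $J$, hence is constant, which immediately forces $\psi(t)=X(t)X^{-1}(t_{0})c$. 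Your route is shorter, handles $t\ge t_{0}$ and $t\le t_{0}$ in one stroke, and is in fact the same ``multiply by $X^{-1}$'' device the paper itself uses later in the proof of Lemma \ref{SNHC1}; its only cost is that it leans entirely on the invertibility and differentiability of the fundamental matrix (Lemma \ref{dimf} and Remark \ref{R3}), which is harmless here since $X(t)$ appears in the statement. The paper's Gronwall argument, by contrast, is the more robust template: it proves uniqueness of solutions of $\dot x=A(t)x$ directly from an integral inequality, a technique that generalizes to settings where no fundamental matrix is available, which is presumably why the authors chose it. Both arguments are complete; no gap to report.
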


\begin{proof}
It is straightforward to verify that $t\mapsto X(t)X^{-1}(t_{0})c$ is a solution of the initial value problem (\ref{Cauchy}). In order to prove its uniqueness let $t\mapsto y(t)$ also be a solution of the above problem. Now, let us define the auxiliar function $u(t)=X(t)X^{-1}(t_{0})c-y(t)$ and note that $u(t)$ verifies:
$$
\dot{u}(t)=A(t)u(t) \quad \textnormal{with $u(t_{0})=0$}.
$$

First of all, let us assume that $t\geq t_{0}$ and
integrates the above equation between $t_{0}$ and $t$. By using the fact that $u(t_{0})=0$ it follows that
$$
u(t)=\int_{t_{0}}^{t}A(s)u(s)\,ds,
$$
and consequently

\begin{displaymath}
\begin{array}{rcl}
|u(t)| &=&  \displaystyle \left |\int_{t_{0}}^{t}A(s)u(s)\,ds \right|\\\\
&\leq &  \displaystyle \int_{t_{0}}^{t}|A(s)u(s)|\,ds \\\\
&\leq & \displaystyle  \int_{t_{0}}^{t}||A(s)||\,|u(s)|\,ds.
\end{array}
\end{displaymath}

Secondly, let us assume that $t\leq t_{0}$ and
integrates the above equation between $t$ and $t_{0}$. By reiterating the fact that $u(t_{0})=0$ it follows that:
$$
-u(t)=\int_{t}^{t_{0}}A(s)u(s)\,ds,
$$
and consequently
\begin{displaymath}
\begin{array}{rcl}
|u(t)|&\leq & \displaystyle  \int_{t}^{t_{0}}||A(s)||\,|u(s)|\,ds \\\\
\end{array}
\end{displaymath}

Consequently, we have that
$$
|u(t)|\leq \left|\int_{t_{0}}^{t}||A(s)||\,|u(s)|\,ds\right| \leq c_{n}+ \left|\int_{t_{0}}^{t}||A(s)||\,|u(s)|\,ds\right|
$$
for any positive sequence $\{c_{n}\}_{n\in \mathbb{N}}$ convergent to zero. 

By the variation of Gronwall's Lemma (see Appendix C for details), we have that
$$
|u(t)|\leq c_{n}\exp\left(\left|\int_{t_{0}}^{t}||A(s)||\,ds\right|\right).
$$

Finally, the uniqueness follows by letting $c_{n}\to 0$ when $n\to +\infty$.
\end{proof}

\begin{definition}
%\label{MT}
Let $X(t)$ be a fundamental matrix of \eqref{LA}. The \textbf{transition matrix} associated to $X(t)$ is defined by 
$$
X(t,s):=X(t)X^{-1}(s) \quad \textit{where $t,s\in J$}.
$$
\end{definition}

We point out that, in the russian literature, the transition matrix is also called as \textit{Cauchy operator} or \textit{matriciant}
while in PDE and functional analysis research it is usually called as evolution operator.

\begin{remark}
\label{remark-MT}
By using Remarks \ref{R2} and \ref{R3}, we can see that:
\begin{equation}
\label{MT1}
\frac{\partial X}{\partial t}(t,s)=A(t)X(t,s) \quad
\textnormal{and} \quad \frac{\partial X}{\partial s}(t,s)=-X(t,s)A(s).
\end{equation}
\end{remark}

\begin{remark}
\label{remark-MT2}
Any transition matrix $X(t,s)$ associated to (\ref{LA}) verifies the following properties:
\begin{itemize}
\item[a)] For any $t,t_{0},s \in J$ it follows that: 
$$
X(t,t)=I \quad \textnormal{and} \quad X(t,t_{0})X(t_{0},s)=X(t,s).
$$ 
%\item[2.-] Given $t,t_{0}\in J$ and a matrix norm $||\cdot||$, the Gronwall inequality implies that
%\begin{equation}
%\label{BG0}
%||X(t,t_{0})||\leq ||I||\,e^{\Big|\int_{t_{0}}^{t}||A(s)||\,ds\Big|}.
%\end{equation}
\item[b)] Any solution $t\mapsto x(t)$ of the initial value problem (\ref{Cauchy}) can be written as 
$x(t)=X(t,t_{0})c$. By using a) we can see that, for any $t,s\in J$, we have the identity $x(t)=X(t,s)x(s)$, which
is behind the adjective \textit{transition}.
\item[c)] If $A\in M_{n}(\mathbb{R})$ is a constant matrix, it can be seen that $e^{At}$ is a fundamental matrix of the system (\ref{LA}) when $A(t)$ is constant for any $t\in J$. By using Lemma \ref{pitaron}, we have that:
\begin{displaymath}
X(t,t_{0})=e^{A(t-t_{0})}.
\end{displaymath}
\end{itemize}
\end{remark}

The next result provides estimations for the transition matrix and can be found in
\cite[Lemma 3.3.4]{Hinrichsen}:
\begin{lemma}
\label{bgpg}
Given $t,t_{0}\in J$ it follows that
\begin{equation}
\label{BGO-I}
\left\{\begin{array}{l}
e^{-\left|\int_{t_{0}}^{t}||A(s)||\,ds\right|}\leq ||X(t,t_{0})||\,\,\,\, \leq e^{\left|\int_{t_{0}}^{t}||A(s)||\,ds\right|},\\\\
e^{-\left|\int_{t_{0}}^{t}||A(s)||\,ds\right|}\leq ||X(t_{0},t)||^{-1} \leq e^{\left|\int_{t_{0}}^{t}||A(s)||\,ds\right|},
\end{array}\right.
\end{equation}
where $||\cdot||$ is a matrix norm satisfying $||I||=1$.
\end{lemma}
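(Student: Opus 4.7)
The plan is to obtain the upper bound on $\|X(t,t_{0})\|$ from the integral version of the matrix equation followed by Gronwall's inequality, and then extract the lower bound from the cocycle identity $X(t,t_{0})X(t_{0},t)=I$. The two estimates on the second line of (\ref{BGO-I}) will then be immediate.

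For the upper bound, I would first assume $t\ge t_{0}$. Combining $\frac{\partial X}{\partial t}(t,t_{0})=A(t)X(t,t_{0})$ from Remark \ref{remark-MT} with $X(t_{0},t_{0})=I$ from Remark \ref{remark-MT2}(a) and integrating from $t_{0}$ to $t$ gives
$$X(t,t_{0}) = I + \int_{t_{0}}^{t} A(s)X(s,t_{0})\,ds.$$
Taking norms and using $\|I\|=1$ together with the submultiplicativity of the operator norm yields
$$\|X(t,t_{0})\| \le 1 + \int_{t_{0}}^{t} \|A(s)\|\,\|X(s,t_{0})\|\,ds,$$
and the scalar Gronwall lemma of Appendix~C (already used in the proof of Lemma \ref{pitaron}) produces $\|X(t,t_{0})\|\le \exp\!\left(\int_{t_{0}}^{t}\|A(s)\|\,ds\right)$. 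The case $t\le t_{0}$ is identical after integrating on $[t,t_{0}]$, so the common upper bound $\|X(t,t_{0})\|\le \exp\!\left(\left|\int_{t_{0}}^{t}\|A(s)\|\,ds\right|\right)$ holds for all $t,t_{0}\in J$.

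For the lower bound, I would exploit the cocycle identity $X(t,t_{0})X(t_{0},t)=I$ of Remark \ref{remark-MT2}(a). Passing to norms gives $1\le \|X(t,t_{0})\|\,\|X(t_{0},t)\|$, whence $\|X(t,t_{0})\|\ge 1/\|X(t_{0},t)\|$. Applying the upper estimate just proved to $X(t_{0},t)$ (interchanging $t$ and $t_{0}$ leaves the absolute value of the integral unchanged) gives $\|X(t_{0},t)\|\le \exp\!\left(\left|\int_{t_{0}}^{t}\|A(s)\|\,ds\right|\right)$, and inserting this bound into the previous display yields the desired lower estimate. The second line of (\ref{BGO-I}) consists of the same two bounds applied to $\|X(t_{0},t)\|$ and then rewritten in terms of $\|X(t_{0},t)\|^{-1}$, so it requires no additional work. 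The only delicate point I anticipate is the sign bookkeeping when $t<t_{0}$: this is handled automatically if the version of Gronwall's lemma quoted from Appendix~C is formulated with an absolute value around the integral, or, alternatively, by splitting into the two time directions and combining both estimates at the end.
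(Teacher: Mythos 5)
Your proof is correct and follows essentially the same route as the paper: integrate the differential equation for the transition matrix, apply Gronwall's lemma (with $\|I\|=1$) to get the upper bounds, and then use the identity $X(t,t_{0})X(t_{0},t)=I$ together with submultiplicativity to convert these into the lower bounds, the second line of \eqref{BGO-I} being just the reciprocal restatement. The paper likewise treats only $t>t_{0}$ explicitly and leaves the other time direction to the reader, so your remark on the sign bookkeeping matches its handling of that case.
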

\begin{proof}
By  (\ref{remark-MT}) we know that
$$
\frac{\partial X}{\partial t}(t,t_{0})=A(t)X(t,t_{0}) \quad \textnormal{and} \quad
\frac{\partial X}{\partial t}(t_{0},t)=-X(t_{0},t)A(t). 
$$

If $t>t_{0}$, we integrate the above equations, obtaining
$$
X(t,t_{0})=I+\int_{t_{0}}^{t}A(s)X(s,t_{0})\,ds,  \quad X(t_{0},t)=I-\int_{t_{0}}^{t}A(s)X(t_{0},s)\,ds,
$$
which leads to
$$
||X(t,t_{0})||\leq 1+\int_{t_{0}}^{t}||A(s)||\,||X(s,t_{0})||\,ds
$$
and
$$
||X(t_{0},t)||\leq 1+\int_{t_{0}}^{t}||A(s)||\,||X(t_{0},s)||\,ds.
$$

By using Gronwall's Lemma (see Appendix B) we obtain
$$
||X(t,t_{0})||\leq e^{\int_{t_{0}}^{t}||A(s)||\,ds} \quad \textnormal{and} \quad
 e^{-\int_{t_{0}}^{t}||A(s)||\,ds} \leq ||X(t_{0},t)||^{-1}.
$$

By statement 1 from Remark \ref{remark-MT2} we have that
$$
1=||I||=||X(t,t_{0})X(t_{0},t)||\leq ||X(t,t_{0})||\,||X(t_{0},t)||,
$$
and by using the above estimations, we can deduce that
$$
e^{-\int_{t_{0}}^{t}||A(s)||\,ds}\leq ||X(t,t_{0})|| \quad \textnormal{and} \quad ||X(t_{0},t)||^{-1}\leq e^{\int_{t_{0}}^{t}||A(s)||\,ds}
$$
and the Lemma follows for $t>t_{0}$. The proof for the case $t<t_{0}$ can be carried out in a similar way and is left for the reader.
\end{proof}

\begin{lemma}
\label{SNHC1}
The initial value problem:
\begin{equation}
\label{No-homogeneo}
\left\{\begin{array}{rcl}
\dot{x} & = & A(t)x+f(t)\\
x(t_{0}) &=& x_{0}   
\end{array}\right.
\end{equation}
where $J\ni t\mapsto f(t)\in \mathbb{R}$, $t_{0}\in J$ and $x_{0}\in \mathbb{R}^{n}$ has a unique solution $t\mapsto x(t)$ given by:
\begin{equation}
\label{solucion-NH}    
\displaystyle
x(t)=X(t,t_{0})x_{0}+\int_{t_{0}}^{t}X(t,s)f(s)\,ds.
\end{equation}
\end{lemma}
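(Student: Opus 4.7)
The plan is to verify directly that the proposed formula is a solution, and then derive uniqueness from the homogeneous case already handled in Lemma \ref{pitaron}. This is the classical variation of parameters argument, and the only slightly delicate point is the differentiation of the integral term.

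First I would check the initial condition, which is immediate: setting $t=t_{0}$ in \eqref{solucion-NH} and using $X(t_{0},t_{0})=I$ (from Remark \ref{remark-MT2}) and the fact that the integral over $[t_{0},t_{0}]$ is zero, one obtains $x(t_{0})=x_{0}$. Then I would differentiate the right-hand side of \eqref{solucion-NH} by splitting into the two summands. The first term $X(t,t_{0})x_{0}$ has derivative $A(t)X(t,t_{0})x_{0}$ by the first identity in \eqref{MT1} of Remark \ref{remark-MT}. For the second term, I would apply the Leibniz rule for differentiation under the integral sign, which produces the boundary contribution $X(t,t)f(t)=f(t)$ together with $\int_{t_{0}}^{t}\frac{\partial X}{\partial t}(t,s)f(s)\,ds=A(t)\int_{t_{0}}^{t}X(t,s)f(s)\,ds$, again by \eqref{MT1}. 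Collecting terms and factoring $A(t)$ yields exactly $A(t)x(t)+f(t)$, so the differential equation is satisfied.

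For uniqueness, I would let $t\mapsto y(t)$ be any other solution of \eqref{No-homogeneo} and set $z(t):=x(t)-y(t)$. By linearity, $z$ satisfies the homogeneous initial value problem $\dot{z}=A(t)z$ with $z(t_{0})=0$, and Lemma \ref{pitaron} (applied with $c=0$) forces $z\equiv 0$ on $J$, hence $x\equiv y$.

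The main technical point is justifying the Leibniz rule for the integral term; this requires the continuity of $(t,s)\mapsto X(t,s)$ and of $\partial_{t}X(t,s)=A(t)X(t,s)$, both of which are automatic since $X(\cdot)$ is $C^{1}$ and $A(\cdot)$ is continuous on $J$. Apart from that invocation, the argument is a purely mechanical verification, so I do not anticipate any real obstacle.
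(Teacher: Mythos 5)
Your proof is correct, but it takes a genuinely different route from the paper's. You verify the candidate formula \eqref{solucion-NH} directly, by differentiating it (checking the initial condition, differentiating the integral term with the Leibniz rule, and using \eqref{MT1} to collect $A(t)x(t)+f(t)$), and then you get uniqueness by subtracting two solutions and invoking Lemma \ref{pitaron}. The paper instead uses the integrating-factor derivation: starting from an arbitrary solution $x(\cdot)$ of \eqref{No-homogeneo}, it multiplies $\dot{x}(s)-A(s)x(s)=f(s)$ on the left by $X^{-1}(s)$, recognizes (via the identity $\frac{d}{ds}X^{-1}(s)=-X^{-1}(s)A(s)$ from Remark \ref{R3}) that the left-hand side is $\frac{d}{ds}\{X^{-1}(s)x(s)\}$, and integrates from $t_{0}$ to $t$ using only the Fundamental Theorem of Calculus; the uniqueness paragraph is then the same reduction to Lemma \ref{pitaron} you use. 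The paper's argument \emph{derives} the formula rather than verifying a guessed one, and it needs no differentiation under the integral sign — only an integration of a continuous function — so it is marginally lighter on technical prerequisites. Your approach, on the other hand, is more elementary in spirit once the formula is known, at the small cost of having to justify the Leibniz rule (which you correctly note is unproblematic here given the continuity of $A$ and hence the joint $C^{1}$ regularity of $(t,s)\mapsto X(t,s)$). Both proofs are valid; yours is just the verification half where the paper gives the derivation half.
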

\begin{proof}
Let $J\ni s\mapsto x(s)$ be a solution of (\ref{No-homogeneo}), then we multiply $\dot{x}(s)$ by $X^{-1}(s)$ obtaining that
$$
X^{-1}(s)\dot{x}(s)-X^{-1}(s)A(s)x(s)=X^{-1}(s)f(s).
$$

Then, by using Remark \ref{R3} we have that the above identity is equivalent to
\begin{displaymath}
\frac{d}{ds}\Big\{X^{-1}(s)x(s)\Big\}=X^{-1}(s)f(s),
\end{displaymath}
thereafter, we integrate between $t_{0}$ and $t$, then  we can deduce that
\begin{displaymath}
X^{-1}(t)x(t)=X^{-1}(t_{0})x_{0}+\int_{t_{0}}^{t}X^{-1}(s)f(s)\,ds
\end{displaymath}
and the identity (\ref{solucion-NH}) can be deduced easily.

In order to prove the uniqueness, let us consider the solution $J\ni t\mapsto z(t)$ and define $u(t)=x(t)-z(t)$. Then, we can see that $t\mapsto u(t)$ is the solution of the initial value problem (\ref{Cauchy}) with $u(t_{0})=c=0$. Hereafter, by using Lemma \ref{pitaron} we have that $u(t)=0$ for any $t\in J$ and the uniqueness follows.
\end{proof}

From the starting point that fundamental and transition matrices are solutions of matrix differential equations such as (\ref{remark-MT}), we can see that it is possible to study 
matrix differential equations generalizing the above matrix equation. The following results
are adapted from \cite[Ch.3]{Dalecki}.

\begin{lemma}
%\label{SEDM0}
Let us consider the continuous matrix valued functions $J\ni t\mapsto A(t),B(t),F(t)\in M_{n}(\mathbb{R})$. The matrix differential equation
\begin{equation}
\label{EDM00}
\begin{array}{rcl}
\dot{Q}(t) & = & A(t)Q(t)-Q(t)B(t)+F(t)\\
\end{array}
\end{equation}
has a solution given by
\begin{equation}
\label{we}    
\displaystyle
Q(t)=X(t,s)Q(s)Y(s,t)+\int_{s}^{t}X(t,\tau)F(\tau)Y(\tau,t)\,d\tau,
\end{equation}
where $X(t)$ and $Y(t)$ are fundamental matrices associated to the linear systems
$\dot{x}=A(t)x$ and $\dot{y}=B(t)y$ respectively.
\end{lemma}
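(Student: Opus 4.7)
The plan is to reduce the matrix equation \eqref{EDM00} to a pure integration, in exactly the same spirit as the proof of Lemma \ref{SNHC1}, by using $X(t)$ and $Y(t)$ to ``kill'' the two linear terms $A(t)Q(t)$ and $-Q(t)B(t)$ simultaneously. Concretely, I introduce the auxiliary matrix-valued function
\begin{displaymath}
R(t):=X^{-1}(t)\,Q(t)\,Y(t),
\end{displaymath}
which is well defined and differentiable on $J$ by Remark \ref{R3}(d) and the analogous fact for $Y$. The substitution is chosen so that $X^{-1}$ acts on the left (to neutralize $A(t)Q(t)$) and $Y$ on the right (to neutralize $-Q(t)B(t)$).

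The key computation is to differentiate $R(t)$ using the product rule together with the identities $\dot X(t)=A(t)X(t)$, $\dot Y(t)=B(t)Y(t)$, and their consequences $\tfrac{d}{dt}X^{-1}(t)=-X^{-1}(t)A(t)$, $\tfrac{d}{dt}Y^{-1}(t)=-Y^{-1}(t)B(t)$ supplied by Lemma \ref{dimf}. Substituting (\ref{EDM00}) for $\dot Q(t)$ and expanding yields
\begin{displaymath}
\dot R(t)=-X^{-1}(t)A(t)Q(t)Y(t)+X^{-1}(t)\bigl[A(t)Q(t)-Q(t)B(t)+F(t)\bigr]Y(t)+X^{-1}(t)Q(t)B(t)Y(t),
\end{displaymath}
so that the two $A(t)$--terms and the two $B(t)$--terms cancel in pairs, leaving
\begin{displaymath}
\dot R(t)=X^{-1}(t)F(t)Y(t).
\end{displaymath}

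This is now a trivial matrix ODE: integrating from $s$ to $t$ I obtain
\begin{displaymath}
X^{-1}(t)Q(t)Y(t)=X^{-1}(s)Q(s)Y(s)+\int_{s}^{t}X^{-1}(\tau)F(\tau)Y(\tau)\,d\tau,
\end{displaymath}
and multiplying on the left by $X(t)$ and on the right by $Y^{-1}(t)$ produces the expression
\begin{displaymath}
Q(t)=X(t)X^{-1}(s)\,Q(s)\,Y(s)Y^{-1}(t)+\int_{s}^{t}X(t)X^{-1}(\tau)F(\tau)Y(\tau)Y^{-1}(t)\,d\tau,
\end{displaymath}
which is exactly \eqref{we} after rewriting $X(t)X^{-1}(\cdot)$ and $Y(\cdot)Y^{-1}(t)$ in terms of the transition matrices associated with $A(t)$ and $B(t)$.

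There is no real obstacle here: the whole content lies in choosing the right conjugating substitution $R(t)=X^{-1}(t)Q(t)Y(t)$, after which the cancellation is automatic and the rest is bookkeeping. The only care needed is to apply Lemma \ref{dimf} correctly to both $X^{-1}$ and $Y^{-1}$ and to respect the order of the matrix factors throughout, since we are no longer in a commutative setting. Uniqueness is not requested in the statement, but if desired it follows by applying the same cancellation argument to the difference of two solutions, which reduces to $\dot R\equiv 0$.
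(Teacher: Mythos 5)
Your proof is correct and follows essentially the same strategy as the paper: both conjugate $Q$ by the two fundamental matrices so that the linear terms cancel, then integrate and undo the conjugation. The paper carries out the same computation by first multiplying \eqref{EDM00} on the left by $X(s,\tau)$, then on the right by $Y(\tau,s)$, recognizing each step as a total derivative via \eqref{MT1}; your substitution $R(t)=X^{-1}(t)Q(t)Y(t)$ simply bundles these two manipulations into a single product-rule differentiation, which is arguably a bit cleaner but not a different argument.
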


\begin{proof}
We multiply (\ref{EDM00}) by $X(s,\tau)$ on the left. Then, by using (\ref{MT1}) we obtain
\begin{displaymath}
\begin{array}{rcl}
X(s,\tau)F(\tau) &=& X(s,\tau)\dot{Q}(\tau)-X(s,\tau)A(\tau)Q(\tau)+X(s,\tau)Q(\tau)B(\tau),\\\\
&=& \displaystyle X(s,\tau)\dot{Q}(\tau)+\frac{d}{d\tau}\left\{X(s,\tau)\right\}Q(\tau)+X(s,\tau)Q(\tau)B(\tau),\\\\
&=&\displaystyle \frac{d}{d\tau}\left\{ X(s,\tau)Q(\tau)\right\}+X(s,\tau)Q(\tau)B(\tau).
\end{array}
\end{displaymath}

In this manner, we multiply the above identity by $Y(\tau,s)$ on the right, then by using again (\ref{MT1}) we obtain
\begin{displaymath}
\begin{array}{rcl}
X(s,\tau)F(\tau)Y(\tau,s) &=&
\displaystyle \frac{d}{d\tau}\left\{ X(s,\tau)Q(\tau)\right\}Y(\tau,s)+X(s,\tau)Q(\tau)B(\tau)Y(\tau,s),\\\\
&=&
\displaystyle \frac{d}{d\tau}\left\{ X(s,\tau)Q(\tau)\right\}Y(\tau,s)+X(s,\tau)Q(\tau)\frac{d}{d\tau}\left\{Y(\tau,s)\right\},\\\\
&=&\displaystyle \frac{d}{d\tau}\left\{X(s,\tau)Q(\tau)Y(\tau,s)\right\}.
\end{array}
\end{displaymath}

By integrating between $s$ and $t$ we obtain
$$
X(s,t)Q(t)Y(t,s)=Q(s)+\int_{s}^{t}X(s,\tau)F(\tau)Y(\tau,s)\,d\tau,
$$
and (\ref{we}) can be deduced by multiplying the above identity
by $X(t,s)$ and $Y(s,t)$ from the left and right respectively.
\end{proof}

The above result has limit cases, which are important by themselves:
\begin{corollary}
\label{SEDM}
The matrix differential equations
\begin{subequations}
  \begin{empheq}{align}
    & \dot{Q}(t) = A(t)Q(t)+F(t) \label{EDM001}, \\
    & \dot{Q}(t) = A(t)Q(t)-B(t)Q(t)  \label{EDM002},
  \end{empheq}
\end{subequations}
where 
$J\ni t\mapsto A(t),B(t),F(t)\in M_{n}(\mathbb{R})$ have solutions respectively given by:
\begin{subequations}
  \begin{empheq}{align}
    & Q(t) = X(t,s)Q(s)+\int_{s}^{t}X(t,\tau)F(\tau)\,d\tau,\label{we1} \\
    & Q(t) = X(t,s)Q(s)Y(s,t)     \label{we2}
  \end{empheq}
\end{subequations}
where $X(t)$ and $Y(t)$ are fundamental matrices associated to the linear systems
$\dot{x}=A(t)x$ and $\dot{y}=B(t)y$.
\end{corollary}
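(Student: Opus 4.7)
The plan is to derive both identities as immediate specializations of the preceding lemma, obtained by making one of the data matrices vanish identically in \eqref{EDM00}.

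For \eqref{EDM001}, I would take $B(t)\equiv 0$. Then the auxiliary system $\dot{y}=B(t)y$ becomes $\dot{y}=0$, whose normalized fundamental matrix is $Y(t)\equiv I$, so $Y(s,t)=I$ for every pair $s,t\in J$. Substituting this into the representation formula \eqref{we} makes the trailing $Y$--factors disappear from both the boundary term and the integrand, yielding exactly \eqref{we1}; simultaneously \eqref{EDM00} collapses to \eqref{EDM001}.

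For \eqref{EDM002}, the symmetric choice is $F(t)\equiv 0$. Then the integral term in \eqref{we} vanishes identically, leaving the product $Q(t)=X(t,s)Q(s)Y(s,t)$, which is \eqref{we2}, while the matrix equation \eqref{EDM00} reduces to a homogeneous Sylvester--type equation of the form $\dot{Q}=A(t)Q-Q(t)B(t)$, i.e.\ \eqref{EDM002}.

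No serious obstacle is expected, since the corollary is essentially bookkeeping on top of the lemma; if desired, one can also give a short direct verification: for \eqref{we1}, differentiate in $t$ using \eqref{MT1} and the fundamental theorem of calculus to recover $A(t)Q(t)+F(t)$; for \eqref{we2}, apply the product rule using $\partial_t X(t,s)=A(t)X(t,s)$ and $\partial_t Y(s,t)=-Y(s,t)B(t)$. The only point I would be careful about is the side on which $B(t)$ acts in the homogeneous case: the derivative of $Y(s,t)$ places the factor $B(t)$ naturally on the right of $Q(t)$, so \eqref{we2} is the correct ansatz precisely for equations with $B(t)$ acting on that side.
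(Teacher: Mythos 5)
Your proposal is correct and matches the paper's (implicit) argument: the corollary is given without proof precisely as the specializations $B\equiv 0$ and $F\equiv 0$ of the preceding lemma, with $Y(s,t)=I$ in the first case, exactly as you do. Your caution about the side on which $B(t)$ acts is well placed: the solution \eqref{we2}, as well as the later uses in Remark \ref{bwks} and the adjoint equation \eqref{MDEADJ}, correspond to $\dot{Q}(t)=A(t)Q(t)-Q(t)B(t)$, so the form $B(t)Q(t)$ printed in \eqref{EDM002} should be read as a misprint for $Q(t)B(t)$.
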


\begin{remark}
\label{bwks}
The equation (\ref{EDM002}) with initial condition $Q(t_{0})=I$ at $t_{0}\in J$ has a solution 
$Q(t)=X(t)Y^{-1}(t)$, where these fundamental matrices verify $X(t_{0})=Y(t_{0})=I$. Moreover, as $Q(t)$ 
is non singular, this allows to write $Y(t)=Q^{-1}(t)X(t)$ and $X(t)=Q(t)Y(t)$. 
\end{remark}

The following Remark can be seen as a converse result of the previous ones:
\begin{remark}
\label{KS-Preh}
If a fundamental matrix $X(t)$ of $\dot{x}=A(t)x$ has the decomposition $X(t)=Q(t)V(t)$, where $Q$ and $V$ are derivable an invertible then
$Q$ satisfies the matrix differential equation
\begin{displaymath}
\dot{Q}(t)=A(t)Q(t)-Q(t)B(t) \quad \textnormal{with} \quad B(t)=\dot{V}(t)V^{-1}(t).
\end{displaymath}
\end{remark}

\subsection{Adjoint systems}
In order to introduce the adjoint of the linear system (\ref{LA}),
let us consider 
a particular case of the matrix differential equation (\ref{EDM002}) with $B(t)=-A^{T}(t)$: 
\begin{equation}
\label{MDEADJ}
\dot{Q}(t) = A(t)Q(t)+Q(t)A^{T}(t).
\end{equation}

A consequence of Corollary \ref{SEDM} and Remark \ref{bwks} is that the matrix differential equation 
(\ref{MDEADJ}) with initial condition $Q(t_{0})=I$ at some $t_{0}\in J$ has a solution 
$Q(t)=X(t)Y^{-1}(t)$, where $X(t)$ and $Y(t)$ are respectively fundamental matrices
(normalized at $t_{0}\in J$) of (\ref{LA}) and 
\begin{equation}
\label{adj}
\dot{y}=-A^{T}(t)y,
\end{equation}
which is known as the \textit{adjoint system} of (\ref{LA}).

\medskip
In order to study the relation between $X(t)$ and $Y(t)$, notice that:
$$
Y^{T}(t)\dot{X}(t)=Y^{T}(t)A(t)X(t).
$$
Moreover, we integrate by parts between $t_{0}$ and $t$ with $t,t_{0}\in J$ and, by using the identity $\frac{d}{dt}[U(t)]^{T}=\left[\frac{d}{dt}U(t)\right]^{T},$
we can deduce that
\begin{displaymath}
Y^{T}(t)X(t)= \displaystyle I+\int_{t_{0}}^{t}[\dot{Y}^{T}(\tau)+Y(\tau)^{T}A(\tau)]X(\tau)\,d\tau= I,
\end{displaymath}
since $Y(t)$ verifies $\dot{Y}^{T}(t)+Y(t)A(t)=0$ for any
$t\in J$ and we conclude that:  
\begin{theorem}
\label{Tadj0}
If $X(t)$ and $Y(t)$ are fundamental matrices of \eqref{LA} and its adjoint respectively, both normalized
at $t_{0}$, it follows that
\begin{equation*}
Y(t)=[X^{-1}(t)]^{T}  \quad \textnormal{for any $t\in J$}.
\end{equation*}
\end{theorem}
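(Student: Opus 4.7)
The plan is to show directly that $Y^{T}(t)X(t)$ is the identity matrix for every $t\in J$, and then transpose and invert to read off the claimed formula. Most of this computation already appears in the paragraph preceding the statement, so my job is just to organize it cleanly and flag where the normalization at $t_0$ enters.

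First I would observe that at $t=t_0$ both fundamental matrices are the identity, so $Y^{T}(t_0)X(t_0)=I$. Next I would differentiate the product $t\mapsto Y^{T}(t)X(t)$ using the product rule together with the two defining equations $\dot X(t)=A(t)X(t)$ and $\dot Y(t)=-A^{T}(t)Y(t)$. Transposing the second equation (and using $\tfrac{d}{dt}[U(t)]^{T}=[\dot U(t)]^{T}$) gives $\dot Y^{T}(t)=-Y^{T}(t)A(t)$, so
\begin{displaymath}
\frac{d}{dt}\bigl[Y^{T}(t)X(t)\bigr]=\dot Y^{T}(t)X(t)+Y^{T}(t)\dot X(t)=-Y^{T}(t)A(t)X(t)+Y^{T}(t)A(t)X(t)=0.
\end{displaymath}
Combined with the initial value $I$ at $t_0$, this yields $Y^{T}(t)X(t)=I$ on all of $J$.

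Finally, I would take transposes to get $X^{T}(t)Y(t)=I$. Since $X(t)$ is invertible for every $t$ (Remark \ref{R3}), so is $X^{T}(t)$, and multiplying on the left by $[X^{T}(t)]^{-1}=[X^{-1}(t)]^{T}$ produces $Y(t)=[X^{-1}(t)]^{T}$, which is the claim.

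I do not really anticipate an obstacle here: the only subtle point is the legality of swapping derivative and transpose, which is entry-wise and hence immediate, and the requirement that both fundamental matrices be normalized at the same $t_0$, which is precisely what makes the constant of integration come out to be $I$ rather than some nonsingular matrix $C$. If the two normalizations were at different points, the same argument would give $Y(t)=[X^{-1}(t)]^{T}C$ for a suitable constant $C$, so the hypothesis is used in exactly one place.
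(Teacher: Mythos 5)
Your proof is correct and follows essentially the same route as the paper: the text also establishes $Y^{T}(t)X(t)=I$ using $\dot{Y}^{T}(t)=-Y^{T}(t)A(t)$, $\dot{X}(t)=A(t)X(t)$ and the common normalization at $t_{0}$ (the paper phrases the constancy of $Y^{T}X$ via an integration by parts rather than differentiating the product, but the computation is identical). Your concluding transposition/inversion step and the remark about what happens without a common normalization are both fine.
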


We stress that the above result can be obtained by direct computation by using the statement d) from
Remark \ref{R3}. Nevertheless, the Remark \ref{bwks} provides us a complementary point of view: to see
the fundamental matrix of (\ref{adj}) as $Y(t)=Q^{-1}(t)X(t)$ where $Q(\cdot)$ is a solution of (\ref{MDEADJ})
with $Q(t_{0})=X(t_{0})=Y(t_{0})=I$.

\medskip

The following result is a direct consequence from Theorem \ref{Tadj0}:
\begin{corollary}
%\label{Tadj}
Let $X(t)$ be a fundamental matrix of the system \eqref{LA}. Then $Y(t)$ is a fundamental matrix of its adjoint system \eqref{adj} if and only if
the following identity is satisfied
\begin{equation}
\label{identidad-adj}
Y^{T}(t)X(t)=C  \quad \textnormal{for any $t\in J$, where $C$ is invertible.}
\end{equation}
\end{corollary}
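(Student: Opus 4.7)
The plan is to derive the equivalence directly from Theorem \ref{Tadj0} and a differentiation argument, splitting the proof into the two implications.

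For the forward direction, I would start by fixing any $t_{0}\in J$ and letting $X_{0}(t)$ and $Y_{0}(t)$ denote the fundamental matrices of \eqref{LA} and \eqref{adj} normalized at $t_{0}$. Theorem \ref{Tadj0} gives $Y_{0}(t)=[X_{0}^{-1}(t)]^{T}$, equivalently $Y_{0}^{T}(t)X_{0}(t)=I$. Since any fundamental matrix differs from a normalized one by right multiplication by an invertible constant matrix (statement a) of Remark \ref{R3}), we may write $X(t)=X_{0}(t)C_{1}$ and $Y(t)=Y_{0}(t)C_{2}$ for some invertible $C_{1},C_{2}\in M_{n}(\mathbb{R})$. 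Then
\begin{displaymath}
Y^{T}(t)X(t)=C_{2}^{T}Y_{0}^{T}(t)X_{0}(t)C_{1}=C_{2}^{T}C_{1},
\end{displaymath}
which is the desired constant invertible matrix $C$.

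For the converse, I would assume $Y^{T}(t)X(t)=C$ with $C$ invertible and show that $Y(t)$ solves the adjoint equation. Solving for $Y^{T}(t)$ gives $Y^{T}(t)=CX^{-1}(t)$, so $Y(t)=[X^{-1}(t)]^{T}C^{T}$; since $X(t)$ and $C$ are invertible, so is $Y(t)$, which guarantees that its columns form a basis. Differentiating the identity $Y^{T}(t)X(t)=C$ and using (\ref{DMF}) yields
\begin{displaymath}
\dot{Y}^{T}(t)X(t)+Y^{T}(t)A(t)X(t)=0,
\end{displaymath}
so right-multiplication by $X^{-1}(t)$ gives $\dot{Y}^{T}(t)=-Y^{T}(t)A(t)$, and transposing produces $\dot{Y}(t)=-A^{T}(t)Y(t)$. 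Hence $Y(t)$ is a fundamental matrix of \eqref{adj}.

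I do not anticipate a substantive obstacle: the argument uses only Theorem \ref{Tadj0}, the characterization of fundamental matrices up to a constant invertible right factor from Remark \ref{R3}, and the product rule. The only mild bookkeeping point is keeping the transposes on the correct side when passing between $Y^{T}(t)X(t)=C$ and the relation $Y(t)=[X^{-1}(t)]^{T}C^{T}$, and noting that invertibility of $C$ is exactly what is needed to promote $Y(t)$ from a matrix solution to a \emph{fundamental} matrix.
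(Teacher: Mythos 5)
Your proof is correct, and its converse takes a genuinely different route from the paper's. In the forward direction you do essentially what the paper does (Theorem \ref{Tadj0} plus the fact that fundamental matrices differ by a constant invertible right factor, Remark \ref{R3}), and in fact your version is slightly more careful: by writing $Y(t)=Y_{0}(t)C_{2}$ you cover an \emph{arbitrary} fundamental matrix of the adjoint, whereas the paper's argument only exhibits the identity for the particular $Y$ produced by Theorem \ref{Tadj0}. In the converse, however, the paper re-invokes Theorem \ref{Tadj0}: it forms the normalized matrix $\tilde{X}(t)=X(t)X^{-1}(t_{0})=[Y^{T}(t)]^{-1}Y^{T}(t_{0})$, applies the theorem to get that $Y(t)Y^{-1}(t_{0})$ is a fundamental matrix of \eqref{adj}, and then removes the normalization. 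You instead argue directly: from $Y^{T}(t)X(t)=C$ you get $Y(t)=[X^{-1}(t)]^{T}C^{T}$, hence invertibility, and differentiation gives $\dot{Y}(t)=-A^{T}(t)Y(t)$. This is more elementary and avoids the detour through normalized matrices; the paper's route has the advantage of reusing Theorem \ref{Tadj0} as a black box without any differentiation. One small point you should make explicit: before differentiating $Y^{T}(t)X(t)=C$ you need to know $Y$ is differentiable, which is not a hypothesis; it follows from the formula $Y(t)=[X^{-1}(t)]^{T}C^{T}$ you have already derived together with Lemma \ref{dimf} (indeed, differentiating this formula via \eqref{DMF2} yields $\dot{Y}=-A^{T}Y$ in one line), so the order of steps matters but the argument stands.
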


\begin{proof}
We first assume that $X(t)$ is a fundamental matrix of (\ref{LA}), then
$X(t)X^{-1}(t_{0})$ is also a fundamental matrix normalized at $t_{0}\in J$.
By Theorem \ref{Tadj0} we know that 
$$
Y(t)=[X(t_{0})X^{-1}(t)]^{T}=[X^{-1}(t)]^{T}X^{T}(t_{0}),
$$
which implies that $Y^{T}(t)X(t)=X(t_{0})$ and the identity (\ref{identidad-adj}) is verified with $C=X^{T}(t_{0})$. 

Now, let us assume that the identity (\ref{identidad-adj}) is verified. Given $t_{0}\in J$ we have that
$C=Y^{T}(t_{0})X(t_{0})$ and
$$
\tilde{X}(t)=X(t)X^{-1}(t_{0})=[Y^{T}(t)]^{-1}CX^{-1}(t_{0})=[Y^{T}(t)]^{-1}Y^{T}(t_{0})
$$
is a fundamental matrix of (\ref{LA}) normalized at $t_{0}\in J$. By applying Theorem \ref{Tadj0} we can deduce 
that
$$
[\tilde{X}^{-1}(t)]^{T}=Y(t)Y^{-1}(t_{0})
$$
is also a fundamental matrix of of (\ref{adj}) normalized at $t_{0}$. Finally, as $Y(t_{0})$ is invertible by using Remark \ref{R2}
we an conclude that $Y(t)$ is a fundamental matrix of (\ref{adj}) and the result follows. 
\end{proof}

A particular but interesting case of the above Theorems is given by the following result:
\begin{corollary}
\label{antisim}
If a linear system \eqref{LA} is such that $A(t)$ is antisymmetric for any $t\in J$, then any fundamental matrix $X(t)$ is orthogonal for any $J$.
\end{corollary}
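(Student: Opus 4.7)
The plan is to exploit the antisymmetry to identify the adjoint system (\ref{adj}) with the original system (\ref{LA}) and then invoke Theorem \ref{Tadj0}. First I would observe that $A^{T}(t)=-A(t)$ turns (\ref{adj}) into $\dot y = A(t)y$, which is exactly (\ref{LA}). Hence any fundamental matrix of (\ref{LA}) is automatically a fundamental matrix of its adjoint.

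Next I would fix $t_{0}\in J$ and take the fundamental matrix $X(t)$ of (\ref{LA}) normalized so that $X(t_{0})=I$. By Theorem \ref{Tadj0} the matrix $Y(t):=[X^{-1}(t)]^{T}$ is a fundamental matrix of the adjoint system also normalized at $t_{0}$. Since the adjoint coincides with (\ref{LA}), both $X(t)$ and $[X^{-1}(t)]^{T}$ solve the matrix Cauchy problem $\dot Z = A(t)Z$ with $Z(t_{0})=I$; applying Lemma \ref{pitaron} column by column gives $X(t)=[X^{-1}(t)]^{T}$, i.e.\ $X^{T}(t)X(t)=I$ for every $t\in J$, which is the desired orthogonality.

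For a general (not necessarily normalized) fundamental matrix $X(t)$, I would write $X(t)=\tilde X(t)\,X(t_{0})$ with $\tilde X$ normalized at $t_{0}$ (Remark \ref{R3}(b)); then $X^{T}(t)X(t)=X^{T}(t_{0})X(t_{0})$ is constant in $t$, so $X(t)$ is orthogonal for every $t\in J$ as soon as it is orthogonal at one point. An alternative self-contained argument, which I would mention for completeness, bypasses the adjoint machinery by differentiating $X^{T}(t)X(t)$ directly:
\begin{displaymath}
\frac{d}{dt}\bigl[X^{T}(t)X(t)\bigr]=X^{T}(t)\bigl[A^{T}(t)+A(t)\bigr]X(t)=0,
\end{displaymath}
so $X^{T}(t)X(t)$ is constant, yielding the same conclusion.

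There is no real obstacle in this argument; the only point requiring care is the interpretation of the statement, since orthogonality of a general fundamental matrix depends on its value at the initial instant. Both routes essentially reduce the corollary to the identity $A^{T}+A=0$ together with uniqueness of solutions.
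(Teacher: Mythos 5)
Your core argument is the paper's: since $A^{T}(t)=-A(t)$, the adjoint system (\ref{adj}) coincides with (\ref{LA}), and Theorem~\ref{Tadj0} then gives $X(t)=[X^{-1}(t)]^{T}$, i.e.\ orthogonality. However, you are more careful than the paper and, in doing so, you expose a genuine imprecision in the statement itself. Theorem~\ref{Tadj0} applies to matrices normalized at a common $t_{0}$, so the conclusion $X(t)=[X^{-1}(t)]^{T}$ is only immediate for the normalized fundamental matrix; the corollary as written --- ``any fundamental matrix $X(t)$ is orthogonal'' --- is false. For example, $A(t)\equiv 0$ is antisymmetric and every constant nonsingular matrix $C$ is a fundamental matrix of $\dot x=0$, yet $C$ need not be orthogonal. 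Your computation $X^{T}(t)X(t)=X^{T}(t_{0})X(t_{0})$ identifies exactly what is true: orthogonality at a single instant propagates to all of $J$, and hence holds for the normalized matrix (and more generally whenever $X(t_{0})$ is orthogonal). Your alternative route, differentiating $X^{T}(t)X(t)$ to obtain $X^{T}(t)[A^{T}(t)+A(t)]X(t)=0$, is cleaner and more elementary than passing through the adjoint machinery, and it makes this dependence on the initial value transparent. The paper's one-line proof implicitly assumes the normalized fundamental matrix without saying so.
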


\begin{proof}
Let $X(t)$ be a fundamental matrix of the system (\ref{LA}). As $A(t)=-A^{T}(t)$ for any $t\in J$, then the systems (\ref{LA}) and (\ref{adj}) coincides, which implies that $X(t)=[X^{-1}(t)]^{T}$ and the orthogonality of $X(t)$ follows.
\end{proof}

\begin{remark}
\label{ejemplo-reducible}
An interesting example of the above Corollary is given by the linear systems
\begin{displaymath}
\dot{x}=\left[\begin{array}{cc}
0 & e^{t} \\
-e^{t} & 0
\end{array}\right]x \quad \textnormal{and} \quad
\dot{y}=\left[\begin{array}{cc}
0 & 1 \\
-1 & 0
\end{array}\right]y 
\end{displaymath}
which have fundamental matrices
\begin{displaymath}
X(t)=\left[\begin{array}{rc}
\cos(e^{t}) & \sin(e^{t}) \\
-\sin(e^{t}) & \cos(e^{t})
\end{array}\right] \quad \textnormal{and} \quad
Y(t)=\left[\begin{array}{rc}
\cos(t) & \sin(t) \\
-\sin(t) & \cos(t)
\end{array}\right].
\end{displaymath}

We can easily see that $X^{T}(t)=X^{-1}(t)$ and $Y^{-1}(t)=Y^{T}(t)$. Note also that $t\mapsto \det X(t)$ and $t\mapsto \det Y(t)$
are equal to $1$ for any $t\in \mathbb{R}$. Finally, as any antisymmetric matrix has null trace, the above examples
provide a good illustration for the topic of the next subsection.
\end{remark}

\subsection{The Liouville's formula}
Given a linear system (\ref{LA}), the Liouville's formula relates the determinant of 
any fundamental matrix $X(t)$ with the trace of its corresponding matrix as follows:
\begin{theorem}
\label{liouville}
For any $t,t_{0}\in J$, the determinant of any fundamental matrix $X(t)$ of \eqref{LA}
verifies the identity:
\begin{equation}
\label{FL}
\det X(t) =\det X(t_{0})e^{\int_{t_{0}}^{t}\textnormal{Tr}A(s)\,ds}.
\end{equation}
\end{theorem}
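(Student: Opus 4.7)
The plan is to differentiate $\det X(t)$ directly and show that it satisfies the scalar linear ODE
\begin{equation*}
\frac{d}{dt}\det X(t) = \textnormal{Tr}A(t)\,\det X(t),
\end{equation*}
after which \eqref{FL} follows by integrating this elementary first--order equation between $t_{0}$ and $t$.

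The first step is to exploit multilinearity of the determinant in its columns. Writing $X(t)=[x_{1}(t)\;x_{2}(t)\;\cdots\;x_{n}(t)]$, each column is a solution of \eqref{LA}, so $\dot{x}_{i}(t)=A(t)x_{i}(t)$ by Remark \ref{R2}. The standard rule for differentiating a multilinear form yields
\begin{equation*}
\frac{d}{dt}\det X(t)=\sum_{i=1}^{n}\det\bigl[x_{1}(t),\ldots,\dot{x}_{i}(t),\ldots,x_{n}(t)\bigr]
=\sum_{i=1}^{n}\det\bigl[x_{1}(t),\ldots,A(t)x_{i}(t),\ldots,x_{n}(t)\bigr].
\end{equation*}

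The second step is to evaluate the right--hand side. Since $X(t)$ is invertible, the columns $\{x_{1}(t),\ldots,x_{n}(t)\}$ form a basis of $\mathbb{R}^{n}$, so for each fixed $t$ we can write $A(t)x_{i}(t)=\sum_{j=1}^{n}c_{ji}(t)x_{j}(t)$. In matrix form this reads $A(t)X(t)=X(t)C(t)$ with $C(t)=X^{-1}(t)A(t)X(t)$. Substituting the expansion into the $i$--th determinant above and using multilinearity again, the only surviving term is $j=i$ (all others have two equal columns and therefore vanish), giving
\begin{equation*}
\det\bigl[x_{1}(t),\ldots,A(t)x_{i}(t),\ldots,x_{n}(t)\bigr]=c_{ii}(t)\,\det X(t).
\end{equation*}
Summing in $i$ and invoking the cyclic invariance of the trace yields
\begin{equation*}
\frac{d}{dt}\det X(t)=\bigl(\textnormal{Tr}\,C(t)\bigr)\det X(t)=\textnormal{Tr}\bigl(X^{-1}(t)A(t)X(t)\bigr)\det X(t)=\textnormal{Tr}A(t)\,\det X(t).
\end{equation*}

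The third step is simply to solve this scalar linear ODE with initial condition $\det X(t_{0})$, which gives \eqref{FL} directly. The only delicate point in the argument is justifying the differentiation of the determinant as a multilinear function of its columns; this is essentially a calculus fact for smooth matrix--valued maps and can be verified by applying the Leibniz expansion $\det U=\sum_{\sigma}\textnormal{sgn}(\sigma)\prod_{i}u_{i\sigma(i)}$ and differentiating termwise, but it is precisely the place where one must be careful not to appeal to a formula that is itself a restatement of what we are proving.
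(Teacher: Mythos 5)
Your proof is correct, but it takes a genuinely different route from the paper at the decisive step. Both arguments reduce the identity \eqref{FL} to showing that $u(t)=\det X(t)$ solves the scalar problem $\dot{u}=\textnormal{Tr}A(t)\,u$ with $u(t_{0})=\det X(t_{0})$; where you differ is in how the derivative of $\det X(t)$ is computed. You use column-wise multilinearity of the determinant (Jacobi's classical argument): differentiate the Leibniz expansion, replace $\dot{x}_{i}=A(t)x_{i}$, expand $A(t)x_{i}$ in the basis of columns via $C(t)=X^{-1}(t)A(t)X(t)$, and use cyclicity of the trace to get $\frac{d}{dt}\det X=\textnormal{Tr}A(t)\det X$. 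The paper instead writes $X(t+h)X^{-1}(t)=I+h\,\theta(t,h)$ with $\theta(t,h)=\frac{1}{h}\int_{t}^{t+h}A(s)X(s,t)\,ds$, computes $\det\bigl(X(t+h)X^{-1}(t)\bigr)$ through the Leverrier--Faddeev expansion of the characteristic polynomial, identifies the first-order term in $h$ as $h\,\textnormal{Tr}\,\theta(t,h)$, and passes to the limit $h\to 0$; it then invokes uniqueness of solutions (Lemma \ref{pitaron}) to conclude, exactly as you do by solving the scalar equation. Your approach is the shorter, standard one and needs only elementary multilinear algebra (plus the termwise differentiation of the Leibniz sum, which you correctly flag as the point requiring justification and which is not circular); the paper's approach deliberately avoids differentiating the determinant as a multilinear form and instead works with the increment of the transition matrix, which is why the authors single it out as an original proof in their comments. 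One cosmetic remark: rather than appealing to an existence--uniqueness statement for the scalar ODE, you could simply observe that $t\mapsto \det X(t)\,e^{-\int_{t_{0}}^{t}\textnormal{Tr}A(s)\,ds}$ has zero derivative, which closes the argument with no auxiliary lemma.
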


\begin{proof}
Note that 
\begin{displaymath}
\frac{d}{dt}\Big(\det X(t_{0})e^{\int_{t_{0}}^{t}\textnormal{Tr} A(s)\,ds}\Big)=\textnormal{Tr} A(t)\det X(t_{0})e^{\int_{t_{0}}^{t}\textnormal{Tr}A(s)\,ds}
\end{displaymath}
and we can verify that 
$t\mapsto \det X(t_{0})e^{\int_{t_{0}}^{t}\textnormal{Tr} A(s)\,ds}$ is solution of the scalar initial value problem
\begin{equation}
\label{escalar}
\left\{\begin{array}{rcl}
\dot{u}&=&\textnormal{Tr} A(t)u\\
u(t_{0})&=&\det X(t_{0}).
\end{array}\right.
\end{equation}

Now, let us calculate the derivative of the right part of (\ref{FL}):
\begin{displaymath}
\begin{array}{rcl}
\frac{d}{dt}\left[\det X(t)\right]&=& \displaystyle \lim\limits_{h\to 0} \frac{\det X(t+h)-\det X(t)}{h}\\\\
&=&\displaystyle \lim\limits_{h\to 0} \frac{\det [X(t+h)X^{-1}(t)X(t)]-\det X(t)}{h}\\\\
&=&\displaystyle \lim\limits_{h\to 0} \frac{\det [X(t+h)X^{-1}(t)]\det X(t)-\det X(t)}{h}\\\\
&=&\displaystyle \left( \lim\limits_{h\to 0} \frac{\det [X(t+h)X^{-1}(t)]-\det I}{h}\right)\det X(t).
\end{array}
\end{displaymath}

Now, notice that
\begin{displaymath}
\begin{array}{rcl}
\displaystyle
X(t+h)X^{-1}(t)&=& \displaystyle  I+\int_{t}^{t+h}A(s)X(s)X^{-1}(t)\,ds\\\\
&=& \displaystyle I+h\theta(t,h) \quad \textnormal{with} \quad  \theta(t,h)=\frac{1}{h}\int_{t}^{t+h}A(s)X(s,t)\,ds.
\end{array}
\end{displaymath}

By the above identity, the computation of $\det X(t+h)X^{-1}(t)$ can be made in a similar way as the computation of the characteristic polynomial of a matrix. In fact, note that
\begin{displaymath}
\begin{array}{rcl}
\det X(t+h)X^{-1}(t) & = & \det \left(hh^{-1}I+h\theta(t,h)\right)\\\\
&= &h^{n}\det\left(h^{-1}I+\theta(t,h)\right)\\\\
&= &h^{n}\det\left(h^{-1}I-\sigma(t,h)\right) \,\,\, \textnormal{with $\sigma(t,h)=-\theta(t,h)$}.
\end{array}
\end{displaymath}

By using the Leverrier--Faddeev algorithm \cite{Hou}, we have that
\begin{displaymath}
\begin{array}{rcl}
\det\left(h^{-1}I-\sigma(t,h)\right)&=& h^{-n}-h^{1-n}\textnormal{Tr}(\sigma_{0})-\sum\limits_{k=2}^{n}\textnormal{Tr}(\sigma_{k-1})\frac{h^{k-n}}{k},
\end{array}
\end{displaymath}
where $\sigma_{k}$ (with $k=0,\ldots,n-1$) is defined recursively as
\begin{displaymath}
\left\{\begin{array}{rcl}
\sigma_{0}&=&-\theta(t,h)\\\\
\sigma_{k}&=&-\theta(t,h)\left(\sigma_{k-1}-\frac{1}{k}\textnormal{Tr}(\sigma_{k-1}I)\right).
\end{array}\right.
\end{displaymath}

Then, it follows that
$$
\det X(t+h)X^{-1}(t)=1+h\,\textnormal{Tr}[\theta(t,h)]+\sum\limits_{k=2}^{n}\frac{h^{k}}{k}\textnormal{Tr}(\sigma_{k-1}).
$$

By  using the above identity combined with $\det I=1$, we can finish the computation:
\begin{displaymath}
\begin{array}{rcl}
\frac{d}{dt}\det X(t)&=&
\lim\limits_{h\to 0}\left(\textnormal{Tr}(\theta(t,h))+\sum\limits_{k=2}^{n}\frac{h^{k-1}}{k}\textnormal{Tr}(\sigma_{k-1})\right)\det X(t)\\\\
&=& \displaystyle\lim\limits_{h\to 0}\textnormal{Tr}\left[\frac{1}{h}\int_{t}^{t+h}A(s)X(s)X^{-1}(t)\right]\det X(t)\\\\
&=&\textnormal{Tr} A(t)\det X(t), 
\end{array}
\end{displaymath}
which implies that $t\mapsto \det X(t)$ is also solution of the initial value problem (\ref{escalar}) and the identity (\ref{FL}) follows from Lemma \ref{pitaron}.
\end{proof}

The Liouville's formula is also known as Abel--Jacobi--Liouville in the literature,
we refer the reader to \cite{Adrianova} for details.

\section{Bounded growth}
To the best of our the property of bounded growth was introduced by W.A. Coppel in \cite[pp. 8--9]{Cop}. Nevertheless,
we will consider a pair of definitions proposed by K.J. Palmer in \cite[p.s172]{Palmer2006}:
\begin{definition}
\label{BG-C}
The system \eqref{LA} has a \textbf{bounded growth} on $J$ if and only if there exists constants $K>0$ and $\alpha \geq 0$ such that its transition matrix satisfies
\begin{equation}
\label{BG-S}
||X(t,s)||\leq Ke^{\alpha(t-s)}\quad \textnormal{for any $t\geq s$ with $t,s\in J$.}
\end{equation}
\end{definition}

\begin{definition}
%\label{BD-C}
The system \eqref{LA} has a \textbf{bounded decay} on $J$ if and only if there exists constants $K>0$ and $\alpha \geq 0$ such that its transition matrix satisfies
\begin{equation*}
%\label{BG-S1}
||X(t,s)||\leq Ke^{\alpha(s-t)}\quad \textnormal{for any $s\geq t$ with $t,s\in J$.}
\end{equation*}
\end{definition}

It is straightforward to see that the above definitions describe properties which are contained in the following definition
stated by S. Siegmund in \cite[p.253]{Siegmund-2002}:
\begin{definition}
\label{BGBD-C}
The system \eqref{LA} has a \textbf{bounded growth $\&$ decay} on $J$ if and only if there exists constants $K>0$ and $\alpha \geq 0$ such that its transition matrix satisfies
\begin{equation*}
%\label{BG-S3}
||X(t,s)||\leq Ke^{\alpha|t-s|}\quad \textnormal{for any $t,s\in J$.}
\end{equation*}
\end{definition}

An alternative characterization of the property of bounded growth on $J$ is stated by W. Coppel in \cite{Cop}:

\begin{proposition}
\label{HBGa}
The system \eqref{LA} has a bounded growth on $J$ if and only if for each $h>0$ there exists $C_{h}\geq 1$ such that any solution $t\mapsto x(t)$ of \eqref{LA} verifies
\begin{equation}
\label{CA}
|x(t)|\leq C_{h}|x(s)| \quad \textnormal{for any $t\in [s,s+h] \cap J$}.
\end{equation}
\end{proposition}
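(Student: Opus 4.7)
The plan is to prove the two implications separately. The forward direction is essentially an immediate translation of the definition: if $\|X(t,s)\|\leq Ke^{\alpha(t-s)}$ for $t\geq s$ in $J$, then by Lemma \ref{pitaron} any solution can be written as $x(t)=X(t,s)x(s)$, so that $|x(t)|\leq Ke^{\alpha(t-s)}|x(s)|$. Restricting $t$ to $[s,s+h]\cap J$ bounds the exponential factor by $e^{\alpha h}$, and one sets $C_{h}:=\max\{1,Ke^{\alpha h}\}$ to obtain \eqref{CA}.

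For the converse the strategy is to iterate the local estimate over a partition of $[s,t]$ of mesh size $h$. Fix once and for all a single value, say $h:=1$, and put $C:=C_{1}\geq 1$. Given $t,s\in J$ with $t\geq s$, set $n:=\lfloor t-s\rfloor$ and define the grid $s_{k}:=s+k$ for $k=0,\ldots,n$ together with $s_{n+1}:=t$. Since $J$ is an interval and $s\leq s_{k}\leq t$ with $s,t\in J$, each $s_{k}$ lies in $J$, and consecutive gridpoints are at distance at most $1$. Applying the hypothesis successively to the solution $x(\cdot)$ along the transitions $s_{k}\to s_{k+1}$ yields
\begin{displaymath}
|x(s_{k+1})|\leq C\,|x(s_{k})|,\quad k=0,1,\ldots,n,
\end{displaymath}
and hence by finite induction $|x(t)|\leq C^{n+1}|x(s)|$.

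The key arithmetic step is then $n+1\leq (t-s)+1$, so that
\begin{displaymath}
C^{n+1}\leq C\cdot C^{t-s}= K\,e^{\alpha(t-s)}\quad \textnormal{with } K:=C,\ \alpha:=\ln C\geq 0.
\end{displaymath}
This holds for every solution, i.e.\ every initial datum $x(s)=\xi\in\mathbb{R}^{n}$, so $|X(t,s)\xi|\leq Ke^{\alpha(t-s)}|\xi|$. Taking the supremum over $|\xi|\leq 1$ yields the bounded growth estimate \eqref{BG-S}.

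I expect the only mildly delicate point to be verifying that the iterative argument in the converse does not leak outside of $J$: this is what forces one to invoke the interval structure of $J$ and the fact that the hypothesis is indexed by \emph{every} $s\in J$ rather than by a fixed basepoint. The rest of the proof is a direct manipulation of the solution representation from Lemma \ref{pitaron} together with the operator norm characterization $\|X(t,s)\|=\sup_{|\xi|\leq 1}|X(t,s)\xi|$.
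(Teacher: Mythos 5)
Your proof is correct and follows essentially the same route as the paper: the forward direction with $C_{h}=\max\{1,Ke^{\alpha h}\}$ is identical, and the converse is the same iteration of the local bound along a partition of mesh at most $h$, converting the integer power of $C_{h}$ into an exponential in $t-s$. The only cosmetic difference is that you fix $h=1$, whereas the paper works with the given $h$ and obtains $K=C_{h}$, $\alpha=\ln(C_{h})/h$; both are legitimate since the hypothesis supplies a constant for every $h>0$.
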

\begin{proof}
First, assume that (\ref{BG-C}) is verified. Then, given $h>0$
and $0\leq t-s\leq h$ it is straightforward to verify that (\ref{CA}) is satisfied with 
$C_{h}=\max\{1,Ke^{\alpha h}\}$.

Secondly, assume that (\ref{CA}) is verified for a given $h>0$ and consider a partition of $J$ in intervals of large
$h$ such that 
$t\in [s+(n-1)h,s+nh] \cap J$. It is straightforward to deduce that
$|x(t)|\leq C_{h}^{n}|x(s)|$. On the other hand, it is easy to deduce 
that $n-1\leq \frac{t-s}{h}\leq n$ and
$$
|x(t)|=|X(t,s)x(s)|=C_{h}^{n}|x(s)|\leq C_{h}\,e^{\frac{\ln(C_{h})}{h}(t-s)}|x(s)|,
$$
and (\ref{BG-C}) is verified with $K=C_{h}$ and $\alpha=\frac{\ln(C_{h})}{h}$.
\end{proof}

The next result can be proved in a similar way and its proof is left as an exercise.
\begin{proposition}
\label{HBGa2}
The system \eqref{LA} has:
\begin{itemize}
\item[a)] A bounded decay on $J$ if and only if for each $h>0$, there exists $C_{h}\geq 1$ such that any solution $t\mapsto x(t)$ of \eqref{LA} verifies
\begin{equation*}
%\label{CA2}
|x(t)|\leq C_{h}|x(s)| \quad \textnormal{for any $t\in [s-h,s] \cap J$},
\end{equation*}
\item[b)] A bounded growth $\&$ decay on $J$ if and only if for each $h>0$, there exists $C_{h}\geq 1$ such that any solution $t\mapsto x(t)$ of \eqref{LA} verifies
\begin{equation*}
%\label{CA3}
|x(t)|\leq C_{h}|x(s)| \quad \textnormal{for any $t\in [s-h,s+h] \cap J$}.
\end{equation*}
\end{itemize}
\end{proposition}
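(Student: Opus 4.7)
The plan is to mirror the proof of Proposition \ref{HBGa} essentially verbatim, with appropriate sign changes for part (a), and then obtain part (b) either by combining (a) with Proposition \ref{HBGa}, or by running the same two-sided partition argument directly. The key identity in both cases will still be $|x(t)|=|X(t,s)x(s)|$, and the only arithmetic subtlety is tracking whether the interval is to the left, to the right, or on both sides of $s$.

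For part (a), I would first assume bounded decay and take $t\in[s-h,s]\cap J$, so $s\ge t$ and $0\le s-t\le h$. Then $|x(t)|\le\|X(t,s)\|\,|x(s)|\le Ke^{\alpha(s-t)}|x(s)|\le Ke^{\alpha h}|x(s)|$, so the bound (CA-backward) holds with $C_h=\max\{1,Ke^{\alpha h}\}$. Conversely, assume the bound for some $h>0$, fix $t\le s$ with $t,s\in J$, and partition $[t,s]$ into subintervals of length $h$, so that $t\in[s-nh,s-(n-1)h]\cap J$ for a unique $n\in\mathbb{N}$ with $n-1\le (s-t)/h\le n$. Iterating the estimate across the $n$ intermediate points yields $|x(t)|\le C_h^{\,n}|x(s)|$, and writing $C_h^{\,n}\le C_h\cdot C_h^{(s-t)/h}=C_h\,e^{\alpha(s-t)}$ with $\alpha=\ln(C_h)/h$ gives the bounded decay estimate with constants $K=C_h$ and this $\alpha$. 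This is literally the proof of Proposition \ref{HBGa} with $t-s$ replaced by $s-t$.

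For part (b), I would simply observe that bounded growth \& decay holds on $J$ if and only if both bounded growth and bounded decay hold on $J$ (pick the larger $K$ and $\alpha$ in the union of estimates, since $|t-s|$ equals either $t-s$ or $s-t$). So assuming the two-sided estimate $|x(t)|\le C_h|x(s)|$ on $[s-h,s+h]\cap J$, one gets the forward estimate on $[s,s+h]\cap J$ and the backward estimate on $[s-h,s]\cap J$ simultaneously; applying Proposition \ref{HBGa} and part (a) respectively yields bounded growth and bounded decay, hence bounded growth \& decay. Conversely, $\|X(t,s)\|\le Ke^{\alpha|t-s|}\le Ke^{\alpha h}$ whenever $|t-s|\le h$, so one takes $C_h=\max\{1,Ke^{\alpha h}\}$.

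I do not foresee any genuine obstacle: both directions are elementary and follow the same telescoping-partition idea that already appears in Proposition \ref{HBGa}. The only point worth being careful about is the book-keeping of the integer $n$ and the interval endpoints in the backward direction of (a), to make sure the chain of inequalities $|x(t)|\le C_h|x(t_1)|\le C_h^{\,2}|x(t_2)|\le\cdots\le C_h^{\,n}|x(s)|$ is legitimately iterated across intermediate points $t_k=s-kh\in J$ (if some $t_k$ falls outside $J$, one shortens the last step, which only improves the estimate). Everything else is a direct transcription.
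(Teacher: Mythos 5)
Your proof is correct and is exactly the argument the paper has in mind: the text leaves Proposition \ref{HBGa2} as an exercise "to be proved in a similar way" to Proposition \ref{HBGa}, and your sign-flipped partition argument for (a) plus the combination of forward and backward estimates for (b) is precisely that transcription.
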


\begin{remark}
%\label{hBG}
As pointed out by W.A. Coppel in \cite[p.9]{Cop}, a careful reading of the proof of \eqref{HBGa} shows that if \eqref{BG-S} is satisfied then the property \eqref{CA}
follows for any $h>0$. That is, the property of bounded growth is independent of the choice of $h$. This fact also applies to the properties of bounded decay and bounded growth $\&$ 
decay.
\end{remark}

From a practical point of view, it will be useful to know sufficient conditions ensuring the bounded growth properties for a linear system. The Lemma \ref{bgpg} provides a set of sufficient conditions, namely, when $A(t)$ is bounded  or integrally bounded:
\begin{corollary}
\label{Cota-BG1}
If $t\mapsto A(t)$ verifies $\sup\limits_{t\in J}||A(t)||=M$, then the system \eqref{LA} has a bounded growth $\&$ decay on $J$.
\end{corollary}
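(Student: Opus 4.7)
The plan is to derive this as an immediate consequence of Lemma \ref{bgpg}, which provides the pointwise exponential bound
\[
\|X(t,s)\| \le \exp\!\Big(\Big|\int_{s}^{t}\|A(\tau)\|\,d\tau\Big|\Big)
\]
for arbitrary $t,s\in J$. The only work left is to estimate the integral in the exponent using the uniform bound $\|A(\tau)\|\le M$.

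First I would fix arbitrary $t,s\in J$ and, without loss of generality, handle both cases $t\ge s$ and $t\le s$ simultaneously by keeping the absolute value. The hypothesis $\sup_{\tau\in J}\|A(\tau)\|=M$ immediately yields
\[
\Big|\int_{s}^{t}\|A(\tau)\|\,d\tau\Big| \le M\,|t-s|.
\]
Substituting this into the upper bound from Lemma \ref{bgpg} gives
\[
\|X(t,s)\| \le e^{M|t-s|}\quad\text{for all }t,s\in J,
\]
which is exactly the inequality required by Definition \ref{BGBD-C} with constants $K=1$ and $\alpha=M\ge 0$.

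There is no real obstacle in this argument; the only minor point to watch is that we need $\alpha\ge 0$ (which is automatic since $M=\sup\|A(\tau)\|\ge 0$) and $K\ge 1$ (satisfied by $K=1$, consistent with $\|X(t,t)\|=\|I\|=1$). Since bounded growth $\&$ decay is by definition stronger than both bounded growth and bounded decay, this single estimate simultaneously establishes all three properties on $J$.
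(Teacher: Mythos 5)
Your argument is correct and is essentially the paper's own proof: both invoke Lemma \ref{bgpg}, bound the exponent by $M|t-s|$, and read off bounded growth \& decay. The only cosmetic difference is that the paper keeps the constant $K=\|I\|$ rather than fixing $K=1$ (Lemma \ref{bgpg} assumes a unitary norm, so this is immaterial).
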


\begin{proof}
By using the inequality (\ref{BGO-I}) from Lemma \ref{bgpg} it follows that
$$
||X(t,s)||\leq ||I||e^{M|t-s|}
$$
and the result follows.

\begin{corollary}
If $t\mapsto A(t)$ satisfies the property:
$$
A_{h}:=\sup\limits_{t\in J}\int_{t}^{t+h}||A(r)||\,dr < +\infty,
$$
for some $h>0$, then the system \eqref{LA} has a bounded growth on $J$.
\end{corollary}

\begin{proof}
Let $s\in J$ and $t\mapsto x(t)$ a solution of (\ref{LA}). Notice that
\begin{displaymath}
|x(t)|\leq |x(s)|+\int_{s}^{t}||A(\tau)||\,|x(\tau)|\,d\tau,    
\end{displaymath}
for any $t>s$. By Gronwall's Lemma (See Appendix B), we have that
$$
|x(t)|\leq |x(s)|e^{\int_{s}^{t}||A(\tau)||\,d\tau}<|x(s)|e^{\int_{s}^{s+h}||A(\tau)||\,d\tau} \quad \textnormal{for any
$t\in [s,s+h]$}
$$
and we can conclude that (\ref{CA}) is satisfied with $C_{h}=e^{A_{h}}$.
\end{proof}

An interesting example of differential equation which does not has the property of bounded growth $\&$ decay on $\mathbb{R}$ is given by $\dot{x}=2tx$.  In fact, notice that its \textit{fundamental matrix} is given by $X(t)=e^{t^{2}}$, then we have that
$X(t,s)=e^{(t-s)(t+s)}$ and it is straightforward to see that the property (\ref{BG-S}) cannot be verified.

The following result can be found in \cite{Cop,Siegmund-2002} and describes an interesting consequence of the 
bounded growth properties:
\begin{proposition}
\label{CDCI}
If the system \eqref{LA} has the property of bounded growth $\&$ decay
on $J$, provided that $\alpha>0$, then the solutions of \eqref{LA} have the property of uniform continuous dependence with respect to the initial conditions on any compact interval contained on $J$.
\end{proposition}
\begin{proof}
Let $t\mapsto x_{1}(t)$ and $t\mapsto x_{2}(t)$
be two solutions of (\ref{LA}) respectively passing through by $\xi_{1}$ and $\xi_{2}$
at $t=t_{0}$, then for any $t\in [t_{0}-h,t_{0}+h]\subset J$ it follows that
$$
|x_{1}(t)-x_{2}(t)|=\left|X(t,t_{0})\xi_{1}-X(t,t_{0})\xi_{2}\right|\leq |\xi_{1}-\xi_{2}|Ke^{\alpha|t-t_{0}|}
$$
and for any $\varepsilon>0$ there exists $\delta(\varepsilon,h)=\varepsilon/Ke^{h}$ such that
if $|\xi_{1}-\xi_{2}|<\delta$ then $|x_{1}(t)-x_{2}(t)|<\varepsilon$ for any 
$|t-t_{0}|\leq h$. 
\end{proof}
We point out that the term uniform, in order to emphasize that $\delta$ is not dependent of $t_{0}$.

\bigskip

\end{proof}

The following result shows that if the linear system (\ref{LA}) has
the property of bounded growth, then the property is preserved
for bounded perturbed systems
\begin{equation}
\label{LA-BP}
\dot{z}=[A(t)+B(t)]z.    
\end{equation}

\begin{theorem}
\label{RoughBG}
If \eqref{LA} has a bounded growth on $J$ and $B(\cdot)$ verifies $||B(t)|| \leq M$ for any $t\in J$,
then this property is also verified for the system \eqref{LA-BP}.
\end{theorem}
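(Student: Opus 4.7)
The plan is to use the variation of constants formula (Lemma \ref{SNHC1}) to express the transition matrix $Z(t,s)$ of the perturbed system \eqref{LA-BP} in terms of the transition matrix $X(t,s)$ of \eqref{LA}, and then to close the loop with Gronwall's inequality.

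First I would fix $s\in J$ and $\xi\in \mathbb{R}^n$, and regard the perturbed equation $\dot{z}=[A(t)+B(t)]z$ with $z(s)=\xi$ as the nonhomogeneous system $\dot{z}=A(t)z+f(t)$ whose forcing term is $f(t)=B(t)z(t)$. Applying Lemma \ref{SNHC1}, the solution satisfies
\begin{displaymath}
z(t)=X(t,s)\xi+\int_{s}^{t}X(t,\tau)B(\tau)z(\tau)\,d\tau,
\end{displaymath}
so writing $Z(t,s)$ for the transition matrix of \eqref{LA-BP} we obtain the identity $Z(t,s)=X(t,s)+\int_{s}^{t}X(t,\tau)B(\tau)Z(\tau,s)\,d\tau$ for $t\geq s$ in $J$.

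Next I would take operator norms on both sides, use the bounded growth hypothesis $\|X(t,s)\|\leq Ke^{\alpha(t-s)}$ from Definition \ref{BG-C} and the uniform bound $\|B(\tau)\|\leq M$ to obtain
\begin{displaymath}
\|Z(t,s)\|\leq Ke^{\alpha(t-s)}+\int_{s}^{t}Ke^{\alpha(t-\tau)}M\,\|Z(\tau,s)\|\,d\tau.
\end{displaymath}
Multiplying through by $e^{-\alpha t}$ and setting $u(t):=e^{-\alpha t}\|Z(t,s)\|$ puts the inequality into the form
\begin{displaymath}
u(t)\leq Ke^{-\alpha s}+\int_{s}^{t}KM\,u(\tau)\,d\tau,
\end{displaymath}
which is exactly the hypothesis of the standard Gronwall Lemma (Appendix B). Applying it yields $u(t)\leq Ke^{-\alpha s}e^{KM(t-s)}$, i.e.\ $\|Z(t,s)\|\leq Ke^{(\alpha+KM)(t-s)}$, so \eqref{LA-BP} has bounded growth on $J$ with constants $\tilde{K}=K$ and $\tilde{\alpha}=\alpha+KM$.

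I do not expect any serious obstacle here: the computation is purely a variation of constants plus Gronwall. The only point that requires a small amount of care is the correct bookkeeping of the exponential factors so that the Gronwall inequality is applied to a function of $t$ alone (with $s$ acting as a fixed parameter); the rearrangement $u(t)=e^{-\alpha t}\|Z(t,s)\|$ is what makes this work.
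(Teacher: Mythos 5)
Your proposal is correct and follows essentially the same route as the paper: variation of parameters (the paper uses Corollary \ref{SEDM}, you use Lemma \ref{SNHC1} applied to $z(t)=Z(t,s)\xi$) followed by Gronwall, arriving at the same constants $K$ and $\alpha+KM$. The only cosmetic difference is that you estimate $\|Z(t,s)\|$ directly and make the substitution $u(t)=e^{-\alpha t}\|Z(t,s)\|$ explicit, whereas the paper estimates vector solutions $|z(t)|$ and then passes to the operator norm; both are valid.
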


\begin{proof}
Any fundamental matrix of (\ref{LA-BP}) satisfies the matrix differential equation
\begin{displaymath}
\dot{Z}(t)=A(t)Z(t)+B(t)Z(t)
\end{displaymath}
and by using Corollary \ref{SEDM} with $F(t)=B(t)Z(t)$ we can deduce that
\begin{displaymath}
Z(t)=X(t,s)Z(s)+\int_{s}^{t}X(t,\tau)B(\tau)Z(\tau)\,d\tau,
\end{displaymath}
where $X(t,s)$ is the transition matrix associated to (\ref{LA}). The bounded growth of
(\ref{LA}) combined with Proposition
\ref{HBGa} implies the existence of constants $K>0$ and $\alpha\geq 0$ such that 
$$
||X(t,s)||\leq Ke^{\alpha(t-s)} \quad \textnormal{for any $t\geq s$ such that $t,s\in J$}.
$$
Let $t\mapsto z(t)=Z(t,s)\xi$ be a solution of (\ref{LA-BP}) passing through
$\xi\neq 0$ at $t=s$ and $\sup\limits_{t\in J}||B(t)||\leq M$, then notice that
$$
|z(t)|\leq Ke^{\alpha(t-s)}|z(s)|+\int_{s}^{t}Ke^{\alpha(t-\tau)}M|z(\tau)|\,d\tau \quad \textnormal{for any $t\geq s$ with $t,s\in J$}.
$$

By using Gronwall's Lemma (see Appendix B) we can deduce that
$$
|z(t)|\leq K |z(s)|e^{(KM+\alpha)(t-s)} \quad \textnormal{for any $t\geq s$ such that $t,s\in J$}
$$
or equivalently
$$
\frac{|Z(t,s)z(s)|}{|z(s)|}\leq Ke^{(KM+\alpha)(t-s)} \quad \textnormal{for any $t\geq s$ such that $t,s\in J$},
$$
which directly implies that $||Z(t,s)||\leq  Ke^{(KM+\alpha)(t-s)}$
for any $t\geq s$ with $t,s\in J$ and the bounded growth property is verified.

\end{proof}

\section{Kinematical Similarity}
In order to state the property of kinematical similarity, let us denote the set of complex valued square matrices functions $\mathfrak{M}_{n}$ defined as follows: 
\begin{displaymath}
\mathfrak{M}_{n}=\left\{A\colon J\to M_{n}(\mathbb{C}) \mid \textnormal{$A$ is continuous and $\sup\limits_{t\in J}||A(t)||<\infty$}\right\}.
\end{displaymath}

\begin{definition}
\label{LyapTra}
A matrix function $t\mapsto Q(t)\in M_{n}(\mathbb{C})$ is said to be a \textbf{Lyapunov transformation} if and only if satisfies the following properties:
\begin{enumerate}
    \item[(i)] $t\mapsto Q(t)$ is invertible for any $t\in J$,
    \item[(ii)] $t\mapsto Q(t)$ is differentiable for any $t\in J$,
    \item[(iii)] $Q,Q^{-1},\dot{Q}\in \mathfrak{M}_{n}$, that is:
    \begin{displaymath}
    \sup\limits_{t\in J}\left[||Q(t)||+||Q^{-1}(t)||+||\dot{Q}(t)||\right]<+\infty.
    \end{displaymath}
\end{enumerate}
\end{definition}

The classical definition of kinematical similarity was introduced by L. Markus in \cite{Markus} and considers the relation between two linear systems with bounded matrices as follows:
\begin{definition}
\label{similarite}
Let us assume that $A,B\in \mathfrak{M}_{n}$. The systems
\begin{subequations}
  \begin{empheq}{align}
    & \dot{x} = A(t)x \quad \textnormal{for any $t\in J$} \label{LKS}, \\
    & \dot{y} = B(t)y  \quad \textnormal{for any $t\in J$}\label{LKS1}
  \end{empheq}
\end{subequations}
are \textbf{kinematically similar} if there exists a Lyapunov transformation $Q(t)$ such that 
\begin{equation}
\label{LT-KS}
\dot{Q}(t)=A(t)Q(t)-Q(t)B(t) \quad \textnormal{for any $t\in J$}.
\end{equation}

In addition, if $A$,$B$ and $Q$ are real valued it is said that \eqref{LKS} and \eqref{LKS1} are kinematically similar over the real field.
\end{definition}

\begin{remark}
In \cite{Markus}, Markus considers the case $J=[0,+\infty)$. The case $J=\mathbb{R}$ is considered in \cite{Harris-Miles},
which is also called \textit{complete kinematical similarity}.
\end{remark}

\begin{remark}
%\label{RE}
It is easy to verify that the property of kinematical similarity defines an equivalence relation.
\end{remark}

\begin{remark}
\label{SCR}
By using the identity (\ref{LT-KS}) it is a straightforward exercise to verify that:
\begin{itemize}
\item[i)] The change of coordinates $y=Q^{-1}(t)x$ transforms the linear system (\ref{LKS}) into (\ref{LKS1}). 
\item[ii)] The previous property prompts that if $X(t)$ is a fundamental matrix of (\ref{LKS})
then $Y(t)=Q^{-1}(t)X(t)$ is a fundamental matrix of (\ref{LKS1}).
\item[iii)] The Corollary \ref{SEDM} provides a way to solve the matrix differential equation (\ref{LT-KS}). If $0\in J$ and $Q(0)=I$, we recover the identity of the previous statement.
\end{itemize}
\end{remark}

In certain works, such as \cite[p.158]{Dalecki} and \cite{Cop,Palmer80,Palmer82}
the authors consider a different, and more general, definition of kinematical similarity which does not assumes boundedness  properties for the derivative of $Q$\footnote{For example: in the Encyclopaedia of Mathematics, when the Lyapunov transformation is defined, it is pointed out that in many cases the requierement $\sup\limits_{t\in J}||\dot{Q}(t)||<+\infty$ can be discarded. \href{https://encyclopediaofmath.org/wiki/Lyapunov_transformation}{See link}.}:
\begin{definition}
\label{wsimilarite}
Given two continuous matrix valued functions $J\in t\mapsto A(t),B(t)\in M_{n}(\mathbb{C})$, the linear systems
\begin{subequations}
  \begin{empheq}{align}
    & \dot{x} = A(t)x \quad \textnormal{for any $t\in J$} \label{WLKS}, \\
    & \dot{y} = B(t)y  \quad \textnormal{for any $t\in J$}\label{WLKS1}
  \end{empheq}
\end{subequations}
are \textbf{generally kinematically similar} if there exists a invertible and differentiable transformation $t\mapsto Q(t)$ such that, $Q$,$Q^{-1}$ and $\dot{Q}$ are continuous in $J$, 
\begin{displaymath}
    \sup\limits_{t\in J}\left[||Q(t)||+||Q^{-1}(t)||\right]<+\infty,
    \end{displaymath}
and \eqref{LT-KS} is satisfied.
In addition, if $A$,$B$ and $Q$ are real valued it is said that \eqref{WLKS} and \eqref{WLKS1} are generally kinematically similar over the real field.
\end{definition}

The kinematical similarity in the sense of Definition \ref{similarite}
will be named as \textit{classical} kinematical similarity in this monography. We are proposing Definition \ref{wsimilarite} because it will
be used in the next sections.

Both definitions of kinematic similarity, classical or general, are equivalent under the assumption that $A,B\in \mathfrak{M}_{n}$ combined with the boundedness of $Q(t)$ and $Q^{-1}(t)$ in $J$ since the matrix differential equation (\ref{LT-KS}) implies that $\dot{Q}(t)$ is also bounded in $J$. 

On the other hand, the general Definition \ref{wsimilarite} is useful when working with unbounded matrices $A$ and $B$.

The kinematical similarity, classical or general, can be seen as a ge\-ne\-ra\-li\-za\-tion to the nonautonomous framework of the classical reduction of autonomous linear systems to its canonical form. In fact, when $A$ and $B$ are constant matrices 
in $M_{n}(\mathbb{C})$, the systems (\ref{LKS}) and (\ref{LKS1}) becomes
\begin{subequations}
  \begin{empheq}{align}
    & \dot{x} = Ax  \label{CLKS}, \\
    & \dot{y} = By  \label{CLKS1}
  \end{empheq}
  \end{subequations}
and they are \textit{similar} or \textit{statically similar} if 
there exists a non singular and constant matrix $Q\in M_{n}(\mathbb{C})$ such that 
$Q^{-1}AQ=B$. Note that this identity is equivalent to $AQ-QB=0$ and then $Q$ is solution of (\ref{LT-KS}). The static similarity is a classic topic of linear algebra which allows to obtain a system (\ref{CLKS1}) having the simplest possible structure, either diagonal or triangular.

\begin{remark}
\label{importanciaQ}
The boundedness of $Q(t)$ and $Q^{-1}(t)$ is essential in the theory of kinematical similarity. In fact, by Corollary \ref{SEDM} we can see that
$$
Q(t)=X(t,0)Q(0)Y(0,t)
$$
satisfies the matrix differential equation (\ref{LT-KS}) and $y=Q^{-1}(t)x$ transforms
(\ref{WLKS}) into (\ref{WLKS1}). Then, if we drop the assumption of boundedness for $Q(t)$ and $Q^{-1}(t)$, we would have that any linear system would be similar to other one.
\end{remark}

In order to point out the importance of the above remark, we state the following result: 

\begin{lemma}
\label{BKL00}
If a pair of fundamental matrices $X(t)$ and $Y(t)$ of the linear systems \eqref{WLKS} and \eqref{WLKS1} and its inverses are bounded on $J$, then the systems
are generally kinematically similar.
\end{lemma}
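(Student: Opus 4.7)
The natural candidate is $Q(t)=X(t)Y^{-1}(t)$, motivated by Remark \ref{SCR} and Remark \ref{bwks}: if $Q$ satisfies \eqref{LT-KS} with $Q(t_{0})=X(t_{0})Y^{-1}(t_{0})$, then $X(t)=Q(t)Y(t)$ must hold. So my plan is simply to verify that this $Q(t)$ fulfills all the requirements of Definition \ref{wsimilarite}.

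First, I would check invertibility and regularity. Since $X(t)$ and $Y(t)$ are fundamental matrices, both are invertible and continuously differentiable on $J$, and so is $Y^{-1}(t)$ by Lemma \ref{dimf}. Therefore $Q(t)=X(t)Y^{-1}(t)$ is invertible with $Q^{-1}(t)=Y(t)X^{-1}(t)$, and both $Q$ and $Q^{-1}$ are continuously differentiable on $J$. Boundedness is immediate from the hypothesis: writing $\|Q(t)\|\le \|X(t)\|\,\|Y^{-1}(t)\|$ and $\|Q^{-1}(t)\|\le \|Y(t)\|\,\|X^{-1}(t)\|$, we conclude
\begin{displaymath}
\sup_{t\in J}\bigl[\|Q(t)\|+\|Q^{-1}(t)\|\bigr]<+\infty.
\end{displaymath}

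Next, I would verify the matrix differential equation \eqref{LT-KS}. Differentiating $Q(t)=X(t)Y^{-1}(t)$ by the product rule and using \eqref{DMF} together with the formula $\frac{d}{dt}Y^{-1}(t)=-Y^{-1}(t)\dot{Y}(t)Y^{-1}(t)$ from Lemma \ref{dimf}, I obtain
\begin{displaymath}
\dot{Q}(t)=A(t)X(t)Y^{-1}(t)-X(t)Y^{-1}(t)B(t)Y(t)Y^{-1}(t)=A(t)Q(t)-Q(t)B(t),
\end{displaymath}
which is precisely \eqref{LT-KS}. The continuity of $\dot{Q}$ on $J$ follows from this identity because $A$, $B$, $Q$ are continuous.

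I do not foresee any real obstacle: the result is essentially a structural observation that combines Remark \ref{bwks} with the boundedness hypothesis on the four matrices. The only subtle point worth stressing is why the weaker Definition \ref{wsimilarite} is the correct framework rather than the classical Definition \ref{similarite}: the construction produces no a priori bound on $\dot{Q}$, since $A$ and $B$ are not assumed to lie in $\mathfrak{M}_{n}$. This is exactly the situation foreseen in Remark \ref{importanciaQ}, which explains why the boundedness of $Q$ and $Q^{-1}$ (ensured here by assumption on the fundamental matrices) is what makes the similarity nontrivial.
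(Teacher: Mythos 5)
Your proposal is correct and follows essentially the same route as the paper: both take $Q(t)=X(t)Y^{-1}(t)$, obtain \eqref{LT-KS} (the paper via Corollary \ref{SEDM} and Remark \ref{bwks} after normalizing at $t=0$, you by direct differentiation), and deduce the boundedness of $Q$ and $Q^{-1}$ from the hypothesis on $X$, $Y$ and their inverses. Your explicit verification of the requirements of Definition \ref{wsimilarite}, including the continuity of $\dot{Q}$, only spells out what the paper leaves implicit.
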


\begin{proof}
We can consider normalized matrices, namely, $X(0)=Y(0)=I$, then
by Corollary \ref{SEDM} we can see that
$Q(t)=X(t)Y^{-1}(t)$ sa\-tis\-fy the assumptions of  Definition \ref{wsimilarite}
and the Lemma follows.
\end{proof}

\begin{corollary}
\label{BKL}
If a fundamental matrix $X(t)$ of the linear system \eqref{WLKS} and its inverse are bounded on $J$, then the system \eqref{WLKS} and
\begin{equation*}
%\label{CT-0}
\dot{y}=0,
\end{equation*}
are generally kinematically similar.
\end{corollary}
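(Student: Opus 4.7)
The plan is to apply Lemma \ref{BKL00} directly, with $\dot{y}=0$ playing the role of \eqref{WLKS1}. First I would observe that $Y(t)\equiv I$ is a fundamental matrix of $\dot{y}=0$: the columns of $I$ are linearly independent constant solutions, and $\dot{Y}(t)=0=0\cdot Y(t)$. Both $Y$ and $Y^{-1}$ are identically $I$, so they are trivially bounded on $J$.

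Combined with the standing hypothesis that $X(t)$ and $X^{-1}(t)$ are bounded on $J$, this supplies exactly the two boundedness conditions required by Lemma \ref{BKL00}. The lemma then delivers the general kinematical similarity of \eqref{WLKS} and $\dot{y}=0$.

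If one prefers to exhibit the Lyapunov--type transformation explicitly rather than quote the lemma, the choice $Q(t)=X(t)$ works without any normalization: $Q$ and $Q^{-1}$ are bounded on $J$ by hypothesis, $\dot{Q}(t)=A(t)X(t)$ is continuous, and the identity \eqref{LT-KS} with $B\equiv 0$ collapses to the defining identity $\dot{X}(t)=A(t)X(t)$ from Remark \ref{R2}. The change of coordinates $y=X^{-1}(t)x$ then transforms \eqref{WLKS} into $\dot{y}=0$, as in Remark \ref{SCR}. There is no real obstacle: the corollary is just the observation that the trivial system $\dot{y}=0$ already meets the boundedness hypothesis of Lemma \ref{BKL00} for free.
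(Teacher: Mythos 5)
Your proposal is correct and follows essentially the same route as the paper: the paper also invokes Lemma \ref{BKL00} (equivalently, takes $Q(t)=X(t)$, noting the trivial fundamental matrix $Y(t)=I$ for $\dot{y}=0$) and concludes the general kinematical similarity from the boundedness of $X$ and $X^{-1}$. Your explicit verification via $\dot{X}(t)=A(t)X(t)$ mirrors the paper's additional remark that $Q(t)=X(t)$ satisfies the defining identity, so there is nothing to add.
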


\begin{proof}
By Lemma \ref{BKL00} we have that $Q(t)=X(t)$ sa\-tis\-fy the assumptions of  Definition \ref{wsimilarite} and the general kinematical similarity follows.

In addition, if $A(t)$ is bounded on $J$, it follows that $\dot{Q}(t)=A(t)Q(t)$
is also bounded on $J$ then $Q(t)$ is a Lyapunov transformation and the classical kinematical similarity follows.
\end{proof}

\begin{corollary}
Any pair of antisymmetric linear systems \eqref{WLKS} and \eqref{WLKS1} is generally kinematically similar on $J$.
\end{corollary}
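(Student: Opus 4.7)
The plan is to reduce the statement to Lemma \ref{BKL00} by producing, for each of the two antisymmetric systems, a fundamental matrix whose image and inverse image stay bounded on $J$. Everything will then follow at once.

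First, I would invoke Corollary \ref{antisim}, applied separately to the systems \eqref{WLKS} and \eqref{WLKS1}: since both matrix functions $A(t)$ and $B(t)$ are antisymmetric for every $t\in J$, any fundamental matrix $X(t)$ of \eqref{WLKS} and any fundamental matrix $Y(t)$ of \eqref{WLKS1} is orthogonal for every $t\in J$, i.e.\ $X^{-1}(t)=X^{T}(t)$ and $Y^{-1}(t)=Y^{T}(t)$.

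Next, I would observe that orthogonality gives the uniform bounds we need. Working with the operator norm induced by the Euclidean norm, every orthogonal matrix has norm equal to $1$, so
\begin{displaymath}
\sup_{t\in J}\|X(t)\| = \sup_{t\in J}\|X^{-1}(t)\| = \sup_{t\in J}\|Y(t)\| = \sup_{t\in J}\|Y^{-1}(t)\| = 1.
\end{displaymath}
If one prefers a different norm on $M_{n}(\mathbb{R})$, equivalence of norms in finite dimension still yields the same (uniform in $t$) boundedness conclusion.

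Finally, I would apply Lemma \ref{BKL00} directly: since a pair of fundamental matrices $X(t)$ and $Y(t)$ of the two systems together with their inverses are bounded on $J$, the systems \eqref{WLKS} and \eqref{WLKS1} are generally kinematically similar, with the Lyapunov-type transformation given explicitly by $Q(t)=X(t)Y^{-1}(t)=X(t)Y^{T}(t)$. There is essentially no obstacle here; the only mildly delicate point is noting that boundedness of $\dot Q$ is \emph{not} asserted (so we stay in the general sense of Definition \ref{wsimilarite} rather than the classical one of Definition \ref{similarite}), which is consistent with the fact that $A$ and $B$ are not assumed to be bounded on $J$.
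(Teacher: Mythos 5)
Your proof is correct and follows essentially the same route as the paper: invoke Corollary \ref{antisim} to get orthogonality of the fundamental matrices, use that orthogonal matrices have Euclidean operator norm $1$ (hence boundedness of $X,Y,X^{-1},Y^{-1}$), and conclude via Lemma \ref{BKL00}. The only difference is that you cite Lemma \ref{BKL00} explicitly where the paper tacitly ends with "the result follows", and you add a useful side remark about why this gives only general (not classical) kinematical similarity; both are minor and fully consistent with the paper's argument.
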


\begin{proof}
Let $X(t)$ and $Y(t)$ be fundamental matrices of (\ref{WLKS})
and (\ref{WLKS1}). By Corollary \ref{antisim} it follows that $X^{T}(t)=X^{-1}(t)$
and $Y^{T}(t)=Y^{-1}(t)$, which implies that
$$
||X(t)||_{2}=\sigma_{\max}(X(t))=\sqrt{\lambda_{\max}(X^{T}(t)X(t))}=1
$$
and similarly, we can deduce that $Y(t),X^{-1}(t)$ and $Y^{-1}(t)$ have
the same euclidean norm and the result follows.
\end{proof}

An interesting example of the above Corollary is given by the linear systems
\begin{displaymath}
\dot{x}=\left[\begin{array}{cc}
0 & e^{t} \\
-e^{t} & 0
\end{array}\right]x \quad \textnormal{and} \quad
\dot{y}=\left[\begin{array}{cc}
0 & 1 \\
-1 & 0
\end{array}\right]y,
\end{displaymath}
which are kinematically similar by the transformation
$y=Q^{-1}(t)x$, where $Q(t)=X(t)Y^{T}(t)$ with $X(t)$
and $Y(t)$ stated in Remark \ref{ejemplo-reducible}. 

\begin{theorem}
The properties of bounded growth, bounded decay and bounded $\&$ decay on $J$ are preserved by kinematical similarity, either classical or general. 
\end{theorem}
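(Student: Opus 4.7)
The plan is to reduce the proof to a single computation relating the transition matrices of the two kinematically similar systems, and then to read off each of the three estimates directly.

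First I would observe that, by Remark \ref{SCR}(ii), if $X(t)$ is a fundamental matrix of \eqref{WLKS} then $Y(t):=Q^{-1}(t)X(t)$ is a fundamental matrix of \eqref{WLKS1}. From this I obtain the key identity between transition matrices
\begin{displaymath}
Y(t,s)=Y(t)Y^{-1}(s)=Q^{-1}(t)X(t)X^{-1}(s)Q(s)=Q^{-1}(t)X(t,s)Q(s),
\end{displaymath}
valid for all $t,s\in J$. Since $Q,Q^{-1}\in\mathfrak{M}_n$ (this holds under Definition \ref{similarite} and is explicitly part of Definition \ref{wsimilarite} as well), I can set
\begin{displaymath}
M:=\sup_{t\in J}\|Q(t)\|\quad\text{and}\quad N:=\sup_{t\in J}\|Q^{-1}(t)\|,
\end{displaymath}
both finite.

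Next, I would handle the three cases in parallel. Assuming \eqref{WLKS} has bounded growth on $J$ with constants $K>0$, $\alpha\geq 0$, the identity above and submultiplicativity yield
\begin{displaymath}
\|Y(t,s)\|\leq \|Q^{-1}(t)\|\,\|X(t,s)\|\,\|Q(s)\|\leq NK M\, e^{\alpha(t-s)}\qquad (t\geq s,\ t,s\in J),
\end{displaymath}
so \eqref{WLKS1} has bounded growth on $J$ with constants $\tilde K:=MNK$ and the same $\alpha$. The same computation with $e^{\alpha(s-t)}$ for $s\geq t$ gives the preservation of bounded decay, and with $e^{\alpha|t-s|}$ for all $t,s\in J$ gives the preservation of bounded growth \& decay. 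Because kinematical similarity is symmetric (Remark stated before Remark \ref{SCR}, using $Q^{-1}$ in place of $Q$), the reverse implications also hold, so in each case the property is genuinely equivalent for the two systems.

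There is essentially no hard step: the argument is a one-line bound. The only point worth emphasizing is that the proof works verbatim for both classical and general kinematical similarity, because the only ingredient used about $Q$ is the uniform boundedness of $Q$ and $Q^{-1}$ on $J$, which is common to Definitions \ref{similarite} and \ref{wsimilarite}; the boundedness of $\dot Q$ (required only in the classical version) plays no role here. Thus no separate argument is needed for the general case.
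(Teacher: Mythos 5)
Your proof is correct and follows essentially the same route as the paper's: both write $Y(t,s)=Q^{-1}(t)X(t,s)Q(s)$, then use submultiplicativity and the uniform bounds on $Q$ and $Q^{-1}$ to transfer each estimate with a factor $\sup\|Q\|\cdot\sup\|Q^{-1}\|$. The only minor differences are cosmetic: you keep separate constants $M$, $N$ and treat the three cases explicitly, while the paper absorbs both into a single bound $M$ and observes that bounded growth and bounded decay follow from the bounded growth \& decay estimate.
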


\begin{proof}
Without loss of generality, we will assume that  (\ref{WLKS}) and (\ref{WLKS1}) are generally kinematically similar by the transformation $y=Q^{-1}x$ with
$$
\max\left\{\sup\limits_{t\in J}||Q(t)||,\sup\limits_{t\in J}||Q^{-1}(t)||\right\}\leq M.
$$

Let $X(t)$ and $Y(t)$ be fundamental matrices of (\ref{WLKS}) and (\ref{WLKS1}) respectively. If (\ref{WLKS}) has bounded growth $\&$ decay on $J$, by Definition \ref{BGBD-C} there exist $K>0$ and $\alpha \geq 0$
such that 
$$
||X(t,s)||\leq Ke^{\alpha|t-s|} \quad \textnormal{for any $t,s\in J$. }
$$

By Remark \ref{SCR} we also know that general kinematical similarity implies that
$Y(t)=Q^{-1}(t)X(t)$ and
\begin{displaymath}
||Y(t,s)||=||Q^{-1}(t)X(t)X^{-1}(s)Q(s)||\leq M^{2}||X(t,s)||\leq KM^{2}e^{\alpha|t-s|},
\end{displaymath}
and the bounded growth $\&$ decay on $J$ is verified for (\ref{WLKS1}). Note that the properties of bounded growth 
and bounded decay are a consequence of the previous estimations. 
\end{proof}

In the rest of this section, we will
study two distinguished examples of both kinematical similarities where
the fundamental matrix $X(t)$ has a decomposition $X(t)=Q(t)V(t)$ and $V$ has special properties. Let us recall by Remark \ref{KS-Preh} that in this case it will follows that
$$
\dot{Q}(t)=A(t)Q(t)-Q(t)\dot{V}(t)V^{-1}(t)
$$
provided suitable conditions on $Q$ and $V$.
Firstly, we will study an abridged version of Floquet's theory which considers the $\omega$--periodic case $A(t+\omega)=A(t)$ for any $t\in \mathbb{R}$. Secondly, we will study the transformation of (\ref{LKS}) in an upper triangular linear system, which has been initiated by O. Perron and S.P. Diliberto.

\subsection{Floquet's Theory}
Let us consider the linear nonautonomous system (\ref{LKS}) under the assumptions of continuity on $\mathbb{R}$ and:
\begin{equation}
\label{Floquet1}
A(t+\omega)=A(t) \quad \textnormal{for any $t\in \mathbb{R}$.}    
\end{equation}

The seminal work on this topic was carried out by the french mathematician G. Floquet in \cite{Floquet}. Nevertheless, we will follow a modern approach 
developed by several works as \cite{Adrianova}.

\begin{lemma}
\label{TF1}
If \eqref{Floquet1} is verified then the transition matrix associated to the system \eqref{LA} satisfies the following property of $\omega$--biperiodicity:
\begin{equation*}
%\label{Floquet2}
X(t+\omega,s+\omega)=X(t,s) \quad \textnormal{for any $t,s\in \mathbb{R}$}.
\end{equation*}
\end{lemma}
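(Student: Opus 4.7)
The plan is to reduce the identity to the uniqueness of solutions of a linear matrix initial value problem, which was established in Lemma \ref{pitaron}. Fix $s \in \mathbb{R}$ and define the matrix-valued function $t \mapsto Z(t) := X(t+\omega, s+\omega)$. The natural strategy is to show that $Z(\cdot)$ and $t \mapsto X(t,s)$ both solve the same matrix initial value problem on $\mathbb{R}$, and then invoke uniqueness.

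First, I would verify that both candidates share the same initial data at $t = s$: on the one hand $X(s,s) = I$ by statement a) of Remark \ref{remark-MT2}, and on the other hand $Z(s) = X(s+\omega, s+\omega) = I$ by the same identity. Next, using the first formula in \eqref{MT1}, I would differentiate $Z$ via the chain rule, obtaining
\begin{displaymath}
\dot{Z}(t) \;=\; \frac{\partial X}{\partial t}(t+\omega, s+\omega) \;=\; A(t+\omega)\,X(t+\omega, s+\omega).
\end{displaymath}
At this point I would invoke the hypothesis $A(t+\omega) = A(t)$, which collapses the right-hand side to $A(t)\,Z(t)$. Thus $Z(\cdot)$ satisfies $\dot{Z}(t) = A(t)Z(t)$ with $Z(s) = I$, and of course the same matrix equation with the same initial condition is satisfied by $t \mapsto X(t,s)$.

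To conclude, I would apply Lemma \ref{pitaron} column by column: for each canonical basis vector $e_{j} \in \mathbb{R}^{n}$, both $Z(t)e_{j}$ and $X(t,s)e_{j}$ are solutions of $\dot{x} = A(t)x$ with the same initial value $e_{j}$ at $t = s$, so they coincide for all $t \in \mathbb{R}$. Since this holds for a basis, $Z(t) = X(t,s)$ for every $t$, and since $s$ was arbitrary the biperiodicity identity follows on all of $\mathbb{R}\times\mathbb{R}$.

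There is no serious obstacle here; the only delicate point is remembering that $s$ must be kept fixed while differentiating in $t$, so that the shift $s \mapsto s+\omega$ does not contribute to $\dot{Z}$. An equivalent route would be to observe that $t\mapsto X(t+\omega)$ is itself a fundamental matrix of \eqref{LA} (by the same chain rule calculation), hence equals $X(t)C$ for some invertible constant $C\in M_n(\mathbb{R})$ by statement a) of Remark \ref{R3}; the constant $C$ then cancels in the product $X(t+\omega)X^{-1}(s+\omega)$, yielding the claim.
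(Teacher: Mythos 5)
Your argument is correct, but your main route differs from the one in the paper. You fix $s$, set $Z(t)=X(t+\omega,s+\omega)$, check $Z(s)=I$ and $\dot Z(t)=A(t+\omega)Z(t)=A(t)Z(t)$, and then conclude $Z(t)=X(t,s)$ by applying the uniqueness statement of Lemma \ref{pitaron} column by column; every step of this is sound, and the caveat you flag (keeping $s$ fixed while differentiating in $t$) is exactly the right one. The paper instead proves first that $t\mapsto X(t+\omega)$ is a fundamental matrix of \eqref{LA} (the same chain-rule computation together with $\det X(t+\omega)\neq 0$), deduces from statement a) of Remark \ref{R3} that $X(t+\omega)=X(t)B$ for a constant invertible $B$, and lets $B$ cancel in $X(t+\omega)X^{-1}(s+\omega)=X(t)BB^{-1}X^{-1}(s)=X(t,s)$ --- which is precisely the ``equivalent route'' you sketch in your last sentence. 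The trade-off: your uniqueness argument is more self-contained at this point, needing only the initial value problem machinery, whereas the paper's choice is not accidental, since the matrix $B$ produced in its proof is recorded as \eqref{Floquet3} and immediately reused to define the monodromy matrix and the Floquet multipliers in the material that follows; your version would require introducing $B$ separately afterwards.
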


\begin{proof}
Let $X(t)$ be a fundamental matrix of (\ref{LA}). Firstly, it can be proved that $X(t+\omega)$ is also a fundamental matrix of (\ref{LA}). In fact, note that (\ref{Floquet1}) implies that
$$
\dot{X}(t+\omega)=A(t+\omega)X(t+\omega)=A(t)X(t+\omega),
$$
and we can see that the columns of $X(t+\omega)$ are solutions of (\ref{LA}).

As the columns of $X(t)$ are linearly independent we have that $\det X(t+\omega)\neq 0$ and the columns of $X(t+\omega)$ are also a basis of solutions of (\ref{LA}).

As $X(t)$ and $X(t+\omega)$ are fundamental matrices, there exists an invertible matrix $B$ such that
\begin{equation}
\label{Floquet3}
X(t+\omega)=X(t)B \quad \textnormal{and} \quad X^{-1}(t+\omega)=B^{-1}X^{-1}(t).
\end{equation}

By using the definition of transition matrix, we have that
\begin{displaymath}
\begin{array}{rcl}
X(t+\omega,s+\omega)&=&X(t)BB^{-1}X^{-1}(s)\\
&=&X(t)X^{-1}(s)\\
&=&X(t,s)
\end{array}
\end{displaymath}
and the Lemma follows.
\end{proof}

\begin{definition}
%\label{TF2}
The matrix $B$ from \eqref{Floquet3} is known as the \textbf{monodromy matrix} of the system \eqref{LA}. The eigenvalues of $B$ are known as the \textbf{Floquet's multipliers} of the system \eqref{LA}. 
\end{definition}

Notice that, if we consider a normalized fundamental matrix, the equation (\ref{Floquet3}) with $t=0$ implies that
\begin{equation}
\label{Flo-Exp}
X(\omega)=B  \quad  \textnormal{and consequently} \quad X(t+\omega)=X(t)X(\omega).
\end{equation}

\begin{lemma}
%\label{TF3}
A number $\rho \in \mathbb{C}$ is a Floquet's multiplier of the system \eqref{LA} if and only if there exists a nontrivial solution $t\mapsto x(t)$ such that
\begin{equation}
\label{Floquet4}
x(t+\omega)=\rho \,x(t).
\end{equation}
\end{lemma}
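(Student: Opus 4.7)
The plan is to exploit the identity $X(t+\omega)=X(t)X(\omega)$ from \eqref{Flo-Exp}, which holds for the normalized fundamental matrix. Under this normalization the monodromy matrix is exactly $B=X(\omega)$, so a Floquet multiplier is by definition an eigenvalue of $X(\omega)$. The equivalence with \eqref{Floquet4} reduces to a one-line algebraic manipulation in each direction; the only care needed is to pass correctly between solutions and initial vectors via the bijection $v \mapsto X(\cdot)v$.

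For the forward direction, suppose $\rho$ is a Floquet multiplier. Then there exists $v\in \mathbb{C}^n\setminus\{0\}$ with $Bv=\rho v$. Define $x(t):=X(t)v$, which is a nontrivial solution of \eqref{LA}. Using \eqref{Flo-Exp} I compute
\begin{displaymath}
x(t+\omega)=X(t+\omega)v=X(t)X(\omega)v=X(t)Bv=\rho X(t)v=\rho\, x(t),
\end{displaymath}
which is \eqref{Floquet4}.

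For the converse, let $t\mapsto x(t)$ be a nontrivial solution satisfying $x(t+\omega)=\rho\, x(t)$. Since $X(t)$ is a fundamental matrix with $X(0)=I$, one has $x(t)=X(t)x(0)$ with $v:=x(0)\neq 0$ (otherwise $x\equiv 0$ by the uniqueness part of Lemma \ref{pitaron}). Evaluating the hypothesis at $t=0$ gives
\begin{displaymath}
Bv=X(\omega)v=x(\omega)=\rho\, x(0)=\rho\, v,
\end{displaymath}
so $\rho$ is an eigenvalue of $B$, hence a Floquet multiplier.

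I do not expect any serious obstacle here: the only pitfall is implicitly assuming a non-normalized fundamental matrix, in which case the relation \eqref{Floquet3} takes the form $X(t+\omega)=X(t)B$ with $B=X^{-1}(0)X(\omega)$ (still similar to $X(\omega)$), so the eigenvalue statement is unchanged, but the computation above is cleanest when one fixes $X(0)=I$ from the outset.
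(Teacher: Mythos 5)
Your proof is correct and follows essentially the same route as the paper: normalize so that $X(0)=I$ and $B=X(\omega)$, use $X(t+\omega)=X(t)X(\omega)$ to pass from an eigenvector $v$ to the solution $x(t)=X(t)v$, and conversely evaluate \eqref{Floquet4} at $t=0$ together with $x(t)=X(t)x(0)$ to recover the eigenvalue relation. Your explicit check that $x(0)\neq 0$ and the closing remark on the non-normalized case are minor additions the paper leaves implicit.
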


\begin{proof}
Without loss of generality, it can be assumed that $X(0)=I$. In this case it follows by  equation (\ref{Flo-Exp}) that $X(\omega)=B$.

Firstly, we will assume that $\rho$ is an eigenvalue of the monodromy matrix $X(\omega)=B$. Then, there exists a vector $v\in \mathbb{C}^{n}\setminus \{0\}$ such that $X(\omega)v=\rho \, v$.
Now, let us consider the solution $t\mapsto x(t)$ of (\ref{LA}) passing through $v$ at $t=0$, then by Lemma \ref{pitaron} combined with $X(0)=I$ and the right equation of (\ref{Flo-Exp}), it follows that $x(t)=X(t)v$, and we can see that
$$
x(t+\omega)=X(t+\omega)v=X(t)X(\omega)v=\rho\, x(t)
$$
and the identity (\ref{Floquet4}) follows.

Secondly, let us assume that (\ref{Floquet4}) is verified. We evaluate this identity at $t=0$ obtaining that
$$
x(\omega)=\rho \, x(0).
$$

In addition, by Lemma \ref{pitaron} we know that any non trivial solution $t\mapsto x(t)$ is given by $x(t)=X(t)x(0)$. Then, we can see that
$$
x(\omega)=X(\omega)\,x(0).
$$

Now, considering the right parts of the above identities it follows that
$$
X(\omega)x(0)=\rho\, x(0) $$
and we have that $\rho$ is an eigenvalue of $B=X(\omega)$. 
\end{proof}

\begin{remark}
\label{EPS}
A direct consequence of the definition of Floquet's multiplier is that
the continuous $\omega$--periodic system (\ref{LA}) has a non trivial $\omega$--periodic solution if and only if $\rho=1$ is a Floquet's multiplier of (\ref{LA}) or equivalently $X(\omega,0)-I$ is non singular.
\end{remark}

We point out that Floquet's original work \cite{Floquet} is difficult to read due to its anachronistic language and, in our opinion, a careful reading is still pending despite the contributions made to it \cite{Rollet,Weikard}. The following result is a modern interpretation:

\begin{theorem}
\label{TeoFlo}
If $X(t)$ is a fundamental matrix of \eqref{LA}, then there exists a continuous and $\omega$--periodic matrix $Q(t)$ and a constant matrix $D\in M_{n}(\mathbb{C})$ such that:
\begin{equation}
\label{TF5}
X(t,0)=Q(t)e^{Dt} \quad \textnormal{where} \quad D=\frac{1}{\omega}\ln X(\omega,0).
\end{equation}
\end{theorem}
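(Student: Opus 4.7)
The plan is to take the proposed formula as a \emph{definition} and verify the claimed properties. Concretely, set $D := \frac{1}{\omega}\ln X(\omega,0)$ and define $Q(t) := X(t,0)\,e^{-Dt}$; then the identity $X(t,0) = Q(t)\,e^{Dt}$ holds tautologically, and it only remains to check that $D$ is well defined as a matrix, and that $Q$ is continuous and $\omega$--periodic.

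The first step, and what I expect to be the main obstacle, is the existence of $\ln X(\omega,0)$ in $M_n(\mathbb{C})$. The monodromy matrix $B := X(\omega,0)$ is invertible: by Liouville's formula (Theorem \ref{liouville}) we have $\det X(\omega,0) = \exp\!\bigl(\int_0^{\omega}\textnormal{Tr}\,A(s)\,ds\bigr) \neq 0$. I would then invoke the standard fact that every invertible matrix in $M_n(\mathbb{C})$ admits a (not necessarily real) logarithm. The argument reduces via Jordan canonical form to the case of a single Jordan block $\lambda I + N$ with $\lambda \neq 0$ and $N$ nilpotent, for which
\[
\ln(\lambda I + N) \;=\; (\ln\lambda)\,I \;+\; \sum_{k=1}^{n-1}\frac{(-1)^{k+1}}{k}\left(\frac{N}{\lambda}\right)^{k}
\]
is a finite sum whose exponential, by a direct computation with commuting summands, returns $\lambda I + N$. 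Thus $D$ exists and satisfies $e^{D\omega} = B$; note that $D$ is complex--valued in general, even when $A(t)$ is real, because $B$ may have negative or non--real eigenvalues.

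Once $D$ is in hand, continuity of $Q$ is immediate from the continuity (indeed differentiability) of $X(\cdot,0)$ and the entirety of $t \mapsto e^{-Dt}$. For the $\omega$--periodicity, Lemma \ref{TF1} together with Remark \ref{remark-MT2}(a) yields $X(t+\omega,0) = X(t+\omega,\omega)\,X(\omega,0) = X(t,0)\,B$, and combining this with $e^{-D\omega} = B^{-1}$ gives
\[
Q(t+\omega) \;=\; X(t+\omega,0)\,e^{-D(t+\omega)} \;=\; X(t,0)\,B\,e^{-D\omega}\,e^{-Dt} \;=\; X(t,0)\,e^{-Dt} \;=\; Q(t),
\]
which closes the argument. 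No further estimates are needed; the whole content of the theorem is concentrated in the existence of the matrix logarithm, while periodicity is a one--line consequence of the biperiodicity established in Lemma \ref{TF1}.
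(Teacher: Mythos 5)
Your proposal is correct and follows essentially the same route as the paper: define $D=\frac{1}{\omega}\ln X(\omega,0)$ and $Q(t)=X(t,0)e^{-Dt}$, and deduce $\omega$--periodicity from the biperiodicity $X(t+\omega,\omega)=X(t,0)$ of Lemma \ref{TF1} together with $e^{-D\omega}=X(0,\omega)$. The only difference is that you sketch the Jordan--block construction of the matrix logarithm (plus the Liouville argument for invertibility of the monodromy matrix), whereas the paper simply cites \cite[Appendix 3]{Brauer} for that fact.
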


\begin{proof}
As $X(t,0)$ is an invertible matrix, we know that $D$ is well defined and we refer the reader to \cite[Appendix 3]{Brauer} for details. Then we have that
$$
X(\omega,0)=e^{D\omega} \quad \textnormal{and} \quad X(0,\omega)=e^{-D\omega}.
$$

Now, let us define $Q(t)=X(t,0)e^{-Dt}$ and note that
$$
X(t,0)=X(t,0)e^{-Dt}e^{Dt}=Q(t)e^{Dt}.
$$

To finish the proof, we need to verify that $Q(t)$ is continuous and $\omega$--periodic. As the continuity is straightforward, we will only verify the $\omega$--periodicity by using the above identities combined with the $\omega$--biperiodicity:
\begin{displaymath}
\begin{array}{rcl}
Q(t+\omega) &=&X(t+\omega,0)e^{-D\omega}e^{-Dt}\\
&=&X(t+\omega,0)X(0,\omega)e^{-Dt}\\
&=&X(t+\omega,\omega)e^{-Dt}\\
&=&X(t,0)e^{-Dt}=Q(t).
\end{array}
\end{displaymath}

\end{proof}

Without loss of generality, we always can consider a fundamental matrix $X(t)$ such that $X(0)=I$. Then by Floquet's theorem we have that 
$$
X(t)=Q(t)e^{Dt} \quad \textnormal{and} \quad Q(t)=X(t)e^{-Dt},
$$
where $D$ is related to the monodromy matrix.

\begin{lemma}
\label{QFLO}
The matrix $Q(t)=X(t)e^{-Dt}$ stated in Floquet's Theorem is a Lyapunov transformation
\end{lemma}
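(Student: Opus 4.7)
The plan is to verify directly the three clauses of Definition \ref{LyapTra} for the matrix $Q(t)=X(t)e^{-Dt}$, leveraging the $\omega$--periodicity already established in Theorem \ref{TeoFlo}.

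First I would check invertibility and differentiability. Since $X(t)$ is a fundamental matrix (hence invertible on $\mathbb{R}$ by Remark \ref{R3}\,d)) and $e^{-Dt}$ is invertible for every $t\in\mathbb{R}$, the product $Q(t)=X(t)e^{-Dt}$ is invertible, with
\begin{displaymath}
Q^{-1}(t)=e^{Dt}X^{-1}(t).
\end{displaymath}
Both factors of $Q$ are differentiable, so $Q$ is differentiable; using \eqref{DMF} together with the commutativity of $D$ and $e^{-Dt}$,
\begin{displaymath}
\dot{Q}(t)=A(t)X(t)e^{-Dt}-X(t)De^{-Dt}=A(t)Q(t)-Q(t)D.
\end{displaymath}
In particular $\dot Q$ is continuous on $\mathbb{R}$.

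Next I would establish the boundedness clause (iii). By Theorem \ref{TeoFlo}, $Q$ is continuous and $\omega$--periodic, so
\begin{displaymath}
\sup_{t\in\mathbb{R}}\|Q(t)\|=\sup_{t\in[0,\omega]}\|Q(t)\|<+\infty
\end{displaymath}
by continuity on the compact interval $[0,\omega]$. The map $t\mapsto Q^{-1}(t)$ is also $\omega$--periodic (since $Q^{-1}(t+\omega)=(Q(t+\omega))^{-1}=Q(t)^{-1}$) and continuous, so the same compactness argument yields boundedness of $Q^{-1}$. Finally, $\dot Q(t)=A(t)Q(t)-Q(t)D$ involves the $\omega$--periodic continuous matrix $A$, the bounded matrix $Q$, and the constant matrix $D$, hence $\dot Q$ is itself $\omega$--periodic and continuous, so bounded on $\mathbb{R}$ by the same reasoning.

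There is really no technical obstacle here; the only point requiring a small observation is the $\omega$--periodicity of $Q^{-1}$ (or equivalently using the formula for $\dot{Q}$ to import the boundedness of $A$ and $Q$). Once the periodicity of $Q$ from Floquet's theorem is available, the three Lyapunov conditions reduce to continuity on the compact interval $[0,\omega]$.
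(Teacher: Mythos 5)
Your proof is correct and follows essentially the same route as the paper: both reduce the boundedness clause (iii) to continuity of $Q$, $Q^{-1}$, $\dot Q$ on the compact interval $[0,\omega]$ via $\omega$--periodicity inherited from Theorem \ref{TeoFlo}. You simply spell out the details the paper leaves to the reader — the explicit inverse $Q^{-1}(t)=e^{Dt}X^{-1}(t)$, the periodicity of $Q^{-1}$, and the formula $\dot Q(t)=A(t)Q(t)-Q(t)D$ — so the write-up is a more complete version of the same argument.
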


\begin{proof}
On one hand, the properties (i) and (ii) of Definition \ref{LyapTra} are consequences of the invertibility and derivability of $X(t)$ and $e^{-Dt}$. On the other hand, as $Q(t)$ is continuous and $\omega$--periodic, it can be proved that $Q^{-1}(t)$ and $\dot{Q}(t)$ have the same properties, which implies that
\begin{displaymath}
    \sup\limits_{t\in \mathbb{R}}\left[||Q(t)||+||Q^{-1}(t)||+||\dot{Q}(t)||\right]=\max\limits_{t\in [0,\omega]}\left[||Q(t)||+||Q^{-1}(t)||+||\dot{Q}(t)||\right]
    \end{displaymath}
and the property (iii) follows.  
\end{proof}

\begin{theorem}
\label{Teo-Floquet}
The $\omega$--periodic and continuous system
\eqref{LKS} and the autonomous linear system
$$
\dot{y}=Dy
$$
are clasically kinematically similar.
\end{theorem}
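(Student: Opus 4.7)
The plan is to show that the Lyapunov transformation constructed in Floquet's Theorem (Theorem \ref{TeoFlo}) realises the kinematical similarity between \eqref{LKS} and $\dot{y} = Dy$. The strategy is to verify the two requirements of Definition \ref{similarite}: that $Q(t) = X(t)e^{-Dt}$ is a Lyapunov transformation in the sense of Definition \ref{LyapTra}, and that this same $Q$ satisfies the matrix differential equation \eqref{LT-KS} with $B(t) \equiv D$.

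First, I would take the normalized fundamental matrix $X(t)$ of \eqref{LKS} with $X(0) = I$ and apply Theorem \ref{TeoFlo} to obtain the decomposition $X(t) = Q(t)e^{Dt}$, so that $Q(t) = X(t)e^{-Dt}$. Then I would differentiate directly: using $\dot{X}(t) = A(t)X(t)$ together with the commutativity $D e^{-Dt} = e^{-Dt} D$, one gets
\[
\dot{Q}(t) = A(t)X(t)e^{-Dt} - X(t)De^{-Dt} = A(t)Q(t) - Q(t)D,
\]
which is exactly the identity required by \eqref{LT-KS} with the constant matrix $B(t) \equiv D$. As an alternative, one could appeal directly to Remark \ref{KS-Preh} with $V(t) = e^{Dt}$, noting that $\dot{V}(t)V^{-1}(t) = D$, avoiding the explicit computation.

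Second, I would invoke Lemma \ref{QFLO}, which already establishes that $Q(t) = X(t)e^{-Dt}$ satisfies the three conditions (i)--(iii) of Definition \ref{LyapTra} and is therefore a Lyapunov transformation. Combined with the previous step, this yields precisely the classical kinematical similarity claimed.

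There is essentially no obstacle here, since the heavy lifting has been performed earlier: Floquet's Theorem supplies the factorization and the $\omega$-periodicity of $Q$, while Lemma \ref{QFLO} supplies the boundedness of $Q, Q^{-1}$ and $\dot{Q}$ on $\mathbb{R}$ (thanks to the periodicity of $Q$, which reduces the supremum to a maximum on the compact interval $[0,\omega]$). The only mildly subtle point is the commutation $D e^{-Dt} = e^{-Dt} D$, which is immediate from the power series definition of the matrix exponential.
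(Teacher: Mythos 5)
Your proposal is correct and follows essentially the same route as the paper: differentiating $Q(t)=X(t)e^{-Dt}$, using $De^{-Dt}=e^{-Dt}D$ to obtain $\dot{Q}(t)=A(t)Q(t)-Q(t)D$, and then invoking Lemma \ref{QFLO} to conclude that $Q$ is a Lyapunov transformation, so the classical kinematical similarity follows. The only cosmetic difference is that you make the appeal to Lemma \ref{QFLO} and the commutation step explicit, and mention the shortcut via Remark \ref{KS-Preh}, whereas the paper states these implicitly.
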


\begin{proof}
As $Q(t)=X(t)e^{-Dt}$, we note that
\begin{displaymath}
\begin{array}{rcl}
\dot{Q}(t)&=&\dot{X}(t)e^{-Dt}-X(t)De^{-Dt},\\
&=&A(t)X(t)e^{-Dt}-X(t)e^{-Dt}D,\\
&=&A(t)Q(t)-Q(t)D.
\end{array}
\end{displaymath}

Now, if $t\mapsto x(t)$ is solution of the linear system (\ref{LKS}) it is straightforward to verify that 
$y=Q^{-1}(t)x$ transforms (\ref{LKS}) into $\dot{y}=Dy$ and the Theorem follows since $Q(t)$ is a Lyapunov transformation.
\end{proof}

\subsection{Triangularization of linear systems}

The following result was proved by O. Perron in \cite{Perron} and is classical in the literature. Nevertheless, simpler proofs have been carried out by S.P. Diliberto in \cite[Th.1]{Diliberto} and L.Y. Adrianova in \cite[Th.3.3.1]{Adrianova}.

\begin{theorem}[Perron's Theorem of Triangularization]
Any linear system \eqref{WLKS} is generally kinematically similar over the real field via the linear transformation $y=Q^{-1}(t)x$, where $Q(t)$ is orthogonal, to an upper triangular system
\begin{equation}
\label{LA2}
\dot{y}=B(t)y,
\end{equation}
whose diagonal coefficients are real. Moreover, if $||A(t)||_{2}\leq M$ the systems \eqref{LKS} and \eqref{LA2} are classically kinematically similar  and $||B(t)||_{2}\leq (1+n)M$.
\end{theorem}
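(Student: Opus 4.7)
The strategy is to apply a column-wise Gram--Schmidt orthonormalization to a fundamental matrix $X(t)$ of \eqref{WLKS}. Since the columns of $X(t)$ are linearly independent at every $t\in J$, the Gram--Schmidt procedure---which involves only inner products, norms and divisions by nonvanishing quantities---produces a pointwise QR decomposition $X(t)=Q(t)R(t)$ in which $Q(t)$ is orthogonal, $R(t)$ is upper triangular with strictly positive real diagonal, and both factors inherit the $C^1$ regularity of $X(t)$.

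Invoking Remark \ref{KS-Preh} with $V(t)=R(t)$, the orthogonal factor $Q(t)$ satisfies
$$\dot{Q}(t)=A(t)Q(t)-Q(t)B(t), \qquad B(t):=\dot{R}(t)R^{-1}(t).$$
Because the set of upper triangular matrices is closed under differentiation, inversion and products, $B(t)$ is upper triangular, and its diagonal entries are real since the construction stays within $M_n(\mathbb{R})$. The orthogonality of $Q(t)$ gives $\|Q(t)\|_2=\|Q^{-1}(t)\|_2=1$ for every $t\in J$, which together with the continuity of $\dot{Q}$ verifies the hypotheses of Definition \ref{wsimilarite}; hence \eqref{WLKS} and \eqref{LA2} are generally kinematically similar over the real field.

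To obtain the sharp bound $\|B(t)\|_2\leq (n+1)M$ under $\|A(t)\|_2\leq M$, I would left-multiply the identity $\dot{Q}=AQ-QB$ by $Q^T$ to get $B=Q^TAQ-Q^T\dot{Q}$. Setting $S:=Q^TAQ$ and $K:=Q^T\dot{Q}$, differentiation of $Q^TQ=I$ yields $K^T=-K$, so $K$ is antisymmetric. The upper triangularity of $B=S-K$ forces the strictly lower triangular part of $K$ to coincide with that of $S$, and the antisymmetry of $K$ then determines $K$ entirely from the off-diagonal entries of $S$. Since $|S_{ij}|\leq\|S\|_2\leq M$ and $K_{ii}=0$, one obtains $\|K\|_2\leq\|K\|_F\leq nM$, so that $\|B\|_2\leq\|S\|_2+\|K\|_2\leq (n+1)M$. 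Under this boundedness hypothesis $\dot{Q}=AQ-QB$ is automatically bounded, so $Q$ is in fact a Lyapunov transformation and the similarity upgrades from general to classical.

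The main subtlety I anticipate is precisely the last estimate: extracting the optimal constant $(n+1)$ requires exploiting the interaction between the antisymmetry of $K$ and the triangularity of $B$, rather than attacking $B=\dot{R}R^{-1}$ directly, which would entangle $\|B\|_2$ with uncontrolled bounds on $\dot{R}$ and $R^{-1}$ separately. A secondary technical point is verifying that the Gram--Schmidt formulas indeed define a $C^1$ map on the open set of $n$-tuples of linearly independent vectors; this is routine but must be checked in order to legitimately apply Remark \ref{KS-Preh}.
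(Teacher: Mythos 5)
Your proposal is correct and follows essentially the same route as the paper: QR factorization of a fundamental matrix via Gram--Schmidt, $B=\dot{R}R^{-1}$ upper triangular, and the splitting $B=Q^{T}AQ-Q^{T}\dot{Q}$ into a conjugated part bounded by $M$ plus an antisymmetric part whose entries are pinned down by the triangularity of $B$ (your $K=Q^{T}\dot{Q}$ is just the transpose of the paper's $V=\dot{Q}^{T}Q$). The only cosmetic difference is that you bound the antisymmetric part through the Frobenius norm, $\|K\|_{2}\leq\|K\|_{F}\leq nM$, whereas the paper chains through the $\|\cdot\|_{1}$ norm; both yield $\|B\|_{2}\leq(n+1)M$, and your concluding upgrade to classical similarity via boundedness of $\dot{Q}$ matches the paper's.
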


\begin{proof}
Let $X(t)=[x_{1}(t) \, \ldots \, x_{n}(t)]$ be a fundamental matrix of (\ref{LKS}). By using the QR factorization, we have that this fundamental matrix can be decomposed as follows
\begin{equation}
\label{QR1}
X(t)=Q(t)R(t),
\end{equation}
where $Q(t)\in M_{n}(\mathbb{R})$ is the orthogonal matrix:
$$
Q(t)=[e_{1}(t) \, \ldots \, e_{n}(t)]   \quad \textnormal{with $e_{i}(t)=\frac{\xi_{i}(t)}{||\xi_{i}(t)||_{2}}$ for $i=1,\ldots,n$}
$$
and $\{\xi_{i}(t)\}_{i=1}^{n}$ the orthogonal set obtained by applying the Gram--Schmidt process to $\{x_{1}(t),\ldots,x_{n}(t)\}$, that is
\begin{displaymath}
\begin{array}{rcl}
\xi_{1} & = & x_{1}\\
\xi_{2} & = & x_{2}-\langle x_{2},e_{1}\rangle e_{1}\\
\vdots  & \vdots & \vdots \\
\xi_{n} & = & x_{n}-\sum\limits_{j=1}^{n-1}\langle x_{n},e_{j}\rangle e_{j}. 
\end{array}
\end{displaymath}

We can see that $\{e_{i}(t)\}_{i=1}^{n}$ is an orthonormal basis of solutions of (\ref{LKS}).

On the other hand, $R(t)$ is the upper triangular matrix 
\begin{displaymath}
R(t)=\left[\begin{array}{cccc}
||\xi_{1}(t)||_{2} & \langle x_{2}(t),e_{1}(t)\rangle & \ldots &   \langle x_{n}(t),e_{1}(t)\rangle \\
0 & ||\xi_{2}(t)||_{2} & \ldots & \langle x_{n}(t),e_{2}(t)\rangle\\
\vdots & & & \vdots\\
0 & \ldots & \ldots & ||\xi_{n}(t)||_{2}
\end{array}\right].
\end{displaymath}

We refer the reader to Lemma 3.3.1 from \cite{Adrianova} for a detailed computation
of $Q(t)$ and $R(t)$. In addition, notice that the diagonal terms
of $R(t)$ verifies
$$
r_{ii}(t)=||\xi_{i}(t)||_{2}.
$$

We will prove that $y=Q^{-1}(t)x=Q^{T}(t)x$ tran\-sforms (\ref{LKS}) into the upper triangular system $\dot{y}=B(t)y$ with
\begin{equation*}
%\label{triangular-KS}
B(t)=\dot{R}(t)R^{-1}(t).
\end{equation*}

In order to prove that $B(t)$ is well defined, we point out some trivial consequences of (\ref{QR1}) are the identities:

\begin{subequations}
  \begin{empheq}{align}
     X^{-1}(t)&=R^{-1}(t)Q^{T}(t) \label{QR2}, \\
     Q(t)&=X(t)R^{-1}(t),  \label{QR3}\\
     Q^{T}(t)&=R(t)X^{-1}(t).
    \label{QR4}
  \end{empheq}
\end{subequations}

Note that $X(t)$ is derivable since is fundamental matrix of (\ref{LKS}) while $Q(t)$ is also derivable since its columns are constructed with the columns of $X(t)$. Moreover, the Remark \ref{R3} says that $X^{-1}(t)$ and $Q^{-1}(t)$ are also derivable. By using this fact combined with the identity 
\begin{equation}
\label{QR45}
R(t)=Q^{-1}(t)X(t)=Q^{T}(t)X(t).
\end{equation}
we have that $R(t)$ is derivable 
and by using Lemma \ref{dimf} it follows that $R^{-1}(t)$ is a derivable upper triangular matrix  and $B(t)$ is well defined.

It is easy to verify that $B(t)$ is upper triangular with real diagonal terms described by:
$$
b_{ii}(t)=\frac{r_{ii}'(t)}{r_{ii}(t)}=\frac{\langle \xi_{i}'(t),\xi_{i}(t)\rangle}{\langle \xi_{i}(t),\xi_{i}(t)\rangle}.
$$

By using (\ref{QR4}) combined with Remark \ref{R3} and the identity (\ref{QR2}) we have that:
\begin{displaymath}
\begin{array}{rcl}
\dot{Q}^{T}(t)&=&\dot{R}(t)X^{-1}(t)-R(t)X^{-1}(t)A(t) \\\\
&=&\dot{R}(t)X^{-1}(t)-Q^{T}(t)A(t)\\\\
&=& \dot{R}(t)R^{-1}(t)Q^{T}(t)-Q^{T}(t)A(t)\\\\
&=& B(t)Q^{T}(t)-Q^{T}(t)A(t).
\end{array}
\end{displaymath}

Moreover, the identity
$\dot{Q}^{T}(t)=-Q^{T}(t)\dot{Q}(t)Q^{T}(t)$ is equivalent to
$$
\dot{Q}(t)=A(t)Q(t)-Q(t)B(t)
$$
and we can see that $y=Q^{-1}(t)x=Q^{T}(t)x$ tran\-sforms (\ref{LKS}) into the upper triangular system $\dot{y}=B(t)y$. In addition, as $Q(t)$ is orthogonal, we have that
$$
||Q(t)||_{2}=||Q^{T}(t)||_{2}=\sqrt{\lambda_{\max}(I_{n})}=1 \quad \textnormal{for any $t\in J$}
$$
and the general kinematical similarity is verified. 

From now on, we will assume that $||A(t)||_{2}\leq M$ for any $t\in J$. By using (\ref{QR45}) we can note that
\begin{displaymath}
\begin{array}{rcl}
B(t)&=&\frac{d}{dt}\left\{Q^{T}(t)X(t)\right\}R^{-1}(t)\\\\
&=&\left\{\frac{d}{dt}Q^{T}(t)\right\}X(t)R^{-1}(t)+Q^{T}(t)A(t)X(t)R^{-1}(t)\\\\
&=&\underbrace{\dot{Q}^{T}(t)Q(t)}_{=V(t)}+\underbrace{Q^{T}(t)A(t)Q(t)}_{=W(t)}. 
\end{array}
\end{displaymath}

As $Q^{T}$ is orthogonal, it follows by a straightforward computation that $V(t)$ is antisymmetric.
Moreover, it easy to see that
$$
||W(t)||_{2}\leq ||Q^{T}(t)||_{2}||A(t)||_{2}||Q(t)||_{2}\leq ||A(t)||_{2}\leq M.
$$

Then, by combining the above properties, we have that:

\noindent $\bullet$ As $B(t)$ is upper triangular we have $B_{ij}(t)=0$ for $i>j$ and this
implies that 
$$
V_{ij}(t)=-W_{ij}(t) \quad \textnormal{for any $i>j$}.
$$

\noindent $\bullet$ As $V(t)$ is antisymmetric, we have that
$$
V_{ii}(t)=0 \quad \textnormal{for any $i=1,\ldots,n$ and $V_{ij}(t)=-V_{ji}(t)=W_{ji}(t)$ for any $i<j$}.
$$

We can see that 
$$
||V(t)||_{2}\leq \sqrt{n}||V(t)||_{1}\leq \sqrt{n}||W(t)||_{1}\leq n||W(t)||_{2}
$$
and we conclude that $||B(t)||_{2}\leq ||V||_{2}+||W||_{2}\leq M(1+n)$

Finally, the classical kinematical similarity  follows since $Q(t)$ is a 
Lyapunov transformation, in fact we already know that $||Q(t)||_{2}=||Q^{T}||_{2}=1$
and the identity $\dot{Q}(t)=A(t)Q(t)-Q(t)B(t)$ combined with the boundedness of $A(t)$ and $B(t)$ implies the boundedness of $\dot{Q}(t)$. 
\end{proof}

\section{Comments and References}

\noindent \textbf{1)} The fundamental and transition matrices are solutions of
a linear matrix differential equation. In this framework, in the subsection 1.1) 
we provided additional results about matrix differential equations results inspired in the variation of parameters. The reader interested in this topic is referred to
\cite{Coles,Palmer84a,Sternberg,Whyburn}.

\medskip  

\noindent \textbf{2)} There exists several definitions of bounded growth and we decide to use the Palmer's notions
of bounded growth and bounded decay in order to make some distinctions. The definition of bounded growth  has been introduced by W.A. Coppel monography \cite{Cop}. To the best of our knowledge, the property of bounded growth $\&$ decay has been firstly used in \cite{Siegmund-2002}
where is also called as \textit{bounded growth} and is used to prove the uniform continuity of the solution of (\ref{LA}) with respect
to the initial conditions.

We also provides additional details as Theorem \ref{RoughBG} and described connections with general Bohl's exponents. 

The property of bounded growth has been generalized in several ways: a first approach considers the existence of constants
$L>0$ and $\beta\geq 0$ such that
\begin{displaymath}
||X(t,s)||\leq L\left(\frac{h(t)}{h(s)}\right)^{\beta} \quad \textnormal{for any $t\geq s$ with $t,s\in J_{0}:=(a,+\infty)$},    
\end{displaymath}
where $h\colon J_{0} \to (0,+\infty)$ is an increasing homeomorphism encompassing the exponential growth. We refer the reader to
\cite[Sect.3]{Elorreaga} for a deeper treatment of this approach.

A second approach is given by the property of \textit{non uniform bounded growth} if there exist positive constants $K,\alpha$ and $\varepsilon$ such that
$$
||X(t,s)||\leq Ke^{\alpha |t-s|+\varepsilon |s|}  \quad \textnormal{for $t,s \in J$}
$$
and we refer the reader to \cite{Chu} for details. We point out that
the uniform continuity of the solutions of (\ref{LA}) with respect to the initial
conditions is not preserved and its dependent of the initial time.

\medskip

\noindent\textbf{3)} In spite that the topics of adjoint system and Liouville formula are standard in the literature, we point out the originality of our proof of the Liouville's formula and that a nice application of Corollary \ref{antisim} is made by T. Burton in \cite{Burton-PAMS}, where it is proved that if $A(t)$ is antisymmetric and $\omega$--periodic, the solutions of (\ref{LA}) are almost 
periodic.

\medskip

\noindent \textbf{4)} With respect to the kinematical similarity, we pointed out the nuances in the definition of kinematic similarity and make a distinction between 
a \textit{classical} and a \textit{general}
definition. This distinction has been noticed previously in the russian literature
(see for example the discussion carried out in \cite[pp.43--44]{Adrianova}) where the property of classic kinematic similarity is denoted as \textit{reducibility} and is noticed that some reduction can be achieved without Lyapunov transformations.

The Remark \ref{importanciaQ}, Lemma \ref{BKL00} and Corollary \ref{BKL} are based in the 
monography \cite{Adrianova}. Finally, a detailed review about the kinematical similarity
and its consequences can be founded in \cite[Ch.4]{Harris-Miles}.

\medskip

\noindent \textbf{5)} In \cite{Bylov} (see also \cite[p.37]{Adrianova}), B.F. Bylov coined the property of \textit{almost reducibility} to the denote
the classical kinematical similarity to an arbitrarily perturbed system, that is, the systems
\begin{displaymath}
\dot{x} = A(t)x \quad \textnormal{and} \quad \dot{y} = B(t)y  \quad \textnormal{for any $t\geq 0$},
  \end{displaymath}
are almost reducible if, for any $\delta>0$, there exists a Lyapunov transformation
$L_{\delta}(t)$ such that the change of variables $y=L_{\delta}^{-1}(t)x$ transforms the first
system into
$$
y'=[B(t)+\Phi(t)]y, \quad \textnormal{where $||\Phi(t)||<\delta$ for any $t\geq 0$}.
$$

\medskip
 
\noindent \textbf{6)} A basic summary of Floquet's theory can be founded in \cite{Adrianova,Berthelin,Hartman,Nemit} while a deep study of linear $\omega$--periodic systems is made in \cite{Yakubovitch}. The proof of Theorem \ref{TeoFlo} assumes a basic knowledge of complex matrix functions and the reader is refered to \cite[Ch.1]{Adrianova} and \cite[Ch.6]{Bellman3}.

The Floquet theory has been successfully applied in the study of the Mathieu's equation
\begin{equation*}
%\label{Hill}
x''+(a-2q\cos(2t))x =0 \quad \textnormal{where $a$ and $q$ are real parameters},
\end{equation*}
which arises in astronomic and physical problems and we refer the reader to \cite{Magnus,Yakubovitch} for details. On the other hand, there have been many efforts to extending Floquet's theory and we point out some directions:

\noindent $\bullet$ To construct a Floquet's type theory for linear nonautonomous equations where
$\mathbb{R}\ni t\mapsto A(t)$ is almost periodic in the Bohr--Bochner sense. In this context, we point out the work
of J. Lillo \cite{Lillo} devoted to linear almost periodic systems, which introduces the property:
\begin{definition}
%\label{ap-sim}
The almost periodic linear systems \eqref{LKS} and \eqref{LKS1} are \textbf{approximately similar} if for any $\varepsilon>0$ there exists a Lyapunov transformation matrix $Q(t,\varepsilon)$ such that
\begin{displaymath}
\sup\limits_{t\in \mathbb{R}}||Q^{-1}(t,\varepsilon)\{A(t)Q(t,\varepsilon)-\dot{Q}(t,\varepsilon)\}-B(t)||<\varepsilon,
\end{displaymath} 
\end{definition} 
\noindent and note that if $\varepsilon=0$, we recover (\ref{LT-KS}) with $J=\mathbb{R}$.
 
In \cite{Coppel67} W.A. Coppel  considers a linear almost periodic system (\ref{LKS})
and tries, unsuccessfully, to prove that if (\ref{LKS}) has an exponential dichotomy 
on $J=\mathbb{R}$ is kinematically similar to (\ref{LKS1}) by means
an almost periodic Lyapunov transformation $Q(t)$. Nevertheless, by following Lillo's ideas it is proved that $t\mapsto X(t)PX^{-1}(t)$ is almost periodic.

The work of J. Ben Slimene and J. Blot \cite{Blot} considers an almost periodic system
(\ref{LKS}) and proves that if its fundamental matrix is composed by $n$ almost periodic solutions then is kinematically similar to an almost periodic triangular system
(\ref{LKS1}) by means of an almost periodic Lyapunov transformation.

On the other hand, the works of D. Berkey \cite{Berkey} and M. Pinto \cite{Pinto}  does not follow an approach based in the kinematical similarity but are focused in to obtain a fundamental matrix emulating the structure of (\ref{TF5}) for the almost periodic case.
 
 \noindent $\bullet$ To generalize the Floquet's Theorem to a nonlinear framework described by the systems 
\begin{subequations}
  \begin{empheq}{align}
    & \dot{x} = f(t,x) \quad \textnormal{for any $t\in \mathbb{R}$} \label{NLKS}, \\
    & \dot{y} = g(t,y)  \quad \textnormal{for any $t\in \mathbb{R}$}\label{NLKS1}
  \end{empheq}
\end{subequations}
where $f,g\colon \mathbb{R}\times \mathbb{R}^{n}\to \mathbb{R}^{n}$ are such that 
the existence and uniqueness of solutions in ensured and $f(t+\omega,x)=f(t,x)$ 
and $g(t+\omega,y)=g(t,y)$ for any $t$. This task has been carried
out by researchers from the Fuzhou's school. In \cite{ZS}, C. Zou and J. Shi provide 
necessary conditions ensuring the existence of a nonlinear continuous transformation $H\colon \mathbb{R}\times \mathbb{R}^{n}\to \mathbb{R}^{n}$ such that:
\begin{itemize}
\item[i)] $H(t,\cdot)$ is an homeomorphism for any $t\in \mathbb{R}$,
\item[ii)] $H(t+\omega,x)=H(t,x)$ for any $(t,x)\in  \mathbb{R}\times \mathbb{R}^{n}$, 
\item[iii)] If $t\mapsto x(t)$ is solution of (\ref{NLKS}) then $t\mapsto H(t,x(t))$ is solution
of (\ref{NLKS1}).
\end{itemize}

The $H(t,x)$ can also be seen as a particular case of a generalization of kinematical similiarity to the nonlinear framework
 \begin{definition}
 \label{KSLIN}
 Given two nonlinear systems
\begin{subequations}
  \begin{empheq}{align}
    & \dot{x} = f(t,x) \quad \textnormal{for any $t\in \mathbb{R}$} \label{Fu1}, \\
    & \dot{y} = g(t,y)  \quad \textnormal{for any $t\in \mathbb{R}$}\label{Fu2}
  \end{empheq}
\end{subequations}
 where $f,g\colon J\to \mathbb{R}^{n}$ are such that the existence and uniqueness of solutions in ensured, we say that \eqref{Fu1} and \eqref{Fu2} are kinematically similar if there exists a continuously differentiable and invertible matrix $Q(t)$ such that
 \begin{equation*}
 %\label{FKS}
 G(t,x)=\dot{Q}(t)Q^{-1}(t)x+Q(t)F(t,Q^{-1}(t)x).
 \end{equation*}
 \end{definition}
 To the best of our knowledge, the above Definition has been stated in works of F. Lin \cite{Lin-2007}.

\noindent $\bullet$ Another generalization of Floquet theory is made by T.A. Burton and J.S. Muldowney in \cite{Burton} and extended in \cite{Burton76,Freedman}, which state that (\ref{LA}) is a generalized Floquet system with respect to $f$ (GFS--$f$) if 
\begin{equation}
\label{GFS1}
f'(t)A(f(t))=A(t).
\end{equation}
where $f\colon (\alpha,\infty)\to \mathbb{R}$ is absolutely continuous and
$f(t)>t$.

Note that $f(t)=t+\omega$ with $\omega>0$ satisfies the above properties for $f$
and (\ref{GFS1}) becomes $A(t+\omega)=A(t)$, which generalizes the $\omega$--periodic case. In addition, the identity 
(\ref{GFS1}) implies that the matrix $Y(t)=X(f(t))$ verifies $Y'(t)=A(t)Y(t)$.

\section{Exercises} 
 
\begin{enumerate}
    \item[1.-] Prove that if the matrix valued function $t\mapsto U(t)$
    is continuous and invertible on $J$, then $t\mapsto U^{-1}(t)$ is also continuous on $J$.
    \item[2.-] Prove the statements a) to d) from Remark \ref{R3}. 
    \item[3.-] Prove that any transition matrix satisfies the matrix differential equations (\ref{MT1}).
    \item[4.-] Prove that any transition matrix verifies the statements from Remark \ref{remark-MT2}.
    \item[5.-] Prove that if $X(t)\in M_{n}(\mathbb{R})$ and $Y(t)\in M_{m}(\mathbb{R})$ are fundamental matrices of the linear systems
    $$
    \dot{x}=A(t)x \quad \textnormal{and} \quad \dot{y}=B(t)y,
    $$
    then the $n+m$ dimensional system
    $$
    \dot{z}=\left[\begin{array}{cc}
    A(t) & C(t)\\
    0 & B(t)\end{array}\right]z
    $$
    has the following transition matrix
    $$
    \left[\begin{array}{cc}
    X(t,t_{0}) & W(t,t_{0})\\
    0 & Y(t,t_{0})\end{array}\right] \quad \textnormal{with} \quad W(t,t_{0})=\int_{t_{0}}^{t}X(t,\tau)C(\tau)Y(\tau,t_{0})\,d\tau.
    $$
    \item[6.-] The linear system $\dot{x}=A(t)x$ with $t\mapsto A(t)$ bounded continuous on $[t_{0},+\infty)$
        is \textit{uniformly exponentially stable} if and only if there exist $K>0$ and $\alpha>0$ such that its transition matrix verifies 
        $$
        ||X(t,s)||\leq Ke^{-\alpha (t-s)} \quad \textnormal{for any $t\geq t_{0}$}.
        $$
        Prove that the uniform exponential stability is preserved by kinematical similarity.
    \item[7.-] If the non homogeneous system (\ref{No-homogeneo}) from Lemma \ref{SNHC1} has a function $f$ such that $\lim\limits_{t\to +\infty}f(t)=0$
    and the corresponding linear system is uniformly exponentially stable, then prove that any solution $t\mapsto x(t)$ verifies $\lim\limits_{t\to +\infty}x(t)=0$. 
    
    \item[8.-] If $X(t,s)$ is a transition matrix of a linear system, prove that
    $[X(t,s)^{-1}]^{T}$ is a transition matrix for its adjoint.
    \item[9.-] Under the assumption that 
    $$
    A(t)\int_{\tau}^{t}A(s)\,ds = \left[\int_{\tau}^{t}A(s)\,ds\right]A(t),
    $$
    prove that  
    $$
    X(t,\tau)=e^{\int_{\tau}^{t}A(s)\,ds}
    $$
    is transition matrix of  $\dot{x}=A(t)x$.
    \item[10.-] Prove Proposition \ref{HBGa2}.
    \item[11.-] Under the assumption that $t\mapsto A(t)$ is continuous and antisymmetric for any $t\in \mathbb{R}$ prove that any fundamental matrix of
    the linear system $\dot{x}=A(t)x$ verifies $\det X(t)=\det X(0)$ for any
    $t\in \mathbb{R}$.
    \item[12.-] Prove that the classical kinematical similarity is an equivalence relation.
    \item[13.-] Prove the statements of Remark \ref{SCR}.
    \item[14.-] Under the assumption that the linear systems
    $$
    \dot{x}=A(t)x \quad \textnormal{and} \quad \dot{y}=B(t)y
    $$
    are classicaly kinematically similar on $J$ by means of the transformation $Q(t)$, prove that the adjoint systems
    $$
    \dot{x}=-A^{T}(t)x \quad \textnormal{and} \quad \dot{y}=-B^{T}(t)y
    $$
    are also kinematically similar on $J$. Note that the corresponding transformation is strongly related to $Q(t)$.
    \item[15.-] Under the assumption that the linear systems
    $$
    \dot{x}=A(t)x \quad \textnormal{and} \quad \dot{y}=B(t)y
    $$
    are classicaly kinematically similar on $J$ by means of the transformation $Q(t)$, prove that 
    \begin{displaymath}
    \det Q(t) = \frac{\det X(t_{0})}{\det Y(t_{0})}e^{\int_{t_{0}}^{t}\textnormal{Tr}[A(s)-B(s)]\,ds}.     
    \end{displaymath}
    
    \item[16.-] Let $\dot{x}=A(t)x$ be a system with solutions defined in all the real line and let $\dot{y}=-A^{T}(t)y$ be its adjoint. If any solution $t\mapsto x(t)$ of the initial system verifies $\lim\limits_{t\to +\infty}x(t)=0$ prove that any solution $t\mapsto y(t)$ of the adjoint system is such that the limit $\lim\limits_{t\to +\infty}|y(t)|$ diverges.
    \item[17.-] An autonomous system $\dot{y}=Dy$ is \textit{hyperbolic} if and only if the real part of all the eigenvalues of $D$ is different from zero. Prove that if
    $\dot{x}=A(t)x$ with $t\mapsto A(t)$ continuous and $\omega$--periodic is classically kinematically similar to an hyperbolic system if and only if its Floquet's multipliers are different from $1$.
    \item[18.-] Prove that if $A$ is continuous, $\omega$--periodic and antisymmetric, then any Floquet multiplier of $\dot{x}=A(t)x$ is $\rho=1$. 
    \item[19.-] In the context of Lemma \ref{QFLO}, prove that $Q^{-1}(t)$ and $\dot{Q}(t)$ are $\omega$--periodic.
    \item[20.-] Prove that the $n+m$--dimensional system
    $$
    \left(\begin{array}{c}
    \dot{x}_{1}\\
    \dot{x}_{2}\end{array}\right)
    =\left[\begin{array}{cc}
    A(t) & 0\\
    0 & C(t)\end{array}\right]
    \left(\begin{array}{c}
    x_{1}\\
    x_{2}\end{array}\right) 
    $$
    with $A(t)\in M_{n}(\mathbb{R})$ and $C(t)\in M_{m}(\mathbb{R})$ is generally kinematically similar to
    $$
    \left(\begin{array}{c}
    \dot{y}_{1}\\
    \dot{y}_{2}\end{array}\right)
    =\left[\begin{array}{cc}
    B(t) & 0\\
    0 & C(t)\end{array}\right]
    \left(\begin{array}{c}
    y_{1}\\
    y_{2}\end{array}\right),
    $$
    by a transformation
    $$
    Q(t)=\left[\begin{array}{cc}
    Q_{1}(t) & 0\\
    0 & I_{m}\end{array}\right]  
    $$  
    where $Q_{1}(t)\in M_{n}(\mathbb{R})$ and $I_{m}$ is the identity on $\in M_{m}(\mathbb{R})$, if and only if
    $$
    \dot{x}_{1}=A(t)x_{1} \quad \textnormal{and} \quad \dot{y}_{1}=B(t)y_{1},
    $$
    are generally kinematically similar by the transformation $Q_{1}$.
    \item[21.-] Let $X_{A}(t)$ and $X_{B}(t)$ be fundamental matrices of 
    $\dot{x}=A(t)x$ and $\dot{y}=B(t)y$ respectively. Prove that if 
    $$
    B(t)X_{A}(t)=X_{A}(t)B(t),
    $$
    then $X_{A}(t)X_{B}(t)$ is fundamental matrix of 
    $\dot{z}=[A(t)+B(t)]z$.
    \item[22.-] Let $X_{A}(t)$ and $X_{B}(t)$ be fundamental matrices of 
    $\dot{x}=A(t)x$ and $\dot{y}=B(t)y$ respectively. Prove that if 
    $$
    A(t)X_{B}(t)=X_{B}(t)A(t)
    $$
    then $X_{B}(t)X_{A}(t)$ is fundamental matrix of 
    $\dot{z}=[A(t)+B(t)]z$.
    \item[23.-] Prove that if we consider $F(t,x)=A(t)x$ and $G(t,x)=B(t)x$ in the Definition \ref{KSLIN}, we can recover the definitions of general Kinematical similarity for linear systems.
    \item[24.-] A linear control system is described by 
\begin{equation}
\label{control1}
x'=A(t)x+B(t)u \quad \textnormal{for any $t \in J$},
\end{equation}
where $t\mapsto u(t)$ is known as the \textit{input} or \textit{control} of the system. In addition
the uncontrolled part is given by:
\begin{equation*}
%\label{plant}
x'=A(t)x  \quad \textnormal{for any $t \in J$},
\end{equation*}
which is usually know as the \textit{plant} of \eqref{control1} and a fundamental
matrix will be denoted by $X_{A}(t)$.

The \textit{controllability gramians} associated to \eqref{control1} are defined by the matrices:
\begin{equation*}
W(a,b)=\displaystyle\int_{a}^{b}X_{A}(a,s)B(s)B^{T}(s)X_{A}^{T}(a,s)\;ds
\end{equation*}
and
\begin{equation*}
K(a,b)=\displaystyle\int_{a}^{b}X_{A}(b,s)B(s)B^{T}(s)X_{A}^{T}(b,s)\;ds.
\end{equation*}

    \begin{enumerate}
    \item[23.1)] Prove that above controllability gramians satisfy 
the following relations:
\begin{equation*}
%\label{GP}
\left\{\begin{array}{rcl}
K(a,b)&=&X_{A}(b,a)W(a,b)X_{A}^{T}(b,a) \\\\
W(a,b)&=&X_{A}(a,b)K(a,b)X_{A}^{T}(a,b).
\end{array}\right.
\end{equation*}
    \end{enumerate}

\end{enumerate}

\chapter{Exponential Dichotomy}
\medskip

 Hyperbolicity is a fundamental property of dynamic systems that enables the splitting of phase space into subspaces that contract and expand under map $f,$ respectively. When the space phase is a smooth manifold of finite dimension, and the considered function is smooth, determining the time evolution, then the evolution of an initial point $x$ is obtained by solving a system of ordinary differential equations. If the system is autonomous, the hyperbolicity of an equilibrium point $x_0$ is determined by the real parts of the eigenvalues of the Jacobian matrix of $f$ at $x_0;$ however, this condition cannot be replicated in a nonautonomous system as is stated in \cite[p. 310]{MY}. To study hyperbolicity effectively in that context, one might examine the property  of exponential dichotomy and the related spectrum, which emulates the work done by eigenvalues in a nonautonomous framework.

\section{Autonomous hyperbolicity}
\begin{definition}
Given a matrix $A\in M_{n}(\mathbb{R})$, the autonomous system of ordinary differential equations
\begin{equation}
\label{li-auto}
\dot{x}=Ax,
\end{equation}
is \textbf{hyperbolic} if $A$ has no eigenvalues having real part zero.
\end{definition}

The hyperbolicity condition has remarkable consequences and applications in the qualitative theory of differential equations, we will highlight the most relevant ones:

\bigskip

\noindent \textbf{a) Splitting between stable and unstable spaces.} Let
$\lambda_{j}=a_{j}+ib_{j}$ be a (possible multiple) eigenvalue of $A$ with
generalized eigenvector $W_{j}=\bf{u}_{j}+i\bf{v}_{j}$. Then we can see that
$$
\mathcal{B}=\{\bf{u_{1}},\ldots,\bf{u_{k}},\bf{u_{k+1}},\bf{v_{k+1}},
\ldots,\bf{u_{m}},\bf{v_{m}}\}
$$
with $n=2m-k$ is a basis of $\mathbb{R}^{n}$ such that $\mathbb{R}^{n}=E^{s}\oplus E^{u}$, where $E^{u}$ and $E^{s}$ are the \textit{stable} and \textit{unstable} 
spaces defined by
\begin{displaymath}
E^{s} = \text{Span}\{\bf{u_{j}},\bf{v_{j}} \mid \textnormal{$a_{j}<0$}\} \quad
\textnormal{and} \quad
\textnormal{$E^{u}$} = \text{Span}\{\bf{u_{j}},\bf{v_{j}} \mid \textnormal{$a_{j}>0$}\}.
\end{displaymath}

Let us recall that any solution $t\mapsto x(t)$ of (\ref{li-auto}) passing
through $x_{0}$ at $t=0$ can be written as $x(t)=e^{At}x_{0}$, then it follows that; see for example \cite[p.55]{Perko};
the subspaces $E^{s}$ and $E^{u}$ are invariants under the flow induced by (\ref{li-auto}), namely, if $x_{0}\in E^{s}$ (\textit{resp}. $x_{0}\in E^{u}$) then $x(t)\in E^{s}$ (\textit{resp}. $x(t)\in E^{u}$).

A consequence of the above mentioned invariance is that if $x_{0}=x_{s}+x_{u}$ with $x_{s}\in E^{s}$
and $x_{u}\in E^{u}$, then it is easy to verify that any
solution $x(t)=e^{At}x_{0}$ can be splitted as
$x(t)=x_{s}(t)+x_{u}(t)$ where
$$
x_{s}(t)=e^{At}x_{s} \in E^{s} \quad \textnormal{and} \quad
x_{u}(t)=e^{At}x_{u} \in E^{u} \quad \textnormal{for any $t\in \mathbb{R}$}.
$$

\noindent \textbf{b) Admissibility properties.} 
Another consequence of the hyperbolicity condition is the fo\-llo\-wing
result, see \textit{e.g} \cite[p.81]{Dalecki} for details:
\begin{proposition}
\label{admi}
If \eqref{li-auto} is hyperbolic then for any 
$f\in BC(\mathbb{R},\mathbb{R}^{n})$ there exists a unique
solution $t\mapsto y_{f}(t)\in  BC(\mathbb{R},\mathbb{R}^{n})$ of the inhomogeneous system
\begin{equation*}
%\label{li-auto-2} 
\dot{y}=Ay+f(t)
\end{equation*}
such that 
$$
\sup\limits_{t\in \mathbb{R}}|y_{f}(t)|\leq \alpha(A) \sup\limits_{t\in \mathbb{R}}|f(t)|,
$$
where $\alpha(A)$ is a constant dependent of $A$. 
\end{proposition}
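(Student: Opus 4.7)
The plan is to exploit the hyperbolic splitting $\mathbb{R}^{n}=E^{s}\oplus E^{u}$ described in item a) above in order to construct an explicit bounded solution via a Green kernel, and then to rule out any other bounded solution. Let $P^{s}$ and $P^{u}=I-P^{s}$ be the projections associated with this decomposition; both subspaces are $e^{At}$--invariant and the projections commute with $A$. Since the eigenvalues of $A|_{E^{s}}$ (resp.\ $A|_{E^{u}}$) all have strictly negative (resp.\ positive) real part, classical estimates for the matrix exponential furnish constants $K\geq 1$ and $\beta>0$ with
\begin{displaymath}
\|e^{At}P^{s}\|\leq Ke^{-\beta t}\ \textnormal{for}\ t\geq 0,\qquad \|e^{At}P^{u}\|\leq Ke^{\beta t}\ \textnormal{for}\ t\leq 0.
\end{displaymath}

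Next I would define the Green kernel
\begin{displaymath}
G(t,s):=\begin{cases} e^{A(t-s)}P^{s}, & t\geq s,\\ -e^{A(t-s)}P^{u}, & t<s, \end{cases}
\end{displaymath}
and set $y_{f}(t):=\int_{-\infty}^{+\infty}G(t,s)f(s)\,ds$. Absolute convergence is immediate from the two estimates above, which simultaneously yield
\begin{displaymath}
|y_{f}(t)|\leq \frac{2K}{\beta}\sup_{s\in\mathbb{R}}|f(s)|,
\end{displaymath}
giving the announced bound with $\alpha(A)=2K/\beta$ and showing that $y_{f}\in BC(\mathbb{R},\mathbb{R}^{n})$. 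To check that $y_{f}$ solves $\dot{y}=Ay+f(t)$, I would split the integral at $s=t$, differentiate inside each piece (where $\partial_{t}G=AG$), and collect the two boundary contributions at $s=t$: the one--sided limits of $G(t,\cdot)$ are $P^{s}$ from the lower branch and $-P^{u}$ from the upper branch, so the boundary term produces $P^{s}f(t)-(-P^{u}f(t))=(P^{s}+P^{u})f(t)=f(t)$, exactly the forcing.

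Uniqueness then follows by a short argument using the same splitting. If $y_{1},y_{2}$ are two bounded solutions, the difference $z=y_{1}-y_{2}$ is a bounded solution of $\dot{z}=Az$, hence $z(t)=e^{At}P^{s}z(0)+e^{At}P^{u}z(0)$; boundedness of the unstable summand as $t\to +\infty$ forces $P^{u}z(0)=0$, and boundedness of the stable summand as $t\to -\infty$ forces $P^{s}z(0)=0$, so $z\equiv 0$. I expect the only genuine technicality to be the differentiation of $y_{f}$ across the jump of $G$ at $s=t$: one must justify bringing $d/dt$ inside an improper integral and combine the two one--sided boundary contributions via the identity $P^{s}+P^{u}=I$ in order to recover $f(t)$. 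The preliminary step of producing the constants $K$ and $\beta$ from the Jordan structure of $A|_{E^{s}}$ and $A|_{E^{u}}$ is classical and can be quoted, and the remaining estimates are routine consequences of the exponential decay/growth of $e^{At}$ on the invariant subspaces.
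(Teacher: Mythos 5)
Your proof is correct. The paper itself gives no proof of Proposition \ref{admi} (it defers to Daleckii--Krein), but your argument — the Green kernel built from the spectral projections $P^{s}$, $P^{u}$, the bound $\alpha(A)=2K/\beta$, differentiation across the jump at $s=t$ using $P^{s}+P^{u}=I$, and uniqueness from the fact that the only bounded solution of the homogeneous hyperbolic system is trivial — is exactly the specialization to the autonomous setting of the route the paper follows for the nonautonomous generalization (Proposition \ref{boundeness} together with Corollary \ref{u-bounded}, via the Green function of Definition \ref{Green2}), so it is essentially the same approach.
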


\noindent \textbf{c) Linearization results.}
The classical linearization theorem currently known as the Hartman--Grobman Theorem is a result obtained by P. Hartman \cite{Hartman1} and D.M. Grobman \cite{Grobman} respectively. It has a distinguished place in the 
qualitative theory of ordinary differential equations and the hyperbolicity condition plays 
an essential role.

In order to state the above mentioned result, let us consider the nonlinear system
\begin{equation}
\label{NLHG00}
\dot{y}=g(y),
\end{equation}
where $g\in C^{1}(\Omega,\mathbb{R}^{n})$, $\Omega\subset \mathbb{R}^{n}$ is an open set containing the origin  such that $g(0)=0$ and $Dg(0)$ is the derivative of $g$ at the origin. The solution of (\ref{NLHG00}) passing through $y_{0}\in \Omega$ at $t=0$ will be denoted by $\phi_{t}(y_{0})$ and its local existence and uniqueness will be assumed.

\begin{proposition}[Hartman--Grobman Theorem]
If the linear system
\begin{equation}
\label{li-nona-3}
\dot{x}=Dg(0)x
\end{equation}
is hyperbolic, then there exists an homeomorphism $H\colon U\to V$
such that
\begin{itemize}
\item[a)] The sets $U$ and $V$ are open in $\mathbb{R}^{n}$ and contain the origin.
\item[b)] For any $y_{0}\in U$, there exists an open interval $I_{y_{0}}\subset \mathbb{R}$ containing zero such that
$$
H\circ \phi_{t}(y_{0}) = e^{At}H(y_{0}) \quad \textnormal{for any $t\in I_{y_{0}}$}.
$$
\end{itemize}
\end{proposition}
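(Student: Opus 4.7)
The plan is to follow the classical Palmer--style route: reduce the nonlinear system to a globally Lipschitz perturbation of the linear one via cutoff, construct the conjugating map by a functional--equation argument that uses the admissibility property from Proposition \ref{admi}, and then localize.

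First I would write $g(y)=Ay+f(y)$ with $A=Dg(0)$, so that $f\in C^{1}$, $f(0)=0$ and $Df(0)=0$. Fix a small $\rho>0$ with $\overline{B(0,2\rho)}\subset \Omega$, pick a smooth bump $\chi\colon \mathbb{R}^{n}\to[0,1]$ equal to $1$ on $B(0,\rho)$ and vanishing outside $B(0,2\rho)$, and set $\tilde f(y)=\chi(y)f(y)$. Then $\tilde f$ is bounded on $\mathbb{R}^{n}$ and globally Lipschitz with a constant $\gamma=\gamma(\rho)$ that tends to $0$ as $\rho\to 0$ (by uniform continuity of $Df$ near $0$). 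The modified system $\dot y=Ay+\tilde f(y)$ has globally defined flow $\tilde\phi_t$, and $\tilde\phi_t(y_0)=\phi_t(y_0)$ for all $t$ such that the trajectory stays inside $B(0,\rho)$; this is what will yield the local statement at the end.

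Next I would search for the conjugacy in the form $H=\mathrm{id}+h$ with $h\in BC(\mathbb{R}^{n},\mathbb{R}^{n})$, solving $H(\tilde\phi_{t}(y_{0}))=e^{At}H(y_{0})$ for all $t\in\mathbb{R}$ and $y_{0}\in\mathbb{R}^{n}$. Differentiating (formally) in $t$ and setting $u(t)=h(\tilde\phi_{t}(y_{0}))$, one finds that $u$ must be the unique bounded solution on $\mathbb{R}$ of the inhomogeneous linear equation $\dot u=Au-\tilde f(\tilde\phi_{t}(y_{0}))$. Since $A$ is hyperbolic and $t\mapsto \tilde f(\tilde\phi_{t}(y_{0}))$ is bounded and continuous, Proposition \ref{admi} produces a unique such bounded solution, which I would call $u_{y_{0}}(t)$; then set $h(y_{0}):=u_{y_{0}}(0)$. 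Continuity and boundedness of $h$ follow from the explicit admissibility estimate $\sup_{t}|u_{y_{0}}(t)|\le\alpha(A)\sup_{t}|\tilde f(\tilde\phi_{t}(y_{0}))|\le \alpha(A)\|\tilde f\|_{\infty}$ together with continuous dependence of $\tilde\phi_{t}(y_{0})$ on $y_{0}$. By construction, $H\circ\tilde\phi_{t}=e^{At}\circ H$. By symmetry --- swapping the roles of the two flows and using admissibility for the linear system perturbed by $-\tilde f\circ e^{-At}$ composed appropriately --- I would build $G=\mathrm{id}+\tilde g$ with $\tilde g\in BC$ such that $G\circ e^{At}=\tilde\phi_{t}\circ G$.

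To finish, I would show $H\circ G=\mathrm{id}=G\circ H$. This is the step where the admissibility uniqueness really pays: both $H\circ G$ and $\mathrm{id}$ intertwine $e^{At}$ with itself, and their difference lies in $BC$; applying the uniqueness of bounded solutions to the linear system $\dot v=Av$ (whose only bounded solution on $\mathbb{R}$ is $v\equiv 0$, again by hyperbolicity) forces them to agree, and similarly for $G\circ H$. Hence $H$ is a homeomorphism of $\mathbb{R}^{n}$. Finally, because $\tilde f=f$ on $B(0,\rho)$, choose $U$ a small neighborhood of $0$ whose forward and backward orbits under $\tilde\phi$ remain in $B(0,\rho)$ for $t$ in some open interval $I_{y_{0}}\ni 0$ depending on $y_{0}$; set $V=H(U)$. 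On this set the identity $H\circ\phi_{t}=e^{At}\circ H$ holds for $t\in I_{y_{0}}$.

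The main obstacle I expect is verifying that the map $H$ built from the admissibility argument is genuinely a homeomorphism rather than merely a continuous surjection with a continuous one--sided inverse; the cleanest way is the symmetric construction of $G$ plus the uniqueness--of--bounded--solutions trick just described, but its validity hinges on choosing $\rho$ (and hence $\gamma$) small enough that the perturbed linear estimates stay in the regime where Proposition \ref{admi} controls everything. A secondary nuisance is the passage from the global statement for the cutoff system to the local statement for the original system, which requires showing that small neighborhoods are mapped into small neighborhoods by $H$ and $H^{-1}$, ensuring trajectories do not leave $B(0,\rho)$ on the relevant time interval.
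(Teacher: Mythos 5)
The paper does not prove this proposition: it is stated as a classical background result, citing Hartman and Grobman, and its nonautonomous counterpart is proved only later in Chapter 6. Your reconstruction therefore has no paper proof to compare against, but it follows the standard Pugh route, and in outline it is sound: cutoff to a globally Lipschitz $\tilde f$ with small Lipschitz constant, global conjugacy for the modified system via bounded solutions, a uniqueness-of-bounded-solutions argument to obtain $H\circ G=\mathrm{id}=G\circ H$, and final localization. Your check that $H(0)=0$, the observation that $V=H(U)$ is automatically open once $H$ is a global homeomorphism, and the choice of $I_{y_0}$ as the interval on which the original trajectory stays inside the cutoff ball are all handled correctly.

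The one real gap sits exactly where you invoke ``symmetry.'' The $H$ direction you treat correctly: writing $H=\mathrm{id}+h$ and $u(t)=h(\tilde\phi_t(y_0))$ yields the \emph{linear} inhomogeneous equation $\dot u = Au - \tilde f(\tilde\phi_t(y_0))$, to which Proposition \ref{admi} applies directly. But the $G$ direction is not a mirror image. With $G(e^{At}z_0)=\tilde\phi_t(G(z_0))$, $G=\mathrm{id}+\tilde g$ and $w(t)=\tilde g(e^{At}z_0)$, the analogous differentiation gives
$$
\dot w = Aw + \tilde f\left(e^{At}z_0 + w\right),
$$
where the unknown $w$ sits \emph{inside} $\tilde f$. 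This is genuinely nonlinear; describing the inhomogeneity as ``$-\tilde f\circ e^{-At}$ composed appropriately'' hides the $w$-dependence. A single application of Proposition \ref{admi} does not settle it: you need a contraction fixed-point argument in $BC(\mathbb{R},\mathbb{R}^{n})$ — precisely the content of Proposition \ref{bounded2} — and this is where the smallness of the cutoff Lipschitz constant $\gamma$ (the condition $2K\gamma<\alpha$) is actually used, not merely convenient. Once that fixed-point lemma replaces the ``symmetry'' step, the rest of your argument goes through and reproduces the Pugh--Palmer proof.
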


The Hartman--Grobman homeomorphism can be seen as a \textit{dictionary} mapping solutions of the non linear system (\ref{NLHG00}) into solutions of the linear one
(\ref{li-nona-3}) and viceversa for any $t\in I_{y_{0}}$ as follows:

\begin{itemize}
\item[i)] If $t\mapsto \phi_{t}(y_{0})$ is a solution of (\ref{NLHG00}) passing trough $y_{0}$ at $t=0$, then $t\mapsto H(\phi_{t}(y_{0}))$ is solution of  (\ref{li-nona-3}) passing through $H(y_{0})$ at $t=0$.

\item[ii)] If $t\mapsto e^{At}z_{0}$ is solution of (\ref{li-nona-3}) passing through $z_{0}$ at $t=0$, then
$H^{-1}(e^{At}z_{0})$ is a solution of (\ref{NLHG00}) passing trough $H^{-1}(z_{0})$ at $t=0$.

\item[iii)] A direct consequence of i) and ii) combined with the uniqueness of solutions is that $H(0)=0$.
\end{itemize}

\bigskip 
We point out that the three above problems cannot be directly generalized when $A$ is not constant, since the eigenvalues does not provide neither qualitative nor accurate information about the system, which prevents the naive extension 
of the hyperbolicity property to the nonautonomous framework.

\section{Nonautonomous hyperbolicity}
We stress that it does not exists an ubiquitous definition of \textit{hyperbolicity} in the nonautonomous setting. In particular, the above topics (splitting, admissibility and linearization)
cannot be addressed by using the eigenvalues of a time varying matrix.

Despite these nuances, the concept of \textit{dichotomy} plays a relevant role, being the exponential dichotomy \cite{Cop,Lin2,Mitropolski} introduced by O. Perron in the 30's the most relevant \cite{Perron}.

\subsection{Exponential Dichotomy} Let us consider the nonautonomous system of ordinary differential equations 
\begin{equation}
\label{LinealC2}
x'=A(t)x,
\end{equation}
where $A(t)$ is a locally integrable matrix.

\begin{definition}
\label{DICE}
\textit{The system \eqref{LinealC2} has an \textbf{exponential dichotomy} on the interval $J\subseteq \mathbb{R}$ if, given a fundamental matrix $X(t)$, there exist positive numbers $K>0$, $\alpha > 0$ and a projection $P^2=P$ such that}
\begin{equation}
\label{eq:2.2}
\left\{\begin{array}{rccr}
||X(t)PX^{-1}(s)||&\leq K e^{-\alpha(t-s)} & \textnormal{if} & t\geq s,\quad t,s\in J\\
||X(t)(I-P)X^{-1}(s)|| &\leq Ke^{-\alpha(s-t)} & \textnormal{if} & t \leq s, \quad t,s \in J.
\end{array}\right.
\end{equation}
\end{definition}

 It is worth to emphasize that the constants $K$ and $\alpha$ are known as the constants of the exponential dichotomy. Nevertheless, we point out that the above notation is not univocal and several references consider the pairs $(K_{1},\alpha_{1})$ and $(K_{2},\alpha_{2})$ in the first and second inequalities respectively.

In addition, we will usually consider the unbounded intervals 
$$
J=(-\infty, 0]=\mathbb{R}_0^{-}, \quad J=[0,+\infty)=\mathbb{R}_0^{+} \quad \textnormal{or} \quad J=\mathbb{R}.
$$

If the exponential dichotomy is verified on $\mathbb{R}$, it is common to say that the linear system
has a dichotomy on the full line. Similarly, if the exponential dichotomy is verified
on $\mathbb{R}_{0}^{+}$ or $\mathbb{R}_{0}^{-}$, it is respectively said that
the system has a dichotomy on the right or left half line.

If the above matrix norm is unitary, that is 
$||I||=1$, the limit case $P=I$ and $t=s$ allows to consider $K\geq 1$ in Definition \ref{DICE}.

In order to illustrate this property, we will revisit the example from L. Markus and H. Yamabe \cite{MY} stated 
in the previous chapter:
\begin{example}
\label{MYDUG}
The system
$$
\dot{x}=\left( \begin{array}{cc}
-1 + \frac{3}{2} \cos^2(t) & 1 - \frac{3}{2} \cos(t) \sin(t)\\
-1 - \frac{3}{2} \cos(t) \sin(t) & -1 + \frac{3}{2} \sin^2(t)
\end{array}
\right )x
$$
has a normalized fundamental matrix
\begin{displaymath}
X(t)= \left[\begin{array}{cc}
e^{\frac{t}{2}} \cos(t) & e^{-t}\sin(t)\\
-e^{\frac{t}{2}} \sin(t) & e^{-t} \cos(t)
\end{array}\right].
\end{displaymath}

It can be verified that the system has exponential dichotomy on $\mathbb{R}$ with projection
$P = \left ( \begin{array}{cc}
0&0\\
0&1
\end{array}
\right)$ 
and constants $K = 1$ and $\alpha = 1/2$.
\end{example}

\begin{remark}
%\label{R1ED}
{Note that the identity matrix $P=I$ and the null matrix $P=0$
are projections, which are called as \textit{trivial projections}. If
$P$ is a non trivial projection in $\mathbb{R}^n$, it will be useful to recall that \cite[p. 210]{Hof}:}
\begin{enumerate}
\item $0<\text{Rank}\,P<n$,
\item Any $\xi\in\mathbb{R}^n$ has the decomposition $\xi =P\xi+(I-P)\xi$,
\item $v\in\text{Im}\,P$ if and only if $Pv=v$,
\item $\mathbb{R}^n=\ker \,P\oplus\text{Im}\, P$.
\end{enumerate}
\end{remark}

\begin{remark}
\label{R1ED-bis}
In linear algebra, a projector $P$ is associated to a decomposition 
of $\mathbb{R}^{n}$ as a direct sum $\mathbb{R}^{n}=U\oplus W$ such that $U$
is invariant under $P$. This arise the following question: If the property of exponential dichotomy is verified in $J$, which is the subspace of  $\mathbb{R}^{n}$ invariant under the projector $P$? We will back on this question in the next section and the next chapter.
\end{remark}

A careful reading of Definition \ref{DICE} shows that the projection $P$ is strongly associated
to a fundamental matrix $X(t)$ such that (\ref{eq:2.2}) is verified. Note that if $Y(t)$ is 
another fundamental matrix, there exists a change of basis matrix $V$ such 
that $Y(t)=X(t)V$ and we refer to the statement a) of Remark \ref{R2} in Chapter 1 for details. Now,
we rewrite the equation (\ref{eq:2.2}) as follows 
\begin{displaymath}
\left\{\begin{array}{rccr}
||X(t)VV^{-1}PVV^{-1}X^{-1}(s)||&\leq K e^{-\alpha(t-s)} & \textnormal{if} & t\geq s,\quad t,s\in J\\
||X(t)VV^{-1}[I-P]VV^{-1}X^{-1}(s)|| &\leq Ke^{-\alpha(s-t)} & \textnormal{if} & t\leq s, \quad t,s \in J.
\end{array}\right.
\end{displaymath}

Moreover, as $V$ is a nonsingular matrix, it follows that $\tilde{P}=VPV^{-1}$ is also a projection and $I-\tilde{P}=V[I-P]V^{-1}$. Then, the above inequality is equivalent to 
\begin{equation}
\label{DICEPRI}
\left\{\begin{array}{rccr}
||Y(t)\tilde{P}Y^{-1}(s)||&\leq K e^{-\alpha(t-s)} & \textnormal{if} & t\geq s,\quad t,s\in J\\
||Y(t)[I-\tilde{P}]Y^{-1}(s)|| &\leq Ke^{-\alpha(s-t)} & \textnormal{if} & t\leq s, \quad t,s \in J.
\end{array}\right.
\end{equation}

In summary, the above estimates allow us to state the following remark:
\begin{remark}
\label{ChoB}
Assume that the linear system (\ref{LinealC2}) has an exponential dichotomy on $J$ with constants $K>0$ and $\alpha>0$. If $X(t)$ and $Y(t)$ are fundamental matrices of (\ref{LinealC2}) the inequalities
(\ref{eq:2.2}) and (\ref{DICEPRI}) are equivalent. Namely, there exists a nonsingular matrix $V$ such that 
$$
Y(t)=X(t)V \quad \textnormal{and} \quad \tilde{P}=VPV^{-1}
$$
and, consequently, the projections are not unique but are similar.
\end{remark}

\begin{remark}
\label{R1ED+}
{As any projection $P$ is idempotent, it can be proved that the eigenvalues of any non trivial projection $P$ in $\mathbb{R}^{n}$ are either $0$ or $1$, which implies its dia\-go\-na\-lizability. Then, for any projection $P$ there exists a nonsingular matrix $V$ such that}
\begin{displaymath}
V^{-1}PV=P_{r}=\left[\begin{array}{cc}
I_{r} & 0 \\
0 & 0
\end{array}\right] \quad \textnormal{and} \quad
V^{-1}[I-P]V=I-P_{r}=\left[\begin{array}{cc}
0 & 0 \\
0 & I_{r-1}
\end{array}\right]
\end{displaymath}
{where $r$ is the rank of $P$. Moreover, it follows (see \textit{e.g.} \cite[p.67]{Bellman3}) that $P$ is semi definite positive. From now on, $P_{r}$ which will be called as the \textbf{canonical projection}.}
\end{remark}

A useful consequence of Remarks \ref{ChoB} and \ref{R1ED+} is the following result, which will play a key role in future chapters:
\begin{lemma}
\label{DEPCa}
If the system \eqref{LinealC2} has an exponential dichotomy on the interval $J\subseteq \mathbb{R}$ constants $K>0$, $\alpha > 0$, then there exist a fundamental matrix $Y(t)$ and a canonic projection $P_{r}$ such that
\begin{displaymath}
\left\{\begin{array}{rccr}
||Y(t)P_{r}Y^{-1}(s)||&\leq K e^{-\alpha(t-s)} & \textnormal{if} & t\geq s,\quad t,s\in J\\
||Y(t)[I-P_{r}]Y^{-1}(s)|| &\leq Ke^{-\alpha(s-t)} & \textnormal{if} & t \leq s, \quad t,s \in J.
\end{array}\right.
\end{displaymath}
\end{lemma}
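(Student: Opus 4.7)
The plan is to combine the two immediately preceding remarks: Remark \ref{R1ED+}, which says that any projection $P$ is similar to the canonical projection $P_r$ of the same rank, and Remark \ref{ChoB}, which says that changing the fundamental matrix on the right by a nonsingular constant $V$ merely conjugates the dichotomy projection. So the only work is to choose $V$ cleverly.

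First, I would invoke Remark \ref{R1ED+} applied to the given projection $P$ from Definition \ref{DICE}: since $P$ is idempotent in $\mathbb{R}^n$ with $r = \operatorname{rank} P$, there exists a nonsingular $V \in M_n(\mathbb{R})$ with
\[
V^{-1} P V = P_r, \qquad V^{-1}(I-P)V = I - P_r,
\]
equivalently $P = V P_r V^{-1}$ and $I - P = V(I - P_r) V^{-1}$.

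Next, I would set $Y(t) := X(t) V$. By statement a) of Remark \ref{R3}, $Y(t)$ is a fundamental matrix of \eqref{LinealC2}, and $Y^{-1}(s) = V^{-1} X^{-1}(s)$. A direct substitution then yields the key algebraic identities
\[
Y(t) P_r Y^{-1}(s) = X(t) V P_r V^{-1} X^{-1}(s) = X(t) P X^{-1}(s),
\]
\[
Y(t)(I - P_r) Y^{-1}(s) = X(t)(I - P) X^{-1}(s),
\]
so that the norm bounds \eqref{eq:2.2} for the pair $(X(t), P)$ transfer verbatim, with exactly the same constants $K > 0$ and $\alpha > 0$, to the pair $(Y(t), P_r)$.

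I do not foresee any real obstacle: the lemma is a purely algebraic reformulation of Definition \ref{DICE} via similarity of projections, and no new estimates on the transition matrix are required. The only point worth double checking is that the change of basis $V$ provided by Remark \ref{R1ED+} can be taken over $\mathbb{R}$ (not only over $\mathbb{C}$), which is fine because a real idempotent matrix always admits a real similarity to the block form $P_r$.
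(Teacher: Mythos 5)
Your proof is correct and follows exactly the route the paper intends: the lemma is presented there as a direct consequence of Remarks \ref{ChoB} and \ref{R1ED+}, which is precisely your combination of choosing $V$ with $V^{-1}PV=P_{r}$ and passing to the fundamental matrix $Y(t)=X(t)V$, with the estimates transferring verbatim with the same constants $K$ and $\alpha$.
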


The next result is only an alternative characterization of the exponential
dichotomy property. Nevertheless, its intermediate computations and estimations will be useful in the future.

\begin{proposition}
\label{triada}
Let $X(t)$ be a fundamental matrix of the linear system \eqref{LinealC2}. This system has an exponential dichotomy on $J$ with positive constants $K_{1},K_{2},\alpha$ and a projection $P^2=P$ if and only if for any $\xi \in \mathbb{R}^n$ it follows that:

\begin{equation}\label{eq:2.3}
\left\{\begin{array}{rcll}
| X(t)P\xi\vert &\leq & K_{1} e^{-\alpha(t-s)}\vert X(s)P\xi | & \textnormal{\textit{if} $t\geq s$,} \\
| X(t)(I-P)\xi\vert &\leq & K_{2}e^{-\alpha(s-t)}\vert X(s)(I-P)\xi | & \textnormal{\textit{if} $s\geq t$,}\\
||X(t)PX^{-1}(t)|| &\leq & M & \textnormal{\textit{for any} $t$,}
\end{array}\right.
\end{equation}
where $t,s\in J$.
\end{proposition}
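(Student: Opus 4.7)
The plan is to prove both implications by manipulating the operator $X(t)PX^{-1}(s)$ (and its complement) into the form $X(t)P\xi$ using the observation that $P^2=P$ and $X(t)\xi = X(t)X^{-1}(s)\cdot X(s)\xi$.

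For the forward direction, assume the dichotomy inequalities \eqref{eq:2.2}. Given any $\xi \in \mathbb{R}^n$ and $t \geq s$, I would use $X(t)P\xi = X(t)P^2\xi = [X(t)PX^{-1}(s)]\,[X(s)P\xi]$ to estimate
\[
|X(t)P\xi| \leq \|X(t)PX^{-1}(s)\|\,|X(s)P\xi| \leq K e^{-\alpha(t-s)}|X(s)P\xi|,
\]
giving the first line of \eqref{eq:2.3} with $K_1 = K$; the same trick with $I-P$ yields the second line with $K_2 = K$. The third inequality is immediate from \eqref{eq:2.2} by setting $s=t$, which gives $\|X(t)PX^{-1}(t)\| \leq K$, so $M = K$ works.

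For the reverse direction, the main work is to turn the pointwise bounds on trajectories into operator-norm bounds on the Green-type projector. Given $t \geq s$ and an arbitrary $\eta \in \mathbb{R}^n$, I would set $\xi = X^{-1}(s)\eta$ so that $X(t)PX^{-1}(s)\eta = X(t)P\xi$ and $X(s)P\xi = X(s)PX^{-1}(s)\eta$. Applying the first inequality of \eqref{eq:2.3} together with the third,
\[
|X(t)PX^{-1}(s)\eta| \leq K_1 e^{-\alpha(t-s)}|X(s)PX^{-1}(s)\eta| \leq K_1 M e^{-\alpha(t-s)}|\eta|,
\]
whence $\|X(t)PX^{-1}(s)\| \leq K_1 M\, e^{-\alpha(t-s)}$ for $t \geq s$. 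The complementary estimate is the delicate one and is the only real obstacle in the argument: the second inequality of \eqref{eq:2.3} produces $|X(t)(I-P)X^{-1}(s)\eta| \leq K_2 e^{-\alpha(s-t)}|X(s)(I-P)X^{-1}(s)\eta|$, but to conclude I still need a uniform bound on $\|X(s)(I-P)X^{-1}(s)\|$. Here the third hypothesis rescues us via the identity
\[
X(s)(I-P)X^{-1}(s) = I - X(s)PX^{-1}(s),
\]
so the norm is at most $1+M$, producing $\|X(t)(I-P)X^{-1}(s)\| \leq K_2(1+M)\, e^{-\alpha(s-t)}$ for $s \geq t$.

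Finally, taking $K := \max\{K_1 M,\, K_2(1+M)\}$ yields both dichotomy inequalities of \eqref{eq:2.2} with the same exponent $\alpha$ and the projection $P$, completing the equivalence. The crux of the proof is recognising that the third condition of \eqref{eq:2.3} is precisely what prevents the complementary projector $I - X(s)PX^{-1}(s)$ from being unbounded, which is why the pointwise inequalities alone are strictly weaker than exponential dichotomy without it.
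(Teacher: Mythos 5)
Your proposal is correct and follows essentially the same route as the paper's proof: the forward direction by inserting $X^{-1}(s)X(s)$ and reading off $K_1=K_2=M=K$, and the converse by setting $\xi=X^{-1}(s)\eta$ and using the third bound to pass from trajectory estimates to operator-norm estimates. Your treatment is in fact slightly more careful at the one delicate point, since you explicitly bound $\|X(s)(I-P)X^{-1}(s)\|\leq \|I\|+M$ via $X(s)(I-P)X^{-1}(s)=I-X(s)PX^{-1}(s)$, a step the paper glosses over when it writes the constant $MK_2$.
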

\begin{proof}
Firstly, we will assume that (\ref{LinealC2}) has an exponential dichotomy and (\ref{eq:2.2}) is satisfied. Notice that if $t\geq s$, the first inequality of (\ref{eq:2.2}) allow us to deduce:
\begin{displaymath}
\begin{array}{rcl}
|X(t)P\xi|&=&|X(t)PX^{-1}(s)X(s)P\xi|\\
&\leq & ||X(t)PX^{-1}(s)||\, |X(s)P\xi|\\
&\leq &  K_{1}e^{-\alpha(t-s)}|X(s)P\xi|,
\end{array}
\end{displaymath}
and the first inequality of (\ref{eq:2.3}) follows with $K_{1}=K$. The second inequality can be proved
in a similar way with $K_{2}=K$ while the third inequality follows directly (with $M=K$) by considering $t=s$ in
(\ref{eq:2.2}).

Secondly, assume that the inequalities (\ref{eq:2.3}) are verified. In the case $t\geq s$ we can consider $\xi=X^{-1}(s)\eta$
with $\eta\neq 0$. The first inequality of (\ref{eq:2.3}) implies that
\begin{displaymath}
\begin{array}{rcl}
|X(t)P\xi|=|X(t)PX^{-1}(s)\eta| &\leq & K_{1}e^{-\alpha(t-s)}|X(s)PX^{-1}(s)\eta| \\\\
&\leq & MK_{1}e^{-\alpha(t-s)}|\eta|, 
\end{array}
\end{displaymath}
then for any $t\geq s$, we can deduce that:
\begin{displaymath}
\sup\limits_{\eta\neq 0}\frac{|X(t)PX^{-1}(s)\eta|}{|\eta|}=
||X(t)PX^{-1}(s)||\leq MK_{1}e^{-\alpha(t-s)}. 
\end{displaymath}

Similarly, if $t\leq s$ and considering $\xi=X^{-1}(s)\eta$,
the second inequality of (\ref{eq:2.3}) implies that
\begin{displaymath}
\begin{array}{rcl}
|X(t)[I-P]X^{-1}(s)\eta| &\leq & K_{2}e^{-\alpha(s-t)}|X(s)[I-P]X^{-1}(s)\eta| \\\\
&\leq & MK_{2}e^{-\alpha(t-s)}|\eta|, 
\end{array}
\end{displaymath}
and for any $t\leq s$, we can deduce that:
\begin{displaymath}
\sup\limits_{\eta\neq 0}\frac{|X(t)[I-P]X^{-1}(s)\eta|}{|\eta|}=
||X(t)[I-P]X^{-1}(s)||\leq MK_{2}e^{-\alpha(s-t)},  
\end{displaymath}
then the exponential dichotomy is verified with projector $P$ and the constants $(K,\alpha)$
with $K=\max\{MK_{1},MK_{2}\}$. 
\end{proof}

The above result is stated without proof in \cite[p.11]{Cop}, to fill this gap we follow the lines of \cite[Prop. 3.4]{BFS}.
As pointed out in \cite{Cop}, the third inequality of (\ref{eq:2.3}) is not implied
by the previous ones. Indeed it provides an example of linear system
uniquely satisfying the first two inequalities of (\ref{eq:2.3}) but not having an exponential dichotomy. This example is described in the section of exercises.

Nevertheless, if the property of bounded growth is verified, 
then the third inequality of (\ref{eq:2.3}) is implied by the previous ones as stated by
the following result:
\begin{lemma}\label{C12-C3}
If the linear system \eqref{LinealC2} has a uniform bounded growth on $J$ and there exists a non null projector
$P$ and positive constants $K_{1},K_{2}$ and $\alpha$ such that the first two inequalities of \eqref{eq:2.3} are verified, then
there exists $M>0$ such that the third one is also verified.
\end{lemma}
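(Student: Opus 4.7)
My plan is to prove the equivalent statement $|X(t_{0})P\xi|\leq M|X(t_{0})\xi|$ for all $t_{0}\in J$ and all $\xi\in\mathbb{R}^{n}$; the substitution $\eta=X(t_{0})\xi$ then yields $||X(t_{0})PX^{-1}(t_{0})||\leq M$. The idea is that although the $P$ and $I-P$ components of $\xi$ may appear skewed at $t_{0}$, propagating forward by a fixed horizon $h$ produces geometric growth of the $(I-P)$-piece and geometric decay of the $P$-piece; combined with bounded growth, this forbids a large imbalance at the outset.

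Fix $t_{0}\in J$ and $\xi\in\mathbb{R}^{n}$; let $L,\beta$ denote bounded growth constants (so $|X(t_{0}+h,t_{0})y|\leq Le^{\beta h}|y|$ for $h\geq 0$), and write $x_{1}(t)=X(t)P\xi$, $x_{2}(t)=X(t)(I-P)\xi$, $x(t)=x_{1}(t)+x_{2}(t)$. For $h>0$ with $t_{0}+h\in J$, the first inequality of \eqref{eq:2.3} gives $|x_{1}(t_{0}+h)|\leq K_{1}e^{-\alpha h}|x_{1}(t_{0})|$; the second, applied with $s=t_{0}+h$ and $t=t_{0}$, rearranges to $|x_{2}(t_{0})|\leq K_{2}e^{-\alpha h}|x_{2}(t_{0}+h)|$; bounded growth yields $|x(t_{0}+h)|\leq Le^{\beta h}|x(t_{0})|$. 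Combining these via $|x_{2}(t_{0}+h)|\leq |x(t_{0}+h)|+|x_{1}(t_{0}+h)|$ produces
\[
|x_{2}(t_{0})|\leq K_{2}L\,e^{(\beta-\alpha)h}|x(t_{0})|+K_{1}K_{2}\,e^{-2\alpha h}|x_{1}(t_{0})|.
\]
Bounding $|x_{1}(t_{0})|\leq |x(t_{0})|+|x_{2}(t_{0})|$ on the right and choosing $h$ large enough that $K_{1}K_{2}e^{-2\alpha h}\leq 1/2$ (a fixed choice depending only on $K_{1}, K_{2}, \alpha$) lets me solve for $|x_{2}(t_{0})|\leq C|x(t_{0})|$ with $C$ a constant independent of $t_{0}$ and $\xi$. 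One more triangle inequality yields $|x_{1}(t_{0})|\leq (1+C)|x(t_{0})|$, so $M=1+C$ does the job.

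The main obstacle is uniformity in $t_{0}$: the argument requires $t_{0}+h\in J$, which is automatic when $J$ is unbounded to the right (the typical setting for dichotomy theory), but may fail near a finite right endpoint $T$ of $J$. In that case the remaining set $J\cap[T-h,T]$ is compact, the map $t\mapsto ||X(t)PX^{-1}(t)||$ is continuous, and hence automatically bounded there, so enlarging $M$ absorbs this boundary contribution. The degenerate subcases $x(t_{0})=0$ or $x_{2}(t_{0})=0$ require only a brief check, since each reduces to a trivial identity via the invertibility of $X(t_{0})$.
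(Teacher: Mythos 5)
Your proof is correct, and while it rests on the same mechanism as the paper's (propagate forward over a fixed time window and play the decay of the $P$--component and the growth of the $(I-P)$--component against bounded growth), the execution is genuinely different. The paper argues at the level of the operator norms $\sigma(t)=\Vert X(t)PX^{-1}(t)\Vert$ and $\rho(t)=\Vert X(t)(I-P)X^{-1}(t)\Vert$: it first notes $|\sigma(t)-\rho(t)|\leq \Vert I\Vert$, then bounds the normalized combination $\rho^{-1}(t)X(t+T)(I-P)X^{-1}(t)+\sigma^{-1}(t)X(t+T)PX^{-1}(t)$ from below by $\gamma(T)=K_{2}^{-1}e^{\alpha T}-K_{1}e^{-\alpha T}$ via a reverse triangle inequality and from above by $2\Vert I\Vert K_{0}e^{\beta T}\rho^{-1}(t)$ via bounded growth, which yields a uniform bound on $\rho(t)$ and hence on $\sigma(t)$. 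You instead argue vectorwise: for each $\xi$ you obtain $|x_{2}(t_{0})|\leq K_{2}Le^{(\beta-\alpha)h}|x(t_{0})|+K_{1}K_{2}e^{-2\alpha h}|x_{1}(t_{0})|$, absorb the last term after fixing $h$ with $K_{1}K_{2}e^{-2\alpha h}\leq 1/2$, and then pass to the operator norm through $\eta=X(t_{0})\xi$. Your route buys two small advantages: it never divides by $\rho(t)$ or $\sigma(t)$, so the degenerate situations (notably $P=I$, where $\rho\equiv 0$ and the paper's normalization breaks down, though the claim is then trivial) need no separate treatment, and the uniformity of the final constant in $t_{0}$ and $\xi$ is completely explicit. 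Your caveat about needing $t_{0}+h\in J$ is also fair -- the paper's proof silently assumes $t+T\in J$ as well -- and your compactness patch works for the closed intervals used throughout the book (the lemma is in fact applied on right--unbounded half-lines), though as stated it presumes $J$ contains its finite right endpoint.
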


\begin{proof}
Firstly, let us define the positive maps for any $t\in J$:
\begin{equation*}
%\label{PM1}
\sigma(t)=||X(t)PX^{-1}(t)||  \quad \textnormal{and} \quad \rho(t)=||X(t)[I-P]X^{-1}(t)||.
\end{equation*}

By using triangular inequality, it can be proved that 
\begin{displaymath}
\rho(t)-\sigma(t)\leq ||I|| \quad \textnormal{and} \quad 
\sigma(t)-\rho(t)\leq ||I||,
\end{displaymath}
which implies that
\begin{equation}
\label{PM2}
|\sigma(t)-\rho(t)|\leq ||I|| \quad \textnormal{for any $t\in J$}.
\end{equation}

Now, let us consider $T>0$ and $\eta \neq 0$. By using the first and second inequalities of (\ref{eq:2.3}) with $\xi=X^{-1}(t)\eta$, it follows that
\begin{displaymath}
\begin{array}{rcl}
|X(t+T)PX^{-1}(t)\eta| &\leq & \displaystyle K_{1}e^{-\alpha T}|X(t)PX^{-1}(t)\eta|,
\end{array}
\end{displaymath}
and
\begin{displaymath}
\begin{array}{rcl}
|X(t)[I-P]X^{-1}(t)\eta| &\leq & 
\displaystyle K_{2}e^{-\alpha T}|X(t+T)[I-P]X^{-1}(t)\eta|,
\end{array}
\end{displaymath}
which leads to 
\begin{equation*}
%\label{PM3}
||X(t + T)PX^{-1}(t)||\leq K_{1}e^{-\alpha T}\sigma(t),
\end{equation*}
and
\begin{equation*}
%\label{PM4}
K_{2}^{-1}e^{\alpha T}\rho(t) \leq ||X(t+T)[I-P]X^{-1}(t)||.
\end{equation*}

Let us construct the strictly increasing map 
$$
T\mapsto\gamma(T)=K_{2}^{-1}e^{\alpha T}-K_{1}e^{-\alpha T}
$$
and choose $T$ big enough such that $\gamma(T)>0$. Now, notice that the above estimations imply:
\begin{displaymath}
\begin{array}{rcl}
\gamma(T) &\leq &  ||\rho^{-1}(t)X(t+T)[I-P]X^{-1}(t)||-||\sigma^{-1}(t)X(t+T)PX^{-1}(t)|| \\\\
&\leq & ||\rho^{-1}(t)X(t+T)[I-P]X^{-1}(t)+\sigma^{-1}(t)X(t+T)PX^{-1}(t)|| \\\\
&\leq & ||\rho^{-1}(t)X(t+T,t)\Phi(t)[I-P]X^{-1}(t)+\sigma^{-1}(t)X(t+T,t)X(t)PX^{-1}(t)|| \\\\
&\leq & ||X(t+T,t)||\, ||\rho^{-1}(t)X(t)[I-P]X^{-1}(t)+\sigma^{-1}(t)X(t)PX^{-1}(t)||.
\end{array}
\end{displaymath}

Moreover, we know by hypothesis that (\ref{LinealC2}) has 
a bounded growth on $J$. That is, according to Definition \ref{BGBD-C} from Chapter 1, there exist $K_{0}\geq 1$
and $\beta\geq 0$ such that:
\begin{displaymath}
||X(t+T,t)||\leq K_{0}e^{\beta T}.
\end{displaymath}
Now, by the above inequalities combined with (\ref{PM2}). we have that:
\begin{displaymath}
\begin{array}{rcl}
\gamma(T)K_{0}^{-1}e^{-\beta T} &\leq & ||\rho^{-1}(t)X(t)[I-P]X^{-1}(t)+\sigma^{-1}(t)X(t)PX^{-1}(t)|| \\\\
& = & ||\rho^{-1}(t)I+(\sigma^{-1}(t)-\rho^{-1}(t))X(t)PX^{-1}(t)|| \\\\
& \leq &  \rho^{-1}(t)||I||+|\sigma^{-1}(t)-\rho^{-1}(t)|\sigma(t) \\\\
& = &  \rho^{-1}(t)\left(||I||+|\rho(t)-\sigma(t)|\right) \\\\
&\leq & 2||I||\rho^{-1}(t),
\end{array}
\end{displaymath}
and we have that
$$
\rho(t)\leq \frac{2||I||K_{0}e^{\beta T}}{\gamma(T)}.
$$
Finally by (\ref{PM2}) we have that
$$
||X(t)PX^{-1}(t)|| \leq ||I||\left( 1+  \frac{2K_{0}e^{\beta T}}{\gamma(T)}\right),
$$
and the Lemma follows since the above estimation is valid for any $t\in J$.
\end{proof}

\subsection{Distinguished examples of exponential dichotomy}
\begin{lemma}
\label{ED-Aut-Ex}
If $A(t)=A\in M_{n}(\mathbb{C})$ is a constant matrix and any eigenvalue $\lambda\in\mathbb{C}$ of $A$ has non zero real part, then the autonomous system $\dot{x}=Ax$ has an exponential dichotomy on $\mathbb{R}$.
\end{lemma}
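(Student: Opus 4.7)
The plan is to use the eigenstructure of the constant matrix $A$ to produce the projection $P$ directly. Since $A$ has no eigenvalues on the imaginary axis, the spectrum $\sigma(A)$ splits as $\sigma_s \cup \sigma_u$ with $\sigma_s = \{\lambda \in \sigma(A) : \mathrm{Re}\,\lambda < 0\}$ and $\sigma_u = \{\lambda \in \sigma(A) : \mathrm{Re}\,\lambda > 0\}$. Associated to this splitting I can decompose $\mathbb{C}^n = E^s \oplus E^u$ into the direct sum of the generalized eigenspaces corresponding to $\sigma_s$ and $\sigma_u$ respectively. Both subspaces are $A$-invariant. I would then define $P$ to be the linear projection onto $E^s$ with kernel $E^u$, which is idempotent and, because $E^s$ and $E^u$ are $A$-invariant, commutes with $A$ and hence with $e^{At}$ for every $t \in \mathbb{R}$.

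Next, recalling from Remark \ref{remark-MT2} that the transition matrix of $\dot x = Ax$ is $X(t,s) = e^{A(t-s)}$, the commutation $Pe^{A\tau} = e^{A\tau}P$ gives
\begin{displaymath}
X(t)PX^{-1}(s) = e^{A(t-s)}P, \qquad X(t)(I-P)X^{-1}(s) = e^{A(t-s)}(I-P).
\end{displaymath}
Thus the problem reduces to obtaining exponential estimates for $\|e^{A\tau}P\|$ with $\tau \geq 0$ and for $\|e^{A\tau}(I-P)\|$ with $\tau \leq 0$.

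For these estimates I would pass to a Jordan basis adapted to the decomposition $\mathbb{C}^n = E^s \oplus E^u$: write $S^{-1}AS = \mathrm{diag}(J_s, J_u)$ where the spectra of $J_s$ and $J_u$ lie strictly in the left and right open half-planes, respectively. Fix any constants $\alpha_s, \alpha_u > 0$ with $\alpha_s < -\max\{\mathrm{Re}\,\lambda : \lambda \in \sigma_s\}$ and $\alpha_u < \min\{\mathrm{Re}\,\lambda : \lambda \in \sigma_u\}$. Because polynomial factors of the form $\tau^k$ arising from Jordan blocks are dominated by any strictly larger exponential, a standard computation yields constants $K_s, K_u \geq 1$ such that $\|e^{J_s \tau}\| \leq K_s e^{-\alpha_s \tau}$ for $\tau \geq 0$ and $\|e^{J_u \tau}\| \leq K_u e^{\alpha_u \tau}$ for $\tau \leq 0$. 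Transferring back via $S$ and absorbing $\|S\|\|S^{-1}\|$ into the constants produces the two inequalities of Definition \ref{DICE} with $\alpha = \min\{\alpha_s, \alpha_u\}$ and $K = \|S\|\|S^{-1}\|\max\{K_s, K_u\}$, valid on $J = \mathbb{R}$.

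The main obstacle here is justifying the Jordan-block estimate $\|e^{J\tau}\| \leq C e^{(\mathrm{Re}\,\lambda + \varepsilon)\tau}$: writing $J = \lambda I + N$ with $N$ nilpotent of index at most the block size, one has $e^{J\tau} = e^{\lambda\tau}\sum_{k=0}^{m-1}\tfrac{\tau^k N^k}{k!}$, so a polynomial in $\tau$ multiplies $e^{\lambda\tau}$; the point is to absorb this polynomial into a slightly worse exponential, which is routine but is the only nontrivial ingredient. Everything else reduces to linear algebra and the commutation $PA=AP$.
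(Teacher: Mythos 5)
Your proof is correct, and it takes essentially the same route as the paper's, with one notable difference of scope. The paper's proof is carried out only for diagonalizable $A$: it chooses a diagonalizing matrix $Q$, works with the diagonal system $\dot{y}=Dy$, obtains the dichotomy estimates there with the canonical projection $P_k$, and then transfers them back via $X(t)=QY(t)$; for non-diagonalizable $A$ the paper stops and sends the reader to an external reference where the projection is built as a Riesz projector. You instead work with the generalized eigenspace splitting $\mathbb{C}^n = E^s\oplus E^u$ from the start, take $P$ to be the projection onto $E^s$ along $E^u$, exploit the commutation $PA=AP$ (so $Pe^{A\tau}=e^{A\tau}P$), and then estimate $\|e^{J_s\tau}\|$ and $\|e^{J_u\tau}\|$ in a Jordan basis, absorbing the polynomial factors from nilpotent blocks into a slightly worse exponential. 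This is the standard way to make the argument cover the general case in a single stroke, so your proof is in fact more self-contained than the paper's. The only ingredient you would need to write out in full is the Jordan-block estimate you already sketch: $e^{J\tau}=e^{\lambda\tau}\sum_{k<m}\tau^kN^k/k!$ gives $\|e^{J\tau}\|\le C(1+|\tau|^{m-1})e^{(\mathrm{Re}\,\lambda)\tau}$, and since $\mathrm{Re}\,\lambda+\alpha_s<0$ for blocks in $J_s$ the polynomial is dominated by $e^{-\alpha_s\tau}$ up to a constant; the mirrored statement holds for $J_u$ with $\tau\le 0$. Everything else (the decomposition, the commutation, the base change by $S$) is exactly the mechanism the paper uses in the diagonalizable case.
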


\begin{proof}
Firstly, we will assume that $A$ is a diagonalizable matrix with eigenvalues $\lambda_{j}=\alpha_{j}+i\beta_{j}\in \mathbb{C}$ with $j=1\ldots,n$ whose real parts satisfy
\begin{equation}
\label{ine+DE}
\alpha_{1}\leq \ldots\leq\alpha_{k}<0
<\alpha_{k+1}\ldots\leq \alpha_{n}.
\end{equation}

Then, there exists an invertible matrix $Q$ such that 
\begin{displaymath}
Q^{-1}AQ=D=\left(\begin{array}{cccc}
\lambda_{1} &        &     \\
            &\ddots  &     \\
            &        & \lambda_{n} 
            \end{array}\right),
\end{displaymath}
and $y=Q^{-1}x$ transforms $\dot{x}=Ax$ into the diagonal system
\begin{equation}
\label{diagonal-ED-ex}
\dot{y}=Dy,
\end{equation}
which has the following fundamental matrix
\begin{displaymath}
Y(t)=\left[\begin{array}{cc}
Y_{1}(t) & 0\\
0 & Y_{2}(t)
\end{array}\right],      
\end{displaymath}
where the matrices $Y_{1}(t)\in M_{k}(\mathbb{C})$ and $Y_{2}(t)\in M_{n-k}(\mathbb{C})$ are respectively defined by
\begin{displaymath}
Y_{1}(t)=\left[\begin{array}{ccc}
e^{\lambda_{1}t} &        &     \\
            &\ddots  &     \\
&  & e^{\lambda_{k}t}
            \end{array}\right]
            \quad \textnormal{and} \quad
  Y_{2}(t)=\left[\begin{array}{ccc}
e^{\lambda_{k+1}t} &        &     \\
            &\ddots  &     \\
            &        & e^{\lambda_{n}t}
            \end{array}\right].          
\end{displaymath}

Let $I_{k}$ be the identity matrix of $M_{k}(\mathbb{C})$ and consider the canonical projection
\begin{displaymath}
P_{k}=\left[\begin{array}{cc}
I_{k} & 0\\
0 & 0
\end{array}\right].      
\end{displaymath}

By using the matrix norm $||\cdot||_{1}$ combined with (\ref{ine+DE}) we can see that if $t\geq s$ then
\begin{displaymath}
\begin{array}{rcl}
||Y(t)P_{k}Y^{-1}(s)||_{1}&=&\max\{e^{-|\alpha_{1}|(t-s)},\ldots,e^{-|\alpha_{k}|(t-s)}\}\\
&=& e^{-|\alpha_{k}|(t-s)} \quad \textnormal{if $t\geq s \quad t,s\in \mathbb{R}$}.
\end{array}
\end{displaymath}

Similarly, if $t\leq s$ we can see that
\begin{displaymath}
\begin{array}{rcl}
||Y(t)[I-P_{k}]Y^{-1}(s)||_{1}&=&\max\{e^{\alpha_{k+1}(t-s)},\ldots,e^{\alpha_{n}(t-s)}\}\\
&=&
\max\{e^{-\alpha_{k+1}(s-t)},\ldots,e^{-\alpha_{n}(s-t)}\}\\
&=& e^{-|\alpha_{k+1}|(s-t)} \quad \textnormal{if $s\geq t \quad t,s\in \mathbb{R}$}.
\end{array}
\end{displaymath}

Let $\alpha=\min\{|\alpha_{k}|,\alpha_{k+1}\}$ and note that
\begin{displaymath}
\left\{\begin{array}{rccr}
||Y(t)P_{k}Y^{-1}(s)||_{1}&\leq  e^{-\alpha(t-s)} & \textnormal{if} & t\geq s,\quad t,s\in \mathbb{R}\\
||Y(t)(I-P_{k})Y^{-1}(s)||_{1} &\leq e^{-\alpha(s-t)} & \textnormal{if} & t \leq s, \quad t,s \in \mathbb{R},
\end{array}\right.
\end{displaymath}
then we can conclude that the diagonal system (\ref{diagonal-ED-ex}) has an exponential dichotomy on $\mathbb{R}$.

It is an easy exercise to show that $X(t)=QY(t)$ is a fundamental matrix of $\dot{x}=Ax$. Then we can deduce
\begin{displaymath}
\begin{array}{rcl}
||X(t)P_{k}X^{-1}(s)||_{1}&=&||QY(t)P_{k}Y^{-1}(s)Q^{-1}||_{1}\\\\
&\leq & ||Q||_{1} \,||Q^{-1}||_{1}\,||X(t)P_kX^{-1}(s)||_{1}\\\\
&\leq & ||Q||_{1}||Q^{-1}||_{1}e^{-\alpha(t-s)} \quad t\geq s,
\end{array}
\end{displaymath}
and similarly
\begin{displaymath}
\begin{array}{rcl}
||X(t)[I-P_{k}]X^{-1}(s)||_{1}
&\leq & ||Q||_{1}||Q^{-1}||_{1}e^{-\alpha(s-t)} \quad s\geq t,
\end{array}
\end{displaymath}
which can be summarized as
\begin{displaymath}
\left\{\begin{array}{rccr}
||X(t)P_{k}X^{-1}(s)||_{1}&\leq  ||Q||_{1}||Q^{-1}||_{1}e^{-\alpha(t-s)} & \textnormal{if} & t\geq s,\quad t,s\in \mathbb{R}\\
||X(t)(I-P_{k})X^{-1}(s)||_{1} &\leq ||Q||_{1}||Q^{-1}||_{1}e^{-\alpha(s-t)} & \textnormal{if} & t \leq s, \quad t,s \in \mathbb{R}
\end{array}\right.
\end{displaymath}
and the exponential dichotomy on $\mathbb{R}$ follows.

When the matrix is not diagonalizable we refer the reader to \cite[pp.214--215]{CK} where the proof follows a less constructive approach which is based in the Riesz's projector.

\end{proof}

The method used to proof the above Lemma (in the diagonalizable case) illustrates a particular case of the following result:
\begin{theorem}
\label{KSDE}
If the linear system \eqref{LinealC2} has an exponential dichotomy on $J$ and is generally kinematically similar to
\begin{equation}
\label{LinealC2B}
y'=B(t)y \quad \textnormal{with $t\in J$,}
\end{equation}
then \eqref{LinealC2B} has an exponential dichotomy on $J$ with the same projection $P$ and rate $\alpha$.
\end{theorem}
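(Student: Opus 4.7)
The plan is to use the fact that general kinematical similarity provides an explicit bridge between fundamental matrices of \eqref{LinealC2} and \eqref{LinealC2B}, and then transport the dichotomy estimates along this bridge. By Remark \ref{SCR}, if $X(t)$ is a fundamental matrix of \eqref{LinealC2}, then $Y(t) := Q^{-1}(t)X(t)$ is a fundamental matrix of \eqref{LinealC2B}, so that
$$
Y(t)PY^{-1}(s) = Q^{-1}(t)X(t)PX^{-1}(s)Q(s),
$$
and similarly for $I-P$. This is the key identity that will reduce the proof to a one-line norm estimate.

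First, I would use the hypothesis of general kinematical similarity (Definition \ref{wsimilarite}) to extract a constant $M>0$ such that $\|Q(t)\|\le M$ and $\|Q^{-1}(t)\|\le M$ for all $t\in J$. Then, applying the submultiplicativity of the operator norm to the identity above and using the first inequality of \eqref{eq:2.2} for the system \eqref{LinealC2}, I obtain
$$
\|Y(t)PY^{-1}(s)\| \le \|Q^{-1}(t)\|\,\|X(t)PX^{-1}(s)\|\,\|Q(s)\| \le M^{2}K\,e^{-\alpha(t-s)}, \qquad t\ge s,
$$
with $t,s\in J$. An identical computation with $I-P$ in place of $P$, invoking the second inequality of \eqref{eq:2.2}, yields
$$
\|Y(t)(I-P)Y^{-1}(s)\| \le M^{2}K\,e^{-\alpha(s-t)}, \qquad t\le s,
$$
with $t,s\in J$. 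Thus \eqref{LinealC2B} admits an exponential dichotomy on $J$ with projection $P$, rate $\alpha$, and constant $\tilde{K}=M^{2}K$, proving the claim.

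There is essentially no hard step here: the fact that the rate $\alpha$ is preserved follows because the boundedness of $Q$ and $Q^{-1}$ only affects the multiplicative constant, not the exponential rate. The only conceptual point worth emphasising is that we insist on keeping the same projection $P$, which is legitimate precisely because Remark \ref{SCR} tells us $Y(t)=Q^{-1}(t)X(t)$ (with no additional change of basis), so the projection does not get conjugated. If one started with a different fundamental matrix $\tilde{Y}(t)=Y(t)V$, then by Remark \ref{ChoB} the projection would be replaced by the similar projection $V^{-1}PV$, but the dichotomy constants and rate would be unaffected.
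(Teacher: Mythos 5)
Your proof is correct and follows essentially the same route as the paper: both pass through the identity $Y(t)=Q^{-1}(t)X(t)$, so that $Y(t)PY^{-1}(s)=Q^{-1}(t)X(t)PX^{-1}(s)Q(s)$, and then transfer the estimates \eqref{eq:2.2} using the boundedness of $Q$ and $Q^{-1}$, which changes only the multiplicative constant (your $M^{2}K$ versus the paper's $K\|Q\|_{\infty}\|Q^{-1}\|_{\infty}$) and not the rate $\alpha$ or the projection $P$.
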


\begin{proof}
If (\ref{LinealC2}) and (\ref{LinealC2B}) are generally kinematically similar by the transformation $Q(t)$. It is an easy exercise to prove that if $X(t)$ is the fundamental matrix of (\ref{LinealC2}), then $Y(t)=Q^{-1}(t)X(t)$ is the fundamental matrix of (\ref{LinealC2B}).

Now, let us assume that
$t\geq s$, $t,s\in J$ and $P$ is the projection of Definition \ref{DICE}. By using the boundedness of $Q$ and $Q^{-1}$ we can see that
\begin{displaymath}
\begin{array}{rcl}
||Y(t)PY^{-1}(s)||&=&||Q^{-1}(t)X(t)PX^{-1}(s)Q(s)||\\\\
&\leq & ||Q(s)|| \,||Q^{-1}(t)||\,||X(t)PX^{-1}(s)||\\\\
&\leq & ||Q||_{\infty}||Q^{-1}||_{\infty}Ke^{-\alpha(t-s)} \quad t\geq s.
\end{array}
\end{displaymath}

The inequality for the case $t\leq s$ can be proved similarly.
\end{proof}

\begin{corollary}
If $t\mapsto A(t)$ is continuous and  $A(t+\omega)=A(t)$ for any $t\in \mathbb{R}$, then the system $\dot{x}=A(t)x$ has an exponential dichotomy on $\mathbb{R}$ if the Floquet multipliers are not in the unit circle.
\end{corollary}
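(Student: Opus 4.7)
The plan is to chain together three results already established: Floquet's theorem on reduction to an autonomous system (Theorem \ref{Teo-Floquet}), the exponential dichotomy for autonomous hyperbolic systems (Lemma \ref{ED-Aut-Ex}), and the invariance of the dichotomy property under kinematical similarity (Theorem \ref{KSDE}).

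First I would invoke Theorem \ref{Teo-Floquet}: under the continuity and $\omega$--periodicity of $A(\cdot)$, the system $\dot{x}=A(t)x$ is classically kinematically similar to the autonomous system $\dot{y}=Dy$, where $D=\frac{1}{\omega}\ln X(\omega,0)$. In particular the monodromy matrix satisfies $X(\omega,0)=e^{D\omega}$, so by the spectral mapping theorem the Floquet multipliers $\{\rho_j\}$ and the eigenvalues $\{\lambda_j\}$ of $D$ are related by $\rho_j=e^{\lambda_j\omega}$.

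Next I would translate the hypothesis on the multipliers into a hyperbolicity condition on $D$: since $|\rho_j|=e^{\omega\,\mathrm{Re}(\lambda_j)}$, the assumption that no $\rho_j$ lies on the unit circle is equivalent to $\mathrm{Re}(\lambda_j)\neq 0$ for every eigenvalue $\lambda_j$ of $D$. Applying Lemma \ref{ED-Aut-Ex} then yields that the autonomous system $\dot{y}=Dy$ has an exponential dichotomy on $\mathbb{R}$.

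Finally, I would appeal to Theorem \ref{KSDE}: since $\dot{x}=A(t)x$ and $\dot{y}=Dy$ are (classically, hence generally) kinematically similar on $\mathbb{R}$, the exponential dichotomy transfers from the latter to the former, concluding the proof. There is really no serious obstacle here; the only mild subtlety worth mentioning in the write-up is that, although $D$ may be a complex matrix while $A(t)$ is real, this causes no problem because the Floquet reduction and Theorem \ref{KSDE} are stated in the complex setting from Definition \ref{similarite}, and Lemma \ref{ED-Aut-Ex} is itself formulated for $A\in M_n(\mathbb{C})$.
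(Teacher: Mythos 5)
Your proposal is correct and follows essentially the same route as the paper: Floquet reduction to $\dot{y}=Dy$ (Theorem \ref{Teo-Floquet}), hyperbolicity of $D$ from the hypothesis on the multipliers giving a dichotomy via Lemma \ref{ED-Aut-Ex}, and transfer back by kinematical similarity through Theorem \ref{KSDE}. The only difference is that you spell out, via the spectral mapping relation $\rho_j=e^{\lambda_j\omega}$, the step the paper dismisses as "a technical exercise of matrix functions," which is a welcome addition rather than a deviation.
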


\begin{proof} 
Without loss of generality, let us assume that the fundamental matrix is normalized, \textit{i.e.}, $X(0)=I$. Then by Floquet's theory we know that 
$X(t)=Q(t)e^{Dt}$ where  $t\mapsto Q(t)$ is $\omega$--periodic and $D\in\mathbb{C}^{n\times n}$ satisfies
$$
D=\frac{1}{\omega}\ln X(\omega).
$$

As the Floquet's multipliers are the eigenvalues of $X(\omega)$, it is a technical exercise of matrix functions to prove that the eigenvalues of $D$ cannot be purely imaginary and by Lemma \ref{ED-Aut-Ex} we have that the autonomous system
$$
\dot{y}=Dy
$$
has an exponential dichotomy on $\mathbb{R}$.

Finally, by Theorem \ref{Teo-Floquet} from Chapter 1, we know that the above system is classically kinematically similar to $\dot{x}=A(t)x$, the Theorem \ref{KSDE} implies that the $\omega$--periodic system
has an exponential dichotomy on $\mathbb{R}$ and the result follows.
\end{proof}

\subsection{An alternative characterization for the exponential dichotomy}

\begin{theorem}
\label{TCA}
Let $X(t)$ be a fundamental matrix of the linear system \eqref{LinealC2}. This system has an exponential dichotomy on $J$ with projection 
$P(\cdot)$ and positive constants $K$,$\alpha$ if and only if 
\begin{equation}
\label{AltChar}
\left\{\begin{array}{rccr}
||X(t,s)P(s)||&\leq K e^{-\alpha(t-s)} & \textnormal{if} & t\geq s,\quad t,s\in J\\
||X(t,s)[I-P(s)]|| &\leq Ke^{-\alpha(s-t)} & \textnormal{if} & t \leq s, \quad t,s \in J,
\end{array}\right.
\end{equation}
where $t\mapsto P(t)$ is also a projection for any $t\in J$ such that
\begin{equation}
\label{invariancia} 
P(t)X(t,s)=X(t,s)P(s)
\end{equation}
and the rank of $P(t)$ and $P$ are identical for any $t\in J$.
\end{theorem}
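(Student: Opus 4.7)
The strategy is to show that the two formulations are interconvertible via the algebraic identity $P(t)=X(t)PX^{-1}(t)$, i.e.\ by conjugating the constant projection with the fundamental matrix. Both directions then amount to a few lines of bookkeeping.

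\emph{From Definition \ref{DICE} to \eqref{AltChar}.} Starting from an exponential dichotomy in the sense of Definition \ref{DICE}, with fundamental matrix $X(t)$ and constant projection $P$, the plan is to define
\[
P(t) := X(t)\,P\,X^{-1}(t), \qquad t\in J.
\]
Since $P(t)^{2}=X(t)P^{2}X^{-1}(t)=P(t)$, each $P(t)$ is a projection, and because $P(t)$ is similar to $P$, one has $\operatorname{rank} P(t)=\operatorname{rank} P$ for every $t\in J$. The invariance \eqref{invariancia} is immediate, since
\[
P(t)X(t,s)=X(t)PX^{-1}(t)X(t)X^{-1}(s)=X(t)PX^{-1}(s)=X(t,s)P(s),
\]
and the very same chain of equalities gives $X(t,s)P(s)=X(t)PX^{-1}(s)$ together with $X(t,s)[I-P(s)]=X(t)[I-P]X^{-1}(s)$. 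Plugging these into the exponential estimates \eqref{eq:2.2} produces exactly \eqref{AltChar}, with the same constants $K,\alpha$.

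\emph{From \eqref{AltChar} to Definition \ref{DICE}.} Conversely, suppose a family $\{P(t)\}_{t\in J}$ of projections of constant rank satisfies \eqref{invariancia} and \eqref{AltChar}. Fixing a base point $s_{0}\in J$, the plan is to set
\[
P := X^{-1}(s_{0})\,P(s_{0})\,X(s_{0}),
\]
which is a constant projection since $P^{2}=X^{-1}(s_{0})P(s_{0})^{2}X(s_{0})=P$. Applying the invariance \eqref{invariancia} with the pair $(s,s_{0})$ yields $P(s)=X(s,s_{0})P(s_{0})X(s_{0},s)=X(s)PX^{-1}(s)$ for every $s\in J$. Substituting this identity back into \eqref{AltChar} gives exactly the estimates \eqref{eq:2.2} of Definition \ref{DICE}, with the same constants and with $\operatorname{rank} P=\operatorname{rank} P(s_{0})$, which by hypothesis equals $\operatorname{rank} P(t)$ for all $t$.

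\emph{Main obstacle.} There is essentially no obstacle: the whole argument rests on the conjugation formula $P(t)=X(t)PX^{-1}(t)$ and on the fact that similarity preserves rank and idempotence. The only point that truly requires the hypothesis \eqref{invariancia} in the reverse direction is that the constant $P$ built from the single time $s_{0}$ automatically agrees with the given family at every other time; if the invariance were dropped, this reconstruction would collapse and the two formulations would no longer be equivalent.
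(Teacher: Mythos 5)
Your proposal is correct and follows essentially the same route as the paper: the forward direction via $P(t)=X(t)PX^{-1}(t)$, and the reverse direction by using the invariance to pin the family to a single constant projection (the paper normalizes at $0$, you use a general base point $s_0$, which is an immaterial difference). Your rank argument via similarity is even slightly more direct than the paper's basis-transport argument, but the content is the same.
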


\begin{proof}
$(\Rightarrow$ Let us assume that the linear system (\ref{LinealC2}) has an exponential dichotomy on
$J$ described by (\ref{eq:2.2}). Now, we construct the matrix
$$
P(t)=X(t)PX^{-1}(t) \quad \textnormal{for any $t\in J$}.
$$

Firstly, it is easy to verify that $P(t)$ is a projection for any $t\in J$, moreover
\begin{displaymath}
\begin{array}{rcl}
P(t)X(t,s)&=& X(t)PX^{-1}(s)\\\\
&=& X(t,s)X(s)PX^{-1}(s)\\\\
&=& X(t,s)P(s),
\end{array}
\end{displaymath}
and (\ref{invariancia}) is satisfied. 

A direct byproduct of the above identities are
$$
X(t)PX^{-1}(s)=X(t,s)P(s)
\quad \textnormal{and} \quad 
 X(t)[I-P]X^{-1}(s)=X(t,s)[I-P(s)],
$$ 
then, the inequalities (\ref{eq:2.2}) imply (\ref{AltChar}).

Finally, a direct consequence of (\ref{invariancia}) is that if
$\{\xi_{1},\xi_{2},\ldots,\xi_{d}\}$ is a basis of $\ker P(s)$ then $\{X(t,s)\xi_{i}\}_{i=1}^{d}$ is a basis of $\ker P(t)$, this is implies
$\text{Rank} P(t)$ is invariant for any $t\in J$ and consequently is equal to
$\text{Rank}\, P(0)=\text{Rank}\, X(0)PX^{-1}(0)=\text{Rank} P$ since $X(0)$ is invertible.

\medskip

\noindent $\Leftarrow)$ On the other hand, let us assume that the transition matrix of the linear system
(\ref{LinealC2}) is such that the inequalities (\ref{AltChar}) are verified and the projections $P(\cdot)$ verify (\ref{invariancia}). 

Without loss of generality, we will consider a normalized fundamental matrix $X(t)$, then we can see that
\begin{displaymath}
\begin{array}{rcl}
X(t,s)P(s)&=&X(t,0)X(0,s)P(s)\\\\
&=&X(t,0)P(0)X(0,s)\\\\
&=&X(t)P(0)X^{-1}(s),
\end{array}
\end{displaymath}
while the identity $X(t,s)[I-P(s)]=X(t)[I-P(0)]X^{-1}(s)$ can be deduced in a similar
way and the inequalities (\ref{AltChar}) imply (\ref{eq:2.2}) with projection $P(0)$. 
\end{proof}

\begin{remark}
As $t\mapsto \text{Rank} P(t)$ is constant for any $t\in J$, it is said in the literature
that $P(\cdot)$ satisfying (\ref{invariancia}) is an \textit{invariant projection}.
\end{remark}

\begin{remark}
\label{UNIpro}
The above theorem allows to introduce the \textit{uniquess of projector problem}:
Given a fundamental matrix $X(t)$ of a linear system (\ref{LinealC2})
having an exponential dichotomy on $J$, the Theorem \ref{TCA} only ensures
the existence of a projector $P(\cdot)$. Is this
projector unique? The answer is dependent of the interval $J$.
\end{remark}

In spite that the above characterization of exponential dichotomy by variable projections is known,
there exists few detailed explanations in the literature. Among them we highlight \cite[Prop.7.4]{CLR} and \cite{Chu}. We stress that, in \cite{CLR}, the exponential dichotomy with constant projections is called as \textit{Coppel's dichotomy}.

This alternative characterization of the exponential dichotomy allows to carry out a simple proof
of the following result:
\begin{lemma}
\label{M-alpha}
If the linear system \eqref{LinealC2} has the property of bounded growth $\&$ decay on $J$:
$$
||X(t,s)||\leq K_{0}e^{M|t-s|} \quad \textnormal{where $J=[t_{0},+\infty)$}
$$
and moreover
has an exponential dichotomy on the interval $J$ with constants $K$ and $\alpha$ then it follows that $\alpha \leq M$.
\end{lemma}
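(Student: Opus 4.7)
The plan is to use the invariant-projection characterization of the exponential dichotomy from Theorem \ref{TCA}, extract a scalar quantity $|X(t,t_{0})\xi|$ with a controlled exponential behavior on $[t_{0},+\infty)$, and then confront this behavior with the upper bound $K_{0}e^{M(t-t_{0})}$ provided by the bounded growth \& decay hypothesis. The comparison of exponents as $t\to+\infty$ will force $\alpha\leq M$.

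First I would fix $s=t_{0}\in J$ and split the discussion according to whether the invariant projector $P(t_{0})$ coincides with $I$ or not. In the generic case $P(t_{0})\neq I$, I would pick a vector $\xi\neq 0$ with $(I-P(t_{0}))\xi=\xi$. The invariance identity $P(t)X(t,t_{0})=X(t,t_{0})P(t_{0})$ from Theorem \ref{TCA} guarantees that the evolved vector $v:=X(t,t_{0})\xi$ lies in the range of $I-P(t)$ for every $t\geq t_{0}$. Applying the second dichotomy inequality to $X(t_{0},t)v=X(t_{0},t)(I-P(t))v$ and using $X(t_{0},t)v=\xi$, I would deduce
\[
|\xi|\leq K e^{-\alpha(t-t_{0})}\,|X(t,t_{0})\xi|,
\]
that is, the lower bound $|X(t,t_{0})\xi|\geq K^{-1}e^{\alpha(t-t_{0})}|\xi|$. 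Confronting this with the upper estimate $|X(t,t_{0})\xi|\leq K_{0}e^{M(t-t_{0})}|\xi|$ coming from bounded growth \& decay yields $e^{(\alpha-M)(t-t_{0})}\leq KK_{0}$ for every $t\geq t_{0}$, and letting $t\to+\infty$ forces $\alpha\leq M$.

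The degenerate case $P(t_{0})=I$ (no unstable direction at $t_{0}$) is handled by a symmetric argument. Now the first dichotomy inequality gives $\|X(t,t_{0})\|\leq Ke^{-\alpha(t-t_{0})}$ for $t\geq t_{0}$. Picking any $\eta\neq 0$ and writing $\eta=X(t_{0},t)X(t,t_{0})\eta$, bounded growth \& decay applied to $\|X(t_{0},t)\|\leq K_{0}e^{M(t-t_{0})}$ yields $|\eta|\leq KK_{0}\,e^{(M-\alpha)(t-t_{0})}|\eta|$, and taking $t\to+\infty$ again forces $\alpha\leq M$.

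The main obstacle is the lower-bound step in the first case. The definition of exponential dichotomy provides only upper bounds, so extracting a lower bound on the forward flow requires the combined use of the invariance of $P(\cdot)$ along the flow and the identity $X(t_{0},t)=X(t,t_{0})^{-1}$ in order to reinterpret an upper estimate for the backward evolution as a lower estimate for the forward evolution of an unstable vector. Once this trick is in place, the rest of the proof is simply a comparison of exponential rates and a limit as $t\to+\infty$.
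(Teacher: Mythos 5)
Your proof is correct, but it is organized differently from the paper's. The paper establishes a single unified estimate
\[
1=\Vert X(t,s)X(s,t)\Vert\le \Vert X(t,s)P(s)\Vert\,\Vert X(s,t)\Vert+\Vert X(t,s)\Vert\,\Vert X(s,t)[I-P(t)]\Vert\le 2KK_{0}e^{(M-\alpha)(t-s)},
\]
valid for any projector, by inserting $I=P(s)+(I-P(s))$ between $X(t,s)$ and $X(s,t)$ and then applying the dichotomy bounds together with bounded growth \emph{and} bounded decay. Your argument works at the vector level instead: when $P(t_{0})\neq I$ you pick an unstable vector, use the backward dichotomy estimate together with invertibility to get a \emph{lower} exponential bound on the forward evolution, and then confront it with the upper bound from bounded growth; the degenerate case $P(t_{0})=I$ is handled separately by comparing $\Vert X(t,t_{0})\Vert\le Ke^{-\alpha(t-t_{0})}$ with the bounded-decay estimate $\Vert X(t_{0},t)\Vert\le K_{0}e^{M(t-t_{0})}$. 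Your approach has the conceptual advantage of exhibiting concretely which direction grows too fast, and it reveals that only one of the two bounded-growth-and-decay inequalities is actually needed in each case; the paper's approach avoids a case split and yields the conclusion in one stroke for every rank of $P$. Both are valid, so no correction is needed.
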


\begin{proof}
By using properties of matrix norms verifying $||I||=1$ combined with the invariance property (\ref{invariancia}), we can deduce that
\begin{displaymath}
\begin{array}{rcl}
1 & = & ||X(t,s)X(s,t)||\\\\
& = & ||X(t,s)[P(s)+I-P(s)]X(s,t)||\\\\
&\leq &  ||X(t,s)P(s)||\, ||X(s,t)||+||X(t,s)||\,||X(s,t)[I-P(t)]||.
\end{array}
\end{displaymath}

Now, without loss of generality, we will assume that $t\geq s$. By using the property of exponential dichotomy combined with the bounded growth growth,
we can deduce that for any $t\geq s$ with $t,s\in J$ it follows that: 
\begin{displaymath}
\begin{array}{rcl}
1 &\leq &  K_{0}e^{M(t-s)}||X(t,s)P(s)||+K_{0}e^{M(t-s)}\,||X(s,t)[I-P(t)]||\\\\
&\leq & KK_{0}e^{M(t-s)}e^{-\alpha(t-s)}+KK_{0}e^{M(t-s)}e^{-\alpha(t-s)}
\end{array}
\end{displaymath}
and we conclude that
$$
1\leq 2KK_{0}e^{(M-\alpha)(t-s)} \quad \textnormal{for any $t\geq s$ with $t,s\in J$}.
$$

We have either $M\geq \alpha$ or $M<\alpha$, if the last inequality is verified, by letting $t\to +\infty$ we will have that
$1\leq 0$ obtaining a contradiction and the result follows.
\end{proof}

In spite that the above result is rather intuitive, it is difficult to find a formal proof in the literature. In some cases, as for example \cite[p.823]{Shi}, this result is stated without proof.

Finally, we point out that this alternative characterization allow us to revisit Theorem \ref{KSDE} which states that the kinematical
similarity preserves the exponential dichotomy with the same projection and rate.

\begin{theorem}
%\label{KSDE-bis}
If the linear system \eqref{LinealC2} is generally kinematically similar to
\begin{equation}
\label{LinealC2B-bis}
y'=B(t)y \quad \textnormal{with $t\in J$},
\end{equation}
via the transformation $y=Q^{-1}(t)x$ and has an exponential dichotomy on $J$ with constants $(K,\alpha)$ and 
invariant projector $P(t)$, then \eqref{LinealC2B-bis} has an exponential dichotomy on $J$ with 
projections  $\tilde{P}(t)=Q^{-1}(t)P(t)Q(t)$ and constants $(K||Q||_{\infty}||Q^{-1}||_{\infty},\alpha)$.
\end{theorem}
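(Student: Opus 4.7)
The plan is to apply the alternative characterization of exponential dichotomy provided by Theorem \ref{TCA} to the transition matrix of \eqref{LinealC2B-bis}, with variable projector $\tilde{P}(\cdot)$ given in the statement. First, by Remark \ref{SCR}, if $X(t)$ is a fundamental matrix of \eqref{LinealC2}, then $Y(t)=Q^{-1}(t)X(t)$ is a fundamental matrix of \eqref{LinealC2B-bis}, so the corresponding transition matrices satisfy
\begin{displaymath}
Y(t,s)=Y(t)Y^{-1}(s)=Q^{-1}(t)X(t,s)Q(s) \quad \textnormal{for any $t,s\in J$.}
\end{displaymath}

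Next I would verify the algebraic prerequisites of Theorem \ref{TCA}. A direct computation inserting $Q(t)Q^{-1}(t)=I$ and using $P(t)^{2}=P(t)$ shows that $\tilde{P}(t)^{2}=\tilde{P}(t)$, and one checks that $I-\tilde{P}(t)=Q^{-1}(t)[I-P(t)]Q(t)$. Since $Q(t)$ is invertible, $\textnormal{Rank}\,\tilde{P}(t)=\textnormal{Rank}\,P(t)$, which is constant on $J$ by Theorem \ref{TCA} applied to \eqref{LinealC2}. The invariance of $\tilde{P}(\cdot)$ along $Y(t,s)$ is obtained by inserting the cancellations $Q(s)Q^{-1}(s)=I$ and applying the invariance identity $P(t)X(t,s)=X(t,s)P(s)$:
\begin{displaymath}
\tilde{P}(t)Y(t,s)=Q^{-1}(t)P(t)X(t,s)Q(s)=Q^{-1}(t)X(t,s)P(s)Q(s)=Y(t,s)\tilde{P}(s).
\end{displaymath}

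Finally, the decay estimates are straightforward: by the same insertion trick,
\begin{displaymath}
Y(t,s)\tilde{P}(s)=Q^{-1}(t)X(t,s)P(s)Q(s),\quad Y(t,s)[I-\tilde{P}(s)]=Q^{-1}(t)X(t,s)[I-P(s)]Q(s),
\end{displaymath}
and then submultiplicativity of the operator norm, together with the dichotomy estimates on $X(t,s)P(s)$ and $X(t,s)[I-P(s)]$ furnished by Theorem \ref{TCA} applied to \eqref{LinealC2}, yields
\begin{displaymath}
||Y(t,s)\tilde{P}(s)||\leq ||Q^{-1}||_{\infty}||Q||_{\infty}Ke^{-\alpha(t-s)} \quad \textnormal{for $t\geq s$,}
\end{displaymath}
and the symmetric estimate for $t\leq s$. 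Theorem \ref{TCA} then gives the exponential dichotomy for \eqref{LinealC2B-bis} with constants $(K||Q||_{\infty}||Q^{-1}||_{\infty},\alpha)$ and variable projection $\tilde{P}(t)$.

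The main obstacle is nothing more than careful bookkeeping: one must insert the identity matrices $Q(s)Q^{-1}(s)=I$ and $Q(t)Q^{-1}(t)=I$ at precisely the right places so that the invariance property of $P(\cdot)$ applies to the inner factor $X(t,s)$, and so that the uniform bounds on $Q$ and $Q^{-1}$ leave the exponential rate $\alpha$ undisturbed and enter only as the multiplicative prefactor $||Q||_{\infty}||Q^{-1}||_{\infty}$. In essence, this is the variable--projector transcription of the argument already used in Theorem \ref{KSDE}.
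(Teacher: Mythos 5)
Your proof is correct and follows essentially the same route as the paper's: compute $Y(t,s)=Q^{-1}(t)X(t,s)Q(s)$, check that $\tilde{P}(t)$ is a projector whose invariance $\tilde{P}(t)Y(t,s)=Y(t,s)\tilde{P}(s)$ follows from $P(t)X(t,s)=X(t,s)P(s)$, and then obtain the decay estimates by submultiplicativity with the prefactor $\|Q\|_{\infty}\|Q^{-1}\|_{\infty}$. Your explicit verification of the rank constancy and the clean bookkeeping in the norm estimate are, if anything, slightly more careful than the paper's write-up.
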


\begin{proof}
Notice that $\tilde{P}^{2}(t)=\tilde{P}(t)$ can be deduced straightforwardly. Moreover, if $X(t)$ is a fundamental matrix of (\ref{LinealC2}) then we know that $Y(t)=Q^{-1}(t)X(t)$ is a fundamental matrix
of (\ref{LinealC2B-bis}) and the identity $X(t,s)=Q(t)Y(t,s)Q^{-1}(s)$ is fulfilled. 

By using the above identity combined with the invariance of the projector $P(t)$
described by (\ref{invariancia}), we can deduce that
\begin{displaymath}
\begin{array}{rcl}
\tilde{P}(t)Y(t,s) & = & Q^{-1}(t)P(t)Q(t)Y(t,s)Q^{-1}(s)Q(s) \\
                  &  = & Q^{-1}(t)P(t)X(t,s)Q(s) \\
                  & =  & Q^{-1}(t)X(t,s)P(s)Q(s) \\
                  & =  & Y(t,s)Q^{-1}(s)P(s)Q(s) \\
                  & = &  Y(t,s)\tilde{P}(s),
\end{array}
\end{displaymath}
and the invariance of $\tilde{P}(s)$ follows. 

If $t\geq s$, the above estimations together with the first inequality of (\ref{AltChar})
allow us to deduce that
\begin{displaymath}
\begin{array}{rcl}
||Y(t,s)\tilde{P}(s)|| & = & ||Y(t,s)Q(s)P(s)Q^{-1}(s)|| \\
                       & = & ||Q(s)X(t,s)P(s)Q^{-1}(s)|| \\ 
                       & \leq & K||Q||_{\infty}||Q^{-1}||_{\infty}||X(t,s)P(s)|| \\
                       & \leq & K||Q||_{\infty}||Q^{-1}||_{\infty}e^{-\alpha(t-s)}
\end{array}
\end{displaymath}
whereas
$$
||Y(t,s)[I-\tilde{P}(s)]||\leq K||Q||_{\infty}||Q^{-1}||_{\infty}e^{-\alpha(s-t)}
$$
can be deduced in a similar way when $t\leq s$.
\end{proof}

\section{Exponential dichotomy on the full line}

In this section we will study the Definition \ref{DICE} of exponential dichotomy
considering $J=\mathbb{R}$, this case will allow to take a look at two consequences of the hyperbolicity, namely, the properties of splitting of solutions and admissibility from a nonautonomous point of view.  

\subsection{Splitting of solutions}

\begin{theorem}
\label{prop-split}
If $\dot{x}=A(t)x$ has an exponential dichotomy on $\mathbb{R}$ with projector $P(\cdot)$ and positive constants $K$ and $\alpha$, then any solution
$t\mapsto x(t,t_{0},\xi):=X(t,t_{0})\xi$ passing through $\xi$ at $t=t_{0}$
can be splitted as follows: 
\begin{equation}
\label{splitting}
x(t,t_{0},\xi)=\underbrace{X(t,t_{0})P(t_{0})\xi}_{:=x^{+}(t,t_{0},\xi)} + \underbrace{X(t,t_{0})[I-P(t_{0})]\xi}_{:=x^{-}(t,t_{0},\xi)}
\end{equation}
such that for any $t\geq t_{0}$ it follows that
\begin{subequations}
  \begin{empheq}{align}
    & |x^{+}(t,t_{0},\xi)|\leq Ke^{-\alpha (t-t_{0})}|P(t_{0})\xi| , \label{DE+I1}\\
    & \frac{|Q(t_{0})\xi|}{K}e^{\alpha(t-t_{0})}\leq |x^{-}(t,t_{0},\xi)|,\label{DE+I2}
  \end{empheq}
\end{subequations}
where $Q(t_{0})=I-P(t_{0})$. Morevoer, for any $t \leq t_{0}$ it follows that
\begin{subequations}
  \begin{empheq}{align}
    & \frac{|P(t_{0})\xi|}{K}e^{\alpha(t_{0}-t)}\leq |x^{+}(t,t_{0},\xi)| , \label{DE-I1}\\
    & |x^{-}(t,t_{0},\xi)|\leq Ke^{\alpha(t-t_{0})}|Q(t_{0})\xi|.\label{DE-I2}
  \end{empheq}
\end{subequations}
\end{theorem}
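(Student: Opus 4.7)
The plan is to derive the splitting identity directly from the decomposition $\xi=P(t_{0})\xi+[I-P(t_{0})]\xi$ of the initial condition, and then to obtain each of the four inequalities by combining the invariance property $P(t)X(t,s)=X(t,s)P(s)$ from Theorem \ref{TCA} with the estimates \eqref{AltChar}. The splitting \eqref{splitting} follows at once by applying $X(t,t_{0})$ to both summands and using the linearity of the transition matrix.

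For the two upper bounds \eqref{DE+I1} and \eqref{DE-I2}, I would first use idempotence of $P(t_{0})$ and $Q(t_{0})=I-P(t_{0})$ to rewrite
\[
x^{+}(t,t_{0},\xi)=X(t,t_{0})P(t_{0})\bigl[P(t_{0})\xi\bigr],\qquad x^{-}(t,t_{0},\xi)=X(t,t_{0})[I-P(t_{0})]\bigl[Q(t_{0})\xi\bigr].
\]
Then for $t\geq t_{0}$, the first estimate of \eqref{AltChar} yields
$|x^{+}(t,t_{0},\xi)|\leq \|X(t,t_{0})P(t_{0})\|\,|P(t_{0})\xi|\leq Ke^{-\alpha(t-t_{0})}|P(t_{0})\xi|$, which is \eqref{DE+I1}; the case $t\leq t_{0}$ for \eqref{DE-I2} proceeds identically using the second estimate of \eqref{AltChar}.

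For the lower bounds \eqref{DE+I2} and \eqref{DE-I1}, the idea is to invert the flow. Using the invariance \eqref{invariancia}, the splitting components satisfy $P(t)x^{+}(t,t_{0},\xi)=x^{+}(t,t_{0},\xi)$ and $[I-P(t)]x^{-}(t,t_{0},\xi)=x^{-}(t,t_{0},\xi)$. For \eqref{DE+I2} with $t\geq t_{0}$, I would apply $X(t_{0},t)$ to $x^{-}(t,t_{0},\xi)$: on one hand this yields $Q(t_{0})\xi$ by the cocycle property, and on the other hand, inserting $[I-P(t)]$ (which acts as the identity on $x^{-}$) and using the second inequality of \eqref{AltChar} with the roles of $t$ and $s$ exchanged ($t_{0}\leq t$), one gets $|Q(t_{0})\xi|\leq Ke^{-\alpha(t-t_{0})}|x^{-}(t,t_{0},\xi)|$, which rearranges to the claim. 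The bound \eqref{DE-I1} is completely symmetric: for $t\leq t_{0}$, apply $X(t_{0},t)$ to $x^{+}(t,t_{0},\xi)=P(t)x^{+}(t,t_{0},\xi)$ and invoke the first inequality of \eqref{AltChar} with $(t_{0},t)$ in place of $(t,s)$.

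No step is really an obstacle; the only subtle point is bookkeeping the direction of the exponential factors, making sure one applies \eqref{AltChar} with the right ordering of arguments, and remembering that the invariance \eqref{invariancia} lets one write $X(t,t_{0})P(t_{0})=P(t)X(t,t_{0})$ so that the projections can be moved freely from the initial to the current fiber. Once this is set, the four inequalities follow by the same two-line manipulations applied in the appropriate direction of time.
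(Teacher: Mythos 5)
Your proposal is correct and follows essentially the same route as the paper's own proof: upper bounds via idempotence of the projectors together with the estimates in \eqref{AltChar}, and lower bounds by applying $X(t_{0},t)$ to the relevant component, inserting the projector via the invariance \eqref{invariancia}, and invoking \eqref{AltChar} with the time arguments exchanged. The bookkeeping you flag as the only subtlety is indeed the crux, and you have handled it correctly.
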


\begin{proof}
Firstly note that by using $\xi=P(t_{0})\xi+[I-P(t_{0})]\xi$, the decomposition
$x(t,t_{0},\xi)=x^{+}(t,t_{0},\xi)+x^{-}(t,t_{0},\xi)$ is straightforwardly deduced.

Let us assume that $t\geq t_{0}$. Then, by using the matrix norm definition combined with $P(t_{0})\xi=P^{2}(t_{0})\xi $ and the first inequality of (\ref{AltChar}) with $s=t_{0}$ and $J=\mathbb{R}$, it follows that
\begin{displaymath}
|x^{+}(t,t_{0},\xi)|\leq ||X(t,t_{0})P(t_{0})||\,|P(t_{0})\xi| \leq Ke^{-\alpha(t-t_{0})}|P(t_{0})\xi|,
\end{displaymath}
and (\ref{DE+I1}) is deduced. On the other hand, as $Q(\cdot)$ is an invariant projection, notice that
\begin{displaymath}
\begin{array}{rcl}
Q(t_{0})\xi&=&X(t_{0},t)X(t,t_{0})Q(t_{0})\xi \\\\
 &=&X(t_{0},t)X(t,t_{0})Q(t_{0})Q(t_{0})\xi \\\\
 &=&X(t_{0},t)Q(t)X(t,t_{0})Q(t_{0})\xi\\\\
 &=&X(t_{0},t)Q(t)x^{-}(t,t_{0},\xi)\\\\
\end{array}
\end{displaymath}

Now, by using the matrix norm definition and the second inequality if (\ref{AltChar})
with $(t_{0},t)$ instead of $(t,s)$, we have that
\begin{displaymath}
|Q(t_{0})\xi|\leq ||X(t_{0},t)Q(t)||\, |x^{-}(t,t_{0},\xi)|\leq Ke^{-\alpha(t-t_{0})}|x^{-}(t,t_{0},\xi)|
\end{displaymath}
and (\ref{DE+I2}) can be easily deduced.

The proof of the inequalities (\ref{DE-I1})--(\ref{DE-I2})
can be made by following the lines of the previous one and is left for the reader.
\end{proof}

\begin{remark}
\label{ext-split}
Despite the above theorem considered an exponential dichotomy on $\mathbb{R}$ as assumption. A careful reading of the proof will show that a similar splitting of solutions can be obtained if we consider an exponential dichotomy on
$\mathbb{R}_{0}^{+}$ or $\mathbb{R}_{0}^{-}$.
\end{remark}

As Theorem \ref{prop-split} ensures the decomposition of any nontrivial solution $t\mapsto x(t,t_{0},\xi):=X(t,t_{0})\xi$ of a system (\ref{LinealC2}) having the property of exponential 
dichotomy on the real line, we will see some consequences:

\noindent $\bullet$ The solutions 
$$
x^{+}(t,t_{0},\xi)=X(t,t_{0})P(t_{0})\xi
\quad \textnormal{and} \quad  x^{-}(t,t_{0},\xi)=X(t,t_{0})Q(t_{0})\xi
$$
have an asymptotic behavior dependent of the splitting of any initial condition: observe that $t\mapsto x^{+}(t,t_{0},\xi)$ has an exponential decay to the origin (with rate $\alpha$) when $t\to +\infty$, while $t\mapsto x^{-}(t,t_{0},\xi)$ has an exponential growth (with rate $\alpha$)
when $t\mapsto +\infty$. This \textit{dichotomic} asymptotic behavior with exponential rate motivates the use of the name exponential dichotomy.

\noindent $\bullet$ The above behavior is reversed when $t\to -\infty$.

\noindent $\bullet$ The exponential growth or decay of any solution --at rate $\alpha$-- above mentioned is dependent of the difference $t-t_{0}$ but it is independent of the initial time $t_{0}$. Due to this fact, several authors use the concept of \textit{uniform exponential dichotomy} in order to emphasize the uniformity with respect to the initial time.

\noindent $\bullet$ Last but not least, an important consequence of 
Theorem \ref{prop-split} is the following result:
\begin{corollary}
\label{u-bounded}
If the system $\dot{x}=A(t)x$ has an exponential dichotomy on $\mathbb{R}$, then the unique solution bounded on $\mathbb{R}$ is the trivial one.
\end{corollary}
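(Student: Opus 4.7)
The plan is to invoke Theorem \ref{prop-split} and argue by contradiction: take a solution $t\mapsto x(t)=X(t,t_0)\xi$ bounded on $\mathbb{R}$, decompose it as $x=x^{+}+x^{-}$ via the invariant projector, and show that each of the pieces $P(t_0)\xi$ and $Q(t_0)\xi=[I-P(t_0)]\xi$ must vanish, forcing $\xi=0$.

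First I would fix $t_0\in\mathbb{R}$ and write $\xi=P(t_0)\xi+Q(t_0)\xi$, so that by Theorem \ref{prop-split} the solution splits as $x(t)=x^{+}(t,t_0,\xi)+x^{-}(t,t_0,\xi)$ with the four estimates (\ref{DE+I1})--(\ref{DE-I2}) available. For $t\geq t_0$ the estimates give
\begin{displaymath}
|x(t)|\geq |x^{-}(t,t_0,\xi)|-|x^{+}(t,t_0,\xi)|\geq \frac{|Q(t_0)\xi|}{K}e^{\alpha(t-t_0)}-K|P(t_0)\xi|e^{-\alpha(t-t_0)},
\end{displaymath}
so if $Q(t_0)\xi\neq 0$ the right-hand side diverges as $t\to+\infty$, contradicting boundedness; hence $Q(t_0)\xi=0$. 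Symmetrically, for $t\leq t_0$ the estimates (\ref{DE-I1})--(\ref{DE-I2}) yield
\begin{displaymath}
|x(t)|\geq |x^{+}(t,t_0,\xi)|-|x^{-}(t,t_0,\xi)|\geq \frac{|P(t_0)\xi|}{K}e^{\alpha(t_0-t)}-K|Q(t_0)\xi|e^{-\alpha(t_0-t)},
\end{displaymath}
whose right-hand side diverges as $t\to -\infty$ unless $P(t_0)\xi=0$.

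Combining both conclusions, $\xi=P(t_0)\xi+Q(t_0)\xi=0$, so $x\equiv 0$. There is no genuine obstacle here; the only point deserving attention is the reverse triangle inequality step, where one must make sure that the exponentially growing component in the split really forces $|x(t)|$ itself (not merely one of its pieces) to blow up. The exponential decay of the other component, ensured by (\ref{DE+I1}) and (\ref{DE-I2}), makes this routine, and no additional bound on $\|P(t)\|$ is needed.
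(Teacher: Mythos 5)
Your proposal is correct and follows essentially the same route as the paper: both rely on the splitting of Theorem \ref{prop-split} and the estimates (\ref{DE+I1})--(\ref{DE-I2}) to force $P(t_{0})\xi=Q(t_{0})\xi=0$ for a bounded solution. The only difference is that you spell out the reverse triangle inequality step explicitly, which the paper leaves implicit when it passes from the unboundedness of $x^{\pm}$ to the unboundedness of the full solution.
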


\begin{proof}
Let $t\mapsto x(t,t_{0},\xi)$ be a nontrivial solution of the linear system. Then, by the above comments, we have that  $t\mapsto x^{+}(t,t_{0},\xi)$ and $t\mapsto x^{-}(t,t_{0},\xi)$
must be unbounded at $-\infty$ and $+\infty$ respectively, which implies that any nontrivial solution cannot be bounded on $\mathbb{R}$.
\end{proof}

The above mentioned splitting of the solutions of a linear system $\dot{x}=A(t)x$ having an exponential dichotomy on $\mathbb{R}$ also provides a decomposition of the initial conditions as direct sum of
two subspaces which is reminiscent to the description of the hyperbolicity condition in the 
autonomous case. In fact, for any initial time $t_{0}$, we can define the $\mathbb{R}$--vector subspaces $\mathcal{V}(t_{0})$
and $\mathcal{W}(t_{0})$ as follows:
\begin{displaymath}
\mathcal{V}(t_{0})=\left\{\xi\in \mathbb{R}^{n}\colon t\mapsto |X(t,t_{0})\xi| \quad \textnormal{is bounded in $[t_{0},+\infty)$}\right\}
\end{displaymath}
and
\begin{displaymath}
\mathcal{W}(t_{0})=\left\{\xi\in \mathbb{R}^{n}\colon t\mapsto |X(t,t_{0})\xi| \quad \textnormal{is bounded in $(-\infty,t_{0}]$}\right\}.
\end{displaymath}

The following result describes the relation between these subspaces with the projector $P(t_{0})$:
\begin{lemma}
\label{subpespacios-t_0}
Let $X(t)$ be a fundamental matrix of \eqref{LinealC2}. If this system has an exponential dichotomy on $\mathbb{R}$ with projection $P(\cdot)$ and positive constants $K$ and $\alpha$, then the above defined subspaces  $\mathcal{V}(t_{0})$ and $\mathcal{W}(t_{0})$ verify
\begin{displaymath}
\textnormal{\text{Im}}P(t_{0})=\mathcal{V}(t_{0}), \quad \ker P(t_{0})=\mathcal{W}(t_{0}) \quad \textnormal{and} \quad \mathbb{R}^{n}=\mathcal{V}(t_{0})\oplus \mathcal{W}(t_{0}). 
\end{displaymath}

Moreover, the corresponding subspaces $\mathcal{V}(t)$ 
and $\mathcal{W}(t)$ are well defined for any $t\in \mathbb{R}$ and verify
\begin{displaymath}
\dim \mathcal{V}(t)=\dim\mathcal{V}(t_{0}) \quad \textnormal{and} \quad
\dim \mathcal{W}(t)=\dim\mathcal{W}(t_{0}) \quad \textnormal{for any $t\in \mathbb{R}$}.
\end{displaymath}
\end{lemma}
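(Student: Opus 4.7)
The plan is to identify $\mathcal{V}(t_{0})$ with $\mathrm{Im}\,P(t_{0})$ and $\mathcal{W}(t_{0})$ with $\ker P(t_{0})$ by exploiting the two-sided estimates of Theorem \ref{prop-split}. Once these identifications are in hand, the direct sum decomposition is automatic from the fact that any projection $P(t_{0})$ induces $\mathbb{R}^{n}=\ker P(t_{0})\oplus \mathrm{Im}\,P(t_{0})$, and the dimension claims follow immediately from Theorem \ref{TCA}, which guarantees that $\mathrm{Rank}\,P(t)$ is constant on $\mathbb{R}$.

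To prove $\mathrm{Im}\,P(t_{0})\subseteq \mathcal{V}(t_{0})$, I take $\xi$ with $P(t_{0})\xi=\xi$ and observe that $X(t,t_{0})\xi=x^{+}(t,t_{0},\xi)$, so the upper bound \eqref{DE+I1} gives exponential decay on $[t_{0},+\infty)$, whence boundedness. For the reverse inclusion $\mathcal{V}(t_{0})\subseteq \mathrm{Im}\,P(t_{0})$, I split an arbitrary $\xi\in\mathcal{V}(t_0)$ as $\xi=P(t_{0})\xi+Q(t_{0})\xi$ with $Q(t_{0})=I-P(t_{0})$, so that $X(t,t_{0})\xi=x^{+}(t,t_{0},\xi)+x^{-}(t,t_{0},\xi)$. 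Since $x^{+}$ decays by \eqref{DE+I1} and $X(t,t_{0})\xi$ is bounded on $[t_{0},+\infty)$ by hypothesis, the difference $x^{-}(t,t_{0},\xi)$ is also bounded there. But the lower estimate \eqref{DE+I2} forces $K^{-1}|Q(t_{0})\xi|\,e^{\alpha(t-t_{0})}$ to remain bounded for all $t\geq t_{0}$, and this is possible only if $Q(t_{0})\xi=0$, yielding $\xi\in \mathrm{Im}\,P(t_{0})$. The identification $\mathcal{W}(t_{0})=\ker P(t_{0})$ is entirely symmetric: the same splitting argument now uses the backward estimates \eqref{DE-I1} and \eqref{DE-I2} on $(-\infty,t_{0}]$, with the roles of $P$ and $Q$ interchanged.

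For the final claim, the argument above applied with an arbitrary $t$ in place of $t_{0}$ gives $\mathcal{V}(t)=\mathrm{Im}\,P(t)$ and $\mathcal{W}(t)=\ker P(t)$, hence $\dim \mathcal{V}(t)=\mathrm{Rank}\,P(t)$ and $\dim \mathcal{W}(t)=n-\mathrm{Rank}\,P(t)$. Theorem \ref{TCA} establishes the invariance of $\mathrm{Rank}\,P(t)$ on $\mathbb{R}$, so both dimensions are independent of $t$. I do not anticipate a genuine obstacle: all the heavy lifting has already been done in Theorem \ref{prop-split} and Theorem \ref{TCA}. The only mildly delicate step is to keep track of which inequality applies on which side of $t_{0}$, so that each lower bound is exploited to force nullity of a projected component while the matching upper bound supplies the boundedness needed for the easy inclusion.
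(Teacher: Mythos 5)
Your proof is correct and follows essentially the same route as the paper: both use the forward estimate \eqref{DE+I1} for the easy inclusion $\textnormal{Im}\,P(t_{0})\subseteq\mathcal{V}(t_{0})$, the exponential lower bound \eqref{DE+I2} to force $Q(t_{0})\xi=0$ for the reverse inclusion (the paper phrases this as a contradiction, you phrase it directly, but the content is identical), the symmetric backward estimates for $\mathcal{W}(t_{0})=\ker P(t_{0})$, and Theorem \ref{TCA} for the constancy of the rank. The one place you genuinely streamline is the direct-sum step: you correctly observe that once $\mathcal{V}(t_{0})=\textnormal{Im}\,P(t_{0})$ and $\mathcal{W}(t_{0})=\ker P(t_{0})$ are established, the decomposition $\mathbb{R}^{n}=\mathcal{V}(t_{0})\oplus\mathcal{W}(t_{0})$ is immediate from the algebraic identity $\mathbb{R}^{n}=\ker P\oplus\textnormal{Im}\,P$ for any projection, whereas the paper derives $\mathcal{V}(t_{0})\cap\mathcal{W}(t_{0})=\{0\}$ via Corollary \ref{u-bounded} and then invokes the Rank--Nullity theorem. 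Both are valid; yours avoids re-invoking the dynamical boundedness corollary, while the paper's route has the pedagogical merit of illustrating once more how the no-bounded-nontrivial-solutions property governs the geometry.
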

\begin{proof}
Let $\xi \in \text{Im}P(t_{0})$, then $P(t_{0})\xi=\xi$ and assume that $t\geq t_{0}$. By using inequality (\ref{DE+I1}), we have that
$$
|X(t,t_{0})\xi|=|X(t,t_{0})P(t_{0})\xi| \leq Ke^{-\alpha(t-t_{0})}|\xi|\leq K|\xi|,
$$
and the boundedness of $t\mapsto X(t,t_{0})\xi$ on $[t_{0},+\infty)$ follows, then $\xi\in \mathcal{V}(t_{0})$ and $\text{Im}P(t_{0})\subseteq \mathcal{V}(t_{0})$ is verified.

Now let $\xi \in \mathcal{V}(t_{0})$ and assume that $t\geq t_{0}$.
The proof of $\xi \in \text{Im}P(t_{0})$ will be made by contradiction. Indeed, if $\xi \notin \text{Im}P(t_{0})$, we can deduce the property $[I-P(t_{0})]\xi\neq 0$ and, by using triangle's inequality, we have that:
\begin{displaymath}
|X(t,t_{0})[I-P(t_{0})]\xi|\leq |X(t,t_{0})\xi|+|X(t,t_{0})P(t_{0})\xi|.
\end{displaymath}

Note that the left term is unbounded on $[t_{0},+\infty)$ since (\ref{DE+I2}), while the right terms are bounded on $[t_{0},+\infty)$, the first term is bounded by hypothesis whereas the second one is bounded due to (\ref{DE+I1}), obtaining a contradiction. Then $\mathcal{V}(t_{0})\subseteq \text{Im}P(t_{0})$ and the identity of both sets follows.

\medskip
In order to prove that $\ker P(t_{0})=\mathcal{W}(t_{0})$, let $\xi \in \ker P(t_{0})$, then $\xi=[ I-P(t_{0})]\xi$. As we are assuming that $t\leq t_{0}$ the inequality (\ref{DE-I1}) leads to
$$
|X(t,t_{0})\xi|=|X(t,t_{0})[I-P(t_{0})]\xi| \leq Ke^{-\alpha(t_{0}-t)}|\xi|\leq K|\xi|,
$$
then the boundedness of $t\mapsto X(t,t_{0})\xi$ on $(-\infty,t_{0}]$ follows and $\xi\in \mathcal{W}(t_{0})$ which implies that $\ker P(t_{0})\subseteq \mathcal{W}(t_{0})$.

Now let $\xi \in \mathcal{W}(t_{0})$ and assume that $t\leq t_{0}$. As before, the proof of $\xi \in \ker P(t_{0})$ will be made by contradiction. Indeed, if $\xi \notin \ker P(t_{0})$, it follows that the property $P(t_{0})\xi\neq 0$ and, by using triangle's inequality, we have that:
\begin{displaymath}
|X(t,t_{0})P(t_{0})\xi| \leq |X(t,t_{0})\xi|+|X(t,t_{0})[I-P(t_{0})]\xi|
\end{displaymath}

Note that the left term is unbounded on $(-\infty,t_{0}]$ since (\ref{DE-I1}), while the right terms are bounded on $(-\infty,t_{0}]$, the first since $\xi\in \mathcal{W}(t_{0})$ and the second for (\ref{DE-I2}), obtaining a contradiction. Then $\mathcal{W}(t_{0})\subseteq \text{Im}P(t_{0})$ and the identity of both sets follows.

\medskip

Note that $\mathcal{V}(t_{0})\cap \mathcal{W}(t_{0})=\{0\}$. In fact, if $\xi \in \mathcal{V}(t_{0})\cap \mathcal{W}(t_{0})$, we will have that $t\mapsto X(t,t_{0})\xi$ is solution of the linear system and is  bounded on $\mathbb{R}$, then Corollary \ref{u-bounded} imply that $\xi=0$. This fact, combined with the Rank--Nullity Theorem imply that $\mathbb{R}^{n}=\mathcal{V}(t_{0})\oplus \mathcal{W}(t_{0})$.

Finally, we know by Theorem \ref{TCA} that $\dim \textnormal{Im}P(t)$ is constant for
any $t\in \mathbb{R}$, then it follows that that the dimension of the subspaces $\mathcal{V}(t_{0})$ and $\mathcal{W}(t_{0})$ is not dependent of the initial time $t_{0}$.
\end{proof}

By using Theorem \ref{TCA} we can see that as similar result can be obtained when considering constant
projectors
\begin{lemma}
\label{subpespacios-cte}
Let $X(t)$ be a fundamental matrix of \eqref{LinealC2}. If this system has an exponential dichotomy on $\mathbb{R}$ with projector $P$ and positive constants $K$ and $\alpha$, then if follows that:
\begin{displaymath}
\textnormal{\text{Im}}P=\mathcal{V}, \quad \ker P=\mathcal{W}\quad \textnormal{and} \quad \mathbb{R}^{n}=\mathcal{V}\oplus \mathcal{W},
\end{displaymath}
where 
\begin{displaymath}
\mathcal{V}=\left\{\xi\in \mathbb{R}^{n}\colon t\mapsto |X(t,0)\xi| \quad \textnormal{is bounded in $[0,+\infty)$}\right\}
\end{displaymath}
and
\begin{displaymath}
\mathcal{W}=\left\{\xi\in \mathbb{R}^{n}\colon t\mapsto |X(t,0)\xi| \quad \textnormal{is bounded in $(-\infty,0]$}\right\}.
\end{displaymath}
\end{lemma}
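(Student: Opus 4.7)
The plan is to reduce Lemma \ref{subpespacios-cte} directly to the already-established Lemma \ref{subpespacios-t_0} via the constant-to-invariant projector bridge furnished by Theorem \ref{TCA}.

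First I would observe that the sets $\mathcal{V}$ and $\mathcal{W}$ depend only on the transition matrix $X(t,0)$, not on the particular fundamental matrix used to define it; replacing $X(t)$ by $X(t)V$ for any invertible constant $V$ leaves $X(t,0)$ unchanged. This freedom allows me to work with a normalized fundamental matrix satisfying $X(0)=I$, without altering the statement.

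Second, with this normalization in hand, Theorem \ref{TCA} upgrades the constant projection $P$ of Definition \ref{DICE} to the invariant projection $P(t):=X(t)PX^{-1}(t)$, which obeys the characterization (\ref{AltChar}) with the same constants $K$ and $\alpha$. Evaluating at $t=0$ and using $X(0)=I$ yields $P(0)=P$, so in particular $\textnormal{Im}\,P(0)=\textnormal{Im}\,P$ and $\ker P(0)=\ker P$.

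Third, I would apply Lemma \ref{subpespacios-t_0} with $t_{0}=0$, which gives at once $\textnormal{Im}\,P(0)=\mathcal{V}(0)$, $\ker P(0)=\mathcal{W}(0)$, and $\mathbb{R}^{n}=\mathcal{V}(0)\oplus\mathcal{W}(0)$. Since $\mathcal{V}(0)$ and $\mathcal{W}(0)$ coincide verbatim with the sets $\mathcal{V}$ and $\mathcal{W}$ defined in the statement of Lemma \ref{subpespacios-cte}, and since $P(0)=P$ by the normalization, the three identities of the lemma follow immediately.

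There is no substantive obstacle here; the proof is essentially a bookkeeping exercise assembling two prior results. The only mild subtlety is the normalization step: for a fundamental matrix with $X(0)=C\neq I$, the constant projector from Definition \ref{DICE} and the invariant projector $P(0)$ from Theorem \ref{TCA} would differ by the similarity $CPC^{-1}$, and the clean identity $\textnormal{Im}\,P=\mathcal{V}$ would have to be rewritten as $C\cdot\textnormal{Im}\,P=\mathcal{V}$. Passing to a normalized fundamental matrix at the outset eliminates this nuisance, which is precisely what the hint in the paper alludes to by saying the result is obtained ``By using Theorem \ref{TCA}\,''.
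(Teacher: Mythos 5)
Your proposal is correct and follows essentially the route the paper intends: it leaves this lemma as an exercise with the hint ``By using Theorem \ref{TCA}'', i.e.\ pass from the constant projector to the invariant projector $P(t)=X(t)PX^{-1}(t)$ and invoke Lemma \ref{subpespacios-t_0} at $t_{0}=0$. Your explicit handling of the normalization $X(0)=I$ (and the remark that otherwise the identity becomes $X(0)\cdot\textnormal{Im}\,P=\mathcal{V}$) is a welcome precision that the paper glosses over.
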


The above result provides the answer to a question stated in Remark \ref{R1ED-bis}.

\subsection{Admissibility}

The property of exponential dichotomy on the full line will allow us to
generalize the Proposition \ref{admi} to the nonautonomous framework. In order to do that,
we need to introduce the Green function associated to the dichotomy:
\begin{definition}
\label{Green2}
Let $X(t)$ be fundamental matrix of \eqref{LinealC2}. If this system has an exponential dichotomy on $\mathbb{R}$ with projector $P(\cdot)$, its \textbf{Green function} is defined by
\begin{displaymath}
\mathcal{G}(t,s)=\left\{\begin{array}{rcl}
X(t)PX^{-1}(s)&\quad \textnormal{if} \quad & t\geq s,\\
-X(t)(I-P)X^{-1}(s)& \quad \textnormal{if} \quad & t \leq s.\\
\end{array}\right.
\end{displaymath}
\end{definition}

Obviously, if we work with the alternative characterization of exponential
dichotomy stated in subsection 2.2, the Green function is
\begin{displaymath}
\mathcal{G}(t,s)=\left\{\begin{array}{rcl}
X(t,s)P(s)&\quad \textnormal{if} \quad & t\geq s,\\
-X(t,s)[I-P(s)]& \quad \textnormal{if} \quad & t \leq s.\\
\end{array}\right.
\end{displaymath}

A remarkable consequence of Corollary \ref{u-bounded}, and consequently, of exponential
dichotomy on $\mathbb{R}$ is the following result:
\begin{proposition}
\label{boundeness}
If the linear system \textnormal{(\ref{LinealC2})} has an exponential dichotomy on $\mathbb{R}$
with constants $K$,$\alpha$ and projector $P(\cdot)$, then for each bounded continous function $t\mapsto g(t)$, the inhomogeneous system
\begin{equation}
\label{perturbado}
z'=A(t)z+g(t)
\end{equation}
has a unique solution $x_{g}^{*}\in BC(\mathbb{R},\mathbb{R}^{n})$ defined by
$$
x_{g}^{*}(t)=\int_{-\infty}^{\infty}\mathcal{G}(t,s)g(s)\,ds.
$$

Moreover, the map $g\mapsto x_{g}^{*}$ is Lipschitz and verifies:
$$
|x_{g}^{*}|_{\infty}\leq \frac{2K}{\alpha}|g|_{\infty}.
$$
\end{proposition}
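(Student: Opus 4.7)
The plan is to take the formula in the statement as a candidate, verify it is well-defined and satisfies the quantitative bound by purely estimating the two halves of the Green kernel, then show it solves the inhomogeneous equation by differentiation under the integral, and finally deduce uniqueness from Corollary \ref{u-bounded}.

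First I would split
$$
x_g^{*}(t)=\int_{-\infty}^{t}X(t,s)P(s)g(s)\,ds\;-\;\int_{t}^{+\infty}X(t,s)[I-P(s)]g(s)\,ds,
$$
and apply directly the two inequalities \eqref{AltChar} together with $|g(s)|\leq |g|_{\infty}$. On the first piece I get $|X(t,s)P(s)g(s)|\leq Ke^{-\alpha(t-s)}|g|_{\infty}$ for $s\leq t$, so its integral is dominated by $\frac{K}{\alpha}|g|_{\infty}$; the second piece is handled symmetrically. This simultaneously proves absolute convergence of both improper integrals (so $x_g^{*}$ is well-defined and continuous) and yields $|x_g^{*}|_{\infty}\leq \frac{2K}{\alpha}|g|_{\infty}$, which is the announced estimate.

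Second I would verify that $x_g^{*}$ is a solution of \eqref{perturbado}. Using Remark \ref{remark-MT} (so that $\partial_t X(t,s)=A(t)X(t,s)$) together with Leibniz's rule at the moving endpoint, the derivative of the first integral produces the boundary term $P(t)g(t)$ and the bulk term $A(t)\int_{-\infty}^{t}X(t,s)P(s)g(s)\,ds$, while the derivative of the second integral produces the boundary term $+[I-P(t)]g(t)$ (with a sign because $t$ is the lower endpoint, minus the outer minus sign) and an analogous bulk term. Adding these,
$$
\dot{x}_g^{*}(t)=\bigl(P(t)+[I-P(t)]\bigr)g(t)+A(t)x_g^{*}(t)=A(t)x_g^{*}(t)+g(t),
$$
which is exactly what is needed. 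The only subtlety to justify carefully is the differentiation under the improper integral, which follows from the exponential dominating functions $Ke^{-\alpha|t-s|}|g|_{\infty}$ established in the first step.

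Third, uniqueness follows immediately: if $z_{1},z_{2}\in BC(\mathbb{R},\mathbb{R}^{n})$ are two bounded solutions of \eqref{perturbado}, their difference $z_{1}-z_{2}$ is a bounded solution of the homogeneous system \eqref{LinealC2}, and by Corollary \ref{u-bounded} it must vanish identically. For the Lipschitz assertion, linearity of the defining integral in $g$ gives $x_{g_1}^{*}-x_{g_2}^{*}=x_{g_1-g_2}^{*}$, so the bound of the first step applied to $g_1-g_2$ yields $|x_{g_1}^{*}-x_{g_2}^{*}|_{\infty}\leq \frac{2K}{\alpha}|g_1-g_2|_{\infty}$. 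I do not foresee a genuine obstacle; the only part requiring care is the interchange of derivative and improper integral in step two, but the exponential dichotomy estimates provide the integrable majorants needed to apply a standard dominated convergence argument.
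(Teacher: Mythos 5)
Your proposal is correct and follows essentially the same approach as the paper: establish the bound via the exponential dichotomy estimates, verify the formula solves the equation, and conclude uniqueness from Corollary \ref{u-bounded} applied to the difference of two bounded solutions. Two small points of comparison: the paper's Step 1 leaves the verification that $x_g^*$ solves \eqref{perturbado} as "easy to prove," whereas you fill this in explicitly via Leibniz differentiation under the integral sign (which is indeed justified by the exponential majorants you produced); and in the uniqueness step the paper takes an arbitrary bounded solution $x$, rewrites it via variation of parameters as $X(t,0)\{x(0)-x_1+x_2\}+x_g^*(t)$ before invoking Corollary \ref{u-bounded}, whereas you observe directly that $z_1-z_2$ solves the homogeneous system — your version is the streamlined form of the paper's argument, since the explicit identification of the constant $x(0)-x_1+x_2$ is not logically necessary once one knows $x_g^*$ is itself a bounded solution.
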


\begin{proof}

\noindent\textit{Step 1:} It is easy to prove that
\begin{displaymath}
x_{g}^{*}(t)=\int_{-\infty}^{\infty}\mathcal{G}(t,s)g(s)\,ds
\end{displaymath}
is a bounded solution of (\ref{perturbado}). Moreover, given a pair of functions
$g_{1},g_{2}\in B(\mathbb{R},\mathbb{R}^{n})$, a careful reading of Definition \ref{Green2} combined with the property of exponential dichotomy on $\mathbb{R}$ shows that 
$$
\begin{array}{rcl}
|x_{g_1}^*(t) - x_{g_2}^*(t)| & \leq & \displaystyle \frac{2 K}{ \alpha} |g_1(t) - g_2(t)|_{\infty}.
\end{array}
$$

\noindent\textit{Step 2:} We will prove that $x_{g}^{*}(t)$ is the unique bounded solution of (\ref{perturbado}). Indeed, let $t\mapsto x(t)$
be a bounded solution. By variation of parameters, we have that
\begin{displaymath}
\begin{array}{rcl}
x(t)&=&\displaystyle X(t,0)x(0)+\int_{0}^{t}X(t,s)g(s)\,ds
\end{array}
\end{displaymath}
which can be written as
\begin{displaymath}
\begin{array}{rcl}
x(t)&=&\displaystyle X(t,0)x(0)+\int_{0}^{t}\{X(t)PX^{-1}(s)g(s)+X(t)(I-P)X^{-1}(s)g(s)\}\,ds\\\\
&=&\displaystyle X(t,0)x(0)-\underbrace{\int_{-\infty}^{0}X(t)PX^{-1}(s)g(s)\,ds}_{:=x_{1}}+\int_{-\infty}^{t}X(t)PX^{-1}(s)g(s)\,ds\\\\
& &\displaystyle +\underbrace{\int_{0}^{\infty}X(t)(I-P)X^{-1}(s)g(s)\,ds}_{:=x_{2}}-\int_{t}^{\infty}X(t)(I-P)X^{-1}(s)g(s)\,ds
\end{array}
\end{displaymath}
and we can see that the bounded solution $t\mapsto x(t)$ can be written as follows
\begin{displaymath}
x(t)=X(t,0)\{x(0)-x_{1}+x_{2}\}+x_{g}^{*}(t).
\end{displaymath}

As $t\mapsto x_{g}^{*}(t)$ is a bounded solution of the inhomogeneous system (\ref{perturbado}), we have that
$t\mapsto x(t)-x_{g}^{*}(t)$ is a bounded solution of the linear system (\ref{LinealC2}). Finally, Corollary \ref{u-bounded} implies that $x(t)=x_{g }^{*}(t)$ and the uniqueness follows.
\end{proof}

In spite of the ubiquitousness and well knowness of this result in the literature,
it is surprising to find few proofs with the exception with the skectched proof in the Coppel's monography \cite{Cop}. The above proof follow the ideas from the paper of X. Chen and Y. Xia \cite{Chen}, which is remarkable by its simplicity.

The Proposition \ref{boundeness} has a plethora of consequences
and applications. In particular, the previous admissibility result can be
adapted for closed subspaces of the Banach space $(BC(\mathbb{R},\mathbb{R}^{n}),|\cdot|_{\infty})$
as the almost periodic and $\omega$--periodic functions. We will consider here the case when $f$ is $\omega$--periodic 
and will also consider the case of bounded nonlinear Lipschitz perturbations.

\begin{corollary}
\label{b-periodic}
If the linear and continuous $\omega$--periodic system \textnormal{(\ref{LinealC2})} has an exponential dichotomy on $\mathbb{R}$, then for each continuous $\omega$--periodic function $t\mapsto g(t)$, the 
system
\begin{equation}
\label{perturbado-per}
z'=A(t)z+g(t)
\end{equation}
has a unique continuous $\omega$--periodic solution $t\mapsto x^{*}(t)$ satisfying
\begin{equation}
\label{ITH}
x^{*}(0)=[I-X(\omega,0)]^{-1}\int_{0}^{\omega}X(\omega,s)g(s)\,ds=
\int_{-\infty}^{+\infty}\mathcal{G}(0,s)g(s)\,ds.
\end{equation}
\end{corollary}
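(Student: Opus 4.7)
The plan is to reduce the statement to Proposition \ref{boundeness}, then extract the $\omega$--periodicity and the explicit formula via standard variation of parameters together with $\omega$--biperiodicity of the transition matrix (Lemma \ref{TF1}).

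First I would observe that every continuous $\omega$--periodic function belongs to $BC(\mathbb{R},\mathbb{R}^{n})$, so Proposition \ref{boundeness} applies and yields a unique bounded solution
\begin{displaymath}
x^{*}(t)=\int_{-\infty}^{+\infty}\mathcal{G}(t,s)g(s)\,ds,
\end{displaymath}
which already gives the right--hand identity of \eqref{ITH}. To upgrade boundedness to $\omega$--periodicity, I would set $y(t)=x^{*}(t+\omega)$ and check that $y$ solves \eqref{perturbado-per}: using $A(t+\omega)=A(t)$ and $g(t+\omega)=g(t)$ one gets $\dot{y}(t)=A(t+\omega)x^{*}(t+\omega)+g(t+\omega)=A(t)y(t)+g(t)$. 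Since $y$ is bounded as well, the uniqueness part of Proposition \ref{boundeness} forces $y\equiv x^{*}$, that is, $x^{*}(t+\omega)=x^{*}(t)$ for every $t\in\mathbb{R}$.

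Next I would derive the left--hand formula of \eqref{ITH}. By the variation of parameters formula (Lemma \ref{SNHC1}) applied between $0$ and $\omega$,
\begin{displaymath}
x^{*}(\omega)=X(\omega,0)x^{*}(0)+\int_{0}^{\omega}X(\omega,s)g(s)\,ds,
\end{displaymath}
and $\omega$--periodicity of $x^{*}$ yields
\begin{displaymath}
[I-X(\omega,0)]\,x^{*}(0)=\int_{0}^{\omega}X(\omega,s)g(s)\,ds.
\end{displaymath}
It remains to see that $I-X(\omega,0)$ is invertible. If some $v\neq 0$ satisfied $X(\omega,0)v=v$, then by the $\omega$--biperiodicity of the transition matrix (Lemma \ref{TF1}), the solution $t\mapsto X(t,0)v$ would satisfy $X(t+\omega,0)v=X(t+\omega,\omega)X(\omega,0)v=X(t,0)v$, producing a nontrivial $\omega$--periodic, hence bounded on $\mathbb{R}$, solution of the homogeneous system \eqref{LinealC2}; this contradicts Corollary \ref{u-bounded}, since the system has an exponential dichotomy on $\mathbb{R}$. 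Therefore $[I-X(\omega,0)]^{-1}$ exists and
\begin{displaymath}
x^{*}(0)=[I-X(\omega,0)]^{-1}\int_{0}^{\omega}X(\omega,s)g(s)\,ds.
\end{displaymath}

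The main obstacle is, as usual, showing that the bounded solution produced by Proposition \ref{boundeness} is in fact periodic; everything else is routine (variation of parameters plus the dichotomy--Floquet observation that $1$ cannot be a Floquet multiplier). Uniqueness of the $\omega$--periodic solution is automatic: any two such solutions would be bounded on $\mathbb{R}$, and Proposition \ref{boundeness} guarantees uniqueness inside $BC(\mathbb{R},\mathbb{R}^{n})$.
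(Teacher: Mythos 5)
Your proof is correct, and it runs in the opposite direction from the paper's. The paper first secures the vector $x^{*}(0)$ by invoking the Floquet-theoretic fact that an exponential dichotomy on $\mathbb{R}$ forces the multipliers off the unit circle (so that $I-X(\omega,0)$ is invertible, via Remark \ref{EPS}), builds the $\omega$--periodic solution from the boundary condition $x(\omega)=x(0)$, and only afterwards identifies it with the unique bounded solution of Proposition \ref{boundeness} to get the Green-function expression. You instead start from Proposition \ref{boundeness}: the Green-function solution exists and is the unique bounded one, you obtain its periodicity by the shift trick ($y(t)=x^{*}(t+\omega)$ solves the same equation and is bounded, hence equals $x^{*}$), and you then read off the formula for $x^{*}(0)$ from variation of parameters, proving invertibility of $I-X(\omega,0)$ directly: a fixed vector $v\neq 0$ of $X(\omega,0)$ would give, via the $\omega$--biperiodicity of Lemma \ref{TF1}, a nontrivial periodic and hence bounded solution of the homogeneous system, contradicting Corollary \ref{u-bounded}. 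Your route has the advantage of being more self-contained — it proves exactly the piece of the Floquet statement it needs ($\rho=1$ is not a multiplier) rather than citing the full equivalence between the dichotomy and the location of the multipliers, and it makes explicit the translation-plus-uniqueness argument that the paper leaves implicit when it passes from $x(\omega)=x(0)$ to $\omega$--periodicity. The paper's version, in turn, highlights the connection with Floquet multipliers, which is the viewpoint it wants to emphasize in that section. Both arguments rest on the same two pillars (Proposition \ref{boundeness} and the invertibility of $I-X(\omega,0)$), so the difference is one of organization and of how the auxiliary facts are justified, not of substance.
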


\begin{proof}
The proof will use the property of exponential dichotomy from two perspectives. A first one
recalls that the exponential dichotomy on $\mathbb{R}$ is equivalent to the fact that
the Floquet's multipliers of  the linear and continuous $\omega$--periodic system (\ref{LinealC2}) cannot be in the unit circle. Then, by using Remark \ref{EPS} from Chapter 1, we have that the fundamental matrix $X(t)$ is such that $X(\omega,0)-I$ is non singular and the vector
\begin{equation}
\label{THPER}
x^{*}(0)=[I-X(\omega,0)]^{-1}\int_{0}^{\omega}X(\omega,s)g(s)\,ds
\end{equation}
is well defined. Now, a direct consequence of the above identity is that $t\mapsto x(t)=X(t,0)x(0)$ is a continuous $\omega$--periodic solution of (\ref{perturbado-per}) if and only if $x(0)=x^{*}(0)$. In fact, notice that
$$
x(\omega)=X(\omega,0)x(0)+\int_{0}^{\omega}X(\omega,s)g(s)\,ds=x(0)
$$
is equivalent to
$$
x(0)=[I-X(\omega,0)]^{-1}\int_{0}^{\omega}X(\omega,s)g(s)\,ds,
$$
and the existence of a continuous $\omega$--periodic solution $t\mapsto x^{*}(t)$ is ensured.

As the continuous $\omega$--periodic functions are bounded on $\mathbb{R}$, we will follow a second perspective of the exponential dichotomy property: the Proposition \ref{boundeness} implies that the unique bounded solution is defined by the Green function, which must be identical to the $\omega$--periodic solution obtained previously, that is
$$
t\mapsto x^{*}(t)=\int_{-\infty}^{\infty}\mathcal{G}(t,s)g(s)\,ds=X(t,0)x^{*}(0)+\int_{0}^{t}X(t,s)g(s)\,ds,
$$
and the identity (\ref{ITH}) is obtained easily by evaluating at $t=0$ and using (\ref{THPER}).
\end{proof}

The proof of the above result was carried out considering the identity
$x(0)=x(\omega)$ but the proof could also be deduced by considering $x(0)=x(-\omega)$, which leads to the identity
\begin{equation}
\label{IPSP}
[I-X(\omega,0)]^{-1}\int_{0}^{\omega}X(\omega,s)g(s)\,ds=[I-X(-\omega,0)]^{-1}\int_{0}^{-\omega}\hspace{-0.3cm}X(-\omega,s)\,ds.
\end{equation}

In addition, the identity (\ref{ITH}) has been obtained as consequence
of uniqueness of the bounded solution. Nevertheless, it can be deduced directly
by Proposition \ref{boundeness} applied to the $\omega$--periodic case. In fact, notice that:
\begin{equation}
\label{ICPP}
\begin{array}{rcl}
x^{*}(0)&=& \displaystyle\int_{-\infty}^{+\infty}\mathcal{G}(0,s)g(s)\,ds\\\\ 
&=& \displaystyle
\int_{-\infty}^{0}\hspace{-0.2cm}X(0,s)P(s)g(s)\,ds-\int_{0}^{\infty}\hspace{-0.2cm}X(0,s)[I-P(s)]g(s)\,ds \\\\
&=& \displaystyle
P(0)\int_{-\infty}^{0}\hspace{-0.2cm}X(0,s)g(s)\,ds-[I-P(0)]\int_{0}^{\infty}\hspace{-0.2cm}X(0,s)g(s)\,ds.
\end{array}
\end{equation}

In addition, by using a natural change of variables combined with by the identity (\ref{TF1}) from Chapter 1 we can deduce that
\begin{displaymath}
\begin{array}{rcl}
\displaystyle \int_{0}^{\infty}X(0,s)g(s)\,ds & = &\displaystyle \int_{0}^{\omega}X(0,s)g(s)\,ds+ \int_{\omega}^{\infty}X(0,s)g(s)\,ds\\\\
& = &\displaystyle\int_{0}^{\omega}X(0,s)g(s)\,ds+
\int_{0}^{\infty}X(0,s+\omega)g(s)\,ds\\\\
& = &\displaystyle\int_{0}^{\omega}X(0,s)g(s)\,ds+
X(0,\omega)\int_{0}^{\infty}X(\omega,s+\omega)g(s)\,ds\\\\
& = &\displaystyle\int_{0}^{\omega}X(0,s)g(s)\,ds+
X(0,\omega)\int_{0}^{\infty}X(0,s)g(s)\,ds.
\end{array}
\end{displaymath}

We multiply the above identity by $X(\omega,0)$ and easily verify that
$$
\int_{0}^{\infty}X(0,s)g(s)=[X(\omega,0)-I]^{-1}\int_{0}^{\omega}X(\omega,s)g(s)\,ds.
$$
and we deduce that
$$
-[I-P(0)]\int_{0}^{\infty}X(0,s)g(s)=[I-P(0)][I-X(\omega,0)]^{-1}\int_{0}^{\omega}X(\omega,s)g(s)\,ds.
$$

Similarly as in the previous case, by using a  change of variables combined with the identity (\ref{TF1}) from Chapter 1 we can deduce that
\begin{displaymath}
\begin{array}{rcl}
\displaystyle \int_{-\infty}^{0}X(0,s)g(s)\,ds & = &\displaystyle \int_{-\omega}^{0}X(0,s)g(s)\,ds+
X(0,-\omega)\int_{0}^{\infty}X(0,s)g(s)\,ds.
\end{array}
\end{displaymath}

By multipliying by $X(-\omega,0)$ and using (\ref{IPSP}) we can deduce that
$$
P(0)\int_{-\infty}^{0}X(0,s)g(s)\,ds=P(0)[I-X(\omega,0)]^{-1}\int_{0}^{\omega}X(\omega,s)g(s)\,ds
$$
and (\ref{ICPP}) combined with the identities for the integral terms $\int_{-\infty}^{0}X(0,s)g(s)\,ds$ and $-[I-P(0)]\int_{0}^{\infty}X(0,s)g(s)$ obtained above leads to
\begin{displaymath}
\begin{array}{rcl}
\displaystyle\int_{-\infty}^{+\infty}\mathcal{G}(0,s)g(s)\,ds 
&=& \displaystyle [I-X(\omega,0)]^{-1}\int_{0}^{\omega}X(\omega,s)g(s)\,ds.
\end{array}
\end{displaymath}

%we can deduce an interesting property for the projections:
%\begin{lemma}
%If the $\omega$--periodic and continuous system \textnormal{(\ref{LinealC2})} has an exponential dichotomy on $\mathbb{R}$ with constants $K$,$\alpha$ and projections $P(\cdot)$, then the projections satisfy $P(t+\omega)=P(t)$ for any $t\in \mathbb{R}$.
%\end{lemma}

%\begin{proof}
%Let $P(\cdot)$ be family of projections associated to the exponential dichotomy.
%By using the property (\ref{invariancia}) we can see that
%$$
%X(t+\omega,\omega)P(\omega)=P(t+\omega)X(t+\omega,\omega).
%$$

%By using the identity (\ref{TF1}) from Chapter 1, it follows that the above identity
%is equivalent to
%$$
%X(t,0)P(\omega)=P(t+\omega)X(t,0),
%$$
%and we can see that $X(t,0)P()$
%\end{proof}

\begin{proposition}
\label{bounded2}
If the system \textnormal{(\ref{LinealC2})} has an exponential dichotomy on $\mathbb{R}$ with constants $K$,$\alpha$ and projection $P$. Moreover, if the nonlinear system:
\begin{equation}
\label{perturbado2}
w'=A(t)w+h(t,w)
\end{equation}
has a continuous perturbation 
$t\mapsto h(t,w)$ satisfying: 
\begin{equation}
\label{HLHB}
|h(t,w_{1})-h(t,w_{2})|\leq \gamma|w_{1}-w_{2}| \quad \textnormal{and} \quad |h(t,w)|\leq \mu
\end{equation}
for any $t\in \mathbb{R}$ and $w,w_{1},w_{2}\in \mathbb{R}^{n}$ where $\gamma$ is such that
$$
2K\gamma<\alpha,
$$
then the system \eqref{perturbado2}
has a unique bounded function $t\mapsto w^{*}(t)$ satisfying the fixed point property:
\begin{equation}
\label{FP}
w^{*}(t)=\int_{-\infty}^{\infty}\mathcal{G}(t,s)h(s,w^{*}(s))\,ds.
\end{equation}
\end{proposition}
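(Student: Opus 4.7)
The plan is to apply the Banach fixed point theorem to the nonlinear integral operator suggested by the fixed point identity \eqref{FP}. Define
\begin{equation*}
T \colon BC(\mathbb{R},\mathbb{R}^{n}) \to BC(\mathbb{R},\mathbb{R}^{n}), \qquad T(w)(t) = \int_{-\infty}^{+\infty} \mathcal{G}(t,s) h(s,w(s))\, ds,
\end{equation*}
and look for a fixed point $w^{*}=T(w^{*})$ in the Banach space $(BC(\mathbb{R},\mathbb{R}^{n}),|\cdot|_{\infty})$.

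First I would verify that $T$ is well defined and maps $BC(\mathbb{R},\mathbb{R}^{n})$ into itself. Given any $w\in BC(\mathbb{R},\mathbb{R}^{n})$, the map $t\mapsto h(t,w(t))$ is continuous and bounded by $\mu$, so the bounded continuous function $g(t):=h(t,w(t))$ falls under the scope of Proposition \ref{boundeness}. That proposition directly yields that $T(w)=x^{*}_{g}$ lies in $BC(\mathbb{R},\mathbb{R}^{n})$ with $|T(w)|_{\infty}\leq \frac{2K}{\alpha}\mu$.

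Next I would check the contraction property. For $w_{1},w_{2}\in BC(\mathbb{R},\mathbb{R}^{n})$, the difference $h(s,w_{1}(s))-h(s,w_{2}(s))$ is bounded in norm by $\gamma|w_{1}-w_{2}|_{\infty}$ uniformly in $s$, so by the same Green function estimate underlying Proposition \ref{boundeness} one gets
\begin{equation*}
|T(w_{1})(t)-T(w_{2})(t)| \leq \int_{-\infty}^{+\infty}\|\mathcal{G}(t,s)\|\,\gamma|w_{1}-w_{2}|_{\infty}\,ds \leq \frac{2K\gamma}{\alpha}|w_{1}-w_{2}|_{\infty}.
\end{equation*}
The assumption $2K\gamma<\alpha$ gives $\frac{2K\gamma}{\alpha}<1$, hence $T$ is a strict contraction on the complete metric space $BC(\mathbb{R},\mathbb{R}^{n})$, and Banach's theorem provides a unique fixed point $w^{*}$ that is precisely the bounded solution of \eqref{perturbado2} satisfying \eqref{FP}.

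I expect no substantive obstacle: the work has already been done in Proposition \ref{boundeness}, which supplies both the $BC$ mapping property and the operator norm bound $\frac{2K}{\alpha}$ of the integral against the Green function. The only delicate point worth double-checking is that the solution of the integral equation \eqref{FP} is genuinely a solution of the differential equation \eqref{perturbado2}; this follows by differentiating the identity $w^{*}(t)=\int_{-\infty}^{+\infty}\mathcal{G}(t,s)h(s,w^{*}(s))\,ds$ using the piecewise definition of $\mathcal{G}$ and the fact that the projection pieces satisfy $\partial_{t}[X(t)PX^{-1}(s)]=A(t)X(t)PX^{-1}(s)$ together with the jump relation $\mathcal{G}(t,t^{-})-\mathcal{G}(t,t^{+})=I$, exactly as in the linear case of Proposition \ref{boundeness}.
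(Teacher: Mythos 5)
Your proof is correct and is essentially the paper's argument in fixed-point-theorem packaging: the paper runs the Picard iteration $\varphi_k(t)=\int_{-\infty}^{\infty}\mathcal{G}(t,s)h(s,\varphi_{k-1}(s))\,ds$ (each step justified by Proposition \ref{boundeness}) and shows the iterates are Cauchy via the same estimate $\frac{2K\gamma}{\alpha}<1$, which is exactly your contraction bound. Invoking Banach's theorem directly, as you do, is the same idea and even makes the uniqueness of the fixed point in $BC(\mathbb{R},\mathbb{R}^{n})$ slightly more explicit.
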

\begin{proof}
Let $\varphi_{1}$ in the Banach space $(BC(\mathbb{R},\mathbb{R}^{n}),|\cdot|_{\infty})$ and construct a sequence $\{\varphi_{k}\}_{k}\subset BC(\mathbb{R},\mathbb{R}^{n})$, where $t\mapsto \varphi_{k}(t)$ is the unique
bounded solution of the inhomogeneous system
\begin{displaymath}
w'=A(t)w+h(t,\varphi_{k-1}(t))  \quad \textnormal{for any} \quad k\geq 2.
\end{displaymath}

In fact, notice that $t\mapsto h(t,\varphi_{k-1}(t))\in BC(\mathbb{R},\mathbb{R}^{n})$; in consequence, Proposition \ref{boundeness} implies the existence of the sequence $\{\varphi_{k}\}_{k}$, which ve\-ri\-fies the recursivity:
\begin{displaymath}
\varphi_{k}(t)=\int_{\infty}^{\infty}\mathcal{G}(t,s)h(s,\varphi_{k-1}(s))\,ds \quad \textnormal{for any $t\in \mathbb{R}$}. 
\end{displaymath}

Finally, notice that as $h(t,z)$ satisfies (\ref{HLHB}), it follows that
\begin{displaymath}
\begin{array}{rcl}
|\varphi_{k}(t)-\varphi_{k-1}(t)|& \leq & \displaystyle \int_{-\infty}^{\infty}|\mathcal{G}(t,s)|\,|h(s,\varphi_{k-1}(s))-h(s,\varphi_{k-2}(s))|\,ds\\\\
                                 & \leq &  \displaystyle \int_{-\infty}^{\infty} Ke^{-\alpha|t-s|}\gamma |\varphi_{k-1}-\varphi_{k-2}|_{\infty}\,ds\\\\
                                 & \leq &  \displaystyle\frac{2K\gamma}{\alpha}  |\varphi_{k-1}-\varphi_{k-2}|_{\infty}
\end{array}
\end{displaymath}
and we have that $\{\varphi_{k}\}_{k}$ is a Cauchy sequence. As $(BC(\mathbb{R},\mathbb{R}^{n}),|\cdot|_{\infty})$ is a Banach space, we have that $\varphi_{k}$
converges uniformly to (\ref{FP}).
\end{proof}

We will finish this section with two comments about the property of exponential dichotomy on $\mathbb{R}$.

\noindent $\bullet$ Firstly, a direct consequence of Definition \ref{DICE}
is that if the linear system (\ref{LinealC2}) has an exponential dichotomy on the interval $J$ then this property is also verified on any interval $\tilde{J}\subset J$. In consequence, the property of exponential dichotomy on $\mathbb{R}$ implies this property on $\mathbb{R}_{0}^{-}$ and $\mathbb{R}_{0}^{+}$.

\noindent $\bullet$ Secondly, it is important to emphasize that 
the linear system (\ref{LinealC2}) can have simultaneously an exponential dichotomy on $\mathbb{R}_{0}^{-}$ and $\mathbb{R}_{0}^{+}$ but this not implies necessarily the existence of an exponential dichotomy 
on $\mathbb{R}$. An interesting question is to find complementary conditions ensuring the
exponential dichotomy on $\mathbb{R}$ when the exponential dichotomy on
the half lines is simultaneously verified. In order 
to address this problem it will be useful to carry out a study of the exponential
dichotomy on the half lines, which is the topic of the next chapter.

\section{Comments and References}

\noindent \textbf{1)} A nice description of the exponential dichotomy on $\mathbb{R}$ as
a property emulating the hyperbolicity and its consequences is given in the article 
of Cs\'asz\'ar and Kov\'acs \cite{CK}. We also
highlight the interesting survey from Elaydi and Hajek \cite{Elaydi}. 

%It is important to highlight some ideas previous to Perron's seminal work \cite{Perron}, which pointed out related ideas as the works of Hadamard \cite{Hadamard}.
%\textcolor{red}{Desarrollarlo mas}
\medskip

\noindent \textbf{2)} The property of exponential dichotomy on $J$ can be generalized as the existence of an invariant projection $P(\cdot)$, a pair of positive constants $(\alpha,K)$ and an increasing homeomorphism $h\colon (a_{0},+\infty)\to (0,+\infty)$ 
such that the transition matrix $X(t,s)$ of (\ref{LinealC2}) satisfies:
    \begin{equation}
    \label{GDE}
    \left\{\begin{array}{rcl}
    ||X(t,s)P(s)||\leq  K\left(h(t)h^{-1}(s)\right)^{-\alpha} &\textnormal{if} & t\geq s\geq a_{0}, \\\\
    ||X(t,s)[I-P(s)]||\leq K\left(h(s)h^{-1}(t)\right)^{-\alpha} &\textnormal{if} & s\geq t \geq a_{0}.
    \end{array}\right.
    \end{equation}

The above property is called $h$--dichotomy has been introduced by Naulin and Pinto in \cite{Naulin-0,Naulin-a}, it encompasses
the exponential dichotomy and the \textit{generalized exponential dichotomy} introduced by Martin \cite{Martin}
and studied in depth by Jiang \cite{Jiang}, Xia \cite{Chen,Wang-XZ} and its collaborators. The $h$--dichotomy has been
recently revisited in \cite{Elorreaga} where the expansions and contractions are studied from a group theory
approach.

A generalization of the $h$--dichotomy has been done by replacing the map $h$ in the second inequality of (\ref{GDE})
by another increasing homeomorphism $k\colon (a_{0},+\infty)\to (0,+\infty)$ which has been introduced by Naulin
and Pinto in \cite{Naulin-a,Pinto-89}. 

The case considering a map $s\mapsto K(s)$ for any $s\in J$ instead of a constant corresponds to 
the \textit{nonuniform $h$--dichotomy}, which has been deeply
studied for the case $h(t)=e^{t}$ by Barreira, Valls and Dragi\v{c}evi\'c in \cite{Barreira1,Barreira2} and references therein.

We point out the existence of a wide family of dichotomies which can be seen as
generalizations of the previous ones and we refer the reader to \cite[Table 1]{ZFY}
for a detailed summary.

\medskip

\noindent \textbf{3)} The above description of generalized dichotomies allows 
a classification of asymptotic stabilities in the nonautonomous case
if we consider $J=[t_{0},+\infty)$ and the projection $P(s)=I$, the 
above construction becomes the nonuniform contraction
    \begin{displaymath}
    \begin{array}{rcl}
    ||X(t,s)||\leq  K(s)\left(h(t)\right)^{-\alpha} &\textnormal{for} & t\geq s>t_{0},
    \end{array}
    \end{displaymath}
where  $K\colon [0,+\infty[\to (0,+\infty)$, $\alpha>0$.

The most distinguished and studied case is
the \textit{uniform asymptotic stability} 
or \textit{uniform exponential stability}, which corresponds
to $K(s)=K>0$ for any $s\geq t_{0}$ and $h(t)=e^{t}$.

The \textit{nonuniform asymptotic stability} corresponds to 
$K(s)=Ke^{\varepsilon |s|}$ for any $s\geq t_{0}$ 
with $K,\varepsilon>0$ and $h(t)=e^{t}$.

The \textit{uniform $h$--stability} corresponds to $K(s)=K>0$ and $h$ satisfying the above properties, which has been introduced by Naulin
and Pinto in \cite{Naulin-a,Pinto-89}. This stability has been revisited recently from a group theory perspective by Pe\~na and Rivera--Villagr\'an in  \cite{JFP}. 

\medskip

\noindent \textbf{4)} To the best of our knowledge, the notion of admissibility has been introduced by Massera and Sch\"affer \cite{Massera}. 

\begin{definition}
Given two spaces of functions $\mathcal{B}$
and $\mathcal{D}$, consisting in mappings from $J\subseteq \mathbb{R}\to \mathbb{R}^{n}$, the pair $(\mathcal{B},\mathcal{D})$ is admissible for the linear system $\dot{y}=A(t)y+g(t)$ if for each $t\mapsto g(t)\in \mathcal{B}$ there exists a solution 
$t\mapsto y(t)\in \mathcal{D}$.
\end{definition}

The property of admissibility is strongly related to the exponential dichotomy 
on $\mathbb{R}$. In fact, Proposition \ref{boundeness} and Corollary \ref{b-periodic} can be seen as results of $(\mathcal{B},\mathcal{B})$--admissibility for $\mathcal{B}=BC(\mathbb{R},\mathbb{R}^{n})$ and $\mathcal{B}=C_{\omega}(\mathbb{R},\mathbb{R}^{n})$ provided that
$a_{ij}$ and $g$ are elements of $\mathcal{B}$ and $\dot{x}=A(t)x$
 has an exponential dichotomy on $\mathbb{R}$. There exists generalizations to the Banach spaces of almost periodic functions \cite[Ch.7]{Fink}, pseudo almost periodic functions \cite{AA,Zhang-PAP} and remotely almost periodic functions \cite{Maulen}. 

\medskip 
\noindent \textbf{5)} There exists numerical methods oriented to detect the exponential dichotomy on the real line and we refer to the works carried out by L. Dieci, C. Elia and E. Van Vleck in \cite{Dieci2010,Dieci2011}.
\section{Exercises}

\begin{itemize}
    \item[1.-] Prove the Remark \ref{R1ED+} in detail.
    \item[2.-] Prove that if $P$ is a projection and $V$ is a nonsingular matrix, then $\tilde{P}=V^{-1}PV$ is also a projection. 
    \item[3.-] Prove that if $P$ is a nontrivial projection, then its eigenvalues are $0$ and $1$.
    \item[4.-] Prove that if $\{P_{k}\}_{k\in \mathbb{N}}$ is a sequence of projections convergent to $Q$
    then $Q$ is also a projection.
    \item[5.-] If the linear system (\ref{LinealC2}) has an exponential dichotomy on $J$ with projection
    $P$ then prove that its adjoint has an exponential dichotomy with projection $I-P^{T}$. 
    \item[6.-] In \cite[p.12]{Cop}, the author consider the linear system
\begin{displaymath}
\begin{array}{rcl}
\dot{x}_{1}&=&-x_{1}+e^{2t}x_{2} \\
\dot{x}_{2}&=&x_{2},
\end{array}
\end{displaymath}
whose fundamental matrix is 
\begin{displaymath}
X(t)=\left[\begin{array}{cc}
e^{-t} & (e^{3t}-e^{-t})/4  \\
0      &   e^{t}
\end{array}\right].
\end{displaymath}
\begin{itemize}
\item[6.1)] Prove that the above system has not the bounded growth property.
\item[6.2)] Prove that the first two inequalities of (\ref{eq:2.3}) are verified
with the projector
\begin{displaymath}
P=\left[\begin{array}{cc}
1 & 0  \\
0 & 0
\end{array}\right].
\end{displaymath}
\item[6.3)] Prove that $||X(t)PX^{-1}(t)||$ is not bounded. 
\end{itemize}
    
    \item[7.-] Prove the inequalities (\ref{DE-I1})--(\ref{DE-I2}) from Theorem \ref{prop-split}.
    \item[8.-] If the linear system (\ref{LinealC2}) is antisymmetric for any $t\in \mathbb{R}$ then prove that cannot have an exponential dichotomy on $\mathbb{R}$.
    \item[9.-] Let $[0,+\infty)\ni t\mapsto a(t)$ a bounded and locally integrable function. Prove that if
    $$
    \limsup\limits_{t-s\to +\infty}\frac{1}{t-s}\int_{s}^{t}a(\tau)\,d\tau<-\alpha<0,
    $$
    then the scalar differential equation $\dot{x}=a(t)x$ has an exponential dichotomy on $\mathbb{R}^{+}$ with projection $p=1$.
     \item[10.-] Similarly, if $[0,+\infty)\ni t\mapsto a(t)$ is a bounded and locally integrable function. Prove that if
    $$
    \liminf\limits_{t-s\to +\infty}\frac{1}{t-s}\int_{s}^{t}a(\tau)\,d\tau>\alpha>0,
    $$
    then the scalar equation $\dot{x}=a(t)x$ has an exponential dichotomy on $\mathbb{R}^{+}$ with projection $p=0$.
    \item[11.-] Prove that the following linear scalar differential equations have not an exponential dichotomy on $\mathbb{R}$: 
    $$
    \dot{x}=\frac{1}{1+t^{2}}x, \quad \dot{x}=e^{-t^{2}}x \quad \textnormal{and} \quad
    \dot{x}=a(t)x
    $$
    with $a(t)>0$ for any $t\in \mathbb{R}$ such that $\displaystyle\int_{\mathbb{R}}a(t)\,dt<\infty$.
    \item[12.-] The average of a $\omega$--periodic and continuous function $f$
    is defined by
    $$
    \mathcal{M}(f):=\frac{1}{\omega}\int_{0}^{\omega}f(t)\,dt.
    $$
    
    Prove that the scalar linear equation $\dot{x}=f(t)x$ has an exponential dichotomy on $\mathbb{R}$
    if and only if $\mathcal{M}(f)\neq 0$. Determine the relation between the corresponding projection and the sign of $\mathcal{M}(f)$.
    
    \item[13.-] Let us consider the linear system $\dot{x}=A(t)x$ where
    $t\mapsto A(t)$ is continuous and $\omega$--periodic. If the system has an exponential dichotomy on $[0,+\infty)$, can we say that the system has an exponential dichotomy on $\mathbb{R}$ with the same projection and constant $\alpha$?

    \item[14.-]  Provide examples showing that Corollary \ref{u-bounded} cannot be extended for systems
    having an exponential dichotomy on $[0,+\infty)$.

    \item[15.-] Prove Lemma \ref{subpespacios-cte}.
    
    \item[16.-] Let us consider the Proposition \ref{boundeness} under the more specific assumption that the linear system \textnormal{(\ref{LinealC2})} has an exponential dichotomy on $\mathbb{R}$ with the identity as projection. Then for each bounded function $t\mapsto g(t)$, prove that the  
system (\ref{perturbado}) has a unique solution $t\mapsto x_{g}^{*}(t)$ bounded on $\mathbb{R}$ such that any other solution $t\mapsto x(t)$ satisfies 
$$
\lim\limits_{t\to +\infty}[x(t)-x^{*}(t)]=0.
$$
     \item[17.-] Similarly, let us consider the Proposition \ref{boundeness} under the more specific assumption that the linear system \textnormal{(\ref{LinealC2})} has an exponential dichotomy on $\mathbb{R}$ with the null projection $P=0$. Then for each bounded function $t\mapsto g(t)$, prove that the  
system (\ref{perturbado}) has a unique solution $t\mapsto x_{g}^{*}(t)$ bounded on $\mathbb{R}$ such that any other solution $t\mapsto x(t)$ satisfies 
$$
\lim\limits_{t\to -\infty}[x(t)-x^{*}(t)]=0.
$$

    \item[18.-] Let $t\mapsto x(t)$ a solution of $\dot{x}=A(t)x$ and define
$$
x_{1}(t)=X(t)PX^{-1}(t)x(t) \quad \textnormal{and} \quad x_{2}(t)=X(t)(I-P)X^{-1}(t)x(t),
$$
where $P$ is a projection. Prove that $x_{1}(t)$ and $x_{2}(t)$ satisfy the following properties:

\medskip
\noindent a) $X(t)PX^{-1}(t)x_{1}(t)=x_{1}(t)$, 

\noindent b) $X(t)(I-P)X^{-1}(t)x_{2}(t)=x_{2}(t)$,

\noindent c) $x(t)=X(t)PX^{-1}(s)x_{1}(s)+X(t)(I-P)X^{-1}(s)x_{2}(s)$,

\noindent d) $x(t)=x_{1}(t)+x_{2}(t)$.
\
    \item[19.-] In the context of Proposition \ref{bounded2}, can we obtain additional information of the fixed point in the cases where the linear part has exponential dichotomy with trivial projections?. 
    
    \item[20.-] Is it possible to extend Proposition \ref{bounded2} and to prove the existence
of $\omega$--periodic solutions of (\ref{perturbado2}) when $A(t)$ is $\omega$--periodic and continuous
and $t\mapsto h(t,w)$ is $\omega$--periodic for any $w \in \mathbb{R}^{n}$?

    \item[21.-] Let us consider a linear system $\dot{x}=A(t)x$ having exponential dichotomy on $[0,+\infty)$ with nontrivial projection $P$. Prove that if $t\mapsto A(t)$
    is bounded on $[0,+\infty)$ then it follows that
    \begin{displaymath}
    X(t)PX^{-1}(t)=I+\int_{0}^{+\infty}\mathcal{G}(t,s)A(s)\,ds,
    \end{displaymath}
where $X(t)$ is a fundamental matrix and $\mathcal{G}(t,s)$ is the Green's function introduced in Definition  \ref{Green2} restriced to $t,s\geq 0$.
\end{itemize}

\chapter{Exponential dichotomy on the  half lines}

\section{Preliminaries}
In this chapter we will consider the linear system
\begin{equation}
\label{LinealCap29}    
\dot{x}=A(t)x
\end{equation}
by focusing our study on the properties of exponential dichotomy on the half lines $\mathbb{R}_{0}^{-}$ or $\mathbb{R}_{0}^{+}$, which are more subtle compared with the case $J=\mathbb{R}$. For example the Proposition \ref{boundeness} from Chapter 2 and its consequences are no longer verified since any solution of the linear system (\ref{LinealCap29}) with exponential dichotomy on $\mathbb{R}_{0}^{+}$ and projection $P=I$ will be bounded on the half line. Nevertheless, as we will see, the restriction to a half line will allow alternative and sharper characterizations of the dichotomy property. 

\section{Basic properties and alternative characterizations}
As in the previous chapter, we can directly deduce by the definition of exponential dichotomy on $J$ that, if the linear system (\ref{LinealCap29}) has an exponential dichotomy on the interval $J$, then it also has this property on any interval $\tilde{J}\subset J$. In consequence, the property of exponential dichotomy on $\mathbb{R}_{0}^{+}$ implies the
dichotomy property on $[t_{0},+\infty)$ for any $t_{0}\geq 0$. The following result from W. Coppel \cite[p.13]{Cop} provides a converse statement
\begin{lemma}
\label{completar}
If the linear system \eqref{LinealCap29} has an exponential dichotomy on $[t_{0},+\infty)$ with $t_{0}>0$ then it also has an exponential dichotomy on $\mathbb{R}_{0}^{+}$ and preserve the same projection $P$
and constant $\alpha>0$.
\end{lemma}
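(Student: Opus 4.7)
The plan is to extend the estimates from $[t_{0},+\infty)$ to $\mathbb{R}_{0}^{+}$ by handling the compact segment $[0,t_{0}]$ separately and then gluing the two pieces together via the semigroup property $X(t,s)=X(t,t_{0})X(t_{0},s)$. The same projection $P$ and the same rate $\alpha$ will be retained, but the constant $K$ will enlarge by a factor depending on $t_{0}$ and on the size of the transition matrix on $[0,t_{0}]$.

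First I would pass to the invariant-projection formulation of Theorem \ref{TCA}, which rewrites the dichotomy on $[t_{0},+\infty)$ as bounds on $\|X(t,s)P(s)\|$ for $t\geq s\geq t_{0}$ and $\|X(t,s)(I-P(s))\|$ for $t_{0}\leq t\leq s$, where $P(t)=X(t)PX^{-1}(t)$ satisfies the invariance identity \eqref{invariancia} on all of $\mathbb{R}_{0}^{+}$ by construction. Next, using continuity of $A$ on the compact interval $[0,t_{0}]$ together with Lemma \ref{bgpg}, I would set
\[
M := \sup_{(t,s)\in [0,t_{0}]^{2}} \max\bigl\{\|X(t,s)P(s)\|,\ \|X(t,s)(I-P(s))\|,\ \|X(t,t_{0})\|,\ \|X(t_{0},s)\|\bigr\}<+\infty.
\]

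Then I would split the verification into cases according to the position of $t$ and $s$ relative to $t_{0}$. The case $t,s\in[t_{0},+\infty)$ is given. When both $t,s\in[0,t_{0}]$, the exponent $e^{-\alpha|t-s|}$ is bounded below by $e^{-\alpha t_{0}}$, so the uniform bound $M$ yields the desired estimate with constant $Me^{\alpha t_{0}}$. In the \emph{crossing} case $s\in[0,t_{0}]$ and $t\geq t_{0}$ (automatically $t\geq s$), the decisive step is the factorization
\[
X(t,s)P(s)=X(t,t_{0})\,P(t_{0})\,X(t_{0},s),
\]
which follows from \eqref{invariancia}; combining the dichotomy bound on $[t_{0},+\infty)$ with the definition of $M$ gives $\|X(t,s)P(s)\|\leq KMe^{-\alpha(t-t_{0})}$, and the desired rate is recovered by writing $-\alpha(t-t_{0})=-\alpha(t-s)+\alpha(t_{0}-s)$ with $t_{0}-s\leq t_{0}$, so that the extra factor is $e^{\alpha t_{0}}$. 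The unstable estimate is handled symmetrically for $t\in[0,t_{0}]$ and $s\geq t_{0}$, using $X(t,s)(I-P(s))=X(t,t_{0})(I-P(t_{0}))X(t_{0},s)$. Setting $K' := e^{\alpha t_{0}}\max\{K,KM,M\}$ finishes the verification of \eqref{eq:2.2} with the same projection $P$ and the same rate $\alpha$.

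The only real bookkeeping challenge is to ensure that in every mixed or compact subcase the ``missing'' exponential factor has the form $e^{\alpha\tau}$ with $\tau\leq t_{0}$, so that it can be absorbed into the new constant without corrupting $\alpha$. Everything else --- the finiteness of $M$, the compatibility of the invariant projector across the boundary $t=t_{0}$, and the semigroup factorization --- is standard.
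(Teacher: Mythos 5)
Your proposal is correct and follows the same core strategy as the paper: split $[0,+\infty)$ into the compact piece $[0,t_{0}]$ and the half-line $[t_{0},+\infty)$, factorize the transition matrix through $t_{0}$ via the semigroup property, and absorb the bounded excess into the constant while keeping $P$ and $\alpha$ unchanged. The only real differences are cosmetic: the paper bounds the transition matrix on $[0,t_{0}]$ by the explicit Gronwall-type constant $N=\exp\bigl(\int_{0}^{t_{0}}\|A(u)\|\,du\bigr)$ from Lemma \ref{bgpg}, whereas you define $M$ as the supremum of a continuous function on a compact square, which is finite by the extreme value theorem; and you phrase everything in the invariant-projection formulation $P(\cdot)$ of Theorem \ref{TCA}, which lets you bound the fully compact subcase directly by $M$, whereas the paper, working with the constant projector, needs one extra factorization $X(t)PX^{-1}(t_{0})=X(t)X^{-1}(t_{0})\cdot X(t_{0})PX^{-1}(t_{0})$ to invoke the dichotomy at $t_{0}$. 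Both routes are valid; yours is marginally cleaner in the compact case, while the paper's $N$ ties the new constant explicitly to $\|A\|$ on $[0,t_{0}]$, which can be useful if one wants to track dependence.
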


\begin{proof}
Without loss of generality, we will consider a unitary norm. Then,
note that, given a unitary norm the number
$$
N= \exp\left(\int_{0}^{t_{0}}||A(u)||\,du\right)> 1 
$$
is well defined. Now, let $t$ and $s$ such that $0\leq s,t\leq t_{0}$ and 
notice that
\begin{equation}
\label{C2A}
||X(t)X^{-1}(s)||\leq N \quad \textnormal{for $0\leq s,t\leq t_{0}$}.
\end{equation}

We will verify (\ref{C2A}) when $s<t$ but the other case can be proved in a similar way. In fact, by using the inequality (\ref{BGO-I}) from Chapter 1 we have that
for any $s\leq t$ with $0,\leq s,t\leq t_{0}$ it follows that
$$
||X(t)X^{-1}(s)||\leq \exp\left(\int_{s}^{t}|A(u)|\,du\right) \leq \exp\left(\int_{0}^{t_{0}}|A(u)|\,du\right)=N
$$
and (\ref{C2A}) is verified.

By using (\ref{C2A}) combined with the property of exponential dichotomy on $[t_{0},+\infty)$ we can deduce the following estimations:

\noindent $\bullet$ When $0\leq s \leq t_{0} \leq t$, it follows that:
\begin{displaymath}
\begin{array}{rcl}
||X(t)PX^{-1}(s)|| & = & ||X(t)PX^{-1}(t_{0})X(t_{0})X^{-1}(s)|| \\\\
                   & \leq & ||X(t)PX^{-1}(t_{0})||\,||X(t_{0})X^{-1}(s)|| \\\\ 
                   & = & N ||X(t)PX^{-1}(t_{0})|| \\\\
                   & \leq & N Ke^{-\alpha(t-t_{0})}  \\\\
                   & \leq & N Ke^{\alpha t_{0}}e^{-\alpha(t-s)}.
\end{array}
\end{displaymath}

\noindent $\bullet$ When $0\leq s \leq t \leq t_{0}$, it follows that
\begin{displaymath}
\begin{array}{rcl}
||X(t)PX^{-1}(s)|| & = & ||X(t)PX^{-1}(t_{0})X(t_{0})X^{-1}(s)|| \\\\
                   & \leq & ||X(t)PX^{-1}(t_{0})||\,||X(t_{0})X^{-1}(s)|| \\\\ 
                   & = & N ||X(t)PX^{-1}(t_{0})|| \\\\
                   & = & N ||X(t)X^{-1}(t_{0})X(t_{0})PX^{-1}(t_{0})|| \\\\
                   & \leq  & N^{2} ||X(t_{0})PX^{-1}(t_{0})|| \\\\
                   & \leq & N^{2} K = N^{2} K e^{\alpha t_{0}}e^{-\alpha t_{0}}  \\\\
%                   & \leq & N^{2} K = N^{2} K e^{\alpha t_{0}}e^{-\alpha t_{0}}e^{\alpha s}\\\\
                   & \leq & N^{2}e^{\alpha t_{0}} Ke^{-\alpha(t-s)}.
\end{array}
\end{displaymath}

Let $K_{1}=\max\{N^{2}e^{\alpha t_{0}} K,N K\}$, then we have that
\begin{displaymath}
||X(t)PX^{-1}(s)||\leq K_{1}e^{-\alpha (t-s)} \quad \textnormal{for any $0\leq s \leq t$},
\end{displaymath}
while the other inequality can be proved similarly and the Lemma follows.
\end{proof}

The next result is symmetric to the previous one and its proof is given to the reader:
\begin{lemma}
\label{completar2}
If the linear system \eqref{LinealCap29} has an exponential dichotomy on $(-\infty,-t_{0}]$ with $t_{0}>0$ then it also has an exponential dichotomy on $\mathbb{R}_{0}^{-}$ and preserve the same projection $P$
and constant $\alpha>0$.
\end{lemma}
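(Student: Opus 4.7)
The plan is to mirror the argument for Lemma \ref{completar}, reflecting it across the origin so that the compact gap to be filled is $[-t_0,0]$ instead of $[0,t_0]$. First I fix a unitary matrix norm and, using the growth bound (\ref{BGO-I}) from Chapter~1, introduce the constant
\[
N=\exp\left(\int_{-t_0}^{0}||A(u)||\,du\right)>1,
\]
which yields $||X(t)X^{-1}(s)||\leq N$ whenever $s,t\in[-t_0,0]$. This is the analogue of the key bound (\ref{C2A}) used in the previous lemma.

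Next, to establish $||X(t)PX^{-1}(s)||\leq K_1 e^{-\alpha(t-s)}$ for $s\leq t$ with $s,t\in\mathbb{R}_0^-$, I would split into three cases according to the position of $-t_0$ relative to $s$ and $t$. When $s\leq t\leq -t_0$ the bound is exactly the hypothesis. When $s\leq -t_0\leq t\leq 0$, inserting the identity $X^{-1}(-t_0)X(-t_0)=I$ yields $X(t)PX^{-1}(s)=[X(t)X^{-1}(-t_0)]\,[X(-t_0)PX^{-1}(s)]$, whose norm is bounded by $N\cdot Ke^{-\alpha(-t_0-s)}$; the conversion to $e^{-\alpha(t-s)}$ costs only a factor $e^{\alpha(t+t_0)}\leq e^{\alpha t_0}$. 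When $-t_0\leq s\leq t\leq 0$ I would insert $X^{-1}(-t_0)X(-t_0)$ twice to sandwich $X(-t_0)PX^{-1}(-t_0)$, picking up a bound $N^2K$, which is absorbed by the observation $t-s\leq t_0$.

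Choosing $K_1=N^2Ke^{\alpha t_0}$ then unifies the three cases while preserving the rate $\alpha$. The symmetric estimate $||X(t)(I-P)X^{-1}(s)||\leq K_1 e^{-\alpha(s-t)}$ for $t\leq s\leq 0$ is obtained by the very same case analysis applied to $I-P$ with the roles of $s$ and $t$ swapped, using that the bound $N$ on $[-t_0,0]$ is symmetric in its two arguments. The only mild obstacle is the bookkeeping of the three cases and checking that $K_1$ depends solely on $K$, $\alpha$, $t_0$ and $||A||$ restricted to $[-t_0,0]$, and never on the specific $s$ or $t$; once the analogue of (\ref{C2A}) is in place the remaining estimates are a direct transcription of the proof of Lemma \ref{completar}.
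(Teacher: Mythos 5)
Your proof is correct and takes exactly the approach the paper intends: the paper leaves Lemma~\ref{completar2} to the reader as the symmetric counterpart of Lemma~\ref{completar}, and your argument is precisely that mirror-image, with the bridging constant $N$ now controlling $X(t)X^{-1}(s)$ on $[-t_0,0]$ and the three-way case split at $-t_0$ replacing the split at $t_0$. The bookkeeping giving $K_1 = N^2Ke^{\alpha t_0}$ while leaving $\alpha$ and $P$ unchanged is exactly right.
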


\subsection{An equation having exponential dichotomy on the semiaxes but not in $\mathbb{R}$}
The following example proposed by K.J. Palmer in \cite{Palmer2006}  provides a simple case of a linear system having an exponential dichotomy on the
two semiaxes $(-\infty,0]$ and $[0,+\infty)$ but not in $\mathbb{R}$: let us consider the scalar equation
\begin{equation}
\label{schned}
x'=a(t)x,
\end{equation}
where $a\colon \mathbb{R}\to \mathbb{R}$ is defined by
\begin{displaymath}
a(t)=\left\{\begin{array}{ccl}
1 &\textnormal{if} &  t\leq -T \\
\phi(t)  &\textnormal{if} & t\in [-T,T] \\
-1 &\textnormal{if} &  t\geq T, 
\end{array}\right.
\end{displaymath}
where $T>0$ and $\phi\colon [-T,T]\to \mathbb{R}$ is a continuous function verifying $\phi(-T)=1$ and $\phi(T)=-1$.

It is easy to see that (\ref{schned}) has an exponential dichotomy on $[T,+\infty)$ 
with $P=1$, $K=1$ and $\alpha=1$ and also has exponential dichotomy on $(-\infty,-T]$ with 
$P=0$, $K=1$ and $\alpha=1$. Now, by Lemmas \ref{completar}--\ref{completar2}
we can see that $x'=a(t)x$ has an exponential dichotomy on $(-\infty,0]$ and $[0,+\infty)$. 
Nevertheless, by using Corollary \ref{u-bounded} from Chapter 2 we have this equation has not an exponential dichotomy on $(-\infty,+\infty)$ because any solution is bounded on $\mathbb{R}$.

\subsection{Noncritical uniformity}
The property of noncritical uniformity has been introduced by N.N. Krasovskii
in \cite{Krasovski} as a necessary and sufficient condition for the existence
of Lyapunov functions of nonautonomous systems having an equilibrium at the  origin.
The following definition, tailored for linear systems, has been used without being explicitly mentioned  
by W.A. Coppel in \cite[p.14]{Coppel1} while K.J. Palmer proposes it explicitly in \cite[Def.4]{Palmer}:

\begin{definition}
The linear system \eqref{LinealCap29} is \textbf{uniformly
non\-critical} on $J\subseteq \mathbb{R}$ if there exists
$T>0$ and $\theta \in (0,1)$ such that any solution $t\mapsto x(t)$ of \eqref{LinealCap29} satisfies
\begin{equation}
\label{CEDO-palmer}
|x(t)| \leq \theta \sup\limits_{|u-t|\leq T}|x(u)| \quad \textnormal{for any $t$ such that $[t-T,t+T]\subset J$}.
\end{equation}
\end{definition}

It will be useful to revisit the above definition for specific intervals $J$. If $J=[0,+\infty)$, the
property (\ref{CEDO-palmer}) becomes
\begin{equation*}
%\label{CED0}
|x(t)| \leq \theta \sup\limits_{|u-t|\leq T}|x(u)| \quad \textnormal{for any $t\geq T$},
\end{equation*}
whereas if $J=(-\infty,0]$, the property (\ref{CEDO-palmer}) becomes
\begin{equation*}
%\label{CED0-palmer2}
|x(t)| \leq \theta \sup\limits_{|u-t|\leq T}|x(u)| \quad \textnormal{for any $t\leq -T$}.
\end{equation*}

The following result from Coppel \cite[pp.13--14]{Coppel1} states that
the noncritical uniformity is implied by the exponential dichotomy on $[0,+\infty)$:

\begin{theorem}
\label{EDINCU}
If the system \eqref{LinealC2} has an exponential dichotomy on $\mathbb{R}_{0}^{+}$ with nontrivial projector $P$
and constants $K\geq 1$ and $\alpha>0$, then 
it has the property of noncritically uniformity on $\mathbb{R}_{0}^{+}$.
\end{theorem}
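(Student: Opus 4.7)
The plan is to exploit the splitting afforded by the exponential dichotomy in both forward and backward time. Using the alternative characterization of Theorem \ref{TCA}, I would work with the invariant projector $P(t):=X(t)PX^{-1}(t)$, which satisfies $P(t)X(t,s)=X(t,s)P(s)$ on $\mathbb{R}_{0}^{+}$, and decompose any solution as $x(t)=x^{+}(t)+x^{-}(t)$ where $x^{+}(t):=P(t)x(t)$ is the ``stable'' component and $x^{-}(t):=[I-P(t)]x(t)$ is the ``unstable'' one.

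For $t\geq T$, both $t-T$ and $t+T$ lie in $\mathbb{R}_{0}^{+}$, so invariance of $P(\cdot)$ allows rewriting
$$x^{+}(t)=X(t,t-T)P(t-T)x(t-T), \qquad x^{-}(t)=X(t,t+T)[I-P(t+T)]x(t+T).$$
Applying the two estimates in \eqref{AltChar} (the first with the pair $(t,t-T)$ and the second with $(t,t+T)$) then gives at once
$$|x^{+}(t)|\leq Ke^{-\alpha T}|x(t-T)| \quad \textnormal{and} \quad |x^{-}(t)|\leq Ke^{-\alpha T}|x(t+T)|.$$

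Since $|x(t-T)|$ and $|x(t+T)|$ are each bounded by $\sup_{|u-t|\leq T}|x(u)|$, the triangle inequality yields
$$|x(t)|\leq 2Ke^{-\alpha T}\sup_{|u-t|\leq T}|x(u)|.$$
It then suffices to fix any $T>\alpha^{-1}\ln(2K)$, so that $\theta:=2Ke^{-\alpha T}\in(0,1)$ provides the noncritical uniformity constants. The proof has no real obstacle: the only care needed is to ensure that every time argument stays in $\mathbb{R}_{0}^{+}$, which is automatic from the hypothesis $t\geq T$ in the definition of noncritical uniformity on $\mathbb{R}_{0}^{+}$. The nontriviality of $P$ is not essential for the estimate itself; it merely guarantees that both components genuinely enter the decomposition, and in the trivial cases the conclusion reduces to one of the two bounds alone.
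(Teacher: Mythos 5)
Your proof is correct, and it takes a genuinely different and in fact more economical route than the paper. The paper works with the same splitting $x_1(t)=X(t,s)P(s)x(s)$, $x_2(t)=X(t,s)[I-P(s)]x(s)$, but it uses the dichotomy in both directions on each component: it derives upper \emph{and lower} (growth) bounds, $|x_2(t)|\geq K^{-1}e^{\alpha(t-s)}|x_2(s)|$ and $|x_1(t)|\geq K^{-1}e^{\alpha(s-t)}|x_1(s)|$, then distinguishes the cases $|x_2(s)|\geq|x_1(s)|$ and $|x_2(s)|<|x_1(s)|$, introduces the auxiliary function $\Psi(t)=K^{-1}e^{\alpha t}-Ke^{-\alpha t}$, chooses $T$ with $\Psi(T)\geq 2/\theta$, and concludes $|x(s)|\leq\theta|x(s+T)|$ or $|x(s)|\leq\theta|x(s-T)|$ according to the case, from which the supremum bound follows. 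You instead write the stable component at time $t$ in terms of the solution at $t-T$ and the unstable component at time $t$ in terms of the solution at $t+T$, via the invariance $P(t)X(t,s)=X(t,s)P(s)$ from Theorem \ref{TCA}, and then only the two \emph{decay} estimates of \eqref{AltChar} plus the triangle inequality are needed: no lower bounds, no case distinction, and an explicit constant $\theta=2Ke^{-\alpha T}<1$ once $T>\alpha^{-1}\ln(2K)$. Both arguments yield the same uniformity (any prescribed $\theta\in(0,1)$ is attainable by enlarging $T$ in either approach); what the paper's longer route buys is the intermediate inequalities $|x(s)|\leq\theta|x(t)|$ for $|t-s|\geq T$, a form of expansivity that is of independent interest (cf.\ the comments on exponential expansiveness in that chapter), whereas your argument goes straight to the noncritical uniformity estimate. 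Your closing remarks are also accurate: the time arguments $t\pm T$ stay in $\mathbb{R}_{0}^{+}$ precisely because $t\geq T$, and nontriviality of $P$ is not used in the estimate itself.
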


\begin{proof}
Suppose that \eqref{LinealC2} has an exponential dichotomy on $[0,+\infty)$ with constants $K$ and  $\alpha$. For any solution $t\mapsto x(t)$ of (\ref{LinealC2}) let us define the auxiliary functions: 
    \begin{equation}
    \label{x1x2}
        x_1(t):= \Phi(t,s)P(s)x(s) \quad \textnormal{ and } \quad x_2(t):= \Phi(t,s)Q(s)x(s).
    \end{equation}
    Now, it can be easily verified that $x_1(t)$ and $x_2(t)$ satisfy the following properties:
%    \begin{enumerate}
%        \item[(i)] $x(t)=x_1(t)+x_2(t)$.
%        \item[(ii)] $x_1(s)=P(s)x(s)$ and $x_2(s)=Q(s)x(s)$.
%        \item[(iii)] $P(t)x_1(t)=x_1(t)$ and $Q(t)x_2(t)=x_2(t)$.
%    \end{enumerate}
\begin{subequations}
  \begin{empheq}{align}
  & x(t)=x_1(t)+x_2(t)  \label{i)}, \\
&   x_1(s)=P(s)x(s) \quad \textnormal{and} \quad x_2(s)=Q(s)x(s)  \label{ii)}, \\
& P(t)x_{1}(t)=x_{1}(t) \quad \textnormal{and} \quad Q(t)x_{2}(t)=x_{2}(t)  \label{iii)}. 
\end{empheq}
\end{subequations}
    
By the exponential dichotomy property, (\ref{x1x2}),(\ref{ii)}) and (\ref{iii)}) we can deduce the following estimations
when $t\geq s$: 
    \begin{equation*}
    \left\{\begin{array}{rcl}
       |x_1(t)| & = & |P(t)\Phi(t,s)x_1(s)|=|\Phi(t,s)P(s)x_1(s)|\leq Ke^{-\alpha(t-s)}|x_1(s)|, 
    \\\\
        |x_2(s)| &=& |\Phi(s,t)Q(t)\Phi(t,s)Q(s)x_2(s)|\leq Ke^{-\alpha(t-s)}|x_2(t)|.
    \end{array}\right.
    \end{equation*}

    Similarly, if $t\leq s$ we can deduce the estimations:
    \begin{equation*}
    \left\{\begin{array}{rcl}
       |x_2(t)| &=& |\Phi(t,s)Q(s)x(s)|=|\Phi(t,s)Q(s)x_2(s)|\leq Ke^{-\alpha(s-t)}|x_2(s)|,\\\\ 
       |x_1(s)| &=& |\Phi(s,t)P(t)\Phi(t,s)P(s)x_1(s)|\leq Ke^{-\alpha(s-t)}|x_1(t)|.
    \end{array}\right.   
    \end{equation*}
      \medskip 
    The above inequalities can be summarized as follows:
       \begin{equation*}
               |x_1(t)|\leq Ke^{-\alpha(t-s)}|x_1(s)| \quad \textnormal{and} \quad |x_2(t)|\geq K^{-1}e^{\alpha(t-s)}|x_2(s)| \,\,\, \textnormal{if} \,\, t\geq s
       \end{equation*}
        and 
       \begin{equation*}
           |x_2(t)|\leq Ke^{-\alpha(s-t)}|x_2(s)| \quad \textnormal{and}\quad |x_1(t)|\geq K^{-1}e^{\alpha(s-t)}|x_1(s)| \,\,\, \textnormal{if}\,\, t\leq s.
       \end{equation*}
       
       Now, we will assume that $|x_2(s)|\geq |x_1(s)|$, then if $t\geq s$,  the above inequalities and (\ref{i)}) imply that

       \begin{align*}
           |x(t)| &\geq |x_2(t)| - |x_1(t)|\\
           &\geq K^{-1}e^{\alpha(t-s)}|x_2(s)| - Ke^{-\alpha(t-s)}|x_1(s)|\\
           &\geq \left\{K^{-1}e^{\alpha(t-s)} - Ke^{-\alpha(t-s)}\right\}|x_2(s)|. 
       \end{align*}

       Similarly, if $|x_2(s)|<|x_1(s)|$, then for $s\geq t$,
       
       \begin{align*}
        |x(t)| &\geq |x_1(t)| - |x_2(t)|\\
        &\geq K^{-1}e^{\alpha(s-t)}|x_1(s)| - Ke^{-\alpha(s-t)}|x_2(s)|\\
        &\geq \left\{K^{-1}e^{\alpha(s-t)} - Ke^{-\alpha(s-t)}\right\}|x_1(s)|.
       \end{align*}
       
       Defining the auxiliary function
       \begin{equation*}
           \Psi(t):= K^{-1}e^{\alpha  t} - K e^{-\alpha t},
       \end{equation*}
    it follows that
    \begin{equation}\label{In-psi1}
        |x(t)|\geq \Psi(t-s)|x_2(s)| \quad \textnormal{ if } t\geq s,
    \end{equation}
    and
    \begin{equation}\label{In-psi2}
        |x(t)|\geq \Psi(s-t)|x_1(s)| \quad \textnormal{ if } t\leq s.
    \end{equation}
    For the function $\Psi$, we can deduce that
    \begin{itemize}
        \item $\Psi(0) =\dfrac{1}{K} - K \leq 0$
        \item Notice that:
    \begin{equation*}
        \Psi'(t) = K^{-1}\alpha e^{\alpha t} + K\alpha e^{-\alpha t}>0 
    \end{equation*}
    \item $\Psi(t_0)=0$, where $t_0=\frac{1}{\alpha}\ln(K) \geq 0$.
    \end{itemize}
For any $\theta\in (0,1)$ we can choose $T>\frac{1}{\alpha}\ln(K)$ such that $\Psi(T)\geq\dfrac{2}{\theta}$. Then if $t-s\geq T>0$, from \eqref{In-psi1}, we have
\begin{align*}
    |x(t)|>\Psi(T)|x_2(s)|\geq\dfrac{2}{\theta}|x_2(s)|
\end{align*}
and since that $|x_1(s)|\leq |x_2(s)|$, we get
\begin{equation*}
    |x(s)|\leq |x_1(s)| + |x_2(s)| \leq 2|x_2(s)|\leq \theta|x(t)|.
\end{equation*}
In a similar way, if $s-t \geq T>0$, from \eqref{In-psi2} and the inequality $|x_2(s)|<|x_1(s)|$, it is shown that
\begin{equation*}
    |x(t)|>\dfrac{2}{\theta}|x_1(s)|>\dfrac{1}{\theta}|x(s)|.
\end{equation*}
Thus, we can conclude that
\begin{equation*}
    |x(s)|<\theta|x(t)|, \quad \textnormal{ if } \quad |s-t|\geq T.
\end{equation*}
Next, for a fixed $s\geq T$, we choose  $t=T+s\geq s$, then
\begin{align}
    |x(s)| &\leq \theta|x(T+ s)|\nonumber\\
    &\leq \theta\sup\limits_{s\leq u\leq s+T}|x(u)|=\theta\sup\limits_{0\leq u-s\leq T}|x(u)|.\label{NCmayor}
\end{align}
For a fixed $s\geq T$, we now choose $t=s-T<s$, then 
\begin{align}
    |x(s)|&\leq \theta|x(s-T)|\nonumber\\
    &\leq \theta\sup\limits_{s-T\leq u\leq s}|x(u)| = \theta \sup\{|x(u)| : -T\leq u-s\leq 0\}.\label{NCmenor}
\end{align}
Finally, \eqref{NCmayor} and \eqref{NCmenor} imply that
\begin{equation*}
    |x(s)|\leq \theta\sup\limits_{|u-s|\leq T}|x(u)| \quad \textnormal{ for } s\geq T,
\end{equation*}
which concludes the proof.
\end{proof}

A converse result can be obtained  provided the additional property of bounded 
growth stated in Chapter 1.

\begin{theorem}
\label{NCU+BG}
If the linear system \eqref{LinealCap29} has the properties of bounded growth and noncritical uniformity
on $[0,+\infty)$ then it also has an exponential dichotomy on $[0,+\infty)$.
\end{theorem}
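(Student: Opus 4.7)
The plan is to apply Proposition \ref{triada}: I will construct a constant projection $P$ so that the first two inequalities of \eqref{eq:2.3} hold, and then use Lemma \ref{C12-C3} to obtain the third inequality from the standing bounded growth assumption, which will yield exponential dichotomy of \eqref{LinealCap29} on $\mathbb{R}_{0}^{+}$. I would define the candidate stable subspace
\[
V := \bigl\{\xi\in\mathbb{R}^{n} : t\mapsto |X(t,0)\xi|\ \text{is bounded on}\ [0,+\infty)\bigr\},
\]
choose any algebraic complement $W$ of $V$ in $\mathbb{R}^{n}$, and let $P$ be the projection onto $V$ along $W$. The bulk of the work is then to show that solutions with initial data in $V$ contract exponentially while solutions with initial data outside $V$ expand exponentially, with uniform constants.

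For $\xi\in V$, set $x(t)=X(t,0)\xi$ and $\mu=\sup_{t\geq 0}|x(t)|<+\infty$. Noncritical uniformity applied at any $t\geq T$ gives $|x(t)|\leq\theta\sup_{|u-t|\leq T}|x(u)|\leq\theta\mu$, so that $\sup_{t\geq T}|x(t)|\leq\theta\mu$, and iterating yields $\sup_{t\geq kT}|x(t)|\leq\theta^{k}\mu$. Using bounded growth on $[0,T]$ one moreover obtains $\mu\leq K_{0}e^{\beta T}|\xi|$, since otherwise the supremum would be attained beyond $T$, forcing $\mu\leq\theta\mu$. The same iteration applied to shifted solutions $\tau\mapsto X(\tau,s)\eta$ starting from forward--bounded data $\eta$ at any $s\geq 0$ delivers the first inequality of Proposition \ref{triada} with rate $\alpha=-(\ln\theta)/T$.

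The main obstacle is the expansion estimate: for every $\xi\notin V$ I need a uniform exponential lower bound of the form $|X(t,s)\xi_{s}|\geq K_{2}^{-1}e^{\alpha(t-s)}|\xi_{s}|$ with $\xi_{s}=X(s,0)\xi$, for all $t\geq s\geq 0$. The subtle point is that unboundedness of $|X(\cdot,0)\xi|$ on $[0,+\infty)$ is a priori compatible with narrow, isolated spikes separated by deep valleys, and NCU alone does not rule this out. Bounded growth is precisely what prevents such pathological behavior: since $|X(\tau,s)|\leq K_{0}e^{\beta(\tau-s)}$, the function $|X(\tau,s)\xi_{s}|$ cannot collapse too rapidly, so any peak must carry a neighborhood of comparable size. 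I would argue that, for $\xi\notin V$, NCU applied at successive record highs of $|X(\cdot,s)\xi_{s}|$ forces an increasing sequence of times $t_{1}<t_{2}<\cdots$ with $t_{k+1}-t_{k}\leq 2T$ and $|X(t_{k+1},s)\xi_{s}|\geq\theta^{-1}|X(t_{k},s)\xi_{s}|$, and then use bounded growth to interpolate between consecutive peaks and obtain the desired uniform exponential lower bound.

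With both directional inequalities in place, Lemma \ref{C12-C3} supplies the missing third inequality $\|X(t)PX^{-1}(t)\|\leq M$ directly from bounded growth, and Proposition \ref{triada} delivers the exponential dichotomy of \eqref{LinealCap29} on $\mathbb{R}_{0}^{+}$. The decisive step is the expansion argument above, where the two hypotheses interact essentially: NCU provides geometric growth along a sparse sequence of peaks, and bounded growth turns this into a continuous exponential lower bound on all of $[s,+\infty)$.
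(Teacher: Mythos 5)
Your overall architecture is the same as the paper's: take $V$ to be the set of forward--bounded initial data, pick an algebraic complement, let $P$ project onto $V$, prove a contraction estimate for bounded solutions and an expansion estimate for unbounded ones, and then invoke Lemma \ref{C12-C3} and Proposition \ref{triada}. Your contraction argument (iterating noncriticality in blocks of length $T$, using bounded growth to control the supremum on $[s,s+T]$) is essentially the paper's Lemma \ref{Bound-sol}, and your record--peak idea for unbounded solutions is the mechanism of the paper's Lemma \ref{Unbound-sol}.

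The gap is in the expansion step, and it is exactly the point you flag as ``the decisive step.'' The peak argument (record times with $t_{k+1}-t_k\leq T$, growth factor $\theta^{-1}$ per step, interpolation by bounded growth) only yields $|x(t)|\leq Ke^{-\alpha(s-t)}|x(s)|$ for $s\geq t\geq t_1(\xi)$, where $t_1(\xi)$ is the first time the solution exceeds the threshold $\theta^{-1}C|x(0)|$; before that time the solution may decay (think of a unit vector in the complement $W$ that is strongly tilted toward $V$: its trajectory shadows a bounded solution for a long while before the unstable part emerges). So the constants you get are per--solution, through $t_1(\xi)$, and your claim of a uniform bound ``for all $t\geq s\geq 0$'' does not follow from the sketch. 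The paper closes this by a compactness argument: if $\sup\{t_1(\xi):\xi\in W,\ |\xi|=1\}$ were infinite, a convergent subsequence of unit vectors $\xi_\nu\to\xi_0\in W$ together with uniform continuous dependence on initial conditions (Proposition \ref{CDCI}, itself a consequence of bounded growth) would force $|x(t,\xi_0)|\leq\theta^{-1}C$ for all $t\geq 0$, contradicting $\xi_0\notin V$. This gives a uniform $T_1$, hence a dichotomy on $[T_1,+\infty)$ via Proposition \ref{triada} and Lemma \ref{C12-C3}, which is then pulled back to $[0,+\infty)$ by Lemma \ref{completar}. Without this uniformization (or an equivalent device) your argument does not produce a single pair $(K,\alpha)$ valid for all data in $W$, and the dichotomy is not established.
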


As the linear system (\ref{LinealCap29}) is noncritical uniform on $[0,+\infty)$, there exists constants $T>0$ and $0<\theta<1$ such that every solution $t\mapsto x(t)$ satisfy the property
\begin{equation}
\label{UNCT1}
|x(t)| \leq \theta \sup\limits_{|u-t|\leq T}|x(u)| \quad \textnormal{for every $t\geq T$}.   
\end{equation}

In addition, by considering the above constant $T>0$, the bounded growth property implies the existence of $C>1$ such that every solution $t\mapsto x(t)$ of the linear system \eqref{LinealCap29} satisfy the property 
\begin{equation}
\label{BGT1}
|x(t)|\leq C|x(s)| \quad \textnormal{for $0\leq s \leq t \leq s+T$}.
\end{equation}

The proof will be consequence of several Lemmas. Firstly, notice that any nontrivial solution $t\mapsto x(t)$ of (\ref{LinealCap29}) is either
bounded or unbounded on $[0,+\infty)$. The idea is to prove that bounded and unbounded solutions
has the same qualitative properties that the functions described in the splitting of solution given in (\ref{splitting}) from the previous chapter.

\begin{lemma}\label{Bound-sol}
Under the assumptions of Theorem \ref{NCU+BG}, there exists $K\geq 1$ and $\alpha>0$ such that any nontrivial solution $t\mapsto x(t)$ which is bounded on $[0,+\infty)$ verifies:
\begin{equation}
\label{contra}
        |x(t)|    \leq  \displaystyle  Ke^{-\alpha(t-s)}|x(s)| \quad \textnormal{for any $t\geq s \geq 0$}.
    \end{equation}
\end{lemma}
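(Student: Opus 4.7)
The plan is to control the bounded solution by a nonincreasing majorant and iterate the noncriticality inequality. For a fixed $s\ge 0$ and a bounded solution $t\mapsto x(t)$, introduce
\begin{equation*}
M_{s}(t):=\sup_{u\ge t}|x(u)|, \qquad t\ge s,
\end{equation*}
which is well defined (by boundedness), finite, and nonincreasing. I would first show the contraction
\begin{equation*}
M_{s}(t)\le \theta\, M_{s}(t-T)\qquad\text{for every }t\ge s+T.
\end{equation*}
Indeed, for any $\tau\ge t\ge s+T$ the noncriticality inequality \eqref{UNCT1} applied at $\tau$ (note $\tau\ge T$) gives $|x(\tau)|\le \theta\sup_{|u-\tau|\le T}|x(u)|\le \theta\, M_{s}(\tau-T)\le \theta\, M_{s}(t-T)$, because $M_{s}$ is nonincreasing and $\tau-T\ge t-T\ge s$. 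Taking the supremum in $\tau\ge t$ yields the claim.

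Next I would anchor the estimate at $t=s$ by combining the contraction with the bounded growth \eqref{BGT1}. Splitting
\begin{equation*}
M_{s}(s)=\max\!\left(\sup_{s\le u\le s+T}|x(u)|,\; M_{s}(s+T)\right)\le \max\bigl(C|x(s)|,\; \theta\, M_{s}(s)\bigr),
\end{equation*}
and using $\theta<1$ to rule out the second alternative, I obtain the key bound $M_{s}(s)\le C|x(s)|$.

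Iterating the contraction $k$ times then gives $M_{s}(s+kT)\le \theta^{k} M_{s}(s)\le C\theta^{k}|x(s)|$ for every integer $k\ge 0$. For arbitrary $t\ge s$ write $t=s+kT+r$ with $0\le r<T$, so $k\ge (t-s)/T-1$, and estimate
\begin{equation*}
|x(t)|\le M_{s}(s+kT)\le C\theta^{k}|x(s)|\le \frac{C}{\theta}\,e^{-\alpha(t-s)}|x(s)|,
\end{equation*}
where $\alpha:=-\ln(\theta)/T>0$. Setting $K:=C/\theta\ge 1$ yields \eqref{contra}.

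The one step that needs care, and that I expect to be the main subtlety, is extracting a one-sided decay from the two-sided noncriticality bound: the sup in \eqref{UNCT1} ranges over $[t-T,t+T]$, so a priori it couples the past and the future of $x$. The monotonicity of $M_{s}$ is exactly what lets me replace that two-sided sup by $M_{s}(t-T)$ and turn the inequality into a genuine contraction. Everything else (the anchoring bound $M_{s}(s)\le C|x(s)|$ and the conversion from discrete iterates $kT$ to a continuous exponential estimate) is routine.
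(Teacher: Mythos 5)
Your proof is correct and follows essentially the same route as the paper: both introduce the nonincreasing tail-supremum ($\mu(\cdot)$ in the paper, your $M_s$), use the uniform noncriticality combined with monotonicity to derive the one-step contraction $\mu(t)\le\theta\,\mu(t-T)$, anchor it via the bounded growth to get $\mu(s)\le C|x(s)|$, and then interpolate from the discrete iterates to a continuous exponential bound with $K=C/\theta$ and $\alpha=-\ln(\theta)/T$. Your organization is slightly cleaner (the contraction is stated once and iterated, rather than re-derived at each shift), but the key ideas and the constants are identical.
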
    

\begin{proof}

\noindent \textit{Step 1: A first estimation.}  As (\ref{BGT1}) provides information about the solution $x(\cdot)$ on $[s,s+T]$, we will study its properties when $t\geq s+T$. In order to do that let us define the map $\mu\colon [0,+\infty)\to [0,+\infty)$ as follows: 
    \begin{equation*}
        \mu(s):=\sup\limits_{u\geq s}|x(u)|.
    \end{equation*}

Notice that $t\geq s+T$ is equivalent to
$t-T\geq s$ and also implies that $t\geq T$ since $s\geq 0$. These facts combined with (\ref{UNCT1}) imply that:  
\begin{equation}
\label{e1}
    \begin{array}{rl}
        |x(t)| &\leq \theta\sup\limits_{|u-t|\leq T}|x(u)|\\
        %&= \theta \sup\limits_{t-T\leq u \leq t+T}|x(u)|\\
        &= \theta \sup\{|x(u)| : t-T\leq u \leq t+T\}\\
        &\leq  \theta \sup\limits_{s\leq u \leq t+T}|x(u)|\\
        &\leq \theta \sup\limits_{u\geq s}|x(u)|=\theta \mu(s).
    \end{array}
\end{equation}    

    The above estimation implies that 
    \begin{equation}
    \label{e0}
        \mu(s)=\sup\limits_{s\leq u\leq s+T}|x(u)|. 
    \end{equation}
    
    Indeed, by using $\sup(A\cup B)=\max\{\sup(A),\sup(B)\}$ and the estimation (\ref{e1}),
    we can deduce that
    \begin{displaymath}
        \mu(s) =\max\left\{\sup\limits_{s\leq u\leq s+T}|x(u)|,\sup\limits_{s+T \leq u}|x(u)|\right\}\leq  \max\left\{\sup\limits_{s\leq u\leq s+T}|x(u)|,\theta \mu(s)\right\},
    \end{displaymath}
which leads to 
$$
\mu(s)\leq \sup\limits_{s\leq u\leq s+T}|x(u)|,
$$
whereas the inverse inequality is trivial and (\ref{e0}) follows.

    Therefore, by using (\ref{BGT1}),(\ref{e1}),(\ref{e0}) and recalling that $t\geq s+T$, we deduce that
    \begin{align*}
        |x(t)| &\leq \theta \mu(s)\\
        &= \theta \sup\limits_{s\leq u\leq s+T}|x(u)|\\
        &\leq \theta C |x(s)|,
    \end{align*}
    which combined with (\ref{BGT1}) implies
    \begin{equation*}
        |x(t)| \leq C|x(s)| \quad \mbox{ for } \quad 0\leq s\leq t. 
    \end{equation*}

\noindent \textit{Step 2: A useful inequality.} At this step, we will assume that
 $t\geq s+ nT$ for some $n\in \mathbb{N}$. In addition,  by recalling that $nT$ is strictly increasing, we have that $t\geq T$ and $t-T\geq s+(n-1)T$. These inequalities and (\ref{UNCT1}) imply: 
  \begin{equation}
  \label{e2}
    \begin{array}{rl}
        |x(t)| &\leq \theta \sup\limits_{|u-t|\leq T}|x(u)|\\
        &\leq \theta \sup\limits_{u\geq s+(n-1)T}|x(u)|=\theta \mu(s+(n-1)T).
    \end{array}
   \end{equation} 
    
As $t\geq s+nT$ and noticing that $n\in \mathbb{N}$ is arbitrary, the above estimation implies that 
    \begin{equation}
    \label{e3}
        \mu(s+nT) \leq \theta \mu(s+(n-1)T).
    \end{equation}

\noindent \textit{Step 3: $t\mapsto |x(t)|$ is an exponential contraction.}

At this step we will assume that $t\geq s$. Notice there always exists some $n\in \mathbb{N}$ such that $s+nT\leq t\leq s+(n+1)T$, then the estimations (\ref{e2})--(\ref{e3}) allow us to deduce that:
    \begin{displaymath}
        |x(t)|  \leq \theta \mu(s+nT)\leq \theta^2 \mu(s+(n-2)T)\leq
        \cdots \leq \theta^n\mu(s),
    \end{displaymath}
and, by using (\ref{BGT1}) and (\ref{e0}), we have that

\begin{equation}  
\label{e5}
|x(t)|\leq  \theta^n\sup\limits_{s\leq u\leq s+T}|x(u)|\leq \theta^n C|x(s)|.
    \end{equation}
    
    On the other hand, as $nT\leq t-s\leq (n+1)T$, we can see that
    \begin{equation*}
      \displaystyle  \theta^{n+1}\leq \theta^{\frac{t-s}{T}}\leq \theta^n.
    \end{equation*}

    The above estimation together with (\ref{e5}) implies:
    \begin{align*}
        |x(t)| &\leq \theta^{-1}C \theta^{n+1}|x(s)|\\
        &\leq \theta^{-1}C e^{\frac{\ln(\theta)}{T}(t-s)}|x(s)|.
    \end{align*}
    
    Taking $K=\theta^{-1}C\geq1$ and $\alpha=-\ln(\theta)/T>0$ since $\theta \in (0,1)$ and $T>0$, we obtain (\ref{contra}) and the Lemma follows.
\end{proof}

\begin{lemma}\label{Unbound-sol}
Under the assumptions of Theorem \ref{NCU+BG}, there exists $K\geq 1$, $\alpha>0$ and $T_{1}>0$ such that any solution $t\mapsto x(t)$
unbounded on $[0,+\infty)$ verifies:
 \begin{equation}
 \label{expa}
        |x(t)|\leq Ke^{-\alpha(s-t)}|x(s)| \quad \mbox{ for } \quad s\geq t\geq T_{1}>0.
        \end{equation}
\end{lemma}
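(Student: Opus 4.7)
The plan is to mirror Lemma \ref{Bound-sol} with past and future reversed. Since $\mu(s)=\sup_{u\ge s}|x(u)|$ would now be infinite, I replace it by the past supremum
\[
\nu(s):=\sup_{0\le u\le s}|x(u)|,
\]
which is non-decreasing and satisfies $\nu(s)\to+\infty$ because $x$ is continuous and unbounded on $[0,+\infty)$. The aim is to prove two complementary facts: (a) $\nu$ grows geometrically by at least a factor $\theta^{-1}$ over every interval of length $T$, and (b) $\nu(s)$ is comparable to $|x(s)|$ once $s$ is large. Combining them forces $|x|$ to grow exponentially in the forward direction, which rearranged is precisely \eqref{expa}.

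For (a), noncritical uniformity at any $t\in[T,s-T]$ gives $|x(t)|\le\theta\sup_{|u-t|\le T}|x(u)|\le\theta\nu(s)$, since the window $[t-T,t+T]$ is contained in $[0,s]$. Together with the trivial bound $\sup_{[0,T]}|x|=\nu(T)$, this yields $\nu(s-T)\le\max\{\nu(T),\theta\nu(s)\}$; picking $T_{1}\ge 2T$ large enough that $\theta\nu(s)\ge\nu(T)$ for all $s\ge T_{1}$, this simplifies to $\nu(s-T)\le\theta\nu(s)$, equivalently $\nu(s+T)\ge\theta^{-1}\nu(s)$ for $s\ge T_{1}-T$. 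For (b), I split $\nu(s+T)=\max\{\nu(s),\sup_{[s,s+T]}|x|\}$: bounded growth controls the second term by $C|x(s)|$, while (a) gives $\nu(s)\le\theta\nu(s+T)$, so
\[
\nu(s+T)\le\max\{\theta\nu(s+T),\,C|x(s)|\}.
\]
Since the first alternative forces $\nu(s+T)=0$, which is impossible for $s\ge T_{1}$, I deduce $\nu(s+T)\le C|x(s)|$, and in particular $\nu(s)\le C|x(s)|$.

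Iterating (a) gives $\nu(s+nT)\ge\theta^{-n}\nu(s)\ge\theta^{-n}|x(s)|$, and (b) applied at time $s+nT$ gives $|x(s+nT)|\ge\nu(s+nT)/C\ge\theta^{-n}|x(s)|/C$. For any $u\in[s+nT,s+(n+1)T]$, monotonicity of $\nu$ together with (b) at $u$ preserves the same lower bound $|x(u)|\ge\nu(u)/C\ge\theta^{-n}|x(s)|/C$. Setting $\alpha:=-\ln\theta/T>0$ and using $n\ge(u-s)/T-1$, one has $\theta^{-n}\ge\theta\,e^{\alpha(u-s)}$, so $|x(u)|\ge(\theta/C)\,e^{\alpha(u-s)}|x(s)|$, i.e., $|x(s)|\le K\,e^{-\alpha(u-s)}|x(u)|$ with $K=C/\theta$. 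Rewriting this with $(t,s)$ in place of $(s,u)$ yields \eqref{expa} for all $s\ge t\ge T_{1}$. The main obstacle is step (b): unlike its counterpart in Lemma \ref{Bound-sol}, it does not follow from noncritical uniformity alone, and the decisive trick is to bootstrap through $\nu(s+T)$, coupling the geometric comparison provided by (a) with the forward bounded growth from $s$.
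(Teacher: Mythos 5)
Your argument is correct, but it takes a genuinely different route from the paper's. The paper normalizes $|x(0)|=1$, constructs a \emph{discrete} sequence $\{t_n\}$ of first-crossing times at levels $\theta^{-n}C$, shows $t_{n+1}\le t_n+T$ by contradiction via noncriticality, and then extracts the exponential estimate from this spacing. You instead work with the \emph{continuous} past running-supremum $\nu(s)=\sup_{0\le u\le s}|x(u)|$, which is the natural past-time analogue of the future-supremum $\mu$ used in Lemma \ref{Bound-sol}. Your two claims are (a) eventual geometric growth $\nu(s+T)\ge\theta^{-1}\nu(s)$, obtained directly from noncriticality applied uniformly on $[T,s-T]$, and (b) the comparison $\nu(s)\le C|x(s)|$ for large $s$, obtained by coupling (a) with bounded growth through a bootstrap inequality $\nu(s+T)\le\max\{\theta\nu(s+T),\,C|x(s)|\}$. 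What your approach buys is structural symmetry with Lemma \ref{Bound-sol} and the elimination of the combinatorial bookkeeping around the crossing times; the decisive extra idea you supply is step (b), which has no counterpart in the bounded-solution case and does not follow from noncriticality alone. Two small remarks: the impossibility of the first alternative in (b) is because $\nu(s+T)>0$ for \emph{every} $s\ge 0$ (a nontrivial solution of a linear system never vanishes identically on an interval), not specifically for $s\ge T_1$; and both your proof and the paper's yield a $T_1$ depending on the individual solution $x$ — the uniformity required downstream is recovered in the proof of Theorem \ref{NCU+BG} by the compactness argument over $V_2$, so neither proof closes that gap at the lemma level, and neither needs to. Your constants $K=C/\theta$ and $\alpha=-\ln\theta/T$ agree with the paper's.
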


\begin{proof}

\noindent \textit{Step 1: Preliminaries.}
    Let $t\mapsto x(t)$ be an unbounded solution with $|x(0)|=1$. As $x(\cdot)$ is also continuous, we can take a sequence $\{t_n\}$ with $t_n>0$ such that  
    \begin{equation}
    \label{QPS}
        |x(t_n)|=\theta^{-n}C \quad \textnormal{and} \quad |x(t)|<\theta^{-n}C \quad \mbox{ for } \quad 0\leq t< t_n,
    \end{equation}
which implies that the sequence is strictly increasing and upper unbounded. On the other hand, by the uniform bounded growth (\ref{BGT1}) with $s=0$ we have that:
    \begin{equation*}
        |x(t)|\leq C|x(0)|=C<\theta^{-1}C \quad \mbox{ for } \quad 0\leq t\leq T,
    \end{equation*}
    which allow us to see that $T<t_{1}$. 

\noindent \textit{Step 2:  $t_{n+1}\leq t_n+T$ for any $n\in \mathbb{N}$.}
    Otherwise, there exists $n_0 \in \mathbb{N}$ such that $t_{n_{0}+1}>t_{n_0}+T$, this fact combined with recalling $t_{n_0}>T$ and (\ref{UNCT1}) leads to:
    \begin{equation*}
        |x(t_{n_0})|\leq \theta\sup\limits_{|u-t_{n_0}|\leq T}|x(u)|\leq \theta\sup\limits_{0\leq u\leq t_{n_0}+ T}|x(u)|\leq  \theta\sup\limits_{0\leq u\leq t_{n_0+1}}|x(u)|.
    \end{equation*}

    In addition, a direct consequence from (\ref{QPS}) is that
    \begin{displaymath}
     \sup\limits_{0\leq u\leq t_{n_0+1}}|x(u)|=C\theta^{-(n_{0}+1)}=\theta^{-1}|x(t_{n_{0}})|.   
    \end{displaymath}

    By gathering the above estimations, we obtain that
    \begin{equation*}
        |x(t_{n_0})|\leq \theta\sup\limits_{0\leq u\leq t_{n_0+1}}|x(u)|\leq |x(t_{n_0})|,
    \end{equation*}
    which is a contradiction. 

    \medskip
    
\noindent \textit{Step 3: End of proof.}    
    Suppose that $0\leq t\leq s$ and $t_m\leq t<t_{m+1}$, $t_n\leq s< t_{n+1}$ with $1\leq m<n$. By (\ref{QPS})
    and the qualitative properties of $\{t_{n}\}_{n}$  we have that:
   \begin{equation}
   \label{SEF}
    \begin{array}{rl}
        |x(t)| &\leq \theta^{-m-1}C\\
        &=\theta^{n-m}|x(t_{n+1})|\\
        &\leq \theta^{-1}C \theta^{n-m+1}|x(s)|,
    \end{array}
   \end{equation} 
where the last estimation follows from $t_{n+1}\leq t_{n}+T$ combined with (\ref{BGT1}).

    In order to obtain a better estimation of (\ref{SEF}), we have that
    \begin{equation*}
        s-t\leq t_{n+1}-t_m\leq (t_n+T)-t_m = (t_n-t_m)+T.
    \end{equation*}
    Moreover,
    \begin{equation*}
        t_n\leq t_{n-1}+ T\leq t_{n-2}+2T\leq \cdots \leq t_m+(n-m)T,
    \end{equation*}
    which implies that 
    \begin{equation*}
        e^{s-t} 
 \leq e^{(n-m+1)T}=(e^{T})^{n-m+1}
    \end{equation*}
    and so $(s-t)/T\leq n-m+1$, which leads to:
    \begin{equation*}
        \theta^{n-m+1}\leq \theta^{\frac{s-t}{T}}.
    \end{equation*}
    Next, upon inserting the above estimation in (\ref{SEF}), we get that
    \begin{align*}
        |x(t)| &\leq \theta^{-1}C_T\theta^{\frac{s-t}{T}}|x(s)|\\
        &= \theta^{-1}C_Te^{(s-t)\frac{\ln(\theta)}{T}}|x(s)|,
    \end{align*}
then (\ref{expa}) is obtained with $K=\theta^{-1}C\geq1$, $\alpha=-\ln(\theta)/T>0$
    and $T_{1}=t_{1}$.
\end{proof}

\subsection{Uniform exponential dichotomy and uniform noncriticality: Proof of Theorem \ref{NCU+BG}} Let us consider the subspace: 
$$
V_1:=\left\{\xi\in\mathbb{R}^n : 
 \sup\limits_{t\geq 0} |X(t,0)\xi|<+\infty\right\}
 $$ 
 and let $V_2$ be a subspace of $\mathbb{R}^n$ such that $\mathbb{R}^n=V_1\oplus V_2$. For any $\xi\in V_2$, with $|\xi|=1$, let $t\mapsto x(t)=x(t,\xi)$ be the solution of \eqref{LinealCap29} with initial condition $x(0)=\xi$. Then $t\mapsto x(t,\xi)$ is unbounded and so there exists a value $t_1:=t_1(\xi)$ such that 
 $$
|x(t,\xi)|<\theta^{-1}C \quad \textnormal{for any $0\leq t<t_{1}(\xi)$} \quad \textnormal{and} \quad |x(t_1,\xi)|=\theta^{-1}C.
 $$
 
 We will show that the set $B:=\{t_1(\xi): \xi\in V_2 \quad \textnormal{ and } \quad |\xi|=1\}$ is bounded. In fact, if $B$ is unbounded, there is a sequence $\{\xi_{\nu}\}_{\nu\in\mathbb{N}}\subset V_2$ with $|\xi_{\nu}|=1$ and $t_1^{(\nu)}=t_1(\xi_{\nu})\to +\infty$ as $\nu\to +\infty$. By the compactness of the unit sphere in $V_2$, we may assume that $\xi_{\nu}\to \xi_0$ as $\nu\to +\infty$, for some unit vector $\xi_0\in V_2$. Thus, by Proposition \ref{CDCI} from Chapter 1, we have that the uniform bounded growth property implies that
 \begin{equation*}
     x(t,\xi_{\nu}) \to x(t,\xi) \quad \textnormal{ as } \quad \nu\to +\infty,
 \end{equation*}
 for all $t\geq 0$. Since $|x(t,\xi_{\nu})|<\theta^{-1}C$ for $0\leq t< t_1^{(\nu)}$, it follows that
 \begin{equation*}
     |x(t,\xi_0)|\leq \theta^{-1}C \quad \textnormal{ for } \quad 0\leq t<+\infty,
 \end{equation*}
 which is a contradiction  with the fact that $\xi_0\in V_2$. Then there is $T_1>0$ such that $t_1(\xi)\leq T_1$ for all unit vector $\xi\in V_2$. Next, for all $\xi\in V_2$, with $\xi\neq 0$, Lemma \ref{Unbound-sol} implies that 
 \begin{equation*}
     |x(t,\xi)|\leq|\xi|Ke^{-\alpha(s-t)}|x(s,\frac{\xi}{|\xi|})|=Ke^{-\alpha(s-t)}|x(s,\xi)|,
 \end{equation*}
 for $s\geq t\geq T_1>0$.
 
 Now, let $P$ be the projection from the split $\mathbb{R}^{n}=V_1\oplus V_2$ on the subspace $V_1$. Then for each $\xi\in \mathbb{R}^n$, by Lemmas  \ref{Bound-sol} and \ref{Unbound-sol}, we have that 
 \begin{equation*}
     |X(t)P\xi|\leq Ke^{-\alpha(t-s)}|X(s)P\xi| \quad \textnormal{ for } \quad t\geq s\geq T_1 
 \end{equation*}
 and
 \begin{equation*}
     |X(t)(I-P)\xi|\leq Ke^{-\alpha(s-t)}|X(s)(I-P)\xi| \quad \textnormal{ for } \quad s\geq t\geq T_1.
 \end{equation*}
 The uniform bounded growth property together with Lemma \ref{C12-C3} imply that there exists a positive constant $M$ such that 
 \begin{equation*}
     ||X(t)PX^{-1}(t)||\leq M \quad \textnormal{for any $t\geq T_{1}$}.
 \end{equation*}
 Hence, by Proposition \ref{triada} from Chapter 2 we have that \eqref{LinealCap29} has a uniform exponential dichotomy on the interval $[T_1,+\infty)$ and by Lemma \ref{completar}, it has a uniform exponential dichotomy on $[0, +\infty)$.

\section{Bounded solutions on the half lines and exponential dichotomy}
As we shown in an example at the beginning of this chapter, if the linear system (\ref{LinealCap29}) has simultaneously an exponential dichotomy on $\mathbb{R}_{0}^{-}$ and $\mathbb{R}_{0}^{+}$ this not always implies the existence of an exponential dichotomy on $\mathbb{R}$. Now, in this section we will provide a necessary and sufficient condition ensuring the exponential dichotomy on $\mathbb{R}$. In this context, we recall the following definition. 
\begin{definition}
 The \textit{index} of a $n$--dimensional linear system $\dot{x}=A(t)x$ with exponential dichotomies both on $\mathbb{R}_{0}^{+}$ and $\mathbb{R}_{0}^{-}$ with constant projections $P_{+}$ and $P_{-}$ respectively, is given by
 $$
 i(A)=\textnormal{\text{dim}\,\text{Im}}P_{+}+\textnormal{\text{dim}}\,\ker P_{-}-n.
 $$
\end{definition}

Moreover, we will consider the linear system (\ref{LinealCap29}) with fundamental matrix $X(t)$ and let us redefine the subspaces 
\begin{subequations}
  \begin{empheq}{align}
    &\mathcal{V}:=\left\{\xi \in \mathbb{R}^{n}\colon \sup\limits_{t\geq 0}|X(t,0)\xi|<+\infty\right\}, \label{EVS}\\
    & 
\mathcal{W}:=\left\{\xi \in \mathbb{R}^{n}\colon \sup\limits_{t\leq 0}|X(t,0)\xi|<+\infty\right\}. \label{EVI}
  \end{empheq}
\end{subequations}

The next two results consider different projections but its proof is
similar to the Lemma \ref{subpespacios-t_0} studied in the previous chapter.
\begin{lemma}
\label{BSPHL}
If the linear system \eqref{LinealCap29} has an exponential dichotomy on $[0,+\infty)$ 
with projector $P_{+}$ and constants $(K,\alpha)$, then
it follows that $\mathcal{V}=\textnormal{\text{Im}}P_{+}$.
\end{lemma}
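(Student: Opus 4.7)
The plan is to mirror the argument used in Lemma \ref{subpespacios-t_0} for the full line, but now restricted to a half line and working with the constant projection $P_+$ from Definition \ref{DICE} instead of the invariant one. Without loss of generality I would choose a fundamental matrix $X(t)$ normalized at $0$, so that $X(0)=I$ and $X(t,0)=X(t)$; then $P_+$ coincides with $P(0)$ in the notation of Theorem \ref{TCA}, and the computations simplify.

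For the inclusion $\textnormal{Im}\,P_+\subseteq\mathcal{V}$ I would take $\xi$ with $P_+\xi=\xi$ and write $X(t,0)\xi=X(t)P_+X^{-1}(0)\xi$; the first inequality of \eqref{eq:2.2} with $s=0$ gives $|X(t,0)\xi|\leq Ke^{-\alpha t}|\xi|\leq K|\xi|$ for every $t\geq 0$, so the supremum defining $\mathcal{V}$ is finite and $\xi\in\mathcal{V}$.

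For the reverse inclusion $\mathcal{V}\subseteq\textnormal{Im}\,P_+$ I would argue by contradiction. Suppose $\xi\in\mathcal{V}$ while $(I-P_+)\xi\neq 0$. Using $(I-P_+)^{2}=I-P_+$ I rewrite
\[
(I-P_+)\xi = X(0)(I-P_+)X^{-1}(s)\,X(s)(I-P_+)\xi,
\]
and apply the second inequality of \eqref{eq:2.2} at $t=0$, $s\geq 0$, to obtain $|(I-P_+)\xi|\leq Ke^{-\alpha s}|X(s)(I-P_+)\xi|$, i.e.\ the expansion estimate $|X(s)(I-P_+)\xi|\geq K^{-1}e^{\alpha s}|(I-P_+)\xi|$, which diverges as $s\to+\infty$. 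However, the triangle inequality gives $|X(s)(I-P_+)\xi|\leq |X(s,0)\xi|+|X(s)P_+\xi|$; the first summand is bounded on $[0,+\infty)$ by the assumption $\xi\in\mathcal{V}$, and the second is bounded by the estimate of the previous paragraph applied to $P_+\xi\in\textnormal{Im}\,P_+$. This contradiction forces $(I-P_+)\xi=0$, that is, $\xi\in\textnormal{Im}\,P_+$.

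I do not expect a substantial obstacle. The argument is essentially a one-sided version of Lemma \ref{subpespacios-t_0}: only the forward contraction and the forward expansion estimates are used, and no control over negative times is required. The only care point is bookkeeping with the constant projection $P_+$, which is handled transparently after the normalization $X(0)=I$.
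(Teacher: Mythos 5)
Your proof is correct and follows essentially the same route the paper indicates, namely a one-sided repetition of the argument for Lemma \ref{subpespacios-t_0}: the forward contraction estimate gives $\textnormal{Im}\,P_+\subseteq\mathcal{V}$, and the forward expansion estimate combined with the triangle inequality gives the reverse inclusion by contradiction. The normalization $X(0)=I$ is the same convention the paper uses implicitly (and explicitly in the analogous Lemma \ref{carac-stable}), so that the constant projector $P_+$ agrees with the invariant projector $P(0)$ and the identity $X(t,0)=X(t)$ makes the dichotomy estimates \eqref{eq:2.2} directly applicable.
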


\begin{lemma}
\label{BSNHL}
If the linear system \eqref{LinealCap29} has an exponential dichotomy on $(-\infty,0]$ 
with projector $P_{-}$ and constants $(K,\alpha)$, then
it follows that $\mathcal{W}=\ker P_{-}$.
\end{lemma}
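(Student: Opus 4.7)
The plan is to mimic the argument used for Lemma \ref{subpespacios-t_0} in Chapter 2, but adapted to the half--line case, and in fact the bulk of the work has already been done: by Remark \ref{ext-split}, the splitting estimates of Theorem \ref{prop-split} remain valid when the exponential dichotomy only holds on $\mathbb{R}_{0}^{-}$, and the characterization given in Theorem \ref{TCA} allows me to pass from the constant projector $P_{-}$ to its invariant counterpart $P_{-}(t)=X(t)P_{-}X^{-1}(t)$, with $P_{-}(0)=P_{-}$ (after normalizing $X(0)=I$).

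For the inclusion $\ker P_{-}\subseteq \mathcal{W}$, I would take $\xi\in\ker P_{-}$ so that $\xi=[I-P_{-}]\xi$, set $t_{0}=0$ and $t\leq 0$, and apply the second inequality of \eqref{AltChar} with $s=0$:
\begin{displaymath}
|X(t,0)\xi|=|X(t,0)[I-P_{-}(0)]\xi|\leq \|X(t,0)[I-P_{-}(0)]\|\,|\xi|\leq Ke^{\alpha t}|\xi|\leq K|\xi|,
\end{displaymath}
which shows that $t\mapsto X(t,0)\xi$ is bounded on $(-\infty,0]$, hence $\xi\in\mathcal{W}$.

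For the reverse inclusion $\mathcal{W}\subseteq \ker P_{-}$, I would argue by contradiction: assume $\xi\in\mathcal{W}$ but $P_{-}\xi\neq 0$. Using the invariance $P_{-}(t)X(t,0)=X(t,0)P_{-}$, I write
\begin{displaymath}
P_{-}\xi=X(0,t)X(t,0)P_{-}\xi=X(0,t)P_{-}(t)X(t,0)\xi,
\end{displaymath}
and then invoke the first inequality of \eqref{AltChar} with the roles $(t,s)=(0,t)$, which is legitimate since $0\geq t$: this gives $\|X(0,t)P_{-}(t)\|\leq Ke^{\alpha t}$ for every $t\leq 0$, so that
\begin{displaymath}
|P_{-}\xi|\leq Ke^{\alpha t}\,|X(t,0)\xi|,\qquad\text{i.e.}\qquad |X(t,0)\xi|\geq \frac{|P_{-}\xi|}{K}\,e^{-\alpha t}.
\end{displaymath}
Letting $t\to -\infty$ forces $|X(t,0)\xi|\to +\infty$, contradicting $\xi\in \mathcal{W}$. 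Hence $P_{-}\xi=0$, i.e.\ $\xi\in\ker P_{-}$.

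I do not foresee a genuine obstacle here: all the analytic content is already packaged in the splitting estimates \eqref{DE-I1}--\eqref{DE-I2}, whose validity on $\mathbb{R}_{0}^{-}$ is guaranteed by Remark \ref{ext-split}. The only point that requires a small amount of care is the bookkeeping between the constant projector $P_{-}$ of Definition \ref{DICE} and the invariant projector $P_{-}(\cdot)$ of Theorem \ref{TCA}, namely the normalization $X(0)=I$ so that $P_{-}(0)$ coincides with $P_{-}$; once this is set up, the two inclusions follow by the direct estimates above.
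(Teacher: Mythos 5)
Your proof is correct and follows essentially the route the paper intends, namely adapting the argument of Lemma \ref{subpespacios-t_0} through the invariant projector of Theorem \ref{TCA}; both inclusions and all exponents check out. The only (harmless) difference is that for $\mathcal{W}\subseteq\ker P_{-}$ you get the lower bound $|X(t,0)\xi|\geq K^{-1}|P_{-}\xi|\,e^{-\alpha t}$ directly from the identity $P_{-}\xi=X(0,t)P_{-}(t)X(t,0)\xi$, which streamlines the triangle--inequality contradiction used in the Chapter 2 proof.
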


The following result has been stated in \cite[Lemma 1]{Palmer2006}.

\begin{lemma}
\label{Palmeriano}
Assume that the linear system \eqref{LinealCap29} has exponential dichotomies both on $\mathbb{R}_{0}^{+}$ and $\mathbb{R}_{0}^{-}$ with projections $P_{+}$ and $P_{-}$ respectively, then the following properties are equivalent:
\begin{itemize}
\item[i)] The unique bounded solution of the lineal system \eqref{LinealCap29} is the trivial one,
\item[ii)] The projectors $P_{+}$ and $P_{-}$ satisfy the identity
\begin{equation*}
%\label{PQ=OP}
P_{+}P_{-}=P_{-}P_{+}=P_{+}.
\end{equation*}
\end{itemize}
\end{lemma}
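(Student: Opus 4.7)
The proof will hinge on translating both conditions into statements about the subspaces $\mathcal{V}$ and $\mathcal{W}$ of \eqref{EVS}--\eqref{EVI}, via Lemmas \ref{BSPHL} and \ref{BSNHL}, which give $\mathcal{V} = \textnormal{Im}\,P_{+}$ and $\mathcal{W} = \ker P_{-}$. A solution $t \mapsto X(t,0)\xi$ is bounded on $\mathbb{R}$ if and only if it is bounded on both half-lines, i.e., if and only if $\xi \in \mathcal{V} \cap \mathcal{W} = \textnormal{Im}\,P_{+} \cap \ker P_{-}$. Therefore (i) is equivalent to the subspace identity $\textnormal{Im}\,P_{+} \cap \ker P_{-} = \{0\}$, which will serve as a bridge between the two formulations.

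For the implication (ii) $\Rightarrow$ (i), the argument is purely algebraic. If $P_{-}P_{+} = P_{+}$, then for every $\xi \in \mathbb{R}^{n}$ the vector $P_{+}\xi$ is fixed by $P_{-}$, hence lies in $\textnormal{Im}\,P_{-}$; this yields $\textnormal{Im}\,P_{+} \subseteq \textnormal{Im}\,P_{-}$. Combining with the general fact $\textnormal{Im}\,P_{-} \cap \ker P_{-} = \{0\}$, one obtains $\textnormal{Im}\,P_{+} \cap \ker P_{-} = \{0\}$, which is (i). For (i) $\Rightarrow$ (ii) I would exploit the freedom in the dichotomy projectors: although Lemmas \ref{BSPHL} and \ref{BSNHL} pin down $\textnormal{Im}\,P_{+}$ and $\ker P_{-}$, the complementary subspaces $\ker P_{+}$ and $\textnormal{Im}\,P_{-}$ may be taken as any complements of $\mathcal{V}$ and $\mathcal{W}$ respectively. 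Using $\mathcal{V} \cap \mathcal{W} = \{0\}$, fix a subspace $Z \subseteq \mathbb{R}^{n}$ with $\mathbb{R}^{n} = \mathcal{V} \oplus Z \oplus \mathcal{W}$ and set $\textnormal{Im}\,P_{-} := \mathcal{V} \oplus Z$ and $\ker P_{+} := Z \oplus \mathcal{W}$. Then $\textnormal{Im}\,P_{+} = \mathcal{V} \subseteq \textnormal{Im}\,P_{-}$ rephrases as $P_{-}P_{+} = P_{+}$, and $\ker P_{-} = \mathcal{W} \subseteq \ker P_{+}$ rephrases as $P_{+}P_{-} = P_{+}$, giving (ii).

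The principal subtlety lies in the direction (i) $\Rightarrow$ (ii): without the explicit selection above, condition (i) does not force (ii) for an arbitrary admissible pair of projectors (in $\mathbb{R}^{2}$ one can easily exhibit $P_{+}, P_{-}$ satisfying $\textnormal{Im}\,P_{+} \cap \ker P_{-} = \{0\}$ yet $P_{-}P_{+} \neq P_{+}$). One must therefore keep careful track of which set inclusion translates to which projector identity and verify that the two choices of complement are mutually consistent, which is precisely what the decomposition $\mathbb{R}^{n} = \mathcal{V} \oplus Z \oplus \mathcal{W}$ provides.
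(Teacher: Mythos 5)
Your direction (ii) $\Rightarrow$ (i) is correct, and in fact the paper does not write it out at all (it is left as an exercise, with a pointer to Palmer), so there is nothing to compare there. The substantive divergence is in (i) $\Rightarrow$ (ii). The paper keeps the \emph{given} projections $P_{+}$, $P_{-}$ fixed and argues that the absence of nontrivial bounded solutions, i.e. $\ker P_{-}\cap\textnormal{Im}\,P_{+}=\{0\}$ via Lemmas \ref{BSPHL}--\ref{BSNHL}, forces the two inclusions $\ker P_{-}\subseteq\ker P_{+}$ and $\textnormal{Im}\,P_{+}\subseteq\textnormal{Im}\,P_{-}$, from which the identities $P_{-}P_{+}=P_{+}$ and $P_{+}P_{-}=P_{+}$ follow by exactly the computations you give. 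You instead discard the given complements and \emph{re-choose} the projections, keeping $\textnormal{Im}\,P_{+}=\mathcal{V}$ and $\ker P_{-}=\mathcal{W}$ but redefining $\ker P_{+}:=Z\oplus\mathcal{W}$ and $\textnormal{Im}\,P_{-}:=\mathcal{V}\oplus Z$. That proves a statement of the form ``the projections can be chosen so that (ii) holds'', which is not literally Lemma \ref{Palmeriano}, where $P_{+}$ and $P_{-}$ are fixed in the hypothesis. Your closing objection is well-founded (a pair with $\textnormal{Im}\,P_{+}\cap\ker P_{-}=\{0\}$ but failing (ii) does exist in $\mathbb{R}^{2}$, e.g. for $\dot x=\textnormal{diag}(-1,1)x$ with one of the complements tilted), and it hits precisely the paper's step ``$\xi$ must be in a complementary subspace of $\textnormal{Im}\,P_{+}$, which leads to $\ker P_{-}\subseteq\ker P_{+}$'', which does not follow for an arbitrary admissible $P_{+}$; but you should state explicitly that you are proving a reformulated lemma rather than the one as worded, since the two readings genuinely differ.

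Inside your own route there is one unproved ingredient: the claim that $\ker P_{+}$ and $\textnormal{Im}\,P_{-}$ ``may be taken as any complements of $\mathcal{V}$ and $\mathcal{W}$''. Within this monograph that is exactly the content of Lemmas \ref{Unip+} and \ref{Unip-}, and each has a boundedness hypothesis to verify: for the new $P'$ with $\textnormal{Im}\,P'=\textnormal{Im}\,P_{+}$ one needs $\sup_{t\geq 0}\Vert X(t)P'X^{-1}(t)\Vert<\infty$, and for the new $\overline{P}$ with $\ker\overline{P}=\ker P_{-}$ one needs $\sup_{t\leq 0}\Vert X(t)(I-\overline{P})X^{-1}(t)\Vert<\infty$. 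These do hold: since $\textnormal{Im}\,P'=\textnormal{Im}\,P_{+}$ one has $P'=P_{+}+P_{+}(P'-P_{+})(I-P_{+})$, and inserting $X^{-1}(0)X(0)$ and using both dichotomy inequalities through $s=0$ shows the extra term is a constant multiple of $e^{-2\alpha t}$ on $[0,+\infty)$, with a symmetric computation on $\mathbb{R}_{0}^{-}$. Without some such verification the re-choice of projections is an assertion, not a proof; add it (or cite Lemmas \ref{Unip+}--\ref{Unip-} and check their hypotheses) and your argument is complete for the reformulated statement.
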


\begin{proof}
By using lemmas (\ref{BSPHL})--(\ref{BSNHL}) it follows that:
\begin{displaymath}
\text{Im} P_{+}=\mathcal{V}\quad \textnormal{and} \quad \ker P_{-}=\mathcal{W}.
\end{displaymath}

If we assume the statement i), it can be straightforwardly proved that $\ker P_{-}\cap \text{Im} P_{+}=\{0\}$. Now, for any nonzero vector $\xi\in \ker P_{-}$ it follows that $\xi$ must be
in a complementary subspace of $\text{Im} P_{+}$, which leads to 
\begin{equation}
\label{contenzion}
\ker P_{-} \subseteq \ker P_{+},
\end{equation}
this allow us to deduce that $\ker P_{-}\oplus \text{Im} P_{+} \subset \ker P_{+}\oplus \text{Im} P_{+}=\mathbb{R}^{n}$. We can deduce the existence of a complementary subspace $V$ such that:
$$
\ker P_{-}\oplus V \oplus \text{Im} P_{+}=\mathbb{R}^{n},
$$
then $V \oplus \text{Im} P_{+}=\text{Im} P_{-}$ which leads to
\begin{equation}
\label{contenzion2}
 \text{Im} P_{+} \subseteq \text{Im} P_{-}.
\end{equation}

For any $\xi \in \mathbb{R}^{n}$ it follows that $P_{+}\xi \in \text{Im} P_{+}\subset \text{Im} P_{-}$, which implies that $P_{-}P_{+}\xi=P_{+}\xi$ and the identity $P_{-}P_{+}=P_{+}$ is verified.

Now, for any $\xi\in \mathbb{R}^{n}$ we have that $\xi=\eta+\lambda$ with
$\eta \in \ker P_{-}$ and $\lambda \in \text{Im} P_{-}$. We have that $P_{-}\xi=\lambda$
and, as $\ker P_{-}\subset \ker P_{+}$, we also have that $P_{+}\eta=0$ which allow us to conclude that $P_{+}P_{-}\xi = P_{+}\lambda =P_{+}(\eta+\lambda)=P_{+}\xi$, then $P_{+}P_{-}=P_{+}$
and the statement (ii) follows.

The converse implication is left as an exercise for the reader, we also refer to \cite[p.S176]{Palmer2006}.

\end{proof}

\begin{proposition}
\label{FullDico}
The linear system $\dot{x}=A(t)x$ has an exponential dichotomy on $\mathbb{R}$
if and only if: 
\begin{itemize}
\item[a)] The system has exponential dichotomies on $\mathbb{R}_{0}^{+}$ and $\mathbb{R}_{0}^{-}$
with projections $P_{+}$ and $P_{-}$ respectively,
\item[b)] The system has no nontrivial bounded solutions, 
\item[c)] The index of the linear system is $i(A)=0$.
\end{itemize}
 \end{proposition}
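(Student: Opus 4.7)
The plan is to prove both implications, noting that the forward direction is essentially a bookkeeping check while the backward direction contains the substance. For ($\Rightarrow$), an exponential dichotomy on $\mathbb{R}$ with projection $P$ and constants $(K,\alpha)$ restricts immediately to the same dichotomy on each half-line, so (a) holds with $P_+ = P_- = P$, Corollary \ref{u-bounded} of the previous chapter gives (b), and for (c) one computes $i(A) = \dim \textnormal{Im}\,P + \dim \ker P - n = n - n = 0$.

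For ($\Leftarrow$), assume (a), (b), (c). The first step is to force $P_+ = P_-$. Hypotheses (a) and (b) activate Lemma \ref{Palmeriano} to yield $P_+P_- = P_-P_+ = P_+$. The identity $P_+P_- = P_+$ shows that any $\xi \in \ker P_-$ satisfies $P_+\xi = P_+P_-\xi = 0$, hence $\ker P_- \subseteq \ker P_+$; symmetrically, $P_-P_+ = P_+$ gives $\textnormal{Im}\,P_+ \subseteq \textnormal{Im}\,P_-$. Now I invoke (c): from $\dim \textnormal{Im}\,P_+ + \dim \ker P_- = n$ together with $\dim \textnormal{Im}\,P_+ + \dim \ker P_+ = n$, the inclusion just established forces $\ker P_- = \ker P_+$, and likewise $\textnormal{Im}\,P_+ = \textnormal{Im}\,P_-$. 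Two projections sharing image and kernel coincide, so $P_+ = P_-$, a common projection I denote $P$.

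The second step is to splice the two half-line dichotomies into a dichotomy on $\mathbb{R}$. Let $(K_\pm,\alpha_\pm)$ denote the constants on $\mathbb{R}_0^\pm$ and set $\alpha := \min\{\alpha_+,\alpha_-\}$. For $t \geq s$ with $t,s$ both in $\mathbb{R}_0^+$ or both in $\mathbb{R}_0^-$ the required estimate is immediate from (a). The only genuinely new case is $s \leq 0 \leq t$; here the idempotency $P = P^2$ permits the factorization
\[
X(t)PX^{-1}(s) = \bigl[X(t)PX^{-1}(0)\bigr]\bigl[X(0)PX^{-1}(s)\bigr],
\]
and applying the $\mathbb{R}_0^+$ bound to the first factor and the $\mathbb{R}_0^-$ bound to the second yields
\[
\|X(t)PX^{-1}(s)\| \leq K_+K_-\, e^{-\alpha_+ t + \alpha_- s} \leq K_+K_-\, e^{-\alpha(t-s)},
\]
since $t \geq 0$ and $s \leq 0$ make $-\alpha_+ t \leq -\alpha t$ and $\alpha_- s \leq \alpha s$. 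The symmetric estimate for $\|X(t)(I-P)X^{-1}(s)\|$ with $t \leq s$ is obtained identically. Setting $K := \max\{K_+,K_-,K_+K_-\}$ completes the construction.

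The principal obstacle is the identification $P_+ = P_-$ rather than the subsequent splicing. The algebraic identities from Lemma \ref{Palmeriano} only deliver the inclusions $\ker P_- \subseteq \ker P_+$ and $\textnormal{Im}\,P_+ \subseteq \textnormal{Im}\,P_-$; it is precisely the index hypothesis (c) that promotes these to equalities via a dimension count, so all three assumptions are genuinely used.
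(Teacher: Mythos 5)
Your proof is correct and follows essentially the same route as the paper: you invoke Lemma \ref{Palmeriano} (via (a) and (b)) to obtain the inclusions $\ker P_- \subseteq \ker P_+$ and $\textnormal{Im}\,P_+ \subseteq \textnormal{Im}\,P_-$, use the index condition $i(A)=0$ to upgrade them to equalities through a dimension count, and conclude $P_+ = P_-$ since projections sharing kernel and image coincide. The one welcome addition is that you explicitly carry out the splicing estimate across $s \le 0 \le t$ using the factorization $X(t)PX^{-1}(s) = [X(t)PX^{-1}(0)][X(0)PX^{-1}(s)]$, whereas the paper stops once $P_+ = P_-$ and treats the resulting dichotomy on $\mathbb{R}$ as immediate.
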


\begin{proof}
Firstly, if the linear system has an exponential dichotomy on $\mathbb{R}$ with projector $P$,
the properties a) and c) are verified straightforwardly with $P=P_{-}=P_{+}$. In addition,
the property b) follows from the Corollary \ref{u-bounded} from Chapter 2.

Secondly, let us assume that the properties a),b) and c) are satisfied. Now, by Lemma \ref{Palmeriano} we can prove that the contentions (\ref{contenzion})--(\ref{contenzion2}) cannot be strict, namely:
\begin{equation*}
%\label{esgalite-sub}
\ker P_{-} = \ker P_{+} \quad \textnormal{and} \quad  \text{Im} P_{+} = \text{Im} P_{-}.
\end{equation*}

Indeed, otherwise, if $\ker P_{-} \subset \ker P_{+}$ we will have that
$\text{dim} \ker P_{-}<\text{dim} \ker P_{+}$, which combined with $i(A)=0$ leads to
\begin{displaymath}
\begin{array}{rcl}
n &=& \text{dim} \ker P_{+} + \text{dim} \text{Im} P_{+} \\
&>&  \text{dim} \ker P_{-} + \text{dim} \text{Im} P_{+}=n,
\end{array}
\end{displaymath}
obtaining a contradiction. The identity $\text{Im} P_{+} = \text{Im} P_{-}$ can be proved in
a similar way.

Let $\xi\in \mathbb{R}^{n}$. As $P_{-}\xi\in \text{Im} P_{-}=\text{Im} P_{+}$, it follows that
$P_{+}P_{-}\xi=P_{-}\xi$. On the other hand, by using the statement ii) of Lemma \ref{Palmeriano}
we also have that $P_{+}P_{-}\xi=P_{+}\xi$, which implies that
$$
P_{-}\xi = P_{+}\xi \quad \textnormal{for any $\xi \in \mathbb{R}^{n}$},
$$
and the identity $P_{+}=P_{-}$ follows, this implies that the linear system has
an exponential dichotomy on $\mathbb{R}$ with projector $P=P_{+}=P_{-}$.

\end{proof}

\section{Backing to the projector problem}

In Remark \ref{UNIpro} we called as the \textit{projector problem} the following question: if a linear system (\ref{LinealC2}) with a fundamental matrix $X(t)$ has an exponential dichotomy on $J$ with projector $P$, is this
projector unique? 

The results of the previous section will allow to provide an answer considering $J=\mathbb{R}_{0}^{-}$, $J=\mathbb{R}_{0}^{+}$,
and $J=\mathbb{R}$. In the case of the half axes we will provide sufficient conditions ensuring the existence of other projectors whereas the uniqueness is proved for the case $J=\mathbb{R}$.

If a linear system (\ref{LinealC2}) has an exponential dichotomy on $\mathbb{R}_{0}^{-}$ with projector $P$ we 
know, as we have seen in Remark \ref{ext-split} from Chapter 2, that any solution $x(t)=\Phi(t,0)\xi$ of (\ref{LinealC2}) can be splitted 
between an exponential
expansion $t\mapsto \Phi(t,0)P\xi$ and an exponential contraction $t\mapsto \Phi(t,0)[I-P]\xi$ when $t\to -\infty$. Now,
if there exists another projector $\overline{P}$ such that $\ker P=\ker \overline{P}$, the Lemma \ref{BSNHL}
states that
$$
\ker \overline{P}=\mathcal{W}=\left\{\xi \in \mathbb{R}^{n}\colon \sup\limits_{t\leq 0}|X(t,0)\xi|<+\infty\right\}.
$$

As the solutions of (\ref{LinealC2}) also can be splitted as $t\mapsto x(t)=\Phi(t,0)\xi=\Phi(t,0)\overline{P}\xi+\Phi(t,0)[I-\overline{P}]\xi$ where $t\mapsto \Phi(t,0)[I-\overline{P}]\xi$ is an exponential contraction when $t\to -\infty$. A natural question is to determine what are additional assumptions on $\overline{P}$ in order to ensure the existence
an exponential dichotomy on $\mathbb{R}_{0}^{-}$ with projector $\overline{P}$. The following result provides an answer.

\begin{lemma}
\label{Unip-}
If the linear system \eqref{LinealCap29} has an exponential dichotomy on $\mathbb{R}_{0}^{-}$ with projection $P$ and there exist another projection $\overline{P}$ satisfying the properties 
\begin{itemize}
\item[i)] There exists $\tilde{K}>0$ such that 
\begin{equation}
\label{ProProj1}
\tilde{K}=\sup_{t\in (-\infty,0]}\Vert X(t)(I-\overline{P})X^{-1}(t)\Vert,
\end{equation}
\item[ii)] $\ker P=\ker \overline{P}$,
\end{itemize}
then the system \eqref{LinealCap29} has also an exponential dichotomy on $\mathbb{R}_{0}^{-}$ with projection $\overline{P}$.
\end{lemma}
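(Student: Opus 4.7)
The strategy is to exploit the hypothesis $\ker P=\ker\overline{P}$ to derive a pair of algebraic identities among the four projections $P,\overline{P},I-P,I-\overline{P}$, and then to transfer each of the two dichotomy estimates from the projection $P$ to the projection $\overline{P}$ by inserting a well-chosen resolution of the identity $X^{-1}(t)X(t)$ (resp.\ $X^{-1}(s)X(s)$) and using the boundedness hypothesis (i).

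First I would observe: since $(I-\overline{P})\mathbb{R}^{n}=\ker\overline{P}=\ker P$, every vector $(I-\overline{P})x$ is annihilated by $P$, so $P(I-\overline{P})=0$, i.e.\ $P=P\overline{P}$. Exchanging the roles of $P$ and $\overline{P}$ yields $\overline{P}=\overline{P}P$. Expanding the complements then gives the dual identities $I-\overline{P}=(I-P)(I-\overline{P})$ and $I-P=(I-\overline{P})(I-P)$.

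Next, introduce the conjugated projections $P(t):=X(t)PX^{-1}(t)$ and $\overline{P}(t):=X(t)\overline{P}X^{-1}(t)$; by conjugating the identities above, $\overline{P}(t)=\overline{P}(t)P(t)$. Hypothesis (i) reads $\|I-\overline{P}(t)\|\le\tilde{K}$ for $t\le 0$, so (in a norm with $\|I\|=1$) $\|\overline{P}(t)\|\le 1+\tilde{K}$. For $t\ge s$ with $t,s\le 0$ I would factor
\begin{displaymath}
X(t)\overline{P}X^{-1}(s)=X(t)\overline{P}P\,X^{-1}(s)=\overline{P}(t)\,X(t)PX^{-1}(s),
\end{displaymath}
and conclude, from the first dichotomy estimate for $P$, that $\|X(t)\overline{P}X^{-1}(s)\|\le (1+\tilde{K})Ke^{-\alpha(t-s)}$. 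Symmetrically, for $t\le s$ with $t,s\le 0$,
\begin{displaymath}
X(t)(I-\overline{P})X^{-1}(s)=X(t)(I-P)(I-\overline{P})X^{-1}(s)=X(t)(I-P)X^{-1}(s)\,(I-\overline{P})(s),
\end{displaymath}
whence $\|X(t)(I-\overline{P})X^{-1}(s)\|\le K\tilde{K}\,e^{-\alpha(s-t)}$.

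Setting $\overline{K}:=\max\{(1+\tilde{K})K,\,K\tilde{K}\}$ and retaining the same rate $\alpha$ would then produce the exponential dichotomy on $\mathbb{R}_0^{-}$ with projection $\overline{P}$. The only conceptual obstacle is the algebraic lemma $P=P\overline{P}$ and $\overline{P}=\overline{P}P$; once this factorization is available, there is a natural place to insert the ``old'' projection $P$ so that the known dichotomy estimates apply, and the role of hypothesis (i) is precisely to bound the extra factor $\overline{P}(t)$ (respectively $(I-\overline{P})(s)$) that appears in the process. No growth property on $A(\cdot)$ is needed, which reflects the fact that (i) already encodes exactly the quantitative control on the discrepancy between $P$ and $\overline{P}$ required for the transfer.
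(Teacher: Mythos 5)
Your proof is correct and takes essentially the same route as the paper: both derive the projection identities from $\ker P=\ker\overline{P}$, insert $X^{-1}(t)X(t)$ (resp.\ $X^{-1}(s)X(s)$) to split off a conjugated projection, bound that factor by hypothesis (i), and invoke the dichotomy estimates for $P$, with the $(I-\overline{P})$ estimate being literally identical. The only (cosmetic) difference is in the $\overline{P}$-estimate, where you factor via $\overline{P}=\overline{P}P$ and bound $\Vert X(t)\overline{P}X^{-1}(t)\Vert\le \Vert I\Vert+\tilde{K}$, whereas the paper uses $\overline{P}=(I-P+\overline{P})P$; this gives you a slightly cleaner constant $(1+\tilde{K})K$ but the argument is the same.
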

\begin{proof}
The property $\ker P= \ker \overline{P}$ implies the following identities whose proof is left an exercise: 
$$
(I-\overline{P})=(I-P)(I-\overline{P}) \text{ and } \overline{P}=(I-P+\overline{P})P.
$$

By using the above characterization of $(I-\overline{P})$ and considering $t\leq s\leq 0$, we have that 
\begin{align*}
\Vert X(t)(I-\overline{P})X^{-1}(s)\Vert &= \Vert X(t)(I-P)(I-\overline{P})X^{-1}(s)\Vert\\
&= \Vert X(t)(I-P)X^{-1}(s)X(s)(I-\overline{P})X^{-1}(s)\Vert\\
&\leq \Vert X(t)(I-P)X^{-1}(s)\Vert \Vert X(s)(I-\overline{P})X^{-1}(s)\Vert\\
&\leq Ke^{-\alpha (s-t)}\Vert X(s)(I-\overline{P})X^{-1}(s)\Vert \\
&\leq K \tilde{K} e^{-\alpha (s-t)}.
\end{align*}

By using the above characterization of $\overline{P}$ and considering $s\leq t\leq 0$, we have that 
\begin{align*}
\Vert X(t)\overline{P}X^{-1} (s) \Vert &= \Vert X(t)(I-P+\overline{P})P X^{-1}(s)\Vert\\
&= \Vert X(t)(I-P+\overline{P})X^{-1}(t)X(t)PX^{-1}(s)\Vert\\ 
&\leq \Vert X(t)(I-P+\overline{P})X^{-1}(t)\Vert \Vert X(t)PX^{-1}(s)\Vert\\ 
&\leq Ke^{-\alpha (t-s)}\Vert X(t)(I-P+\overline{P})X^{-1}(t)\Vert\\
&\leq Ke^{-\alpha (t-s)}\{\Vert X(t)(I-P)X^{-1}(t)\Vert + \Vert X(t)\overline{P}X^{-1}(t)||\} \\
&\leq K(K+K\tilde{K}+||I||)e^{-\alpha (t-s)}
\end{align*}
where the last estimation is obtained by using
$$
||X(t)\overline{P}X^{-1}(t)||-||I||\leq ||X(t)(I-\overline{P})X^{-1}(t)||\leq K \tilde{K}
$$
and the result follows.
\end{proof}

The above result has a symmetric version when considering the exponential dichotomy on $\mathbb{R}_{0}^{+}$.

\begin{lemma}
\label{Unip+}
If the linear system \eqref{LinealCap29} has an exponential dichotomy on $\mathbb{R}_{0}^{+}$ with projection $P$ and there exist another projection $P'$ satisfying the properties 
\begin{itemize}
\item[i)] There exists $K'>0$ such that 
\begin{equation}
\label{ProProj2}
K'=\sup_{t\in [0,+\infty)}\Vert X(t)P' X^{-1}(t)\Vert,
\end{equation}
\item[ii)] $\textnormal{Im}\,P=\textnormal{Im}\,P'$,
\end{itemize}
then the system \eqref{LinealCap29} has also an exponential dichotomy on $\mathbb{R}_{0}^{+}$ with projection $P'$.
\end{lemma}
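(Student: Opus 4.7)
The plan is to mirror the proof of Lemma \ref{Unip-}, replacing the condition $\ker P = \ker \overline{P}$ (which governed the unstable direction on $\mathbb{R}_0^-$) by the dual condition $\operatorname{Im} P = \operatorname{Im} P'$ (which governs the stable direction on $\mathbb{R}_0^+$). First I would extract from $\operatorname{Im} P = \operatorname{Im} P'$ the two algebraic identities
$$PP' = P' \quad \text{and} \quad P'P = P:$$
the former because any vector in $\operatorname{Im} P' = \operatorname{Im} P$ is fixed by $P$, and the latter because any vector in $\operatorname{Im} P = \operatorname{Im} P'$ is fixed by $P'$. As an immediate consequence I obtain $(I-P)P' = 0$ and $(I-P')P = 0$, and hence
$$(I-P')(I-P) = I - P - P' + P'P = I - P'.$$
These two identities play the role of $(I-\overline{P}) = (I-P)(I-\overline{P})$ and $\overline{P} = (I-P+\overline{P})P$ used in Lemma \ref{Unip-}.

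For the contracting direction $t \geq s \geq 0$, I would use $P' = PP'$ to factor
$$X(t)P'X^{-1}(s) = X(t)PX^{-1}(s)\cdot X(s)P'X^{-1}(s),$$
bound the first factor by $Ke^{-\alpha(t-s)}$ via the given exponential dichotomy with projection $P$, and bound the second factor by $K'$ using hypothesis (\ref{ProProj2}). This yields $\|X(t)P'X^{-1}(s)\| \leq KK' e^{-\alpha(t-s)}$.

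For the expanding direction $s \geq t \geq 0$, I would use $(I-P') = (I-P')(I-P)$ to factor
$$X(t)(I-P')X^{-1}(s) = X(t)(I-P')X^{-1}(t)\cdot X(t)(I-P)X^{-1}(s).$$
The second factor is bounded by $Ke^{-\alpha(s-t)}$ by the exponential dichotomy for $P$, while the first factor is estimated uniformly through the triangle inequality
$$\|X(t)(I-P')X^{-1}(t)\| \leq \|I\| + \|X(t)P'X^{-1}(t)\| \leq \|I\| + K',$$
where hypothesis (\ref{ProProj2}) supplies the second bound. Combining these two estimates produces an exponential dichotomy on $\mathbb{R}_0^+$ with projection $P'$, the same rate $\alpha$, and constant $\max\{KK',\, K(\|I\|+K')\}$.

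I do not expect a major obstacle: the content is essentially a transcription of Lemma \ref{Unip-} under the duality $\ker \leftrightarrow \operatorname{Im}$ and $\mathbb{R}_0^- \leftrightarrow \mathbb{R}_0^+$. The only subtle point is recognizing the correct normalization hypothesis: on the positive half-line the potentially uncontrolled direction is the stable one (since $\|X(t)(I-P)X^{-1}(s)\|$ grows as $s\to\infty$ for fixed $t$), so it is the projector $P'$ itself (rather than $I-P'$) whose conjugate $X(t)P'X^{-1}(t)$ must be bounded a priori, which is exactly what (\ref{ProProj2}) asserts.
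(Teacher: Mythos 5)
Your proof is correct and follows the same approach as the paper: derive $PP'=P'$ and $P'P=P$ from $\textnormal{Im}\,P=\textnormal{Im}\,P'$, then factor each dichotomy estimate through the a priori bound $K'$ on $\Vert X(t)P'X^{-1}(t)\Vert$. Your identity $(I-P')(I-P)=I-P'$ is a slightly cleaner choice than the paper's $I-P'=(I+P-P')(I-P)$ and yields a marginally sharper constant $K(\Vert I\Vert+K')$ on the expanding side, but the structure of the argument is identical.
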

\begin{proof}
We can see that the property $\textnormal{Im}\, P=\textnormal{Im}\,P'$ implies the identities
\begin{equation}
\label{identites-projecteur}
P'P=P \quad \textnormal{and} \quad PP'=P', 
\end{equation}
which allow us to deduce:
\begin{displaymath}
P-P'=P(P-P')=(P-P')(I-P)  \quad \textnormal{and} \quad
I-P'=[I+P-P'](I-P).
\end{displaymath}

By using $P'=PP'$ and considering $0\leq s\leq t$, 
we have that 
\begin{align*}
\Vert X(t) P' X^{-1} (s) \Vert &= \Vert X(t)PP'X^{-1}(s)\Vert\\
&= \Vert X(t)PX^{-1}(s)X(s)P'X^{-1}(s)\Vert\\ 
&\leq \Vert X(t)PX^{-1}(s)\vert\vert \vert\vert X(s)P'X^{-1}(s)\Vert\\ 
&\leq KK'e^{-\alpha(t-s)}.
\end{align*}

By using the above characterization of $(I-P')$ and considering $0\leq t\leq s$, we have that 
\begin{align*}
\Vert X(t)(I-P')X^{-1}(s)\Vert &= \Vert X(t)(I+P-P')(I-P)X^{-1}(s)\Vert\\
&= \Vert X(t)(I+P-P')X^{-1}(t)X(t)(I-P)X^{-1}(s)\Vert\\
&\leq \Vert X(t)(I+P-P')X^{-1}(t)\vert \vert \vert X(t)(I-P)X^{-1}(s)\Vert\\
&\leq Ke^{-\alpha(s-t)}\Vert X(t)(I-P'+P)X^{-1}(t)\vert\vert \\
&\leq Ke^{-\alpha(s-t)}\left(\Vert X(t)[I-P']X^{-1}(t)\vert\vert+\vert\vert X(t)PX^{-1}(t)\vert\vert\right) \\
&\leq  K(||I||+K+KK')e^{-\alpha(s-t)},
\end{align*}
where the last estimation is a consequence of
\begin{displaymath}
 ||X(t) P' X^{-1}(t)||-||I|| \leq ||X(t)[I-P']X^{-1}(t)||\leq KK'   
\end{displaymath}
and the result follows.
\end{proof}

Finally, as we stated at the beginning of this section, we conclude it
with an affirmative answer to the projector problem when $J=\mathbb{R}$. 

\begin{lemma}
\label{Unicité}
If the linear system \eqref{LinealCap29} has an exponential dichotomy on $\mathbb{R}$ with projection $P$
this projection is unique.
\end{lemma}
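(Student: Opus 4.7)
The plan is to exploit the fact that, when the dichotomy holds on $\mathbb{R}$, the image and kernel of the projection are completely determined by the dynamics, not by any particular choice of basis or projector. Specifically, the forward--bounded set $\mathcal{V}$ and the backward--bounded set $\mathcal{W}$ defined in Lemma \ref{subpespacios-cte} depend only on the system $\dot{x}=A(t)x$, not on the projection used to describe the dichotomy.

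Concretely, I would proceed as follows. Suppose that the linear system has an exponential dichotomy on $\mathbb{R}$ with two projections $P$ and $P'$ (possibly with different constants $K, K', \alpha, \alpha'$). Apply Lemma \ref{subpespacios-cte} to each projection separately: this yields
\begin{equation*}
\textnormal{Im}\,P = \mathcal{V} = \textnormal{Im}\,P' \quad \textnormal{and} \quad \ker P = \mathcal{W} = \ker P',
\end{equation*}
together with the decomposition $\mathbb{R}^{n}=\mathcal{V}\oplus\mathcal{W}$. Since a linear projection is uniquely determined by its image and its kernel --- given any $\xi\in\mathbb{R}^{n}$, write $\xi = v+w$ with $v\in\mathcal{V}$ and $w\in\mathcal{W}$, and then necessarily $P\xi=v=P'\xi$ --- we conclude $P=P'$.

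I expect no real obstacle here: the heavy lifting was already done in Lemma \ref{subpespacios-cte}, whose proof used Corollary \ref{u-bounded} (the only bounded solution on $\mathbb{R}$ is the trivial one) to guarantee the transversality $\mathcal{V}\cap\mathcal{W}=\{0\}$. The uniqueness statement is essentially a corollary of that characterization, and the key point worth emphasising in the write-up is precisely the contrast with the half-line setting studied in Lemmas \ref{Unip-} and \ref{Unip+}: on $\mathbb{R}_{0}^{-}$ only the kernel is dynamically pinned down (it equals $\mathcal{W}$) while the image is free, and symmetrically on $\mathbb{R}_{0}^{+}$ only the image is pinned down, which is why extra boundedness hypotheses such as (\ref{ProProj1}) or (\ref{ProProj2}) are needed there. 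On the full line both the image and the kernel are forced, which leaves no freedom at all.
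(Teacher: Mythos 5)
Your proposal is correct and follows essentially the same route as the paper: the paper likewise pins down both subspaces dynamically, deducing $\ker P=\ker P'$ and $\text{Im}\,P=\text{Im}\,P'$ from the half-line characterizations in Lemmas \ref{BSPHL} and \ref{BSNHL} (the same content as Lemma \ref{subpespacios-cte}, which you cite). The only difference is in the last step, and it is cosmetic: where you conclude directly because a projection is determined by its image and kernel, the paper instead combines the commutation relation of Lemma \ref{Palmeriano} with the identities $P'P=P$ and $PP'=P'$ from Lemma \ref{Unip+} to obtain $P=P'$.
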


\begin{proof}
Let us assume that there exists another projector $P'$. 
In addition, it follows that the system (\ref{LinealCap29}) also has an exponential dichotomy
on $\mathbb{R}_{0}^{-}$ and $\mathbb{R}_{0}^{+}$, then by using lemmas \ref{BSPHL} and \ref{BSNHL}
we have that 
$$
\ker P = \ker P'   \quad \textnormal{and} \quad \text{Im}\, P = \text{Im}\, P'.
$$

By Lemma \ref{Palmeriano} we have that
$PP'=P'P$ which combined with the identity (\ref{identites-projecteur}) from Lemma \ref{Unip+}
leads to $P=P'$.
\end{proof}

\section{Comments and references}

\noindent \textbf{1)} The majority of the results stated in this section
are from the Coppel's book, namely, Lemma \ref{completar}, Theorem \ref{EDINCU}
and Theorem \ref{NCU+BG}. We only added
intermediate computations and explanations in order to improve its clarity. 

\medskip

\noindent \textbf{2)} The Theorem \ref{NCU+BG} is revisited and improved
by K.J. Palmer in \cite[Th.1]{Palmer2006} where it is proved that if the system
(\ref{LinealCap29}) is continuous on $\mathbb{R}_{0}^{+}$ and has the property of
uniform bounded growth then, the properties of exponential dichotomy on $\mathbb{R}_{0}^{+}$,
uniform noncriticality and exponential expansiveness are equivalent. 

We recall that
(\ref{LinealCap29}) is exponentially expansive on $\mathbb{R}_{0}^{+}$ if there exist positive
constants $L$ and $\beta$ such that for any solution $t\mapsto x(t)$ of (\ref{LinealCap29})
and compact interval $[a,b]\subset \mathbb{R}_{0}^{+}$ it follows that:
$$
|x(t)|\leq L\left\{e^{-\beta(t-a)}|x(a)|+e^{-\beta(b-t)}|x(b)|\right\} \quad \textnormal{for any $t\in [a,b]$}.
$$

\medskip

\noindent \textbf{3)} The Lemma \ref{Palmeriano}
is from K.J. Palmer \cite{Palmer2006} and allowed to carry out an easy proof of Proposition \ref{FullDico}.

\medskip

\noindent \textbf{4)} The Lemmas \ref{Unip-} and \ref{Unip+} about the projector problem
where adapted from Kloeden $\&$ Rasmussen's book \cite[p.83]{Klo}. Notice that, by considering
the section 2.3 from Chapter 2, we can construct the projectors $\overline{P}(t)=X(t)\overline{P}X^{-1}(t)$
and $P'(t)=X(t)P'X^{-1}(t)$ such that the properties (\ref{ProProj1}) and (\ref{ProProj2}) becomes
$$
\tilde{K}=\sup_{t\in (-\infty,0]}\Vert [I-\overline{P}(t)]\Vert,
\quad \textnormal{and} \quad
K'=\sup_{t\in [0,+\infty)}\Vert P'(t)\Vert.
$$

Last but not least, Lemmas \ref{Unip-} and \ref{Unip+} allowed us to provide an easy proof
for Lemma \ref{Unicité}.

%It is important to highlight some ideas previous to Perron's seminal work \cite{Perron}, which pointed out related ideas as the works of Hadamard \cite{Hadamard}.
%\textcolor{red}{Desarrollarlo mas}
\medskip

\section{Exercises}

\begin{itemize}
\item[1.-] Prove the Lemma \ref{completar2}.
\item[2.-] Prove the Lemma \ref{BSNHL}.
\item[3.-] Prove that ii) implies i) in Lemma \ref{Palmeriano}.
\item[4.-] Under the assumptions of Lemma \ref{Unip-} prove the identities
   $$
   (I-\overline{P})=(I-P)(I-\overline{P}) \text{ and } \overline{P}=(I-P+\overline{P})P.
   $$
\item[5.-] Under the assumptions of Lemma \ref{Unip+} prove the identities
\begin{displaymath}
P-\overline{P}=P(P-\overline{P})=(P-\overline{P})(I-P)  
\end{displaymath}
and
 \begin{displaymath}
I-\overline{P}=[I+P-\overline{P}](I-P).
\end{displaymath}
\item[6.-] Prove Lemmas \ref{Unip-} and \ref{Unip+} by considering variable projectors. 
\item[7.-] Prove a symmetric version of Theorem \ref{NCU+BG}: if the linear system \eqref{LinealCap29} has the properties of bounded decay and noncritical uniformity
on $(-\infty,0]$ then it also has an exponential dichotomy on $(-\infty,0]$.
\end{itemize}

\chapter{Reducibility}
The main result of this section states that any linear system having an exponential dichotomy on $J$ with nontrivial projection is generally kinematically similar to a block diagonal system.

\section{Preliminaries}
\subsection{Definition}
The reducibility is a particular case of general kinematical similarity introduced by W. Coppel in
\cite{Coppel67} and revisited in \cite[p.38]{Cop}.

\begin{definition}
\label{RedCop}
The linear $n$--dimensional system
\begin{equation}
\label{LinealRED}
\dot{x}=A(t)x \quad \textnormal{with $t\in J\subseteq \mathbb{R}$}
\end{equation}
with $A(t)\in M_{n}(\mathbb{R})$ continuous on $J$ is \textbf{reducible} to 
\begin{equation}
\label{LinealRED2}
\dot{y}=B(t)y \quad \textnormal{with $t\in J\subseteq \mathbb{R}$}
\end{equation}
if the systems \eqref{LinealRED} and \eqref{LinealRED2} are generally kinematically 
similar on $J$. Additionally the matrix $B(t)$ has the block form:
\begin{displaymath}
B(t)=\left[\begin{array}{cc}
B_{1}(t) & 0\\\\
0        & B_{2}(t)
\end{array}\right],
\end{displaymath}
where $B_{1}(t)\in M_{n_{1}}(\mathbb{C})$ and  $B_{2}(t)\in M_{n_{2}}(\mathbb{C})$
with $n_{1}+n_{2}=n$ and $n_{1},n_{2}<n$. 
\end{definition}

In order to avoid confusion, as pointed by W. Coppel in \cite{Coppel67}, it is important to say that in a vast number of references, the property of reducibility is a different thing, namely, the kinematical similarity to an autonomous system, which was introduced by A. Lyapunov and strongly influenced the mathematical literature. In addition, W. Coppel states that Definition \ref{RedCop} agrees with the definition of reducibility in linear algebra.

\subsection{Technical results}

\begin{lemma}
\label{coppel39}
Let $P$ be an orthogonal projection (i.e., $P^{*}=P$) and $X$ be an invertible matrix, then there exists an invertible matrix $S$ such that
\begin{displaymath}
SPS^{-1}=XPX^{-1}.
\end{displaymath}

Moreover, the matrix $S$ satisfies the following properties:
\begin{displaymath}
||S||_{2}\leq  \sqrt{2}\quad \textnormal{and} \quad
||S^{-1}||_{2} \leq  \sqrt{||XPX^{-1}||_{2}^{2}+||X(I-P)X^{-1}||_{2}^{2}}.
\end{displaymath}
\end{lemma}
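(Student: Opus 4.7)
The plan is to construct $S$ as a direct sum of two linear isometries across the orthogonal decomposition induced by $P$, suitably adapted to $X$. Since $P^{*}=P$, one has the orthogonal splitting $\mathbb{R}^{n}=V\oplus V^{\perp}$ with $V=\operatorname{Im}(P)$ and $V^{\perp}=\ker(P)$. Set $Q:=XPX^{-1}$; then $Q^{2}=Q$, $\operatorname{Im}Q=XV$, and $\ker Q=XV^{\perp}$, so one also has a parallel (but generally non-orthogonal) direct sum $\mathbb{R}^{n}=XV\oplus XV^{\perp}$ with associated projections $Q=XPX^{-1}$ and $I-Q=X(I-P)X^{-1}$. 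Because $X$ is invertible, the dimensions match across $V\leftrightarrow XV$ and $V^{\perp}\leftrightarrow XV^{\perp}$, so one may choose linear isometries $U_{1}\colon V\to XV$ and $U_{2}\colon V^{\perp}\to XV^{\perp}$ (treating $XV,XV^{\perp}\subseteq\mathbb{R}^{n}$ with the inherited inner product).

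With these choices, set
\[
S \;:=\; U_{1}P + U_{2}(I-P).
\]
The intertwining identity $SP=QS$ reduces to the two observations $QU_{1}=U_{1}$ (because $U_{1}$ takes values in $\operatorname{Im}Q$, where $Q$ acts as the identity) and $QU_{2}=0$ (because $U_{2}$ takes values in $\ker Q$). Invertibility and the formula for $S^{-1}$ come from solving $S\xi=\eta$ along the decomposition $\eta=Q\eta+(I-Q)\eta\in XV\oplus XV^{\perp}$: matching components forces $P\xi=U_{1}^{-1}Q\eta$ and $(I-P)\xi=U_{2}^{-1}(I-Q)\eta$, yielding
\[
S^{-1}\eta \;=\; U_{1}^{-1}Q\eta + U_{2}^{-1}(I-Q)\eta,
\]
and combining with the intertwining identity gives $SPS^{-1}=Q=XPX^{-1}$.

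For the bound on $\|S\|_{2}$: for any $\xi\in\mathbb{R}^{n}$, the triangle inequality together with the isometry of each $U_{i}$ gives $\|S\xi\|_{2}\le\|P\xi\|_{2}+\|(I-P)\xi\|_{2}$. The elementary inequality $a+b\le\sqrt{2}\sqrt{a^{2}+b^{2}}$ combined with the Pythagorean identity $\|P\xi\|_{2}^{2}+\|(I-P)\xi\|_{2}^{2}=\|\xi\|_{2}^{2}$ (valid because $P$ is orthogonal) then produces $\|S\|_{2}\le\sqrt{2}$. For the bound on $\|S^{-1}\|_{2}$: the two summands $U_{1}^{-1}Q\eta\in V$ and $U_{2}^{-1}(I-Q)\eta\in V^{\perp}$ now lie in genuinely orthogonal subspaces, so Pythagoras plus the isometry property of $U_{i}^{-1}$ yields
\[
\|S^{-1}\eta\|_{2}^{2} \;=\; \|Q\eta\|_{2}^{2}+\|(I-Q)\eta\|_{2}^{2} \;\le\; \bigl(\|XPX^{-1}\|_{2}^{2}+\|X(I-P)X^{-1}\|_{2}^{2}\bigr)\,\|\eta\|_{2}^{2}.
\]

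The step carrying the conceptual weight is spotting the right construction of $S$: the naive choice $S=X$ trivially satisfies the intertwining relation but leaves $\|S\|_{2}$ uncontrolled. The useful observation is that any $S=XT$ with $T$ commuting with $P$ also intertwines $P$ and $Q$, leaving the freedom of a block-diagonal factor on $V\oplus V^{\perp}$; choosing this factor so that the action on each block is isometric onto the corresponding image of $X$ produces the sharp constants above. The only mild subtlety is that $XV$ and $XV^{\perp}$ need not be mutually orthogonal in $\mathbb{R}^{n}$, which is what forces the unavoidable $\sqrt{2}$ loss in $\|S\|_{2}$, in contrast to the clean Pythagorean identity available on the $\|S^{-1}\|_{2}$ side.
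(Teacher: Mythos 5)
Your proof is correct, and it takes a genuinely different route from the paper. The paper builds $S$ analytically: it forms the Hermitian positive definite matrix $U=PX^{*}XP+(I-P)X^{*}X(I-P)$, takes its positive square root $R$, proves (via simultaneous diagonalization) that $R$ commutes with $P$, and sets $S=XR^{-1}$; the two norm bounds are then extracted from the identity $I=PS^{*}SP+(I-P)S^{*}S(I-P)$ and from $(S^{-1})^{*}S^{-1}=(X^{-1})^{*}UX^{-1}$. You instead construct $S$ geometrically as a block map $S=U_{1}P+U_{2}(I-P)$, with $U_{1},U_{2}$ isometries from $\operatorname{Im}P$ and $\ker P$ onto $X\operatorname{Im}P$ and $X\ker P$; the intertwining, the explicit inverse $S^{-1}=U_{1}^{-1}Q+U_{2}^{-1}(I-Q)$ with $Q=XPX^{-1}$, and both bounds then follow from the Pythagorean identity on the orthogonal side and the triangle inequality plus $a+b\le\sqrt{2}\sqrt{a^{2}+b^{2}}$ on the non-orthogonal side, exactly matching the stated constants. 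Your argument is more elementary (no square roots, no simultaneous diagonalization) and makes the geometry of the $\sqrt{2}$ loss transparent. What the paper's construction buys, and yours does not directly provide, is canonicity: in Corollary \ref{Copp39-2} the same recipe $S(t)=X(t)R^{-1}(t)$ is applied to a fundamental matrix $X(t)$, and the differentiability of $R(t)$ (hence of $S(t)$, leading to the differential equation (\ref{DSKS}) and the reducibility Theorem \ref{EDIR}) relies on that specific choice; your isometries $U_{1},U_{2}$ involve an arbitrary choice of orthonormal bases, so extending your construction to a differentiable family $t\mapsto S(t)$ would require an extra argument. For the static lemma as stated, however, your proof is complete.
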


\begin{proof}
In this proof, we will write $||\cdot||$ instead of $||\cdot||_{2}$. Moreover, we will divide the proof in several steps.

\noindent\textit{Step 1:} The matrix
\begin{equation}
\label{U}
U=PX^{*}XP+(I-P)X^{*}X(I-P),
\end{equation}
is Hermitian and positive.

The identity $U=U^{*}$ is a direct consequence of $P=P^{*}$. In order to prove that $U$ is positive, let $\xi \in \mathbb{C}^{n}\setminus\{0\}$ and notice that
\begin{displaymath}
\begin{array}{rcl}
\xi^{*}U\xi &=& \xi^{*}PX^{*}XP\xi+\xi^{*}(I-P)X^{*}X(I-P)\xi\\\\
&=&||XP\xi||^{2}+||X(I-P)\xi||^{2}\geq 0\\\\
\end{array}
\end{displaymath}
and we will see that the inequality is strict. Indeed, otherwise if $\xi^{*}U\xi=0$, it will be equivalent to
$$
XP\xi=0 \quad \textnormal{and} \quad X(I-P)\xi=0,
$$
which implies that $XP\xi+X(I-P)\xi=X\xi=0$ and $\xi=0$ since $X$ is invertible, obtaining a contradiction.

\medskip

\noindent\textit{Step 2:} The positiveness of $U$ implies the 
existence and uniqueness of a matrix $R=R^{*}>0$ such that $R^{2}=U$, we 
refer the reader to the Appendix A and references for details.

\medskip

\noindent\textit{Step 3}: The matrices $R$ and $P$ commute.

As $P^{2}=P$ and $R^{2}=U$, by using (\ref{U}), it is easy to see that 
$$
R^{2}P=PR^{2}=PX^{*}XP.
$$
 
As $R^{2}$ and $P$ are Hermitian matrices they are diagonalizable. In addition, as they commute it is well known that they have simultaneous diagonalization (see \textit{e.g.} \cite[pp.69--72]{Horn},\cite{Rajdavi}), that is, there exists an invertible matrix $Q$ such that
\begin{displaymath}
R^{2}=Q^{-1}AQ \quad \textnormal{and} \quad
P=Q^{-1}BQ,
\end{displaymath}
where $A$ and $B$ are diagonal matrices containing the eigenvalues of $R^{2}$ and $P$ respectively.

As the diagonal matriz $A$ has only positive terms, we can consider $A=DD$, where $D$ is a diagonal matrix
$D=\textnormal{Diag}\{d_{11},\ldots,d_{nn}\}$ such that $a_{ii}=\sqrt{d_{ii}}$. Then, it follows that
$$
R^{2}=Q^{-1}DDQ=(Q^{-1}DQ)(Q^{-1}DQ)
$$
and we have that $R=Q^{-1}DQ$.

As $BD=DB$ since are diagonal matrices, we have that
$$
Q^{-1}BDQ=Q^{-1}DBQ,
$$
which is equivalent to
$$
(Q^{-1}BQ)(Q^{-1}DQ)=(Q^{-1}DQ)(Q^{-1}BQ),
$$
which is also equivalent to 
$PR=RP$ and also implies that
$$
R(I-P)=R-RP=R-PR=(I-P)R.
$$

\noindent\textit{Step 4}: Let us consider the transformation $S=XR^{-1}$. Now, note that $PR=RP$ implies:
\begin{displaymath}
\begin{array}{rcl}
SPS^{-1}&=& XR^{-1}PRX^{-1}\\\\
&=& XR^{-1}RPX^{-1}\\\\
&=&XPX^{-1},
\end{array}
\end{displaymath}
and we deduce the first identity stated in the Lemma.

\noindent\textit{Step 5}:  By using $SR=X$, $R^{*}=R$ and recalling that $PR=RP$ where $R^{2}=U$, we have that
\begin{displaymath}
\begin{array}{rcl}
R^{2} & = & PX^{*}XP+(I-P)X^{*}X(I-P)\\\\
 &=&PR^{*}S^{*}SRP+(I-P)R^{*}S^{*}SR(I-P)\\\\
 &=&PRS^{*}SRP+(I-P)RS^{*}SR(I-P)\\\\
 &=&RPS^{*}SPR+R(I-P)S^{*}S(I-P)R.
\end{array}
\end{displaymath}

We multiply the above identity by $R^{-1}$ by the right and the left, obtaining
$$
I=PS^{*}SP+(I-P)S^{*}S(I-P).
$$

Now, let $\xi \in \mathbb{R}^{n}\setminus \{0\}$ and note that
\begin{displaymath}
\begin{array}{rcl}
||S\xi||^{2} &=& ||SP\xi+S(I-P)\xi||^{2} \\\\
& \leq & \{||SP\xi||+||S(I-P)\xi||\}^{2} \\\\
& \leq & 2\{||SP\xi||^{2}+||S(I-P)\xi||^{2}\}\\\\
& \leq & 2\{\xi^{*}PS^{*}SP\xi+\xi^{*}(I-P)S^{*}S(I-P)\xi\}\\\\
& \leq & 2\xi^{*}\{PS^{*}SP+(I-P)S^{*}S(I-P)\}\xi\\\\
& \leq & 2\xi^{*}\xi=2||\xi||^{2} 
\end{array}
\end{displaymath}
then we can conclude that $||S||\leq \sqrt{2}$.

\medskip

\noindent\textit{Step 6:} Finally, as $S=XR^{-1}$, we have that $S^{-1}=RX^{-1}$ and
\begin{displaymath}
\begin{array}{rcl}
(S^{-1})^{*}S^{-1}&=&(X^{-1})^{*}R^{*}RX^{-1}\\\\
&=&(X^{-1})^{*}R^{2}X^{-1}\\\\
&=&(X^{-1})^{*}[PX^{*}XP+(I-P)X^{*}X(I-P)]X^{-1}\\\\
&=&(X^{-1})^{*}PX^{*}XPX^{-1}+(X^{-1})^{*}(I-P)X^{*}X(I-P)X^{-1}\\\\
\end{array}
\end{displaymath}
then we can deduce, similarly as in the previous step, that
$$
||S^{-1}||^{2}\leq ||XPX^{-1}||^{2}+||X(I-P)X^{-1}||^{2}
$$
and the result follows.
\end{proof}

A careful reading of the statement of Lemma \ref{coppel39} shows that given a fixed orthogonal projection $P$ and any function $t\mapsto X(t)$ of invertible matrices, there will be a collection of matrices $t\mapsto S(t)$ satisfying the above identities and estimations. This prompt the following byproducts:  

\begin{corollary}
\label{Copp39-2}
Let $J\ni t\mapsto X(t)$ be a fundamental matrix of the linear system $\dot{x}=A(t)x$ and $P$ be an orthogonal projection (i.e., $P^{*}=P$), then there exists an invertible and differentiable (with derivable inverse) matrix $J\ni t\mapsto S(t)$ such that
\begin{displaymath}
S(t)PS^{-1}(t)=X(t)PX^{-1}(t).
\end{displaymath}

Moreover, $S(t)$ satisfies the following properties:
\begin{displaymath}
||S(t)||_{2}\leq  \sqrt{2}\,\, \textnormal{and} \,\,
||S^{-1}(t)||_{2} \leq  \sqrt{||X(t)PX^{-1}(t)||_{2}^{2}+||X(t)(I-P)X^{-1}(t)||_{2}^{2}}
\end{displaymath}
and
\begin{equation}
\label{DSKS}
\dot{S}(t)=A(t)S(t)-S(t)\dot{R}(t)R^{-1}(t).
\end{equation}
\end{corollary}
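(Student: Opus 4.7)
The plan is to apply Lemma \ref{coppel39} pointwise with $X$ replaced by $X(t)$ and then check that the resulting matrix $S(t)$ inherits the desired regularity. Specifically, for each $t\in J$ define
\begin{displaymath}
U(t):=PX^{*}(t)X(t)P+(I-P)X^{*}(t)X(t)(I-P),
\end{displaymath}
let $R(t)$ be the unique Hermitian positive square root of $U(t)$ (whose existence was established in Step 2 of the proof of Lemma \ref{coppel39}), and set $S(t):=X(t)R^{-1}(t)$. Then the identity $S(t)PS^{-1}(t)=X(t)PX^{-1}(t)$ and the two norm estimates are immediate translations of the corresponding conclusions of Lemma \ref{coppel39}, so nothing new needs to be checked there.

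Next I would verify the invertibility and differentiability of $t\mapsto R(t)$ and $t\mapsto R^{-1}(t)$. Since $t\mapsto X(t)$ is a fundamental matrix, it is $C^{1}$ on $J$, and consequently $t\mapsto U(t)$ is $C^{1}$ and takes values in the open cone of Hermitian positive-definite matrices (this uses only that $X(t)$ is invertible, exactly as in Step 1 of the proof of Lemma \ref{coppel39}). The principal square root is a smooth map from the cone of positive-definite matrices to itself, so $t\mapsto R(t)$ is $C^{1}$ and invertible on $J$; by Lemma \ref{dimf} the map $t\mapsto R^{-1}(t)$ is also $C^{1}$ with $\tfrac{d}{dt}R^{-1}(t)=-R^{-1}(t)\dot{R}(t)R^{-1}(t)$. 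Consequently $S(t)=X(t)R^{-1}(t)$ is differentiable with differentiable inverse $S^{-1}(t)=R(t)X^{-1}(t)$.

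Finally, to obtain the differential equation (\ref{DSKS}), I would differentiate the product $S(t)=X(t)R^{-1}(t)$ using Remark \ref{R2} and Lemma \ref{dimf}:
\begin{displaymath}
\dot{S}(t)=\dot{X}(t)R^{-1}(t)+X(t)\frac{d}{dt}R^{-1}(t)=A(t)X(t)R^{-1}(t)-X(t)R^{-1}(t)\dot{R}(t)R^{-1}(t),
\end{displaymath}
and then rewriting $X(t)R^{-1}(t)=S(t)$ in both terms yields $\dot{S}(t)=A(t)S(t)-S(t)\dot{R}(t)R^{-1}(t)$.

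The main obstacle I foresee is justifying the $C^{1}$-regularity of the principal square root $R(t)=U(t)^{1/2}$ cleanly without invoking heavy functional calculus. The simplest self-contained argument uses that the squaring map $\Psi\colon R\mapsto R^{2}$ is a smooth map on the open set of Hermitian positive-definite matrices whose derivative $D\Psi(R)(H)=RH+HR$ is invertible there (its inverse is the Lyapunov operator), so the inverse function theorem yields that $\Psi^{-1}$ is smooth and hence $t\mapsto R(t)=\Psi^{-1}(U(t))$ is as smooth as $U(\cdot)$. Once this regularity is in hand, every other step is a routine consequence of Lemma \ref{coppel39} and the product rule.
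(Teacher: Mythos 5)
Your proposal is correct, and it follows the paper's construction exactly up to the one genuinely delicate point, where you take a different route. The paper also sets $S(t)=X(t)R^{-1}(t)$ with $R(t)^{2}=U(t)=PX^{*}(t)X(t)P+(I-P)X^{*}(t)X(t)(I-P)$, obtains the conjugation identity and both norm bounds by applying Lemma \ref{coppel39} pointwise, and derives \eqref{DSKS} by the same product-rule computation you give. The difference is in how the differentiability of $t\mapsto R(t)$ is justified: the paper exploits $R^{2}(t)P=PR^{2}(t)$ to claim a \emph{constant} invertible matrix $Z$ simultaneously diagonalizing $P$ and $R^{2}(t)$, writes $Z^{-1}R^{2}(t)Z=\textnormal{Diag}\{a_{1}(t),\ldots,a_{n}(t)\}$ with each $a_{i}$ derivable, and defines $R(t)=Z\,\textnormal{Diag}\{\sqrt{a_{1}(t)},\ldots,\sqrt{a_{n}(t)}\}Z^{-1}$ (the authors themselves contrast this with Coppel's original appeal to a lemma of Reid). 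You instead invoke the inverse function theorem for the squaring map $\Psi(R)=R^{2}$ on the open cone of Hermitian positive definite matrices, noting that $D\Psi(R)(H)=RH+HR$ is the invertible Lyapunov operator there, so $R(t)=\Psi^{-1}(U(t))$ inherits the $C^{1}$-regularity of $U(t)$. Your argument is slightly less elementary in the tools it cites, but it is more robust: it does not depend on the assertion that the common eigenbasis of $P$ and $R^{2}(t)$ can be chosen independent of $t$ (within the eigenspaces of $P$ the diagonalizing basis of $R^{2}(t)$ may a priori vary with $t$, so the paper's constant-$Z$ step requires more care than the text gives it), whereas the Lyapunov-operator argument yields smoothness of the square root with no diagonalization at all. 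Both routes then finish identically via Lemma \ref{dimf} and the product rule.
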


\begin{proof}
The existence of a matrix valued function $t\mapsto S(t)$ satisfying the above identities and inequalities for any $t\in J$ is a consequence of Lemma \ref{coppel39}. Moreover,
by following the lines of the previous proof, we know that $S(t)=X(t)R^{-1}(t)$,
where $R^{2}(t)P=PR^{2}(t)$ and $R^{2}(t)$ defined by:
\begin{equation}
\label{R2U}
R^{2}(t)=PX^{*}(t)X(t)P+[I-P]X^{*}(t)X(t)[I-P]>0.
\end{equation}

The derivability of $X(t)$ combined with (\ref{R2U}) implies that $R^{2}(t)$ is de\-ri\-va\-ble on $J$. Now, we
will prove that $R(t)$ is derivable on $J$. Firstly, let us recall that as $P$ is hermitian it follows that 
is diagonalizable and there exists a nonsingular and constant matrix $Z$ such that $Z^{-1}PZ=D_{P}$.

On the other hand, as $R^{2}(t)P=PR^{2}(t)$, we know that $R^{2}(t)$ and $P$ have simultaneous  diagonalization, we can assume that 
$$
Z^{-1}R^{2}(t)Z=\textnormal{Diag}\{a_{1}(t),\ldots,a_{n}(t)\},
$$ 
that is, $R^{2}(t)$ is diagonalizable
by a constant basis. As we know that $R^{2}(t)$ is de\-ri\-va\-ble on $J$ it follows that $t\mapsto a_{i}(t)$ are derivable for any $i\in \{1,\ldots,n\}$.

The property $R(t)>0$ implies that
$$
R(t)=Z \,\textnormal{Diag}\{\sqrt{a_{1}(t)},\ldots,\sqrt{a_{n}(t)}\}Z^{-1}
$$
is well defined and derivable on $J$. Now, by Lemma \ref{dimf} from Chapter 1, we have that $\dot{R}^{-1}(t)=-R^{-1}(t)\dot{R}(t)R^{-1}(t)$. Then, the de\-ri\-va\-bility of $S(t)=X(t)R^{-1}(t)$ follows and the identity (\ref{DSKS})
can be deduced easily.
\end{proof}

Note that the equation (\ref{DSKS}) is related with the definitions of kinematical similarity stated
in the Chapter 1. In fact, the linear systems $\dot{x}=A(t)x$ and $\dot{y}=\dot{R}(t)R^{-1}(t)y$ will be generally kinematically similar provided that the matrix valued functions $t\mapsto S(t)$ and $t\mapsto S^{-1}(t)$ are bounded on $J$. This is the topic of the next section.

%\begin{lemma}
%The matrix $\dot{R}(t)R^{-1}(t)$ satisfies
%\begin{equation}
%    \label{desigualdad-R}
%\lambda(t)I \leq   \dot{R}(t)R^{-1}(t)+R^{-1}(t)\dot{R}(t)\leq \mu(t)I,
%\end{equation}
%where 
%$$
%\lambda(t)I \leq   A(t)+A^{*}(t)\leq \mu(t)I
%$$
%\end{lemma}

%\begin{proof}

%We derivate (\ref{R2U}) and obtain.
%\begin{displaymath}
%\begin{array}{rcl}
%R(t)\dot{R}(t)+\dot{R}(t)R(t) &=& PX^{*}(t)[A(t)+A^{*}(t)]X(t)P\\\\
%&& +[I-P]X^{*}[A(t)+A^{*}(t)]X(t)[I-P].
%\end{array}
%\end{displaymath}

%The inequality $A(t)+A^{*}(t)\leq \mu(t)I$ implies that
%\begin{displaymath}
%\begin{array}{rcl}
%R(t)\dot{R}(t)+\dot{R}(t)R(t) &\leq  &  \mu(t)PX^{*}(t)X(t)P\\\\
%&& +\mu(t)[I-P]X^{*}(t)X(t)[I-P]\\\\
%&\leq & \mu(t) R^{2}(t),
%\end{array}
%\end{displaymath}
%which combined with the inequality $\lambda(t)I\leq A(t)+A^{*}(t)$ implies
%\begin{displaymath}
%\lambda(t)R^{2}(t)\leq R(t)\dot{R}(t)+\dot{R}(t)R(t)\leq \mu(t)R^{2}(t).
%\end{displaymath}

%By multiplying the above matrix inequality for $R^{-1}(t)$ by the left and by the right we obtain that 
%\begin{displaymath}
%\lambda(t)\leq \dot{R}(t)R^{-1}(t)+R^{-1}(t)\dot{R}(t)\leq \mu(t).
%\end{displaymath}
%\end{proof}

\section{Exponential Dichotomy and Reducibility}

The main result of this section states that any linear system
having the property of exponential dichotomy is reducible. This allow us to revisit the 
fact that any solution of a linear system having an exponential dichotomy on $J$ can be decomposed in an asymptotically stable
component and an asymptotically unstable one since
the reducibility implies a decoupling in two systems having the same asymptotic behavior.

\begin{theorem}
\label{EDIR}
Let $X(t)$ be a fundamental matrix of $\dot{x}=A(t)x$, where $t\mapsto A(t)$ is continuous on $J$. Moreover, assume the existence of an or\-tho\-go\-nal and nontrivial projection $P$ with rank $r$ such that $t\mapsto ||X(t)PX^{-1}(t)||_{2}$ is
 bounded on $J$. Then, the linear system is reducible to the diagonal block system
 \begin{equation}
 \label{rDr}
 \dot{y}=\dot{R}(t)R^{-1}(t)y=\left[\begin{array}{cc}
B_{1}(t) & 0\\\\
0        & B_{2}(t)
\end{array}\right]y,
 \end{equation}
where $B_{1}(t)$ has rank $r$ while $B_{2}(t)$ has rank $n-r$ for any $t\in J$.
\end{theorem}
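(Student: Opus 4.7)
The plan is to apply Corollary \ref{Copp39-2} pointwise to the given fundamental matrix $X(t)$ together with the orthogonal projection $P$. This yields a differentiable invertible matrix $S(t)=X(t)R^{-1}(t)$ satisfying
$$
\dot{S}(t)=A(t)S(t)-S(t)\dot{R}(t)R^{-1}(t)
$$
and the uniform bound $||S(t)||_{2}\leq\sqrt{2}$. To obtain a general kinematical similarity it only remains to control $||S^{-1}(t)||_{2}$. The Corollary gives $||S^{-1}(t)||_{2}\leq\sqrt{||X(t)PX^{-1}(t)||_{2}^{2}+||X(t)(I-P)X^{-1}(t)||_{2}^{2}}$, and since $X(t)(I-P)X^{-1}(t)=I-X(t)PX^{-1}(t)$, the hypothesis that $||X(t)PX^{-1}(t)||_{2}$ is bounded by some constant $M$ automatically yields $||X(t)(I-P)X^{-1}(t)||_{2}\leq 1+M$. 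Therefore the change of variables $z=S^{-1}(t)x$ is a generalised Lyapunov transformation reducing the original system to $\dot{z}=\dot{R}(t)R^{-1}(t)z$.

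Next I would exploit the commutation structure hidden in $R(t)$. The proof of Lemma \ref{coppel39} shows that $R^{2}(t)P=PR^{2}(t)$, and because the positive square root of a positive matrix commutes with every matrix commuting with its square, one deduces $R(t)P=PR(t)$. Differentiating this identity (recalling that $P$ is constant) yields $\dot{R}(t)P=P\dot{R}(t)$, and multiplying $R(t)P=PR(t)$ by $R^{-1}(t)$ on both sides gives $R^{-1}(t)P=PR^{-1}(t)$. Consequently $\dot{R}(t)R^{-1}(t)$ commutes with $P$ for every $t\in J$.

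To display the block form explicitly I would perform one further \emph{constant} change of coordinates that diagonalises $P$. As $P$ is Hermitian of rank $r$, there exists a constant unitary matrix $U$ with $U^{*}PU=P_{r}=\textnormal{diag}(I_{r},0_{n-r})$. The transformation $y=U^{*}z$ is trivially a Lyapunov transformation, producing the system $\dot{y}=B(t)y$ with $B(t):=U^{*}\dot{R}(t)R^{-1}(t)U$. Conjugation by $U$ preserves the commutation relation, so $B(t)P_{r}=P_{r}B(t)$, and a direct block-matrix computation then shows that any matrix commuting with $P_{r}$ must be block diagonal of the form $\textnormal{diag}(B_{1}(t),B_{2}(t))$ with $B_{1}(t)\in M_{r}$ and $B_{2}(t)\in M_{n-r}$, which is exactly \eqref{rDr}. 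The composition $y=U^{*}S^{-1}(t)x$ realises the reducibility (and one can absorb $U$ into $R$ to recover the notation of the theorem).

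The main delicate point is the commutation $R(t)P=PR(t)$ together with its derivative $\dot{R}(t)P=P\dot{R}(t)$; everything else is either immediate from Corollary \ref{Copp39-2} or a routine block-matrix calculation. A secondary subtlety is that the hypothesis bounds only the single quantity $||X(t)PX^{-1}(t)||_{2}$: the complementary bound $||X(t)(I-P)X^{-1}(t)||_{2}\leq 1+||X(t)PX^{-1}(t)||_{2}$ arises automatically from the identity $X(t)PX^{-1}(t)+X(t)(I-P)X^{-1}(t)=I$, and without it one could not close the argument.
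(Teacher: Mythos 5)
Your proposal is correct and follows essentially the same route as the paper: both rest on Corollary \ref{Copp39-2} (the decomposition $S(t)=X(t)R^{-1}(t)$), on the observation that boundedness of $\Vert X(t)PX^{-1}(t)\Vert_{2}$ forces boundedness of $\Vert X(t)(I-P)X^{-1}(t)\Vert_{2}$, and on the identity (\ref{DSKS}) to obtain the general kinematical similarity to $\dot{y}=\dot{R}(t)R^{-1}(t)y$. The only difference is presentational: the paper normalizes to the canonical projection $P_{r}$ at the outset and reads the block structure directly from the explicit block form of $R^{2}(t)$ (using positivity of its diagonal blocks), whereas you keep a general orthogonal $P$, deduce that $R$, $R^{-1}$ and $\dot{R}$ commute with $P$, and diagonalize $P$ by a constant unitary conjugation at the end, which amounts to the same normalization.
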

%\begin{lemma}
%\label{boundedS1}
%Let $X(t)$ be a fundamental matrix of (\ref{LinealRED}) and suppose that there exists an orthogonal projection $P$ such that $t\mapsto ||X(t)PX^{-1}(t)||_{2}$ is
% bounded on $J$. Then, the linear system (\ref{LinealRED}) is kinematically similar (general sense) to the block system
% \begin{equation}
% \label{rDr}
% \dot{y}=\dot{R}(t)R^{-1}(t)y,
% \end{equation}
% whose blocks submatrices have dimensions $r$ and $n-r$ respectively.
%\end{lemma}

\begin{proof}
Without loss of generality, by using Lemma \ref{DEPCa} from Chapter 2, we can consider
a fundamental matrix $X(t)$ and a canonical projection $P_{r}$ with rank $0<r<n$ as follows:
\begin{displaymath}
X(t)=\left[\begin{array}{cc}
X_{11}(t) & X_{12}(t) \\
X_{21}(t) & X_{22}(t)
\end{array}\right] \quad \textnormal{and} \quad 
P_{r}=\left[\begin{array}{cc}
I_{r} & 0 \\
0 & 0
\end{array}\right]
\end{displaymath}
with $X_{11}\in M_{r}(\mathbb{C})$,$X_{12}\in M_{r,n-r}(\mathbb{C})$,$X_{21}\in M_{n-r,r}(\mathbb{C})$ and $X_{22}\in M_{n-r}(\mathbb{C})$.

The boundedness of $t\mapsto ||X(t)P_{r}X^{-1}(t)||_{2}$ on $J$
implies the boundedness of $t\mapsto ||X(t)[I-P_{r}]X^{-1}(t)||_{2}$, then Corollary \ref{Copp39-2} implies the existence of $S(t)=X(t)R^{-1}(t)$ satisfying (\ref{DSKS}),
such that $S(t)$ and $S^{-1}(t)$ are bounded and derivable on $J$.

Secondly, it can be easily seen that the general kinematical similarity on $J$  between (\ref{LinealRED}) and (\ref{rDr}) is a consequence of (\ref{DSKS}) combined with the transformation $y=S^{-1}(t)x$. Then, the Theorem will follows if we prove that $\dot{R}(t)R^{-1}(t)$ is block diagonal.

In order to verify that $\dot{R}(t)R^{-1}(t)$ is block diagonal,
let us recall that the identity (\ref{R2U}) is considered with any orthogonal
projection $P$, in particular we can use $P_{r}$. In addition, note that 
$R^{2}(t)$ can be represented as follows:
$$
R^{2}(t)=\left[\begin{array}{cc}
X_{11}^{*}(t)X_{11}(t)+X_{21}^{*}(t)X_{21}(t) & 0 \\
0 & X_{12}^{*}(t)X_{12}(t)+X_{22}^{*}(t)X_{22}(t)
\end{array}\right],
$$
which has blocks of dimensions $r$ and $n-r$ respectively.

By using the properties $R^{2}(t)>0$ and $R(t)>0$
combined with the fact that the submatrices of $R^{2}(t)$ are also
positive (see \cite[p.13]{Householder}) it is easy to see that $R(t)$ is also block diagonal with the same dimensions that $R^{2}(t)$. Then, this block diagonal property is also preserved by $R^{-1}(t)$ and $\dot{R}(t)$ and the reducibility follows.
\end{proof}

The next result is devoted to the subsystems of (\ref{rDr}), namely
\begin{equation}
\label{subsystemes-V}
\dot{y}_{1}=B_{1}(t)y_{1} \quad \textnormal{and} \quad \dot{y}_{2}=B_{2}(t)y_{2}
\end{equation}
and describes its relation with (\ref{LinealRED}).

\begin{corollary}
\label{Redu2}
 If the linear system \eqref{LinealRED} has an exponential dichotomy on $J$ with constants $K,\alpha>0$ and projection
 $P$ having  rank equal to $r$ with $0<r<n$, then

\noindent \textnormal{a)} If $Y_{i}(t)$ is a fundamental matrix of $\dot{y}_{i}=B_{i}(t)y_{i}$ with $i=1,2$, then
there exists $\tilde{K}>0$ such that:
\begin{displaymath}
\left\{\begin{array}{rcl}
||Y_{1}(t,s)||_{2}\leq \tilde{K}e^{-\alpha(t-s)} & \textnormal{for $t\geq s$ with $t,s\in J$}\\
||Y_{2}(t,s)||_{2}\leq \tilde{K}e^{-\alpha(s-t)} & \textnormal{for $t\leq s$ with $t,s\in J$}.
\end{array}\right.
\end{displaymath}

\noindent \textnormal{b)}  The subsystems \eqref{subsystemes-V} have an exponential dichotomy on $J$ with the trivial projections $P_{1}=I_{r}\in M_{r}(\mathbb{R})$
and $P_{2}=0\in M_{n-r}(\mathbb{R})$ respectively.
\end{corollary}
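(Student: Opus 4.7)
The plan is to chain Theorem \ref{EDIR} (reducibility to block--diagonal form) with Theorem \ref{KSDE} (kinematical similarity preserves exponential dichotomy), and then to read off the two desired estimates from the block--diagonal structure of the transition matrix of the reduced system.

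\textbf{First,} since \eqref{LinealRED} has an exponential dichotomy on $J$, Lemma \ref{DEPCa} from Chapter 2 allows me to assume without loss of generality that the projection is the canonical one $P_{r}$, which is orthogonal; taking $t=s$ in the dichotomy inequality also shows that $t\mapsto \|X(t)P_{r}X^{-1}(t)\|_{2}$ is bounded on $J$. Theorem \ref{EDIR} then produces a general kinematical similarity $y=S^{-1}(t)x$ to the block--diagonal system \eqref{rDr}, with $S(t)=X(t)R^{-1}(t)$ and $R(t)=\mathrm{diag}(R_{1}(t),R_{2}(t))$ block--diagonal of sizes $r$ and $n-r$. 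Since $S^{-1}(t)X(t)=R(t)$, the blocks $R_{1}(t)$ and $R_{2}(t)$ serve (up to nonsingular constant multipliers, which do not alter transition--matrix norms) as fundamental matrices $Y_{1}(t),Y_{2}(t)$ of the subsystems \eqref{subsystemes-V}.

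\textbf{Second,} Theorem \ref{KSDE} upgrades the exponential dichotomy through the similarity: the reduced system $\dot{y}=B(t)y$ has an exponential dichotomy on $J$ with the same projection $P_{r}$, the same rate $\alpha$, and some constant $\tilde{K}>0$. Because the transition matrix of $\dot{y}=B(t)y$ is block--diagonal with blocks $Y_{1}(t,s)$ and $Y_{2}(t,s)$, multiplication by $P_{r}$ (respectively $I-P_{r}$) zeros out the lower--right (respectively upper--left) block, and the spectral norm of a block--diagonal matrix in which one diagonal block vanishes coincides with the norm of the remaining block. Substituting these identifications into the dichotomy estimates for $\dot{y}=B(t)y$ yields exactly the two inequalities of part a).

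\textbf{Third,} part b) is then a cosmetic verification: for the first subsystem with $P_{1}=I_{r}$ one has $I_{r}-P_{1}=0$, so the second dichotomy inequality holds trivially, while the first is precisely the estimate obtained in a); an entirely symmetric argument with $P_{2}=0$ disposes of the second subsystem. I do not anticipate any genuine obstacle beyond the bookkeeping of block--diagonal norms and the careful absorption, into the single constant $\tilde{K}$, of the factor coming from the change of basis in Lemma \ref{DEPCa} together with the factor $\|S\|_{\infty}\|S^{-1}\|_{\infty}$ picked up when invoking Theorem \ref{KSDE}.
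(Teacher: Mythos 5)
Your proposal is correct and follows essentially the same route as the paper's proof: reduce to the canonical projection, invoke Theorem \ref{EDIR} for the block--diagonal reduction, transfer the dichotomy via Theorem \ref{KSDE}, and use the identification $Y(t)=S^{-1}(t)X(t)=R(t)$ (block--diagonal) so that $\|Y(t)P_{r}Y^{-1}(s)\|_{2}=\|Y_{1}(t,s)\|_{2}$ and $\|Y(t)[I-P_{r}]Y^{-1}(s)\|_{2}=\|Y_{2}(t,s)\|_{2}$, from which a) and then b) with the trivial projections follow exactly as in the paper.
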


\begin{proof}
As (\ref{LinealRED}) has an exponential dichotomy on $J$, we have that
$t\mapsto ||X(t)PX^{-1}(t)||$ is bounded on $J$. Then, by Theorem \ref{EDIR} we know that (\ref{LinealRED}) is reducible to (\ref{LinealRED2}), which can be written as 
\begin{equation}
\label{blocs}
\left(\begin{array}{c}
\dot{y}_{1}\\
\dot{y}_{2}
\end{array}\right)
=\left[\begin{array}{cc}
B_{1}(t) & 0\\\\
0        & B_{2}(t)
\end{array}\right]
\left(\begin{array}{c}
y_{1}\\
y_{2}
\end{array}\right).
\end{equation}

On the other hand, as the reducibility is a particular case of kinematical similarity
and, as in the proof of the previous result, we are assuming that the exponential dichotomy
of (\ref{LinealRED}) have positive constants $K,\alpha$ and a canonical projection $P_{r}$, by 
using Theorem \ref{KSDE} from Chapter 2 we have the existence of a fundamental matrix $Y(t)$ for (\ref{blocs}) and $\tilde{K}>0$ such that
\begin{displaymath}
\left\{\begin{array}{rcl}
||Y(t)P_{r}Y^{-1}(s)||_{2}\leq \tilde{K}e^{-\alpha(t-s)} & \textnormal{for $t\geq s$ with $t,s\in J$}\\
||Y(t)[I-P_{r}]Y^{-1}(s)||_{2}\leq \tilde{K}e^{-\alpha(s-t)} & \textnormal{for $t\leq s$ with $t,s\in J$},
\end{array}\right.
\end{displaymath}
where $Y(t)$ is a fundamental matrix of (\ref{blocs}) such that $Y(t)=S^{-1}(t)X(t)$. 

Let us recall that $S(t)=X(t)R^{-1}(t)$, then it follows that $Y(t)=R(t)$ which is a diagonal block matrix, as we see in the proof of Theorem \ref{EDIR}. In consequence, $Y(t)$ must have the form
\begin{displaymath}
Y(t)=\left[\begin{array}{cc}
Y_{1}(t) & 0\\\\
0        & Y_{2}(t)
\end{array}\right],
\end{displaymath}
where $Y_{1}(t)$ and $Y_{2}(t)$ are a pair of fundamental matrices of the subsystems (\ref{subsystemes-V}) respectively.

Then, by considering the stucture of the canonical projection $P_{r}$, it is easy to deduce that
\begin{displaymath}
Y(t)P_{r}Y^{-1}(s)=\left[\begin{array}{cc}
Y_{1}(t,s) & 0\\\\
0        &  0
\end{array}\right] \,\, \textnormal{and}\,\,\, Y(t)[I-P_{r}]Y^{-1}(s)=\left[\begin{array}{cc}
0 & 0\\\\
0        &  Y_{2}(t,s)
\end{array}\right]
\end{displaymath}
and we have that 
$$
||Y(t)P_{r}Y^{-1}(s)||_{2}=||Y_{1}(t,s)||_{2}
\quad \textnormal{and}\quad
||Y(t)[I-P_{r}]Y^{-1}(s)||_{2}=||Y_{2}(t,s)||_{2}
$$
where the left terms
are norms for matrices of order $n$ while the right terms are norms for matrices of orders
$r$ and $n-r$ respectively  and the statement a) is proved.

Finally, a direct consequence is that
the subsystems (\ref{subsystemes-V}) have an exponential dichotomy on $J$ with projections $P_{1}=I_{r}\in M_{r}(\mathbb{R})$ and $P_{2}=0\in M_{n-r}(\mathbb{R})$ respectively.

\end{proof}

A careful reading of the above results show us that any linear system
having exponential dichotomy with no trivial projection is reducible to 
a block system, which can be decomposed in a stable system and a
unstable one.

\section{Comments and References}

\noindent \textbf{1)} As pointed out in the subsection 1.1, there exist a different property
of reducibility introduced by Lyapunov. In fact, in \cite[pp. 241--242]{Lyapunov}, Lyapunov describes
the currently called Lyapunov transformations and says "$\ldots$\textit{The considered systems of differential equations are, by an appropriate choice of transformations, transformed in a system with constant coefficients. In this case, we will call such systems as \textbf{reducible}}$\ldots$" \footnote{Free translation of the authors.}. On other hand, we refer
to \cite{Bylov2} where the kinematical similarity to a block system is treated.

\medskip

\noindent \textbf{2)} Lemma \ref{coppel39} has been stated in \cite[Lemma 1,p.39]{Cop}, we only provide
intermediate computations and additional estimations in order to facilitate its reading.
In particular, we use the fact that $R^{2}$ and $P$ have simultaneous diagonalization
as a tool to prove that if $R^{2}$ and $P$ commute then $R$ and $P$ also commute.

\medskip

\noindent \textbf{3)}
Corollary \ref{Copp39-2} has been partially stated in \cite[Lemma 1, p.39]{Coppel67}, where
the derivability of $t\mapsto R(t)$ is proved by using a result from \cite[Lemma 3.1]{Reid}
while our proof uses the fact that $P$ and $R^{2}(t)$ have simultaneous diagonalization.

\medskip

\noindent \textbf{4)} There exist several works in the literature, such as \cite{Chu, Siegmund2,Silva,Zhang}, that mentioned   \cite[Lemma 1, p.39]{Coppel67} as a useful tool for diverse results into their respective works. We emphasize that in \cite{Siegmund2} makes a work different from W. Coppel due to the S. Siegmund deals  with the more general class of systems with locally integrable $t-$dependence rather than systems with continuous $t-$dependence.

\section{Exercises}
\begin{itemize}
\item[1.-] Let us consider the linear system (\ref{LinealRED}) with $t\mapsto A(t)$ continuous on $\mathbb{R}$ and $\omega$--periodic (\ref{LinealRED}). Prove that if (\ref{LinealRED}) has Floquet
multipliers inside an outside the unit circle then it is is reducible to an autonomous system.
\item[2.-] If (\ref{LinealRED}) is reducible to (\ref{LinealRED2}), prove that $x'=-A^{T}(t)x$ is reducible to
\begin{displaymath}
\dot{y}=-\left[\begin{array}{cc}
B_{1}^{T}(t) & 0\\\\
0        & B_{2}^{T}(t)
\end{array}\right]y
\end{displaymath}
\item[3.-] Prove that the system considered in the Example \ref{MYDUG} from Chapter 2 is reducible and obtain an explicit representation.

\end{itemize}

\chapter{Exponential Dichotomy Spectrum}

\section{Introduction}
Given a matrix $A\in M_{n}(\mathbb{R})$, let us consider the following systems:
\begin{subequations}
  \begin{empheq}{align}
    &\dot{x}=Ax, \label{ESA1}\\
    &\dot{x}=Ax+f(t), \label{ESA2}\\
    &\dot{x}=Ax+Bu(t), \label{ESA3}
  \end{empheq}
\end{subequations}
where $f\in BC(\mathbb{R},\mathbb{R}^{n})$, $B\in M_{n,p}(\mathbb{R})$
and $u\colon \mathbb{R}\to \mathbb{R}^{p}$ is a measurable function.

The eigenvalues of $A$ provides essential information about several properties of the above systems.

We know that (\ref{ESA1}) is \textit{hyperbolic} if and only if the eigenvalues
of $A$ have non zero real part. In addition, (\ref{ESA1}) is \textit{uniformly asymptotically stable}
if and only if the eigenvalues of $A$ have negative real part.

We also know by Proposition \ref{admi} from Chapter 2 that if (\ref{ESA1}) is hyperbolic,
then the system (\ref{ESA2}) has the \textit{admissibility} property, that is, for any $f\in BC(\mathbb{R},\mathbb{R}^{n})$ the system (\ref{ESA2}) has bounded 
continuous solutions.

The system (\ref{ESA3}) is \textit{controllable} at $t_{0}$ if for any nontrivial initial condition $x_{0}\in \mathbb{R}^{n}$, there exists a finite time $t_{1}>t_{0}$ and a function $u_{0}\colon [t_{0},t_{1}]\to \mathbb{R}^{p}$, such that the solution $t\mapsto x(t,t_{0},x_{0},u_{0})$ of (\ref{ESA3}) with $u=u_{0}(\cdot)$ verifies $x(t_{1},t_{0},x_{0},u_{0})=0$. A nice result \cite[Th.3.1]{Zhou} states that the system (\ref{ESA3})
is controllable if and only if for any set of $\mathcal{S}\subset \mathbb{C}$ composed by $n$ 
arbitrary points, module conjugation, there exists $F\in M_{p,n}(\mathbb{R})$ such that the eigenvalues of $A-BF$ are exactly $\mathcal{S}$.

It is well known that there no exists an eigenvalues--based approach able to replicate the above results for the nonautonomous systems
\begin{subequations}
  \begin{empheq}{align}
    &\dot{x}=A(t)x, \label{LinCap3}\\
    &\dot{x}=A(t)x+f(t), \label{lin-nh}\\
    &\dot{x}=A(t)x+B(t)u(t), \label{ESNA3}
  \end{empheq}
\end{subequations}
where $A\colon J\subseteq \mathbb{R}\to M_{n}(\mathbb{R})$ and the interval $J$ will be assumed either $\mathbb{R}$,$\mathbb{R}_{0}^{-}$ or $\mathbb{R}_{0}^{+}$.

In order to fill this essential gap, several spectral theories have been developed
for the linear system (\ref{LinCap3}) and the definition proposed by Dieci $\&$ Van Vleck in \cite[p.267]{Dieci} encompasses the essential properties of these ones when considering $J=[0,+\infty)$:
\begin{definition}
\label{TSD}
A subset $\Sigma(A)\subset \mathbb{R}$ it is a \textbf{spectrum} associated to \eqref{LinCap3} if one or more of the following properties are verified:

\begin{itemize}
\item[{\bf (P1)}] For $\Sigma(A) \cap (-\infty,0)\neq \varnothing$ and $0\notin \Sigma(A)$, there exists
a non--zero bounded solution of \eqref{LinCap3}.

\item[{\bf (P2)}] For $0\notin \Sigma(A)$, there exists a bounded solution of 
\eqref{lin-nh} for any 
$f\in BC(J,\mathbb{R})\setminus \{0\}$.

\item[{\bf (P3)}] For $\Sigma(A) \cap (-\infty,0)\neq \varnothing$ and $0\notin \Sigma(A)$, there exists
a non--zero bounded solution of \eqref{lin-nh} for any $f\in BC(J,\mathbb{R})$.
\end{itemize}
\end{definition}

We have to point out that if $A(t)\equiv A$, the set 
\begin{displaymath}
\Sigma_{EIG}(A):=\left\{\lambda\in \mathbb{R}\colon \textnormal{$\lambda$ is the real part of an eigenvalue of $A$}\right\}
\end{displaymath}
satisfies \textbf{(P1)--(P3)}. The 
property $0\notin \Sigma_{EIG}(A)$ is the \textit{hyperbolicity} condition and $A$ is a Hurwitz matrix if $\Sigma_{EIG}(A)\subset (-\infty,0)$, which is equivalent to the \textit{uniform asymptotic stability}.
 
As we have seen in the Chapter 2, the property of exponential dichotomy 
can be seen as a nonautonomous hyperbolicity, in this section we will construct a spectrum $\Sigma(A)$ based in the property of exponential dichotomy.

\section{Definitions and basic results}

\subsection{A formal definition}
Let us consider the family of linear shifted systems:
\begin{equation}
\label{ec:2.4}
\dot{x}=[A(t)-\lambda I]x
\end{equation}
with $\lambda \in \mathbb{R}$ and notice that it has the following fundamental matrix:
\begin{equation*}%\label{ec:2.5}
X_{\lambda}(t)=X(t)e^{-\lambda t},
\end{equation*}
where $X(t)$ is a fundamental matrix of (\ref{LinCap3}), then it is straightforward to introduce the next property: 
\begin{definition}
\label{ED-Per}
The system \eqref{ec:2.4} has an \textbf{exponential dichotomy} on $J$ if there exist positive numbers $K_{\lambda}$, $\alpha_{\lambda}$ and a projection $P_{\lambda}^2=P_{\lambda}$ such that the fundamental matrix of \eqref{ec:2.4} satisfies
\begin{displaymath}
\left\{\begin{array}{rl}
\Vert X(t)e^{-\lambda t}P_{\lambda}X^{-1}(s)e^{\lambda s}\Vert \leq K_{\lambda}e^{-\alpha_{\lambda}(t-s)}&\textnormal{if} \,\, t\geq s, \,\,\, \textnormal{$t,s\in J$}\\
\Vert X(t)e^{-\lambda t}(I-P_{\lambda})X^{-1}(s)e^{\lambda s}\Vert \leq K_{\lambda}e^{-\alpha_{\lambda} (s-t)} &\textnormal{if} \,\, t \leq s, \,\,\, \textnormal{$t,s\in J$}.
\end{array}\right.
\end{displaymath}
\end{definition}

The reader can deduce easily that, for any $t,s\in J$, the above property is equivalent to
\begin{equation}
\label{ec:2.7}
\left\{\begin{array}{rl}
\Vert X(t)P_{\lambda}X^{-1}(s)\Vert \leq K_{\lambda}e^{\lambda (t-s)}e^{-\alpha_{\lambda}(t-s)}&\textnormal{if}\,\, t\geq s, \\
\Vert X(t)(I-P_{\lambda})X^{-1}(s)\Vert \leq K_{\lambda}e^{\lambda (t-s)}e^{-\alpha_{\lambda} (s-t)} &\textnormal{if}\,\, t \leq s.
\end{array}\right.
\end{equation}

%\begin{lemma}
%If the system (\ref{ec:2.4}) has an exponential dichotomy on $\mathbb{R}$, then the origin is the unique solution that is bounded in $\mathbb{R}$.
%\end{lemma}

%\begin{proof}
%The proof is similar to the proof of Lemma 2.1 from Ch.2.
%\end{proof}

\begin{definition}
\label{DefinicionEspectro}
The \textbf{exponential dichotomy spectra} of \eqref{LinCap3} are defined by the sets:
\begin{displaymath}
\begin{array}{rcl}
\Sigma (A)&:=&\{\lambda\in\mathbb{R}\colon \textnormal{(\ref{ec:2.4}) has not an exponential dichotomy on $\mathbb{R}$}\},\\
\Sigma^{-} (A)&:=&\{\lambda\in\mathbb{R}\colon \textnormal{(\ref{ec:2.4}) has not an exponential dichotomy on $\mathbb{R}_{0}^-$}\},\\
\Sigma^{+}(A)&:=&\{\lambda\in\mathbb{R}\colon\textnormal{(\ref{ec:2.4}) has not an exponential dichotomy on $\mathbb{R}_{0}^+$}\}.
\end{array}
\end{displaymath}
\end{definition}

In some works, the above spectrum $\Sigma(A)$ is also known as the Sacker $\&$ Sell spectrum since 
it has been developed in 1978 by R.J. Sacker \& G. Sell in \cite{Sac} in a more general
framework of nonautonomous dynamics. As stated in \cite{Klo}, the spectra
$\Sigma^{-}(A)$ and $\Sigma^{+}(A)$ are also respectively known as \textit{past dichotomy spectrum} and \textit{future dichotomy spectrum}. On the other hand, it is important to emphasize that, in general, the above spectra are not the same.

The study of the properties of the above spectra is strongly related
with the properties of its complements, which will be defined as follows:
\begin{definition}
%\label{resolvent}
The \textbf{resolvents} associated to \eqref{LinCap3} are the sets 
$$
\rho (A)=\mathbb{R}\setminus \Sigma (A), \quad \rho^{-}(A)=\mathbb{R}\setminus \Sigma^{-}(A) \quad \textnormal{and} \quad
\rho^{+}(A)=\mathbb{R}\setminus \Sigma^{+}(A).
$$
\end{definition}

Note that if $\lambda_{0} \in \rho(A)$, then the exponential dichotomy property (\ref{ec:2.7}) is satisfied with $\lambda=\lambda_{0}$ and $J=\mathbb{R}$. This is also verified when considering
$\lambda_{0}\in \rho^{\pm}(A)$ and $J=\mathbb{R}_{0}^{\pm}$.

\begin{remark}
\label{contentions}
By using the resolvent definition, it is straightforward to verify that
\begin{equation}
\label{inc-spec}
\Sigma^{+}(A) \subseteq \Sigma(A) \quad \textnormal{and} \quad \Sigma^{-}(A)\subseteq \Sigma(A).
\end{equation}

In fact, by revisiting the equation \ref{schned} from Chapter 3, we can see that $0\notin \Sigma^{+}(a)$ whereas $0\in \Sigma(a)$.
\end{remark}

\subsection{Backwardly and forwardly bounded solutions of (\ref{ec:2.4}) }
The study of the family of shifted linear systems (\ref{ec:2.4}) is also
essential to study of the above mentioned spectra and resolvents. Then, similarly as we have done in the previous chapters,
for any $\lambda\in\mathbb{R}$ we will introduce the sets:
$$
\mathcal{S}^{\lambda}:=\left\{\xi\in\mathbb{R}^n: \sup_{t\in\mathbb{R}_{0}^+}\vert X(t,0)\xi\vert e^{- \lambda t}< +\infty \right\}
$$
and
$$
\mathcal{U}^{\lambda}:=\left\{\xi\in\mathbb{R}^n: \sup_{t\in\mathbb{R}_{0}^-}\vert X(t,0)\xi\vert e^{-\lambda t}<+\infty \right\}.
$$

\begin{remark}
\label{TE0}
As the solution of (\ref{ec:2.4}) with initial condition $x(0)=\xi$ is given by $t\mapsto X(t,0)e^{-\lambda t}\xi$. It is direct to see that $\mathcal{S}^{\lambda}$ (resp. $\mathcal{U}^{\lambda}$) is the set of initial conditions $\xi$ such that, the solutions of (\ref{ec:2.4}) passing trough $\xi$ at $t=0$ are bounded in $\mathbb{R}_{0}^+$ (resp. $\mathbb{R}_{0}^-$).
\end{remark}

\begin{remark}
\label{unstable-stable-spaces}
Notice that:
\begin{enumerate}
\item[a)] $\mathcal{S}^{\lambda}$ and $\mathcal{U}^{\lambda}$ are $\mathbb{R}$--vector subspaces of $\mathbb{R}^{n}$,
\item[b)] If $\lambda\leq\mu$, then $\mathcal{S}^{\lambda}\subseteq \mathcal{S}^{\mu}$,
\item[c)] If $\lambda\leq\mu$, then $\mathcal{U}^{\mu}\subseteq \mathcal{U}^{\lambda}$,
\item[d)] If $\lambda=0$, we recover the sets $\mathcal{V}=\mathcal{S}^{0}$ and $\mathcal{W}=\mathcal{U}^{0}$ defined by (\ref{EVS})--(\ref{EVI}) in Chapter 3. 
\end{enumerate}
\end{remark}

In case of $\lambda$ is such that the shifted linear system (\ref{ec:2.4}) has an exponential dichotomy, the following lemmas -- whose proof is left as an exercise -- describe the strong relationship between the vector spaces $\mathcal{S}^{\lambda}$ and $\mathcal{U}^{\lambda}$ with the projection $P_{\lambda}$.
\begin{lemma}
\label{carac-stable}
If the shifted linear system \eqref{ec:2.4} has an exponential dichotomy on $\mathbb{R}_{0}^{+}$
with projection $P_{\lambda}$, then $\mathcal{S}^{\lambda}=\textnormal{\text{Im}}P_{\lambda}$.
\end{lemma}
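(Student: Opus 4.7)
The plan is to mirror the argument already used for the analogous unweighted statements (Lemma \ref{subpespacios-t_0} and Lemma \ref{BSPHL}) applied now to the shifted system \eqref{ec:2.4}, carefully tracking how the extra weight $e^{-\lambda t}$ appearing in the definition of $\mathcal{S}^{\lambda}$ cancels against the exponential factors in the dichotomy estimates. Throughout I work with a normalized fundamental matrix of \eqref{LinCap3}, i.e.\ $X(0)=I$, so that $X(t,0)=X(t)$ and $X_{\lambda}(t,0)=X(t)e^{-\lambda t}$.

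For the inclusion $\operatorname{Im} P_{\lambda}\subseteq \mathcal{S}^{\lambda}$, take $\xi\in\operatorname{Im} P_{\lambda}$, so $P_{\lambda}\xi=\xi$. The first inequality in \eqref{ec:2.7} with $s=0$ yields
\begin{displaymath}
\bigl|X(t,0)\xi\bigr|\,e^{-\lambda t}
= \bigl|X(t)e^{-\lambda t}P_{\lambda}\xi\bigr|
\le K_{\lambda}e^{-\alpha_{\lambda}t}|\xi|
\le K_{\lambda}|\xi|,
\end{displaymath}
for every $t\ge 0$, so the supremum defining $\mathcal{S}^{\lambda}$ is finite and $\xi\in\mathcal{S}^{\lambda}$.

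For the reverse inclusion $\mathcal{S}^{\lambda}\subseteq \operatorname{Im} P_{\lambda}$ I would argue by contradiction. Take $\xi\in\mathcal{S}^{\lambda}$ and decompose $\xi=P_{\lambda}\xi+(I-P_{\lambda})\xi$. If $\xi\notin\operatorname{Im}P_{\lambda}$, then $(I-P_{\lambda})\xi\neq 0$. Applying Proposition \ref{triada} to the shifted system \eqref{ec:2.4} (equivalently, reading the second inequality of \eqref{ec:2.7} from right to left along the orbit), for every $s\ge t\ge 0$ one obtains
\begin{displaymath}
\bigl|X(s)(I-P_{\lambda})\xi\bigr|\,e^{-\lambda s}
\ge \frac{1}{K_{\lambda}}\,e^{\alpha_{\lambda}(s-t)}\,
\bigl|X(t)(I-P_{\lambda})\xi\bigr|\,e^{-\lambda t}.
\end{displaymath}
Setting $t=0$ gives a lower bound growing like $\tfrac{1}{K_{\lambda}}e^{\alpha_{\lambda}s}|(I-P_{\lambda})\xi|$, so the weighted norm $|X(s)(I-P_{\lambda})\xi|e^{-\lambda s}$ diverges as $s\to+\infty$. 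Combining this with the triangle inequality
\begin{displaymath}
\bigl|X(s)\xi\bigr|\,e^{-\lambda s}
\ge \bigl|X(s)(I-P_{\lambda})\xi\bigr|\,e^{-\lambda s}
-\bigl|X(s)P_{\lambda}\xi\bigr|\,e^{-\lambda s},
\end{displaymath}
and recalling from the first step that the second term on the right is bounded (in fact $O(e^{-\alpha_{\lambda}s})$), one concludes that $|X(s)\xi|e^{-\lambda s}\to+\infty$, contradicting $\xi\in\mathcal{S}^{\lambda}$.

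Conceptually the statement is a direct transcription of Lemma \ref{BSPHL} to the weighted/shifted setting, and I do not anticipate any deep obstacle. The only point requiring care is bookkeeping of the exponentials: one must verify that the lower bound derived from the second dichotomy estimate really produces $e^{+\alpha_{\lambda}s}$ growth in the weighted norm (the weights $e^{-\lambda t}$ in the definition of $\mathcal{S}^{\lambda}$ and the factor $e^{\lambda(t-s)}$ hidden in \eqref{ec:2.7} must cancel correctly), so that the unboundedness argument goes through uniformly in $\lambda$.
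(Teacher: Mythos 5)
Your proof is correct, but for the inclusion $\mathcal{S}^{\lambda}\subseteq \textnormal{Im}\,P_{\lambda}$ you take a different route than the paper. You argue by contradiction: from the second dichotomy estimate you extract the pointwise growth bound $|X(s)(I-P_{\lambda})\xi|e^{-\lambda s}\ge K_{\lambda}^{-1}e^{\alpha_{\lambda}s}|(I-P_{\lambda})\xi|$, and then the triangle inequality forces the weighted orbit of $\xi$ to blow up, contradicting $\xi\in\mathcal{S}^{\lambda}$ (your exponential bookkeeping is right: the weights $e^{-\lambda t}$ cancel exactly as you claim, and the stable part is $O(e^{-\alpha_{\lambda}s})$ by the same estimate you used in the first inclusion). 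The paper instead gives a direct argument with no contradiction and no lower bound: writing $\xi=\xi_{K}+\xi_{R}$ with $\xi_{K}=(I-P_{\lambda})\xi$, it inserts $X_{\lambda}^{-1}(\tau)X_{\lambda}(\tau)$ to get
\begin{displaymath}
|\xi_{K}|\le \Vert (I-P_{\lambda})X_{\lambda}^{-1}(\tau)\Vert\,|X_{\lambda}(\tau)\xi|\le K_{\lambda}C\,e^{-\alpha_{\lambda}\tau}
\quad\textnormal{for all }\tau\ge 0,
\end{displaymath}
where $C$ bounds the weighted orbit of $\xi\in\mathcal{S}^{\lambda}$, and letting $\tau\to+\infty$ gives $\xi_{K}=0$ at once. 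The paper's squeeze is shorter and only uses the second dichotomy inequality evaluated at $(t,s)=(0,\tau)$; your version needs the trajectory-wise estimates of Proposition \ref{triada} plus the triangle inequality, but it makes the dichotomic mechanism (exponential growth of the unstable component) explicit and matches the template of Lemmas \ref{subpespacios-t_0} and \ref{BSPHL}, so it transfers unchanged to the weighted setting. Both arguments are complete and correct.
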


\begin{lemma}
\label{carac-unstable}
If the linear shifted system \eqref{ec:2.4} has an exponential dichotomy on $\mathbb{R}_{0}^{-}$ with projection $P_{\lambda}$, then $\mathcal{U}^{\lambda}=\ker P_{\lambda}$.
\end{lemma}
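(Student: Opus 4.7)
The plan is to mirror the proof of Lemma \ref{carac-stable} but with the roles of $P_\lambda$ and $I - P_\lambda$ (as well as the direction of time) reversed. Without loss of generality I will assume $X(0) = I$ so that $X(t,0) = X(t)$ and the solution of \eqref{ec:2.4} starting from $\xi$ at $t=0$ is $t \mapsto X(t) e^{-\lambda t} \xi$.

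First I would show $\ker P_\lambda \subseteq \mathcal{U}^\lambda$. If $\xi \in \ker P_\lambda$, then $(I - P_\lambda)\xi = \xi$, and applying the second inequality of Definition \ref{ED-Per} with $s=0$ gives
$$
|X(t)e^{-\lambda t}\xi| = |X(t) e^{-\lambda t}(I - P_\lambda)\xi| \leq K_\lambda e^{\alpha_\lambda t}|\xi| \qquad \textnormal{for any } t \leq 0,
$$
and since $\alpha_\lambda > 0$ and $t \leq 0$, the right-hand side is bounded (in fact decays as $t \to -\infty$), hence $\xi \in \mathcal{U}^\lambda$.

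For the reverse inclusion $\mathcal{U}^\lambda \subseteq \ker P_\lambda$, I would decompose $\xi = P_\lambda \xi + (I - P_\lambda)\xi$ and aim to show $P_\lambda \xi = 0$. The key idea is to apply the first inequality of Definition \ref{ED-Per} with $t = 0$ and $s = \tau \leq 0$, yielding
$$
\|P_\lambda X^{-1}(\tau) e^{\lambda \tau}\| \leq K_\lambda e^{\alpha_\lambda \tau} \qquad \textnormal{for any } \tau \leq 0.
$$
Then, inserting the identity $I = X^{-1}(\tau) e^{\lambda \tau} X(\tau) e^{-\lambda \tau}$ between $P_\lambda$ and $\xi$, we obtain
$$
|P_\lambda \xi| \leq \|P_\lambda X^{-1}(\tau) e^{\lambda \tau}\|\,|X(\tau)e^{-\lambda \tau}\xi| \leq K_\lambda C e^{\alpha_\lambda \tau},
$$
where $C := \sup_{\tau \leq 0}|X(\tau) e^{-\lambda \tau}\xi| < +\infty$ by the assumption $\xi \in \mathcal{U}^\lambda$. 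Letting $\tau \to -\infty$ forces $|P_\lambda \xi| = 0$, so $\xi \in \ker P_\lambda$.

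There is no substantial obstacle: the proof is symmetric to Lemma \ref{carac-stable} once the correct dichotomy inequality (first one with $t=0$, $s=\tau \leq 0$) is identified as the tool for controlling $P_\lambda \xi$ through backward times. The only mild subtlety is choosing the right pairing $(t,s)$ so that the estimate produces the decaying factor $e^{\alpha_\lambda \tau}$ as $\tau \to -\infty$, which is exactly what drives $P_\lambda \xi$ to zero.
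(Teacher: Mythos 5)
Your proof is correct and follows essentially the same route as the paper's argument: the inclusion $\ker P_{\lambda}\subseteq\mathcal{U}^{\lambda}$ via the second dichotomy inequality with $s=0$, and the reverse inclusion by estimating $|P_{\lambda}\xi|\leq \Vert P_{\lambda}X^{-1}(\tau)e^{\lambda\tau}\Vert\,|X(\tau)e^{-\lambda\tau}\xi|\leq K_{\lambda}Ce^{\alpha_{\lambda}\tau}$ and letting $\tau\to-\infty$. The paper phrases the second step through the decomposition $\xi=\xi_{K}+\xi_{R}$ with $\xi_{R}=P_{\lambda}\xi$, which is the same computation you carry out directly.
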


\begin{corollary}
If the linear system \eqref{ec:2.4} has an exponential dichotomy on $\mathbb{R}$ with projection $P_{\lambda}$, then 
$$
\mathcal{U}^{\lambda}=\ker P_{\lambda} \quad \textnormal{and} \quad \mathcal{S}^{\lambda}=\textnormal{\text{Im}}P_{\lambda}.
$$
\end{corollary}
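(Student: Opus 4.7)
The plan is to reduce the statement directly to the two preceding lemmas by restricting the dichotomy interval. First, I would observe that if \eqref{ec:2.4} has an exponential dichotomy on $\mathbb{R}$ with constants $K_{\lambda},\alpha_{\lambda}$ and projection $P_{\lambda}$, then the defining inequalities of Definition \ref{ED-Per} hold, a fortiori, whenever $t,s$ are restricted to lie in $\mathbb{R}_0^+$, and likewise when restricted to $\mathbb{R}_0^-$. Hence the shifted system inherits an exponential dichotomy on $\mathbb{R}_0^+$ and on $\mathbb{R}_0^-$, both with the \emph{same} projection $P_{\lambda}$ and the same constants.

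With this observation in hand, the corollary is immediate: Lemma \ref{carac-stable} applied on $\mathbb{R}_0^+$ yields $\mathcal{S}^{\lambda}=\textnormal{Im}\,P_{\lambda}$, while Lemma \ref{carac-unstable} applied on $\mathbb{R}_0^-$ yields $\mathcal{U}^{\lambda}=\ker P_{\lambda}$. No further estimation is required; one simply cites the two lemmas in succession.

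There is effectively no obstacle here, and I would not spend more than a couple of lines writing it out. The only subtlety worth flagging is that the \emph{same} projector $P_{\lambda}$ appears in both identities: this is guaranteed because we do not prove two separate half-line dichotomies and then attempt to reconcile their projectors (which, as emphasized in Chapter 3 and the discussion surrounding Proposition \ref{FullDico}, is precisely the delicate step), but rather we begin with a single dichotomy on $\mathbb{R}$ whose projector, by restriction, governs both halves simultaneously.
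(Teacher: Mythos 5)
Your proposal is correct and is precisely the argument the paper has in mind (the paper states this corollary without proof, immediately after Lemmas \ref{carac-stable} and \ref{carac-unstable}, because the reduction you describe is routine). Restricting $t,s$ in Definition \ref{ED-Per} from $\mathbb{R}$ to $\mathbb{R}_0^+$ and to $\mathbb{R}_0^-$ gives dichotomies on each half--line with the identical projection $P_\lambda$ and constants, so the two lemmas apply directly; your remark that starting from a single full--line dichotomy is what guarantees a common projector on both halves is exactly the right point to flag.
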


If the shifted linear system (\ref{ec:2.4}) has an exponential dichotomy on $\mathbb{R}_{0}^{+}$ with projection $P_{\lambda}^{+}$ and also has an exponential dichotomy on $\mathbb{R}_{0}^{-}$ with projection $P_{\lambda}^{-}$ 
then 
$$
\mathcal{U}^{\lambda}=\ker P_{\lambda}^{+} \quad \textnormal{and} \quad \mathcal{S}^{\lambda}=\textnormal{\text{Im}}P_{\lambda}^{-}.
$$

As we have seen when addressing the projector problem in the section 4 from Chapter 3, the projectors
$P_{\lambda}^{+}$ and $P_{\lambda}^{-}$ will not be necessarily the same.

\begin{lemma}
\label{2proj}
If the shifted system \eqref{ec:2.4} has an exponential dichotomy on $\mathbb{R}_{0}^{-}$ with projection $P_{\lambda}$ and there exist another projection $\overline{P}$ satisfying the properties 
\begin{itemize}
\item[i)] There exists $K_1>0$ such that $$K_1=\sup_{t\in (-\infty,0]}\Vert X_{\lambda}(t)(I-\overline{P})X_{\lambda}^{-1}(t)\Vert,$$
\item[ii)] $\ker P_{\lambda}=\ker \overline{P}$,
\end{itemize}
then the system \eqref{ec:2.4} has also an exponential dichotomy on $\mathbb{R}_{0}^{-}$ with projection $\overline{P}$.
\end{lemma}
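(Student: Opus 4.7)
The plan is to reproduce the strategy of Lemma \ref{Unip-} from Chapter 3 in the shifted setting, replacing $X(t)$ by the fundamental matrix $X_{\lambda}(t)=X(t)e^{-\lambda t}$ of the shifted system \eqref{ec:2.4}. The hypothesis $\ker P_{\lambda}=\ker \overline{P}$ is the essential algebraic ingredient: it yields the two projector identities
\begin{displaymath}
I-\overline{P}=(I-P_{\lambda})(I-\overline{P})
\quad \textnormal{and} \quad
\overline{P}=(I-P_{\lambda}+\overline{P})P_{\lambda},
\end{displaymath}
obtained respectively by noting that $(I-\overline{P})\xi\in\ker\overline{P}=\ker P_{\lambda}$, so $P_{\lambda}(I-\overline{P})\xi=0$, and that $(I-P_{\lambda})\xi\in\ker P_{\lambda}=\ker\overline{P}$, so $\overline{P}P_{\lambda}=\overline{P}$.

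With these identities in hand, the two required dichotomy estimates on $\mathbb{R}_{0}^{-}$ for $\overline{P}$ are obtained by inserting $X_{\lambda}^{-1}(s)X_{\lambda}(s)$ or $X_{\lambda}^{-1}(t)X_{\lambda}(t)$ between factors and applying submultiplicativity. Concretely, for $t\leq s\leq 0$ I would write
\begin{displaymath}
\Vert X_{\lambda}(t)(I-\overline{P})X_{\lambda}^{-1}(s)\Vert
=\Vert X_{\lambda}(t)(I-P_{\lambda})X_{\lambda}^{-1}(s)\cdot X_{\lambda}(s)(I-\overline{P})X_{\lambda}^{-1}(s)\Vert,
\end{displaymath}
and bound the first factor by the dichotomy estimate $Ke^{-\alpha(s-t)}$ (in the form \eqref{ec:2.7} for $\lambda$) and the second by $K_{1}$ using hypothesis (i). For $s\leq t\leq 0$, I would similarly decompose
\begin{displaymath}
\Vert X_{\lambda}(t)\overline{P}X_{\lambda}^{-1}(s)\Vert
=\Vert X_{\lambda}(t)(I-P_{\lambda}+\overline{P})X_{\lambda}^{-1}(t)\cdot X_{\lambda}(t)P_{\lambda}X_{\lambda}^{-1}(s)\Vert
\leq Ke^{-\alpha(t-s)}\cdot \Vert X_{\lambda}(t)(I-P_{\lambda}+\overline{P})X_{\lambda}^{-1}(t)\Vert.
\end{displaymath}

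The remaining step, which is the only mildly delicate point, is to bound the residual factor $\Vert X_{\lambda}(t)(I-P_{\lambda}+\overline{P})X_{\lambda}^{-1}(t)\Vert$ uniformly in $t\in\mathbb{R}_{0}^{-}$. By the triangle inequality it is at most $\Vert X_{\lambda}(t)(I-P_{\lambda})X_{\lambda}^{-1}(t)\Vert+\Vert X_{\lambda}(t)\overline{P}X_{\lambda}^{-1}(t)\Vert$; the first term is bounded by $K$ by taking $t=s$ in the exponential dichotomy for $P_{\lambda}$, and the second is controlled using $\Vert X_{\lambda}(t)\overline{P}X_{\lambda}^{-1}(t)\Vert\leq \Vert I\Vert+\Vert X_{\lambda}(t)(I-\overline{P})X_{\lambda}^{-1}(t)\Vert\leq \Vert I\Vert+K_{1}$ by hypothesis (i). Collecting, the exponential dichotomy on $\mathbb{R}_{0}^{-}$ for $\overline{P}$ holds with the same rate $\alpha_{\lambda}$ and constant $\widetilde{K}=K\max\{K_{1},\,K+\Vert I\Vert+K_{1}\}$. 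No step is truly an obstacle; the only thing to be careful about is correctly picking the side (at $t$ or at $s$) at which to insert the identity $X_{\lambda}^{-1}X_{\lambda}$ so that the bounded factor pays hypothesis (i) and the decaying factor pays the known dichotomy of $P_{\lambda}$.
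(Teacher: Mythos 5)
Your proof is correct and follows exactly the route the paper takes: it observes that the statement is Lemma~\ref{Unip-} applied to the shifted fundamental matrix $X_{\lambda}(t)=X(t)e^{-\lambda t}$, uses the same two projector identities from $\ker P_{\lambda}=\ker\overline{P}$, and obtains the two dichotomy estimates by the same insertion-of-identity decomposition. Your residual bound $\Vert X_{\lambda}(t)\overline{P}X_{\lambda}^{-1}(t)\Vert\leq\Vert I\Vert+K_{1}$ is in fact slightly cleaner than the inequality displayed in the paper's Chapter~3 prototype, which carries a redundant factor of $K$.
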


\begin{proof}
The proof is similar to the proof of Lemma \ref{Unip-} from Chapter 3.
\end{proof}

\begin{lemma}
\label{2proj+}
If the shifted system \eqref{ec:2.4} has an exponential dichotomy on $\mathbb{R}_{0}^{+}$ with projection $P_{\lambda}$ and there exist another projection $P'$ satisfying the properties 
\begin{itemize}
\item[i)] There exists $K_2>0$ such that 
$$
K_2=\sup_{t\in [0,+\infty)}\Vert X_{\lambda}(t)P'X_{\lambda}^{-1}(t)\Vert,
$$
\item[ii)] $\textnormal{Im} P_{\lambda}=\textnormal{Im} P'$,
\end{itemize}
then the system \eqref{ec:2.4} has also an exponential dichotomy on $\mathbb{R}_{0}^{+}$ with projection $\overline{P}$.
\end{lemma}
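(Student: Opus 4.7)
The plan is to reduce this to the exact argument used for Lemma \ref{Unip+} from Chapter 3, after passing to the shifted fundamental matrix $X_{\lambda}(t)=X(t)e^{-\lambda t}$, which by Definition \ref{ED-Per} serves as a fundamental matrix for the shifted system \eqref{ec:2.4}. Indeed, saying that \eqref{ec:2.4} has an exponential dichotomy on $\mathbb{R}_{0}^{+}$ with projection $P_{\lambda}$ amounts to the standard inequalities for $X_{\lambda}$ with projection $P_{\lambda}$, and hypothesis (i) is exactly the counterpart of condition (i) in Lemma \ref{Unip+} for $X_{\lambda}$.

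First I would extract the algebraic consequences of hypothesis (ii). Since $\textnormal{Im}\,P_{\lambda}=\textnormal{Im}\,P'$, the identities
\begin{displaymath}
P'P_{\lambda}=P_{\lambda}, \qquad P_{\lambda}P'=P', \qquad I-P' = [I+P_{\lambda}-P'](I-P_{\lambda})
\end{displaymath}
follow exactly as in Chapter 3 (these are pure linear algebra, so I would just invoke the identities already used in the proof of Lemma \ref{Unip+}).

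Next I would verify the two dichotomy inequalities for $X_{\lambda}$ with projection $P'$. For $0\le s\le t$, using the factorization $P'=P_{\lambda}P'$, insert the identity $X_{\lambda}^{-1}(s)X_{\lambda}(s)=I$ in the middle to get
\begin{displaymath}
\|X_{\lambda}(t)P'X_{\lambda}^{-1}(s)\|\le \|X_{\lambda}(t)P_{\lambda}X_{\lambda}^{-1}(s)\|\,\|X_{\lambda}(s)P'X_{\lambda}^{-1}(s)\|\le K_{\lambda}K_{2}\,e^{-\alpha_{\lambda}(t-s)},
\end{displaymath}
where the first factor is bounded by the dichotomy of \eqref{ec:2.4} and the second by hypothesis (i). For $0\le t\le s$, I would use the factorization $I-P'=[I+P_{\lambda}-P'](I-P_{\lambda})$, insert $X_{\lambda}^{-1}(t)X_{\lambda}(t)$ in the middle, and bound
\begin{displaymath}
\|X_{\lambda}(t)[I-P']X_{\lambda}^{-1}(s)\|\le \|X_{\lambda}(t)[I+P_{\lambda}-P']X_{\lambda}^{-1}(t)\|\,K_{\lambda}e^{-\alpha_{\lambda}(s-t)}.
\end{displaymath}
The middle factor is controlled by the triangle inequality together with $\|X_{\lambda}(t)[I-P']X_{\lambda}^{-1}(t)\|\le \|I\|+K_{2}$ (obtained from $\|X_{\lambda}P'X_{\lambda}^{-1}\|\le K_{2}$) and $\|X_{\lambda}(t)P_{\lambda}X_{\lambda}^{-1}(t)\|\le K_{\lambda}$.

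There is no real obstacle: the argument is a rewriting of Lemma \ref{Unip+} with $X$ replaced by $X_{\lambda}$, and hypothesis (i) absorbs everything into the constant $K_{2}$. The only thing worth noting is that the resulting dichotomy constants for $P'$ are $(K_{\lambda}K_{2},\alpha_{\lambda})$ in the first inequality and $(K_{\lambda}(\|I\|+K_{\lambda}+K_{2}),\alpha_{\lambda})$ in the second, so one takes the maximum to obtain a single dichotomy constant while preserving the rate $\alpha_{\lambda}$. A minor typographical point is that the statement names the new projection $\overline{P}$ in the conclusion while the hypotheses name it $P'$; I would simply write $P'$ throughout to keep notation consistent with the rest of the proof.
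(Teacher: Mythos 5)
Your proposal is correct and follows essentially the same route as the paper, which explicitly reduces this Lemma to the proof of Lemma~\ref{Unip+} from Chapter~3 with $X$ replaced by $X_{\lambda}$: the same algebraic identities $P'=P_{\lambda}P'$ and $I-P'=[I+P_{\lambda}-P'](I-P_{\lambda})$, the same insertion of $X_{\lambda}^{-1}(s)X_{\lambda}(s)$ (resp.\ $X_{\lambda}^{-1}(t)X_{\lambda}(t)$), and the same final constant $K_{\lambda}(\|I\|+K_{\lambda}+K_{2})$. Your remark about the $P'$ versus $\overline{P}$ mismatch in the statement is a genuine typographical slip in the paper.
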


\begin{proof}
The proof is similar to the proof of Lemma \ref{Unip+} from Chapter 3.
\end{proof}

\section[Properties of the spectrum and the resolvent]{Properties of the exponential dichotomy spectrum and the resolvent}

\subsection{Topological Properties}
\begin{proposition}
\label{resolvent-open}
The resolvents $\rho(A)$,$\rho^{-}(A)$ and $\rho^{+}(A)$ of the linear
system \eqref{LinCap3} are open sets in $\mathbb{R}$.
\end{proposition}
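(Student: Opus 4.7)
The plan is to show directly that each resolvent point is an interior point by keeping the same projection and tracking what happens to the exponential rate when $\lambda$ is perturbed slightly. Since the three resolvents differ only by the interval on which the dichotomy is verified, I will write the argument for $\rho(A)$ and note that the identical computation works for $\rho^{+}(A)$ and $\rho^{-}(A)$.

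First, I would fix $\lambda_0 \in \rho(A)$, so the shifted system \eqref{ec:2.4} with parameter $\lambda_0$ admits an exponential dichotomy on $\mathbb{R}$ with some projection $P_{\lambda_0}$ and constants $K_{\lambda_0}, \alpha_{\lambda_0} > 0$. Using the equivalent form \eqref{ec:2.7}, this amounts to the estimates
\begin{displaymath}
\|X(t)P_{\lambda_0}X^{-1}(s)\| \leq K_{\lambda_0}\, e^{(\lambda_0-\alpha_{\lambda_0})(t-s)} \quad \text{for } t\geq s,
\end{displaymath}
\begin{displaymath}
\|X(t)(I-P_{\lambda_0})X^{-1}(s)\| \leq K_{\lambda_0}\, e^{(\lambda_0+\alpha_{\lambda_0})(t-s)} \quad \text{for } t\leq s.
\end{displaymath}

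Next, for any $\lambda \in \mathbb{R}$, I would multiply these inequalities by $e^{-\lambda(t-s)}$ to obtain the corresponding dichotomy estimates for the $\lambda$-shifted system tested against the same projector $P_{\lambda_0}$:
\begin{displaymath}
e^{-\lambda(t-s)}\|X(t)P_{\lambda_0}X^{-1}(s)\| \leq K_{\lambda_0}\, e^{-(\alpha_{\lambda_0}-(\lambda-\lambda_0))(t-s)} \quad \text{for } t\geq s,
\end{displaymath}
\begin{displaymath}
e^{-\lambda(t-s)}\|X(t)(I-P_{\lambda_0})X^{-1}(s)\| \leq K_{\lambda_0}\, e^{-(\alpha_{\lambda_0}+(\lambda-\lambda_0))(s-t)} \quad \text{for } t\leq s.
\end{displaymath}
Choosing any $\delta \in (0,\alpha_{\lambda_0})$ and restricting to $|\lambda-\lambda_0|<\delta$, both rates $\alpha_{\lambda_0}\mp(\lambda-\lambda_0)$ remain bounded below by $\alpha_{\lambda_0}-\delta > 0$. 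Therefore the $\lambda$-shifted system admits an exponential dichotomy on $\mathbb{R}$ with the same projection $P_{\lambda_0}$, constant $K_{\lambda_0}$, and rate $\alpha_{\lambda_0}-\delta$, showing $\lambda\in\rho(A)$.

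Hence the open interval $(\lambda_0-\delta,\lambda_0+\delta)$ lies inside $\rho(A)$, proving that $\rho(A)$ is open. The very same manipulation, restricted to $t,s \in \mathbb{R}_0^{+}$ or $t,s \in \mathbb{R}_0^{-}$, yields the openness of $\rho^{+}(A)$ and $\rho^{-}(A)$. There is no genuine obstacle here; the only delicate point is the bookkeeping that verifies $|\lambda-\lambda_0|<\alpha_{\lambda_0}$ is enough to keep the projection fixed while producing strictly positive decay rates in both halves of the dichotomy estimate.
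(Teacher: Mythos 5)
Your argument is essentially the paper's own proof: fix $\lambda_0\in\rho(A)$, keep the same projection $P_{\lambda_0}$, absorb the shift into the exponential rates, and note that both rates remain positive whenever $|\lambda-\lambda_0|<\alpha_{\lambda_0}$, so a whole interval around $\lambda_0$ lies in the resolvent; the same computation restricted to the half lines gives $\rho^{\pm}(A)$. The only flaw is a sign swap in your two displayed estimates: multiplying the $P_{\lambda_0}$--bound by $e^{-\lambda(t-s)}$ produces the decay rate $\alpha_{\lambda_0}+(\lambda-\lambda_0)$ (not $\alpha_{\lambda_0}-(\lambda-\lambda_0)$), while the $(I-P_{\lambda_0})$--bound produces $\alpha_{\lambda_0}-(\lambda-\lambda_0)$, so each inequality as written is unjustified for $\lambda$ on one side of $\lambda_0$. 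Since you ultimately take the minimum of the two rates, which is $\alpha_{\lambda_0}-|\lambda-\lambda_0|\geq\alpha_{\lambda_0}-\delta>0$ in either labelling, the conclusion and the openness of all three resolvents are unaffected.
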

\begin{proof}
The proof will be made only for $\rho(A)$ since the other resolvents
can be addressed similarly. The cases $\rho (A)=\varnothing$ and $\rho (A)=\mathbb{R}$ are direct. Now, let us assume that $\rho (A)$ is a proper subset of $\mathbb{R}$ and $\lambda\in \rho (A)$. Then, by Definition \ref{ED-Per}, we have that the shifted system $\dot{x}=[A(t)-\lambda I]x$ has an exponential dichotomy on $\mathbb{R}$, namely, there exist a projection $P_{\lambda}$ and two positive constants $K_{\lambda}$ and $\alpha_{\lambda}$ such that:
\begin{equation*}
\left\{\begin{array}{rlcr}
\Vert X(t)e^{-\lambda t}P_{\lambda}X^{-1}(s)e^{\lambda s}\Vert &\leq K_{\lambda}e^{-\alpha_{\lambda} (t-s)} &\text{if}&t\geq s\\
\Vert X(t)e^{-\lambda t}(I-P_{\lambda})X^{-1}(s)e^{\lambda s }\Vert &\leq K_{\lambda}e^{-\alpha_{\lambda} (s-t)} &\text{if}& t \leq s.
\end{array}\right.
\end{equation*}

Now, when considering $\mu\in (\lambda - \alpha_{\lambda} , \lambda + \alpha_{\lambda})$, we will prove that the linear system 
\begin{equation}
\label{MDRA2}
\dot{x}=[A(t)-\mu I]x
\end{equation}
has an exponential dichotomy on $\mathbb{R}$. Indeed, the inequality above is equivalent to 
\begin{equation*}
\left\{\begin{array}{rlcr}
\Vert X(t)P_{\lambda}X^{-1}(s)\Vert &\leq K_{\lambda}e^{-(\alpha_{\lambda}-\lambda ) (t-s)} &\text{if}&t\geq s\\
\Vert X(t)(I-P_{\lambda})X^{-1}(s)\Vert &\leq K_{\lambda}e^{-(\alpha_{\lambda} +\lambda ) (s-t)} &\text{if}& t \leq s,
\end{array}\right.
\end{equation*}
or also equivalently to
\begin{equation*}
\left\{\begin{array}{rlcr}
\Vert X(t)e^{-\mu t}P_{\lambda}X^{-1}(s)e^{\mu s}\Vert &\leq K_{\lambda}e^{-(\alpha_{\lambda} -\lambda +\mu )(t-s)} &\text{if}&t\geq s\\
\Vert X(t)e^{-\mu t}(I-P_{\lambda})X^{-1}(s)e^{\mu s }\Vert &\leq K_{\lambda}e^{-(\alpha_{\lambda} + \lambda -\mu ) (s-t)} &\text{if}& t \leq s.
\end{array}\right.
\end{equation*}

Now, let $\beta = \min \{\alpha_{\lambda} - \lambda +\mu , \alpha_{\lambda} + \lambda - \mu\}>0$ and we can deduce that 

\begin{equation*}
\left\{\begin{array}{rlcr}
\Vert X(t)e^{-\mu t}P_{\lambda}X^{-1}(s)e^{\mu s}\Vert &\leq K_{\lambda}e^{-\beta (t-s)} &\text{if}&t\geq s\\
\Vert X(t)e^{-\mu t}(I-P_{\lambda})X^{-1}(s)e^{\mu s }\Vert &\leq K_{\lambda}e^{-\beta (s-t)} &\text{if}& t \leq s,
\end{array}\right.
\end{equation*}
and we can see that the linear system (\ref{MDRA2}) has an exponential dichotomy on $\mathbb{R}$ with the same projection $P_{\lambda}$.

Finally, we can deduce that $(\lambda - \alpha_{\lambda} , \lambda + \alpha_{\lambda})\subset \rho (A)$ and the Proposition follows.
\end{proof}

A careful reading of the above proof show us that we worked with the same projection
$P_{\lambda}$, this is a deeper property of the resolvents:
\begin{corollary}
\label{open-resolvent}
For any $\lambda\in\rho (A),\rho^{-}(A),\rho^{+}(A)$, there exists $\delta_{\lambda}>0$ such that for any $\mu\in (\lambda - \delta_{\lambda}, \lambda + \delta_{\lambda})$, the system $$\dot{x}=[A(t)-\mu I]x$$ has an exponential dichotomy with the same projection $P_{\lambda}$.
\end{corollary}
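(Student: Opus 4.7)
The plan is to observe that this corollary is essentially a refinement of the proof of Proposition \ref{resolvent-open}, where we extract the additional information that the projection is preserved under small perturbations of $\lambda$. So I would not really give a new proof; I would rather point back to the computation already carried out.

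Concretely, fix $\lambda\in\rho(A)$ and let $P_{\lambda}$, $K_{\lambda}$, $\alpha_{\lambda}>0$ be the data of the exponential dichotomy of $\dot{x}=[A(t)-\lambda I]x$ on $\mathbb{R}$ supplied by Definition \ref{ED-Per}. I would set
\begin{displaymath}
\delta_{\lambda}:=\alpha_{\lambda}.
\end{displaymath}
Then for any $\mu\in(\lambda-\delta_{\lambda},\lambda+\delta_{\lambda})$, I would repeat verbatim the chain of equivalences in the proof of Proposition \ref{resolvent-open}: multiplying and dividing by $e^{(\mu-\lambda)(t-s)}$ transforms the two dichotomy estimates for the $\lambda$--shifted system into the estimates
\begin{displaymath}
\left\{\begin{array}{ll}
\|X(t)e^{-\mu t}P_{\lambda}X^{-1}(s)e^{\mu s}\|\leq K_{\lambda}e^{-(\alpha_{\lambda}-\lambda+\mu)(t-s)} & \textnormal{if } t\geq s,\\[2pt]
\|X(t)e^{-\mu t}(I-P_{\lambda})X^{-1}(s)e^{\mu s}\|\leq K_{\lambda}e^{-(\alpha_{\lambda}+\lambda-\mu)(s-t)} & \textnormal{if } t\leq s,
\end{array}\right.
\end{displaymath}
both of whose exponents are strictly positive precisely because $|\mu-\lambda|<\alpha_{\lambda}$. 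Setting $\beta_{\mu}:=\min\{\alpha_{\lambda}-\lambda+\mu,\alpha_{\lambda}+\lambda-\mu\}>0$, the $\mu$--shifted system $\dot{x}=[A(t)-\mu I]x$ then satisfies Definition \ref{ED-Per} on $\mathbb{R}$ with the same projection $P_{\lambda}$, constant $K_{\lambda}$ and rate $\beta_{\mu}$.

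The cases $\lambda\in\rho^{+}(A)$ and $\lambda\in\rho^{-}(A)$ are handled by exactly the same manipulation, restricting the inequalities to $t,s\in\mathbb{R}_{0}^{+}$ or $t,s\in\mathbb{R}_{0}^{-}$ respectively; no further idea is required. There is no real obstacle here: the algebraic identity $e^{-\lambda t}e^{\lambda s}=e^{-\mu t}e^{\mu s}\,e^{(\mu-\lambda)(t-s)}$ is what makes the projection invariant under the perturbation, and the only quantitative constraint is that the exponential penalty $|\mu-\lambda|\cdot|t-s|$ be dominated by the dichotomy rate $\alpha_{\lambda}\cdot|t-s|$, which is exactly the condition $|\mu-\lambda|<\alpha_{\lambda}$ built into the choice of $\delta_{\lambda}$. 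Thus the corollary is really a bookkeeping observation extracted from the previous proof, emphasizing that along the entire interval $(\lambda-\alpha_{\lambda},\lambda+\alpha_{\lambda})\subset\rho(A)$ one may use a single common projector.
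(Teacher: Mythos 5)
Your proposal is correct and follows exactly the route the paper takes: Corollary \ref{open-resolvent} is presented there without a separate proof, as a byproduct of the computation in Proposition \ref{resolvent-open}, which explicitly keeps the projection $P_{\lambda}$ fixed while replacing $\lambda$ by $\mu$ via the identity $e^{-\lambda t}e^{\lambda s}=e^{-\mu t}e^{\mu s}e^{(\mu-\lambda)(t-s)}$ and absorbing the penalty into the rate. Your choice $\delta_{\lambda}=\alpha_{\lambda}$ and the resulting rate $\beta_{\mu}=\min\{\alpha_{\lambda}-\lambda+\mu,\alpha_{\lambda}+\lambda-\mu\}$ match the paper's exponents exactly, and the reduction to half-lines is handled the same way.
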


A direct consequence of the definition of resolvent sets is that the spectra $\Sigma (A)$,$\Sigma^{-}(A)$ and $\Sigma^{+}(A)$ are closed sets in $\mathbb{R}$. We can obtain
a sharper result provided that the linear system has the property of bounded growth $\&$ decay
stated in the Chapter 1:
\begin{proposition}
\label{EDECS}
Under the assumption that the linear system \eqref{LinCap3} has the property of bounded growth $\&$ decay on $\mathbb{R}$ (resp. $\mathbb{R}_{0}^{-}$,$\mathbb{R}_{0}^{+}$) with
constants $K>0$ and $\alpha\geq 0$, if the spectrum $\Sigma (A)$ (resp. $\Sigma^{-}(A)$,$\Sigma^{+}(A)$) is non empty, then $\Sigma(A)$ is a compact subset of $[-\alpha,\alpha]$ when $\alpha>0$ and $\Sigma(A)=\{0\}$ when $\alpha=0$.
\end{proposition}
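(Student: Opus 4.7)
The plan is to combine two ingredients: (i) the resolvents are already known to be open by Proposition \ref{resolvent-open}, so each spectrum is automatically closed in $\mathbb{R}$; (ii) the bounded growth \& decay hypothesis forces every $\lambda$ with $|\lambda|>\alpha$ to lie in the resolvent, which simultaneously yields boundedness (hence compactness) and the inclusion in $[-\alpha,\alpha]$.

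To implement (ii), I would observe that the transition matrix of the shifted system \eqref{ec:2.4} is $X_\lambda(t,s)=X(t,s)e^{-\lambda(t-s)}$, so the bounded growth \& decay bound $\|X(t,s)\|\le K e^{\alpha|t-s|}$ translates into
\begin{displaymath}
\|X_\lambda(t,s)\|\le K\,e^{\alpha|t-s|-\lambda(t-s)}\quad\textnormal{for any $t,s$ in the corresponding interval.}
\end{displaymath}
If $\lambda>\alpha$, I choose the trivial projection $P_\lambda=I$: the first inequality of Definition \ref{ED-Per} reduces to $\|X_\lambda(t,s)\|\le K e^{-(\lambda-\alpha)(t-s)}$ for $t\ge s$, which holds with rate $\alpha_\lambda=\lambda-\alpha>0$, while the second inequality is trivial because $I-P_\lambda=0$. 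If $\lambda<-\alpha$, I dually choose $P_\lambda=0$: now the first inequality is trivial, and for $t\le s$ the computation gives $\|X_\lambda(t,s)\|\le K e^{(\alpha+\lambda)(s-t)}=K e^{-(|\lambda|-\alpha)(s-t)}$, again a genuine exponential dichotomy with rate $|\lambda|-\alpha>0$. Hence $(-\infty,-\alpha)\cup(\alpha,+\infty)\subseteq\rho(A)$ and consequently $\Sigma(A)\subseteq[-\alpha,\alpha]$; since $\Sigma(A)$ is closed and now bounded, it is compact.

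For the limit case $\alpha=0$, the bound $\|X(t,s)\|\le K$ is uniform, and repeating the argument above shows that every $\lambda\ne 0$ belongs to $\rho(A)$, so $\Sigma(A)\subseteq\{0\}$. The standing assumption that $\Sigma(A)$ is non-empty forces $\Sigma(A)=\{0\}$. The arguments on $\mathbb{R}_0^+$ and $\mathbb{R}_0^-$ are verbatim the same, since the shift computation and the trivial projections only use the transition matrix estimate on the relevant half-line.

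I do not foresee a serious obstacle: the only subtle point is keeping the sign conventions straight in the shift $X_\lambda(t,s)=X(t,s)e^{-\lambda(t-s)}$ and checking that the ``trivial'' dichotomies with $P_\lambda=I$ or $P_\lambda=0$ really satisfy both inequalities of Definition \ref{ED-Per}; once that bookkeeping is done, the compactness and the collapse to $\{0\}$ in the $\alpha=0$ case are immediate.
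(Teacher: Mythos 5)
Your proof is correct and follows essentially the same route as the paper: using the bounded growth \& decay estimate $\|X(t,s)\|\le Ke^{\alpha|t-s|}$ to show that the shifted system has an exponential dichotomy with trivial projection $P_\lambda=I$ for $\lambda>\alpha$ and $P_\lambda=0$ for $\lambda<-\alpha$, so that $(-\infty,-\alpha)\cup(\alpha,+\infty)\subset\rho(A)$. The only cosmetic difference is that you explicitly invoke the openness of the resolvent (Proposition \ref{resolvent-open}) to get closedness, and spell out the collapse to $\{0\}$ when $\alpha=0$, both of which the paper leaves slightly more implicit.
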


\begin{proof}
As before, the proof will be made only for the spectrum $\Sigma(A)$ and considering $\alpha>0$. The other ones can be obtained in a similar way.

It will be useful to note that
$\Sigma(A)\subset [-\alpha,\alpha]$ if and only
$$
(-\infty,-\alpha)\cup (\alpha,+\infty) \subset \rho(A).
$$

By Definition \ref{BGBD-C} of bounded growth $\&$ decay from Chapter 1, we know that the transition
matrix of (\ref{LinCap3}) verifies
$$
||X(t,s)||\leq Ke^{\alpha|t-s|} \quad \textnormal{for any $t,s\in \mathbb{R}$}.
$$

\noindent \textit{Case a):} If $t\geq s$, we have that
$$
||X(t,s)e^{-\lambda(t-s)}||\leq Ke^{(\alpha-\lambda)(t-s)}
$$
and, we can see that if $\lambda>\alpha$, there exists $\delta>0$ such that $\lambda = \alpha+\delta$, then we have that 
$$
\Vert X(t) e^{-\lambda t} X^{-1} (s)e^{\lambda s}\Vert \leq Ke^{-\delta (t-s)},
$$ 
which implies that for any $\lambda > \alpha$, the system $\dot{x}=[A(t)-\lambda I]x$ has an exponential dichotomy on $\mathbb{R}$ with projection $P=I$. This is equivalent to say that $(\alpha , +\infty)\subset \rho (A)$. 

\medskip

\noindent\textit{Case b):} If $t\leq s$, we have that 
\begin{displaymath}
\Vert X(t) e^{-\lambda t}X^{-1}(s)e^{\lambda s}\Vert \leq  Ke^{(\alpha+\lambda)(s-t)}
\end{displaymath}
and, we can see that if $\lambda<-\alpha$, there exists $\delta>0$ such that $\lambda =-(\alpha+\delta)$, then we can deduce that 
$$\Vert X(t) e^{-\lambda t} X^{-1} (s)e^{\lambda s}\Vert \leq Ke^{-\delta (s-t)},
$$ 
which implies that for any $\lambda < -\alpha$, the system $\dot{x}=[A(t)-\lambda I]x$ has an exponential dichotomy on $\mathbb{R}$ with projection $P=0$. This is equivalent to say that $(-\infty, -\alpha)\subset \rho (A)$.

Summarizing, we have proved that
$(-\infty,-\alpha) \cup (\alpha,+\infty) \subset \rho (A)$, and the Proposition follows.  
\end{proof}

\begin{remark}
\label{cotas-parciales}
By following the lines of the proof of the above result, we can deduce that:
\begin{itemize}
\item[a)] If \eqref{LinCap3} has the property of bounded growth on $\mathbb{R}$ (resp. $\mathbb{R}_{0}^{-}$,$\mathbb{R}_{0}^{+}$) with constants $K>0$ and $\alpha\geq 0$, if the spectrum $\Sigma (A)$ (resp. $\Sigma^{-}(A)$,$\Sigma^{+}(A)$) is non empty, then
$$
\Sigma(A) \subset (-\infty,\alpha].
$$
\item[b)] If \eqref{LinCap3} has the property of bounded decay on $\mathbb{R}$ (resp. $\mathbb{R}_{0}^{-}$,$\mathbb{R}_{0}^{+}$) with
constants $K>0$ and $\alpha\geq 0$, if the spectrum $\Sigma (A)$ (resp. $\Sigma^{-}(A)$,$\Sigma^{+}(A)$) is non empty, then 
$$
\Sigma(A) \subset [-\alpha,+\infty).
$$
\end{itemize}
\end{remark}

Notice that we explicitly assumed that the spectrum $\Sigma(A)$ is a not empty set. We will see, as a consequence of next results, that the spectra cannot be 
empty when the property of bounded growth $\&$ decay is satisfied.

In case that the exponential dichotomy spectrum is bounded, we will introduce 
the notation 
\begin{equation}
\label{Bornes-Spec}
\Sigma_{\min}=\min \Sigma(A) \quad\textnormal{and} \quad \Sigma_{\max}=\max\Sigma(A).
\end{equation}

A direct consequence of the boundedness of $\Sigma(A)$ is 
that the resolvent $\rho(A)$ is an unbounded set satisfying
$$
(-\infty,\Sigma_{\min}) \cup (\Sigma_{\max},+\infty) \subset \rho (A).
$$ 

In particular, if the spectrum $\Sigma(A)$ is bounded, the resolvent $\rho(A)$ is not connected and has two unbounded connected components, namely, $(-\infty,\Sigma_{\min})$  and $(\Sigma_{\max},+\infty)$. The characterization of the bounded connected components
will be carried out in the next subsection.

\subsection{Basic properties of the resolvents}
In the last subsection we have seen that:

\medskip

\noindent $\bullet$ The Corollary \ref{open-resolvent} states that for any $\lambda\in\rho (A),\rho^{-}(A),\rho^{+}(A)$, there exists a neighborhood $V_{\lambda}$ of $\lambda$
such that for any $\mu\in V_{\lambda}$, the linear system $\dot{x}=[A(t)-\mu\,I]x$ has an exponential dichotomy with the same projection.

\noindent $\bullet$ If the linear system $\dot{x}=A(t)x$ has a bounded exponential dichotomy spectrum, we can deduce that the resolvents $\rho(A)$ and $\rho^{\pm}(A)$ are neither connected nor bounded. 

In this subsection we will explore the consequences of above properties in order to characterize
the spectra in terms of the connected components of its resolvents.
\begin{proposition} 
\label{reso-o}
If $\Sigma (A)\neq \varnothing$  and $[\gamma_1, \gamma_2]\subset\rho (A)$, then the shifted systems $$\dot{x}=[A(t)-\gamma_1 I]x\text{ and } \dot{x}=[A(t)-\gamma_2 I]x$$ have an exponential dichotomy on
$\mathbb{R}$ with projections $P_{\gamma_{1}}$ and $P_{\gamma_{2}}$, which have the same rank.
\end{proposition}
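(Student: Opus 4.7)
The plan is to show that the function $\lambda \mapsto \mathrm{rank}(P_{\lambda})$ is locally constant on the interval $[\gamma_{1},\gamma_{2}]$, and then invoke connectedness to conclude it is constant, whence $\mathrm{rank}(P_{\gamma_{1}})=\mathrm{rank}(P_{\gamma_{2}})$.

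First I would note that since $[\gamma_{1},\gamma_{2}]\subset\rho(A)$, for every $\lambda\in [\gamma_{1},\gamma_{2}]$ the shifted system $\dot{x}=[A(t)-\lambda I]x$ has an exponential dichotomy on $\mathbb{R}$, so a projection $P_{\lambda}$ is well defined. By Lemma \ref{Unicité} applied to the shifted system, this projection is unique. This uniqueness is the crucial bookkeeping step: it lets me talk about $P_{\lambda}$ unambiguously for each $\lambda\in\rho(A)$.

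Next, I would invoke Corollary \ref{open-resolvent}: for each $\lambda\in[\gamma_{1},\gamma_{2}]\subset\rho(A)$ there exists $\delta_{\lambda}>0$ such that, for every $\mu\in(\lambda-\delta_{\lambda},\lambda+\delta_{\lambda})$, the system $\dot{x}=[A(t)-\mu I]x$ has an exponential dichotomy on $\mathbb{R}$ with the same projection $P_{\lambda}$. Combining this with the uniqueness from Lemma \ref{Unicité}, we obtain the equality $P_{\mu}=P_{\lambda}$ for all such $\mu$; in particular $\mathrm{rank}(P_{\mu})=\mathrm{rank}(P_{\lambda})$ throughout this neighborhood. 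Therefore the integer-valued map
\begin{displaymath}
r\colon [\gamma_{1},\gamma_{2}]\to \mathbb{Z}_{\geq 0},\qquad r(\lambda):=\mathrm{rank}(P_{\lambda}),
\end{displaymath}
is locally constant on $[\gamma_{1},\gamma_{2}]$.

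To finish, I would use compactness and connectedness. By compactness of $[\gamma_{1},\gamma_{2}]$, the open cover $\{(\lambda-\delta_{\lambda},\lambda+\delta_{\lambda})\}_{\lambda\in[\gamma_{1},\gamma_{2}]}$ admits a finite subcover associated to points $\gamma_{1}=\lambda_{0}<\lambda_{1}<\cdots<\lambda_{N}=\gamma_{2}$, where consecutive neighborhoods overlap. On each such neighborhood $r$ is constant, and on every overlap the two constant values must coincide; iterating from $\lambda_{0}$ to $\lambda_{N}$ yields $r(\gamma_{1})=r(\gamma_{2})$. Equivalently, any integer-valued locally constant map on the connected set $[\gamma_{1},\gamma_{2}]$ is constant. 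Hence $\mathrm{rank}(P_{\gamma_{1}})=\mathrm{rank}(P_{\gamma_{2}})$ as claimed.

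I do not expect a serious obstacle here: the whole argument rests on two tools already established in the monograph (uniqueness of the projector on $\mathbb{R}$ and the local constancy of the projector on $\rho(A)$), and on the elementary topological fact that a locally constant $\mathbb{Z}$-valued function on a connected set is constant. The only subtlety worth double-checking is that Corollary \ref{open-resolvent} genuinely delivers the \emph{same} projection (not merely an exponential dichotomy with some projection of possibly different rank); this is precisely what its statement asserts, so the argument goes through without modification.
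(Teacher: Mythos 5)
Your argument is correct, and it shares the paper's skeleton (cover $[\gamma_{1},\gamma_{2}]$ by the neighborhoods from Corollary \ref{open-resolvent}, pass to a finite subcover, chain through overlaps), but the key comparison step is done differently. Where you identify the projections outright via the uniqueness Lemma \ref{Unicité} applied to each shifted system on $\mathbb{R}$ (so that $P_{\mu}=P_{\lambda}$ on each neighborhood and the rank map is locally constant, hence constant by connectedness), the paper never invokes uniqueness of the projector: at the endpoints and on the overlaps it compares ranks through the monotonicity of the sets $\mathcal{S}^{\lambda}$ (Remark \ref{unstable-stable-spaces}) together with the characterization $\textnormal{Im}\,P_{\lambda}=\mathcal{S}^{\lambda}$ of Lemma \ref{carac-stable}, squeezing $\dim\textnormal{Im}\,P_{\gamma_{1}}$ between two equal dimensions. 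Your route is shorter and in fact yields the stronger conclusion $P_{\gamma_{1}}=P_{\gamma_{2}}$; what the paper's argument buys is robustness, since it transfers verbatim to the half-line resolvents $\rho^{\pm}(A)$ (as the remark following the proposition points out), where the projector of an exponential dichotomy is no longer unique and your appeal to Lemma \ref{Unicité} would not be available. One small bookkeeping point you handled correctly and should keep explicit: all the projections $P_{\lambda}$ are taken with respect to the same fundamental matrix $X(t)$ of the unshifted system, which is what makes the uniqueness statement applicable across different values of $\lambda$.
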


\begin{proof}
The proof will be made in several steps: 

\noindent \textit{Step 1:} As $[\gamma_{1},\gamma_{2}]\subset \rho(A)$, it follows by definition that shifted
systems $\dot{x}=[A(t)-\gamma_{i}I]x$ have an exponential dichotomy on
$\mathbb{R}$ with projections $P_{\gamma_{i}}$ with $i=1,2$.

\medskip

\noindent \textit{Step 2: A covering of $[\gamma_{1},\gamma_{1}].$}
By Corollary \ref{resolvent-open} we know that for any $\lambda\in[\gamma_1, \gamma_2]\subset\rho (A)$, there exists $\delta_\lambda >0$ such that $(\lambda - \delta_{\lambda}, \lambda + \delta_{\lambda})\subset\rho (A)$. This implies that the family of the open sets $\{(\lambda - \delta_{\lambda}, \lambda + \delta_{\lambda}) : \lambda \in [\gamma_1, \gamma_2]\}$ is an open covering of $[\gamma_1, \gamma_2]$.

By using the compactness of $[\gamma_1, \gamma_2]$, we can choose a finite covering $$
[\gamma_1 , \gamma_2]\subseteq \bigcup_{i=1}^{m}(\lambda_i -\delta_{\lambda_i}, \lambda_i+\delta_{\lambda_i}),
$$ 
such that $\lambda_1 <\lambda_2<\ldots <\lambda_m$ and
$$
(\lambda_i -\delta_{\lambda_i}, \lambda_i+\delta_{\lambda_i})\cap (\lambda_{i+1} -\delta_{\lambda_{i+1}}, \lambda_{i+1}+\delta_{\lambda_{i+1}})\neq\varnothing
$$ 
for any $i\in \{1,\ldots,m-1\}$, where
$$
\gamma_{1}\in (\lambda_{1}-\delta_{\lambda_{1}},\lambda_{1}+\delta_{\lambda_{1}}) \quad \textnormal{and} \quad
\gamma_{2}\in (\lambda_{m}-\delta_{\lambda_{m}},\lambda_{m}+\delta_{\lambda_{m}}).
$$

\medskip 

\noindent \textit{Step 3: Study of the covering's first interval.} Without loss of generality, we will assume that
$\lambda_{1}-\delta_{\lambda_{1}}<\gamma_{1}<\lambda_{1}$. By statement b) of Remark \ref{unstable-stable-spaces} we have that
\begin{displaymath}
\mathcal{S}^{\mu}\subseteq \mathcal{S}^{\gamma_{1}}\subseteq \mathcal{S}^{\lambda_{1}} \quad
\textnormal{for any $\lambda_{1}-\delta_{\lambda_{1}}<\mu<\lambda_{1}$}.
\end{displaymath}

As the exponential dichotomy on $\mathbb{R}$ implies the exponential
dichotomy on $\mathbb{R}_{0}^{+}$, we can use the Lemma \ref{carac-stable}, which leads to:
\begin{displaymath}
\dim \textnormal{Im}P_{\mu}\leq \dim \textnormal{Im}P_{\gamma_{1}}\leq \dim \textnormal{Im}P_{\lambda_{1}} \quad
\textnormal{for any $\lambda_{1}-\delta_{\lambda_{1}}<\mu<\lambda_{1}$}.
\end{displaymath}

By Corollary \ref{open-resolvent} we know that $P_{\mu}=P_{\lambda_{1}}$, which implies
that $\dim\textnormal{Im}P_{\gamma_{1}}=\dim \textnormal{Im}P_{\lambda_{1}}$, or equivalently,
$P_{\lambda_{1}}$ and $P_{\gamma_{1}}$ have the same rank.

\medskip 

\noindent \textit{Step 4: Study of the covering's last interval.} By following
the lines of the previous step, it can be proved easily that $P_{\lambda_{m}}$ and $P_{\gamma_{2}}$ have the same rank.

\medskip

\noindent \textit{Step 5: End of proof.} Finally, when choosing 
$$
\mu \in 
(\lambda_i -\delta_{\lambda_i}, \lambda_i+\delta_{\lambda_i})\cap (\lambda_{i+1} -\delta_{\lambda_{i+1}}, \lambda_{i+1}+\delta_{\lambda_{i+1}}),
$$
we have that $\dot{x}=[A(t)-\mu I]x$ has an exponential dichotomy on $\mathbb{R}$ with
projections $P_{\lambda_{i}}$ and $P_{\lambda_{i+1}}$. Then, similarly as before, we conclude
that $\text{Rank}\, P_{\lambda_{i}}= \text{Rank}\, P_{\lambda_{i+1}}$ for any
$i\in \{1,\ldots,m-1\}$ and it follows that
$$
\text{Rank}\, P_{\gamma_{1}}= \text{Rank}\, P_{\lambda_{1}}=\ldots =\text{Rank}\, P_{\lambda_{m}}=
\text{Rank} P_{\gamma_{2}},
$$
and the result follows.
\end{proof}

\begin{remark}
%\label{BFP}
If $\Sigma^{+}(A)$ and $\Sigma^{-}(A)$ are non--empty and the property $[\gamma_1, \gamma_2]\subset\rho (A)$ is verified, then the shifted systems $$\dot{x}=[A(t)-\gamma_1 I]x\text{ and } \dot{x}=[A(t)-\gamma_2 I]x$$ also have an exponential dichotomy on
$\mathbb{R}$ with projections $P_{\gamma_{1}}$ and $P_{\gamma_{2}}$ having the same rank. Indeed:
\begin{itemize}
\item[a)] The proof for $\Sigma^{+}(A)$ is identical since Lemma \ref{carac-stable} still can be used.
\item[b)] The proof for $\Sigma^{-}(A)$ changes at the step 3, where it can be deduced that
\begin{displaymath}
\mathcal{U}^{\lambda_{1}}\subseteq \mathcal{U}^{\gamma_{1}}\subseteq \mathcal{U}^{\mu} \quad
\textnormal{for any $\lambda_{1}-\delta_{\lambda_{1}}<\mu<\lambda_{1}$}.
\end{displaymath}

Now, by using Lemma \ref{carac-unstable}, it follows in a similar ay that $\dim \ker P_{\lambda_{1}}=\dim \ker P_{\gamma_{1}}$, and by the Rank--Nullity Theorem we can see that
$P_{\lambda_{1}}$ and $P_{\gamma_{1}}$ have the same rank. The rest of the proof follows as before.
\end{itemize}
\end{remark}

The next result can be seen as a converse of the previous one

\begin{proposition}
\label{reso-o2}
If $\gamma_1 , \gamma_2\in\rho (A)$ are such that 
$$
\gamma_1<\gamma_2
\quad \textnormal{with} \quad
\textnormal{\text{Rank}}(P_{\gamma_1})=\textnormal{\text{Rank}}(P_{\gamma_2}),
$$
then $[\gamma_1, \gamma_2]\subset \rho(A)$.
\end{proposition}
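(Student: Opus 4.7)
The plan is to reduce the problem to the case of a single projection shared by both $\gamma_1$ and $\gamma_2$, and then interpolate the dichotomy estimates. I would proceed in two main steps.

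\textbf{Step 1: Show that $P_{\gamma_{1}}=P_{\gamma_{2}}$.}
Since the exponential dichotomy on $\mathbb{R}$ restricts to dichotomies on $\mathbb{R}_{0}^{+}$ and $\mathbb{R}_{0}^{-}$, Lemmas \ref{carac-stable} and \ref{carac-unstable} give
$\textnormal{Im}\,P_{\gamma_{i}}=\mathcal{S}^{\gamma_{i}}$ and $\ker P_{\gamma_{i}}=\mathcal{U}^{\gamma_{i}}$ for $i=1,2$.
By statements b) and c) of Remark \ref{unstable-stable-spaces}, $\gamma_{1}<\gamma_{2}$ implies
$\mathcal{S}^{\gamma_{1}}\subseteq \mathcal{S}^{\gamma_{2}}$ and $\mathcal{U}^{\gamma_{2}}\subseteq \mathcal{U}^{\gamma_{1}}$, so
$\textnormal{Im}\,P_{\gamma_{1}}\subseteq \textnormal{Im}\,P_{\gamma_{2}}$ and $\ker P_{\gamma_{2}}\subseteq \ker P_{\gamma_{1}}$. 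The hypothesis $\textnormal{Rank}\,P_{\gamma_{1}}=\textnormal{Rank}\,P_{\gamma_{2}}$ forces equality of images, and the Rank--Nullity Theorem forces equality of kernels. A projection is determined by its image and kernel, hence $P_{\gamma_{1}}=P_{\gamma_{2}}=:P$.

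\textbf{Step 2: Interpolate the dichotomy at every $\mu\in(\gamma_{1},\gamma_{2})$.}
Writing the characterization \eqref{ec:2.7} at $\lambda=\gamma_{1}$ and $\lambda=\gamma_{2}$ with the common projection $P$, one has
\begin{equation*}
\Vert X(t)PX^{-1}(s)\Vert \leq K_{\gamma_{1}}e^{(\gamma_{1}-\alpha_{\gamma_{1}})(t-s)} \qquad (t\geq s),
\end{equation*}
\begin{equation*}
\Vert X(t)(I-P)X^{-1}(s)\Vert \leq K_{\gamma_{2}}e^{-(\gamma_{2}+\alpha_{\gamma_{2}})(s-t)} \qquad (t\leq s).
\end{equation*}
For a fixed $\mu\in(\gamma_{1},\gamma_{2})$, set $\alpha_{\mu}:=\min\{\mu-\gamma_{1}+\alpha_{\gamma_{1}},\,\gamma_{2}-\mu+\alpha_{\gamma_{2}}\}>0$ and $K_{\mu}:=\max\{K_{\gamma_{1}},K_{\gamma_{2}}\}$. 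Since $\mu-\alpha_{\mu}\geq \gamma_{1}-\alpha_{\gamma_{1}}$ and $\mu+\alpha_{\mu}\leq \gamma_{2}+\alpha_{\gamma_{2}}$, one obtains for $t\geq s$,
\begin{equation*}
\Vert X(t)PX^{-1}(s)\Vert \leq K_{\mu}e^{(\mu-\alpha_{\mu})(t-s)}=K_{\mu}e^{\mu(t-s)}e^{-\alpha_{\mu}(t-s)},
\end{equation*}
and for $t\leq s$,
\begin{equation*}
\Vert X(t)(I-P)X^{-1}(s)\Vert \leq K_{\mu}e^{-(\mu+\alpha_{\mu})(s-t)}=K_{\mu}e^{\mu(t-s)}e^{-\alpha_{\mu}(s-t)},
\end{equation*}
which is precisely \eqref{ec:2.7} for the shifted system $\dot x=[A(t)-\mu I]x$ with projection $P$. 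Thus $\mu\in \rho(A)$, and together with $\gamma_{1},\gamma_{2}\in\rho(A)$ this yields $[\gamma_{1},\gamma_{2}]\subset \rho(A)$.

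\textbf{Expected main obstacle.} The interpolation in Step 2 is essentially algebraic once the projections are identified, so the only delicate point is Step 1: the equality $P_{\gamma_{1}}=P_{\gamma_{2}}$ depends critically on using both the future dichotomy (to capture $\textnormal{Im}\,P$ via $\mathcal{S}^{\gamma_{i}}$) and the past dichotomy (to capture $\ker P$ via $\mathcal{U}^{\gamma_{i}}$); using only one half-line would not suffice, since by the Projector problem discussed in Chapter~3 the projection on a half-line is not unique.
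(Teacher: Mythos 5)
Your proposal is correct, but it takes a genuinely shorter route than the paper in its second half. The first step coincides with the paper's Steps 1 and 5: both arguments use the monotonicity of $\mathcal{S}^{\lambda}$ and $\mathcal{U}^{\lambda}$ (Remark \ref{unstable-stable-spaces}) together with Lemmas \ref{carac-stable} and \ref{carac-unstable} and the rank hypothesis to obtain $\textnormal{Im}\,P_{\gamma_{1}}=\textnormal{Im}\,P_{\gamma_{2}}$ and $\ker P_{\gamma_{1}}=\ker P_{\gamma_{2}}$. From there you conclude that the two projections are literally the same matrix (a projection is determined by its image and kernel) and then interpolate the two estimates of \eqref{ec:2.7} directly on $\mathbb{R}$ — pairing the $P$--inequality at $\gamma_{1}$ with the $(I-P)$--inequality at $\gamma_{2}$ — which gives the dichotomy of the $\mu$--shifted system on the whole line in one stroke; your exponent bookkeeping ($\mu-\alpha_{\mu}\ge\gamma_{1}-\alpha_{\gamma_{1}}$, $\mu+\alpha_{\mu}\le\gamma_{2}+\alpha_{\gamma_{2}}$) is correct. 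The paper instead never asserts $P_{\gamma_{1}}=P_{\gamma_{2}}$: it works on each half line separately, uses the change-of-projection Lemmas \ref{2proj} and \ref{2proj+} to rewrite the dichotomy of the $\gamma_{1}$--shifted system with the projection $P_{\gamma_{2}}$, performs the same pairing of inequalities on $\mathbb{R}_{0}^{-}$ and $\mathbb{R}_{0}^{+}$, and finally recombines the two half-line dichotomies via Proposition \ref{FullDico}. What your approach buys is economy — no auxiliary lemmas, no passage through the half-line theory — but it is intrinsically tied to the full-line case, where the projection of an exponential dichotomy is unique relative to a fixed fundamental matrix. What the paper's detour buys is that the same proof applies verbatim to $\rho^{+}(A)$ and $\rho^{-}(A)$ (as the remark following the proposition points out), where only one of the two subspace characterizations is available and the projection is not unique, so one genuinely needs Lemma \ref{2proj} or \ref{2proj+} rather than an identification of projections; you correctly anticipated this limitation in your closing remark.
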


\begin{proof}
The proof will be made in several steps:\medskip

\noindent\textit{Step 1:} Note that $\mathcal{U}^{\gamma_2}\subseteq \mathcal{U}^{\gamma_1}$ is a consequence of $\gamma_1 <\gamma_2$ combined with Remark \ref{unstable-stable-spaces}. Now, the identity $\textnormal{Rank}(P_{\gamma_1})=\textnormal{Rank}(P_{\gamma_2})$ implies that $\dim \ker(P_{\gamma_1})=\dim \ker(P_{\gamma_2})$ and Lemma \ref{carac-unstable} says that $\dim(\mathcal{U}^{\gamma_1})=\dim(\mathcal{U}^{\gamma_2})$.

As $\mathcal{U}^{\gamma_2}$ is a subspace of $\mathcal{U}^{\gamma_1}$ having the same dimension, it follows that $\mathcal{U}^{\gamma_2}=\mathcal{U}^{\gamma_1}$ and by using again Lemma \ref{carac-unstable}
we have that $\ker P_{\gamma_{1}}=\ker P_{\gamma_{2}}$.

\medskip

\noindent\textit{Step 2:} As $\gamma_2\in\rho (A)$, let us recall that the system  $$\dot{x}=[A(t)-\gamma_2 I]x$$ has an exponential dichotomy on $\mathbb{R}_{0}^{-}$, namely, there exist
a projection $P_{\gamma_{2}}$ and a pair
of positive constants $K_{2}$ and $\alpha_{2}$ such that:
\begin{equation}\label{ec:4.1}
\left\{\begin{array}{rlcr}
\Vert X(t)P_{\gamma_2}X^{-1}(s)\Vert &\leq K_2e^{\gamma_2 (t-s)}e^{-\alpha_2 (t-s)} &\text{if}&t \geq s\\
\Vert X(t)(I-P_{\gamma_2})X^{-1}(s)\Vert &\leq K_2e^{\gamma_2 (t-s)}e^{-\alpha_2 (s-t)} &\text{if}&  t \leq s.
\end{array}\right.
\end{equation}

\noindent\textit{Step 3:} At step 1 we verified that $\ker(P_{\gamma_1})=\ker(P_{\gamma_2})$. Now, by using Lemma \ref{2proj}
with $P_{\gamma_{2}}=\overline{P}$ and $P_{\lambda}=P_{\gamma_{1}}$, we can see that 
$$\dot{x}=[A(t)-\gamma_1 I]x$$ has also an exponential dichotomy on $\mathbb{R}_{0}^{-}$ with projection $P_{\gamma_2}$:

\begin{equation}\label{ec:4.2}
\left\{\begin{array}{rlcr}
\Vert X(t)P_{\gamma_2}X^{-1}(s)\Vert &\leq K_1e^{\gamma_1 (t-s)}e^{-\alpha_1 (t-s)} &\text{if}&t\geq s\\
\Vert X(t)(I-P_{\gamma_2})X^{-1}(s)\Vert &\leq K_1e^{\gamma_1 (t-s)}e^{-\alpha_1 (s-t)} &\text{if}& t \leq s.
\end{array}\right.
\end{equation}

\noindent\textit{Step 4:} If $\gamma\in[\gamma_1, \gamma_2]$ it is easy to verify that 
$$
e^{\gamma_1 (t-s)}\leq e^{\gamma (t-s)} \text{ if } t\geq s
$$
and
$$
e^{\gamma_2 (t-s)}\leq e^{\gamma (t-s)} \text{ if } s\geq t.
$$

We will pair the first inequality of (\ref{ec:4.2}) and the second inequality of (\ref{ec:4.1}) by considering $K=\max \{K_1, K_2\}$ and $\alpha =\min \{\alpha_1, \alpha_2\}$:

\begin{equation*}
\left\{\begin{array}{rlcr}
\Vert X(t)P_{\gamma_2}X^{-1}(s)\Vert &\leq Ke^{\gamma (t-s)}e^{-\alpha (t-s)} &\text{if}&t\geq s\\
\Vert X(t)(I-P_{\gamma_2})X^{-1}(s)\Vert &\leq Ke^{\gamma (t-s)}e^{-\alpha (s-t)} &\text{if}& t \leq s,
\end{array}\right.
\end{equation*}
and we have that $\dot{x}=[A(t)-\gamma I]x$ has an exponential dichotomy on $\mathbb{R}_{0}^{-}$ with projection $P_{\gamma_{2}}$ which implies that $\gamma \in \rho^{-}(A)$ and $[\gamma_1, \gamma_2]\subset \rho^{-}(A)$.

\medskip
\noindent \textit{Step 5:} Similarly as done in step 1, it can be proved that $\mathcal{S}^{\gamma_{1}}=\mathcal{S}^{\gamma_{2}}$ and the Lemma \ref{carac-stable} implies the identity $\textnormal{Im} P_{\gamma_{1}}=\textnormal{Im}P_{\gamma_{2}}$. Then, by using 
Lemma \ref{2proj+} we can replicate the steps 2, 3, and 4 and deduce that, for any $\gamma \in [\gamma_{1},\gamma_{2}]$, the shifted system has an exponential dichotomy on $\mathbb{R}_{0}^{+}$
with the same projector $P_{\gamma_{2}}$.

\medskip 

\noindent  \textit{Step 6:} We have proved that, for any $\gamma\in [\gamma_{1},\gamma_{2}]$, the shifted system $\dot{x}=[A(t)-\gamma I]x$ has an exponential dichotomy on the both half lines
with the same projection $P_{\gamma_{2}}$. Then by using Proposition \ref{FullDico} from Chapter 3,
the exponential dichotomy is verified on $\mathbb{R}$ and the result follows.
 \end{proof}

\begin{remark}
An attentive reading of the above result shows that the result is also true when considering
$\rho^{-}(A)$ and $\rho^{+}(A)$.
\end{remark}

\begin{corollary}
If the linear system \eqref{LinCap3} has the property of bounded growth $\&$ decay on 
$\mathbb{R}$ (resp. $\mathbb{R}_{0}^{-}$,$\mathbb{R}_{0}^{+}$) with constants $K>0$
and $\alpha \geq 0$ then its corresponding
spectrum $\Sigma(A)$ (resp. $\Sigma^{-}(A)$,$\Sigma^{+}(A)$) is non empty.
\end{corollary}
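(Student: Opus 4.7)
The plan is to argue by contradiction, using the rank invariance of the projectors along the resolvent (Proposition \ref{reso-o}) together with the explicit description of the exponential dichotomies that appear far from the origin in the proof of Proposition \ref{EDECS}. I will write out the argument for $\Sigma(A)$ on $J=\mathbb{R}$; the cases of $\Sigma^{+}(A)$ and $\Sigma^{-}(A)$ follow verbatim using the analogous statements for $\rho^{\pm}(A)$ noted in the remarks that follow Propositions \ref{EDECS} and \ref{reso-o}.

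Suppose, for contradiction, that $\Sigma(A)=\varnothing$, so that $\rho(A)=\mathbb{R}$. Choose two parameters $\lambda_{1}<\lambda_{2}$ satisfying $\lambda_{1}<-\alpha$ and $\lambda_{2}>\alpha$ (if $\alpha=0$, it suffices to take $\lambda_{1}<0<\lambda_{2}$). A careful reading of the two cases in the proof of Proposition \ref{EDECS} shows that for $\lambda_{2}>\alpha$ the shifted system $\dot{x}=[A(t)-\lambda_{2}I]x$ has an exponential dichotomy on $\mathbb{R}$ with projection $P_{\lambda_{2}}=I$, whereas for $\lambda_{1}<-\alpha$ the shifted system $\dot{x}=[A(t)-\lambda_{1}I]x$ has an exponential dichotomy on $\mathbb{R}$ with projection $P_{\lambda_{1}}=0$. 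Hence
$$
\textnormal{Rank}(P_{\lambda_{1}})=0 \quad \textnormal{and} \quad \textnormal{Rank}(P_{\lambda_{2}})=n.
$$

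On the other hand, our standing assumption $\rho(A)=\mathbb{R}$ implies $[\lambda_{1},\lambda_{2}]\subset \rho(A)$, and thus Proposition \ref{reso-o} applies and yields $\textnormal{Rank}(P_{\lambda_{1}})=\textnormal{Rank}(P_{\lambda_{2}})$. Combining this with the previous identities we obtain $0=n$, which contradicts $n\geq 1$. Consequently $\Sigma(A)\neq\varnothing$.

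For $\Sigma^{+}(A)$ (resp. $\Sigma^{-}(A)$) the same scheme works: by Remark \ref{cotas-parciales} and the half-line analogue of Proposition \ref{EDECS}, taking $\lambda_{1}<-\alpha<\alpha<\lambda_{2}$ produces exponential dichotomies on $\mathbb{R}_{0}^{+}$ (resp. $\mathbb{R}_{0}^{-}$) with projections of rank $0$ and $n$ respectively, while the extension of Proposition \ref{reso-o} to $\rho^{\pm}(A)$ noted in the remark just after its proof forces these ranks to coincide, yielding again a contradiction. The only step I expect to require extra care is verifying that both half-line variants of Proposition \ref{reso-o} really apply under bounded growth $\&$ decay alone; this is handled exactly as in the full-line case, since the $t\geq s$ and $t\leq s$ estimates used there become the first and second inequalities of (\ref{AltChar}) on the appropriate half-line.
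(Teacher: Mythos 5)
Your proof is correct and follows essentially the same approach as the paper's own: argue by contradiction, observe that $\rho(A)=\mathbb{R}$ would force, by Proposition \ref{reso-o}, the rank of $P_{\lambda}$ to be constant, and then use the endpoints $\lambda<-\alpha$ and $\lambda>\alpha$ (from the proof of Proposition \ref{EDECS}) to produce projections of rank $0$ and $n$, a contradiction. The brief extra remarks you add — handling $\alpha=0$ explicitly and flagging the half-line analogues of Proposition \ref{reso-o} — are sound but do not change the underlying argument.
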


\begin{proof}
We will only consider the case $\Sigma(A)$ since the other cases can be addressed
in a similar way. The proof will be made by contradiction. In fact, if we assume $\Sigma(A)=\varnothing$, it follows
that $\rho(A)=\mathbb{R}$. Then, by Proposition \ref{reso-o}  we have that for any $\gamma_{1}<\gamma_{2}$ it follows that  $\textnormal{Rank}P_{\gamma_{1}}=\textnormal{Rank}P_{\gamma_{2}}$ since
$[\gamma_{1},\gamma_{2}]\subset \rho(A)$.

On the other hand, by observing the proof of Proposition \ref{EDECS} and considering constants $\gamma_{1}<-\alpha$
and $\gamma_{2}>\alpha$, we will have that $P_{\gamma_{1}}=0$ and $P_{\gamma_{2}}=I$, which leads to 
$\textnormal{Rank}P_{\gamma_{1}}=0<n=\textnormal{Rank}P_{\gamma_{2}}$, obtaining a contradiction.
\end{proof}

In case that the exponential dichotomy spectrum $\Sigma(A)$ has the bounded growth $\&$ decay property, the following result provides
information about the unbounded connected subsets on $\rho(A)$, namely, 
$(-\infty,\Sigma_{\min})$ and $(\Sigma_{\max},+\infty)$, where $\Sigma_{\min}$
and $\Sigma_{\max}$ are the upper and lower bounds of the spectrum described in (\ref{Bornes-Spec}).

\begin{theorem}
\label{espectro-proj}
If the linear system \eqref{LinCap3} has the property of bounded growth $\&$ decay on $\mathbb{R}$ (resp. $\mathbb{R}_{0}^{+}$,$\mathbb{R}_{0}^{-}$), it follows that:
\begin{enumerate}
\item[i)] For any $\lambda < \Sigma_{\min}$, the shifted system $\dot{x}=[A(t)-\lambda I]x$ has an exponential dichotomy on $\mathbb{R}$ (resp. $\mathbb{R}_{0}^{+}$,$\mathbb{R}_{0}^{-}$) with projection $P_{\lambda}=0$. That is, there exists $\delta >0$ such that 
\begin{equation}
\label{BGTP1}
\Vert X(t) e^{-\lambda t}X^{-1}(s)e^{\lambda s}\Vert \leq Ke^{-\delta (s-t)}, \text{ for any } t\leq s.
\end{equation}

\item[ii)] For any $\lambda > \Sigma_{\max}$, the shifted system $\dot{x}=[A(t)-\lambda I]x$ has an exponential dichotomy on $\mathbb{R}$ with projection $P_{\lambda}=I$. That is, there exists $\delta >0$ such that 
\begin{equation*}
%\label{BGTP2}
\Vert X(t) e^{-\lambda t}X^{-1}(s)e^{\lambda s}\Vert \leq Ke^{-\delta (t-s)}, \text{ for any } t\geq s.
\end{equation*}
\end{enumerate}
\end{theorem}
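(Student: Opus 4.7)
The plan is to combine two ingredients we already have: the explicit exponential dichotomies produced in the proof of Proposition \ref{EDECS} for the ``extreme'' shifts $\lambda<-\alpha$ and $\lambda>\alpha$, and the rank--preservation of projections on connected subsets of the resolvent provided by Proposition \ref{reso-o}. All the work will be done on $\mathbb{R}$; the variants on $\mathbb{R}_{0}^{\pm}$ follow by the same argument, using the corresponding spectra $\Sigma^{\pm}(A)$ and the extension of Proposition \ref{reso-o} to the half--line resolvents noted in the subsequent Remark.

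For statement (i), fix $\lambda<\Sigma_{\min}$. If $\lambda<-\alpha$, then exactly the computation of Case b) in the proof of Proposition \ref{EDECS} shows that $\dot{x}=[A(t)-\lambda I]x$ has an exponential dichotomy on $\mathbb{R}$ with projection $P_{\lambda}=0$ and rate $\delta=-\alpha-\lambda>0$, which is precisely the estimate \eqref{BGTP1}. Otherwise $-\alpha\leq\lambda<\Sigma_{\min}$; pick any $\mu<-\alpha$, so that $\mu<\lambda$ and $[\mu,\lambda]\subseteq(-\infty,\Sigma_{\min})\subset\rho(A)$. By Proposition \ref{reso-o} the projections $P_{\mu}$ and $P_{\lambda}$ have the same rank. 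But $P_{\mu}=0$ by the previous paragraph, so $\mathrm{Rank}(P_{\lambda})=0$, and the only rank--zero projection in $M_{n}(\mathbb{R})$ is the zero matrix, hence $P_{\lambda}=0$. The second inequality of \eqref{ec:2.7} with $P_{\lambda}=0$ then gives \eqref{BGTP1}.

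For statement (ii), the argument is symmetric. For $\lambda>\alpha$, Case a) of the proof of Proposition \ref{EDECS} directly yields an exponential dichotomy on $\mathbb{R}$ with projection $P_{\lambda}=I$ and some rate $\delta>0$. For $\Sigma_{\max}<\lambda\leq\alpha$, choose $\mu>\alpha$, so $[\lambda,\mu]\subseteq(\Sigma_{\max},+\infty)\subset\rho(A)$; Proposition \ref{reso-o} gives $\mathrm{Rank}(P_{\lambda})=\mathrm{Rank}(P_{\mu})=n$, and since any $n\times n$ projection of full rank must be invertible and satisfy $P=P^{2}$, we conclude $P_{\lambda}=I$. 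The first inequality of \eqref{ec:2.7} then reduces to the desired estimate.

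The only mildly delicate point in the whole argument is the elementary linear--algebra remark that a projection in $M_{n}(\mathbb{R})$ of rank $0$ (respectively $n$) is necessarily the zero matrix (respectively the identity): for rank $n$, $P$ is invertible and $P^{2}=P$ gives $P=I$ after left--multiplying by $P^{-1}$; for rank $0$, $P=0$ is immediate. This is what promotes the rank equality coming from Proposition \ref{reso-o} into the explicit trivial projection, and hence into the one--sided estimates of the statement.
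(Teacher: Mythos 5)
Your proposal is correct and follows essentially the same route as the paper's own proof: use the explicit trivial-projection dichotomies obtained in the proof of Proposition \ref{EDECS} for $\lambda>\alpha$ (resp.\ $\lambda<-\alpha$), and then transfer the trivial projection to the remaining values $\Sigma_{\max}<\lambda\leq\alpha$ (resp.\ $-\alpha\leq\lambda<\Sigma_{\min}$) via the rank invariance of Proposition \ref{reso-o} on an interval contained in the resolvent. The only additions are that you treat both cases explicitly (the paper leaves case (i) to the reader) and spell out the rank-$0$/rank-$n$ linear-algebra step, which is a welcome clarification but not a different argument.
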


\begin{proof}
We will only consider the case of $\mathbb{R}$ since the other ones can be addressed similarly.
Now, as the linear system (\ref{LinCap3}) has a bounded growth $\&$ decay on $\mathbb{R}$, by Proposition \ref{EDECS}
we have the existence of $\alpha\geq 0$ such that the spectrum verifies $\Sigma(A)\subset [-\alpha,\alpha]$ or
$\Sigma(A)=\{0\}$, as appropriate.

The proof will be made only for the case $\lambda>\Sigma_{\max}$ while the other case is left to the reader. Firstly notice that if $\lambda\geq \alpha+\varepsilon$, with $\varepsilon>0$ arbitrarily small,
the case a) from the proof of Proposition \ref{EDECS} says that (\ref{BGTP1}) is verified, that is, the shifted system $\dot{x}=[A(t)-\lambda I]x$
has an exponential dichotomy on $\mathbb{R}$ with projection $P_{\lambda}=I$.

Secondly, let $\lambda \in (\Sigma_{\max},\alpha+\varepsilon)$ and consider $\gamma>\alpha+\varepsilon$.
Notice that as $[\lambda,\gamma]\subset (\Sigma_{\max},+\infty)\subset \rho(A)$, the Proposition \ref{reso-o} implies that $P_{\lambda}$ and $P_{\gamma}=I$ have the same rank and the result follows since $P_{\lambda}=I$.
\end{proof}

The above result has a converse one:
\begin{theorem}
\label{BSIBG}
Assume that the spectrum $\Sigma(A)$ of the linear system \eqref{LinCap3} is non empty and bounded. If:
\begin{itemize}
\item[a)] For any $\lambda \in (\Sigma_{\max},+\infty)$, the shifted system $\dot{x}=[A(t)-\lambda I]x$
has an exponential dichotomy on $\mathbb{R}$ with projector $P_\lambda=I$,
\item[b)] For any $\gamma \in (-\infty,\Sigma_{\min})$, the shifted system $\dot{x}=[A(t)-\gamma I]x$
has an exponential dichotomy on $\mathbb{R}$ with projector $P_\gamma=0$,
\end{itemize}
then the linear system a bounded growth $\&$ decay on $\mathbb{R}$.
\end{theorem}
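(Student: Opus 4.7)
The plan is to use the two hypotheses to produce unilateral exponential bounds on the transition matrix $X(t,s)$: (a) will deliver a bound valid for $t\ge s$ while (b) will deliver one valid for $t\le s$, and then the two are glued through the triangle inequality in the exponent. Since Definition \ref{BGBD-C} of bounded growth $\&$ decay on $\mathbb{R}$ only asks for the single estimate $\|X(t,s)\|\le K_0 e^{\alpha_0|t-s|}$, the whole problem reduces to unpacking the dichotomy inequalities of Definition \ref{ED-Per} in the case when the projector is trivial.

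Concretely, I would first fix any $\lambda_0 > \max\{\Sigma_{\max}, 0\}$. Hypothesis (a) provides an exponential dichotomy on $\mathbb{R}$ for $\dot x = [A(t)-\lambda_0 I]x$ with $P_{\lambda_0} = I$; reading the first inequality of Definition \ref{ED-Per} and multiplying by $e^{\lambda_0 t}e^{-\lambda_0 s}$ yields
\begin{displaymath}
\|X(t,s)\| \le K_{\lambda_0}\,e^{(\lambda_0 - \alpha_{\lambda_0})(t-s)} \le K_{\lambda_0}\,e^{\lambda_0(t-s)} \quad\textnormal{for } t\ge s.
\end{displaymath}
Symmetrically, I would fix $\gamma_0 < \min\{\Sigma_{\min}, 0\}$; hypothesis (b) provides a dichotomy for $\dot x=[A(t)-\gamma_0 I]x$ with $P_{\gamma_0} = 0$, so only the second inequality of Definition \ref{ED-Per} is nontrivial and the same rearrangement gives
\begin{displaymath}
\|X(t,s)\| \le K_{\gamma_0}\,e^{(-\gamma_0 - \alpha_{\gamma_0})(s-t)} \le K_{\gamma_0}\,e^{-\gamma_0(s-t)} \quad\textnormal{for } t\le s.
\end{displaymath}

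The last step is to set $K_0 = \max\{K_{\lambda_0}, K_{\gamma_0}\}$ and $\alpha_0 = \max\{\lambda_0,\, -\gamma_0\}$; the sign choices $\lambda_0>0$ and $\gamma_0<0$ force $\alpha_0>0$, and on both regions the bound collapses to $\|X(t,s)\|\le K_0\, e^{\alpha_0|t-s|}$ for all $t,s\in\mathbb{R}$, which is exactly Definition \ref{BGBD-C}. There is really no hard step here: the argument is nothing more than a careful reading of the dichotomy inequalities with trivial projectors, and the only nuance is to choose $\lambda_0$ and $\gamma_0$ of opposite signs so that both $t-s$ and $s-t$ become $|t-s|$ without loss in the constants.
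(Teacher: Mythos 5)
Your proposal is correct and follows essentially the same route as the paper: unpack the two trivial-projector dichotomy estimates into the unilateral bounds $\|X(t,s)\|\le K_{\lambda_0}e^{\lambda_0(t-s)}$ for $t\ge s$ and $\|X(t,s)\|\le K_{\gamma_0}e^{-\gamma_0(s-t)}$ for $t\le s$, then glue them into a single bound of the form $K_0e^{\alpha_0|t-s|}$. The only (harmless) cosmetic difference is that you fix the signs from the outset by taking $\lambda_0>\max\{\Sigma_{\max},0\}$ and $\gamma_0<\min\{\Sigma_{\min},0\}$, whereas the paper keeps arbitrary $\lambda>\Sigma_{\max}$, $\gamma<\Sigma_{\min}$ and inserts an auxiliary constant $\Delta>0$ to guarantee a positive exponent.
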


\begin{proof}
Let us consider a fixed $\lambda>\Sigma_{\max}$, then there exist positive constants $K_{\lambda}$
and $\alpha_{\lambda}$ such that
\begin{displaymath}
 ||X_{\lambda}(t,s)||\leq K_{\lambda}e^{-\alpha_{\lambda} (t-s)}  \quad \textnormal{for any $t\geq s$}.
 \end{displaymath}

By using $\alpha_{\lambda}>0$ and considering $\Delta>0$ 
the above estimation implies
\begin{displaymath}
\begin{array}{rcll}
 ||X(t,s)|| &\leq & K_{\lambda}e^{\lambda (t-s)} & \textnormal{for any $t\geq s$}, \\
 &\leq & K_{\lambda}e^{\max\{\Delta,\lambda\} |t-s|} & \textnormal{for any $t\geq s$}.
 \end{array}
\end{displaymath}

Similarly, we will consider a fixed $\gamma<\Sigma_{\min}$, then there exist positive constants $K_{\gamma}$
and $\alpha_{\gamma}$ such that
\begin{displaymath}
 ||X_{\gamma}(t,s)||\leq K_{\gamma}e^{-\alpha_{\gamma} (s-t)}  \quad \textnormal{for any $t\leq s$}.
 \end{displaymath}

As $\alpha_{\gamma}>0$, by choosing again $\Delta>0$ 
the above estimation implies
\begin{displaymath}
\begin{array}{rcll}
 ||X(t,s)|| &\leq & K_{\gamma}e^{\gamma (s-t)} & \textnormal{for any $t\leq s$}, \\
 &\leq & K_{\gamma}e^{\max\{\Delta,\gamma\} |t-s|} & \textnormal{for any $t\leq s$}.
 \end{array}
\end{displaymath}

By gathering the above estimations we conclude that
\begin{displaymath}
||X(t,s)||\leq Ke^{\beta|t-s|} \quad \textnormal{for any $t,s\in \mathbb{R}$}    
\end{displaymath}
with $K=\max\{K_{\lambda},K_{\gamma}\}$ and $\beta=\max\{\Delta,\gamma\}+\max\{\Delta,\lambda\}$,
and the result follows.
\end{proof}

\begin{remark}
Let us observe that if, in the above result, only the hypotesis a) is verified,
we can deduce the property of bounded growth on $\mathbb{R}$. Analogously,
if only b) is verified we can deduce the property of bounded decay on $\mathbb{R}$.
\end{remark}

\subsection{Structure of the resolvent and the spectrum}

We know that the resolvents $\rho(A)$,$\rho^{+}(A)$ and $\rho^{-}(A)$ are open sets.
By using the results of the previous subsection, we will prove that this resolvents 
are a finite union of open intervals.

\begin{definition}
\label{componente}
Under the assumption that $\rho(A)$ (resp. $\rho^{\pm}(A)$) is a non empty set. For any $\lambda\in\rho(A)$
(resp. $\lambda\in\rho^{\pm}(A)$), the set $C_\lambda\subset \rho(A)$ (resp. $C_\lambda\subset \rho^{\pm}(A)$) denotes the \textbf{maximal open interval} containing $\lambda$. 
\end{definition}

We can deduce several consequences of the above definition:
\begin{itemize}
    \item[(i)] As Proposition \ref{open-resolvent} ensures that $\rho(A)$ is an open set in $\mathbb{R}$, we know 
that $C_{\lambda}\neq \varnothing$;
     \item[(ii)] $C_{\lambda_{1}}\cap C_{\lambda_{2}}\neq \varnothing$ if and only if $C_{\lambda_{1}}=C_{\lambda_{2}}$;
     \item[(iii)] Let us denote $\lambda_{1}\sim \lambda_{2}$ if $C_{\lambda_{1}}=C_{\lambda_{2}}$ and note that
this defines an equivalence relation on $\mathbb{R}$;
\item[(iv)] If $\Sigma(A)$ (resp $\Sigma^{\pm}(A)$) is lowerly bounded and $\lambda <\Sigma_{\min}$, it follows that $C_{\lambda}=(-\infty,\Sigma_{\min})$;
\item[(v)] If $\Sigma(A)$ (resp $\Sigma^{\pm}(A)$) is upperly bounded and $\lambda >\Sigma_{\max}$, it follows that $C_{\lambda}=(\Sigma_{\max},+\infty)$;
\item[(vi)] If $\Sigma(A)=\varnothing$ it follows that $C_{\lambda}=(-\infty,+\infty)$ for any $\lambda\in \mathbb{R}$. 
\end{itemize}

The statement (iii) implies that $\rho(A)$ can be partitioned by the equivalence classes
associated to equivalence relation $\sim$. Given $\lambda\in \rho(A)$ is equivalence class
will be denoted by
\begin{displaymath}
[\lambda]:=\left\{\mu \in \rho(A)\colon \mu \sim \lambda\right\}    
\end{displaymath}
and the set of equivalence classes will be denoted by $\rho(A)/\sim $.

%\begin{corollary}
%\label{const-conexa}
%\textcolor{red}{Corregir!!!}   Given $\lambda \in \rho(A)$, it follows that the system
%$$
%\dot{x}=[A(t)-\mu I]x
%$$
%has an exponential dichotomy on $J=\mathbb{R},\mathbb{R}^{-},\mathbb{R}^{+}$ with the same projection %$P_{\lambda}$ for any $\mu \in C_{\lambda}$.
%\end{corollary}

%\begin{proof}
%Let us consider $\mu_{1}\in C_{\lambda}$ and $\mu _{2}\in C_{\lambda}$ with $\mu_{1}<\mu_{2}$ such %that the systems
%$$
%\dot{x}=[A(t)-\mu_{1}I]x \quad \textnormal{and} \quad
%\dot{x}=[A(t)-\mu_{2}I]x
%$$
%have exponential dichotomy on $\mathbb{R}$. In addition, as $[\mu_{1},\mu_{2}]\subset C_{\lambda}\subset \rho(A)$, Proposition \ref{reso-o} implies that the above systems have exponential dichotomy with the same projection and the result follows.
%\end{proof}

\begin{lemma}
\label{coment-rangos}
Let $\lambda_1\in\rho (A)$ and $\lambda_2\in\rho (A)$ such that $\lambda_1<\lambda_2$, then \textnormal{\text{Rank}}$(P_{\lambda_1})\leq$ \textnormal{\text{Rank}}$(P_{\lambda_2})$. Moreover, the inequality is strict if 
the respective sets $C_{\lambda_1}$ and $C_{\lambda_2}$ are disjoint.
\end{lemma}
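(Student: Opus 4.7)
The plan is to prove both parts by reducing them to results already established in the chapter: the monotonicity of the stable sets $\mathcal{S}^{\lambda}$ in $\lambda$ (Remark \ref{unstable-stable-spaces}), the identification $\mathcal{S}^{\lambda}=\textnormal{Im}\,P_{\lambda}$ (Lemma \ref{carac-stable}), and the converse of Proposition \ref{reso-o} given by Proposition \ref{reso-o2}. The weak inequality will be an immediate combination of the first two, while the strict inequality in the disjoint case will follow by a one‑line contradiction argument using the third.

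For the weak inequality, I would argue as follows. Since $\lambda_1,\lambda_2\in\rho(A)$, both shifted systems $\dot{x}=[A(t)-\lambda_i I]x$ admit an exponential dichotomy on $\mathbb{R}$, and in particular on $\mathbb{R}_0^{+}$, with projections $P_{\lambda_1}$ and $P_{\lambda_2}$. By Lemma \ref{carac-stable} this gives $\textnormal{Im}\,P_{\lambda_i}=\mathcal{S}^{\lambda_i}$ for $i=1,2$. The hypothesis $\lambda_1<\lambda_2$ together with item (b) of Remark \ref{unstable-stable-spaces} yields $\mathcal{S}^{\lambda_1}\subseteq\mathcal{S}^{\lambda_2}$, hence $\textnormal{Im}\,P_{\lambda_1}\subseteq\textnormal{Im}\,P_{\lambda_2}$, and the rank inequality follows at once.

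For the strict inequality, suppose by contradiction that $\textnormal{Rank}(P_{\lambda_1})=\textnormal{Rank}(P_{\lambda_2})$ while $C_{\lambda_1}\cap C_{\lambda_2}=\varnothing$. Since $\lambda_1<\lambda_2$ and both lie in $\rho(A)$ with equal ranks, Proposition \ref{reso-o2} gives $[\lambda_1,\lambda_2]\subset\rho(A)$. This is a connected subset of $\rho(A)$ containing both $\lambda_1$ and $\lambda_2$, so by the maximality in Definition \ref{componente} it is contained in $C_{\lambda_1}$ and in $C_{\lambda_2}$ simultaneously. In particular $\lambda_1\in C_{\lambda_2}$ and $\lambda_2\in C_{\lambda_1}$, so $C_{\lambda_1}\cap C_{\lambda_2}\ne\varnothing$, whence $C_{\lambda_1}=C_{\lambda_2}$ by item (ii) following Definition \ref{componente}, contradicting disjointness.

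No step looks delicate; the only thing to be careful about is to make sure the applications of Lemma \ref{carac-stable} and Proposition \ref{reso-o2} are legitimate, which they are because $\lambda_1,\lambda_2\in\rho(A)$ (and not merely in $\rho^{+}(A)$), so the exponential dichotomy of the shifted systems holds on the full line and a fortiori on $\mathbb{R}_0^{+}$, and the projections appearing in both statements are the same. The core of the argument is really the combination of $\mathcal{S}^{\lambda_1}\subseteq\mathcal{S}^{\lambda_2}$ with Proposition \ref{reso-o2}, which functions as a dichotomy: either the inclusion is strict and the ranks differ, or equal ranks force the whole segment $[\lambda_1,\lambda_2]$ into a single connected component of the resolvent.
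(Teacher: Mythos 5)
Your proof is correct and follows essentially the same route as the paper: the weak inequality via the monotonicity $\mathcal{S}^{\lambda_1}\subseteq\mathcal{S}^{\lambda_2}$ combined with Lemma \ref{carac-stable} (the paper equivalently allows the $\mathcal{U}^{\lambda}$/Lemma \ref{carac-unstable} route), and the strict inequality by contradiction through Proposition \ref{reso-o2}. Your only addition is to spell out why $[\lambda_1,\lambda_2]\subset\rho(A)$ forces $C_{\lambda_1}=C_{\lambda_2}$, a detail the paper leaves implicit.
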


\begin{proof}
By using statements b) and c) of Remark \ref{unstable-stable-spaces}, we have that 
$\mathcal{S}^{\lambda_{1}}\subseteq \mathcal{S}^{\lambda_{2}}$ and  $\mathcal{U}^{\lambda_2}\subseteq\mathcal{U}^{\lambda_1}$. Now, by using Lemma \ref{carac-stable}
or Lemma \ref{carac-unstable}, we 
obtain that  $\dim \textnormal{Im} P_{\lambda_{1}}\leq \dim \textnormal{Im} P_{\lambda_{2}}$ or $\dim \ker \, P_{\lambda_2}\leq \dim \ker \, P_{\lambda_1}$, which implies that $\text{Rank}(P_{\lambda_{1}})\leq \text{Rank}(P_{\lambda_{2}})$.

Firstly, under the assumption that $C_{\lambda_1}\cap C_{\lambda_2} \neq \varnothing$ 
the above stated properties of the $C_{\lambda}$ sets imply that $C_{\lambda_{1}}=C_{\lambda_{2}}$, then
we have that $[\lambda_{1},\lambda_{2}]\subset C_{\lambda_{1}}\subseteq \rho(A)$.
Then, by using Proposition \ref{reso-o} we deduce that Rank$(P_{\lambda_2})=$ Rank$(P_{\lambda_1})$.

Secondly, under the assumption that $C_{\lambda_1}\cap C_{\lambda_2}=\varnothing$, we will prove that Rank$(P_{\lambda_1})<$ Rank$(P_{\lambda_2})$. Indeed, otherwise, if Rank$(P_{\lambda_1})=$ Rank$(P_{\lambda_2})$. Hence, the Proposition \ref{reso-o2} says that $[\lambda_1, \lambda_2]\subset \rho (A)$, obtaining a contradiction with $C_{\lambda_1}\cap C_{\lambda_2}=\varnothing$.
\end{proof}

Notice that the result is still valid when considering $\rho^{+}(A)$ and $\rho^{-}(A)$. In fact, the 
resolvents $\rho^{+}(A)$ and $\rho^{-}(A)$ can be addressed  by using $\mathcal{S}^{\lambda_{1}}\subseteq \mathcal{S}^{\lambda_{2}}$ and $\mathcal{U}^{\lambda_{2}}\subseteq \mathcal{U}^{\lambda_{1}}$ respectively.

%\begin{corollary}
%If $\Sigma(A)\neq \varnothing$, there exist $N\leq n-1$ such that 
%the resolvent $\rho(A)$ has the form
%$$
%\rho(A)=\bigcup\limits_{i=0}^{N+1}I_{i},
%$$
%where $I_{i}$ are open and bounded intervals for $i=1,\ldots,N$ while
%$$
%I_{0}=(-\infty,\min\Sigma(A)) \quad \textnormal{and} \quad I_{N+1}=(\max\Sigma(A),+\infty).
%$$
%\end{corollary}

\begin{theorem}
\label{descripcion-reso}
If the linear system $\dot{x}=A(t)x$ 
has the property of bounded growth $\&$ decay on $\mathbb{R}$ (resp. $\mathbb{R}_{0}^{-}$,$\mathbb{R}_{0}^{+}$), then the resolvent $\rho(A)$ is the union of --at most-- $n+1$ open intervals, where  $n$ is the dimension of the system.
\end{theorem}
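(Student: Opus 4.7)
The plan is to exploit the two main ingredients already established: the resolvent is open (Proposition \ref{resolvent-open}), and across disjoint components of the resolvent the rank of the associated projection is strictly monotone (Lemma \ref{coment-rangos}). Since ranks of projections in $M_n(\mathbb{R})$ take at most the $n+1$ integer values $0,1,\ldots,n$, the number of components will be bounded by $n+1$.

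First I would dispose of the trivial case $\Sigma(A)=\varnothing$: then $\rho(A)=\mathbb{R}$ is a single open interval and the bound holds. So assume $\Sigma(A)\neq\varnothing$. The bounded growth $\&$ decay hypothesis combined with Proposition \ref{EDECS} gives that $\Sigma(A)$ is a compact subset of $\mathbb{R}$, so $\Sigma_{\min}$ and $\Sigma_{\max}$ defined in \eqref{Bornes-Spec} are well--defined real numbers, and the unbounded intervals $(-\infty,\Sigma_{\min})$ and $(\Sigma_{\max},+\infty)$ are contained in $\rho(A)$. Being open in $\mathbb{R}$, the resolvent decomposes as a disjoint union of its connected components, and each component is, by Definition \ref{componente}, one of the maximal intervals $C_\lambda$. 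Since $\rho(A)$ contains both unbounded half--lines, all its remaining components are bounded open intervals contained in $(\Sigma_{\min},\Sigma_{\max})$.

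Next, let $k$ be the number of components of $\rho(A)$ and suppose, looking for the desired bound, that we enumerate them from left to right as $C^{(1)}<C^{(2)}<\cdots<C^{(k)}$, choosing representatives $\lambda_1<\lambda_2<\cdots<\lambda_k$ with $\lambda_i\in C^{(i)}$. Since these components are pairwise disjoint, Lemma \ref{coment-rangos} applied to each consecutive pair yields the strict chain
\begin{displaymath}
\textnormal{Rank}(P_{\lambda_1})<\textnormal{Rank}(P_{\lambda_2})<\cdots<\textnormal{Rank}(P_{\lambda_k}).
\end{displaymath}
Because each $P_{\lambda_i}$ is a projection in $M_n(\mathbb{R})$, its rank is an integer in $\{0,1,\ldots,n\}$, so a strictly increasing chain of such integers has length at most $n+1$. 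Hence $k\leq n+1$, which proves the theorem. One can moreover identify the endpoints: by Theorem \ref{espectro-proj} the leftmost component $(-\infty,\Sigma_{\min})$ carries rank $0$ and the rightmost $(\Sigma_{\max},+\infty)$ carries rank $n$, which is consistent with the bound and not strictly needed for the count.

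The proofs for $\rho^+(A)$ and $\rho^-(A)$ follow the same scheme, using the analogues of Propositions \ref{reso-o}, \ref{reso-o2} and Lemma \ref{coment-rangos} on the half--lines noted in the remarks after those statements. I do not foresee a serious obstacle: all the heavy lifting sits in Lemma \ref{coment-rangos}, and the present argument is a clean combinatorial consequence of it. The one point to state carefully is the initial reduction that guarantees $\Sigma(A)$ is compact (so that the leftmost and rightmost components are indeed the two unbounded half--lines), for otherwise ``ordering components from left to right'' might fail to enumerate all of them.
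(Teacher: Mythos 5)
Your proposal is correct and follows essentially the same route as the paper: both arguments decompose $\rho(A)$ into connected components, use Lemma \ref{coment-rangos} to make the rank of $P_\lambda$ strictly increase across disjoint components, and invoke the pigeonhole bound since ranks lie in $\{0,1,\ldots,n\}$. The paper packages this via the function $\phi(\lambda)=\textnormal{Rank}\,P_\lambda$ and its preimages $\phi^{-1}(i)$, whereas you enumerate components directly; one small point to phrase carefully is that enumerating the components as a finite list presupposes what you want to prove, so it is cleaner to argue by contradiction (if there were $n+2$ or more components, choosing one representative in each would give a strictly increasing chain of $n+2$ integers in $\{0,\ldots,n\}$, impossible).
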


\begin{proof}
By using Definition \ref{componente}, we note that
$$
\rho(A)=\bigcup\limits_{\lambda\in \rho(A)}C_{\lambda}=\bigcup\limits_{[\lambda]\in \rho(A)/\sim}C_{\lambda}.
$$

Let us denote $C_{[\gamma]}=(-\infty,\Sigma_{\min})$ and $C_{[\mu]}=(\Sigma_{\max},+\infty)$ for any $\gamma<\Sigma_{\min}$ and $\mu>\Sigma_{\max}$.

This allows to deduce several properties of $\rho(A)$. Firstly, let us consider the function 
\begin{displaymath}
\begin{array}{rcl}
\phi &\colon & \rho(A) \to \{0,1,\ldots,n\}\\
     &       &    \lambda \mapsto \phi(\lambda)= \textnormal{\text{Rank}} \, P_{\lambda}
     \end{array}
\end{displaymath}
and, by Lemma \ref{coment-rangos} we have that $\phi$ is nondecreasing and constant at each set $C_{[\lambda]}\subset \rho(A)$.

By Theorem \ref{espectro-proj} combined with the local properties of $\phi$, we can see that
\begin{itemize}
    \item[a)] The statement i) from Theorem \ref{espectro-proj} implies that $\phi(\gamma)=0$ for any $\gamma <\Sigma_{\min}$,
    \item[b)] the statement ii) from Theorem \ref{espectro-proj} implies that $\phi(\mu)=n$ for any $\mu >\Sigma_{\max}$,
    \item[c)] by a) and b), we know that $\phi^{-1}(n)$ and $\phi^{-1}(0)$ are nonempty sets. On the other hand, for any $k\in \{1,\ldots,n-1\}$ either $\phi^{-1}(k)=\varnothing$ or there exists an equivalence class $[\lambda]\in \rho(A)/\sim$ such that 
    $$
    \phi^{-1}(k)=C_{\lambda},
    $$
    which is unique due to Lemma \ref{coment-rangos},
    \item[d)]  by Lemma \ref{coment-rangos} we have that if $\lambda_{1}<\lambda_{2}$ and $C_{\lambda_{1}}\cap C_{\lambda_{2}}=\varnothing$ then $\phi(\lambda_{1})<\phi(\lambda_{2})$.
\end{itemize}

By summarizing this properties, we can conclude that 
$$
\rho(A)=\bigcup\limits_{i=0}^{n}\phi^{-1}(i),
$$
that is, $\rho(A)$ is the union of at most $n+1$ open intervals.

When considering the cases of bounded growth $\&$ decay on $\mathbb{R}_{0}^{-}$
and $\mathbb{R}_{0}^{+}$, the proof can be addressed in a similar way and 
it is left to the reader.
\end{proof}

The sets $\phi^{-1}(i)$ constructed in the above proof, are open intervals
which usually are named \textit{spectral gaps} in the literature.

\begin{theorem}[Spectral Theorem -- Special case]
\label{TeoEsp1}
If the linear system \eqref{LinCap3} has the property of bounded growth $\&$ decay
on $\mathbb{R}$, then
$$
\Sigma (A)=[a_1, b_1]\cup[a_2,b_2]\cup\ldots\cup[a_{\ell -1},b_{\ell -1}]\cup[a_{\ell} , b_ {\ell}],$$ where $a_1\leq b_1<a_2\leq b_2<\ldots <a_{\ell}\leq b_{\ell}$ with $1\leq \ell\leq n$.

\end{theorem}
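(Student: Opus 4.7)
The plan is to deduce the statement as a direct structural consequence of three results already established in this chapter: the fact that the bounded growth $\&$ decay property forces $\Sigma(A)$ to be a nonempty compact subset of $[-\alpha,\alpha]$ (Proposition \ref{EDECS} together with the corollary guaranteeing nonemptiness), the characterization of the resolvent as a disjoint union of at most $n+1$ open intervals (Theorem \ref{descripcion-reso}), and the identification of the two unbounded resolvent components via Theorem \ref{espectro-proj}.

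First I would set $\Sigma_{\min}=\min\Sigma(A)$ and $\Sigma_{\max}=\max\Sigma(A)$, which are well-defined and finite because $\Sigma(A)$ is a nonempty compact set. Since $\rho(A)=\mathbb{R}\setminus\Sigma(A)$, this immediately yields $(-\infty,\Sigma_{\min})\cup(\Sigma_{\max},+\infty)\subseteq\rho(A)$, and by Theorem \ref{espectro-proj} these are precisely two of the connected components in the decomposition of $\rho(A)$, corresponding to the values $\phi=0$ and $\phi=n$ of the rank function constructed in the proof of Theorem \ref{descripcion-reso}.

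Next I would invoke Theorem \ref{descripcion-reso} to write $\rho(A)=\bigsqcup_{i=0}^{n}\phi^{-1}(i)$, where each nonempty $\phi^{-1}(i)$ is an open interval, the components $\phi^{-1}(0)$ and $\phi^{-1}(n)$ are the unbounded ones identified above, and the remaining possibly nonempty components $\phi^{-1}(1),\dots,\phi^{-1}(n-1)$ are necessarily bounded open intervals contained in $(\Sigma_{\min},\Sigma_{\max})$. If we let $k$ denote the number of indices $i\in\{1,\dots,n-1\}$ for which $\phi^{-1}(i)\neq\varnothing$, then $0\leq k\leq n-1$, and these $k$ bounded gaps partition $[\Sigma_{\min},\Sigma_{\max}]\cap\Sigma(A)=\Sigma(A)$ into exactly $\ell=k+1$ disjoint closed intervals. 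Ordering them from left to right as $[a_1,b_1],[a_2,b_2],\dots,[a_\ell,b_\ell]$ with $b_j<a_{j+1}$ corresponding to the bounded spectral gaps, gives the desired decomposition, with $a_1=\Sigma_{\min}$, $b_\ell=\Sigma_{\max}$, and $1\leq\ell\leq n$.

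There is no real obstacle here; the proof is a bookkeeping argument once Theorems \ref{descripcion-reso} and \ref{espectro-proj} are in hand. The only mildly delicate point is to justify that the endpoints $a_j,b_j$ of each closed spectral interval actually belong to $\Sigma(A)$, but this is immediate because $\rho(A)$ is open, so its complement is closed and each bounded gap $\phi^{-1}(i)=(b_j,a_{j+1})$ has endpoints in $\Sigma(A)$, while the extremal endpoints $a_1=\Sigma_{\min}$ and $b_\ell=\Sigma_{\max}$ belong to $\Sigma(A)$ by definition of minimum and maximum.
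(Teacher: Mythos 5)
Your argument is correct and follows essentially the same route as the paper: both start from the decomposition $\rho(A)=\bigcup_{i=0}^{n}\phi^{-1}(i)$ of Theorem \ref{descripcion-reso}, identify $\phi^{-1}(0)$ and $\phi^{-1}(n)$ as the two unbounded components via Theorem \ref{espectro-proj}, and then pass to the complement. Your bookkeeping (letting $k$ count the nonempty bounded gaps $\phi^{-1}(1),\dots,\phi^{-1}(n-1)$, so that $\ell=k+1$ with $0\le k\le n-1$) is in fact tidier than the paper's two-case argument, which briefly conflates the number of nonempty resolvent components with the number $\ell$ of spectral intervals; but the substance of the proof is the same.
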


\begin{proof}
By Theorem \ref{descripcion-reso}, we know that
$\rho(A)$ is a union of at most $n+1$ open intervals defined by:
$$
\rho(A)=\bigcup\limits_{i=0}^{n}\phi^{-1}(i),
$$
where $\phi^{-1}(i)$ is either $\varnothing$ or an open interval. Then, as $\Sigma(A)=\rho(A)^{c}$ it follows easily that 
$\Sigma(A)$ is the union of at most $n$ closed intervals. 

Firstly, we will assume that  $\phi^{-1}(i)\neq \varnothing$ for all $i\in \{0,\ldots,n\}$. Then, we have that $\Sigma(A)$ is the union of $n$ closed sets:
$$
\Sigma(A)=\bigcup\limits_{i=1}^{n}[a_{i},b_{i}] \quad \textnormal{where $[a_{i},b_{i}]=[\sup\phi^{-1}(i-1),\inf\phi^{-1}(i)]$}.
$$

Secondly, we will assume that  $\phi^{-1}(i) = \varnothing$ for at least  one index $i\in \{0,1,\ldots,n\}$. Then, the number of indexes $i\in \{0,1,\ldots,n\}$ such that $\phi^{-1}(i)\neq \varnothing$
will be $\ell<n$. In consequence, the number of indexes $i\in \{0,1,\ldots,n\}$ such that $\phi^{-1}(i)=\varnothing$ will be $k\geq 1$ with $k+\ell=n+1$. Then we have that $\Sigma(A)$ is the union of $\ell<n$ closed sets: 
$$
\Sigma(A)=\bigcup\limits_{i=1}^{\ell}[a_{i},b_{i}].
$$

\end{proof}

The above result deserves some remarks:

\medskip
\noindent a) For $i=1,\ldots,\ell$, the intervals $[a_{i},b_{i}]$ are called \textit{spectral intervals}.

\medskip

\noindent b) We will see in the next examples that some spectral intervals could be degenerate, that is, $a_{i}=b_{i}$
since the identity $\sup\phi^{-1}(i-1)=\inf\phi^{-1}(i)$ is possible.

\medskip

\noindent c) When $\ell=n$, it is said  that the system has the \textit{full spectrum} property.

\subsection{Illustrative examples}
In order to describe the above results, we will see 
some examples.

\textit{First Example:} Let us consider the scalar linear di\-ffe\-rential equation
\begin{equation}
\label{ejemplo-escalar}
\dot{x}=a(t)x \quad \textnormal{with} \quad a(t)=\frac{1}{1+t^{2}},
\end{equation}
and note that $X(t)=e^{\arctan(t)}$ is a fundamental matrix. As $X(t)$ is bounded on $\mathbb{R}$, the Corollary \ref{u-bounded} from Chapter 2 implies that the above equation has not an exponential dichotomy on $\mathbb{R}$ and consequently $0\in \Sigma(a)$. Moreover, as $|a(t)|\leq 1$ for any $t\in \mathbb{R}$, we know by Corollary
\ref{Cota-BG1} from Chapter 1 that the equation (\ref{ejemplo-escalar}) has the property of 
bounded growth $\&$ decay with constants $K=\alpha=1$ and the 
Proposition \ref{EDECS} implies that $\Sigma(a)\subset [-1,1]$. 

In order to study
\begin{equation}
\label{ejemplo-escalar2}
\dot{x}=[a(t)-\lambda]x \quad \textnormal{with} \quad a(t)=\frac{1}{1+t^{2}},
\end{equation}
let us recall that $X_{\lambda}(t)=e^{\arctan(t)-\lambda\, t}$. Then, for any $\lambda>0$ and $t\geq s$ it follows that
\begin{displaymath}
\begin{array}{rcl}
|X_{\lambda}(t)X_{\lambda}^{-1}(s)| &=& e^{\arctan(t)}e^{-\arctan(s)}e^{-\lambda(t-s)}
\leq  e^{\pi}e^{-\lambda(t-s)},
\end{array}
\end{displaymath}
then (\ref{ejemplo-escalar2}) has an exponential dichotomy on $\mathbb{R}$ with projection $P_{\lambda}=1$ for any $\lambda>0$, which implies that $(0,\infty)\subset \rho(a)$.

Similarly, it can be proved that for any $\lambda<0$ and $t\leq s$, then
$$
|X_{\lambda}(t)X_{\lambda}^{-1}(s)|\leq e^{\pi}e^{-|\lambda|(s-t)}
$$
and (\ref{ejemplo-escalar2}) has an exponential dichotomy on $\mathbb{R}$ with projection $P_{\lambda}=0$. This implies that $(-\infty,0)\subset \rho(A)$ and we can conclude that
$$
\rho(a)=(-\infty,0)\cup (0,+\infty) \quad \textnormal{and} \quad \Sigma(a)=\{0\}.
$$

In addition, by Remark \ref{contentions}, we know that $\Sigma^{+}(a)\subseteq \Sigma(a)=\{0\}$ and
it can be proved that $\Sigma^{+}(a)=\{0\}$. Indeed, otherwise, we will have that $0\in \rho^{+}(a)$ and (\ref{ejemplo-escalar}) would have and exponential dichotomy on $\mathbb{R}_{0}^{+}$. Then,
by Theorem \ref{prop-split} and Remark \ref{ext-split} from Chapter 2 it follows that any nontrivial solution $x(t)=x(0)e^{\arctan(t)}$ of (\ref{ejemplo-escalar}) would have either exponential growth or exponential decay, obtaining a contradiction.

\medskip

\textit{Second Example:} Let us consider the autonomous system
\begin{equation*}
%\label{ex-auto}
\dot{x}=Ax \quad \textnormal{where} \quad A=\left[\begin{array}{ccc}
1 & 0 & 0 \\
0 & 1 & 0 \\
0 & 0 & -1
\end{array}\right].
\end{equation*}

On the one hand, we know that the exponential dichotomy spectrum $\Sigma(A)$ is composed by the numbers $\lambda\in \mathbb{R}$ such that $\dot{x}=[A-\lambda I]x$ has not an exponential dichotomy. On the other hand, by Lemma \ref{ED-Aut-Ex} from Chapter 2 it follows that $\dot{x}=[A-\lambda I]x$ has an exponential dichotomy if and only if the eigenvalues of $A-\lambda I$ have nonzero real part. Finally,
as $\Sigma(A-\lambda I)=\{1-\lambda,-1-\lambda\}$ we can deduce that
$$
\Sigma(A)=\{-1,1\} \quad \textnormal{and} \quad 
\rho(A)=(-\infty,-1)\cup (-1,1)\cup (1,+\infty).
$$

In order to study the function $\phi \colon \rho(A)\to \{0,1,2,3\}$
\begin{itemize}
  \item[a)] If $\lambda >1$. We can prove that  $\dot{x}=[A-\lambda I]x$
 has an exponential dichotomy with projection $P=I$. Then, $\phi(\lambda)=3$
 for any $\lambda>1$.
\item[b)] If $\lambda <-1$. We can prove that  $\dot{x}=[A-\lambda I]x$
 has an exponential dichotomy with projection $P=0$. Then, $\phi(\lambda)=0$
 for any $\lambda<-1$.
    \item[c)] If $\lambda \in (-1,1)$, it is easy to verify that the system
    $\dot{x}=[A-\lambda I]x$ has an exponential dichotomy with the projection
    $$
    P=\left[\begin{array}{ccc}
0 & 0 & 0 \\
0 & 0 & 0 \\
0 & 0 & 1
\end{array}\right].
    $$
    and $\phi(\lambda)=1$ for any $\lambda\in (-1,1)$.
\end{itemize}    

Then we can conclude that 
$$
\phi^{-1}(0)=(-\infty,-1), \quad \phi^{-1}(1)=(-1,1),
\quad \phi^{-1}(2)=\varnothing \quad \textnormal{and} \quad
\phi^{-1}(3)=(1,\infty).
$$

\textit{Third Example:} We will verify that the spectrum of the scalar equation
$$
\dot{x}=a(t)x \quad \textnormal{with $a(t)=t$}
$$
verifies $\Sigma(a)=\mathbb{R}$. In fact, note that if there exists some $\lambda \in \mathbb{R}$ such 
that $\lambda \in \rho(a)$ then
$$
\dot{x}=(t-\lambda)x
$$
has an exponential dichotomy with projection either $P_{\lambda}=1$ or $P_{\lambda}=0$.

As any solution of the perturbed equation is of type
$x_{\lambda}(t)=x_{\lambda}(0)e^{\frac{(t-\lambda)^{2}}{2}}$,
which is not exponentially decreasing to zero when $t\to +\infty$,
we can see that the perturbed equation cannot have an exponential
dichotomy with projection $P_{\lambda}=1$. 

On the other hand, if the system has an exponential dichotomy
with the null projection $P_{\lambda}=0$, we will have the existence
of $K>0$ and $\alpha>0$ such that
\begin{displaymath}
\begin{array}{rcl}
x_{\lambda}(t,s)&=&\displaystyle e^{\frac{(t-\lambda)^{2}-(s-\lambda)^{2}}{2}}
=e^{\frac{(t-s)(t+s-2\lambda)}{2}}\leq Ke^{-\alpha(s-t)} \quad \textnormal{for $t\leq s$},
\end{array}
\end{displaymath}
which is equivalent to
$$
(t-s)(t+s-\lambda)\leq 2\ln(K)-\alpha(s-t) \quad \textnormal{for $t<s$} 
$$
or 
$$
-(t+s-\lambda)\leq \frac{2\ln(K)}{s-t}-\alpha \quad \textnormal{for $t<s$}. 
$$

Now, letting $t\to -\infty$ we will have that $+\infty<-\alpha$, obtaining
a contradiction and $\lambda \notin \rho(a)$ for any $\lambda \in \mathbb{R}$.

\medskip

The last example will be written as a Lemma due to its importance.
\begin{lemma}
%\label{ed-exsu}
Let $a\colon \mathbb{R}_{0}^{+}\to \mathbb{R}$ be a bounded continuous function such that
\begin{displaymath}
\beta^{-}(a):=\liminf\limits_{L,s\to +\infty}\frac{1}{L}\int_{s}^{s+L}a(r)\,dr\quad \textnormal{and} \quad \beta^{+}(a):=\limsup\limits_{L,s\to +\infty}\frac{1}{L}\int_{s}^{s+L}a(r)\,dr, 
\end{displaymath}
then spectrum of the scalar equation
$$
\dot{x}=a(t)x
$$
is described by $\Sigma^{+}(a)=[\beta^{-}(a),\beta^{+}(a)]$.
\end{lemma}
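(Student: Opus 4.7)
The plan is to exploit the scalar nature of the equation, for which the fundamental solution is explicit: $X_{\lambda}(t,s)=\exp\left(\int_{s}^{t}a(r)\,dr-\lambda(t-s)\right)$. Since $n=1$, the only candidate projections in Definition \ref{ED-Per} are $P=1$ and $P=0$, so $\lambda\in\rho^{+}(a)$ means that one of the two scalar inequalities
$$\int_{s}^{t}a(r)\,dr\leq \ln K+(\lambda-\alpha)(t-s)\quad (t\geq s\geq 0),$$
$$\int_{t}^{s}a(r)\,dr\geq -\ln K+(\lambda+\alpha)(s-t)\quad (s\geq t\geq 0),$$
holds for some $K,\alpha>0$. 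The whole proof reduces to characterising each of these two possibilities in terms of $\beta^{\pm}(a)$.

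First I would show that the $P=1$ case is equivalent to $\beta^{+}(a)<\lambda$. For the easy direction, dividing the first inequality by $L:=t-s$ and taking $\limsup$ as $L,s\to+\infty$ gives $\beta^{+}(a)\leq\lambda-\alpha<\lambda$. Conversely, given $\beta^{+}(a)<\lambda$, set $\alpha:=\tfrac{1}{2}(\lambda-\beta^{+}(a))>0$; by definition of $\limsup$ there exist $L_{0},s_{0}\geq 0$ with
$$\int_{s}^{s+L}a(r)\,dr\leq (\lambda-\alpha)L\quad\text{for all }L\geq L_{0},\ s\geq s_{0}.$$
For the remaining pairs $(t,s)$ with $0\leq s<s_{0}$ or $0<t-s<L_{0}$, the crude estimate $\left|\int_{s}^{t}a(r)\,dr\right|\leq M(t-s)$, with $M:=\sup_{r\geq 0}|a(r)|<\infty$, together with the trivial additivity $\int_{s}^{t}=\int_{s}^{\max(s,s_{0})}+\int_{\max(s,s_{0})}^{t}$, shows that the remainder can be absorbed into a constant $C$; hence the desired inequality holds with $\ln K=C$.

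An entirely symmetric argument handles the $P=0$ case: writing the second inequality as $\tfrac{1}{L}\int_{t}^{t+L}a\geq \lambda+\alpha-\ln K/L$ with $L=s-t$ and taking $\liminf$ shows $\beta^{-}(a)\geq \lambda+\alpha>\lambda$; conversely, $\beta^{-}(a)>\lambda$ furnishes, together with the boundedness of $a$, the required lower bound. Combining both cases,
$$\rho^{+}(a)=\{\lambda:\beta^{+}(a)<\lambda\}\cup\{\lambda:\beta^{-}(a)>\lambda\}=(-\infty,\beta^{-}(a))\cup(\beta^{+}(a),+\infty),$$
and since $\beta^{-}(a)\leq\beta^{+}(a)$ always, taking complements yields $\Sigma^{+}(a)=[\beta^{-}(a),\beta^{+}(a)]$.

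The main obstacle is the careful passage from the asymptotic information encoded in $\limsup/\liminf$ (which only controls the averages for large $L$ and large $s$) to a uniform estimate valid for all $t\geq s\geq 0$ required by the definition of exponential dichotomy. This is precisely where the hypothesis that $a$ be bounded is used: it allows the boundary region $\{s<s_{0}\}\cup\{t-s<L_{0}\}$ to be controlled and reincorporated into the multiplicative constant $K$, without disturbing the exponential rate $\alpha$.
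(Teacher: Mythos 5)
Your proposal is correct and follows essentially the same route as the paper's own proof (the lemma defers to Proposition \ref{SPEC} in Appendix A): characterize the two trivial projectors separately, obtaining $\lambda>\beta^{+}(a)$ for $P=1$ and $\lambda<\beta^{-}(a)$ for $P=0$ by dividing the dichotomy inequality by $t-s$ and taking $\limsup/\liminf$, and conversely use the definition of the Bohl exponents on the region $t>t_{0}$, $t-s>L_{0}$ while absorbing the remaining finite region into the constant via the boundedness of $a$, exactly as in the appendix's case analysis. Taking complements then gives $\Sigma^{+}(a)=[\beta^{-}(a),\beta^{+}(a)]$, matching the paper.
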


\begin{proof}
See Proposition \ref{SPEC} from Appendix A.
\end{proof}

\subsection{The Spectral Theorem}
An essential assumption of Theorem \ref{TeoEsp1} was that the system (\ref{LinCap3}) has a property of bounded growth $\&$ decay
which implies the boundedness of $\Sigma(A)$. If we drop this assumption by using the Remark \ref{cotas-parciales}
and following the lines of the proof of Theorem \ref{TeoEsp1} we can prove the next result:
\begin{theorem}
\label{TeoSpec2}
Assume that the exponential dichotomy spectrum of linear system \eqref{LinCap3} is non empty, then
\begin{itemize}
\item[a)] If \eqref{LinCap3} has the property of bounded growth on $\mathbb{R}$ then
\begin{displaymath}
\Sigma(A)=\left\{\begin{array}{lc}
& (-\infty,b_{1})\cup [a_{2},b_{2}] \cup \cdots \cup [a_{\ell},b_{\ell}] \\
&\textnormal{or} \\
& \hspace{0.5cm} [a_{1},b_{1}]\cup [a_{2},b_{2}] \cup \cdots \cup [a_{\ell},b_{\ell}].
\end{array}\right.
\end{displaymath}
\item[b)] If \eqref{LinCap3} has the property of bounded decay on $\mathbb{R}$ then
\begin{displaymath}
\Sigma(A)=\left\{\begin{array}{lc}
& \hspace{0.4cm} [a_{1},b_{1}]\cup [a_{2},b_{2}] \cup \cdots \cup [a_{\ell},+\infty) \\
&\textnormal{or} \\
&[a_{1},b_{1}]\cup [a_{2},b_{2}] \cup \cdots \cup [a_{\ell},b_{\ell}].
\end{array}\right.
\end{displaymath}
\item[c)] If \eqref{LinCap3} has neither the property of bounded growth nor bounded decay on $\mathbb{R}$ then
\begin{displaymath}
\Sigma(A)=\left\{\begin{array}{lc}
& \hspace{-0.5cm} (-\infty,b_{1})\cup [a_{2},b_{2}] \cup \cdots \cup [a_{\ell},b_{\ell}], \\\\
& \hspace{0.35cm} [a_{1},b_{1}]\cup [a_{2},b_{2}] \cup \cdots \cup [a_{\ell},+\infty), \\\\
& (-\infty,b_{1}]\cup [a_{2},b_{2}] \cup \cdots \cup [a_{\ell},+\infty), \\
&\textnormal{or} \\
&[a_{1},b_{1}]\cup [a_{2},b_{2}] \cup \cdots \cup [a_{\ell},b_{\ell}].
\end{array}\right.
\end{displaymath}
\end{itemize}
\end{theorem}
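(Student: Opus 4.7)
The plan is to follow the same structural scheme as in Theorem \ref{TeoEsp1}, with the asymptotic hypothesis of each case merely controlling which of the two extremal components of $\rho(A)$ is forced to be unbounded. First I would establish, independently of the three cases, the underlying decomposition of $\rho(A)$. By Lemma \ref{coment-rangos} the rank function
\[
\phi \colon \rho(A) \to \{0,1,\ldots,n\}, \qquad \phi(\lambda) := \mathrm{Rank}(P_{\lambda})
\]
is nondecreasing and constant on each maximal interval of $\rho(A)$, while Proposition \ref{reso-o2} gives the converse: whenever two points of $\rho(A)$ share the same rank, the entire closed interval between them lies in $\rho(A)$. Consequently each non-empty level set $\phi^{-1}(k)$ is a single maximal open interval, and the ordered non-empty level sets partition $\rho(A)$ into at most $n+1$ open intervals. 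Taking complements, $\Sigma(A)$ is a union of at most $n$ closed intervals corresponding to the jumps of $\phi$.

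Next I would analyze case a). Under bounded growth on $\mathbb{R}$, Remark \ref{cotas-parciales} gives $\Sigma(A) \subset (-\infty,\alpha]$ for some $\alpha \geq 0$, so $(\alpha,+\infty) \subset \rho(A)$. The computation in case a) of the proof of Proposition \ref{EDECS} produces for every $\mu > \alpha$ an exponential dichotomy on $\mathbb{R}$ of $\dot{x}=[A(t)-\mu I]x$ with $P_{\mu}=I$, so $\phi \equiv n$ on $(\alpha,+\infty)$; hence the rightmost component of $\rho(A)$ is $\phi^{-1}(n)$, unbounded above, forcing the rightmost spectral interval to be a bounded $[a_{\ell},b_{\ell}]$. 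The leftmost component is controlled by whether $\phi^{-1}(0)$ is non-empty: if it is, the leftmost spectral interval is bounded and we recover the pattern of Theorem \ref{TeoEsp1}; if it is empty, the minimal level $k_{1} := \min \phi(\rho(A))$ is at least $1$ and $\phi^{-1}(k_{1})$ is bounded below, so the spectrum extends to $-\infty$, yielding the first alternative. Case b) is entirely symmetric: bounded decay forces $\phi^{-1}(0) \supset (-\infty,-\alpha)$ via case b) of the proof of Proposition \ref{EDECS}, the leftmost component of $\rho(A)$ is unbounded, and the rightmost component may or may not be unbounded. In case c) no extremal component is forced to be unbounded, and all four listed combinations may occur depending on whether $\phi^{-1}(0)$ and $\phi^{-1}(n)$ are non-empty.

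The main obstacle will be verifying that the partition of $\rho(A)$ via the rank function $\phi$ can be carried out without the bounded growth \& decay hypothesis that underlies Theorem \ref{descripcion-reso}. The delicate point is that the bound ``at most $n+1$ intervals'' was there obtained by noting that the extreme levels $\phi^{-1}(0)$ and $\phi^{-1}(n)$ are both non-empty; here neither of them need be, so I must argue directly from Lemma \ref{coment-rangos} and Proposition \ref{reso-o2} (both of which are proved without invoking bounded growth \& decay) that the ordered non-empty level sets form a finite sequence of at most $n+1$ disjoint open intervals. Once this is in place, the rest of the proof is a bookkeeping of cases, and the theorem follows by passing to complements.
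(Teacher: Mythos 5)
Your proposal follows the route the paper itself indicates: the paper gives no written proof (the proof environment just defers to the exercises), but the text preceding the statement says to drop the bounded growth \& decay assumption, invoke Remark \ref{cotas-parciales}, and follow the lines of the proof of Theorem \ref{TeoEsp1}. That is exactly what you do, and your structural observation that Lemma \ref{coment-rangos} and Proposition \ref{reso-o2} are the correct tools to recover the at most $n+1$ spectral gaps without bounded growth \& decay is accurate: neither of those results uses that hypothesis, and together they show that the non-empty level sets of $\phi$ are ordered disjoint open intervals, at most $n+1$ of them.

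There is, however, a genuine error in your case analysis for a). You assert that the leftmost spectral interval is bounded if and only if $\phi^{-1}(0) \neq \varnothing$, and in particular that $\phi^{-1}(0)=\varnothing$ forces the leftmost component $\phi^{-1}(k_1)$ of $\rho(A)$ to be bounded below. The implication $\phi^{-1}(0)\neq\varnothing\Rightarrow\Sigma(A)$ bounded below is correct (the same shift computation used in the proof of Proposition \ref{EDECS} shows that ED with $P=0$ propagates to all smaller $\lambda$, so $\phi^{-1}(0)$, when non-empty, is a ray $(-\infty,b)$). But the converse fails. Take $n=2$ and $A(t)=\mathrm{diag}(-|t|,\,b)$ with $b$ a constant. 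The first diagonal entry gives a scalar equation whose shifted versions $\dot x = (-|t|-\lambda)x$ all admit ED with projection $P=1$ (the quadratic decay of $\int_s^t(-|r|)\,dr$ absorbs any $\lambda$ at the cost of a $\lambda$-dependent constant $K_\lambda$), so its scalar spectrum is empty and its contribution to $\phi$ is identically $1$. The second entry contributes $0$ for $\lambda<b$ and $1$ for $\lambda>b$. Hence $\Sigma(A)=\{b\}$, $\rho(A)=(-\infty,b)\cup(b,+\infty)$, $\phi\equiv 1$ on $(-\infty,b)$ and $\phi\equiv 2$ on $(b,+\infty)$. The system has bounded growth (the transition matrix satisfies $\|X(t,s)\|\leq e^{|b|(t-s)}$ for $t\geq s$) but not bounded decay, $\Sigma(A)$ is non-empty and bounded below, yet $\phi^{-1}(0)=\varnothing$. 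So your dichotomy is wrong: $\phi^{-1}(0)=\varnothing$ does \emph{not} imply the spectrum extends to $-\infty$.

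The good news is that the erroneous claim is not needed to prove the theorem. Once you have the decomposition of $\rho(A)$ into at most $n+1$ ordered open intervals with $\phi^{-1}(n)$ non-empty and unbounded above (which follows from bounded growth via Remark \ref{cotas-parciales}), you only need to observe that the leftmost component of $\rho(A)$ is either bounded below (whence $\Sigma(A)$ has an unbounded leftmost interval) or unbounded below (whence the leftmost spectral interval is a bounded $[a_1,b_1]$). Both options are already among the forms listed in case a), so the conclusion follows without attempting to characterize, in terms of $\phi^{-1}(0)$, which of the two alternatives occurs. The same remark applies symmetrically in case b) and with both ends in case c). You should drop the characterization via $\phi^{-1}(0)$ (and its mirror via $\phi^{-1}(n)$) and replace it by this simpler and correct case split.
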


\begin{proof}
See the list of exercises.   
\end{proof}

\section{Spectral Manifolds and Block Diagonalization}
The main goal of this subsection is to prove that any linear system
$\dot{x}=A(t)x$ having a bounded growth $\&$ decay on $\mathbb{R}$ (resp. $\mathbb{R}_{0}^{+}$,$\mathbb{R}_{0}^{-}$) is generally kinematically similar to a block diagonal system
\begin{displaymath}
\dot{y}=\left[\begin{array}{cccc}
B_{1}(t) &  &  & \\
 & B_{2}(t) &  & \\
 & & \ddots  &\\
 &  & & B_{\ell}(t)
\end{array}\right]y.
\end{displaymath}

In addition, notice that the above system can be seen as $\ell$ uncoupled subsystems
$\dot{z}=B_{i}(t)z$ for any $i\in \{1,\ldots,\ell\}$. In this context it will be also proved that
$\Sigma(B_{i})$ is the $i$--th spectral interval of $\Sigma(A)$.

\subsection{Spectral manifolds}
Before starting to work on the spectrum, let us recall that if $V_{1}$ and $V_{2}$ are vector subspaces of $\mathbb{R}^{n}$, we denote as uual
$$
V_{1}+V_{2}:=\left\{z\in \mathbb{R}^{n}\colon z=v_{1}+v_{2} \quad \textnormal{where $v_{1}\in V_{1}$ and $v_{2}\in V_{2}$}\right\}.
$$

In addition, the following Lemma will be useful to deduce interesting properties of the spectrum:

\begin{lemma}
Let $E, F$ and $G$ be subspaces of $\mathbb{R}^n$ with $G\subset E$, then $$E\cap (F+G)=(E\cap F)+G.$$
\end{lemma}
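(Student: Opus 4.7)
The plan is to prove this by establishing both inclusions separately. This is the classical modular law (sometimes called Dedekind's identity) for subspaces of a vector space, and the hypothesis $G\subset E$ is crucial for one of the two directions; without it, only one inclusion holds in general.

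For the easier inclusion $(E\cap F)+G\subseteq E\cap(F+G)$, I would take an arbitrary element $z=v+w$ with $v\in E\cap F$ and $w\in G$. The point $v$ lies in $E$, and by the standing assumption $w\in G\subset E$ as well, so $z=v+w\in E$ by closure of $E$ under sums. On the other hand $v\in F$ and $w\in G$ directly give $z\in F+G$. Hence $z\in E\cap(F+G)$.

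For the reverse inclusion $E\cap(F+G)\subseteq(E\cap F)+G$, take $z\in E\cap(F+G)$ and decompose $z=f+g$ with $f\in F$ and $g\in G$. The key step, and the only place where the hypothesis $G\subset E$ enters nontrivially, is to observe that $g\in G\subset E$, so $f=z-g$ is a difference of two elements of $E$ and therefore lies in $E$. Combined with $f\in F$, this yields $f\in E\cap F$, and consequently $z=f+g\in(E\cap F)+G$.

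There is really no obstacle here beyond remembering to invoke $G\subset E$ at exactly the right moment in the second inclusion; the first inclusion is formal and uses $G\subset E$ only to conclude that $w\in E$. A brief remark could be added noting that if one drops the hypothesis $G\subset E$, then only the inclusion $(E\cap F)+(E\cap G)\subseteq E\cap(F+G)$ survives in general, which explains why the statement is phrased with $G\subset E$.
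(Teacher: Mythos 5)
Your proof is correct and follows essentially the same two-inclusion argument as the paper, decomposing an element of $E\cap(F+G)$ as $f+g$ and using $G\subset E$ to place $f=z-g$ in $E\cap F$. The added remark about the failure of the identity without $G\subset E$ is accurate and a nice touch, though not present in the paper.
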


\begin{proof}
Firstly, we will prove that $(E\cap F)+G\subseteq E\cap (F+G):$

If $x\in (E\cap F)+G$, then 
$$
x=z+g, \text{ where } z\in E\cap F \text{ and } g \in G.
$$ 

We have to prove that $x\in F+G$ and $x\in E$. Notice that $x\in F+G$ follows directly from the above identity
since $z\in E\cap F\subset F$ and $g\in G$. Now, as $g\in G\subset E$, $z\in E\cap F\subset E$, and $E$ is a subspace, we have that $x=z+g\in E$ and we conclude that $x\in E\cap (F+G)$.

\bigskip

Secondly, we will prove that $E\cap (F+G)\subseteq (E\cap F) +G$:

If $x\in E\cap (F+G)$, it follows that $x\in E$ and $x=f+g$ where $f\in F$ and $g\in G$, which implies that
$x-g=f\in F$. As $g\in G\subset E$ and $x\in E$, we also have $x-g\in E\cap F$. This fact combined with $g\in G$ implies that $x\in (E\cap F)+G$ and the Lemma follows. 
\end{proof}

\begin{theorem}
\label{subesp-inv}
Let us assume that the linear system $\dot{x}=A(t)x$ has an exponential dichotomy spectrum 
described by
$$
\Sigma (A)=[a_1, b_1]\cup [a_2, b_2]\cup \ldots\cup [a_{\ell}, b_{\ell}],
$$ 
where $a_{1}\leq b_1 < a_2 \leq b_2 <\ldots <b_{\ell -1} < a_{\ell} \leq b_{\ell}$ and $\ell \leq n$.

Let us consider the trivial projections $P_{\gamma_0}=0$ and $ P_{\gamma_{\ell}}=I$, where 
$\gamma_{0}\in (-\infty,a_{1})$ and $\gamma_{\ell}=(b_{\ell},+\infty)$. Moreover, for
 $i\in \{1, \ldots , \ell -1\}$, choose $\gamma_i\in (b_i, a_{i+1})$ and projections $P_{\gamma_i}$. Then, the sets $$\mathcal{W}_i=\textnormal{Im}(P_{\gamma_i})\cap \ker(P_{\gamma_{i-1}})$$ are subspaces of $\mathbb{R}^{n}$ such that $$\mathcal{W}_1\oplus \cdots \oplus \mathcal{W}_{\ell}=\mathbb{R}^n.$$
\end{theorem}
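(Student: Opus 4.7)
The plan is to argue by induction on $i \in \{0, 1, \ldots, \ell\}$ that $\operatorname{Im}(P_{\gamma_i}) = \mathcal{W}_1 \oplus \cdots \oplus \mathcal{W}_i$, with the empty sum for $i=0$ interpreted as $\{0\}$. Applied at $i = \ell$, together with the fact that $\operatorname{Im}(P_{\gamma_\ell}) = \operatorname{Im}(I) = \mathbb{R}^n$, this yields the desired decomposition $\mathbb{R}^n = \mathcal{W}_1 \oplus \cdots \oplus \mathcal{W}_\ell$.

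First I would establish the monotonicity that makes the whole scheme work. Since each $\gamma_i$ lies in $\rho(A)$, the shifted system has an exponential dichotomy on $\mathbb{R}$ with a projection $P_{\gamma_i}$ that is unique by Lemma \ref{Unicité}, and by the Corollary following Lemma \ref{carac-unstable} we have $\operatorname{Im}(P_{\gamma_i}) = \mathcal{S}^{\gamma_i}$ and $\ker(P_{\gamma_i}) = \mathcal{U}^{\gamma_i}$. Combining the ordering $\gamma_0 < \gamma_1 < \cdots < \gamma_\ell$ with the monotonicity of $\mathcal{S}^\lambda$ and $\mathcal{U}^\lambda$ stated in Remark \ref{unstable-stable-spaces} gives the nested chains
$$\{0\} = \operatorname{Im}(P_{\gamma_0}) \subseteq \operatorname{Im}(P_{\gamma_1}) \subseteq \cdots \subseteq \operatorname{Im}(P_{\gamma_\ell}) = \mathbb{R}^n,$$
$$\mathbb{R}^n = \ker(P_{\gamma_0}) \supseteq \ker(P_{\gamma_1}) \supseteq \cdots \supseteq \ker(P_{\gamma_\ell}) = \{0\}.$$

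For the induction step, I would apply the preparatory lemma $E \cap (F + G) = (E \cap F) + G$ (valid when $G \subseteq E$) with $E = \operatorname{Im}(P_{\gamma_i})$, $F = \ker(P_{\gamma_{i-1}})$, and $G = \operatorname{Im}(P_{\gamma_{i-1}})$. The inclusion $G \subseteq E$ is precisely the first nested chain, and $F + G = \mathbb{R}^n$ since $P_{\gamma_{i-1}}$ is a projection, so the lemma yields
$$\operatorname{Im}(P_{\gamma_i}) = \mathcal{W}_i + \operatorname{Im}(P_{\gamma_{i-1}}).$$
Directness of this sum is immediate from $\mathcal{W}_i \cap \operatorname{Im}(P_{\gamma_{i-1}}) \subseteq \ker(P_{\gamma_{i-1}}) \cap \operatorname{Im}(P_{\gamma_{i-1}}) = \{0\}$, and plugging in the inductive hypothesis closes the step.

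I do not anticipate a serious obstacle: the structural ingredients (uniqueness of $P_{\gamma_i}$, identification of image and kernel with $\mathcal{S}^{\gamma_i}$ and $\mathcal{U}^{\gamma_i}$, and the preparatory set-theoretic lemma) have all been set up in the preceding sections. The only subtlety worth flagging is that the trivial extremes $P_{\gamma_0} = 0$ and $P_{\gamma_\ell} = I$ must be genuinely forced, not merely chosen; this follows from Theorem \ref{espectro-proj} since $\gamma_0 < a_1 = \Sigma_{\min}$ and $\gamma_\ell > b_\ell = \Sigma_{\max}$. Once this is noted, the induction base $\operatorname{Im}(P_{\gamma_0}) = \{0\}$ and the closing identity $\operatorname{Im}(P_{\gamma_\ell}) = \mathbb{R}^n$ both hold and the argument concludes.
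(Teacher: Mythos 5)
Your argument is correct and rests on the same ingredients as the paper's proof: the monotonicity of $\mathcal{S}^\lambda$ and $\mathcal{U}^\lambda$ (Remark \ref{unstable-stable-spaces} plus Lemmas \ref{carac-stable}--\ref{carac-unstable}) giving the nested chains of images and kernels, and the modular identity $E\cap(F+G)=(E\cap F)+G$ valid for $G\subseteq E$. The paper organizes the same computation dually: rather than building $\operatorname{Im}(P_{\gamma_i})=\mathcal{W}_1\oplus\cdots\oplus\mathcal{W}_i$ by forward induction starting from $\operatorname{Im}(P_{\gamma_0})=\{0\}$, it starts from $\mathbb{R}^n=\operatorname{Im}(P_{\gamma_1})+\ker(P_{\gamma_1})$ and at each stage peels $\mathcal{W}_k$ off $\ker(P_{\gamma_{k-1}})$ by applying the lemma with $E=\ker(P_{\gamma_{k-1}})$, $F=\operatorname{Im}(P_{\gamma_k})$, $G=\ker(P_{\gamma_k})$. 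The paper also proves, as a separate preliminary, that $\mathcal{W}_i\cap\mathcal{W}_j=\{0\}$ for $i\neq j$; since pairwise trivial intersections alone do not guarantee a direct sum, the directness must in fact come from the recursive structure, which is only implicit there. Your version makes it explicit, carrying the directness of the partial sums along the induction via $\mathcal{W}_i\cap\operatorname{Im}(P_{\gamma_{i-1}})\subseteq\ker(P_{\gamma_{i-1}})\cap\operatorname{Im}(P_{\gamma_{i-1}})=\{0\}$ and then using the inductive identification $\operatorname{Im}(P_{\gamma_{i-1}})=\mathcal{W}_1\oplus\cdots\oplus\mathcal{W}_{i-1}$. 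This is a modest but genuine tightening of the bookkeeping; the observation that $P_{\gamma_0}=0$ and $P_{\gamma_\ell}=I$ are forced (via Theorem \ref{espectro-proj} and Lemma \ref{Unicité}) rather than merely chosen is also a useful clarification that the paper leaves implicit.
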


\begin{proof}
Firstly, we will prove that $$
\mathcal{W}_i\cap \mathcal{W}_j=0 \text{ for any } 1\leq i \neq j\leq \ell\leq n.
$$

As $\gamma_{0}<\gamma_1 <\gamma_2<\cdots <\gamma_{\ell}$ the statement c) from Remark \ref{unstable-stable-spaces} says that 
$$
\mathcal{U}^{\gamma_{\ell}}\subset \cdots\subset \mathcal{U}^{\gamma_2}\subset \mathcal{U}^{\gamma_1}\subset \mathcal{U}^{\gamma_0},
$$ 
and Lemma \ref{carac-unstable} implies 
$$
\ker (P_{\gamma_{\ell}})\subset \cdots \subset \ker (P_{\gamma_2}) \subset \ker (P_{\gamma_1}). 
$$

As $i\neq j$, we have thar either $i\geq j+1$ or $i\leq j-1$. Without loss of generality, let us assume that $i\leq j-1$. Notice that 
$$
\mathcal{W}_i=\text{Im}(P_{\gamma_i})\cap \ker (P_{\gamma_{i-1}})\subseteq \text{Im}(P_{\gamma_i})
$$ 
and $\ker(P_{\gamma_{j-1}})\subset\ker(P_{\gamma_i})$, then we can deduce
$$
\mathcal{W}_j=\text{Im}(P_{\gamma_j})\cap\ker(P_{\gamma_{j-1}}) \subseteq \ker(P_{\gamma_{j-1}})\subseteq\ker(P_{\gamma_i}).
$$ 

The above properties imply that
$$
\mathcal{W}_{i}\cap \mathcal{W}_{j}\subseteq \ker(P_{\gamma_{i}})\cap \textnormal{Im}(P_{\gamma_{i}})=0.
$$

Secondly, we will prove that
$$
\mathcal{W}_1+\cdots + \mathcal{W}_{\ell}=\mathbb{R}^n.
$$

By using the fact that $\ker(P_{\gamma_0})=\mathbb{R}^n$ we note that 
\begin{align*}
\mathbb{R}^n &= \text{Im}(P_{\gamma_1})+\ker(P_{\gamma_1})\\
&= [\text{Im}(P_{\gamma_1})\cap \ker(P_{\gamma_0})]+\ker(P_{\gamma_1}) \\
&= \mathcal{W}_1+\ker(P_{\gamma_1})\\
&= \mathcal{W}_1 +\{\ker P_{\gamma_1}\cap [\text{Im}(P_{\gamma_2})+\ker(P_{\gamma_2})]\}.
\end{align*}

By using Lemma 4.1 with $E=\ker(P_{\gamma_1}), F=\text{Im}(P_{\gamma_2}) $ and $G=\ker(P_{\gamma_2})$ combined with $G=\ker(P_{\gamma_2})\subset \ker(P_{\gamma_1})=E$, it follows that
\begin{align*}
\mathbb{R}^n&=\mathcal{W}_1+\{[\ker(P_{\gamma_1})\cap\text{Im}(P_{\gamma_2})]+\ker(P_{\gamma_2})\}\\
&=\mathcal{W}_1+\mathcal{W}_2+\ker(P_{\gamma_2})\\
&= \mathcal{W}_1+\mathcal{W}_2 + \ker(P_{\gamma_2})\cap[\text{Im}(P_{\gamma_3})+\ker(P_{\gamma_3})]
\end{align*}
and the identity $\mathcal{W}_1+\ldots +\mathcal{W}_{\ell}=\mathbb{R}^n$ can be proved recursively.
\end{proof}

\begin{remark}
\label{Spec+Dico}
Under the assumptions of Theorem \ref{subesp-inv} and supposing that $\dot{x}=A(t)x$ has an exponential dichotomy on $\mathbb{R}$, it follows by definition that $0\notin \Sigma(A)$ and consequently $0$ must be an element of some spectral gap. By the Theorem \ref{espectro-proj}, we will consider three cases:

\noindent a) If $0\in (b_{\ell},+\infty)$, or equivalently $\Sigma(A)\subset (-\infty,0)$, then the linear system has an exponential dichotomy with the trivial projection $P=I$ and
$$
||X(t,s)||\leq Ke^{-\alpha(t-s)} \quad \textnormal{with $t\geq s$}.
$$

\noindent b) If $0 \in (-\infty,a_{1})$, or equivalently $\Sigma(A)\subset (0,+\infty)$, then the linear system has an exponential dichotomy with the trivial projection $P=0$ and
$$
||X(t,s)||\leq Ke^{-\alpha(s-t)} \quad \textnormal{with $s\geq t$}.
$$

\noindent c) If  $0\in (b_{i-1},a_{i})$ for some $i=2,\ldots,\ell$, or equivalently
$$
\bigcup\limits_{j=1}^{i-1}[a_{j},b_{j}]\subset (-\infty,0)  \quad \textnormal{and} \quad
\bigcup\limits_{j=i}^{\ell}[a_{j},b_{j}]\subset (0,\infty), 
$$
then the linear system has an exponential dichotomy  with a nontrivial projection $P$ having \text{Rank}$(P)=\text{Rank}(P_{\gamma})$ for any 
$\gamma \in (b_{i-1},a_{i})$. Moreover, there exists
$i-1$ spectral intervals to the left of $0$ and $\ell-i+1$ spectral intervals to the right.
\end{remark}

\subsection{A Block Diagonalization Result}

The following Lemmas will provide useful tools for the block diagonalization
result.

\begin{lemma}
%\label{INVTRA}
If the system $\dot{x}=A(t)x$ has bounded spectrum
$$
\Sigma (A)=[a_1, b_1]\cup[a_2,b_2]\cup\cdots\cup[a_{\ell -1}, b_{\ell -1}]\cup [a_{\ell}, b_{\ell}],$$ 
and $\lambda \notin \Sigma(A)$, then it follows that
$$
\Sigma (A-\lambda I)=[a_1-\lambda , b_1-\lambda ]\cup [a_2-\lambda , b_2-\lambda ]\cup\ldots\cup[a_{\ell -1} -\lambda , a_{\ell -1}-\lambda ]\cup[a_{\ell}-\lambda , b_{\ell}-\lambda ].
$$
\end{lemma}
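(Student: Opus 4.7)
The plan is to exploit the translation symmetry built into the definition of the exponential dichotomy spectrum. Writing $B(t) := A(t) - \lambda I$, the central observation is that for any $\nu \in \mathbb{R}$ the $\nu$--shifted system associated with $B$ is
\begin{displaymath}
\dot{x} = [B(t) - \nu I]x = [A(t) - (\lambda + \nu) I]x,
\end{displaymath}
which is literally the $(\lambda+\nu)$--shifted system associated with $A$. In particular, its fundamental matrix is $X(t)e^{-(\lambda+\nu)t}$ and the exponential dichotomy inequalities of Definition \ref{ED-Per} for the parameter $\nu$ with respect to $B$ are verbatim the same inequalities for the parameter $\lambda+\nu$ with respect to $A$.

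From this I would conclude directly that $\nu \in \rho(B)$ if and only if $\lambda + \nu \in \rho(A)$, i.e. $\rho(B) = \rho(A) - \lambda$. Taking complements in $\mathbb{R}$ yields
\begin{displaymath}
\Sigma(A - \lambda I) \;=\; \Sigma(B) \;=\; \Sigma(A) - \lambda.
\end{displaymath}
Since translation is a homeomorphism of $\mathbb{R}$ onto itself, it maps the disjoint closed intervals composing $\Sigma(A)$ onto the disjoint closed intervals $[a_i - \lambda, b_i - \lambda]$, preserving their order and pairwise separation (the separation $a_{i+1} - b_i > 0$ is invariant under a common shift). Hence the identity
\begin{displaymath}
\Sigma(A - \lambda I) = [a_1 - \lambda, b_1 - \lambda] \cup \cdots \cup [a_{\ell} - \lambda, b_{\ell} - \lambda]
\end{displaymath}
follows immediately.

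The hypothesis $\lambda \notin \Sigma(A)$ plays no essential role in this argument, but it guarantees that $0 \notin \Sigma(A - \lambda I)$, so the shifted system $\dot{x} = B(t)x$ itself admits an exponential dichotomy on $\mathbb{R}$. There is no real obstacle here: everything rests on the tautology that shifting $A$ by $\lambda$ and then by $\nu$ is the same as shifting $A$ by $\lambda + \nu$, combined with the translation invariance of the complement operation on $\mathbb{R}$.
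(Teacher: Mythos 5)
Your argument is correct and is essentially the paper's own proof: both rest on the observation that the $\nu$-shifted system for $A-\lambda I$ is literally the $(\lambda+\nu)$-shifted system for $A$, so that $\Sigma(A-\lambda I)=\Sigma(A)-\lambda$, and the shifted intervals then follow immediately. Your remark that the hypothesis $\lambda\notin\Sigma(A)$ is not actually used is also correct.
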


\begin{proof}
By definition of spectrum, we know that 
\begin{displaymath}
\begin{array}{rcl}
\Sigma (A-\lambda I)&=&\left\{\mu\in\mathbb{R} : \dot{x}=[A(t)-(\lambda +\mu ) I]x \text{ has not an exponential dichotomy on $\mathbb{R}$} \right\}\\\\
&=&\left\{\mu\in\mathbb{R} :\lambda +\mu  \in \Sigma(A) \right\}.
\end{array}
\end{displaymath}

Notice that 
\begin{align*}
\mu\in\Sigma (A-\lambda I) &\Leftrightarrow (\mu +\lambda)\in\Sigma (A)\\
&\Leftrightarrow a_j\leq\mu +\lambda \leq b_j \text{ for some}\, j=1, \ldots, \ell\\
&\Leftrightarrow a_j -\lambda \leq\mu\leq b_j-\lambda,
\end{align*}
and the results follows.
\end{proof}

\begin{lemma}
\label{similarite1}
If the systems
\begin{equation}
\label{KSa}
\dot{x}=A(t)x \quad \textnormal{and} \quad 
\dot{y}=B(t)y
\end{equation}
are generally kinematically similar, then
the systems
\begin{equation}
\label{KSb}
\dot{x}=[A(t)-\lambda I]x \quad \textnormal{and} \quad 
\dot{y}=[B(t)-\lambda I]y
\end{equation}
are also generally kinematically similar for any $\lambda\in\mathbb{R}$.
\end{lemma}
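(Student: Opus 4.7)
The plan is to show that the very same Lyapunov--type transformation $Q(t)$ that implements the general kinematical similarity between the systems in \eqref{KSa} also implements the general kinematical similarity between the shifted systems in \eqref{KSb}. Since the notion of general kinematical similarity from Definition \ref{wsimilarite} is characterized by the matrix differential equation \eqref{LT-KS} together with regularity and boundedness requirements on $Q$ and $Q^{-1}$, the strategy reduces to a direct verification that these conditions transfer from the pair $(A,B)$ to the pair $(A-\lambda I, B-\lambda I)$.

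First, by hypothesis there exists an invertible and differentiable matrix function $Q$ with $Q$, $Q^{-1}$ and $\dot{Q}$ continuous on $J$, such that
\begin{displaymath}
\sup_{t\in J}\bigl[\|Q(t)\|+\|Q^{-1}(t)\|\bigr]<+\infty
\end{displaymath}
and $\dot{Q}(t)=A(t)Q(t)-Q(t)B(t)$ for all $t\in J$. I would then compute directly the right--hand side of the matrix differential equation associated with the shifted systems:
\begin{displaymath}
[A(t)-\lambda I]\,Q(t)-Q(t)\,[B(t)-\lambda I]=A(t)Q(t)-\lambda Q(t)-Q(t)B(t)+\lambda Q(t)=A(t)Q(t)-Q(t)B(t).
\end{displaymath}
The two $\lambda Q(t)$ terms cancel, so this quantity equals $\dot{Q}(t)$ by the original similarity. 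Hence $Q$ satisfies \eqref{LT-KS} with $A,B$ replaced by $A-\lambda I,B-\lambda I$.

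Finally, the remaining requirements in Definition \ref{wsimilarite} are automatic: the invertibility, differentiability and continuity of $Q$, $Q^{-1}$ and $\dot{Q}$ do not depend on $A$ or $B$, and the uniform bound on $\|Q(t)\|+\|Q^{-1}(t)\|$ is exactly the one already provided by the hypothesis. Therefore $Q$ realizes a general kinematical similarity between the shifted systems in \eqref{KSb}, which concludes the proof. There is no substantive obstacle here; the key observation is merely that the scalar perturbation $-\lambda I$ commutes with every matrix and thus cancels out in the conjugation equation, so that the transformation witnessing similarity is invariant under spectral shifts.
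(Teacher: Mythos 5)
Your proof is correct and follows essentially the same route as the paper: both take the transformation $Q(t)$ witnessing the original similarity and verify by direct cancellation that $\dot{Q}(t)=[A(t)-\lambda I]Q(t)-Q(t)[B(t)-\lambda I]$, then observe that the regularity and boundedness conditions on $Q$, $Q^{-1}$ are unchanged. No issues.
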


\begin{proof}
As the systems (\ref{KSa}) are generally kinematically similar, by Definition
\ref{wsimilarite} from Chapter 1 there exists a matrix $Q(t)$ which verifies the matrix differential equation $\dot{Q}(t)=A(t)Q(t)-Q(t)B(t)$ and the change of variables $y=Q^{-1}(t)x$ transforms the first system of (\ref{KSa}) into the second one.

Now, notice that
\begin{displaymath}
\begin{array}{rcl}
\dot{Q}(t)&=& A(t)Q(t)-\lambda Q(t)I-Q(t)B(t)+\lambda Q(t)I\\
&=&\{A(t)-\lambda I\}Q(t)-Q(t)\{B(t)-\lambda I\}
\end{array}
\end{displaymath}
and we can easily verify that the change of variable $y=Q^{-1}(t)x$ transforms the first system of (\ref{KSb}) into the second one.
\end{proof}

\begin{lemma}
\label{PKSSPEC}
If the systems \eqref{KSa} are generally kinematically similar, then its spectra are the same, namely, $\Sigma (A)=\Sigma (B)$.
\end{lemma}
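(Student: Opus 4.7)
The plan is to combine the two immediately preceding lemmas with Theorem \ref{KSDE} (preservation of exponential dichotomy under general kinematical similarity). By the very definition of the resolvent, $\lambda \in \rho(A)$ if and only if the shifted system $\dot{x}=[A(t)-\lambda I]x$ admits an exponential dichotomy on $\mathbb{R}$, and similarly for $\rho(B)$. So it suffices to show $\rho(A)=\rho(B)$, which amounts to a biconditional transfer of the exponential dichotomy property between the two shifted systems at each fixed $\lambda$.

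First, I would fix an arbitrary $\lambda\in\mathbb{R}$ and apply Lemma \ref{similarite1} to the assumed general kinematical similarity between \eqref{KSa}: this yields that the two shifted systems $\dot{x}=[A(t)-\lambda I]x$ and $\dot{y}=[B(t)-\lambda I]y$ are themselves generally kinematically similar, via the same transformation $Q(t)$ (so $Q$, $Q^{-1}$ are bounded and continuous, $\dot Q$ is continuous, and $\dot{Q}(t)=[A(t)-\lambda I]Q(t)-Q(t)[B(t)-\lambda I]$).

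Next, I would invoke Theorem \ref{KSDE}, which asserts that general kinematical similarity preserves exponential dichotomy on $J$ (with the same projection, in an appropriate sense, and the same rate $\alpha$; only the constant $K$ is inflated by $\|Q\|_\infty \|Q^{-1}\|_\infty$). Applied on $J=\mathbb{R}$, this gives: the shifted system for $A$ has an exponential dichotomy on $\mathbb{R}$ if and only if the shifted system for $B$ has one. Hence $\lambda\in\rho(A)\iff\lambda\in\rho(B)$, so $\rho(A)=\rho(B)$, and taking complements in $\mathbb{R}$ yields $\Sigma(A)=\Sigma(B)$.

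The argument is essentially a one-line reduction to results already established, so there is no serious obstacle. The only minor point to be careful with is the symmetry of Theorem \ref{KSDE}: kinematical similarity is an equivalence relation (Remark after Definition \ref{similarite} in Chapter 1, verified for the general case via the transformation $Q^{-1}$), which is what legitimizes the biconditional ``if and only if'' above; so in fact Theorem \ref{KSDE} is applied twice, once in each direction, to produce the equality $\rho(A)=\rho(B)$.
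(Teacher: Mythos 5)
Your proof is correct and matches the paper's argument essentially verbatim: both apply Lemma \ref{similarite1} to transfer the general kinematical similarity to the shifted systems, then use Theorem \ref{KSDE} together with the symmetry of kinematical similarity to obtain $\rho(A)=\rho(B)$, hence $\Sigma(A)=\Sigma(B)$. No discrepancies.
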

\begin{proof}
Firstly, by hypothesis combined with Lemma \ref{similarite1} we know that the systems (\ref{KSb}) are generally kinematically similar. Secondly, as the kinematical similarity is a symmetric relation, 
the Theorem \ref{KSDE} from Chapter 2 implies that $\lambda\in\rho (A)$ if and only if $\lambda\in\rho (B)$, which is equivalent to $\Sigma (A)=\Sigma (B)$ and the Theorem follows.
\end{proof}

If $\dot{x}=A(t)x$ has an exponential dichotomy 
with non trivial projection, from Corollary \ref{Redu2} from Chapter 4,
we know that this linear system is generally kinematically similar to
\begin{equation}
\label{bloque}
\left(\begin{array}{c}
\dot{y}_{1}\\
\dot{y}_{2}
\end{array}\right)
=\left[\begin{array}{cc}
B_{1}(t) & 0\\\\
0        & B_{2}(t)
\end{array}\right]
\left(\begin{array}{c}
y_{1}\\
y_{2}
\end{array}\right),
\end{equation}
where $B_{1}(t)\in M_{n_{1}}(\mathbb{R})$, $B_{2}(t)\in M_{n_{2}}(\mathbb{R})$
and $n_{1}+n_{2}=n$. Moreover, Lemma \ref{PKSSPEC} implies that both systems have the same exponential
dichotomy spectrum, namely
\begin{displaymath}
\Sigma(A)=\Sigma\left(\begin{array}{cc}
B_{1}(t) & 0\\
0        & B_{2}(t)
\end{array}\right).
\end{displaymath}

The next result provides a sharper characterization of
$\Sigma(A)$ in terms of the spectra $\Sigma(B_{1})$ and $\Sigma(B_{2})$
of the subsystems
\begin{equation}
\label{sub-diag-FF}
\dot{y}_{1}=B_{1}(t)y_{1} \quad \textnormal{and} \quad \dot{y}_{2}=B_{2}(t)y_{2}.
\end{equation}

\begin{lemma}
\label{dec-spec-22}
If $\dot{x}=A(t)x$ has bounded spectrum
$$
\Sigma(A)=[a_{1},b_{1}]\cup [a_{2},b_{2}]\cup \ldots \cup [a_{\ell},b_{\ell}]
$$
and also has an exponential dichotomy with non trivial projection, then it follows that:
\begin{itemize}
\item[a)] The exponential dichotomy spectrum $\Sigma(A)$ and the exponential dichotomy spectrum of the system \eqref{bloque} verify
\begin{equation}
\label{Id-Spect-T}
\Sigma(A)=\Sigma\left(\left[\begin{array}{cc}
B_{1} & 0\\
0        & B_{2}
\end{array}\right]\right)=\Sigma(B_{1})\cup \Sigma(B_{2});
\end{equation}
\item[b)] There exists a spectral gap $(b_{j-1},a_{j})\subset \rho(A)$ 
containing $0$  such that 
$$
\Sigma(B_{1})=[a_{1},b_{1}]\cup \cdots \cup [a_{j-1},b_{j-1}] \quad \textnormal{and} \quad
\Sigma(B_{2})=[a_{j},b_{j}]\cup \cdots \cup [a_{\ell},b_{\ell}].
$$
\end{itemize}
\end{lemma}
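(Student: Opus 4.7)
The plan is to combine the reducibility of the full system with the fact that kinematic similarity preserves the dichotomy spectrum, and then to locate the spectral intervals on either side of zero. First, since $\dot{x}=A(t)x$ has an exponential dichotomy with a non-trivial projection, Corollary \ref{Redu2} gives a block decomposition into \eqref{bloque}, where $\dot{y}_1=B_1(t)y_1$ has an ED on $\mathbb{R}$ with projection $I_{n_1}$ and $\dot{y}_2=B_2(t)y_2$ has an ED on $\mathbb{R}$ with projection $0$. Lemma \ref{PKSSPEC} then supplies the first identity in \eqref{Id-Spect-T}, so it only remains to prove that the spectrum of a block diagonal system is the union of the spectra of its blocks and then to allocate the spectral intervals to each block.

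For the decomposition $\Sigma(\operatorname{diag}(B_1,B_2))=\Sigma(B_1)\cup\Sigma(B_2)$, the inclusion $\supseteq$ is routine: if each shifted subsystem $\dot{y}_i=[B_i(t)-\lambda I]y_i$ admits an ED with projection $P^i_\lambda$ and constants $(K_i,\alpha_i)$, then the block diagonal projection $\operatorname{diag}(P^1_\lambda,P^2_\lambda)$ together with $K=\max\{K_1,K_2\}$ and $\alpha=\min\{\alpha_1,\alpha_2\}$ furnishes an ED for the block system. The converse is the technical heart of the argument. Suppose $\lambda\in\rho(\operatorname{diag}(B_1,B_2))$, with block diagonal transition matrix $Y(t,s)=\operatorname{diag}(Y_1(t,s),Y_2(t,s))$ and dichotomy projection $P_\lambda$. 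Boundedness of the pair $(y_1(t),y_2(t))$ with weight $e^{-\lambda t}$ on a half-line is equivalent to boundedness of each component, so the stable and unstable subspaces split as products
\[
\mathcal{S}^\lambda=\mathcal{S}^\lambda(B_1)\times\mathcal{S}^\lambda(B_2),\qquad \mathcal{U}^\lambda=\mathcal{U}^\lambda(B_1)\times\mathcal{U}^\lambda(B_2).
\]
By Lemmas \ref{carac-stable} and \ref{carac-unstable} applied to the block system on $\mathbb{R}$, $\operatorname{Im}(P_\lambda)=\mathcal{S}^\lambda$ and $\ker(P_\lambda)=\mathcal{U}^\lambda$. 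Evaluating $P_\lambda$ on vectors of the form $(v,0)$ and $(0,w)$ with $v\in\mathcal{S}^\lambda(B_1)\cup\mathcal{U}^\lambda(B_1)$ and $w\in\mathcal{S}^\lambda(B_2)\cup\mathcal{U}^\lambda(B_2)$ forces the off-diagonal blocks of $P_\lambda$ to vanish, so $P_\lambda=\operatorname{diag}(P^1_\lambda,P^2_\lambda)$ with each $P^i_\lambda$ an invariant projection for the corresponding subsystem. The ED inequalities for the block system then restrict to ED inequalities for each $B_i$, because the operator norm of a block diagonal matrix controls the operator norm of each block up to a fixed constant. Hence $\lambda\in\rho(B_1)\cap\rho(B_2)$, concluding \eqref{Id-Spect-T}.

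To establish part (b), I would use the sign of the exponent in the ED for each block. Since $\dot{y}_1=B_1(t)y_1$ has ED on $\mathbb{R}$ with projection $I$, there exist $K,\alpha>0$ such that $\|Y_1(t,s)\|\leq Ke^{-\alpha(t-s)}$ for $t\geq s$. For every $\lambda\geq 0$, the estimate $\|Y_1(t,s)e^{-\lambda(t-s)}\|\leq Ke^{-(\alpha+\lambda)(t-s)}$ for $t\geq s$ shows that $\dot{y}_1=[B_1(t)-\lambda I]y_1$ again has ED on $\mathbb{R}$ with projection $I$, so $[0,+\infty)\subseteq\rho(B_1)$, i.e., $\Sigma(B_1)\subset(-\infty,0)$. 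Symmetrically, using the projection $0$ for $B_2$ yields $\Sigma(B_2)\subset(0,+\infty)$. Combined with Remark \ref{Spec+Dico}, which guarantees that $0$ lies in some spectral gap $(b_{j-1},a_j)$ of $\Sigma(A)$, the ordered decomposition $\Sigma(A)=[a_1,b_1]\cup\cdots\cup[a_\ell,b_\ell]$ is forced to split at $j$: the first $j-1$ intervals form $\Sigma(B_1)$ and the remaining ones form $\Sigma(B_2)$.

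The main obstacle is the converse half of the spectral decomposition: showing that an ED of the block diagonal system is necessarily realised by a block diagonal projection and that the exponential estimates transfer to each subsystem. Once the block diagonal nature of $P_\lambda$ is secured via the product structure of $\mathcal{S}^\lambda$ and $\mathcal{U}^\lambda$, the remaining work is routine norm bookkeeping on block diagonal matrices and a direct check that the shifted subsystems with projections $I$ and $0$ have spectrum contained in $(-\infty,0)$ and $(0,+\infty)$ respectively.
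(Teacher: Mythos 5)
Your proposal is correct, and while its skeleton coincides with the paper's (Corollary \ref{Redu2} plus Lemma \ref{PKSSPEC} for the first identity in \eqref{Id-Spect-T}, two inclusions for the spectrum of the block diagonal system, and a sign argument to allocate the intervals), you handle the key technical step by a genuinely different argument. For the inclusion $\Sigma(B_{1})\cup\Sigma(B_{2})\subseteq\Sigma\big(\textnormal{diag}(B_{1},B_{2})\big)$, equivalently $\rho(\textnormal{diag})\subseteq\rho(B_{1})\cap\rho(B_{2})$, the paper places $\lambda$ in a bounded spectral gap and applies Corollary \ref{Redu2} a second time, now to the shifted block system \eqref{bloqueL}; you instead work with the block diagonal fundamental matrix, use the product splitting of $\mathcal{S}^{\lambda}$ and $\mathcal{U}^{\lambda}$ together with Lemmas \ref{carac-stable} and \ref{carac-unstable} to identify them with $\textnormal{Im}\,P_{\lambda}$ and $\ker P_{\lambda}$, conclude that $P_{\lambda}$ is itself block diagonal, and then restrict the dichotomy estimates blockwise. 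Your route buys uniformity: it treats every $\lambda\in\rho(\textnormal{diag})$, including $\lambda$ in the unbounded gaps where the projection is trivial and Corollary \ref{Redu2} (which requires $0<r<n$) does not apply as the paper invokes it, and it makes explicit why the projection respects the block structure; the paper's route reuses the reducibility machinery already available but, as written, only covers the bounded gaps. The small price of your argument is a normalization point you should state: fix the block diagonal fundamental matrix with $Y(0)=I$ and, via Remark \ref{ChoB}, take $P_{\lambda}$ as the projection associated with it, so that the two characterization lemmas apply to that constant projection; you should also record that $\mathcal{S}^{\lambda}(B_{i})\oplus\mathcal{U}^{\lambda}(B_{i})=\mathbb{R}^{n_{i}}$ (a consequence of $\mathcal{S}^{\lambda}\oplus\mathcal{U}^{\lambda}=\mathbb{R}^{n}$ and the product structure) before extending $P_{\lambda}(v,0)=(P^{1}_{\lambda}v,0)$ by linearity. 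For part b) you replace the paper's appeal to Remark \ref{Spec+Dico} a)--b) by a direct shift estimate showing $\Sigma(B_{1})\subset(-\infty,0)$ and $\Sigma(B_{2})\subset(0,+\infty)$; since that Remark is stated in the converse direction, your explicit computation is in fact the cleaner justification, and the final splitting at the gap $(b_{j-1},a_{j})$ containing $0$ then follows exactly as in the paper. One cosmetic slip: your labels for the two inclusions are interchanged (the ``routine'' argument proves $\Sigma(\textnormal{diag})\subseteq\Sigma(B_{1})\cup\Sigma(B_{2})$, and the ``technical heart'' proves the reverse); this does not affect correctness since both directions are established.
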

\begin{proof}
As we stated above, the first identity of (\ref{Id-Spect-T}) follows from Lemma \ref{PKSSPEC} combined with the reducibility property. Then, we only 
need to prove that
\begin{displaymath}
\Sigma\left(\left[\begin{array}{cc}
B_{1} & 0\\
0        & B_{2}
\end{array}\right]\right)=\Sigma(B_{1})\cup \Sigma(B_{2}).
\end{displaymath}

Firstly, the property
\begin{displaymath}
\Sigma\left(\left[\begin{array}{cc}
B_{1} & 0\\
0        & B_{2}
\end{array}\right]\right) \subset \Sigma(B_{1})\cup \Sigma(B_{2}).
\end{displaymath}
will be proved by contradiction. In fact, if we assume that
$$
\lambda \in \Sigma\left(\left[\begin{array}{cc}
B_{1} & 0\\
0        & B_{2}
\end{array}\right]\right)  \quad \textnormal{but} \quad \lambda \notin  \Sigma(B_{1})\cup \Sigma(B_{2}).
$$
we will have that
\begin{displaymath}
\begin{array}{rcl}
 \lambda \notin  \Sigma(B_{1})\cup \Sigma(B_{2}) &\Rightarrow & 
 \lambda \in  [\Sigma(B_{1})\cup \Sigma(B_{2})]^{c} \\\\
  &\Rightarrow & 
 \lambda \in  [\Sigma(B_{1})]^{c}\cap [\Sigma(B_{2})]^{c} \\\\
  &\Rightarrow & 
 \lambda \in  \rho(B_{1})\cap \rho(B_{2})
\end{array}
\end{displaymath}
which implies that $\lambda$ is such that the linear systems
$$
\dot{y}_{1}=[B_{1}(t)-\lambda I_{n_{1}}]y_{1} \quad \textnormal{and} \quad \dot{y}_{2}=[B_{2}(t)-\lambda I_{n_{2}}]y_{2}
$$
have an exponential dichotomy on $\mathbb{R}$ with positive constants $K_{i}$, $\alpha_{i}$
($i=1,2$) and
projections $P_{1}\in M_{n_{1}}(\mathbb{R})$
and $P_{2}\in M_{n_{2}}(\mathbb{R})$ ($n_{1}+n_{2}=n$) respectively. Namely
\begin{equation}
\label{EDB10}
\left\{\begin{array}{rcl}
||Y_{1}(t)e^{-\lambda t}P_{1}Y_{1}^{-1}(s)e^{\lambda s}||\leq K_{1}e^{-\alpha_{1}(t-s)} & \textnormal{for $t\geq s$} \,\\
||Y_{1}(t)e^{-\lambda t}[I_{n_{1}}-P_{1}]Y_{1}^{-1}(s)e^{\lambda s}||\leq K_{1}e^{-\alpha_{1}(s-t)} & \textnormal{for $s\geq t$} ,
\end{array}\right.
\end{equation}
and
\begin{equation}
\label{EDB20}
\left\{\begin{array}{rcl}
||Y_{2}(t)e^{-\lambda t}P_{2}Y_{2}^{-1}(s)e^{\lambda s}||\leq K_{2}e^{-\alpha_{2}(t-s)} & \textnormal{for $t\geq s$}\\
||Y_{2}(t)e^{-\lambda t}[I_{n_{2}}-P_{2}]Y_{2}^{-1}(s)e^{\lambda s}||\leq K_{2}e^{-\alpha_{2}(s-t)} & \textnormal{for $s\geq t$},
\end{array}\right.
\end{equation}
where $Y_{1}$ and $Y_{2}$ are the respective fundamental matrices of 
(\ref{sub-diag-FF}). Now, let us define
$$
Y_{\lambda}(t)=\left[\begin{array}{cc}
Y_{1}(t)e^{-\lambda t} & 0\\
0        & Y_{2}(t)e^{-\lambda t}
\end{array}\right] \quad \textnormal{and} \quad P_{0}=\left[\begin{array}{cc}
P_{1} & 0\\
0        & P_{2}
\end{array}\right]. 
$$

By following an approach similar to the carried out in
the proof of Corollary \ref{Redu2} from Chapter 4, it can be verified that $Y_{\lambda}(t)$ is a fundamental matrix
of 
\begin{equation}
\label{bloqueL}
\left(\begin{array}{c}
\dot{y}_{1}\\
\dot{y}_{2}
\end{array}\right)
=\left[\begin{array}{cc}
B_{1}(t)-\lambda I_{n_{1}}& 0\\\\
0        & B_{2}(t)-\lambda I_{n_{2}}
\end{array}\right]
\left(\begin{array}{c}
y_{1}\\
y_{2}
\end{array}\right).
\end{equation}
Morever, by using (\ref{EDB10})--(\ref{EDB20}) it can be deduced
that the above system has an exponential dichotomy with projection
$P_{0}$ and positive constants $K=\max\{K_{1},K_{2}\}$ and $\alpha=\min\{\alpha_{1},\alpha_{2}\}$, which implies that
$$
\lambda \notin \Sigma\left(\left[\begin{array}{cc}
B_{1} & 0\\
0        & B_{2}
\end{array}\right]\right),
$$
obtaining a contradiction and the first contention follows.

Secondly, to prove that
\begin{displaymath}
\Sigma(B_{1})\cup \Sigma(B_{2}) \subset \Sigma\left(\left[\begin{array}{cc}
B_{1} & 0\\
0        & B_{2}
\end{array}\right]\right),
\end{displaymath}
we will prove the equivalent property
\begin{displaymath}
\rho(A)=\rho\left(\left[\begin{array}{cc}
B_{1} & 0\\
0        & B_{2}
\end{array}\right]\right) \subset \rho(B_{1})\cap \rho(B_{2}).
\end{displaymath}

Now, if $\lambda \in \rho(A)$, it must be an element of some spectral gap.
Without loss of generality, we will assume that
$$
\lambda \in (b_{i-1},a_{i})\subset \rho(A)=\rho\left(\left[\begin{array}{cc}
B_{1} & 0\\
0        & B_{2}
\end{array}\right]\right) ,
$$
and we have that (\ref{bloqueL}) has an exponential dichotomy with 
non trivial projection since $(b_{i-1}-\lambda,a_{i}-\lambda)$ is a spectral
gap of (\ref{bloqueL}) containing $0$. Then, we can we can apply Corollary
\ref{Redu2} from Chapter 4 to system (\ref{bloqueL}) with $\lambda \in ]b_{i-1},a_{i}[$
and we will obtain that
$$
\dot{y}_{1}=[B_{1}(t)-\lambda I_{n_{1}}]y_{1} \quad \textnormal{and} \quad \dot{y}_{2}=[B_{2}(t)-\lambda I_{n_{2}}]y_{2}
$$
have an exponential dichotomy, then $\lambda \in \rho(B_{1})\cap \rho(B_{2})$
and the second contention follows, as well as the statement a).

To prove the statement b), as we know that $\dot{x}=A(t)x$ has
an exponential dichotomy with non trivial projection, then by Remark \ref{Spec+Dico}-c) it follows that
there exists $j<\ell$ such that $0\in (b_{j-1},a_{j})$ and then $\Sigma(A)$ verifies
$$
[a_{1},b_{1}]\cup \cdots \cup [a_{j-1},b_{j-1}] \subset (-\infty,0).
$$

Moreover, we can use the statement b) from Corollary \ref{Redu2} of Chapter 4, which implies that $\dot{y}_{1}=B_{1}(t)y_{1}$ and  $\dot{y}_{2}=B_{2}(t)y_{2}$
has an exponential dichotomy with trivial projections $I_{n_{1}}$ and $0_{n_{2}}$ respectively, then by Remark \ref{Spec+Dico}-a) and  \ref{Spec+Dico}-b) we can see that $\Sigma(B_{1})\subset (-\infty,0)$ and $\Sigma(B_{2})\subset (0,+\infty)$.

Finally, as $\Sigma(A)=\Sigma(B_{1})\cup \Sigma(B_{2})$, it follows that
$$
[a_{1},b_{1}]\cup \cdots \cup [a_{j-1},b_{j-1}]=\Sigma(B_{1}) \subset (-\infty,0)
$$
and the result follows.
\end{proof}

\begin{theorem}
\label{TDB}
For any linear system \eqref{LA} with bounded 
spectrum of $\ell$ intervals $\{[a_{i},b_{i}]\}_{i=1}^{\ell}$, then there exist $\ell$ matrices $B_i(t)$ where $\Sigma (B_i)=[a_i, b_i]$ such that \eqref{LA} is generally kinematically similar to the system
\begin{equation*}%\label{ec:7.2}
\dot{x}=\left[\begin{array}{cccc}
B_{1}(t) &  &  & \\
 & B_{2}(t) &  & \\
 & & \ddots  &\\
 &  & & B_{\ell}(t)
\end{array}\right]x.
\end{equation*}
\end{theorem}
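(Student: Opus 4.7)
The plan is to proceed by induction on the number $\ell$ of spectral intervals, using the reducibility result from Corollary \ref{Redu2} (Chapter 4) together with Lemma \ref{dec-spec-22} to peel off one spectral interval at a time.

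For the base case $\ell=1$, there is nothing to do: take $B_1(t)=A(t)$ with the identity as Lyapunov transformation. For the inductive step, assume the result holds whenever the spectrum has fewer than $\ell$ components, and suppose $\Sigma(A)=[a_1,b_1]\cup\cdots\cup[a_\ell,b_\ell]$ with $\ell\geq 2$. I would choose $\lambda_0\in(b_1,a_2)$, which lies in a spectral gap, and consider the shifted system $\dot x=[A(t)-\lambda_0 I]x$. Its spectrum is $\Sigma(A-\lambda_0 I)=\Sigma(A)-\lambda_0$, which contains $0$ in a gap with nonempty spectrum on both sides of $0$; by Remark \ref{Spec+Dico}(c) the shifted system has an exponential dichotomy on $\mathbb{R}$ with nontrivial projection. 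Applying Theorem \ref{EDIR} and Corollary \ref{Redu2}, the shifted system is generally kinematically similar to a two-block diagonal system $\dot y=\mathrm{diag}(\tilde B_1(t),\tilde B_2(t))\,y$.

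Next I would transfer this similarity back to $\dot x=A(t)x$: if $Q(t)$ realizes the kinematical similarity of the shifted systems via $\dot Q=(A-\lambda_0 I)Q-Q\,\mathrm{diag}(\tilde B_1,\tilde B_2)$, then adding and subtracting $\lambda_0 Q$ shows $\dot Q=A\,Q-Q\,\mathrm{diag}(\tilde B_1+\lambda_0 I,\tilde B_2+\lambda_0 I)$, so the same $Q(t)$ witnesses the general kinematical similarity of $\dot x=A(t)x$ to
\begin{equation*}
\dot y=\begin{bmatrix} C_1(t) & 0 \\ 0 & C_2(t)\end{bmatrix}y,\qquad C_i(t):=\tilde B_i(t)+\lambda_0 I.
\end{equation*}
By Lemma \ref{dec-spec-22}(b) applied to the shifted system, $\Sigma(\tilde B_1)=[a_1-\lambda_0,b_1-\lambda_0]$ and $\Sigma(\tilde B_2)=[a_2-\lambda_0,b_2-\lambda_0]\cup\cdots\cup[a_\ell-\lambda_0,b_\ell-\lambda_0]$, so translating and using Lemma \ref{PKSSPEC} one gets $\Sigma(C_1)=[a_1,b_1]$ and $\Sigma(C_2)=[a_2,b_2]\cup\cdots\cup[a_\ell,b_\ell]$.

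The induction hypothesis applies to $\dot y_2=C_2(t)y_2$, producing a Lyapunov-type transformation $Q_2(t)$ (with $Q_2,Q_2^{-1}$ bounded and $Q_2$ differentiable) that puts it into block diagonal form $\mathrm{diag}(B_2(t),\ldots,B_\ell(t))$ with $\Sigma(B_i)=[a_i,b_i]$ for $i=2,\ldots,\ell$. Composing with the block diagonal transformation $R(t)=\mathrm{diag}(I,Q_2(t))$ --- which is clearly bounded together with its inverse and derivative, and which leaves the $C_1$-block untouched --- and setting $B_1(t)=C_1(t)$ yields the desired $\ell$-block decomposition of the original system. The only delicate points in this plan are keeping track of the boundedness of the successive transformations $Q$, $R$ and their composition (so that the general kinematical similarity condition in Definition \ref{wsimilarite} is preserved through the recursion) and invoking Lemma \ref{dec-spec-22} at each inductive stage to match the block spectra with the correct spectral intervals of $A$; the main conceptual obstacle is choosing the gap $\lambda_0$ so that the reducibility step of Chapter 4 can actually be triggered, which is precisely what the hypothesis $\ell\geq 2$ guarantees.
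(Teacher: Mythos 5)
Your proposal is correct and follows essentially the same route as the paper's proof: shift into a spectral gap, use the exponential dichotomy with nontrivial projection to invoke the Chapter 4 reducibility (Corollary \ref{Redu2} together with Lemma \ref{dec-spec-22}), transfer the similarity back via the argument of Lemma \ref{similarite1}, and recurse with block-diagonal compositions of the transformations. The only differences are cosmetic — you peel off the left-most interval and phrase the recursion as induction on $\ell$, while the paper peels off the right-most interval in an explicit recursive construction.
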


\begin{proof}
The proof will be divided in several steps:

\noindent\textit{Step 1:} Let us choose $\lambda_{\ell} \in (b_{\ell -1}, a_{\ell})$, then, the linear system 
\begin{equation}\label{ec:7.3}
\dot{x}=[A(t)-\lambda_{\ell} I]x
\end{equation}
has an exponential dichotomy with projection $P_{\lambda_{\ell}}$ with Rank$(P_{\lambda_{\ell}})=m_{\ell}<n$.

Moreover, as $0\in (b_{\ell-1}-\lambda_{\ell},a_{\ell}-\lambda_{\ell})$, Lemma 
\ref{dec-spec-22} applied to the system (\ref{ec:7.3}) implies that it is reducible to 
\begin{equation}\label{ec:7.4}
\dot{y}(t)=\left[\begin{matrix}
B_{1}^{*}(t) & 0\\
0& B_{\ell}^{*}(t)
\end{matrix}\right] y(t),
\end{equation}
where $B_{1}^{*}\in\mathbb{R}^{m_{\ell}\times m_{\ell}}$ and $B_{\ell}^{*}\in\mathbb{R}^{(n-m_{\ell})\times (n-m_{\ell})}$ are such that
$$
\Sigma (A-\lambda_{\ell}I)=\bigcup_{i=1}^{\ell} \left[a_i-\lambda_{\ell}, b_i-\lambda_{\ell}\right]=\Sigma (B_{1}^{*})\cup\Sigma (B_{\ell}^{*}),
$$ 
and verify
\begin{displaymath}
\Sigma(B_{1}^{*})=\bigcup\limits_{i=1}^{\ell-1}\Big[a_{i}-\lambda_{\ell},b_{i}-\lambda_{\ell}\Big] \quad \textnormal{and} \quad
\Sigma(B_{\ell}^{*})=\Big[a_{\ell}-\lambda_{\ell},b_{\ell}-\lambda_{\ell}\Big].
\end{displaymath}

As we know that (\ref{ec:7.3}) is reducible to (\ref{ec:7.4}), by 
Lemma \ref{similarite1} we also can deduce that the linear system $\dot{x}=A(t)x$ is reducible to
\begin{displaymath}
\dot{y}=\left[\begin{matrix}
B_{1}^{*}(t)+\lambda_{\ell}I_{m_{\ell}} & 0\\
0& B_{\ell}^{*}(t)+\lambda_{\ell}I_{n-m_{\ell}}
\end{matrix}\right]y.
\end{displaymath}

Now we define $B_{\ell}(t):= B_{\ell}^{*}(t)+\lambda_{\ell}I_{n-m_{\ell}}$ and use again Lemma \ref{dec-spec-22} to obtain that
\begin{displaymath}
\Sigma(A)=\Sigma(B_{1}^{*}(t)+\lambda_{\ell}I_{m_{\ell}})\cup \Sigma(B_{\ell}(t)),
\end{displaymath}
where
\begin{displaymath}
\Sigma(B_{\ell})=[a_{\ell},b_{\ell}] \quad \textnormal{and} \quad
\Sigma(B_{1}^{*}+\gamma_{\ell}I_{m_{\ell}})=\bigcup\limits_{i=1}^{\ell-1}[a_{i},b_{i}]=\Sigma(\hat{B}_{1}).
\end{displaymath}

Summarizing, we have proved that the linear system $\dot{x}=A(t)x$ is reducible to
\begin{equation}\label{step1-ks-diag}
\dot{y}(t)=\left[\begin{matrix}
\hat{B}_{1}(t) & 0\\
0& B_{\ell}(t)
\end{matrix}\right] y(t),
\end{equation}
by a transformation $Q_{\ell}(t)\in M_{n}(\mathbb{R})$.

\noindent \textit{Step 2:} Note that (\ref{step1-ks-diag}) can be decoupled as
\begin{displaymath}
\begin{array}{rcl}
\dot{y} & = & \hat{B}_{1}(t)y\\
\dot{z} & = & B_{\ell}(t)z
\end{array}
\end{displaymath}
where
$$
\Sigma(\hat{B}_{1})=[a_{1},b_{1}]\cup\ldots \cup [a_{\ell-2},b_{\ell-2}]\cup [a_{\ell-1},b_{\ell-1}].
$$

Now, the subsystem $\dot{y}=\hat{B}_{1}(t)y$ can be treated as in the previous step
by choosing $\lambda_{\ell-1}\in (b_{\ell-2},a_{\ell-1})$ or, equivalently, $0\in  (b_{\ell-2}-\lambda_{\ell-1},a_{\ell-1}-\lambda_{\ell-1})$.

By following the lines of Step 1,
we can prove that $\dot{y}=\hat{B}_{1}y$ is reducible to
$$
\dot{y}=\left[\begin{matrix}
\hat{B}_{2}(t) & 0\\
0& B_{\ell-1}(t)
\end{matrix}\right]y,
$$
by a transformation $\tilde{Q}_{\ell-1}(t)\in M_{m_{\ell}}(\mathbb{R})$
where 
$$
\Sigma(\hat{B}_{2})=\bigcup\limits_{i=1}^{\ell-2}[a_{i},b_{i}] \quad \textnormal{and} \quad 
\Sigma(B_{\ell-1})=[a_{\ell-1},b_{\ell-2}]
$$
and we can see that $\dot{x}=A(t)x$ is kinematically similar to
\begin{displaymath}
\dot{y}=\left[\begin{array}{ccc}
\hat{B}_{2}(t) & &  \\
 & B_{\ell-1}(t) & \\
 & &  B_{\ell}(t)
\end{array}\right]
\end{displaymath}
by a composition
\begin{displaymath}
Q_{\ell-1}(t)=\left[\begin{matrix}
\widetilde{Q}_{\ell-1}(t) & 0\\
0& I_{n-m_{\ell}}
\end{matrix}\right]\circ Q_{\ell}(t).
\end{displaymath}

\noindent \textit{Step 3:} The rest of the proof can be achieved recursively.

\end{proof}

\section{Comments and References}

\noindent \textbf{1)} The exponential dichotomy spectrum is also called as the \textit{Sacker $\&$ Sell's spectrum}
due to the seminal 1978 article of Sacker and Sell \cite{Sac} which constructs a spectrum for linear skew--products dynamical systems encompassing those arising from linear nonautonomous differential and difference equations. A step ahead is made at the beginning of the century with the works of Aulbach and Siegmund which the construct a spectral theory focused
on linear nonautonomous differential and difference equations \cite{Aul,Sie,Siegmund-2002}, providing a solid basis for a vast research effort. An abridged but well written version of this topic can be founded on \cite[Ch.5]{Klo}.

\medskip

\noindent \textbf{2)} The sections 2 and 3 of this chapter have been writing emulating  the structure and proofs presented in \cite{Klo}. Moreover, the step 2 in the proof of Proposition \ref{reso-o} follows an idea from \cite{Lin}.

\medskip

\noindent \textbf{3)} At the beginning of this chapter we made references to several
\textit{spectral theories}. We refer the reader to  for \textit{Lyapunov spectrum} and \textit{Bohl spectrum}
which are studied in \cite[Ch.1]{Doan}.

\medskip

\noindent \textbf{4)} The assignability problem for the control system (\ref{ESNA3})
has been studied in relation with the exponential dichotomy spectra and we refer
the reader to \cite{Anh,Anh2}.

\noindent \textbf{5)} The exponential dichotomy spectrum for the block system 
\begin{equation*}
%\label{bloque-diag}
\left(\begin{array}{c}
\dot{y}_{1}\\
\dot{y}_{2}
\end{array}\right)
=\left[\begin{array}{cc}
B_{1}(t) & C(t)\\\\
0        & B_{2}(t)
\end{array}\right]
\left(\begin{array}{c}
y_{1}\\
y_{2}
\end{array}\right),
\end{equation*}
has been studied in depth by F. Battelli and K.J. Palmer in \cite{Battelli}, where
it was proved that, in general, 
\begin{displaymath}
   \Sigma\left(\left[\begin{array}{cc}
B_{1}(t) & C(t)\\\\
0        & B_{2}(t)
\end{array}\right]\right) \neq \Sigma(B_{1})\cup \Sigma(B_{2}).
\end{displaymath}

On the other hand, it was also proved that
\begin{displaymath}
   \Sigma^{+}\left(\left[\begin{array}{cc}
B_{1}(t) & C(t)\\\\
0        & B_{2}(t)
\end{array}\right]\right) = \Sigma^{+}(B_{1})\cup \Sigma^{+}(B_{2}),
\end{displaymath}
provided that $C(t)$ is bounded on $\mathbb{R}_{+}$.

\medskip

\noindent \textbf{6)} The property of \textit{diagonal significance}, which has been coined by C. P\"otzsche
in \cite{Potzsche-2016},  refers to
the identity between the spectra of a block system and the union of diagonal spectra.  Some new results about
diagonal significance have been extended for the nonuniform exponential dichotomy spectrum in \cite{CHR}.

\medskip

\noindent \textbf{7)} To the best of our knowledge, the first proof of Theorem \ref{TDB}
has been carried out by S. Siegmund in \cite[Th.3.2]{Siegmund2}. Nevertheless, we emphasize that our proof
has several differences which are based on our results from Chapter 4.

\section{Exercises}

\begin{itemize}
    \item[1-.] Prove that $\Sigma_{EIG}(A)$ satisfies properties \textbf{(P1)--(P3)} of Definition \ref{TSD}.
    \item[2.-] Prove the property (\ref{inc-spec}).
    \item[3.-] Prove that the equation (\ref{schned}) from Chapter 3 verifies the following property: $0\notin \Sigma^{+}(a)$
but $0\in \Sigma(a)$.
    \item[4.-] Prove the properties stated in the Remark \ref{TE0}. In addition
    prove that if $\dot{x}=[A(t)-\lambda I]x$ has an exponential dichotomy on $\mathbb{R}$ with
     projection $P_{\lambda}$ then $\mathcal{U}^{\lambda}\oplus \mathcal{S}^{\lambda}=\mathbb{R}^{n}$.
    \item[5.-] Prove Remark \ref{cotas-parciales} 
    \item[6.-] In the context of Definition \ref{componente}, prove that $C_{\lambda_{1}}\cap C_{\lambda_{2}}\neq \varnothing$ if and only if $C_{\lambda_{1}}=C_{\lambda_{2}}$.

    \item[7.-] Prove the property (\ref{BGTP1}) from Theorem \ref{espectro-proj}.

\item[8.-] Prove a version of Theorem \ref{BSIBG} for $\Sigma^{\pm}$ and the
property of bounded growth growth on $\mathbb{R}_{0}^{+}$ and $\mathbb{R}_{0}^{-}$.
   
   \item[9.-]  Prove that $\lambda_{1}\sim \lambda_{2}$ if $C_{\lambda_{1}}=C_{\lambda_{2}}$ defines an equivalence relation on $\mathbb{R}$. 
   
\item[10.-] Generalize the example of the subsection 3.3 by considering a function
    $f\colon \mathbb{R}\to \mathbb{R}$ such that is bounded and integrable on $\mathbb{R}$.

\item[11.-] Prove Theorem \ref{TeoSpec2}.
\end{itemize}

\chapter{Global linearization}

As we stated at the beginning of the Chapter 2, the problem of local linearization, in the framework of autonomous equations, was initially considered by
D. Grobman \cite{Grobman} and P. Hartman \cite[Theorem I]{Hartman1}; later a specific type of those equations regarded in the previous mentioned works, namely a linear and a quasi-linear system, were studied by C. Pugh \cite{Pugh} in order to achieve a global homeomorphism between the solutions of the systems.  In this chapter, we will study
the generalization of the Pugh's result to the nonautonomous case carried out by K.J. Palmer \cite{Palmer} under the assumption that the linear part has an exponential dichotomy.

\section{Introduction}
Let us consider the linear system
\begin{equation}
\label{lineal-efes}
\begin{array}{lcl}
\dot{x}&=&A(t)x,
\end{array}
\end{equation}
and the family of quasilinear systems
\begin{equation} 
\label{sistema1}
\begin{array}{lcl}
\dot{y}&=&A(t)y+f(t,y), 
\end{array}
\end{equation}
where $t\mapsto A(t)\in M_{n}(\mathbb{R})$ and  
$f\colon \mathbb{R}\times\mathbb{R}^{n}\to \mathbb{R}^{n}$ has suitable properties. In this chapter, we will revise several results devoted to the global linealization of (\ref{sistema1}).

The concept of topological equivalence for nonautonomous systems has been introduced by K.J. Palmer in \cite[pp.754--755]{Palmer}:
\begin{definition}
\label{TopEq}
The systems \textnormal{(\ref{lineal-efes})} and \textnormal{(\ref{sistema1})} are \textbf{topologically equivalent}
if there exists a function $H\colon \mathbb{R}\times \mathbb{R}^{n}\to \mathbb{R}^{n}$ with the properties
\begin{itemize}
\item[(i)] For each fixed $t\in \mathbb{R}$, $u\mapsto H(t,u)$ is an homeomorphism of $\mathbb{R}^{n}$, with inverse 
$u\mapsto G(t,u)$,
\item[(ii)] $G(t,u)-u$ is bounded in $\mathbb{R}\times \mathbb{R}^{n}$,
\item[(iii)] If $t\mapsto x(t)$ is a solution of \textnormal{(\ref{lineal-efes})}, then $t\mapsto H[t,x(t)]$ is a solution 
of \textnormal{(\ref{sistema1})},
\item[(iv)] If $t\mapsto y(t)$ is a solution of \textnormal{(\ref{sistema1})}, then $t\mapsto G[t,y(t)]$ is a solution 
of \textnormal{(\ref{lineal-efes})}.

\end{itemize}
\end{definition}

The concept of strongly topologically equivalence was introduced by Shi and Xiong \cite{Shi}, who realized that, in several
examples of topological e\-qui\-va\-lence, the maps $u\mapsto H(t,u)$ and $u\mapsto G(t,u)$ could have properties sharper than continuity.

\begin{definition}
%\label{StrTopEq}
The systems \textnormal{(\ref{lineal-efes})} and \textnormal{(\ref{sistema1})} are strongly topologically equivalent
if they are topologically equivalent and $H$ and $G$ are uniformly continuous for all $t \in \mathbb{R}.$
\end{definition}

\section{Palmer's Homeomorphism}
\begin{theorem}\cite{Palmer}
\label{intermedio}
Assume that \eqref{lineal-efes} has an exponential dichotomy on $\mathbb{R}$ with constants $K\geq 1$, $\alpha>0$ and projection $P$. Moreover, we will assume that
\begin{itemize}
 \item[\textbf{(A1)}] The matrix $A(\cdot)$ of the linear system is bounded, namely
 $$
 \sup\limits_{t\in \mathbb{R}}||A(t)||=M,
 $$
 \item[\textbf{(A2)}] there exists a positive constant $\mu$  such that
 \begin{displaymath}
|f(t,y)|\leq \mu \quad \textnormal{for any} \quad (t,y)\in \mathbb{R}\times\mathbb{R}^{n}, 
\end{displaymath} 
 \item[\textbf{(A3)}] there exists a positive constant $\gamma$ such that 
\begin{displaymath}
|f(t,y)-f(t,y')| \leq \gamma|y-y'| \quad \textnormal{for any $t\in \mathbb{R}$ and $(y,y')\in \mathbb{R}^{n}$}.
\end{displaymath}
\end{itemize}

If the following inequality is satisfied
\begin{equation}
\label{FPT}
2K\gamma<\alpha,
\end{equation}
then \textnormal{(\ref{lineal-efes})} and \textnormal{(\ref{sistema1})} are strongly topologically equivalent.
\end{theorem}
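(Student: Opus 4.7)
The plan is to construct $H$ and $G$ by the classical device of associating, to each solution of one system, a unique ``parallel'' solution of the other that differs from it by a uniformly bounded function. Given $(\tau,\xi)\in\mathbb{R}\times\mathbb{R}^{n}$, I would set $\phi(t):=X(t,\tau)\xi$ and look for a solution $\psi$ of (\ref{sistema1}) such that $z(t):=\psi(t)-\phi(t)$ is bounded on $\mathbb{R}$. The function $z$ must satisfy $\dot z=A(t)z+h(t,z)$ with $h(t,z):=f(t,z+\phi(t))$, and since $h$ inherits from $f$ the bound $\mu$ and the Lipschitz constant $\gamma$, hypothesis (\ref{FPT}) lets me invoke Proposition \ref{bounded2} to obtain a unique $z^{*}_{\tau,\xi}\in BC(\mathbb{R},\mathbb{R}^{n})$ given by the fixed point equation
$$z^{*}_{\tau,\xi}(t)=\int_{-\infty}^{+\infty}\mathcal{G}(t,s)\,f(s,z^{*}_{\tau,\xi}(s)+X(s,\tau)\xi)\,ds.$$
I then set $H(\tau,\xi):=\xi+z^{*}_{\tau,\xi}(\tau)$. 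Symmetrically, letting $\psi(\cdot;\tau,\eta)$ denote the unique solution of (\ref{sistema1}) with $\psi(\tau)=\eta$ (globally defined thanks to \textbf{(A2)}--\textbf{(A3)}), Proposition \ref{boundeness} applied to $\dot w=A(t)w-f(t,\psi(t;\tau,\eta))$ yields a unique bounded $w^{*}_{\tau,\eta}$, and I set $G(\tau,\eta):=\eta+w^{*}_{\tau,\eta}(\tau)$. Property (ii) of Definition \ref{TopEq} will then follow at once from the a priori bounds $\sup|z^{*}|\leq 2K\mu/(\alpha-2K\gamma)$ and $\sup|w^{*}|\leq 2K\mu/\alpha$ supplied by the two propositions, uniformly in the base point.

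Properties (iii)--(iv) will follow from a uniqueness observation: if $x(t)$ solves (\ref{lineal-efes}), the candidates $z^{*}_{t_{1},x(t_{1})}$ and $z^{*}_{t_{2},x(t_{2})}$ associated with different base points on $x$ satisfy the same perturbed equation and are both bounded, hence coincide by the uniqueness part of Proposition \ref{bounded2}. Thus $t\mapsto H(t,x(t))$ traces a single parallel solution of (\ref{sistema1}); (iv) is analogous. For the inverse identity $G(\tau,H(\tau,\xi))=\xi$, put $y:=H(\tau,\xi)$ and $\psi(t):=X(t,\tau)\xi+z^{*}_{\tau,\xi}(t)$; a direct computation using $\dot\psi=A\psi+f(\cdot,\psi)$ shows that $\omega(t):=X(t,\tau)\xi-\psi(t)=-z^{*}_{\tau,\xi}(t)$ is a bounded solution of $\dot w=A(t)w-f(t,\psi(t))$, so uniqueness in Proposition \ref{boundeness} (ultimately traceable to Corollary \ref{u-bounded}) forces $w^{*}_{\tau,y}=\omega$, hence $G(\tau,y)=y-z^{*}_{\tau,\xi}(\tau)=\xi$. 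The identity $H(\tau,G(\tau,\eta))=\eta$ will be proved analogously, invoking this time the uniqueness part of Proposition \ref{bounded2}.

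The main obstacle will be establishing continuity, and in fact uniform continuity, of $u\mapsto H(t,u)$ and $u\mapsto G(t,u)$ jointly in $(t,u)$, which is what upgrades mere topological equivalence to its strong version. Continuity in $u$ for fixed $t$ follows from standard sensitivity of a contractive fixed point to its parameter: the Green integral operator defining $z^{*}_{\tau,\xi}$ has Lipschitz constant $\leq 2K\gamma/\alpha<1$ and depends on $\xi$ through the driving term $f(\cdot,\cdot+X(\cdot,\tau)\xi)$. For uniform continuity the straightforward estimate produces a supremum of $|X(s,\tau)(\xi_{1}-\xi_{2})|$ over $s\in\mathbb{R}$, which is not controlled by $|\xi_{1}-\xi_{2}|$ alone; my plan to resolve this is to split the Green integral into a compact piece $[\tau-T,\tau+T]$ and its tail. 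On the tail the exponential decay of $\mathcal{G}$ together with $|f|\leq\mu$ will give an $O(e^{-\alpha T})$ contribution independent of $\xi$; on the compact piece the bounded growth granted by \textbf{(A1)} (Corollary \ref{Cota-BG1}) bounds $|X(s,\tau)(\xi_{1}-\xi_{2})|$ by $e^{MT}|\xi_{1}-\xi_{2}|$. Choosing $T=T(\varepsilon)$ first to kill the tails and then taking $|\xi_{1}-\xi_{2}|$ small enough will yield the required $(t,u)$-uniform estimate, and a strictly parallel argument will handle $G$, completing the strong topological equivalence.
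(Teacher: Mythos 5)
Your construction is essentially the paper's own proof: $H$ and $G$ are built from the bounded solutions of the two auxiliary equations (Propositions \ref{bounded2} and \ref{boundeness}), the solution-mapping and inverse identities follow from uniqueness of bounded solutions exactly as in Lemmas \ref{lema2}--\ref{invertibilidad} (your uniqueness argument for the compositions is a minor streamlining of the paper's estimate in Lemma \ref{composition}), and your tail/compact splitting of the Green integral for uniform continuity is precisely the Shi--Xiong argument the paper uses. The proposal is correct and complete in outline.
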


The proof of this Theorem will be a consequence of several intermediate results.

\subsection{Continuity with respect to initial conditions}
The unique solution of (\ref{lineal-efes}) passing through $\xi$ at $t=\tau$ will be denoted by $t\mapsto x(t,\tau,\xi)$. Similarly,
the unique solution of (\ref{sistema1}) passing through $\eta$ at $t=\tau$ will be denoted by $t\mapsto y(t,\tau,\eta)$.

\begin{proposition}
\label{T1}
Under the assumptions of Theorem \ref{intermedio}, given two solutions $t\mapsto y(t,\tau,\eta)$ and $t\mapsto y(t,\tau,\eta')$ of \textnormal{(\ref{sistema1})} it follows that
\begin{equation}
\label{CRCI}
|y(t,\tau,\eta)-y(t,\tau,\eta')|\leq |\eta-\eta'|e^{(M+\gamma)|t-\tau|}.
\end{equation}
\end{proposition}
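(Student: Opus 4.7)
The plan is to derive the estimate from a standard Gronwall argument applied to the integral form of the quasilinear equation (\ref{sistema1}). Denote $u(t) = y(t,\tau,\eta) - y(t,\tau,\eta')$ and observe that, by variation of parameters applied to the full right-hand side of (\ref{sistema1}), both $y(t,\tau,\eta)$ and $y(t,\tau,\eta')$ satisfy the integral equation
$$
y(t,\tau,\zeta) = \zeta + \int_{\tau}^{t}\bigl[A(s)\,y(s,\tau,\zeta) + f(s,y(s,\tau,\zeta))\bigr]\,ds,
$$
so that subtracting we get
$$
u(t) = (\eta-\eta') + \int_{\tau}^{t}\bigl[A(s)\,u(s) + f(s,y(s,\tau,\eta)) - f(s,y(s,\tau,\eta'))\bigr]\,ds.
$$

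First I would handle the case $t\geq \tau$. Taking norms and using hypothesis \textbf{(A1)} ($\|A(s)\|\leq M$) together with the Lipschitz condition \textbf{(A3)} ($|f(s,y)-f(s,y')|\leq \gamma |y-y'|$), I obtain
$$
|u(t)| \leq |\eta - \eta'| + (M+\gamma)\int_{\tau}^{t}|u(s)|\,ds.
$$
Applying Gronwall's Lemma (Appendix B) to this scalar inequality yields $|u(t)|\leq |\eta-\eta'|\,e^{(M+\gamma)(t-\tau)}$, which is (\ref{CRCI}) in this regime since $|t-\tau|=t-\tau$.

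For the case $t\leq \tau$, I would integrate between $t$ and $\tau$ instead, reversing the order so that the bounds are positive. Specifically, rewriting the identity above as
$$
u(t) = (\eta-\eta') - \int_{t}^{\tau}\bigl[A(s)\,u(s) + f(s,y(s,\tau,\eta)) - f(s,y(s,\tau,\eta'))\bigr]\,ds
$$
and taking norms gives
$$
|u(t)| \leq |\eta-\eta'| + (M+\gamma)\int_{t}^{\tau}|u(s)|\,ds,
$$
so the backward version of Gronwall's Lemma (or applying the forward version to the time-reversed variable $r = \tau - s$) produces $|u(t)|\leq |\eta-\eta'|\,e^{(M+\gamma)(\tau-t)}$. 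Combining both cases into the single bound $e^{(M+\gamma)|t-\tau|}$ completes the proof. No obstacle is expected here: the only mildly subtle point is the bookkeeping with the absolute value when $t<\tau$, which is entirely routine once the integrals are oriented consistently.
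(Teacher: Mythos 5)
Your proposal is correct and follows essentially the same route as the paper: integrate the equation, bound the difference using \textbf{(A1)} and \textbf{(A3)} to obtain the scalar inequality with coefficient $M+\gamma$, apply Gronwall's Lemma, and treat the case $t<\tau$ by reorienting the integral. No substantive differences.
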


\begin{proof}
Firstly, let us assume that $t>\tau$. By integrating the system, we have that
\begin{displaymath}
\begin{array}{rcl}
|y(t,\tau,\eta)-y(t,\tau,\eta')|&=& \displaystyle |\eta-\eta'|+\int_{\tau}^{t}|A(s)y(s,\tau,\eta))-A(s)y(s,\tau,\eta'))|\,ds  \\\\
&&
\displaystyle +\int_{\tau}^{t}|f(s,y(s,\tau,\eta))-f(s,y(s,\tau,\eta'))|\,ds.
\end{array}
\end{displaymath}

Now, by \textbf{(A1)} and \textbf{(A3)} we can deduce that

\begin{displaymath}
|y(t,\tau,\eta)-y(t,\tau,\eta')|\leq |\eta-\eta'|+(M+\gamma) \int_{\tau}^{t}|y(s,\tau,\eta)-y(s,\tau,\eta')|\,ds
\end{displaymath}
and (\ref{CRCI}) is a consequence from Gronwall's Lemma.

Now, if $t<\tau$, we can use the integral representation
\begin{displaymath}
\begin{array}{rcl}
y(t,\tau,\eta)-y(t,\tau,\eta') &=& \displaystyle \eta-\eta'-\int_{\tau}^{t}\{A(s)y(s,\tau,\eta)-A(s)y(s,\tau,\eta')\}\,ds\\\\
&&
\displaystyle -\int_{\tau}^{t}\{f(s,y(s,\tau,\eta))-f(s,y(s,\tau,\eta'))\}\,ds,
\end{array}
\end{displaymath}
and it follows that
\begin{displaymath}
\begin{array}{rcl}
|y(t,\tau,\eta)-y(t,\tau,\eta')|&\leq & \displaystyle|\eta-\eta'|\\\\
&&
\displaystyle +(M+\gamma)\int_{\tau}^{t}\,|y(s,\tau,\eta))-y(s,\tau,\eta')|\,ds
\end{array}
\end{displaymath}
and the rest of the proof can be made in a similar way.
\end{proof}

The following result is a direct consequence of the bounded growth growth property
\begin{proposition}
\label{T2}
Given to solutions $t\mapsto x(t,\tau,\xi)$ and $t\mapsto x(t,\tau,\xi')$ of \textnormal{(\ref{lineal-efes})} it follows that
\begin{equation*}
%\label{CRCI2}
|x(t,\tau,\xi)-x(t,\tau,\xi')|\leq |\xi-\xi'|e^{M|t-\tau|}.
\end{equation*}
\end{proposition}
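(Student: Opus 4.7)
The plan is to exploit the fact that the proposition concerns the \emph{linear} system \eqref{lineal-efes}, so the difference of two solutions is again a solution. Concretely, I would set $z(t):=x(t,\tau,\xi)-x(t,\tau,\xi')$ and observe that by linearity $t\mapsto z(t)$ solves the initial value problem $\dot z=A(t)z$ with $z(\tau)=\xi-\xi'$. By Lemma \ref{pitaron} this solution is given explicitly by $z(t)=X(t,\tau)(\xi-\xi')$ in terms of the transition matrix.

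The next step is to control $\|X(t,\tau)\|$ using hypothesis \textbf{(A1)} of the ambient Theorem \ref{intermedio}, which asserts $\sup_{t\in\mathbb{R}}\|A(t)\|=M$. Corollary \ref{Cota-BG1} of Chapter 1 then immediately gives the bounded growth \& decay estimate
$$\|X(t,\tau)\|\leq e^{M|t-\tau|}\quad\text{for all }t,\tau\in\mathbb{R}.$$
Combining this with the representation of $z$ yields
$$|x(t,\tau,\xi)-x(t,\tau,\xi')|=|X(t,\tau)(\xi-\xi')|\leq \|X(t,\tau)\|\,|\xi-\xi'|\leq |\xi-\xi'|\,e^{M|t-\tau|},$$
which is exactly the desired inequality.

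There is no real obstacle here; the argument is a one-line corollary of the linear variation-of-constants formula plus the norm bound on the transition matrix supplied by bounded growth. If one preferred to avoid invoking Corollary \ref{Cota-BG1} directly, an alternative — completely parallel to the proof of Proposition \ref{T1} — would be to write the integral form of $z$ separately for $t\geq\tau$ and $t\leq\tau$, apply \textbf{(A1)}, and close the estimate with Gronwall's lemma (Appendix B); the absence of the nonlinearity $f$ simply removes the $\gamma$ term, recovering the bound with $M$ alone in the exponent.
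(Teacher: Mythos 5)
Your argument is correct, and indeed the paper itself does not supply a proof here — the body of the proof is literally ``See Exercises,'' so the statement is delegated to the reader. Your transition-matrix route (set $z:=x(\cdot,\tau,\xi)-x(\cdot,\tau,\xi')$, invoke Lemma \ref{pitaron} to write $z(t)=X(t,\tau)(\xi-\xi')$, then bound $\|X(t,\tau)\|\le e^{M|t-\tau|}$ via Corollary \ref{Cota-BG1}) is slightly cleaner than the Gronwall argument the paper uses for the companion Proposition \ref{T1}, since linearity lets you bypass the integral inequality entirely. You also correctly observe that the Gronwall route paralleling Proposition \ref{T1} (with the $\gamma$ term absent) gives the same bound, so either path is acceptable. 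One small point worth flagging explicitly: Corollary \ref{Cota-BG1} actually gives $\|X(t,s)\|\le \|I\|\,e^{M|t-s|}$, so the stated inequality without a multiplicative constant implicitly uses a matrix norm with $\|I\|=1$ — the convention the monograph adopts throughout, but it deserves a word in a written-out solution.
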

\begin{proof}
See Exercises.
\end{proof}

\subsection{Some intermediate Lemmas}

The following result are classical tools developed
by K.J. Palmer in his global linearization result.

\begin{lemma}
\label{lemme-0}
For any solution $t\mapsto y(t,\tau,\eta)$ of \textnormal{(\ref{sistema1})} passing
through $\eta$ at $t=\tau$, there exists a unique bounded solution $t\mapsto \chi(t;(\tau,\eta))$ of
\begin{equation}
\label{auxiliar} 
\begin{array}{lcl}
\dot{z}(t)&=&A(t)z(t)-f(t,y(t,\tau,\eta)).
\end{array}
\end{equation}
\end{lemma}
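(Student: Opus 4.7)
The plan is to recognize that equation \eqref{auxiliar} is exactly a non\-ho\-mo\-ge\-neous linear system of the form $\dot{z} = A(t)z + g(t)$ with forcing term $g(t) := -f(t,y(t,\tau,\eta))$, and then to invoke Proposition \ref{boundeness} from Chapter 2.

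First I would check that $g$ belongs to $BC(\mathbb{R},\mathbb{R}^n)$. Continuity of $g$ follows from the continuity of $f$ (which is a consequence of the Lipschitz estimate in \textbf{(A3)}) combined with the continuity of the solution $t\mapsto y(t,\tau,\eta)$ of \eqref{sistema1}. Boundedness is immediate from hypothesis \textbf{(A2)}:
$$
|g(t)| = |f(t,y(t,\tau,\eta))| \leq \mu \quad \textnormal{for any $t\in \mathbb{R}$}.
$$

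Next, since \eqref{lineal-efes} has an exponential dichotomy on $\mathbb{R}$ with constants $K,\alpha>0$ and projection $P$, Proposition \ref{boundeness} from Chapter 2 ensures that the inhomogeneous system
$$
\dot{z} = A(t)z + g(t)
$$
has a unique solution $t\mapsto \chi(t;(\tau,\eta))$ in $BC(\mathbb{R},\mathbb{R}^n)$, explicitly given via the Green function associated to the dichotomy by
$$
\chi(t;(\tau,\eta)) = -\int_{-\infty}^{\infty} \mathcal{G}(t,s)\, f(s,y(s,\tau,\eta))\, ds,
$$
and moreover
$$
\sup\limits_{t\in\mathbb{R}}|\chi(t;(\tau,\eta))| \leq \frac{2K}{\alpha}\,\mu.
$$
This yields both existence and uniqueness, concluding the proof. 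There is no real obstacle here: the result is a direct specialization of the admissibility theorem from Chapter 2. The only subtlety worth pointing out for later use is the uniform $L^\infty$ bound $2K\mu/\alpha$, which is independent of the initial data $(\tau,\eta)$ and will be crucial when assembling the homeomorphism $H$ in the forthcoming construction.
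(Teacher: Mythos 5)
Your proof is correct and follows the same route as the paper: identify $g(t)=-f(t,y(t,\tau,\eta))$, check that $g\in BC(\mathbb{R},\mathbb{R}^n)$ using \textbf{(A2)} and \textbf{(A3)}, and invoke Proposition \ref{boundeness} to obtain existence, uniqueness, and the Green-function representation with the bound $2K\mu/\alpha$. The only cosmetic difference is that you spell out the membership $g\in BC(\mathbb{R},\mathbb{R}^n)$ explicitly, which the paper leaves implicit.
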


\begin{proof}
By using Proposition \ref{boundeness} from Chapter 2 with $g(t)=-f(t,y(t,\tau,\eta))$, we have that  
\begin{equation}
\label{admi-cap6}
\chi(t;(\tau,\eta))=-\int_{-\infty}^{\infty}\mathcal{G}(t,s)f(s,y(s,\tau,\eta))\,ds
\end{equation}
is the unique bounded solution of (\ref{auxiliar}). In addition, \textbf{(A2)} implies 
that $|\chi(t;(\tau,\eta))|\leq 2K \mu \alpha^{-1}$.
\end{proof}

\begin{remark}
\label{uniqueness}
By uniqueness of solutions of (\ref{sistema1}), we know that
\begin{displaymath}
y\big(t,r,y(r,\tau,\eta)\big)=y\big(t,\tau,\eta\big) \quad \textnormal{for any $r\in \mathbb{R}$},
\end{displaymath} 
this fact implies that system (\ref{auxiliar}) can be written as
\begin{displaymath}
\begin{array}{lcl}
\dot{z}(t)&=&A(t)z(t)-f(t,y(t,r,y(r,\tau,\eta)))
\end{array}
\end{displaymath}
and Lemma \ref{lemme-0} implies the identity
\begin{equation}
\label{uniqueness1}
\chi(t;(\tau,\xi))=\chi(t;(r,x(r,\tau,\xi))) \quad \textnormal{for any $r\in \mathbb{R}$}.
\end{equation} 
\end{remark}

\begin{lemma}
%\label{lemme-0-b}
For any solution $t\mapsto x(t,\tau,\xi)$ of \textnormal{(\ref{lineal-efes})} passing through $\xi$ at $t=\tau$, there 
exists a unique bounded solution $t\mapsto \vartheta(t;(\tau,\xi))$ of
\begin{equation}
\label{auxiliar2} 
\begin{array}{lcl}
\dot{w}&=&A(t)w+f(t,x(t,\tau,\xi)+w). 
\end{array}
\end{equation}
\end{lemma}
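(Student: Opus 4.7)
The plan is to recognize that the equation \eqref{auxiliar2} is a nonlinear perturbation of the dichotomic linear system \eqref{lineal-efes} of exactly the form treated by Proposition \ref{bounded2} in Chapter 2. The only thing to check is that the perturbation
$$
h(t,w):=f(t,x(t,\tau,\xi)+w)
$$
inherits from $f$ the hypotheses \textbf{(A2)} and \textbf{(A3)} uniformly in $t$ and $w$, and that the smallness condition \eqref{FPT} gives precisely the constant relation $2K\gamma<\alpha$ required there.

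First I would fix a solution $t\mapsto x(t,\tau,\xi)$ of \eqref{lineal-efes} and verify directly that $h$ is continuous, bounded by $\mu$, and Lipschitz in $w$ with constant $\gamma$, namely
$$
|h(t,w)|=|f(t,x(t,\tau,\xi)+w)|\leq\mu,
$$
$$
|h(t,w_{1})-h(t,w_{2})|=|f(t,x(t,\tau,\xi)+w_{1})-f(t,x(t,\tau,\xi)+w_{2})|\leq \gamma|w_{1}-w_{2}|.
$$
These are immediate from \textbf{(A2)}--\textbf{(A3)} because the shift by $x(t,\tau,\xi)$ does not affect the $w$-Lipschitz constant, nor the sup bound.

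Next, since \eqref{lineal-efes} has exponential dichotomy on $\mathbb{R}$ with constants $K,\alpha$ and projection $P$, and $2K\gamma<\alpha$ by hypothesis, Proposition \ref{bounded2} applies verbatim to the system $\dot w=A(t)w+h(t,w)$. This yields a unique bounded map $t\mapsto \vartheta(t;(\tau,\xi))\in BC(\mathbb{R},\mathbb{R}^{n})$ satisfying the fixed-point identity
$$
\vartheta(t;(\tau,\xi))=\int_{-\infty}^{+\infty}\mathcal{G}(t,s)\,f\bigl(s,x(s,\tau,\xi)+\vartheta(s;(\tau,\xi))\bigr)\,ds,
$$
which, by the standard variation-of-constants argument used to derive the Green-function representation, is precisely the unique bounded solution of \eqref{auxiliar2}. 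As a byproduct, the $L^{\infty}$ bound $\|\vartheta(\cdot;(\tau,\xi))\|_{\infty}\leq 2K\mu/\alpha$ follows just as in Lemma \ref{lemme-0}.

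There is essentially no obstacle here: the whole content of the lemma is the translation of the already-proven admissibility Proposition \ref{bounded2} to the state-dependent perturbation $h$. The only bookkeeping I would take care to record explicitly, for later use (in particular to obtain an analogue of Remark \ref{uniqueness} for $\vartheta$), is the cocycle-type identity
$$
\vartheta(t;(\tau,\xi))=\vartheta\bigl(t;(r,x(r,\tau,\xi))\bigr)\qquad\text{for every }r\in\mathbb{R},
$$
which follows from the uniqueness of the bounded solution together with $x(\cdot,r,x(r,\tau,\xi))=x(\cdot,\tau,\xi)$.
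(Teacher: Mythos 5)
Your proposal is correct and is essentially the paper's own proof: the paper likewise applies Proposition \ref{bounded2} with $h(t,w)=f(t,x(t,\tau,\xi)+w)$ to obtain the unique bounded solution via the Green-function fixed point. The extra remarks you record (the transfer of \textbf{(A2)}--\textbf{(A3)} to $h$, the bound $2K\mu/\alpha$, and the identity $\vartheta(t;(\tau,\xi))=\vartheta(t;(r,x(r,\tau,\xi)))$) are consistent with, and in the last case anticipate, the remark the paper states immediately after the lemma.
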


\begin{proof}
By using Proposition \ref{bounded2} from Chapter 2 with $h(t,w)=f(t,x(t,\tau,\xi)+w)$, we know that there exists
a unique bounded solution
\begin{equation}
\label{fijo}
\begin{array}{l}
\vartheta(t;(\tau,\xi))=\displaystyle \int_{-\infty}^{+\infty}\mathcal{G}(t,s)f(s,x(s,\tau,\xi)+\vartheta(s;(\tau,\xi)))\,ds,
\end{array}
\end{equation}
where $\mathcal{G}$ is the Green function stated in Definition \ref{Green2} from Chapter 2.
\end{proof}

\begin{remark}
%\label{uniquenessB}
Similarly as in Remark \ref{uniqueness}, the reader can verify the identity
\begin{equation}
\label{uniqueness1B}
\vartheta(t;(\tau,\xi))=\vartheta(t;(r,x(r,\tau,\xi))) \quad \textnormal{for any $r\in \mathbb{R}$}.
\end{equation} 
%In addition, it will be useful to nothe that this fixed point can be deduced as a limit of the succesive approximations
%\begin{displaymath}
%\vartheta_{m+1}(t;(\tau,\nu))=\int_{\mathbb{R}}\widetilde{G}(t,s)f(s,y(s,\tau,\nu)+\vartheta_{m}(s;(\tau,\nu)),y(\gamma(s),\tau,\nu)+\vartheta_{m}(\gamma(s);(\tau,%\nu)))\,ds
%\end{displaymath}
%with $\vartheta_{m}(t;(\tau,\nu))=0$.
\end{remark}

\begin{lemma}
\label{lema2}
There exists a function 
$H\colon \mathbb{R}\times \mathbb{R}^{n}\to \mathbb{R}^{n}$, satisfying:
\begin{enumerate}
 \item[(i)] $H(t,x)-x$ is bounded in $\mathbb{R}\times \mathbb{R}^{n}$,
 \item[(ii)] For any solution $t\mapsto x(t,\tau,\xi)$ of \textnormal{(\ref{lineal-efes})}, we have that 
 $t\mapsto H[t,x(t,\tau,\xi)]$ is a  solution of the nonlinear system \textnormal{(\ref{sistema1})} verifying 
\begin{subequations}
  \begin{empheq}{align}
     |H[t,x(t,\tau,\xi)]-x(t,\tau,\xi)|\leq 2\mu K \alpha^{-1} \label{cotas2}, \\
      H[t,x(t,\tau,\xi)]=y(t,\tau,H(\tau,\xi)).
       \label{unicite-HP1}
  \end{empheq}
\end{subequations}
\end{enumerate}
\end{lemma}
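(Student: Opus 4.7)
The plan is to construct $H$ explicitly from the bounded solution $\vartheta$ furnished by the previous lemma, and then verify each property by a direct computation together with the uniqueness of bounded solutions already established.

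First, I would define
\[
H(t,x):=x+\vartheta(t;(t,x))\qquad \text{for every }(t,x)\in\mathbb{R}\times\mathbb{R}^{n}.
\]
Property (i) and the quantitative bound \eqref{cotas2} should fall out immediately from the integral representation \eqref{fijo}: since $|f|\leq \mu$ by \textbf{(A2)} and the Green function $\mathcal{G}$ of the exponential dichotomy satisfies $\int_{-\infty}^{+\infty}\|\mathcal{G}(t,s)\|\,ds\leq 2K/\alpha$ (this is exactly the estimate used in Proposition \ref{boundeness} of Chapter 2), one gets $|\vartheta(t;(\tau,\xi))|\leq 2\mu K\alpha^{-1}$ uniformly in $(t,\tau,\xi)$; in particular $H(t,x)-x=\vartheta(t;(t,x))$ is bounded by $2\mu K\alpha^{-1}$ on $\mathbb{R}\times\mathbb{R}^n$.

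Next, for a given solution $t\mapsto x(t,\tau,\xi)$ of \eqref{lineal-efes}, I would rewrite
\[
H[t,x(t,\tau,\xi)]=x(t,\tau,\xi)+\vartheta\bigl(t;(t,x(t,\tau,\xi))\bigr)
=x(t,\tau,\xi)+\vartheta(t;(\tau,\xi)),
\]
where the second equality uses the uniqueness identity \eqref{uniqueness1B} with $r=t$ (both sides are bounded solutions of the same equation passing through the same point at time $t$). From here a one-line differentiation, combining $\dot{x}=A(t)x$ with the defining equation \eqref{auxiliar2} of $\vartheta$, yields
\[
\tfrac{d}{dt}H[t,x(t,\tau,\xi)]=A(t)H[t,x(t,\tau,\xi)]+f\bigl(t,H[t,x(t,\tau,\xi)]\bigr),
\]
so that $t\mapsto H[t,x(t,\tau,\xi)]$ solves \eqref{sistema1}. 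The estimate \eqref{cotas2} is then just the previous uniform bound on $\vartheta$.

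Finally, for \eqref{unicite-HP1}, I would observe that both $t\mapsto H[t,x(t,\tau,\xi)]$ and $t\mapsto y(t,\tau,H(\tau,\xi))$ are solutions of \eqref{sistema1}, and that they coincide at $t=\tau$: indeed $H[\tau,x(\tau,\tau,\xi)]=H(\tau,\xi)=y(\tau,\tau,H(\tau,\xi))$. Uniqueness of solutions of the initial value problem for \eqref{sistema1}, guaranteed under \textbf{(A1)}--\textbf{(A3)}, then forces the two functions to coincide for all $t\in\mathbb{R}$. The main subtlety throughout is the bookkeeping needed to legitimately replace $\vartheta(t;(t,x(t,\tau,\xi)))$ by $\vartheta(t;(\tau,\xi))$ via \eqref{uniqueness1B}; once that identification is in place, every remaining step is a short calculation.
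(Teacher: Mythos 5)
Your proposal is correct and follows essentially the same route as the paper: you define $H(t,\xi)=\xi+\vartheta(t;(t,\xi))$, obtain (i) and \eqref{cotas2} from \textbf{(A2)} together with the Green-function estimate, use \eqref{uniqueness1B} with $r=t$ to identify $\vartheta(t;(t,x(t,\tau,\xi)))$ with $\vartheta(t;(\tau,\xi))$, differentiate using \eqref{auxiliar2}, and deduce \eqref{unicite-HP1} from uniqueness of solutions of \eqref{sistema1}. This matches the paper's argument step for step.
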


\begin{proof}
Let us define $H\colon \mathbb{R}\times \mathbb{R}^{n}\to \mathbb{R}^{n}$ as follows:
\begin{equation}
\label{DefH}
\begin{array}{rcl}
H(t,\xi)&=&\xi+\vartheta(t;(t,\xi)) \\\\
&=&\displaystyle \xi + \int_{-\infty}^{\infty}\mathcal{G}(t,s)f(s,x(s,t,\xi)+\vartheta(s;(t,\xi)))\,ds.
\end{array}
\end{equation}

By using  (\ref{fijo}) and (\ref{uniqueness1B}), we can deduce the alternative characterizations
\begin{equation}
\label{3-0}
H[t,x(t,\tau,\xi)]=\left\{
\begin{array}{l}
x(t,\tau,\xi)+\vartheta(t;(t,x(t,\tau,\xi)))\\\\
x(t,\tau,\xi)+\vartheta(t;(\tau,\xi))\\\\
x(t,\tau,\xi)+ \displaystyle \int_{-\infty}^{+\infty}\mathcal{G}(t,s)f(s,\underbrace{x(s,\tau,\xi)+\vartheta(s;(\tau,\xi))}_{=H[s,x(s,\tau,\xi)]})\,ds.             
\end{array}\right.
\end{equation}

It will be useful to describe $H[t,x(t,\tau,\xi)]$ as follows
\begin{equation}
\label{HomeL1}
\begin{array}{rcl}
H[t,x(t,\tau,\xi)]&=&\displaystyle x(t,\tau,\xi)+\int_{-\infty}^{\infty}\mathcal{G}(t,s)f(s,H[s,x(s,\tau,\xi)])\,ds,
\end{array}
\end{equation}
and the property (\ref{cotas2}) can be deduced by property \textbf{(A2)} followed by a standard estimation of the Green's function combined with the exponential dichotomy constants.

On the other hand, as $t\mapsto x(t,\tau,\xi)$ is solution of (\ref{lineal-efes}) and $t\mapsto \vartheta(t;(\tau,\xi))$ is solution
of (\ref{auxiliar2}), the alternative characterizations for $H[t,x(t,\tau,\xi)]$ allow us to deduce that 
\begin{displaymath}
\begin{array}{rcl}
\frac{d}{dt}H[t,x(t,\tau,\xi)] &=& A(t)x(t,\tau,\xi)+ \frac{d}{dt} \vartheta(t;(\tau,\xi))\\\\
&=& A(t)x(t,\tau,\xi)+A(t) \vartheta(t;(\tau,\xi))+f(t,x(t,\tau,\xi)+ \vartheta(t;(\tau,\xi))) \\\\
&=&  A(t)\{x(t,\tau,\xi)+\vartheta(t;(\tau,\xi))\}+f(t,x(t,\tau,\xi)+ \vartheta(t;(\tau,\xi))) \\\\
&=& A(t)H[t,x(t,\tau,\xi)]+f(t,H[t,x(t,\tau,\xi)]).
\end{array}
\end{displaymath}

Then, $t\mapsto H[t,x(t,\tau,\xi)]$ is a solution of (\ref{sistema1}) passing through $H(\tau,\xi)$ at $t=\tau$, and the identity 
(\ref{unicite-HP1}) follows by the uniqueness of solutions.
\end{proof}

\begin{lemma}
\label{lema1}
There exists a  function 
$G\colon \mathbb{R}\times \mathbb{R}^{n}\to \mathbb{R}^{n}$, satisfying:
\begin{enumerate}
 \item[(i)] $G(t,y)-y$ is bounded in $\mathbb{R}\times \mathbb{R}^{n}$,
 \item[(ii)] For any solution $t\mapsto y(t,\tau,\eta)$ of \textnormal{(\ref{sistema1})}, then $t\mapsto G[t,y(t,\tau,\eta)]$ 
is a solution of \textnormal{(\ref{lineal-efes})} satisfying
\begin{subequations}
  \begin{empheq}{align}
     |G[t,y(t,\tau,\eta)]-y(t,\tau,\eta)|\leq 2\mu K\alpha^{-1} \label{cota1}, \\
      G[t,y(t,\tau,\eta)]=x(t,\tau,G(\tau,\eta)).
       \label{unicite-HP2}
  \end{empheq}
\end{subequations}
\end{enumerate}
\end{lemma}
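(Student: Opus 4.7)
The plan is to mirror the construction from Lemma~\ref{lema2}, but using the bounded solution $\chi$ of the auxiliary equation \eqref{auxiliar} instead of $\vartheta$. Specifically, I would define
\begin{displaymath}
G(t,\eta):=\eta+\chi(t;(t,\eta))=\eta-\int_{-\infty}^{+\infty}\mathcal{G}(t,s)f(s,y(s,t,\eta))\,ds.
\end{displaymath}
The boundedness part (i) is then essentially immediate: by Lemma~\ref{lemme-0} (which uses Proposition~\ref{boundeness} from Chapter~2 together with the uniform bound \textbf{(A2)} and the exponential dichotomy constants), $|\chi(t;(t,\eta))|\leq 2K\mu\alpha^{-1}$ holds uniformly in $(t,\eta)\in\mathbb{R}\times\mathbb{R}^{n}$, and this already gives $|G(t,\eta)-\eta|\leq 2K\mu\alpha^{-1}$.

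For statement (ii), the crucial step is the identity
\begin{displaymath}
G[t,y(t,\tau,\eta)]=y(t,\tau,\eta)+\chi(t;(\tau,\eta)),
\end{displaymath}
which follows by applying the definition of $G$ at the point $(t,y(t,\tau,\eta))$ and then invoking the ``cocycle'' property \eqref{uniqueness1} of $\chi$, namely $\chi(t;(t,y(t,\tau,\eta)))=\chi(t;(\tau,\eta))$. Once this is in place, I would differentiate with respect to $t$: since $t\mapsto y(t,\tau,\eta)$ solves \eqref{sistema1} and $t\mapsto\chi(t;(\tau,\eta))$ solves \eqref{auxiliar}, the $f$--terms cancel exactly,
\begin{displaymath}
\frac{d}{dt}G[t,y(t,\tau,\eta)]=A(t)y(t,\tau,\eta)+f(t,y(t,\tau,\eta))+A(t)\chi(t;(\tau,\eta))-f(t,y(t,\tau,\eta))=A(t)G[t,y(t,\tau,\eta)],
\end{displaymath}
so $t\mapsto G[t,y(t,\tau,\eta)]$ solves the linear system \eqref{lineal-efes}. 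The estimate \eqref{cota1} is then the bound on $\chi$ recalled above, and \eqref{unicite-HP2} follows from the uniqueness of solutions to \eqref{lineal-efes}: both $t\mapsto G[t,y(t,\tau,\eta)]$ and $t\mapsto x(t,\tau,G(\tau,\eta))$ solve \eqref{lineal-efes} and agree at $t=\tau$.

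I do not expect a genuine obstacle here, since the proof is symmetric to Lemma~\ref{lema2} and uses exactly the same ingredients in the same order; the only point that requires care is verifying the identity \eqref{uniqueness1} for $\chi$ (the analogue of \eqref{uniqueness1B}) which is itself a consequence of the uniqueness of the bounded solution of \eqref{auxiliar} combined with the uniqueness of solutions of \eqref{sistema1} and the fact that $y(t,r,y(r,\tau,\eta))=y(t,\tau,\eta)$. Once this cocycle-type relation is established, the rest of the proof is a direct calculation and the required bounds are already contained in Lemma~\ref{lemme-0}.
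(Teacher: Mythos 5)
Your construction of $G(t,\eta)=\eta+\chi(t;(t,\eta))$, the use of the cocycle identity \eqref{uniqueness1} to obtain $G[t,y(t,\tau,\eta)]=y(t,\tau,\eta)+\chi(t;(\tau,\eta))$, and the differentiation showing the $f$--terms cancel are exactly the paper's argument. The proposal is correct and follows essentially the same route as the paper.
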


\begin{proof}

Let us define the 
function $G\colon \mathbb{R}\times\mathbb{R}^{n}\to \mathbb{R}^{n}$
as follows
\begin{equation}
\label{homeo-00}
\begin{array}{rcl}
G(t,\eta)&=&\eta+\chi(t;(t,\eta))\\\\
&=&\displaystyle \eta-\int_{-\infty}^{\infty}\mathcal{G}(t,s)f(s,y(s,t,\eta))\,ds,
\end{array}
\end{equation}
and \textbf{(A2)} combined with (\ref{admi-cap6}) implies $|G(t,\eta)-\eta|\leq 2\mu K\alpha^{-1}$.

By replacing $(t,\eta)$ by $(t,y(t,\tau,\eta))$ in (\ref{homeo-00}) and using (\ref{uniqueness1}), we can deduce the following alternative characterizations
\begin{equation}
\label{homeosofit}
G[t,y(t,\tau,\eta)]=\left\{\begin{array}{l}
y(t,\tau,\eta)+\chi(t;(t,y(t,\tau,\eta)))\\\\
y(t,\tau,\eta)+\chi(t;(\tau,\eta))\\\\
\displaystyle y(t,\tau,\eta)-\int_{-\infty}^{\infty}\mathcal{G}(t,s)f(s,y(s,\tau,\eta))\,ds.
\end{array}\right.
\end{equation}

As in the previous result, the property (\ref{cota1}) can be deduced by property \textbf{(A2)} followed by a standard estimation of the Green's function combined with the exponential dichotomy constants.

Finally, as $t\mapsto y(t,\tau,\eta)$ is solution of (\ref{sistema1}) and $t\mapsto \chi(t;(\tau,\eta))$ is solution
of (\ref{auxiliar}), the above characterizations of
$G[t,y(t,\tau,\eta)]$
allow us to deduce that
\begin{displaymath}
\begin{array}{rcl}
\frac{d}{dt}G[t,y(t,\tau,\eta)]&=&\displaystyle A(t)y(t,\tau,\eta)+f(t,y(t,\tau,\eta))+\frac{d}{dt}\chi(t;(\tau,\eta))\\\\
&=&\displaystyle A(t)\{y(t,\tau,\eta)+\chi(t;(\tau,\eta))\}\\\\
&=& A(t)G[t,y(t,\tau,\eta)],
\end{array}
\end{displaymath}
and we verify that $t\mapsto G[t,y(t,\tau,\eta)]$ is solution of (\ref{lineal-efes}) passing through by $G(\tau,\eta)$ at $t=\tau$ and (\ref{unicite-HP2}) is verified.
\end{proof}

\begin{lemma}
\label{composition}
For any 
solution $t\mapsto y(t,\tau,\eta)$ of \textnormal{(\ref{sistema1})} with fixed $t$, it follows that
\begin{displaymath}
H[t,G[t,y(t,\tau,\eta)]]=y(t,\tau,\eta).
\end{displaymath}
\end{lemma}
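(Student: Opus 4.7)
The plan is to reduce the global identity $H[t,G[t,y(t,\tau,\eta)]]=y(t,\tau,\eta)$ to the pointwise identity $H(\tau,G(\tau,\eta))=\eta$, and then to establish the pointwise identity via the uniqueness clause of Proposition \ref{bounded2} applied to the perturbed system defining $\vartheta$.

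First I would combine the equivariance identities \eqref{unicite-HP2} and \eqref{unicite-HP1}. Setting $\xi=G(\tau,\eta)$ and using \eqref{unicite-HP2} gives $G[t,y(t,\tau,\eta)]=x(t,\tau,G(\tau,\eta))=x(t,\tau,\xi)$. Feeding this into \eqref{unicite-HP1} yields
$$H\bigl[t,G[t,y(t,\tau,\eta)]\bigr]=H[t,x(t,\tau,\xi)]=y\bigl(t,\tau,H(\tau,G(\tau,\eta))\bigr),$$
so by uniqueness for the nonlinear Cauchy problem it suffices to prove $H(\tau,G(\tau,\eta))=\eta$.

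Next, from the defining formulas \eqref{DefH} and \eqref{homeo-00} evaluated along the solutions (i.e., from the first lines of \eqref{homeosofit} and \eqref{3-0} with $t=\tau$), one has $G(\tau,\eta)=\eta+\chi(\tau;(\tau,\eta))$ and $H(\tau,\xi)=\xi+\vartheta(\tau;(\tau,\xi))$. Consequently
$$H(\tau,G(\tau,\eta))=G(\tau,\eta)+\vartheta\bigl(\tau;(\tau,G(\tau,\eta))\bigr)=\eta+\chi(\tau;(\tau,\eta))+\vartheta\bigl(\tau;(\tau,G(\tau,\eta))\bigr),$$
so the claim will follow if I can show $\vartheta\bigl(s;(\tau,G(\tau,\eta))\bigr)=-\chi(s;(\tau,\eta))$ for all $s\in\mathbb{R}$, evaluated at $s=\tau$.

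The heart of the argument, which is the only step that is not a bookkeeping move, is precisely this identification of the two bounded solutions. The function $s\mapsto \vartheta(s;(\tau,G(\tau,\eta)))$ is by construction the unique bounded solution of $\dot w=A(s)w+f(s,x(s,\tau,G(\tau,\eta))+w)$. Using \eqref{unicite-HP2} and the first alternative form in \eqref{homeosofit}, the seed $x(s,\tau,G(\tau,\eta))$ can be rewritten as $G[s,y(s,\tau,\eta)]=y(s,\tau,\eta)+\chi(s;(\tau,\eta))$, so the equation becomes
$$\dot w=A(s)w+f\bigl(s,y(s,\tau,\eta)+\chi(s;(\tau,\eta))+w\bigr).$$
Plugging in $w(s)=-\chi(s;(\tau,\eta))$, the perturbation collapses to $f(s,y(s,\tau,\eta))$, and using the ODE \eqref{auxiliar} satisfied by $\chi$ one verifies termwise that $w$ solves this equation; moreover $w$ is bounded because $\chi$ is. Invoking Proposition \ref{bounded2} (whose hypotheses $|f|\le\mu$, Lipschitz constant $\gamma$, and $2K\gamma<\alpha$ are exactly \textbf{(A2)}--\textbf{(A3)} and \eqref{FPT}) for the perturbation $h(s,w)=f(s,x(s,\tau,G(\tau,\eta))+w)$, the bounded solution is unique, so $\vartheta(s;(\tau,G(\tau,\eta)))=-\chi(s;(\tau,\eta))$. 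Setting $s=\tau$ and substituting back cancels the $\chi$ term and leaves $H(\tau,G(\tau,\eta))=\eta$, which completes the reduction. The main obstacle is verifying that $-\chi$ genuinely satisfies the $\vartheta$--equation after the seed $x(s,\tau,G(\tau,\eta))$ is expanded; once that cancellation is noticed, the uniqueness clause from Chapter 2 does the rest.
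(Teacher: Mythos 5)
Your proposal is correct, but it takes a genuinely different route from the paper. The paper proves the lemma directly by a contraction-type estimate: using the integral representation \eqref{HomeL1} together with \eqref{unicite-HP2} and the third form in \eqref{homeosofit}, it writes the difference $H[t,G[t,y(t,\tau,\eta)]]-y(t,\tau,\eta)$ as a Green-function integral of $f(s,H[s,G[s,y(s,\tau,\eta)]])-f(s,y(s,\tau,\eta))$, applies \textbf{(A3)} and the dichotomy estimates to get $\sup_t|\cdot|\le \tfrac{2K\gamma}{\alpha}\sup_t|\cdot|$ (the supremum being finite thanks to \eqref{cotas2} and \eqref{cota1}), and concludes from \eqref{FPT} that the supremum vanishes. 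You instead reduce everything, via the equivariance identities \eqref{unicite-HP1}--\eqref{unicite-HP2} and uniqueness of solutions of \eqref{sistema1}, to the time-$\tau$ identity $H(\tau,G(\tau,\eta))=\eta$, and you establish that identity by showing $w=-\chi(\cdot;(\tau,\eta))$ is a bounded solution of the equation \eqref{auxiliar2} defining $\vartheta(\cdot;(\tau,G(\tau,\eta)))$, invoking the uniqueness of bounded solutions guaranteed by Proposition \ref{bounded2}. Your verification is sound: the rewriting $x(s,\tau,G(\tau,\eta))=G[s,y(s,\tau,\eta)]=y(s,\tau,\eta)+\chi(s;(\tau,\eta))$ makes the nonlinearity collapse to $f(s,y(s,\tau,\eta))$ when $w=-\chi$, the ODE \eqref{auxiliar} closes the computation, and there is no circularity since you only use results proved before the lemma. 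What your route buys is the intermediate identity $\vartheta(t;(\tau,G(\tau,\eta)))+\chi(t;(\tau,\eta))=0$ (which the paper lists as Exercise 4 of this chapter and obtains only afterwards), so the cancellation becomes structural rather than the outcome of an estimate; what the paper's route buys is a shorter, self-contained argument in which the smallness condition \eqref{FPT} is used explicitly rather than hidden inside the uniqueness clause of Proposition \ref{bounded2}. Both arguments ultimately rest on the same hypothesis $2K\gamma<\alpha$.
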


\begin{proof}
By using (\ref{HomeL1}) combined with (\ref{unicite-HP2}) 
we have that
\begin{displaymath}
\begin{array}{rcl}
H[t,G[t,y(t,\tau,\eta)]]&=& H[t,x(t,\tau,G(\tau,\eta))]\\\\
&=&\displaystyle x(t,\tau,G(\tau,\eta))+\int_{\mathbb{R}}\mathcal{G}(t,s)f(s,H[s,x(s,\tau,G(s,\eta)]])\,ds\\\\
&=&\displaystyle G[t,y(t,\tau,\eta)]+\int_{\mathbb{R}}\mathcal{G}(t,s)f(s,H[s,G[s,y(s,\tau,\eta)]])\,ds.
\end{array}
\end{displaymath}

The third characterization of (\ref{homeosofit}) and the identity above implies that
\begin{displaymath}
H[t,G[t,y(t,\tau,\eta)]]-y(t,\tau,\eta)=\int_{\mathbb{R}}\mathcal{G}(t,s)\{f(s,H[s,G[s,y(s,\tau,\eta)]])-f(s,y(s,\tau,\eta))\}\,ds.
\end{displaymath}

By \textbf{(A3)} combined with standard estimations of the Green's function allow to deduce that

\begin{displaymath}
|H[t,G[t,y(t,\tau,\eta)]]-y(t,\tau,\eta)|\leq \frac{2K\gamma}{\alpha}\sup\limits_{t\in \mathbb{R}}|H[t,G[t,\tau,\eta)]-y(t,\tau,\eta)|.
\end{displaymath}

The Lemma is a direct consequence of the identity
$$
\sup\limits_{t\in \mathbb{R}}|H[\cdot,G[\cdot,\tau,\eta)]-y(\cdot,\tau,\eta)|=0.
$$

In fact, the above term cannot be positive since it will implies a contradiction with (\ref{FPT}).
\end{proof}

\begin{lemma}
\label{composition2}
For any solution $t\mapsto x(t,\tau,\xi)$ of \textnormal{(\ref{lineal-efes})}, it follows that
\begin{displaymath}
G[t,H[t,x(t,\tau,\xi)]]=x(t,\tau,\xi).
\end{displaymath}
\end{lemma}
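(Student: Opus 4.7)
The plan is to mirror the proof of Lemma \ref{composition}, but I expect this direction to be simpler because $G$ admits an explicit (non-recursive) representation in terms of a nonlinear solution, whereas $H$ is only given as a fixed point. Concretely, I would proceed as follows.

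First, I would use the identity (\ref{unicite-HP1}) to recognise that the curve $t\mapsto H[t,x(t,\tau,\xi)]$ is exactly the solution $t\mapsto y(t,\tau,H(\tau,\xi))$ of the nonlinear system (\ref{sistema1}) with initial datum $\eta=H(\tau,\xi)$ at time $\tau$. This is the key observation: to evaluate $G[t,\cdot]$ along this curve, I may apply the representation of $G$ along a nonlinear trajectory given by the third line of (\ref{homeosofit}).

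Applying that representation with $\eta=H(\tau,\xi)$, and then translating $y(\cdot,\tau,H(\tau,\xi))$ back into $H[\cdot,x(\cdot,\tau,\xi)]$ via (\ref{unicite-HP1}), I obtain
\begin{displaymath}
G[t,H[t,x(t,\tau,\xi)]] \;=\; H[t,x(t,\tau,\xi)] \;-\; \int_{-\infty}^{+\infty}\mathcal{G}(t,s)\,f\bigl(s,H[s,x(s,\tau,\xi)]\bigr)\,ds.
\end{displaymath}
Now I would invoke the third characterisation of $H$ in (\ref{3-0}), namely (\ref{HomeL1}), to replace the first $H[t,x(t,\tau,\xi)]$ on the right-hand side by $x(t,\tau,\xi) + \int_{-\infty}^{+\infty}\mathcal{G}(t,s)\,f(s,H[s,x(s,\tau,\xi)])\,ds$. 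The two integrals cancel identically and the conclusion $G[t,H[t,x(t,\tau,\xi)]]=x(t,\tau,\xi)$ drops out.

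I expect no serious obstacle: unlike Lemma \ref{composition}, no contractive fixed-point argument (and hence no use of the standing hypothesis $2K\gamma<\alpha$) is needed, because the integrand in the $G$-representation depends only on the nonlinear trajectory $y(s,\tau,\eta)$ and not on $G$ itself. The only care required is bookkeeping: one must apply (\ref{unicite-HP1}) to identify the relevant nonlinear solution, then use the explicit (third) forms of $G$ and $H$ in (\ref{homeosofit}) and (\ref{HomeL1}) in the correct order so that the integral terms cancel.
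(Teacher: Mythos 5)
Your proposal is correct and follows essentially the same route as the paper's proof: both identify $t\mapsto H[t,x(t,\tau,\xi)]$ with the nonlinear solution $y(t,\tau,H(\tau,\xi))$ via (\ref{unicite-HP1}), use the trajectory representation of $G$ coming from (\ref{homeo-00})/(\ref{homeosofit}), and then cancel the integral against the one in (\ref{HomeL1}). Your observation that, unlike Lemma \ref{composition}, no contraction argument (and hence no use of $2K\gamma<\alpha$) is needed here is also consistent with the paper's argument.
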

\begin{proof}
Let us observe that  (\ref{unicite-HP1}), (\ref{HomeL1})  and (\ref{homeo-00}) imply that
\begin{displaymath}
\begin{array}{rcl}
G[t,H[t,x(t,\tau,\xi)]]&=&\displaystyle H[t,x(t,\tau,\xi)]-\int_{\mathbb{R}}\mathcal{G}(t,s)f(s,y(s,t,H[t,x(t,\tau,\xi)])\,ds\\\\
&=&\displaystyle 
H[t,x(t,\tau,\xi)]-\int_{\mathbb{R}}\mathcal{G}(t,s)f(s,y(s,t,y(t,\tau,H(\tau,\xi))))\,ds\\\\
&=&\displaystyle 
H[t,x(t,\tau,\xi)]-\int_{\mathbb{R}}\mathcal{G}(t,s)f(s,y(s,\tau,H(\tau,\xi)))\,ds\\\\
&=&
x(t,\tau,\xi)\\\\
&&\displaystyle +\int_{\mathbb{R}}\mathcal{G}(t,s)\{f(s,H[s,x(s,\tau,\xi)])-f(s,y(s,\tau,H(\tau,\xi)))\}\,ds\\\\
&=&x(t,\tau,\xi),
\end{array}
\end{displaymath}
and the result follows.
\end{proof}

The reader can notice (see also Definition \ref{TopEq}) that the notation $H[\cdot,\cdot]$ and $G[\cdot,\cdot]$
is reserved to the case when $H$ and $G$ are respectively defined on solutions of (\ref{sistema1}) and (\ref{lineal-efes}).

\begin{lemma}
\label{invertibilidad}
For any fixed $t$ and any pair $(\xi,\eta)\in \mathbb{R}^{n}\times\mathbb{R}^{n}$, it follows that
\begin{equation}
\label{inve1}
G(t,H(t,\xi))=\xi 
\end{equation}
and
\begin{equation}
\label{inve2}
H(t,G(t,\eta))=\eta.
\end{equation}
\end{lemma}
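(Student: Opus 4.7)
The plan is to reduce the two identities to the compositions already established in Lemmas \ref{composition} and \ref{composition2}, exploiting the fact that every point of $\mathbb{R}^n$ can be realized as the value of a solution at a chosen instant.

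First I would treat \eqref{inve2}. Fix $t\in\mathbb{R}$ and $\eta\in\mathbb{R}^n$, and consider the (unique) solution $s\mapsto y(s,t,\eta)$ of \eqref{sistema1} that passes through $\eta$ at $s=t$, so that by definition $y(t,t,\eta)=\eta$. Applying Lemma \ref{composition} with $\tau=t$ gives
\begin{displaymath}
H[t,G[t,y(t,t,\eta)]]=y(t,t,\eta),
\end{displaymath}
and substituting $y(t,t,\eta)=\eta$ on both sides yields $H(t,G(t,\eta))=\eta$. Note that the bracket notation $H[\cdot,\cdot]$ and $G[\cdot,\cdot]$ used in Lemma \ref{composition} is exactly the same evaluation of the maps $H$ and $G$ defined in \eqref{DefH} and \eqref{homeo-00}; the square brackets only stress that the second argument belongs to a solution curve, but the map itself does not depend on that fact.

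Next I would prove \eqref{inve1} in a perfectly symmetric way. Fix $t\in\mathbb{R}$ and $\xi\in\mathbb{R}^n$ and consider the solution $s\mapsto x(s,t,\xi)$ of \eqref{lineal-efes} with $x(t,t,\xi)=\xi$. Applying Lemma \ref{composition2} with $\tau=t$ gives
\begin{displaymath}
G[t,H[t,x(t,t,\xi)]]=x(t,t,\xi),
\end{displaymath}
and substituting $x(t,t,\xi)=\xi$ yields $G(t,H(t,\xi))=\xi$.

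I do not expect any real obstacle here: the only conceptual point is that the previous two lemmas were formulated along trajectories, whereas \eqref{inve1}--\eqref{inve2} are required pointwise, and this gap is bridged by the trivial observation that $(\tau,\xi)\mapsto x(\tau,\tau,\xi)=\xi$ (and analogously for $y$). As a byproduct, combined with Lemma \ref{lema2}(i) and Lemma \ref{lema1}(i), this shows that for each fixed $t$ the map $u\mapsto H(t,u)$ is a bijection of $\mathbb{R}^n$ with inverse $u\mapsto G(t,u)$, and both $H(t,\cdot)-\mathrm{id}$ and $G(t,\cdot)-\mathrm{id}$ are uniformly bounded, which is a key ingredient towards the final proof of topological equivalence in Theorem \ref{intermedio}.
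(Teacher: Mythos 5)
Your proof is correct and takes essentially the same route as the paper: specialize the composition identities of Lemmas \ref{composition} and \ref{composition2} to $\tau=t$ and use $x(t,t,\xi)=\xi$, $y(t,t,\eta)=\eta$. Your write-up is in fact slightly more careful than the paper's, which only treats \eqref{inve1} explicitly (and, incidentally, cites Lemma \ref{composition} while writing the identity from Lemma \ref{composition2}); your symmetric treatment and the remark that the bracket notation $H[\cdot,\cdot]$, $G[\cdot,\cdot]$ denotes the very same maps $H$, $G$ evaluated along solutions are both sound and helpful.
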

\begin{proof}
By using Lemma \ref{composition}, we have that
\begin{displaymath}
G[t,H[t,x(t,\tau,\xi)]]=x(t,\tau,\xi) \quad \textnormal{for any} \quad t\in \mathbb{R}.
\end{displaymath}

Now, if we consider the particular case  $\tau=t$, we obtain (\ref{inve1}). The identity (\ref{inve2}) can be deduced
similarly.
\end{proof}

\begin{remark}
%\label{tofin}
Notice that the maps $\xi  \mapsto H(t,\xi)$ and $\eta \mapsto G(t,\eta)$ satisfy properties
(ii) and (iii) of Definition \ref{TopEq}, which is a consequence of Lemmas \ref{lema1}--\ref{composition}. In addition,
Lemma \ref{invertibilidad} says that $u\mapsto G(t,u)=H^{-1}(t,u)$ for any $t\in \mathbb{R}$. In consequence, the last step is to prove
the uniform continuity of the maps, which will be made in the next two sections. 
\end{remark}

\subsection{End of the Proof: Continuity of the maps $H$ and $G$}
We follow the approach developed in the nice paper of Shi and Xiong \cite{Shi}.

\begin{lemma}
%\label{end-a1} 
The map $\eta\mapsto G(t,\eta)=\eta+\chi(t;(t,\eta))$ is uniformly continuous for any $t\in \mathbb{R}$.
\end{lemma}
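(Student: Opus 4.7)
The plan is to start from the integral representation $G(t,\eta)=\eta-\int_{-\infty}^{+\infty}\mathcal{G}(t,s)f(s,y(s,t,\eta))\,ds$ from \eqref{homeo-00} and control the difference
\[
|G(t,\eta)-G(t,\eta')|\leq |\eta-\eta'|+\int_{-\infty}^{+\infty}|\mathcal{G}(t,s)|\,|f(s,y(s,t,\eta))-f(s,y(s,t,\eta'))|\,ds.
\]
Given $\varepsilon>0$, I would split the integral at a threshold $T>0$ into the central piece $\{|s-t|\leq T\}$ and the tail $\{|s-t|>T\}$, and handle them by two very different mechanisms: boundedness of $f$ on the tail, Lipschitz continuity of $f$ combined with continuous dependence on initial data on the central piece.

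First, on the tail I would use \textbf{(A2)} to bound the integrand difference by $2\mu$, and then the exponential dichotomy estimate $\|\mathcal{G}(t,s)\|\leq Ke^{-\alpha|t-s|}$ to deduce a bound of the form $\tfrac{4\mu K}{\alpha}e^{-\alpha T}$, independent of $t$, $\eta$ and $\eta'$. Choosing $T=T(\varepsilon)$ large makes this smaller than $\varepsilon/3$. Next, on the central piece I would use the Lipschitz hypothesis \textbf{(A3)} together with the continuous dependence inequality from Proposition \ref{T1}, namely $|y(s,t,\eta)-y(s,t,\eta')|\leq |\eta-\eta'|e^{(M+\gamma)|s-t|}$, to bound the integrand by $K\gamma |\eta-\eta'|e^{(M+\gamma)T}$ on an interval of length $2T$. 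This yields a bound of the form $C_{T}|\eta-\eta'|$, where $C_{T}$ depends only on the fixed constants and on $T$; choosing $\delta=\delta(\varepsilon)$ such that $(1+C_{T})\delta<2\varepsilon/3$ then makes the whole quantity smaller than $\varepsilon$ whenever $|\eta-\eta'|<\delta$.

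The crucial observation is that both $T$ and $\delta$ can be taken independent of $t$: the tail estimate uses only the translation-type bound $Ke^{-\alpha|t-s|}$ on the Green function, and the central estimate uses the a priori inequality of Proposition \ref{T1}, whose constants $M$ and $\gamma$ are global. Hence the modulus of continuity is uniform in $t\in\mathbb{R}$, which is precisely what strong topological equivalence requires. The only mildly subtle point — and the main obstacle I see — is that the exponent $(M+\gamma)$ in the continuity estimate is in general much larger than the dichotomy rate $\alpha$ (we only dispose of the fixed--point inequality $2K\gamma<\alpha$, not of any bound on $M$), so one cannot hope to control the central piece by the decay of $\mathcal{G}$ alone; the split at $T$ together with the smallness of $|\eta-\eta'|$ is what makes the argument go through, and the order of the two choices (first $T$, then $\delta$) is essential.
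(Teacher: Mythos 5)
Your proposal is correct and follows essentially the same strategy as the paper's proof: split the Green-function integral into far tails (controlled by \textbf{(A2)} and the dichotomy decay, giving a bound of order $e^{-\alpha T}$ independent of $\eta,\eta',t$) and a central piece of length comparable to $T$ (controlled by \textbf{(A3)} together with the continuous-dependence estimate of Proposition \ref{T1}, giving a constant $C_{T}$ times $|\eta-\eta'|$), choosing first $T$ and then $\delta$. The paper merely writes the split as four integrals $I_{11},I_{12},I_{21},I_{22}$ and keeps the exponential weight in the central estimate, but this is only a cosmetic difference from your argument.
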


\begin{proof}
As the identity is uniformly continuous, we only need to prove that the map $\eta\to \chi(t;(t,\eta))$ is uniformly continuous.

Let $\eta$ and $\eta'$ be two initial conditions of (\ref{sistema1}). Notice that (\ref{homeo-00}) allows to say that
\begin{equation*}
%\label{decoupage}
\begin{array}{rcl}
\chi(t;(t,\eta))-\chi(t;(t,\eta'))&=&\displaystyle -\int_{-\infty}^{t}\mathcal{G}(t,s)\big\{f(s,y(s,t,\eta))-f(s,y(s,t,\eta'))\big\}\,ds\\\\
&&\displaystyle +\int_{t}^{\infty}\mathcal{G}(t,s)\big\{f(s,y(s,t,\eta))-f(s,y(s,t,\eta'))\big\}\,ds\\\\
&=&-I_{1}+I_{2}.
\end{array}
\end{equation*}

Now, we divide $I_{1}$ and $I_{2}$ as follows:
\begin{displaymath}
I_{1}=\int_{-\infty}^{t-L}+\int_{t-L}^{t}=I_{11}+I_{12}
\quad
\textnormal{and}
\quad
I_{2}=\int_{t}^{t+L}+\int_{t+L}^{\infty}=I_{21}+I_{22},
\end{displaymath}
where $L$ is a positive constant.

By using \textbf{(A2)} combined with exponential dichotomy, we can see that the integrals 
$I_{11}$ and $I_{22}$ are always finite since 
\begin{displaymath}
|I_{11}|\leq 2K\mu \int_{-\infty}^{t-L}e^{-\alpha(t-s)}\,ds=\frac{2K\mu}{\alpha}e^{-\alpha L} 
\end{displaymath}
and
\begin{displaymath}
|I_{22}|\leq 2K\mu \int_{t+L}^{\infty}e^{-\alpha(s-t)}\,ds = \frac{2K\mu}{\alpha}e^{-\alpha L}. 
\end{displaymath}

Now, by \textbf{(A3)}, we have that 
\begin{displaymath}
\begin{array}{rcl}
|I_{12}|&\leq & \displaystyle \int_{t-L}^{t}Ke^{-\alpha(t-s)}\gamma |y(s,t,\eta)-y(s,t,\eta')|\,ds\\\\
& \leq  & \displaystyle \int_{0}^{L}K e^{-\alpha u}\gamma |y(t-u,t,\eta)-y(t-u,t,\eta')|\,du.\\\\
\end{array}
\end{displaymath}

On the other hand, by Proposition \ref{T1}, we have that
\begin{displaymath}
0\leq |y(t-u,t,\eta)-y(t-u,t,\eta')|\leq |\eta-\eta'|e^{(M+\gamma) u} \quad \textnormal{for any} \quad u\in [0,L].
\end{displaymath}

Upon inserting this inequality in the previous estimation of $|I_{12}|$, we obtain that
\begin{displaymath}
\begin{array}{rcl}
|I_{12}|&\leq  & \displaystyle \int_{0}^{L} Ke^{-\alpha u}\gamma |y(t-u,t,\eta)-y(t-u,t,\eta')|\,du\\\\
&\leq & \displaystyle K\gamma |\eta-\eta'|\int_{0}^{L}e^{-\alpha u}e^{(M+\gamma)u}\,du\\
&\leq & \displaystyle K\gamma |\eta-\eta'| e^{(M+\gamma)L}\int_{0}^{L}e^{-\alpha u}\,du\\
&= & \displaystyle\frac{K\gamma e^{(M+\gamma)L}}{\alpha}(1-e^{-\alpha L})|\eta-\eta'|
\end{array}
\end{displaymath}

The reader can deduce that the inequalities above implies
\begin{equation}
\label{estimacion1}
|I_{12}|\leq D|\eta-\eta'| \quad \textnormal{with} \quad D=\frac{K\gamma e^{(M+\gamma)L}}{\alpha}(1-e^{-\alpha L}).
\end{equation}

Analogously, we can deduce that 
\begin{equation}
\label{estimacion2}
|I_{21}|\leq D|\eta-\eta'|.
\end{equation}

For any $\varepsilon>0$, we can choose 
\begin{displaymath}
L \geq \displaystyle \frac{1}{\alpha}\ln\left(\frac{8K\mu}{\alpha \varepsilon}\right),
\end{displaymath}
which implies that
$|I_{11}|+|I_{22}|<\varepsilon/2$. By using this fact combined with (\ref{estimacion1})--(\ref{estimacion2}), we obtain 
that
\begin{displaymath}
\forall \varepsilon>0 \hspace{0.1cm}\exists \delta=\frac{\varepsilon}{4D}>0 \,\, \textnormal{such that} \,\,
|\eta-\eta'|<\delta \Rightarrow |\chi(t;(t,\eta))-\chi(t;(t,\eta'))|<\varepsilon
\end{displaymath}
and the uniform continuity follows. 
\end{proof}

\begin{lemma}
%\label{end-a1-2} 
The map $\xi\mapsto H(t,\xi)=\xi+\vartheta(t;(t,\xi))$ is uniformly continuous for any $t\in \mathbb{R}$.
\end{lemma}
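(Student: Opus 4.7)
The plan is to mimic the structure of the preceding lemma. Since the identity map is trivially uniformly continuous, it suffices to show that $\xi\mapsto\vartheta(t;(t,\xi))$ is uniformly continuous. The natural starting point is the representation
\begin{equation*}
\vartheta(t;(t,\xi))=\int_{-\infty}^{+\infty}\mathcal{G}(t,s)\,f\bigl(s,y(s,t,H(t,\xi))\bigr)\,ds,
\end{equation*}
obtained from the third line of \eqref{3-0} evaluated at $\tau=t$ together with the transport identity \eqref{unicite-HP1} (which yields $H[s,x(s,t,\xi)]=y(s,t,H(t,\xi))$).

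Taking two initial conditions $\xi,\xi'\in\mathbb{R}^n$, I would decompose
\begin{equation*}
\vartheta(t;(t,\xi))-\vartheta(t;(t,\xi'))=\int_{-\infty}^{+\infty}\mathcal{G}(t,s)\bigl\{f(s,y(s,t,H(t,\xi)))-f(s,y(s,t,H(t,\xi')))\bigr\}\,ds
\end{equation*}
and split the integration domain into the four pieces $(-\infty,t-L]$, $[t-L,t]$, $[t,t+L]$, $[t+L,+\infty)$ for a parameter $L>0$ to be fixed later. On the two outer pieces, assumption \textbf{(A2)} together with the exponential dichotomy estimates bounds the combined contribution by $\tfrac{4K\mu}{\alpha}e^{-\alpha L}$, exactly as for $I_{11}+I_{22}$ in the preceding lemma. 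On the two inner pieces, assumption \textbf{(A3)} combined with Proposition~\ref{T1} applied to the $y$-trajectories starting at $H(t,\xi)$ and $H(t,\xi')$ at time $t$ produces a bound $2D\,|H(t,\xi)-H(t,\xi')|$ with $D=\tfrac{K\gamma e^{(M+\gamma)L}}{\alpha}(1-e^{-\alpha L})$, paralleling the estimates on $I_{12}$ and $I_{21}$.

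Combining these contributions gives
\begin{equation*}
|\vartheta(t;(t,\xi))-\vartheta(t;(t,\xi'))|\;\le\;\tfrac{4K\mu}{\alpha}e^{-\alpha L}+2D\,|H(t,\xi)-H(t,\xi')|.
\end{equation*}
The main obstacle, and where the proof genuinely departs from the previous one, is that the right-hand side features $|H(t,\xi)-H(t,\xi')|$ rather than $|\xi-\xi'|$. To close the loop I would substitute the triangle bound $|H(t,\xi)-H(t,\xi')|\le|\xi-\xi'|+|\vartheta(t;(t,\xi))-\vartheta(t;(t,\xi'))|$ and rearrange into
\begin{equation*}
(1-2D)\,|\vartheta(t;(t,\xi))-\vartheta(t;(t,\xi'))|\;\le\;2D\,|\xi-\xi'|+\tfrac{4K\mu}{\alpha}e^{-\alpha L}.
\end{equation*}
The fixed-point condition \eqref{FPT} is invoked precisely here: it allows $L$ to be chosen so that $2D<1$ and, simultaneously, so that the tail contribution is below $\varepsilon/2$; then $\delta=\delta(\varepsilon,L)$ is selected small enough to force the remaining term below $\varepsilon/2$, delivering the uniform continuity of $\xi\mapsto H(t,\xi)$.
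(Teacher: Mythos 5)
There is a genuine gap in the final step. Your inner-window estimate produces the coefficient $2D$ with $D=\tfrac{K\gamma e^{(M+\gamma)L}}{\alpha}\bigl(1-e^{-\alpha L}\bigr)$ multiplying $|H(t,\xi)-H(t,\xi')|$, and after the triangle inequality this same $2D$ multiplies the unknown $|\vartheta(t;(t,\xi))-\vartheta(t;(t,\xi'))|$ in the rearranged inequality $(1-2D)\,|\Delta\vartheta|\le 2D|\xi-\xi'|+\tfrac{4K\mu}{\alpha}e^{-\alpha L}$. The hypothesis \eqref{FPT} controls $\tfrac{2K\gamma}{\alpha}$, but it says nothing about $2D$, which contains the factor $e^{(M+\gamma)L}$ coming from Proposition \ref{T1} and therefore grows without bound as $L$ increases. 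Meanwhile, making the tail term smaller than $\varepsilon/2$ forces $L\ge\tfrac{1}{\alpha}\ln\bigl(\tfrac{8K\mu}{\alpha\varepsilon}\bigr)$, which tends to $+\infty$ as $\varepsilon\to 0$. Hence for all sufficiently small $\varepsilon$ every admissible $L$ gives $2D>1$, the factor $1-2D$ is negative, and the rearrangement yields no information; the claimed "simultaneous" choice of $L$ does not exist. (Note that in the preceding lemma for $G$ the large constant $D$ is harmless because it multiplies $|\eta-\eta'|$, which one shrinks via $\delta$; here it multiplies the very quantity you are trying to estimate, and that is fatal.)

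The way the paper avoids this is to keep the fixed-point representation $\vartheta(t;(t,\xi))=\int_{-\infty}^{+\infty}\mathcal{G}(t,s)\,f\bigl(s,x(s,t,\xi)+\vartheta(s;(t,\xi))\bigr)\,ds$ rather than rewriting the argument of $f$ as the nonlinear flow $y(s,t,H(t,\xi))$. Then \textbf{(A3)} splits the inner-window integrand into $\gamma|x(s,t,\xi)-x(s,t,\xi')|$, which is controlled by Proposition \ref{T2} and yields the large but harmless constant $\tilde D=\tfrac{K\gamma}{\alpha}e^{M\tilde L}$ multiplying $|\xi-\xi'|$, plus $\gamma\,\|\vartheta(\cdot;(t,\xi))-\vartheta(\cdot;(t,\xi'))\|_{\infty}$, whose coefficient after integrating the Green function is $\tfrac{K\gamma}{\alpha}$ on each window, i.e.\ $\Gamma^{*}=\tfrac{2K\gamma}{\alpha}<1$ \emph{independently of} $\tilde L$. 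The self-referential term can then be absorbed for every choice of $\tilde L$, and $\tilde L$ remains free to kill the tail. If you want to salvage your argument, you must reintroduce exactly this decomposition $y(s,t,H(t,\xi))=x(s,t,\xi)+\vartheta(s;(t,\xi))$ inside $f$ before applying the Lipschitz bound, which brings you back to the paper's proof.
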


\begin{proof}
 We only need to prove that the map $\xi\mapsto \vartheta(t;(t,\xi))$ is uniformly continuous. In order to prove that,
let $\xi$ and $\xi'$ be two initial conditions of (\ref{lineal-efes}) and define the differences
$$
\Delta \vartheta :=\vartheta(t;(t,\xi))-\vartheta(t;(t,\xi')),
$$
and
$$
\Delta f:=f(s,x(s,t,\xi)+\vartheta(s;(t,\xi)))-f(s,x(s,t,\xi')+\vartheta(s;(t,\xi'))).
$$

By using (\ref{fijo}), we can see that $\Delta \vartheta$ can be written as follows:
\begin{equation*}
%\label{decoupage2}
\begin{array}{rcl}
\Delta \vartheta&=&\displaystyle 
\underbrace{\int_{-\infty}^{t-\tilde{L}}\mathcal{G}(t,s)\Delta f\,ds}_{:=J_{11}}+
\underbrace{\int_{t-\tilde{L}}^{t}\mathcal{G}(t,s)\Delta f\,ds}_{:=J_{12}}\\\\
&&+ \displaystyle
\underbrace{\int_{t}^{t+\tilde{L}}\mathcal{G}(t,s)\Delta f\,ds}_{:=J_{21}}+
\underbrace{\int_{t+\tilde{L}}^{+\infty}\mathcal{G}(t,s)\Delta f\,ds}_{:=J_{22}}.
\end{array}
\end{equation*}

By \textbf{(A2)} and the exponential dichotomy property, it is straightforward to verify that
\begin{displaymath}
|J_{11}|\leq \frac{2K\mu}{\alpha}e^{-\alpha \tilde{L}} \quad \textnormal{and} \quad
|J_{22}|\leq \frac{2K\mu}{\alpha}e^{-\alpha \tilde{L}}. 
\end{displaymath}

Let us define
\begin{equation*}
%\label{Ninfty}
||\vartheta(\cdot;(t,\xi))-\vartheta(\cdot;(t,\xi'))||_{\infty}=\sup\limits_{s\in \mathbb{R}}|\vartheta(s;(t,\xi))-\vartheta(s;(t,\xi'))|,
\end{equation*}
and notice that \textbf{(A3)} implies:
\begin{displaymath}
\begin{array}{rcl}
|J_{12}|&\leq & \displaystyle \frac{K\gamma}{\alpha}||\vartheta(\cdot;(t,\xi))-\vartheta(\cdot;(t,\xi'))||_{\infty}\\\\
        & &\displaystyle +K\gamma\int_{t-\tilde{L}}^{t}e^{-\alpha(t-s)}|x(s,t,\xi)-x(s,t,\xi')|\,ds\\\\   
        &\leq & \displaystyle \frac{K\gamma}{\alpha}||\vartheta(\cdot;(t,\xi))-\vartheta(\cdot;(t,\xi'))||_{\infty}\\\\
        & &\displaystyle +K\gamma\int_{0}^{\tilde{L}}e^{-\alpha u}|x(t-u,t,\xi)-x(t-u,t,\xi')|\,du.\\\\
\end{array}
\end{displaymath}

By using Proposition \ref{T2}, we know that
\begin{displaymath}
|x(t-u,t,\xi)-x(t-u,t,\xi')|\leq |\xi-\xi'|e^{Mu} \quad \textnormal{for any} \quad u\in [0,\tilde{L}].
\end{displaymath}

Upon inserting this inequality in $|J_{12}|$ , we have that
\begin{displaymath}
\begin{array}{rcl}
|J_{12}|&\leq &  \displaystyle \frac{K\gamma}{\alpha}||\vartheta(\cdot;(t,\xi))-\vartheta(\cdot;(t,\xi'))||_{\infty}+K\gamma|\xi-\xi'|\int_{0}^{\tilde{L}}e^{-\alpha u}e^{M\tilde{L}}\,du\\\\
        &\leq & \displaystyle \frac{K\gamma}{\alpha}||\vartheta(\cdot;(t,\xi))-\vartheta(\cdot;(t,\xi'))||_{\infty}+
\frac{K\gamma}{\alpha}e^{M\tilde{L}}|\xi-\xi'|
\end{array}
\end{displaymath}
and the reader can deduce that 
\begin{displaymath}
|J_{12}|\leq \frac{K\gamma}{\alpha}||\vartheta(\cdot;(t,\xi))-\vartheta(\cdot;(t,\xi'))||_{\infty}+\tilde{D}|\xi-\xi'| \quad \textnormal{with} \quad
\tilde{D}=\frac{K\gamma}{\alpha}e^{M\tilde{L}}.
\end{displaymath}

The next inequality can be proved in a similar way
\begin{displaymath}
|J_{21}|\leq \frac{K\gamma}{\alpha}||\vartheta(\cdot;(t,\xi))-\vartheta(\cdot;(t,\xi'))||_{\infty}+\tilde{D}|\xi-\xi'|. 
\end{displaymath}

By using the inequalities stated above combined with (\ref{FPT}), he have
\begin{displaymath}
\begin{array}{rcl}
|\vartheta(t;(t,\xi))-\vartheta(t;(t,\xi'))|&\leq& |J_{11}|+|J_{22}|+|J_{12}|+|J_{21}|\\\\
&\leq & \displaystyle \frac{4K \mu}{\alpha}e^{-\alpha \tilde{L}}+2\tilde{D}|\xi-\xi'|\\\\
& &\displaystyle  + \frac{2K\gamma}{\alpha}||\vartheta(\cdot;(t,\xi))-\vartheta(\cdot;(t,\xi'))||_{\infty}.
\end{array}
\end{displaymath}

Note that
\begin{displaymath}
\begin{array}{rcl}
||\vartheta(\cdot;(t,\xi))-\vartheta(\cdot;(t,\xi'))||_{\infty}&\leq & \displaystyle 
\frac{4K \mu}{\alpha}e^{-\alpha \tilde{L}}+2\tilde{D}|\xi-\xi'|\\\\
& &\displaystyle  + \frac{2K\gamma}{\alpha}||\vartheta(\cdot;(t,\xi))-\vartheta(\cdot;(t,\xi'))||_{\infty}.
\end{array}
\end{displaymath}

By defining $\Gamma^{*}:=\frac{2K\gamma}{\alpha}<1$,
and using the inequality
\begin{displaymath}
|\vartheta(t;(t,\xi))-\vartheta(t;(t,\xi'))|\leq ||\vartheta(\cdot;(t,\xi))-\vartheta(\cdot;(t,\xi'))||_{\infty},   
\end{displaymath}
we easily deduce that
\begin{displaymath}
\begin{array}{rcl}
|\vartheta(t;(t,\xi))-\vartheta(t;(t,\xi'))|&\leq & \displaystyle \frac{4K \mu e^{-\alpha \tilde{L}}}{\alpha(1-\Gamma^{*})}+\frac{2\tilde{D}}{1-\Gamma^{*}}|\xi-\xi'|.
\end{array}
\end{displaymath}

Finally, for any $\varepsilon>0$, we can choose 
\begin{displaymath}
\tilde{L} \geq \displaystyle \frac{1}{\alpha}\ln\Big(\frac{8K\mu}{\alpha \varepsilon (1-\Gamma^{*})}\Big),
\end{displaymath}
which implies that
$\frac{4K\rho^{*} \mu}{\alpha(1-\Gamma^{*})}e^{-\alpha \tilde{L}} <\varepsilon/2$. By using this fact, we obtain 
that
\begin{displaymath}
\forall\varepsilon>0 \,\,\exists \delta=\frac{\varepsilon}{4\tilde{D}(1-\Gamma^{*})}>0 \,\, \textnormal{such that} \,\,
|\xi-\xi'|<\delta \Rightarrow |\vartheta(t;(t,\xi))-\vartheta(t;(t,\xi'))|<\varepsilon
\end{displaymath}
and the uniform continuity follows. 
\end{proof}

\section{Comments and References}

\noindent \textbf{1)} The proof of Theorem \ref{intermedio} is inspired in the global linearization result from K.J. Palmer \cite{Palmer},
where construction of the maps $H$ and $G$ is carried out in a nicely way but its continuity with respect to $t$ is only sketched. Furthermore, the uniform continuity with respect to this time variable has been proved by 
using the boundedness of $A(t)$ and adapting the proof of a global linearization from J. Shi and K. Xiong \cite{Shi} carried out for a more general
family. 

On the other hand, we emphasize that our demonstration has made an effort to be clear in several intricate details of Lemmas \ref{lema2} and
\ref{lema1}. We also highlight
the triple characterization of the maps $t\mapsto H[t,x(t,\tau,\xi)]$ and $t\mapsto G[t,y(t,\tau,\eta)]$
described in (\ref{3-0}) and (\ref{homeosofit}), respectively.

\medskip

\noindent \textbf{2)} The methods and ideas developed by K.J. Palmer in \cite{Palmer} have been
extended for several families of equations: difference equations \cite{Reinfelds97}, piecewise constant delay equations \cite{Papas}, impulsive equations \cite{LFP} and time--scales equations \cite{Hilger,Potzsche-2008}. We also refer to \cite{Reinfelds} for dichotomies beyond the exponential.

\medskip

\noindent \textbf{3)} The global linearization result of Palmer was applied by R. Naulin \cite{Naulin}
in order to study asymptotic properties of the quasilinear system
$$
\dot{x}=Ax+f(t,x)
$$
where $A$ has purely imaginary eigenvalues and the perturbation verifies
$$
\lim\limits_{x\to 0}f(t,x)=0 \quad \textnormal{and} \quad \lim\limits_{x\to 0}Df(t,x)=0 
$$
uniformly with respect to $t$.

\medskip

\noindent \textbf{4)} The boundedness of the nonlinearity played a key role in the global linearization
carried out in this chapter and we point out the existence of different approaches for global linearization 
which drop this assumption. In this context, a first result in this line is obtained by F. Lin in \cite{Lin2} who showed that if the linear system has an exponential dichotomy with $P = I$ on $\mathbb{R},$ then the linearization is of class $C^0.$ In order to obtain his result Lin uses the concept of \textit{almost reducibility}, \textit{i.e}, the linear system can be written as a linear diagonal system perturbed by a bounded linear term where the diagonal part is contained in the spectrum of the exponential dichotomy (see Ch.5), and it uses the concept of \textit{crossing times} with respect to the unit sphere. A second approach is carried out by I. Huerta in \cite{Huerta} which generalizes the previous work of Lin to a nonuniform framework, that is, it is constructed a linearization of class $C^0$ in two steps: the first one considers to write the linear system on $\mathbb{R}^+$ as in Lin's case where the diagonal part lies on the spectrum of the nonuniform exponential dichotomy (see \cite{Chu} and \cite{Zhang} for details abouth this spectrum), and the second step is devoted to construct a suitable Lyapunov function that plays the role of crossing time with respect to the unit sphere. Recently, M. Wu and Y. Xia in \cite{WX} generalize the previous works considering projections different from trivial ones.

\section{Exercises}

\begin{itemize}
   \item[1.-] If $t\mapsto y(t,\tau,\eta)$ is the solution of (\ref{sistema1}) passing trough $\eta$
   at $t=\tau$, prove that
   $$
   G[t,y(t,\tau,\eta)]=X(t,\tau)G(\tau,\eta).
   $$

\item[2.-] If $t\mapsto x(t,\tau,\xi)$ and $t\mapsto y(t,\tau,\eta)$ 
are respectively solutions of (\ref{lineal-efes}) and (\ref{sistema1})
passing respectively trough $\xi$ and $\eta$ at $t=\tau$, use Lemmas \ref{lema2} and \ref{composition2} 
to prove that
\begin{displaymath}
 x(t,\tau,\xi)=y(t,\tau,H(\tau,\xi)) +\vartheta(t;(\tau,H(\tau,\xi))).  
\end{displaymath}

\item[3.-] If $t\mapsto x(t,\tau,\xi)$ and $t\mapsto y(t,\tau,\eta)$ 
are respectively solutions of (\ref{lineal-efes}) and (\ref{sistema1})
passing respectively trough $\xi$ and $\eta$ at $t=\tau$, use Lemmas \ref{lema1} and \ref{composition} 
to prove that
\begin{displaymath}
 y(t,\tau,\eta)=x(t,\tau,G(\tau,\eta)) +\chi(t;(\tau,G(\tau,\eta))).  
\end{displaymath}

\item[4.-] Use exercises 2 and 3 to prove that 
\begin{displaymath}
 \vartheta(t;(\tau,G(\tau,\eta)))+\chi(t;(\tau,\eta))=0 \quad \textnormal{for any $\eta\in \mathbb{R}^{n}$}.    
\end{displaymath}

\item[5.-] Use exercises 2 and 3 to prove that 
\begin{displaymath}
 \vartheta(t;(\tau,\xi)))+\chi(t;(\tau,H(\tau,\xi)))=0 \quad \textnormal{for any $\xi\in \mathbb{R}^{n}$}.    
\end{displaymath}
   
\item[6.-] Use the identities (\ref{inve1})--(\ref{inve2}) together with
the Definitions of $H$ and $G$ given respectively by (\ref{DefH}) and (\ref{homeo-00}) to prove that:
\begin{displaymath}
\left\{\begin{array}{rcl}
G(t,\eta) &=& \eta -\chi(t;(t,G(t,\eta))) \quad \textnormal{for any $\eta\in \mathbb{R}^{n}$},\\
H(t,\xi) &=& \xi - \vartheta(t;(H(t,\xi))) \quad \textnormal{for any $\xi\in \mathbb{R}^{n}$}.
\end{array}\right.
\end{displaymath}

\item[7.-] Let us consider the quasilinear systems
\begin{subequations}
  \begin{empheq}{align}
  \dot{x}  = A(t)x+f_{1}(t,x)    \label{se1}, \\
 \hspace{-0.5cm}  \dot{y} = A(t)y+f_{2}(t,y) \label{se2},
  \end{empheq}
\end{subequations}
where the linear part has exponential dichotomy on $\mathbb{R}$ with constants $K$ and $\alpha$
and also satisfies \textbf{(A1)} whereas both nonlinearities verifies \textbf{(A2)}--\textbf{(A3)}
with $\mu$ and $\alpha$. Prove that if $2K\gamma<\alpha$ then there exists a map $H\colon \mathbb{R}\times \mathbb{R}^{n}\to \mathbb{R}^{n}$ such that for any solution $t\mapsto x(t)$ of \eqref{se1} it follows that $t\mapsto H[t,x(t)]$ is solution
of \eqref{se2} and viceversa.

\item[8.-] If we assume that $t\mapsto A(t)$ and $t\mapsto f(t,x)$ are continuous and $\omega$--periodic,
prove that the exists a unique $\omega$--periodic map $t\mapsto \chi(t;(\tau,\eta))$ in Lemma \ref{lemme-0}
by considering a suitable function $y(t,\tau,\eta)$. \textit{Hint}: See Corollary \ref{b-periodic} from Chapter 2.

\item[9.-] Let us consider the nonlinear control system
\begin{displaymath}
 \dot{x}=A(t)x+B(t)u(t)+f(t,u(t)),   
\end{displaymath}
where $A\in M_{nn}(\mathbb{R})$, $B\in M_{np}(\mathbb{R})$, $u\in \mathbb{R}^{p}$ are continuous 
and $f\colon \mathbb{R}\times \mathbb{R}^{p}\to \mathbb{R}^{n}$ verifies
$$
|f(t,u)|\leq \mu \quad \textnormal{and} \quad
|f(t,u)-f(t,u')|\leq \gamma |u-u'|
$$
for any $(t,u,u')\in \mathbb{R}\times \mathbb{R}^{p}\times \mathbb{R}^{p}$. 

If there exists a bounded continuous map $t\mapsto F(t)\in M_{pn}(t)$ such that the linear system $\dot{x}=[A(t)+B(t)F(t)]x$ has an exponential dichotomy
on $\mathbb{R}$, find conditions on $||F(t)||_{\infty}$ and the exponential dichotomy constants ensuring
the topological equivalence with the system
\begin{displaymath}
 \dot{x}=[A(t)+B(t)F(t)]x+f(t,F(t)x),  
\end{displaymath}
which arises from the feedback control $u(t)=F(t)x$. 

\item[10.-]Give more details for Remark \ref{uniqueness}.

\item[11.-] Prove in detail Proposition \ref{T2}.

\item[12.-] Verify carefully identity \eqref{uniqueness1B}.
    
\item[13.-] Give details in order to obtain \eqref{inve2}.

\item[14.-] Deduce identities \eqref{estimacion1} and \eqref{estimacion2}.

\item[15.-] Give explicit examples that verify the hypothesis of Theorem \ref{intermedio}.

\end{itemize}

\chapter{Differentiability of Global Linearization}

\section{Motivation} The previous chapter was devoted to construct a global homeomorphism
relating the solutions of the linear system
\begin{equation}
\label{linF}
\dot{x}=A(t)x
\end{equation}
with the solutions of its quasilinear perturbation
\begin{equation}
\label{nolinF}
\dot{y}=A(t)y+f(t,y)
\end{equation}
under the following assumptions:
\begin{itemize}
 \item[\textbf{(A1)}] The matrix $A(\cdot)$ of the linear system is bounded, namely
 $$
 \sup\limits_{t\in \mathbb{R}}||A(t)||=M,
 $$
 \item[\textbf{(A2)}] there exists a positive constant $\mu$  such that
 \begin{displaymath}
|f(t,y)|\leq \mu \quad \textnormal{for any} \quad (t,y)\in \mathbb{R}\times\mathbb{R}^{n}, 
\end{displaymath} 
 \item[\textbf{(A3)}] there exists a positive constant $\gamma$ such that
\begin{displaymath}
|f(t,y)-f(t,y')| \leq \gamma|y-y'| \quad \textnormal{for any $t\in \mathbb{R}$ and $(y,y')\in \mathbb{R}^{n}$}.
\end{displaymath}
\end{itemize}

The smoothness of the homeomorphism $\eta \mapsto G(t,\eta)$ described by (\ref{homeo-00}) from Chapter 6
has started to be studied a few years ago in a series of articles. Nevertheless, an additional and more restrictive assumption
was considered:
\begin{itemize}
    \item[\textbf{(A4)}] The linear system \textnormal{(\ref{linF})} has an exponential dichotomy with projection $P = I$
    and constants $K$ and $\alpha$ such that
\begin{displaymath}
\frac{2K}{\alpha}<\gamma.    
\end{displaymath}
    
\end{itemize}

A first approach in order to establish the differentiability of this homeomorphism was done in \cite{CRJDE} where it was proved that; under suitable differentiability conditions for $y\mapsto f(t,y)$; the map $G$ is $C^2.$ After, in \cite{CMR} it was obtained that $G \in C^{r}$ with $r \geq 1$. This last work only considered homeomorphisms $\eta\mapsto G(t,\eta)$ for $t\geq 0$.

\section{First Approach on the whole line}

The main goal of this section is to study the differentiability properties
of the homeomorphism $\eta\mapsto G(t,\eta)$ for any $t\in \mathbb{R}$. To achieve this objective,
it will be interesting to consider the map $\xi\mapsto y(t,t_{0},\xi)$
and its properties. Indeed, if $y\mapsto f(t,y)$ is $C^{1}$ for any $t\in \mathbb{R}$, it is well known (see \textit{e.g.} \cite[Chap. 2]{Coddington}) that
$\partial y(t,t_{0},\xi)/\partial \xi=y_{\xi}(t,t_{0},\xi)$ satisfies the matrix differential equation
\begin{equation}
\label{MDE1}
\left\{
\begin{array}{rcl}
\displaystyle \frac{d}{dt}\frac{\partial y}{\partial \xi}(t,t_{0},\xi)&=& \displaystyle\{A(t)+Df(t,y(t,t_{0},\xi))\}\frac{\partial y}{\partial \xi}(t,t_{0},\xi),\\\\
\displaystyle \frac{\partial y}{\partial \xi}(t_{0},t_{0},\xi)&=&I.
\end{array}\right.
\end{equation}

As $\xi=(\xi_{1},\ldots,\xi_{n})$, the above equation can be written as a set of $n$ systems of
ordinary differential equations
\begin{equation*}
%\label{MDE1-vec}
\left\{
\begin{array}{rcl}
\displaystyle \frac{d}{dt}\frac{\partial y}{\partial \xi_{i}}(t,t_{0},\xi)&=& \displaystyle\{A(t)+Df(t,y(t,t_{0},\xi))\}\frac{\partial y}{\partial \xi_{i}}(t,t_{0},\xi),\\\\
\displaystyle \frac{\partial y}{\partial \xi_{i}}(t_{0},t_{0},\xi)&=& e_{i},
\end{array}\right.
\end{equation*}
for all $i=1,\ldots,n$.

Moreover, it is proved that (see \textit{e.g.}, Theorem 4.1 from \cite[Ch.V]{Hartman}) if $y\mapsto f(t,y)$ is $C^{r}$ 
with $r>1$ for any $t\in \mathbb{R}$, then the map
$\xi \mapsto y(t,t_{0},\xi)$ is also $C^{r}$. In particular, if $f$ is $C^{2}$, we can verify that
the second derivatives $\partial^{2}y(t,t_{0},\xi)/\partial \xi_{j}\partial\xi_{i}$  are solutions
of the system of differential equations
\begin{equation}
\label{MDE15}
\left\{
\begin{array}{rcl}
\displaystyle \frac{d}{dt}\frac{\partial^{2}y}{\partial\xi_{j}\partial\xi_{i}}&=&\displaystyle
\{A(t)+Df(t,y)\}\frac{\partial^{2}y}{\partial\xi_{j}\partial\xi_{i}}
+D^{2}f(t,x)\frac{\partial y}{\partial\xi_{j}}\frac{\partial y}{\partial\xi_{i}} \\\\
\displaystyle \frac{\partial^{2} y}{\partial\xi_{j}\partial\xi_{i}}  &=&0,
\end{array}\right.
\end{equation}
with $y:=y(t,t_{0},\xi)$, for any $i,j=1,\ldots,n$.

\medskip
Now, let us introduce the following conditions on the nonlinearities:
\begin{enumerate}

\item[\textbf{(D1)}] $y\mapsto f(t,y)$ is
$C^{2}$  and, for any fixed $t$, its first derivative is such that
\begin{displaymath}
\int_{-\infty}^{t}\,\sup\limits_{\xi \in \mathbb{R}^{n}}||X(t,r)Df(r,y(r,0,\xi))X(r,t)||\,dr<1.
\end{displaymath}
\item[\textbf{(D2)}] For any fixed $\in \mathbb{R}$ and any initial condition $\xi$,
it follows that 
\[
\int_{-\infty}^{t}\textnormal{Tr}\, Df(r,y(r,t,\xi)\,dr<+\infty.
\]

\item[\textbf{(D3)}] For any fixed $t\in \mathbb{R}^{n}$ and $i,j=1,\ldots,n$, the following limit exists
\begin{equation*}
%\label{D2}
\lim\limits_{s\to -\infty}\frac{\partial Z_{\eta}(s)}{\partial \eta_{j}}e_{i},
\end{equation*}
where $\eta=(\eta_{1},\ldots,\eta_{n})$, $e_{i}$ is the $i$--th component of the canonical
basis of $\mathbb{R}^{n}$ and $Z_{\eta}(s)$ is a fundamental matrix of the $\eta$--parameter dependent system
\begin{equation}
\label{L2}
\dot{z}=F(s,\eta)z
\end{equation}
satisfying $Z_{\eta}(t)=I$, where $F(r,\eta)$ is defined as follows
\begin{equation}
\label{L1}
F(r,\eta)=X(t,r)Df(r,y(r,t,\eta))X(r,t).
\end{equation}
\end{enumerate}

\begin{remark}
%\label{about-hyp}
We will see that the construction of the homeomorphism $G$ considers the behavior of
$y(t,0,\xi)$ for any $t\in (-\infty,\infty)$. In particular, to prove that $G$ is a $C^{2}$ preserving orientation diffeomorphism
will require to know the behavior on $(-\infty,t]$. Indeed, notice that:

\textbf{(D1)} is a technical assumption, introduced to ensure that the homeomorphism $y\mapsto G(t,y)$ is $C^{1}$. In fact,
by using (\ref{unicite-HP2}) from Chapter 6 we have that $G[s,t_{0},y(s,t_{0},\xi)]=x(s,t_{0},G(t_{0},\xi))$ is a solution
of (\ref{linF}) passing through $G(t_{0},\xi)$ at $s=t_{0}$. In addition, by using (\ref{DE-I1}) from Chapter 2 and recalling that $P(\cdot)=I$, it follows that $|G[s,y(s,t_{0},\xi)]|\to +\infty$ when $s\to -\infty$ since
\begin{displaymath}
G[s,t_{0},y(s,t_{0},\xi)]=|x(s,t_{0},G(t_{0},\xi))|\geq \frac{|G(t_{0},\xi)|}{K}e^{\alpha(t_{0}-s)},    
\end{displaymath}
this fact combined with (\ref{cota1}) from Chapter 6 and triangle's inequality implies that $|y(s,t_{0},\xi)|\to +\infty$. In consequence,
\textbf{(D1)} suggest that the asymptotic behavior of $Df(s,y)$ (when $|y|\to +\infty$ and $s\to -\infty$) must ensure integrability.

%Moreover, let us note that \textbf{(D1)} appears in some results about asymptotical equivalence (see \emph{e.g.}, \cite{Akhmet},\cite{Rab}).

\textbf{(D2)} is introduced in order to assure that $G$ is a preserving orientation diffeomorphism.
We emphasize that this asumption is related to Liouville's formula.

\textbf{(D3)} is introduced to ensure that $G$ is a $C^{2}$ diffeomorphism.
\end{remark}

\begin{theorem}
\label{anexo-palmer}
If \eqref{linF} and \eqref{nolinF} satisfy \textnormal{\textbf{(A1)--(A4)}}  and \textnormal{\textbf{(D1)--(D2)}}, then, for
any fixed $t$, the function $y\mapsto G(t,y)$ is a preserving orientation diffeomorphism.
\end{theorem}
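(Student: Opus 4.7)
The plan is to combine the Palmer homeomorphism from the previous chapter with a careful differentiation under the integral sign and Liouville's formula. Under assumption (A4) the Green function collapses to $\mathcal{G}(t,s)=X(t,s)$ for $s\le t$ and $0$ otherwise, so the formula (\ref{homeo-00}) from Chapter 6 reduces to
$$G(t,\eta)=\eta-\int_{-\infty}^{t}X(t,s)f(s,y(s,t,\eta))\,ds.$$

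First I would formally differentiate with respect to $\eta$ and then justify the interchange. The formal derivative is
$$\frac{\partial G}{\partial \eta}(t,\eta)=I-\int_{-\infty}^{t}X(t,s)Df(s,y(s,t,\eta))\,\frac{\partial y}{\partial \eta}(s,t,\eta)\,ds.$$
The key algebraic identity is the factorization $y_{\eta}(s,t,\eta)=X(s,t)Z_{\eta}(s)$, obtained by substituting into (\ref{MDE1}) and using $\dot{X}=A(s)X$; this is exactly how (\ref{L2})--(\ref{L1}) have been engineered. Under this substitution the integrand becomes $X(t,s)Df(s,y)X(s,t)Z_{\eta}(s)=F(s,\eta)Z_{\eta}(s)=\dot{Z}_{\eta}(s)$, so the integral telescopes:
$$\frac{\partial G}{\partial \eta}(t,\eta)=I-\bigl[Z_{\eta}(t)-\lim_{s\to-\infty}Z_{\eta}(s)\bigr]=\lim_{s\to-\infty}Z_{\eta}(s).$$
The interchange of derivative and integral (via Leibniz/dominated convergence) and the existence of this limit are both provided by (D1): the uniform majorant $\sup_{\xi}\|X(t,r)Df(r,y(r,0,\xi))X(r,t)\|$ is integrable on $(-\infty,t]$, Gronwall's inequality applied to $Z_{\eta}(s)=I-\int_{s}^{t}F(r,\eta)Z_{\eta}(r)\,dr$ yields $\|Z_{\eta}(s)\|\le e$ uniformly in $s\le t$, and the same integrability gives the Cauchy property as $s\to-\infty$. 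Writing $Z_{\eta}(-\infty):=\lim_{s\to-\infty}Z_{\eta}(s)$, we obtain the compact formula $DG(t,\eta)=Z_{\eta}(-\infty)$, and continuity of $\eta\mapsto Z_{\eta}(-\infty)$ follows from the uniform-in-$s$ continuous dependence of $Z_{\eta}$ on $\eta$ granted again by (D1), so $G(t,\cdot)\in C^{1}$.

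For the orientation, I would apply Liouville's formula (Theorem \ref{liouville}) to (\ref{L2}): $\det Z_{\eta}(s)=\exp\!\bigl(\int_{t}^{s}\operatorname{Tr} F(r,\eta)\,dr\bigr)$. Because the trace is invariant under conjugation,
$$\operatorname{Tr} F(r,\eta)=\operatorname{Tr}\bigl(X(t,r)Df(r,y(r,t,\eta))X(r,t)\bigr)=\operatorname{Tr} Df(r,y(r,t,\eta)),$$
and therefore
$$\det DG(t,\eta)=\exp\!\left(-\int_{-\infty}^{t}\operatorname{Tr} Df(r,y(r,t,\eta))\,dr\right),$$
which by (D2) is a well-defined, strictly positive real number. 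Since $G(t,\cdot)$ is already a homeomorphism by Chapter 6 and its Jacobian is everywhere nonsingular with positive determinant, the inverse function theorem upgrades it to a $C^{1}$ orientation-preserving diffeomorphism.

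I expect the main obstacle to be the rigorous justification of differentiation under the integral on the unbounded interval $(-\infty,t]$: the integrand contains both $X(t,s)$ (which decays by (A4)) and $y_{\eta}(s,t,\eta)$ (which, since $P=I$, grows backwards like $X(s,t)$). The factorization $y_{\eta}=X(s,t)Z_{\eta}$ is precisely what cancels this geometric growth against the decay of $X(t,s)$, leaving the integrable kernel $F(s,\eta)$; the sup-over-$\xi$ formulation of (D1) then supplies a dominating function valid uniformly for $\eta$ in a neighbourhood, which is what makes the Leibniz rule legitimate and gives the continuity of $DG$.
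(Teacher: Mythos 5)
Your proposal is correct and follows essentially the same route as the paper's proof: differentiate under the integral, recognize $Z_{\eta}(s)=X(t,s)\,\partial y(s,t,\eta)/\partial\eta$ as the solution of $\dot{Z}=F(s,\eta)Z$ with $Z(t)=I$ so the integral telescopes to $\lim_{s\to-\infty}Z_{\eta}(s)$, invoke \textbf{(D1)} (via a Gronwall-type bound) for existence of that limit, and then use Liouville's formula together with the conjugation-invariance of the trace and \textbf{(D2)} to get $\det DG(t,\eta)>0$, concluding with the inverse function theorem since $G(t,\cdot)$ is already a homeomorphism. Your explicit justification of the interchange of limit and integral and of the continuity of $\eta\mapsto DG(t,\eta)$ only makes more detailed what the paper states tersely; it is not a different argument.
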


\begin{proof}

The proof is decomposed in several steps.

\noindent\textit{Step 1:} Differentiability of the map $\eta\mapsto G(t,\eta)$. By using the fact that $y\mapsto f(t,y)$ is $C^{1}$ for any $t\in \mathbb{R}$ combined with (\ref{MDE1}) and (\ref{MT1}) from Chapter 1
we can deduce that:
\begin{equation}
\label{delicat}
\begin{array}{rcl}
\displaystyle \frac{\partial G(t,\eta)}{\partial \eta}&=& \displaystyle I-\int_{-\infty}^{t}X(t,s)Df(s,y(s,t,\eta))\frac{\partial y(s,t,\eta)}{\partial \eta}\,ds\\\\
\displaystyle &=&\displaystyle  I-\int_{-\infty}^{t}\frac{d}{ds}\Big\{X(t,s)\frac{\partial y(s,t,\eta)}{\partial \eta}\Big\}\,ds\\\\
\displaystyle &=&\displaystyle \lim\limits_{s\to -\infty}X(t,s)\frac{\partial y(s,t,\eta)}{\partial \eta}.
\end{array}
\end{equation}

In consequence, the differentiability of $\eta\mapsto G(t,\eta)$ follows if and only if the limit above exists.

\medskip

\noindent\textit{Step 2:} The limit (\ref{delicat}) is well defined. By (\ref{MDE1}), we have verified that 
$s\mapsto \frac{\partial y}{\partial \eta}(s,t,\eta)$ is solution of the equation:
\begin{equation}
\label{perturbacion1}
\left\{\begin{array}{rcl}
\dot{Y}(s)&=&F(s,\eta)Y(s)\\
Y(t)&=&I.
\end{array}\right.
\end{equation}

%On the other hand, we can deduce that $\partial \phi(s,0,\xi)/\partial \xi$ is solution
%of the integral matrix equation:
%$$
%\frac{\partial \phi(s,0,\xi)}{\partial \xi}=\Psi(s,0)-\int_{s}^{0}\Psi(s,r)Df(r,\phi(r,0,\xi))\frac{\partial \phi(r,0,\xi)}{\partial \xi}\,dr,
%$$
%which is equivalent to
%\begin{displaymath}
%\begin{array}{rcl}
%\displaystyle \Psi(0,s)\frac{\partial \phi(s,0,\xi)}{\partial \xi}&=&\displaystyle I-\int_{s}^{0}\Psi(0,r)Df(r,\phi(r,0,\xi))\frac{\partial \phi(r,0,\xi)}{\partial \xi}\,dr\\\\
%                                                    &=&\displaystyle I-\int_{s}^{0}\Psi(0,r)Df(r,\phi(r,0,\xi))\Psi(r,0)\Psi(0,r)\frac{\partial \phi(r,0,\xi)}{\partial \xi}\,dr.
%\end{array}
%\end{displaymath}

By (\ref{MT1}) from Chapter 1, the identity (\ref{L1})  and (\ref{perturbacion1}), the reader can
verify that $s\mapsto Z_{\eta}(s)=X(t,s)\frac{\displaystyle \partial y(s,t,\eta)}{\displaystyle \partial \eta}$
is solution of the matrix differential equation
\begin{equation}
\label{modif1}
\left\{\begin{array}{rcl}
\displaystyle \dot{U}(s)&=&\big\{X(t,s)Df(s,y(s,t,\eta))X(s,t)\big\}U(s) \\\\
U(t)&=&I.
\end{array}\right.
\end{equation}

By using (\ref{bgpg}) from Chapter 1, we have the following estimation for the transition matrix
associated to $Z_{\eta}$
$$
||Z_{\eta}(s,t)||\leq \exp\Big(\Big|\int_{t}^{s}X(t,r)Df(r,y(r,t,\eta))X(r,t) \,dr\Big|\Big)
$$
and \textbf{(D1)} implies that (\ref{delicat}) is well defined.

\medskip

\noindent \textit{Step 3:} $\eta \mapsto G(t,\eta)$ is a preserving orientation diffeomorphism.
We already proved that $\eta \mapsto G(t,\eta)$ is $C^{1}$ for any $t\in \mathbb{R}$ and that
\begin{displaymath}
\frac{\partial G(t,\eta)}{\partial \eta}=\lim\limits_{s\to -\infty}X(t,s)\frac{\partial y(s,t,\eta)}{\partial \eta}=
\lim\limits_{s\to -\infty}Z_{\eta}(s),
\end{displaymath}
where $Z_{\eta}(\cdot)$ is a fundamental matrix of \eqref{modif1}, namely
\[
\dot{Z}(s)=F(s,\eta)Z(s),\qquad Z(t)=I.
\]
By Liouville's formula (see Theorem \ref{liouville} from Chapter 1) applied to \eqref{modif1} we have, for every $s\le t$,
\[
\det Z_{\eta}(s)=\exp\left(\int_{t}^{s}\textnormal{Tr}\, F(r,\eta)\,dr\right)
=\exp\left(-\int_{s}^{t}\textnormal{Tr}\, F(r,\eta)\,dr\right)>0.
\]
Since $F(r,\eta)=X(t,r)Df(r,y(r,t,\eta)X(r,t)$ and $X(r,t)=X(t,r)^{-1}$,
by using the cyclicity of the trace, we obtain the invariance of the trace under conjugation,
\[
\textnormal{Tr}\, F(r,\eta)=\textnormal{Tr}\, Df(r,y(r,t,\eta)).
\]
Therefore, for any fixed $t\in \mathbb{R}$ we have that
\[
\frac{\partial G(t,\eta)}{\partial \eta}=\lim_{s\to-\infty}\det Z_{\eta}(s)
=\exp\left(-\int_{-\infty}^{t}\textnormal{Tr}\, Df(r,y(r,t,\eta)\,dr\right).
\]
Hence, under assumption \textbf{(D2)}, we that $\det\frac{\partial G(t,\eta)}{\partial \eta}>0$ and $\frac{\partial G(t,\eta)}{\partial \eta}$ is invertible for any fixed $t\in \mathbb{R}$.
Since $\eta \mapsto G(t,\eta)$ is a homeomorphism, the inverse function theorem implies that $\eta \mapsto G(t,\eta)$ is a global
$C^{1}$ diffeomorphism preserving orientation.
\end{proof}

%\begin{remark}
%As $t\mapsto G[t,y(t)]$ is solution of (\ref{lineal-efes}), the uniqueness of the solution implies that
%\begin{equation}
%\label{TH}
%G[t,y(t,0,\xi)]=X(t,0)G[0,\xi].
%\end{equation}
% \end{remark}

\begin{remark}
%\label{equiv-asympt}
The matrix differential equation (\ref{perturbacion1}) can be seen as a perturbation of the matrix equation
\begin{equation}
\label{Lin-one}
\left\{\begin{array}{rcl}
\dot{W}(s)&=&A(s)W(s)\\
W(t)&=&I
\end{array}\right.
\end{equation}
related to (\ref{lineal-efes}). In addition, (\ref{Lin-one}) has a solution $s\mapsto W(s)=X(s,t)$ for any 
fixed $t\in \mathbb{R}$. Moreover, notice that $X(t,s)W(s)=I$ while Theorem \ref{anexo-palmer} says
that $s\mapsto X(t,s)\frac{\partial y(s,t,\eta)}{\partial \eta}$ exists when $s\to -\infty$. This fact prompt us that the asymptotic behavior of
(\ref{perturbacion1}) and (\ref{Lin-one}) when $s\to -\infty$ has some relation weaker than asymptotic equivalence. Indeed,
in \cite{Akhmet} it is proved that \textbf{(D1)} is a necessary condition for asymptotic equivalence between a linear system
and a linear perturbation.
\end{remark}

\begin{theorem}
\label{SegDer}
If \eqref{linF} and \eqref{nolinF} satisfy \textnormal{\textbf{(A1)--(A4)}}  and \textnormal{\textbf{(D1)--(D3)}} then, for
any fixed $t$, the function $\eta\mapsto G(t,\eta)$ is a $C^{2}$ preserving orientation diffeomorphism. 
\end{theorem}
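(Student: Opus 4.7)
The strategy is to build on Theorem \ref{anexo-palmer}, which already established that for each fixed $t$ the map $\eta \mapsto G(t,\eta)$ is a $C^{1}$ preserving-orientation diffeomorphism with
\begin{displaymath}
\frac{\partial G(t,\eta)}{\partial \eta_i} = \lim_{s \to -\infty} Z_\eta(s)\, e_i,
\end{displaymath}
where $s\mapsto Z_\eta(s)$ is the fundamental matrix of \eqref{modif1} normalized by $Z_\eta(t)=I$. To upgrade to $C^{2}$, my plan is to show that each of these limits is itself a $C^{1}$ function of $\eta$, and that its $j$-th partial derivative is precisely the limit posited by hypothesis \textbf{(D3)}. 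Orientation preservation will then be inherited directly from Theorem \ref{anexo-palmer}, since $\det \partial G/\partial \eta$ is already known to be strictly positive under \textbf{(D2)}.

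First, I would establish that for each fixed $s \leq t$ the matrix $\eta \mapsto Z_\eta(s)$ is of class $C^{1}$. Since $y \mapsto f(t,y)$ is $C^{2}$ by \textbf{(D1)} and the flow $\eta \mapsto y(r,t,\eta)$ is itself $C^{1}$ with derivative given by \eqref{MDE1}, the map $\eta \mapsto F(r,\eta)$ defined in \eqref{L1} is $C^{1}$, with
\begin{displaymath}
\frac{\partial F(r,\eta)}{\partial \eta_j} = X(t,r)\, D^{2} f\bigl(r,y(r,t,\eta)\bigr)\,\frac{\partial y(r,t,\eta)}{\partial \eta_j}\,X(r,t).
\end{displaymath}
Standard smooth dependence on parameters applied to \eqref{modif1} then yields that $\partial Z_\eta(s)/\partial \eta_j$ is the unique solution of the inhomogeneous matrix equation
\begin{displaymath}
\frac{d}{ds}\frac{\partial Z_\eta(s)}{\partial \eta_j} = F(s,\eta)\,\frac{\partial Z_\eta(s)}{\partial \eta_j} + \frac{\partial F(s,\eta)}{\partial \eta_j}\, Z_\eta(s),\qquad \frac{\partial Z_\eta(t)}{\partial \eta_j}=0,
\end{displaymath}
which by Corollary \ref{SEDM} admits the explicit variation-of-parameters representation
\begin{displaymath}
\frac{\partial Z_\eta(s)}{\partial \eta_j} = \int_t^s Z_\eta(s,r)\,\frac{\partial F(r,\eta)}{\partial \eta_j}\,Z_\eta(r)\,dr.
\end{displaymath}

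Next, I would define the candidate second derivative
\begin{displaymath}
H_{ji}(\eta):=\lim_{s\to -\infty}\frac{\partial Z_\eta(s)}{\partial \eta_j}\,e_i,
\end{displaymath}
whose existence is guaranteed by \textbf{(D3)}, and prove that $\partial^{2} G(t,\eta)/\partial \eta_j \partial \eta_i = H_{ji}(\eta)$ via the classical theorem on interchange of limit and differentiation. The two ingredients needed are the pointwise convergence $Z_\eta(s)e_i\to \partial G(t,\eta)/\partial \eta_i$ already secured in Theorem \ref{anexo-palmer}, together with \emph{local uniform} convergence in $\eta$ of the derivatives $\partial Z_\eta(s)/\partial \eta_j\,e_i$ to $H_{ji}(\eta)$ as $s\to -\infty$.

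The main obstacle will be precisely this local uniform convergence, since \textbf{(D3)} only provides the pointwise limit. To overcome it, I would plug the integral representation above into the estimate for $\partial Z_\eta(s)/\partial\eta_j - H_{ji}(\eta)$ and dominate the tail using \textbf{(D1)} in combination with Lemma \ref{bgpg} applied to \eqref{modif1}, which controls $\|Z_\eta(s,r)\|$ uniformly in $\eta$, and a matching bound on $\|\partial F(r,\eta)/\partial \eta_j\|$ coming from the $C^{2}$ regularity of $f$, the uniform bound on $\|\partial y(r,t,\eta)/\partial \eta_j\|$ given by Proposition \ref{T1} and the boundedness of $X(t,r)Df(r,\cdot)X(r,t)$ in \textbf{(D1)}. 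Once this local uniform convergence is in place, continuity of $H_{ji}$ is automatic, the interchange theorem yields that $\partial G/\partial \eta_i$ is $C^{1}$ in $\eta$ with partial derivative $H_{ji}$, and the positivity $\det\partial G/\partial \eta>0$ inherited from Theorem \ref{anexo-palmer} finishes the proof that $\eta\mapsto G(t,\eta)$ is a $C^{2}$ preserving-orientation diffeomorphism.
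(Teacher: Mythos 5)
Your reduction is, at bottom, the same as the paper's: both arguments identify the candidate second derivative with $\lim_{s\to-\infty}\frac{\partial Z_{\eta}(s)}{\partial \eta_{j}}e_{i}$, whose existence is exactly what \textbf{(D3)} postulates, and both inherit orientation preservation from the $C^{1}$ step (Theorem \ref{anexo-palmer} and \textbf{(D2)}). The paper reaches this limit by differentiating the integral formula for $\partial G/\partial\eta$, using the second variational equation \eqref{MDE15} to recognize the integrand as the exact derivative of $s\mapsto X(t,s)\,\partial^{2}y(s,t,\eta)/\partial\eta_{j}\partial\eta_{i}$, and then noting that $X(t,s)\,\partial^{2}y(s,t,\eta)/\partial\eta_{j}\partial\eta_{i}=\frac{\partial Z_{\eta}(s)}{\partial\eta_{j}}e_{i}$, so that \textbf{(D3)} is invoked directly; no uniform convergence in $\eta$ is claimed, and continuity of the resulting limit is asserted at that same level.

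The genuine gap is in your last step. You correctly observe that to interchange $\partial/\partial\eta_{j}$ with the limit $s\to-\infty$ you need locally uniform (in $\eta$) convergence of $\frac{\partial Z_{\eta}(s)}{\partial\eta_{j}}e_{i}$, but the domination argument you propose cannot be run on the stated hypotheses. In your representation $\frac{\partial Z_{\eta}(s)}{\partial\eta_{j}}=\int_{t}^{s}Z_{\eta}(s,r)\,\frac{\partial F(r,\eta)}{\partial\eta_{j}}\,Z_{\eta}(r)\,dr$, the factor $\|Z_{\eta}(s,r)\|$ is indeed controlled uniformly in $\eta$ by \textbf{(D1)} via Lemma \ref{bgpg}; the problem is the alleged ``matching bound'' on $\big\|\frac{\partial F(r,\eta)}{\partial\eta_{j}}\big\|=\big\|X(t,r)D^{2}f(r,y(r,t,\eta))\frac{\partial y}{\partial\eta_{j}}(r,t,\eta)X(r,t)\big\|$. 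Mere $C^{2}$ regularity gives no global bound on $D^{2}f$; Proposition \ref{T1} only yields $\big\|\frac{\partial y}{\partial\eta_{j}}(r,t,\eta)\big\|\le e^{(M+\gamma)(t-r)}$, which blows up as $r\to-\infty$; $\|X(r,t)\|$ may grow like $e^{M(t-r)}$ (Lemma \ref{bgpg}), while $\|X(t,r)\|\le Ke^{-\alpha(t-r)}$ with $\alpha\le M$ (Lemma \ref{M-alpha}). Hence the integrand need not even be bounded on $(-\infty,t]$, let alone integrable uniformly on compacts of $\eta$, and \textbf{(D1)}, which controls only the conjugated \emph{first} derivative, cannot absorb it --- this is precisely why the paper postulates \textbf{(D3)} rather than deriving the limit. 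To close the argument you must either work at the paper's level (formal differentiation plus \textbf{(D3)}, with continuity of the limit asserted as in Step 2 of the paper's proof) or add a hypothesis of the type $\int_{-\infty}^{t}\sup_{\eta}\big\|X(t,r)D^{2}f(r,y(r,t,\eta))\frac{\partial y}{\partial\eta_{j}}(r,t,\eta)X(r,t)\big\|\,dr<\infty$, which would legitimize your tail estimate and the interchange; as written, the uniform-convergence step fails.
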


\begin{proof}
Let us denote $\eta=(\eta_{1},\ldots,\eta_{n})$. As in the previous result, the proof will
be decomposed in several steps:\\

\noindent \textit{Step 1:} About $\partial^{2}G(t,\eta)/\partial \eta_{j}\partial \eta_{i}$.\\
\noindent For any $i,j\in \{1,\ldots,n\}$, we can verify that
\begin{displaymath}
\begin{array}{rcl}
\displaystyle \frac{\partial^{2}G}{\partial \eta_{j}\partial \eta_{i}}(t,\eta)&=& \displaystyle -\int_{-\infty}^{t}\hspace{-0.3cm}X(t,s)D^{2}f(s,y(s,t,\eta))
\frac{\partial y(s,t,\eta)}{\partial \eta_{j}}\frac{\partial y(s,t,\eta)}{\partial \eta_{i}}\,ds\\\\
&&\displaystyle -\int_{-\infty}^{t}X(t,s)Df(s,y(s,t,\eta))\frac{\partial^{2}y(s,t,\eta)}{\partial \eta_{j}\partial 
\eta_{i}}\,ds.
\end{array}
\end{displaymath}

Now, by using (\ref{MDE15}) and (\ref{MT1}) from Chapter 1, the reader can see that
\begin{equation}
\label{uruguay1}
\begin{array}{rcl}
 \displaystyle  \frac{\partial^{2}G}{\partial \eta_{j}\partial \eta_{i}}(t,\eta) &=&\displaystyle -\int_{-\infty}^{t}\frac{d}{ds}\Big\{X(t,s)\frac{\partial^{2}y(s,t,\eta)}{\partial \eta_{j}\partial \eta_{i}}\Big\}\,ds\\\\
                                &=&\displaystyle \lim\limits_{s\to -\infty}X(t,s)\frac{\partial^{2}y(s,t,\eta)}{\partial y_{j}\partial y_{i}}
\end{array}
\end{equation}
and the existence of $\partial^{2}G(t,\eta)/\partial \eta_{j}\partial \eta_{i}$ follows if and only if the limit above exists.

\medskip

\noindent\textit{Step 2:}  $\partial^{2}G(t,\eta)/\partial \eta_{j}\partial \eta_{i}$ is well defined.\\

By using (\ref{MDE1}) and (\ref{MT1}) from Chapter 1, we can see that $s\mapsto X(t,s)\partial y(s,t,\eta)/\partial \eta_{i}$ is a
solution of (\ref{L2}) passing through $e_{i}$ at $s=t$. In consequence, we can deduce that
$$
X(t,s)\frac{\partial y(s,t,\eta)}{\partial \eta_{i}}=Z_{\eta}(s)e_{i}
$$
and
$$
X(t,s)\frac{\partial^{2} y(s,t,\eta)}{\partial \eta_{j}\partial \eta_{i}}=\frac{\partial Z_{\eta}(s)}{\partial \eta_{j}}e_{i}.
$$

By \textbf{(D3)}, the last identity has limit when $s\to -\infty$ and
$\partial^{2}G(t,\eta)/\partial \eta_{j}\partial \eta_{i}$ is well defined and continuous with respect to $\eta$
for any fixed $t\in \mathbb{R}$.
\end{proof}

\section{Second approach on the half--line}

A careful reading of the previous section shows that the first and second derivatives of
$\eta \mapsto G(t,\eta)$ for any $t\in \mathbb{R}$ are related to the existence of limits when $t\to -\infty$, which are ensured 
by rather restrictive assumptions as \textbf{(D1)} and \textbf{(D3)}. In order to address this issue, notice that
theses assumptions can be dropped if we restrict the parameter $t\in [0,+\infty)$. Nevertheless, this prompt us to 
tailor the topological equivalence Definition \ref{TopEq} from Chapter 6 for any $t\in J\subset \mathbb{R}$ as follows:

\begin{definition}
\label{defnueva}
The systems \textnormal{(\ref{lineal-efes})} and \textnormal{(\ref{sistema1})} are $J$--continuously topologically equivalent if 
there exists a function $H\colon J \times \mathbb{R}^{n}\to \mathbb{R}^{n}$ with the properties
\begin{itemize}
\item[(i)] If $t\mapsto x(t)$ is a solution of \textnormal{(\ref{linF})}, then $t\mapsto H[t,x(t)]$ is a solution 
of \textnormal{(\ref{nolinF})},
\item[(ii)] $H(t,u)-u$ is bounded in $J \times \mathbb{R}^{n}$,
\item[(iii)]  For each fixed $t\in J$, $u\mapsto H(t,u)$ is an homeomorphism of $\mathbb{R}^{n}$,
\item[(iv)] $H$ is continuous in any $(t,u)\in J\times \mathbb{R}^{n}$.
\end{itemize}
In addition, the function $u\mapsto G(t,u)=H^{-1}(t,u)$ has properties \textnormal{(ii)--(iv)} and maps solutions of \textnormal{(\ref{nolinF})} into solutions of \textnormal{(\ref{linF})}.
\end{definition}

The above definition encompasses Definition \ref{TopEq} since $J=\mathbb{R}$ is only a particular case.
Furthermore, observe that the continuity of $H$ and $G$ in any $(t,u)\in J\times \mathbb{R}^{n}$ has not been considered in Definition \ref{TopEq}. However, a careful
review of the topological equivalence literature shows that this property has a disparate treatment. Indeed, it has  been considered by Palmer \cite[p.754]{Palmer} and omitted by
Shi \textit{et al.} \cite[p.814]{Shi}. Furthermore, in order to simplify the text, we will refer to $H$ and $G$ as the \textit{Palmer's homeomorphisms}.

To the best of our knowledge, the differentiability of the Palmer's homeomorphisms $u\mapsto H(t,u)$ and $u\mapsto G(t,u)$ in for any $t\in J$ has not been studied before \cite{CMR}, which prompt us to introduce the following definition:

\begin{definition}
\label{TopEqCr}
The systems \textnormal{(\ref{linF})} and \textnormal{(\ref{nolinF})} are $C^{r}$ continuously topologically equivalent on $J$ if: 
\begin{itemize}
\item[(i)] The systems are $J-$continuously topologically equivalent,
\item[(ii)] For any fixed $t\in J$; the map $u\mapsto H(t,u)$ is a $C^{r}$--diffeomorphism of $\mathbb{R}^{n}$,
with $r\geq 1$,
\item[(iii)] The partial derivatives of $H$ and $G$ up to order $r$ with respect to $u$ are continuous functions of $(t,u)\in J\times \mathbb{R}^{n}$. 
\end{itemize}
\end{definition}

In this subsection, we work on $J = \mathbb{R}^+$ and we are interested to show that the systems \textnormal{(\ref{linF})} and \textnormal{(\ref{nolinF})} are $C^{r}$ continuously topologically equivalent on $\mathbb{R}^+.$ For this purpose, we require the properties  \textnormal{\textbf{(A1)--(A4)}} to be satisfied for $t \in \mathbb{R}^+$ rather than $t \in \mathbb{R}.$

The following Theorem has a proof similar to the linearization result studied in the Chapter 6. Nevertheless,
the restriction of $t$ to the interval $[0,+\infty)$ induces some subtleties that deserve to be treated in detail. In particular, a noteworthy consequence is that all the solutions of (\ref{linF}) and (\ref{nolinF}) are bounded on $[0,+\infty)$.

\begin{theorem}
\label{teonuevo}
If the systems \eqref{linF} and \eqref{nolinF} satisfy \textnormal{\textbf{(A1)--(A4)}} for any $t\in [0,+\infty)$, then they are $\mathbb{R}^+-$continuously topologically equivalent. 
\end{theorem}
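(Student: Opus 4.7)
The plan is to adapt Palmer's construction from Chapter 6 to the half-line setting. Since \textbf{(A4)} gives an exponential dichotomy on $\mathbb{R}^+$ with projection $P = I$, the Green's function of Definition \ref{Green2} collapses to $\mathcal{G}(t,s) = X(t,s)$ for $0 \leq s \leq t$ and $\mathcal{G}(t,s) = 0$ for $s > t$. Consequently, every bilateral integral of Chapter 6 becomes a proper integral over $[0,t]$, and the ``unique bounded solution on $\mathbb{R}$'' furnished by Proposition \ref{boundeness} is replaced by the distinguished bounded solution on $\mathbb{R}^+$ whose value at $t = 0$ equals zero.

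First I would set, for every solution $t \mapsto y(t,\tau,\eta)$ of \eqref{nolinF},
\[
\chi(t;(\tau,\eta)) = -\int_0^t X(t,s)\, f(s, y(s,\tau,\eta))\, ds,
\]
which is a bounded solution on $\mathbb{R}^+$ of $\dot z = A(t)z - f(t, y(t,\tau,\eta))$ with $|\chi| \leq K\mu/\alpha$. Analogously, for every solution $t \mapsto x(t,\tau,\xi)$ of \eqref{linF}, I would define $\vartheta(t;(\tau,\xi))$ as the unique fixed point in $BC(\mathbb{R}^+,\mathbb{R}^n)$ of the operator
\[
\vartheta \longmapsto \int_0^{\cdot} X(\cdot,s)\, f\bigl(s,\, x(s,\tau,\xi) + \vartheta(s)\bigr)\, ds,
\]
whose Lipschitz constant is $K\gamma/\alpha < 1$ thanks to \textbf{(A4)} (read as $2K\gamma < \alpha$). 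Setting $G(t,\eta) := \eta + \chi(t;(t,\eta))$ and $H(t,\xi) := \xi + \vartheta(t;(t,\xi))$, the uniqueness identities $\chi(t;(\tau,\eta)) = \chi(t;(r,y(r,\tau,\eta)))$ and $\vartheta(t;(\tau,\xi)) = \vartheta(t;(r,x(r,\tau,\xi)))$, together with mirror versions of Lemmas \ref{lema2} and \ref{lema1}, yield properties (i) and (ii) of Definition \ref{defnueva}. Property (iii), the invertibility of $u \mapsto H(t,u)$ with inverse $u \mapsto G(t,u)$, follows from the analogues of Lemmas \ref{composition}--\ref{invertibilidad}: the sup-norm deviation of $H(t,G(t,u))$ and $G(t,H(t,u))$ from the identity satisfies a strict contraction inequality with factor $K\gamma/\alpha < 1$, and is therefore zero.

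The main obstacle is property (iv), the joint continuity of $H$ and $G$ on $\mathbb{R}^+ \times \mathbb{R}^n$, since Definition \ref{TopEq} demanded only uniform continuity in the spatial variable. For $G$, assuming $t \geq t_0$ without loss of generality, I would decompose
\[
G(t,u) - G(t_0,u_0) = (u-u_0) - \int_{t_0}^{t} X(t,s)\, f(s,y(s,t,u))\, ds + \int_0^{t_0}\!\!\bigl\{X(t_0,s) f(s,y(s,t_0,u_0)) - X(t,s) f(s,y(s,t,u))\bigr\} ds.
\]
The boundary term is of order $K\mu|t-t_0|$ by \textbf{(A2)}. For the remaining integral I would reuse the $L$-splitting of Chapter 6: on $[0,t_0-L]$ the exponential decay of $X(t,s)$ dominates and the contribution is bounded by $Ce^{-\alpha L}$ independently of $(t,u)$, while on $[t_0-L,t_0]$ the joint continuous dependence of $(s,t,u)\mapsto y(s,t,u)$ (via Proposition \ref{CDCI} of Chapter 1 together with \textbf{(A3)}) controls the integrand. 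The delicate technical point will be that the exponential bound of Proposition \ref{T1} grows like $e^{(M+\gamma)L}$, so $L$ must be chosen as a function of $\varepsilon$ in a way that absorbs both $|t-t_0|$ and $|u-u_0|$ simultaneously. Joint continuity of $H$ will then be transferred from that of $(s,t,u)\mapsto x(s,t,u)$ to the implicitly defined $\vartheta$ through the contraction property of the associated fixed-point operator.
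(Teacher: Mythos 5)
Your construction is essentially identical to the paper's: the maps $H(t,\xi)=\xi+\vartheta(t;(t,\xi))$ and $G(t,\eta)=\eta+\chi(t;(t,\eta))$ defined through integrals over $[0,t]$ are exactly the solutions of the paper's auxiliary initial value problems with zero data at $t=0$ (your ``collapsed Green function'' reading of \textbf{(A4)}), the contraction argument with factor $K\gamma/\alpha<1$ for the mutual inverse identities is the same as the paper's Step 4, and your continuity plan (the estimate of Proposition \ref{T1}, the $L(\varepsilon)$-splitting, and continuity of the fixed point $\vartheta$ through the uniform contraction) is the same machinery the paper uses in its Steps 5 and 6. The only deviation is organizational --- the paper first proves uniform continuity in the spatial variable for fixed $t$ (by successive approximations for $H$) and then joint continuity on compact time intervals, whereas you aim directly at joint continuity on $\mathbb{R}^{+}\times\mathbb{R}^{n}$ --- which is a harmless repackaging of the same estimates.
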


\begin{proof}
    In order to make a more readable proof, we will decompose it in several steps. Namely, the step 1 defines two auxiliary systems whose solutions are used in the step 2 to construct the maps $H$ and $G$. To prove that these maps establish a topological equivalence, the properties (i)--(ii) from Definition \ref{defnueva} are verified in the step 3, while the uniform continuity is proved in the steps 4 and 5. Finally the continuity of $H$ in any $(t_0, \xi_0) \in \mathbb{R}^{+} \times \mathbb{R}^n$ is proved in the step 6.

\medskip

\noindent\textit{Step 1: Preliminaries.} We will consider the initial value problems
\begin{equation}
\label{pivote2}
\left\{\begin{array}{rcl}
\dot{w}&=&A(t)w-f(t,y(t,\tau,\eta))\\
w(0)&=& 0,
\end{array}\right.
\end{equation}
and
\begin{equation}
\label{pivote1}
\left\{\begin{array}{rcl}
\dot{z}&=&A(t)z+f(t,x(t,\tau,\xi)+z)\\
z(0)&=& 0.
\end{array}\right.
\end{equation}

By using the variation of parameters formula we have that
\begin{equation}
\label{w-star}
w^{*}(t;(\tau,\eta))=-\int_{0}^{t} X(t,s)f(s,y(s,\tau,\eta))\,ds
\end{equation}
is the unique solution of (\ref{pivote2}). Now, in order to study (\ref{pivote1}) let $BC(\mathbb{R}^+, \mathbb{R}^n)$ be the Banach space of bounded continuous functions with the supremum norm. Now,  for any pair $(\tau, \xi) \in \mathbb{R}^+ \times \mathbb{R}^n,$ by using \textbf{(A2)} we define the  operator $\Gamma_{(\tau, \xi)} \colon BC(\mathbb{R}^+, \mathbb{R}^n) \to BC(\mathbb{R}^+, \mathbb{R}^n) $ as follows 
\begin{equation*}
\phi(t) \mapsto \Gamma_{(\tau, \xi)} \phi(t) := \displaystyle \int_0^t X(t,s) f(s,x(s,\tau,\xi) + \phi) \, ds. 
\end{equation*}

Since $2\gamma K/\alpha<1$ it is easy to see by {\bf{(A3)}} that the operator $\Gamma_{(\tau, \xi)} $ is a contraction and by the Banach fixed point theorem it follows that
\begin{displaymath}
z^{*}(t;(\tau,\xi))=\int_{0}^{t} X(t,s)f(s,x(s,\tau,\xi)+z^{*}(s;(\tau,\xi))) \, ds
\end{displaymath}
is the unique solution of (\ref{pivote1}).

On the other hand, by uniqueness of solutions it can be proved that 
\begin{equation}
\label{identity1}
z^{*}(t;(\tau,\xi))=z^{*}(t;(r,x(r,\tau,\xi))) \quad \textnormal{for any $r\geq 0$},   
\end{equation}
and
\begin{equation*}
%\label{identity2}
w^{*}(t;(\tau,\eta))=w^{*}(t;(r,y(r,\tau,\eta))) \quad \textnormal{for any $r\geq 0$}.    
\end{equation*}

\noindent\textit{Step 2: Construction of the maps $H$ and $G$.} For any $t\geq 0$ we define the maps $H(t,\cdot)\colon \mathbb{R}^{n}\to \mathbb{R}^{n}$ and
$G(t,\cdot)\colon \mathbb{R}^{n}\to \mathbb{R}^{n}$ as follows:
\begin{displaymath}
\begin{array}{rcl}
H(t,\xi)&:=&\displaystyle  \xi+\int_{0}^{t} X(t,s)f(s,x(s,t,\xi)+z^{*}(s;(t,\xi)))\,ds \\\\
&=&\xi + z^{*}(t;(t,\xi)),
\end{array}
\end{displaymath}
and
\begin{equation*}
%\label{Homeo-G}
\begin{array}{rcl}
G(t,\eta)&:=&\displaystyle \eta -\int_{0}^{t} X(t,s)f(s,y(s,t,\eta))\,ds \\\\
&=&\eta+w^{*}(t;(t,\eta)).
\end{array}
\end{equation*}

Let $t\mapsto x(t,\tau,\xi)$ be a solution of (\ref{linF}) passing trough $\xi$ at $t=\tau$. By using (\ref{identity1}), we can verify that
\begin{displaymath}
\begin{array}{rcl}
H[t,x(t,\tau,\xi)]&=&\displaystyle  x(t,\tau,\xi)+\int_{0}^{t}X(t,s)f(s,x(s,t,x(t,\tau,\xi)+z^{*}(s;(t,x(t,\tau,\xi))))\,ds \\\\
&=& \displaystyle x(t,\tau,\xi)+\int_{0}^{t}X(t,s)f(s,x(s,\tau,\xi)+z^{*}(s;(\tau,\xi)))\,ds \\\\
&=&x(t,\tau,\xi)+z^{*}(t;(\tau,\xi)).
\end{array}
\end{displaymath}

\noindent\textit{Step 3: $H$ and $G$ satisfy properties \textnormal{(i)--(ii)} of Definition \ref{defnueva}.}
By (\ref{linF}) and (\ref{pivote1}) combined with the above equality, we have that
$$
\begin{array}{rcl}
\displaystyle \frac{\partial }{\partial t} H[t,x(t,\tau,\xi)] & = & \displaystyle \frac{\partial }{\partial t} x(t, \tau, \xi) + \frac{\partial }{\partial t} z^*(t; (\tau, \xi))\\\\
& = & A(t)x(t,\tau,\xi) + A(t)z^*(t; (\tau, \xi)) + f(t,H[t,x(t,\tau,\xi)])\\\\
& = & A(t)H[t,x(t,\tau,\xi)] + f(t,H[t,x(t,\tau,\xi)]),
\end{array}
$$
then $t\mapsto H[t,x(t,\tau,\xi)]$ is solution of (\ref{sistema1}) passing trough 
$H(\tau,\xi)$ at $t=\tau$. As consequence of uniqueness of solution we obtain
\begin{equation}\label{conj1}
H[t, x(t,\tau, \xi)] = y(t,\tau, H(\tau,\xi)),
\end{equation}
similarly, it can be proved that $t\mapsto G[t,y(t,\tau,\eta)]$ is solution of (\ref{lineal-efes}) passing through $G(\tau, \eta)$
at $t=\tau$ and 
\begin{equation*}%\label{conj2}
G[t, y(t,\tau, \eta)] = x(t,\tau, G(\tau,\eta)) = X(t, \tau)G(\tau,\eta),
\end{equation*}
and the property (i) follows.
Secondly, by using (\textbf{A2}) and (\textbf{A4})  it follows that
\begin{displaymath}
|H(t,\xi)-\xi|\leq  K\mu\int_{0}^{t}e^{-\alpha(t-s)}\,ds \leq \displaystyle \frac{K\mu}{\alpha} 
\end{displaymath}
for any $t\geq 0$. A similar inequality can be obtained for
$|G(t,\eta)-\eta|$ and the property (ii) is verified.

\medskip

\noindent\textit{Step 4: $H$ is bijective for any $t\geq 0$.} We will 
first show the identity $H(t, G(t, \eta)) = \eta$ for any $t\geq 0$. Indeed, 
$$
\begin{array}{rcl}
H[t,G[t,y(t,\tau,\eta)]] & = & G[t,y(t,\tau,\eta)] \\\\ & &  + \displaystyle \int_0^t X(t,s) f(s,x(s,t,G[t,y(t,\tau,\eta)])+z^*(s;(t,G[t,y(t,\tau,\eta)]))) \, ds\\\\
& = &  y(t,\tau, \eta)  -\displaystyle \int_0^t  X(t,s)f(s,y(s,\tau, \eta)) \, ds\\\\
&&+\displaystyle \int_0^t X(t,s) f(s,x(s,t,G[t,y(t,\tau,\eta)])+z^*(s;(t,G[t,y(t,\tau,\eta)]))) \, ds.
\end{array}
$$
Let $\omega(t) = | H[t, G[t,y(t,\tau,\eta)]] - y(t,\tau, \eta)| .$ Hence by using {\bf{(A2)}} and {\bf{(A3)}} we have that
$$
\begin{array}{ll}
 \omega(t) & =  \left |\displaystyle \int_0^t X(t,s) \{f(s,x(s,t,G[t,y(t,\tau,\eta)])+z^*(s;(t,G[t,y(t,\tau,\eta)]))) -f(s,y(s,\tau, \eta))\} \, ds \right |\\\\
&\leq  K \gamma  \displaystyle \int_0^t e^{-\alpha(t-s)} |\{x(s,t,G[t,y(t,\tau,\eta)])+z^*(s;(t,G[t,y(t,\tau,\eta)])) -y(s,\tau, \eta)\}| \, ds.
\end{array}
$$

The boundedness of all the solutions of systems \eqref{linF} and \eqref{nolinF} on $\mathbb{R}^{+}$
combined with $s\mapsto z^*(s;(t,G[t,y(t,\tau,\eta)])) \in BC(\mathbb{R}^{+},\mathbb{R}^{n})$ implies that $\sup\limits_{t\geq 0}\{\omega(t)\}<+\infty$.

\medskip

In addition, by using again the definition of $\xi\mapsto H(s,\xi)$ we can deduce that
$$
x(s,t,G[t,y(t,\tau,\eta)])+z^*(s;(t,G[t,y(t,\tau,\eta)])) = H[s,x(s,t,G[t,y(t,\tau,\eta)])],
$$
which leads to
\begin{displaymath}
\begin{array}{rcl}
\omega(t) &\leq &  K \gamma  \displaystyle \int_0^t e^{-\alpha(t-s)} |\{H[s,x(s,t,G[t,y(t,\tau,\eta)])] -y(s,\tau, \eta)\}| \, ds \\\\
& = &  K \gamma  \displaystyle \int_0^t e^{-\alpha(t-s)} |\{H[s,G[s,y(s,\tau,\eta)]] -y(s,\tau, \eta)\}| \, ds, 
\end{array}
\end{displaymath}
where the last step follows from the identities
$$
x(s,t,G[t,y(t,\tau,\eta)]) = x(s, \tau, G(\tau, \eta)) = G[s,y(s,\tau, \eta)].
$$

Therefore, we obtain
$$\omega(t) \leq K \gamma  \int_0^t e^{-\alpha(t-s)} \omega(s) \, ds \leq \frac{K \gamma}{\alpha} \displaystyle \sup_{s \in \mathbb{R}^+} \{\omega(s)\} \quad \rm{for \,\, all} \quad t \geq 0.
$$

Now, we take the supremum on the left side above and due to $K \gamma / \alpha < 1$ it follows that $\omega(t) = 0$ for any $t \geq 0.$ In particular, when we take $t = \tau$ we obtain  $H(\tau, G(\tau, \eta)) = \eta.$

\medskip

Next, we will prove that $G(t, H(t, \xi)) = \xi.$ In fact, due to (\ref{conj1}) we have that
$$
\begin{array}{rcl}
G[t,H[t,x(t,\tau,\xi)]] & = & H[t,x(t,\tau,\xi)] \\\\ & &  - \displaystyle \int_0^t X(t,s) f(s,y(s,t,H[t,y(x,\tau,\xi)])) \, ds\\\\
&= &  x(t,\tau, \xi)  \\\\
&& \displaystyle +\int_0^t  X(t,s) \{f(s,H[s,x(s,\tau,\xi)]) - f(s,y(s,\tau,H(\tau,\xi)))\} \, ds\\\\
& = &  x(t,\tau, \xi),
\end{array}
$$
and taking $t=\tau$ leads to $G(\tau,H(\tau,\xi))=\xi$. In consequence, for any $t\geq 0$, $\xi \mapsto H(t,\xi)$ is a bijection and $\eta \mapsto G(t,\eta)$ is its inverse. 

\medskip

\noindent\textit{Step 5: $H$ and $G$  are uniformly continuous for any fixed $t$.} Firstly, we prove that $\eta \mapsto G(t,\eta)$ is uniformly  continuous. 

\medskip

By using the property \textbf{(A1)} combined with Corollary \ref{Cota-BG1} from Chapter 1, we can see that
$||X(t,s)||\leq ||I||e^{M|t-s|}$ for any $t,s\geq 0$. Then Lemma \ref{M-alpha} from Chapter 2 says that $\alpha \leq M$.
Now, we construct the auxiliary functions $\theta,\theta_{0}\colon [0,+\infty)\to [0,+\infty)$ defined by
\begin{displaymath}
\theta(t)=1+ K\gamma \left(\frac{e^{(M+\gamma-\alpha)t}-1}{M+\gamma-\alpha}\right) \quad \textnormal{and} \quad
\theta_{0}(t)=\left\{\begin{array}{lcl}
K\gamma t & \textnormal{if} & \alpha=M,\\\\
K\gamma\left(\frac{e^{(M-\alpha)t}-1}{M-\alpha}\right)  &\textnormal{if} & \alpha<M.
\end{array}\right.
\end{displaymath}

Now, given $\varepsilon>0$, let us define the constants
\begin{equation}
\label{auxiliares}
L(\varepsilon)=\frac{1}{\alpha}\ln\left(\frac{2\mu K}{\alpha \varepsilon}\right), \,\, \theta_{0}^{*}=\max\limits_{t\in [0,L(\varepsilon)]}\theta_{0}(t) \,\, \textnormal{and} \,\, \theta^{*}=\max\limits_{t\in [0,L(\varepsilon)]}\theta(t).
\end{equation}

We will prove the uniform continuity of $G$ by considering two cases: 

\medskip

\noindent {Case i)} $t\in [0,L(\varepsilon)]$. By {\bf{(A2)}} and  {\bf{(A4)}} we can deduce that
\begin{equation}
\label{estimation}
|G(t,\eta) - G(t,\bar{\eta})|  \leq   |\eta - \bar{\eta} |+ K \gamma \displaystyle \int_0^t e^{-\alpha (t-s)} |y(s,t,\eta) - y(s,t,\bar{\eta})| \, ds.
\end{equation}

Now, by  {\bf{(A1)}},{\bf{(A3)}} and Proposition \ref{T1} from Chapter 6 we know that
for any $0\leq s \leq t$:
\begin{equation}
\label{ContCondIni}
 |y(s,t,\eta) - y(s,t,\bar{\eta})| \leq |\eta - \bar{\eta}| e^{(M+\gamma)(t-s)}.
\end{equation}

Upon inserting (\ref{ContCondIni}) in (\ref{estimation}), we obtain that
$$
\begin{array}{rcl}
|G(t,\eta) - G(t,\bar{\eta})| & \leq&  \left (1 +  K\gamma e^{(M + \gamma-\alpha) t} \displaystyle \int_0^t e^{-(M+\gamma-\alpha) s} \right )|\eta - \bar{\eta} | \\\\
& = & \displaystyle    \left (1 + K\gamma \left\{\frac{e^{(M+ \gamma-\alpha)t}-1}{M + \gamma-\alpha}\right\} \right )|\eta - \bar{\eta} |\\\\
& \leq & \theta(t)|\eta - \bar{\eta}| \leq \theta^{*}|\eta - \bar{\eta} |.
\end{array}
$$

\noindent Case ii) $t>L(\varepsilon)$. By {\bf{(A2)}}--{\bf{(A4)}}, we have that
\begin{equation}
\label{estimacion3}
\begin{array}{rcl}
|G(t,\eta)-G(t,\bar{\eta})| & \leq & \displaystyle |\eta-\bar{\eta}|+\mu K\int_{0}^{t-L(\varepsilon)}e^{-\alpha(t-s)}\,ds \\\\
&&\displaystyle +K\gamma\int_{t-L(\varepsilon)}^{t}e^{-\alpha(t-s)}|y(s,t,\eta)-y(s,t,\bar{\eta})|\,ds\\\\
& \leq & \displaystyle |\eta-\bar{\eta}|+\frac{\mu K}{\alpha}e^{-\alpha L(\varepsilon)}\\\\
&& \displaystyle + K\gamma \int_{0}^{L(\varepsilon)}e^{-\alpha u}|y(t-u,t,\eta)-y(t-u,t,\bar{\eta})|\,du.
\end{array}
\end{equation}

As in case i), the inequality (\ref{ContCondIni})
implies that
\begin{displaymath}
\begin{array}{rcl}
\displaystyle K\gamma \int_{0}^{L(\varepsilon)}e^{-\alpha u}|y(t-u,t,\eta)-y(t-u,t,\bar{\eta})|\,du 
& \leq & \displaystyle K\gamma \int_{0}^{L(\varepsilon)}e^{(M+\gamma-\alpha)u}|\eta-\bar{\eta}|\,du\\\\
& = & \displaystyle K\gamma \left\{\frac{e^{(M+\gamma-\alpha)L(\varepsilon)}-1}{M+\gamma-\alpha}\right\}|\eta-\bar{\eta}|.
\end{array}
\end{displaymath}

Upon inserting the above inequality in (\ref{estimacion3}) and using (\ref{auxiliares}), we have that
\begin{displaymath}
\begin{array}{rcl}
|G(t,\eta)-G(t,\bar{\eta})| &\leq & \displaystyle \left(1+ K\gamma \left\{\frac{e^{(M+\gamma-\alpha)L(\varepsilon)}-1}{M+\gamma-\alpha}\right\}\right)|\eta-\bar{\eta}|+\frac{\mu K}{\alpha}e^{-\alpha L(\varepsilon)}\\\\
&\leq & \displaystyle \theta^{*}|\eta-\bar{\eta}|+\frac{\varepsilon}{2}.
\end{array}
\end{displaymath}

Summarizing, given $\varepsilon>0$, there exists $L(\varepsilon)>0$ and $\theta^{*}>0$ such that:
\begin{displaymath}
|G(t,\eta)-G(t,\bar{\eta})|\leq \left\{\begin{array}{lcl}
\displaystyle \theta^{*}|\eta-\bar{\eta}| &\textnormal{if}& t\in [0,L(\varepsilon)]\\\\
\displaystyle \theta^{*}|\eta-\bar{\eta}|+\frac{\varepsilon}{2} &\textnormal{if}& t>L(\varepsilon),
\end{array}\right.
\end{displaymath}
then it follows that
$$
\forall \varepsilon>0 \,\exists
\delta(\varepsilon):=\frac{\varepsilon}{2\theta^{*}}\quad \textnormal{such that} \quad
|\eta-\bar{\eta}|<\delta \Rightarrow |G(t,\eta)-G(t,\bar{\eta})|<\varepsilon
$$
and the uniform continuity of $G$ follows.

\medskip

Finally, we will prove that $\xi \mapsto H(t,\xi)$ is uniformly continuous for any $t\geq 0$. As the identity is uniformly continuous, we will only prove that the map $\xi \mapsto z^{*}(t;(t,\xi))$ is uniformly continuous.

Note that the fixed point $z^{*}(t;(t,\xi))$ can be seen as the uniform limit on $\mathbb{R}^{+}$ of a sequence $z_{j}^{*}(t;(t,\xi))$ defined recursively as follows:
\begin{displaymath}
\left\{\begin{array}{rcl}
z_{j+1}^{*}(t;(t,\xi))&=& \displaystyle \int_{0}^{t}X(t,s)f(s,x(s,t,\xi)+z_{j}^{*}(s;(t,\xi))) \, ds \quad \textnormal{for any $j\geq 1$},\\\\
z_{0}^{*}(t;(t,\xi)) &=&     0.
\end{array}\right.
\end{displaymath}

The uniform continuity of each map $\xi \mapsto z_{j}^{*}(t;(t,\xi))$ will be proved inductively by following the lines of \cite{Jiang, Shi}. First, it is clear that 
$\xi\mapsto z_{0}^{*}(t;(t,\xi))$ verify this property. Secondly, we will assume  the inductive hypothesis 
\begin{displaymath}
\forall\varepsilon>0\,\exists\delta_{j}(\varepsilon)>0 \,\,\textnormal{s.t.} \, |\xi-\bar{\xi}|<\delta_{j} \Rightarrow |z_{j}^{*}(t;(t,\xi))-z_{j}^{*}(t;(t,\bar{\xi}))|<\varepsilon \,\, \textnormal{for any $t\geq 0$}.
\end{displaymath}

For the step $j+1$ and given $\varepsilon>0$, we will only consider $\alpha<M$ since the case $\alpha=M$ can be carried out easily. We will use the constants $L(\varepsilon)$ and $\theta_{0}^{*}$ defined in (\ref{auxiliares}) and introduce the notation 
$$
\Delta_{j}(t,\xi,\bar{\xi})=z_{j}^{*}(t;(t,\xi))-z_{j}^{*}(t;(t,\bar{\xi})).
$$

As before, we will distinguish the cases $t\in [0,L(\varepsilon)]$ and $t>L(\varepsilon)$. First, for $t\in [0,L(\varepsilon)]$ we use {\bf{(A1)}} combined with the estimation 
\begin{equation}
\label{linealCI}
|x(s,t,\xi) - x(s,t,\bar{\xi})| \leq |\xi - \bar{\xi}| e^{M|t-s|},
\end{equation}
and we can
verify that
\begin{displaymath}
\begin{array}{rcl}
|\Delta_{j+1}(t,\xi,\bar{\xi})| &\leq & \displaystyle K\gamma e^{-\alpha t}
\int_{0}^{t}e^{\alpha s}\{|x(s,t,\xi)-x(s,t,\bar{\xi})|+|\Delta_{j}(s,\xi,\bar{\xi})|\}\,ds \\\\
&\leq & \displaystyle K\gamma e^{-\alpha t}
\int_{0}^{t}e^{\alpha s}\{|\xi-\bar{\xi}|e^{M(t-s)}+||\Delta_{j}(\cdot,\xi,\bar{\xi})||_{\infty}\}\,ds\\\\
&\leq & \displaystyle  K\gamma\left\{\frac{e^{(M-\alpha)t}-1}{M-\alpha}\right\}|\xi-\bar{\xi}|+\frac{K\gamma}{\alpha}||\Delta_{j}(\cdot,\xi,\bar{\xi})||_{\infty}\\\\
&\leq& \displaystyle \theta_{0}^{*}|\xi-\bar{\xi}|+\frac{K\gamma}{\alpha}||\Delta_{j}(\cdot,\xi,\bar{\xi})||_{\infty},
\end{array}
\end{displaymath}
where $||\Delta_{j}(\cdot,\xi,\bar{\xi})||_{\infty}=\sup\limits_{t\geq 0}|\Delta_j(t,\xi,\bar{\xi})|$.

On the other hand, when $t>L(\varepsilon)$, we use \textbf{(A2)}, \textbf{(A4)} combined with the boundedness of $f$ in $[0,t-L(\varepsilon)]$ and Lipschitzness in $[t-L(\varepsilon),t)$ to deduce that \begin{displaymath}
\begin{array}{rcl}
|\Delta_{j+1}(t,\xi,\bar{\xi})|
&\leq & \displaystyle K\mu\int_{0}^{t-L(\varepsilon)}e^{-\alpha(t-s)}\,ds \\\\
&&\displaystyle + K\gamma
\int_{t-L(\varepsilon)}^{t}e^{-\alpha(t-s) }\{|x(s,t,\xi)-x(s,t,\bar{\xi})|+|\Delta_{j}(s,\xi,\bar{\xi})|\}\,ds.
\end{array}
\end{displaymath}

By using {\bf{(A4)}} combined with $u=t-s$, (\ref{auxiliares}) and (\ref{linealCI}), we have that
\begin{displaymath}
\begin{array}{rcl}
|\Delta_{j+1}(t,\xi,\bar{\xi})| &\leq& \displaystyle \frac{K\mu}{\alpha}e^{-\alpha L(\varepsilon)}
+\frac{K\gamma}{\alpha}||\Delta_{j}(\cdot,\xi,\bar{\xi})||_{\infty} \\\\
&&
\displaystyle +K\gamma\int_{0}^{L(\varepsilon)}e^{-\alpha u }|x(t-u,t,\xi)-x(t-u,t,\bar{\xi})|\,du \\\\
&\leq &\displaystyle \frac{K\mu}{\alpha}e^{-\alpha L(\varepsilon)}+\frac{K\gamma}{\alpha}||\Delta_{j}(\cdot,\xi,\bar{\xi})||_{\infty} +K\gamma |\xi-\bar{\xi}|\int_{0}^{L(\varepsilon)}e^{(M-\alpha) u }\,du \\\\
&\leq& \displaystyle
\frac{\varepsilon}{2}+\frac{K\gamma}{\alpha}||\Delta_{j}(\cdot,\xi,\bar{\xi})||_{\infty}+K\gamma \left\{\frac{e^{(M-\alpha)L(\varepsilon)}-1}{M-\alpha}\right\}|\xi-\bar{\xi}|\\\\
&\leq& \displaystyle 
\frac{\varepsilon}{2}+\frac{K\gamma}{\alpha}||\Delta_{j}(\cdot,\xi,\bar{\xi})||_{\infty} + \theta_{0}^{*}|\xi-\bar{\xi}|.
\end{array}
\end{displaymath}

Summarizing, for any $t\geq 0$ it follows that
\begin{displaymath}
|\Delta_{j+1}(t,\xi,\bar{\xi})|\leq 
\left\{\begin{array}{lcl}
\displaystyle 
 \theta_{0}^{*}|\xi-\bar{\xi}|+\frac{K\gamma}{\alpha} ||\Delta_{j}(\cdot,\xi,\bar{\xi})||_{\infty}& \textnormal{if}& t\in [0,L(\varepsilon)]\\\\ \displaystyle\frac{\varepsilon}{2}+\frac{K\gamma}{\alpha}||\Delta_{j}(\cdot,\xi,\bar{\xi})||_{\infty} + \theta_{0}^{*}|\xi-\bar{\xi}| &\textnormal{if}& t>L(\varepsilon).
\end{array}\right.
\end{displaymath}

Now, for any $\varepsilon>0$ there exists $L(\varepsilon)>0$, $\theta_{0}^{*}>0$ and
\begin{displaymath}
\delta_{j+1}(\varepsilon)=
\min\left\{\delta_{j}(\varepsilon/2),\frac{\varepsilon}{\theta_{0}^{*}}\left(1-\frac{K\gamma}{\alpha}\right)\right\}
\end{displaymath}
such that for any $t\geq 0$, we have
\begin{displaymath}
\forall\varepsilon>0\,\exists\delta_{j+1}(\varepsilon)>0 \,\,\textnormal{s.t.} \, |\xi-\bar{\xi}|<\delta_{j+1} \Rightarrow |z_{j+1}^{*}(t;(t,\xi))-z_{j+1}^{*}(t;(t,\bar{\xi}))|<\varepsilon.
\end{displaymath}
and the uniform continuity of $\xi \mapsto z_{j}^{*}(t;(t,\xi)) $ follows for any $j\in \mathbb{N}$.

In order to finish our proof, we choose $N\in \mathbb{N}$ such that for any $j>N$ it follows that
\begin{displaymath}
||z^{*}(\cdot;(\cdot,\xi))-z_{j}^{*}(\cdot;(\cdot,\xi))||_{\infty}<\varepsilon \quad \textnormal{for any} \quad  \xi\in \mathbb{R}^{n},
\end{displaymath} 
and therefore, if $|\xi-\bar{\xi}|<\delta_{j}$ with $j>N$, it is true that
\begin{displaymath}
\begin{array}{rcl}
|z^{*}(t;(t,\xi))-z^{*}(t;(t,\bar{\xi}))|&\leq & |z^{*}(t;(t,\xi))-z_{j}^{*}(t;(t,\xi))|+\Delta_{j}(t,\xi,\bar{\xi})\\\\
&&+|z^{*}(t;(t,\bar{\xi}))-z_{j}^{*}(t;(t,\bar{\xi}))|<3\varepsilon,
\end{array}
\end{displaymath}
and the uniform continuity of $\xi\mapsto z^{*}(t;(t,\xi))$ and $\xi\mapsto H(t,\xi)$ follows for any fixed $t\geq 0$.

\medskip

\noindent\textit{Step 6: $H$ and $G$  are  continuous on $\mathbb{R}^{+} \times \mathbb{R}^n.$} By using the continuity of the solutions with respect to the initial time and initial conditions  \cite[Ch.V]{Hartman}, we note that for any $\varepsilon_{1}>0$ there exists $\delta_{1}(t_{0},\xi_{0},\varepsilon_{1})>0$ such that
\begin{equation}
\label{crt1}
|x(s,t,\xi)-x(s,t_{0},\xi_{0})|< \varepsilon_{1}  \quad\textnormal{when} \quad |t-t_{0}|+|\xi-\xi_{0}|<\delta_{1}  
\end{equation}
for any $t$ and $s$ in a compact interval of $\mathbb{R}^{+}$ containing $t_{0}$.

On the other hand, by using the continuity of the solutions with respect of the parameters \cite[Ch.V]{Hartman} combined with a Palmer's result \cite[Lemma 1]{Palmer} restricted to $\mathbb{R}^{+}$, we know that for any $\varepsilon_{2}>0$ there exists $\delta_{2}(t_{0},\xi_{0},\varepsilon_{2})>0$ such that
\begin{equation}
\label{crt2}
|z^{*}(s;(t,\xi))-z^{*}(s;(t_{0},\xi_{0}))|<\varepsilon_{2} \quad\textnormal{when} \quad |t-t_{0}|+|\xi-\xi_{0}|<\delta_{2}, 
\end{equation}
for any $t$ and $s$ in a compact interval of $\mathbb{R}^{+}$ containing $t_{0}$.

Additionally, we know that for any $\varepsilon_{3}>0$ there exists $\delta_{3}(\varepsilon_{3},t_{0})$ such that
\begin{equation}
\label{cmf}
||X(t,s)-X(t_{0},s)|| < \varepsilon_{3} \quad \textnormal{when} \quad |t-t_{0}|<\delta_{3} 
\end{equation}
for any $t$ and $s$ in a compact interval of $\mathbb{R}^{+}$ containing $t_{0}$.

From now on, we will assume that $t$,$s$ and $t_{0}$ are in a compact interval $I\subset \mathbb{R}^{+}$ and we denote
\begin{displaymath}
\omega_{1}(s,t,t_{0},\xi,\xi_{0})=f(s,x(s,t,\xi)+z^{*}(s;(t,\xi)))-f(s,x(s,t_{0},\xi_{0})+z^{*}(s;(t_{0},\xi_{0}))).
\end{displaymath}

Without loss of generality, we will assume that $t>t_{0}$. Now, notice that 
\begin{displaymath}
\begin{array}{rcl}
H(t,\xi)-H(t_{0},\xi_{0}) &=& \displaystyle \xi-\xi_{0}+\int_{0}^{t}X(t,s)f(s,x(s,t,\xi)+z^{*}(s;(t,\xi)))\,ds \\\\
&& \displaystyle - \int_{0}^{t_{0}}X(t_{0},s)f(s,x(s,t_{0},\xi_{0})+z^{*}(s;(t_{0},\xi_{0})))\,ds \\\\
&=& \xi-\xi_{0}\\\\
&& \displaystyle +\int_{0}^{t_{0}}\{X(t,s)-X(t_{0},s)\}f(s,x(s,t,\xi)+z^{*}(s;(t,\xi)))\,ds \\\\
&&
\displaystyle +\int_{0}^{t_{0}}X(t_{0},s)\omega_{1}(s,t,t_{0},\xi,\xi_{0}) \,ds\\\\
&&
\displaystyle +\int_{t_{0}}^{t}X(t,s)f(s,x(s,t,\xi)+z^{*}(s;(t,\xi)))\,ds.
\end{array}
\end{displaymath}

Let $C=\max\{||X(u,s)|| \colon u,s\in I\}$. By using \textbf{(A3)} combined with (\ref{crt1}),(\ref{crt2}) and (\ref{cmf}), we deduce that
\begin{displaymath}
\begin{array}{rcl}
|H(t,\xi)-H(t_{0},\xi_{0})| & \leq & \displaystyle |\xi-\xi_{0}|+\mu \int_{0}^{t_{0}}||X(t,s)-X(t_{0},s)||\,ds \\\\
&&+\displaystyle \mu\int_{t_{0}}^{t}||X(t,s)||\,ds\\\\
&&
\displaystyle +\int_{0}^{t_{0}}||X(t_{0},s)||\, |\omega_{1}(s,t,t_{0},\xi,\xi_{0})|\,ds\\\\
&\leq&  |\xi-\xi_{0}|+\mu t_{0}\varepsilon_{3} +C|t-t_{0}|\mu+C\gamma (\varepsilon_{1}+\varepsilon_{2})t_{0},
\end{array}
\end{displaymath}
and we conclude that $H$ is continuous in any $(t_{0},\xi_{0})\in \mathbb{R}^{+}\times \mathbb{R}^{n}$.

Finally, the result follows by verifying that $G$ is continuous in any $(t_{0},\eta_{0})\in \mathbb{R}^{+}\times \mathbb{R}^{n}$, which can be proved 
in a similar way.
\end{proof}

In order to establish the main result of this subsection, we introduce the following additional condition:
\begin{itemize}
    \item[\textbf{(D4)}] The function $f(t,x)$ and its derivatives with respect to $x$ up to order $r$--th are continuous functions of  $(t,x)$. 
\end{itemize}

A direct consequence of the above property (see \textit{e.g.} \cite[Chap.2]{Coddington}) is that 
$\partial y(t,\tau,\eta)/\partial \eta$ satisfies the matrix differential equation (\ref{MDE1}).

The following result provides sufficient conditions ensuring that the maps $\xi \mapsto H(t,\xi)$ and $\eta \mapsto G(t,\eta)$ satisfy the properties stated in Definition \ref{TopEqCr}.
\begin{theorem}
%\label{TeoDif}
If {\bf{(A1)--\bf{(A4)}}} and \textnormal{\textbf{(D4)}} are satisfied, then \textnormal{(\ref{lineal-efes})} and \textnormal{(\ref{sistema1})} are $C^{r}$ continuously topologically equivalent on $\mathbb{R}^{+}$.
\end{theorem}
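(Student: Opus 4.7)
The plan is to reduce everything to a regularity statement for $\eta \mapsto G(t,\eta)$ and then transfer it to $H$ via the inverse function theorem, since Theorem \ref{teonuevo} already provides the $\mathbb{R}^{+}$-continuous topological equivalence and Step 4 of its proof establishes the pointwise identity $H(t,\cdot) = G(t,\cdot)^{-1}$. The crucial simplification over Section 2 is that on the half-line the integrals defining $H$ and $G$ run over the compact interval $[0,t]$, so no limits at $-\infty$ appear and the restrictive hypotheses \textbf{(D1)}--\textbf{(D3)} can be replaced by the single smoothness assumption \textbf{(D4)}.

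First I would invoke Theorem \ref{teonuevo} to secure clause (i) of Definition \ref{TopEqCr}. Next, starting from
$$G(t,\eta) = \eta - \int_0^t X(t,s)\,f(s,y(s,t,\eta))\,ds,$$
I would differentiate under the integral sign. By \textbf{(D4)} and standard dependence results (see \cite[Ch.V]{Hartman}), the map $\eta \mapsto y(s,t,\eta)$ is $C^r$ and its partial derivatives up to order $r$ are jointly continuous in $(s,t,\eta)$; since $[0,t]$ is compact this transfers to $G$, so $\eta \mapsto G(t,\eta)$ is $C^r$ with derivatives jointly continuous in $(t,\eta)$. Repeating the telescoping computation of Theorem \ref{anexo-palmer} but now on $[0,t]$, and using that $\partial y/\partial \eta$ solves the variational equation \eqref{MDE1} together with \eqref{MT1}, one obtains the closed form
$$\frac{\partial G(t,\eta)}{\partial \eta} = X(t,0)\,\frac{\partial y(0,t,\eta)}{\partial \eta}.$$
This matrix is invertible: $X(t,0)$ is a transition matrix, and Liouville's formula (Theorem \ref{liouville}) applied to the variational equation yields
$$\det\frac{\partial y(0,t,\eta)}{\partial \eta} = \exp\!\left(-\int_0^t \textnormal{Tr}\bigl[A(r)+Df(r,y(r,t,\eta))\bigr]\,dr\right)>0.$$

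Having $G(t,\cdot)$ a global homeomorphism with $C^r$ regularity and everywhere-invertible derivative, the global inverse function theorem gives that $H(t,\cdot) = G(t,\cdot)^{-1}$ is a $C^r$ diffeomorphism of $\mathbb{R}^n$, with $DH(t,\eta) = [DG(t,H(t,\eta))]^{-1}$. Higher derivatives $D^{k}H$ are recovered inductively by differentiating the identity $G(t,H(t,\eta)) = \eta$ and solving algebraically, expressing $D^{k}H$ as a polynomial in $D^{j}G$ evaluated at $H(t,\eta)$ and in $D^{j}H$ with $j<k$, multiplied by powers of $[DG]^{-1}$. This yields clause (ii) of Definition \ref{TopEqCr} for both $H$ and $G$.

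For clause (iii), the joint continuity of $(t,\eta)\mapsto D^{k}G(t,\eta)$ up to order $r$ follows directly from the joint continuity in $(s,t,\eta)$ of the integrand and its $\eta$-derivatives, combined with continuity of $X(t,s)$ in both arguments. For $D^{k}H$ one argues inductively using the Faà di Bruno-type expressions above: $H$ is jointly continuous by Theorem \ref{teonuevo}, matrix inversion is continuous on the open set of invertible matrices, and composition preserves joint continuity. The main obstacle I expect is purely combinatorial rather than analytic: keeping the chain-rule bookkeeping clean when inverting the relation $G \circ H = \mathrm{id}$ to derive $D^{k}H$ explicitly for $k \geq 2$. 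Once the $k=1$ case is disposed of by the inverse function theorem, the remaining work is a disciplined induction with no new estimates required, since the finiteness of the interval $[0,t]$ already absorbs all growth concerns.
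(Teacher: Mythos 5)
Your proposal is correct and follows essentially the same route as the paper: differentiate under the (compact) integral to reach the closed form $\partial G/\partial\eta = X(t,0)\,\partial y(0,t,\eta)/\partial\eta$, invoke Liouville's formula on the variational equation to get invertibility, and then pass to $H$ by the inverse function theorem with an inductive Fa\`a di Bruno argument for $D^{k}H$, $k\geq 2$. The one small economy you make is that you transfer $C^{r}$ regularity to $H$ directly from the known bijectivity of $G(t,\cdot)$ (Theorem \ref{teonuevo}) plus everywhere-invertible Jacobian and the local inverse function theorem, whereas the paper invokes a variant of Hadamard's theorem and therefore also verifies the properness condition $|G(t,\eta)|\to\infty$ as $|\eta|\to\infty$; since bijectivity is already in hand, your shortcut is sound and dispenses with that extra check.
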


\begin{proof} The property (i) of Definition \ref{TopEqCr} is satisfied by Theorem \ref{teonuevo} whereas the property (ii) will be established by cases.
 
 \medskip
 
 \noindent{\bf{Case $r=1$.}} As $y\mapsto f(t,y)$ is $C^{1}$ for any $t\geq 0$, it follows that $y\mapsto Df(t,y)$ and  $\eta \mapsto \partial y/\partial \eta$ are continuous.
This allows to calculate the first partial derivatives of the map $\eta\mapsto G(t,\eta)$ for any $t\geq 0$ and any $i\in \{1,\ldots,n\}$ as follows
\begin{equation}
\label{derivada-parcial}
\frac{\partial G}{\partial \eta_{i}}(t,\eta)=e_{i}-\int_{0}^{t}X(t,s)Df(s,y(s,t,\eta))\frac{\partial y}{\partial \eta_{i}}(s,t,\eta)\,ds, 
\end{equation}
which implies that the partial derivatives exists and are continuous for any fixed $t\geq 0$, then $\eta \mapsto G(t,\eta)$ is $C^{1}$.

\medskip

By using the identity $X(t,s)A(s)=-\frac{\partial }{\partial s}X(t,s)$ combined with
(\ref{MDE1}) we can deduce that for any $t\geq 0$, the Jacobian matrix is given by
\begin{equation}
    \label{machine}
\begin{array}{rcl}
\displaystyle\frac{\partial G}{\partial\eta}(t,\eta)&=&  \displaystyle I-\int_{0}^{t}X(t,s)Df(s,y(s,t,\eta))
\frac{\partial y}{\partial\eta}(s,t,\eta)\,ds \\\\
&=&I - \displaystyle \int_{0}^{t}\frac{d}{ds}\left\{X(t,s)\frac{\partial y}{\partial \eta}(s,t,\eta)\right\}\,ds \\\\
&=&\displaystyle X(t,0)\frac{\partial y(0,t,\eta)}{\partial \eta},
\end{array}
\end{equation}
or, in a more concise manner
\begin{equation}
\label{jacobiano-G}
\frac{\partial G}{\partial \eta}(t,\eta)=X(t,0)\frac{\partial y(0,t,\eta)}{\partial \eta}.
\end{equation}

\medskip

In order to achieve our proof we will use a variation of the Hadamard's Theorem \cite[Cor. 2.1]{Plastock} which states that $\eta \mapsto G(t,\eta)\in C^{1}$ is a diffeomorphism if and only if  $\textnormal{Det} \frac{\partial G(t,\eta)}{\partial \eta} \neq 0$ 
for any $\eta\in \mathbb{R}^{n}$ and $|G(t,\eta)|\to +\infty$ when $|\eta|\to +\infty$.

\medskip Notice that the right term of (\ref{jacobiano-G}) is composed by two fundamental matrices
associated respectively to the linear systems (\ref{linF}) and (\ref{MDE1}). By applying the Liouville's formula
(see Theorem \ref{liouville} from Chapter 1 or  Theorems 7.2 and 7.3 from  \cite[Ch.1]{Coddington})
we can see that
\begin{displaymath}
\begin{array}{rcl}
\textnormal{Det}\frac{\partial G}{\partial\eta}(t,\eta)&=&\textnormal{Det} X(t,0)\, \textnormal{Det} \frac{\partial y(0,t,\eta)}{\partial \eta}\\\\
&=& \displaystyle \exp\left(\int_{0}^{t}\textnormal{Tr}A(s)\,ds\right)\exp\left(\int_{t}^{0}\textnormal{Tr}\{A(s)+Df(s,t,\eta)\}\,ds\right),
\end{array}
\end{displaymath}
and consequently $\textnormal{Det} \frac{\partial G(t,\eta)}{\partial \eta}>0$ for any $t\geq 0$. 

In addition, let us recall that
$$
G(t,\eta)=\eta + w^{*}(t;(t,\eta)),
$$
where $w^{*}(t;(t,\eta))$ is given by (\ref{w-star}), we can deduce that
$|G(t,\eta)|\to +\infty$ as $|\eta|\to +\infty$, due to $|w^{*}(t;(t,\eta))|\leq K\mu/\alpha$ for any $(t,\eta)$. Therefore,  we conclude that $\eta \mapsto G(t,\eta)$ is a global diffeomorphism for any fixed $t\geq 0$.

Finally, by using again the property \textbf{(D4)} combined with Theorem 4.1 from \cite[Ch.V]{Hartman})  we also can conclude that  $(t,\eta)\mapsto \frac{\partial y(0,t,\eta)}{\partial \eta}$ on $\mathbb{R}^{+}\times \mathbb{R}^{n}$
and by the identity (\ref{jacobiano-G}) we have that $(t,\eta)\mapsto \frac{\partial G}{\partial \eta}(t,\eta)$ is continuous on $\mathbb{R}^{+}\times \mathbb{R}^{n}$ and this case follows.
\medskip
 
\noindent{\bf{Case $r = 2$.}} Due to \textbf{(D4)}  we can verify that
the second partial derivatives $\partial^{2}y(s,\tau,\eta)/\partial \eta_{j}\partial\eta_{i}$  satisfy
 the system of differential equations
\begin{equation*}
%\label{MDE16}
\left\{
\begin{array}{rcl}
\displaystyle \frac{d}{dt}\frac{\partial^{2}y}{\partial\eta_{j}\partial\eta_{i}}&=&\displaystyle
\{A(t)+Df(t,y)\}\frac{\partial^{2}y}{\partial\eta_{j}\partial\eta_{i}}
+D^{2}f(t,y)\frac{\partial y}{\partial\eta_{j}}\frac{\partial y}{\partial\eta_{i}} \\\\
\displaystyle \frac{\partial^{2}y}{\partial\eta_{j}\partial\eta_{i}}  &=&0,
\end{array}\right.
\end{equation*}
 for any $i,j=1,\ldots,n,$ where $D^2 f$ is the formal second derivative of $f$ and  $y=y(t,\tau,\eta).$ By using (\ref{derivada-parcial}) we have
\begin{displaymath}
\begin{array}{rcl}
\displaystyle \frac{\partial^{2} G}{\partial\eta_{j}\partial \eta_{i}}(t,\eta)&=&\displaystyle -\int_{0}^{t}X(t,s)D^{2}f(s,y(s,t,\eta))\frac{\partial y}{\partial \eta_{j}}(s,t,\eta)\frac{\partial y}{\partial \eta_{i}}(s,t,\eta)\,ds\\\\
& & \displaystyle
-\int_{0}^{t}X(t,s)Df(s,y(s,t,\eta))\frac{\partial^{2} y(s,t,\eta)}{\partial\eta_{j}\partial\eta_{i}}\,ds\\\\
&=&\displaystyle -\int_{0}^{t}\frac{d}{ds}\left\{X(t,s)
\frac{\partial^{2} y(s,t,\eta)}{\partial\eta_{j}\partial\eta_{i}}\right\}\,ds\\\\
&=&\displaystyle  X(t,0)\frac{\partial^{2} y(0,t,\eta)}{\partial\eta_{j}\partial\eta_{i}}.

\end{array}
\end{displaymath}  
Thus,  the map $\eta \mapsto G(t,\eta)$ is $C^{2}$ for any fixed $t\geq 0$. The identity $\xi=G(t,H(t,\xi))$ for any fixed $t\in \mathbb{R}^{+}$, the Jacobian matrix of the identity map on $\mathbb{R}^{n}$ can be seen as
\begin{displaymath}
DG(t,H(t,\xi))DH(t,\xi)=I \quad \textnormal{for any fixed $t\in \mathbb{R}^{+}$}.
\end{displaymath}

By Case $r=1$, we have that $\eta\mapsto G(t,\eta)$ is a diffeormorphism of class $C^{1}$ for any fixed $t\in \mathbb{R}^{+}$, which implies that
\begin{equation}
\label{Jaco2}
DH(t,\xi)=[DG(t,H(t,\xi))]^{-1} \quad \textnormal{for any $t\in \mathbb{R}^{+}$}
\end{equation}
is well defined. In addition, note that $(t,\xi) \mapsto DH(t,\xi)$ is continuous since the maps $U\mapsto U^{-1}$ and $(t,\xi) \mapsto DG(t,H(t,\xi))$ are continuous for any $U\in Gl_{n}(\mathbb{R})$ and $(t,\xi)\in \mathbb{R}^{+}\times \mathbb{R}^{n}$.

Now, differentiating again with respect to the second variable, we have the formal computation
\begin{displaymath}
D^{2}G(t,H(t,\xi))DH(t,\xi)DH(t,\xi)+DG(t,H(t,\xi))D^{2}H(t,\xi)=0
\end{displaymath}
and the identity (\ref{Jaco2}) implies that
\begin{equation}\label{Jaco3}
   D^{2}H(t,\xi)=-DH(t,\xi)D^{2}G(t,H(t,\xi))DH(t,\xi)DH(t,\xi).
\end{equation}

It is easy to see that $D^{2}H(t,\xi)$ is continuous with respect to $(t, \xi)$ due to is a composition of maps that are continuous with respect to $(t, \xi).$ Therefore $\eta \mapsto G(t,\eta)$ is a global diffeomorphism of class $C^2$ for any fixed $t\geq 0.$

\medskip

\noindent{\bf{Case $r \geq 3 $.}} By using \textbf{(D4)}  we can  conclude that $\eta \mapsto y(0,t,\eta)$ is $C^{r}$and the partial derivatives
\begin{displaymath}
(t,\eta)\mapsto \frac{\partial^{|m|} y(0,t,\eta)}{\partial\eta_{1}^{m_{1}}\cdots \partial \eta_{n}^{m_{n}}}, \quad \textnormal{where $|m|=m_{1}+\ldots+m_{n}\leq r$},
\end{displaymath}
are continuous for any $(t,\eta)\in \mathbb{R}^{+}\times\mathbb{R}^{n}$. Moreover, this fact combined with (\ref{machine}) shows that the partial derivatives up to order $r$--th  of $G$ with respect to $\eta$
\begin{displaymath}
(t,\eta)\mapsto \frac{\partial^{|m|} G(t,\eta)}{\partial\eta_{1}^{m_{1}}\cdots \partial \eta_{n}^{m_{n}}}=X(t,0)\frac{\partial^{|m|} y(0,t,\eta)}{\partial\eta_{1}^{m_{1}}\cdots \partial \eta_{n}^{m_{n}}}, 
\end{displaymath}
where $|m|=m_{1}+\ldots+m_{n}\leq r,$
are continuous in $\mathbb{R}^{+}\times \mathbb{R}$. Additionally, the higher formal derivatives of $H$ up to order $r-$th and its continuity on $\mathbb{R}^{+}\times \mathbb{R}^{n}$ can be deduced in a recursive way from (\ref{Jaco2}) and (\ref{Jaco3}).

\medskip

The property (iii) of Definition \ref{TopEqCr} is immersed in the previous analysis. Therefore, the result follows.
 \end{proof}

 \section{Comments and References}

\noindent \textbf{1)} The differentiability results of the section 1 are inspired in the article \cite{CRJDE}. In general, the section 1 improves the clarity of the presentation 
and, in several intermediate steps, provides a clearer and simpler explanation. We also point out that, to the best of our knowledge, there are no previous works
devoted to the smoothness properties of the Palmer's homeomorphism.

\medskip

\noindent \textbf{2)} The original motivation to study the smoothness of the Palmer's homeomorphism is found in the framework of converse asymptotic stability results: note that
\textbf{(A1)}--\textbf{(A4)} implies that (\ref{linF}) and (\ref{nolinF}) are uniformly asymptotically stable. On the other hand, a converse result from P. Monz\'on implies the existence
of a \textit{Rantzer's density function}  $r(t,x)$ associated to (\ref{linF}) and an interesting achievement of the article \cite{CRJDE} was the construction of a Rantzer's density function
for (\ref{nolinF}) in terms of $r(t,x)$ and the Palmer's homeomorphism. We refer to Theorem 3 from \cite{CRJDE} for details.

\medskip 

\noindent \textbf{3)} The restrictive nature of hypotheses \textbf{(D1)} and \textbf{(D3)} prompted to restrict the time domain of the Palmer's homeomorphism from
$\mathbb{R}$ to $[0,+\infty)$ which was carried out in \cite{CMR} and \cite{Monzon1} and summarized in the section 2. The construction of homeomorphisms becomes much easier, while their differentiability properties are obtained with less restrictive hypotheses.

\medskip 

\noindent \textbf{4)} In the context of Section 2, and under the additional assumption of existence of an equilibrium $\bar{y}$, namely
\begin{displaymath}
A(t)\bar{y}=f(t,\bar{y})=0  \quad \textnormal{for any $t\geq 0$},   
\end{displaymath}
a byproduct of the global linearization result is \cite[Th.3.1]{CMR}:
\begin{theorem}
Assume that \textnormal{\textbf{(A1)--(A4)}} are fulfilled, then
\begin{itemize}
    \item[i)] If the system has an equilibrium $\bar{y}$, it is unique.
    \item[ii)] If $\bar{y}=0$, namely $f(t,0)=0$ for any $t\geq 0$, then:
    \begin{displaymath}
     H(t,0)=G(t,0)=0 \quad \textnormal{for any $t\geq 0$}.
     \end{displaymath}
    \item[iii)] If $\bar{y}\neq 0$, then 
\begin{displaymath}
\lim\limits_{t\to +\infty}H(t,0)=\bar{y} \quad \textnormal{and} \quad  \lim\limits_{t\to +\infty}G(t,0)=0. 
\end{displaymath}
    \item[iv)] The equilibrium $\bar{y}$ is globally uniformly asymptotically stable for the system \eqref{nolinF}.
\end{itemize}
\end{theorem}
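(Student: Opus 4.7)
The plan is to leverage the strong topological equivalence provided by Theorem 7.2.1, combined with the decisive fact that, under (A4) with $P = I$, every solution of (linF) satisfies $\|X(t,\tau)\| \le K e^{-\alpha(t-\tau)}$, so in particular $X(t,0) \to 0$ as $t\to +\infty$. I would handle the four parts in the order (i), (ii), (iii), (iv), with (iii) carrying out the asymptotic heavy lifting from which (iv) follows almost immediately.

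For (i), I would observe that any equilibrium $\bar{y}_i$ is a constant solution of (nolinF), so variation of parameters gives $\bar{y}_i = X(t,\tau)\bar{y}_i + \int_{\tau}^{t} X(t,s)\,f(s,\bar{y}_i)\,ds$. Subtracting the two identities, estimating with the dichotomy bound and the Lipschitz condition (A3), and then letting $t\to +\infty$ with $\tau$ fixed, the $X(t,\tau)$ term vanishes and one is left with $|\bar{y}_1-\bar{y}_2| \le (K\gamma/\alpha)\,|\bar{y}_1-\bar{y}_2|$. Interpreting (A4) as $2K\gamma<\alpha$ (the Palmer condition \eqref{FPT} used throughout the chapter), one has $K\gamma/\alpha<1$ and uniqueness is immediate.

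For (ii), the hypothesis $f(t,0)\equiv 0$ forces $y(s,t,0)\equiv 0$ and $x(s,t,0)\equiv 0$, so plugging into the explicit formula $G(t,0) = -\int_0^t X(t,s)\,f(s,y(s,t,0))\,ds$ yields $G(t,0)=0$ directly, while the fixed-point equation for $H(t,0)=z^{*}(t;(t,0))$ collapses to $z^{*}(t)=\int_0^t X(t,s)\,f(s,z^{*}(s))\,ds$, whose unique fixed point (guaranteed by the contraction argument in Step 1 of the proof of Theorem 7.2.1) is the zero function. For (iii) and (iv), the key computation is that the constant solution $y(t)\equiv\bar{y}$ of (nolinF) maps under $G$ to a solution of (linF): using $G[t,y(t,\tau,\eta)] = X(t,\tau)\,G(\tau,\eta)$ and $G(0,\eta)=\eta$ one gets $G(t,\bar{y}) = X(t,0)\,\bar{y}$, which tends to $0$. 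Then from $\bar{y}=H(t,G(t,\bar{y}))$ and the uniform-in-$t$ uniform continuity of $\eta\mapsto H(t,\eta)$ (Step 5 of the proof of Theorem 7.2.1), we obtain $H(t,0)\to \bar{y}$. For (iv), any solution satisfies $y(t,\tau,\eta)=H[t,X(t,\tau)\,G(\tau,\eta)]$; since $X(t,\tau)\,G(\tau,\eta)\to 0$ and $H(t,\cdot)$ is uniformly continuous uniformly in $t$, we get $y(t,\tau,\eta)-H(t,0)\to 0$, and combined with $H(t,0)\to \bar{y}$ this yields $y(t,\tau,\eta)\to \bar{y}$. The bound $|G(\tau,\eta)-\eta|\le K\mu/\alpha$ together with the uniform modulus of continuity of $H$ controls the convergence uniformly in $(\tau,\eta)$, giving GUAS.

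The main subtlety I anticipate is in (iii): a quick check with the scalar example $\dot{y}=-y+c$ (equilibrium $\bar{y}=c$) yields $G(t,0)=-c(1-e^{-t})\to -\bar{y}$, not $0$. This strongly suggests that the literal claim $\lim G(t,0)=0$ is a typographical error and that the intended statement is $\lim G(t,\bar{y})=0$, which is the natural companion of $\lim H(t,0)=\bar{y}$ and is exactly what the argument above delivers. The proof I sketched establishes this corrected version cleanly; were the literal wording intended, I do not see how to avoid the counterexample without strengthening (A1)--(A4).
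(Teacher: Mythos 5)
The monograph states this theorem without proof, quoting it from \cite{CMR}, so there is no internal argument to compare with; judged on its own, your proposal is essentially correct and uses exactly the tools the chapter provides. Parts (i) and (ii) are fine: the variation-of-parameters identity for a constant solution plus the dichotomy bound and \eqref{FPT} give $|\bar y_1-\bar y_2|\le (K\gamma/\alpha)|\bar y_1-\bar y_2|$, and when $f(t,0)\equiv 0$ the uniqueness of solutions and of the fixed point $z^{*}$ force $H(t,0)=G(t,0)=0$. For (iii) you are right on both counts: since $G(0,\eta)=\eta$ and $G[t,y(t,0,\bar y)]=X(t,0)G(0,\bar y)$ for the constant solution, one gets $G(t,\bar y)=X(t,0)\bar y\to 0$, and the $t$-uniform modulus of continuity of $\xi\mapsto H(t,\xi)$ established in Step 5 of the proof of Theorem \ref{teonuevo} upgrades $\bar y=H(t,X(t,0)\bar y)$ to $H(t,0)\to\bar y$. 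Your example $\dot y=-y+c$ does satisfy \textbf{(A1)--(A4)} and gives $G(t,0)=-c(1-e^{-t})\to-\bar y\neq 0$, so the printed limit $\lim_{t\to+\infty}G(t,0)=0$ is indeed a misprint for $\lim_{t\to+\infty}G(t,\bar y)=0$, which is what \cite{CMR} proves and what your argument delivers. Note also that the displayed equilibrium condition $A(t)\bar y=f(t,\bar y)=0$ must be read as $A(t)\bar y+f(t,\bar y)=0$: under \textbf{(A4)} with $P=I$ the literal version forces $\bar y=0$ and makes (iii) vacuous, so the reading you adopt is the intended one, and your uniqueness proof in (i) works under either reading.

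Two small remarks on (iv), neither of which is a genuine gap. For uniform attraction it is cleaner to compare directly with $\bar y=H(t,X(t,\tau)G(\tau,\bar y))$ rather than routing through $H(t,0)$: then $|y(t,\tau,\eta)-\bar y|$ is controlled by the modulus of continuity of $H$ evaluated at $Ke^{-\alpha(t-\tau)}|G(\tau,\eta)-G(\tau,\bar y)|$, a quantity depending only on $t-\tau$ and $|\eta-\bar y|$, which makes the uniformity in $\tau$ transparent. For the uniform-stability half of GUAS the bound $|G(\tau,\eta)-\eta|\le K\mu/\alpha$ alone is not sufficient; you also need the $\tau$-uniform equicontinuity of $\eta\mapsto G(\tau,\eta)$ (again Step 5 of Theorem \ref{teonuevo}) to turn $|\eta-\bar y|$ small into $|G(\tau,\eta)-G(\tau,\bar y)|$ small before applying the decay of $X(t,\tau)$ and the modulus of $H$. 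Both facts are already available in the chapter, so this only needs to be said explicitly.
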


\medskip

\noindent \textbf{5)} In \cite{CT25} the authors generalize the smooth linearization results from \cite{CMR} by dropping
the assumption \textbf{(A2)} by allowing a family of unbounded perturbations $f$.

\medskip

\noindent \textbf{6)} When considering projections different from the trivial ones, a first result in terms of the differentiability of the linearization on $\mathbb{R}^+$ was obtained by D. Dragi\v{c}evi\'c \textit{et al.} in \cite{Dragicevic2}, its approach take in account nonuniform exponential dichotomy and the spectrum associated to this dichotomy.
Secondly in \cite{Jara}, N. Jara proves, without using dichotomy spectrum, that if the linear system has a general nonuniform dichotomy and the nonlinear part has restrictive asymptotic assumptions; in compensation for avoiding the dichotomy spectrum, then the linearization is of class $C^2$ on $\mathbb{R}^+$.

\section{Exercises}

\begin{itemize}
    \item[1.-] By using the methods of Theorems \ref{anexo-palmer} and \ref{SegDer}, show that $y(t) \mapsto G[t,y(t)]$ is a $C^3$ preserving orientation diffeomorphisms.

    \item[2.-] Verify that $\eta \mapsto G(t,\eta)$ satisfies (\ref{uruguay1}).

%     \item[3.-] Establish a equality similar to \eqref{TH} for $H.$

    \item[4.-] Prove that the homeomorphism $\eta \mapsto G(t,\eta)$ verifies the property (ii) of Definition \ref{defnueva}.

     \item[5.-] Prove that $(t,\eta) \mapsto G(t,\eta)$ of Definition \ref{defnueva}, is continuous in any $(t_{0},\eta_{0})\in \mathbb{R}^{+}\times \mathbb{R}^{n}$.

\item[6.-] By using \eqref{Jaco2} and \eqref{Jaco3},
 obtain a formula for the higher formal derivatives of $H$ up to order $r-$th and prove its continuity on $\mathbb{R}^{+}\times \mathbb{R}^{n}.$

 \item[7.-] In Theorem \ref{teonuevo}, carry out the steps of the proof considering $J = \mathbb{R^-}.$

     \item[8.-] In the condition  \textbf{(A4)} change the projection $P=I$ by $P = 0.$ What can you say about all theorems of this chapter?

     \item[9.-] Give explicit examples that verify the hypothesis of all theorems of this chapter.

\end{itemize}

%%%%%%%%%%%%%%%%%%%%%%%%%%%%%%%%%%%%%%%%%%%%%%%%%%%%%%%%%%%%%%%%%%%%%%%%%%%%%%%%%%%%%%%%%%%%%%%%%%%%%

\appendix

\chapter[Exponential Dichotomy in the scalar case]{Exponential Dichotomy in the scalar case (Pablo Amster)}

A particularity of the scalar case is that the unique possible projectors are the
trivial ones.  Limit cases are always special and provides new perspectives on the 
already studied results. In this context, 
we will be focused on the study of the scalar equation:
\begin{equation}
\label{sca1}
 \dot{x}=a(t)x \quad \textnormal{where $t\in J$}
\end{equation}
and, as it has been usual on this book, $J$ can be the intervals
$J=(-\infty,0]$, $J=[0,+\infty)$ and $J=(-\infty,+\infty)$. 

\begin{definition}
\label{DiEx}
The equation \eqref{sca1} has an exponential dichotomy on the %upperly 
unbounded interval $J$ if and only if there exist constants $C\ge 0$ and $\alpha>0$ such that
\begin{equation*}
%\label{dico}
\left|\int_s^t a(r)\,dr 
\right|\ge \alpha(t-s) - C\qquad \textnormal{for any $t\geq s$ with $t,s\in J$}.
\end{equation*}
\end{definition}

By redefining the constant $C$ if necessary, it is clear that Definition \ref{DiEx} always corresponds to one of the following situations

\begin{enumerate}
\item[a)] There  exist constants $C\ge 0$ and $\alpha>0$ such that:
\begin{equation}
    \label{proj-id} 
\int_s^t a(r)\,dr  \le C - \alpha(t-s) \qquad \textnormal{for any $t\geq s$ with $t,s\in J$}.\end{equation}

\item[b)] 
There  exist constants $C\ge 0$ and $\alpha>0$ such that:
\begin{equation}
    \label{proj-null}\int_t^s a(r)\,dr  \ge  \alpha(s-t) - C \qquad \textnormal{for any $s\geq t$ with $t,s\in J$}.
    \end{equation}
\end{enumerate}

The above properties are equivalent to the  standard definition of exponential dichotomy:
\begin{itemize}
\item[a')] There exist constants $K\ge 1$ and $\alpha>0$ such that 
\begin{equation}
\label{DEPI}
 e^{\int_{s}^{t}a(r)\,dr}\leq Ke^{-\alpha (t-s)} \quad \textnormal{for any $t\geq s$ with $t,s\in J$}.
 \end{equation}
\item[b')] There exist constants $K\ge 1$ and $\alpha>0$ such that 
\begin{equation}
\label{DENP}
e^{-\int_{t}^{s}a(r)\,dr}\leq Ke^{-\alpha (s-t)} \quad \textnormal{for any $s\geq t$ with $t,s\in J$}.
\end{equation} 
\end{itemize}

In fact, inspired by the standard definition, the case a) corresponds to the exponential dichotomy  with the \textit{identity projector} while the case b) corresponds to 
the exponential dichotomy  with the \textit{null projector}.

\begin{itemize}
\item[$\bullet$] If (\ref{sca1}) has an exponential dichotomy on $J=[0,+\infty)$, the properties (\ref{DEPI})--(\ref{DENP}) 
says that any solution either converges exponentially to zero (identity projector) or diverges exponentially (null projector) as $t\to +\infty$, since (\ref{DENP}) is equivalent to $K^{-1}e^{\alpha(t-t_0)}\leq e^{\int_{t_0}^t\mathfrak{a}(r)\,dr}$ with $t\geq t_{0}\geq 0$. 
\item[$\bullet$] Similarly, if (\ref{sca1}) has an exponential dichotomy on $J=(-\infty,0]$, the properties (\ref{DEPI})--(\ref{DENP}) 
says that any solution either converges exponentially to zero (null projector) or diverges exponentially (identity projector) as $t\to -\infty$, since (\ref{DEPI}) is equivalent to $K^{-1}e^{\alpha(t_0-t)}\leq e^{\int_{t_0}^{t}\mathfrak{a}(r)dr}$ with $t\leq t_0\leq 0$. 
\item[$\bullet$] 
If (\ref{sca1}) has an exponential dichotomy on $J=(-\infty,+\infty)$ then it also has an exponential dichotomy on $(-\infty,0]$
and $[0,+\infty)$ and inherits the above asymptotic behaviors at $\pm \infty$.
\end{itemize}

As we can see, if (\ref{sca1}) has an exponential dichotomy on $J$, its solutions have 
a \textit{dichotomic} asymptotic behavior at $t\to +\infty$ or $t\to -\infty$: they are either convergent or divergent at exponential rate. As we know, this qualitative asymptotic behavior is behind the name of exponential dichotomy.

\begin{remark}
%\label{BST}
As a consequence of the previous discussion is that, if \eqref{sca1} has an exponential dichotomy on $(-\infty,+\infty)$ then
the unique solution bounded on $(-\infty,+\infty)$ is the trivial one. In fact, any other solution will be divergent
at $+\infty$ or $-\infty$. 
\end{remark}

A noteworthy fact is that a linear equation (\ref{sca1}) can have an exponential dichotomy simultaneously on $(-\infty,0]$
and $[0,+\infty)$ but does not necessarily has an exponential dichotomy on $(-\infty,+\infty)$,
we refer to Section from Chapter 3 for an illustrative example.

\subsection{Roughness, admissibility and spectra}

The next result is known as the \textit{roughness property} and shows that the set of bounded continuous functions such that an exponential dichotomy exists is open. More precisely,
\begin{proposition}
%\label{rough-a}
Assume that  \eqref{sca1} has an exponential  
dichotomy on $[0,+\infty)$ with constants $C$, $\alpha$
and let
$b\colon [0,+\infty)\to \mathbb{R}$ be a continuous function such that
\begin{displaymath}
\delta_{0}:= \limsup\limits_{t\to +\infty}|b(t)|<{\alpha}.
\end{displaymath}
Then the perturbed equation
\begin{displaymath}
\dot{x}=[a(t)+b(t)]x    
\end{displaymath}
also has an exponential dichotomy on $[0,+\infty)$ with constants $\tilde{C},\alpha_{0}$ where
$\tilde{C}>C$ and $\alpha_{0}$ is any positive constant satisfying the restriction
$\alpha_0+\delta_{0} < \alpha$.
\end{proposition}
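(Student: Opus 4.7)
The strategy is to split the argument according to which of the two equivalent formulations of exponential dichotomy, (\ref{proj-id}) or (\ref{proj-null}), is satisfied by the unperturbed equation, and to show in each case that the same type of estimate persists for $a+b$ at the weaker rate $\alpha_0$. The scalar setting is crucial here, since the projector is necessarily trivial and so we never need to reconcile two different projectors when passing from $a$ to $a+b$; this is what makes the argument much simpler than the vector case studied in Chapter~2.

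The key preliminary step is to exploit the hypothesis $\limsup_{t\to+\infty}|b(t)|=\delta_0<\alpha$. Given $\alpha_0>0$ with $\alpha_0+\delta_0<\alpha$, I would pick $\delta\in(\delta_0,\alpha-\alpha_0]$ and then $T>0$ such that $|b(r)|\le\delta$ for all $r\ge T$. On the compact interval $[0,T]$ the continuity of $b$ gives a bound $|b(r)|\le M_b$, and splitting the range of integration yields, for all $0\le s\le t$,
\begin{displaymath}
\left|\int_s^t b(r)\,dr\right|\le \int_s^t|b(r)|\,dr\le \delta(t-s)+M_bT.
\end{displaymath}
This single estimate is the whole engine of the proof: the $\delta(t-s)$ part is what eats into $\alpha$, while the $M_bT$ part is a constant that will be absorbed into a new constant $\tilde{C}$.

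With this inequality in hand, the two cases are immediate. In the identity-projector case, adding the bound above to (\ref{proj-id}) gives
\begin{displaymath}
\int_s^t[a(r)+b(r)]\,dr\le C+M_bT-(\alpha-\delta)(t-s)\le \tilde{C}-\alpha_0(t-s),
\end{displaymath}
with $\tilde{C}:=C+M_bT$, so the perturbed equation satisfies (\ref{proj-id}) with $(\tilde{C},\alpha_0)$; the null-projector case is dealt with symmetrically by replacing $\int_s^t$ by $\int_t^s$ and using the corresponding lower bound $\int_t^s b(r)\,dr\ge -\delta(s-t)-M_bT$ together with (\ref{proj-null}). The only conceptual subtlety, and arguably the main ``obstacle'', is recognising that the contribution of $b$ near the origin (where no smallness is available) has to be handled by inflating the constant $C$ to $\tilde{C}$ rather than by weakening the rate further; this is precisely why the statement allows $\tilde{C}>C$ but still any $\alpha_0$ with $\alpha_0+\delta_0<\alpha$.
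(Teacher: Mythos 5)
Your proof is correct, but it is organized differently from the paper's. The paper never splits into the two signed cases \eqref{proj-id} and \eqref{proj-null}: it works directly with the absolute-value form of Definition \ref{DiEx}, fixes $T$ with $|b(t)|<\alpha-\alpha_0$ for $t\ge T$, and then runs a three-case analysis in time ($t\ge s\ge T$, $s\le t\le T$, $t>T\ge s$), using the reverse triangle inequality on $\bigl|\int_s^t(a+b)\bigr|$ in the far regime and absorbing the near-origin regime into the constant $C_0=\int_0^T|a(r)+b(r)|\,dr$, ending with $\tilde C=C+C_0+\alpha_0 T$. You instead reduce to the signed formulations and prove the single uniform estimate $\bigl|\int_s^t b(r)\,dr\bigr|\le\delta(t-s)+M_bT$ for all $0\le s\le t$, which lets you treat each signed case in one line with $\tilde C=C+M_bT$; this is shorter, avoids the time-case split, and your inflated constant depends only on $b$ rather than on $a+b$ near the origin. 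The price is that your argument leans on the paper's assertion (made right after Definition \ref{DiEx}, with a possibly redefined $C$) that a scalar exponential dichotomy is always of exactly one of the types \eqref{proj-id} or \eqref{proj-null}; the paper's own proof does not need that reduction, since it manipulates the absolute-value inequality throughout. Since that reduction is taken for granted in the text, your use of it is legitimate, and both routes yield the claimed constants $(\tilde C,\alpha_0)$.
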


\begin{proof}

    Fix $\alpha_0>0$ such that $\alpha_{0}< \alpha - \delta_{0}$ and $T\geq 0$ such that $|\mathfrak{e}(t)|<\alpha-\alpha_0$ for any $t\ge T$. The proof will consider
the three possible cases: $t\geq s\geq T$, $T\geq t\geq s$ and $t\geq T \geq s$.
    
\medskip 

Firstly, when $t\ge s\ge T$ it is verified that: 
    $$\left|\int_s^t [a(r)+b(r)] \,dr \right| 
\ge \alpha(t-s) - C - \int_s^t |b(r)|\, dr \ge \alpha_0(t-s) - C.
$$

Secondly, when $s\leq t\leq T$, let us define $C_0:= \int_0^{T} |\mathfrak{a}(r)+\mathfrak{e}(r)|\ dr$. It is clear that 
$$\left|\int_s^t [a(r)+b(r)] \, dr \right| 
\ge - C_0  \ge \alpha_0(t-s) - C_0 -\alpha_0\,T.
$$
Finally, for $t> T\ge s$, we have that
\begin{displaymath}
\begin{array}{rcl}
\displaystyle \left|\int_s^t [a(r)+b(r)] \,dr \right| &\ge&  \displaystyle
\left|\int_{T}^t [a(r)+b(r)] \,dr \right| - 
\int_s^{T} |a(r)+b(r)|\ dr \\
&\ge& \alpha_0(t-T) - C-C_0 \\
&\ge &\alpha_0 (t-s) - \alpha_0 T-
C-C_0,
\end{array}
\end{displaymath}
then we have that the equation has an exponential dichotomy with constants $\alpha_0$ and $\tilde C:= C+ C_0 + \alpha_0\,T$. 
\end{proof}

We stress the beautiful and remarkable simplicity of the proof of the above lemma
compared with the  general $n$--dimensional case which has been addressed in several
monographs as \cite{Barreira2},\cite[Ch.4]{Cop} and research articles as \cite{Ju,Naulin-a}.

Now, let $(\mathcal{B},\mathcal{D})$ be a pair of function spaces, as we have seen in the Chapter 2,
this pair is said
to be \textit{admissible} if for any $v\in \mathcal{D}$, the equation 
\begin{equation}
\label{sca0}
\dot{y}=a(t)y+v(t),
\end{equation}
has --at least-- one solution in the space $\mathcal{B}$. The next result describes the equivalence between the exponential dichotomy on $[0,+\infty)$ with the admissibility property 
when $\mathcal{B}=\mathcal{D}$ is the Banach space of bounded continuous functions on 
$[0,+\infty)$ with the supremum norm:

\begin{proposition} 
%\label{Prop-Cop}
Let $t\mapsto a(t)$ be a continuous function on $[0,+\infty)$. The following properties are equivalent:
\begin{itemize}
\item[a)] The equation \eqref{sca1} has an exponential dichotomy on $[0,+\infty)$.
\item[b)] For any 
bounded and continuous function $v\colon [0,+\infty) \to \mathbb{R}$, the problem
\begin{equation}
\label{inhp}
\dot{x}=a(t)x + v(t)    
\end{equation}
has 
at least one bounded and continuous solution on $[0,+\infty)$. 
More precisely, \eqref{proj-id} holds if and only if 
all solutions are bounded for each bounded $v(\cdot)$   and
\eqref{proj-null} holds if and only if 
there exists exactly one bounded solution for  each bounded $v(\cdot)$.
\item[c)] The non homogeneous equation 
\begin{equation*}
%\label{3nh}
\dot{z}=a(t)z+ 1
\end{equation*}
has at least a bounded solution on $[0,+\infty)$. 
\item[d)] There exists a bounded and continuous function $v\colon [0,+\infty)\to \mathbb{R}$ with 
$$
\liminf\limits_{t\to +\infty}|v(t)|>0
$$
such that the problem \eqref{inhp} has at least one bounded solution. 
\end{itemize}
\end{proposition}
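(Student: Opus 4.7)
The overall strategy is to prove the cyclic chain $a)\Rightarrow b)\Rightarrow c)\Rightarrow d)\Rightarrow a)$. The two middle implications are essentially free: the constant function $v\equiv 1$ is bounded continuous and satisfies $\liminf_{t\to\infty}|v(t)|=1>0$, so a bounded solution to $\dot x=a(t)x+1$ simultaneously witnesses $b)\Rightarrow c)$ and $c)\Rightarrow d)$. The refinement inside b) distinguishing the two projector cases will fall out at the end from the structure of the one-parameter family $\Phi(t)C+x^{*}(t)$: it is bounded in $C$ precisely when $\Phi$ is bounded (the \eqref{proj-id} case, where every solution is bounded), and contains a unique bounded element precisely when $\Phi$ is unbounded (the \eqref{proj-null} case).

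For $a)\Rightarrow b)$ I would perform a direct variation-of-parameters argument in each projector case. Under \eqref{proj-id} the general solution $x(t)=\Phi(t)C+\int_0^t e^{\int_s^t a(r)\,dr}v(s)\,ds$ is majorized by $Ke^{-\alpha t}|C|+(K/\alpha)\|v\|_\infty$, yielding both b) and the "all bounded" half of the refinement. Under \eqref{proj-null} the candidate is the backward integral $x^{*}(t)=-\int_t^\infty e^{-\int_t^s a(r)\,dr}v(s)\,ds$; one checks that it solves the ODE by direct differentiation, estimates $|x^{*}(t)|\le (K/\alpha)\|v\|_\infty$ using b'), and verifies uniqueness because any other bounded solution would differ from $x^{*}$ by a nonzero multiple of the exponentially growing $\Phi$.

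The core of the proof is $d)\Rightarrow a)$. Since $\liminf|v|>0$ and $v$ is continuous, $v$ has a fixed sign near infinity, so I may assume $v(t)\ge\eta>0$ for $t\ge T$. Writing the bounded solution as $x^{*}(t)=\Phi(t)F(t)$ with $F(t)=x^{*}(0)+\int_0^t \Phi(s)^{-1}v(s)\,ds$, one observes that $F'(t)=\Phi(t)^{-1}v(t)>0$ on $[T,\infty)$, so $F_\infty:=\lim_{t\to\infty}F(t)\in(F(T),+\infty]$ exists. If $F_\infty=+\infty$, then $|x^{*}|\le M$ gives $\Phi(t)\le M/F(t)$, whence $F'\ge\eta F/M$, and Gronwall yields exponential growth of $F$ and exponential decay of $\Phi$ on $[T,\infty)$, i.e.\ \eqref{proj-id}. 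If $F_\infty$ is finite, a short contradiction rules out $F_\infty\ne 0$ (a nonzero limit would force $\Phi$ to stay bounded, yet $v\ge\eta$ would then make $\int \Phi^{-1}v=+\infty$), so $F_\infty=0$; rewriting $x^{*}(t)=-\Phi(t)\int_t^\infty \Phi(s)^{-1}v(s)\,ds$ yields $\Phi(t)G(t)\le M/\eta$ where $G(t):=\int_t^\infty \Phi(s)^{-1}\,ds$ satisfies $G'=-1/\Phi$, which rewrites as the differential inequality $G'\le -\eta G/M$. Integrating gives exponential decay of $G$, and inverting this back through the relation $\Phi=-1/G'$ produces the uniform exponential growth of $\Phi$ required by \eqref{proj-null}. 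Finally, the estimates obtained on $[T,\infty)$ are propagated to $[0,\infty)$ by the scalar analogue of Lemma \ref{completar} from Chapter 3, exploiting the local bounded-growth behavior of $a$ on the compact interval $[0,T]$.

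The main technical obstacle I anticipate is precisely the final step of Case 2: inverting the integrated bound $\Phi G\le K_0$ into a genuine pointwise uniform exponential growth of $\Phi$. In contrast, the identity-projector case is clean because the upper bound $\Phi\le M/F$ feeds directly into the exponential differential inequality $F'\ge\eta F/M$ without any loss. This asymmetry between the two projector cases is the scalar manifestation of the usual Perron-type difficulty in passing from admissibility to dichotomy, and it is the place where the regularity properties of $a$ have to be used most carefully.
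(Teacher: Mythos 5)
Your chain $a)\Rightarrow b)\Rightarrow c)\Rightarrow d)\Rightarrow a)$ differs from the paper's route in the decisive implication: the paper proves $a)\Rightarrow b)$ (with $c)$, $d)$ as special cases), then $d)\Rightarrow c)$ by a comparison argument that reduces everything to the equation $\dot z=a(t)z+1$, and finally $c)\Rightarrow a)$ by first showing $\liminf_{t\to+\infty}|z(t)|>0$ and then differentiating $\ln z$, so that $\ln\bigl(z(t)/z(s)\bigr)=\int_s^t a(r)\,dr+\int_s^t z(r)^{-1}\,dr$ delivers the two-parameter estimates \eqref{proj-id}, \eqref{proj-null} immediately. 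Your $a)\Rightarrow b)$ matches the paper, and your direct $d)\Rightarrow a)$ via the factorization $x^{*}=\Phi F$ is a genuinely different idea; however, as written it is incomplete exactly where the dichotomy has to be produced.

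Concretely: in the case $F_\infty=+\infty$ you conclude ``exponential decay of $\Phi$, i.e.\ \eqref{proj-id}'', but \eqref{proj-id} is the two-parameter statement $\int_s^t a(r)\,dr\le C-\alpha(t-s)$ for all $t\ge s$, i.e.\ a bound on $\Phi(t)/\Phi(s)$, whereas Gronwall only gives you $\Phi(t)\le M/F(t)\le Ce^{-\alpha t}$, a one-parameter bound; the two are not equivalent (a decay bound on $\Phi(t)$ is compatible with ratios $\Phi(t)/\Phi(s)$ of size $e^{s/2}$ for $t$ slightly larger than $s$). To convert, one writes $\Phi(t)/\Phi(s)=\bigl(x^{*}(t)/x^{*}(s)\bigr)\bigl(F(s)/F(t)\bigr)\le \bigl(M/x^{*}(s)\bigr)e^{-(\eta/M)(t-s)}$, which requires $\inf_{s\ge T}x^{*}(s)>0$; this is precisely the ``$\liminf>0$'' step the paper proves for $z$ (using $||a||_\infty$: the solution is increasing whenever it is positive and small), and your claim that this case is ``loss-free'' is therefore not justified. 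In the case $F_\infty$ finite you yourself leave the key step open: exponential decay of $G(t)=\int_t^\infty \Phi(s)^{-1}ds$ together with $\Phi G\le M/\eta$ does not by itself yield the pointwise bound required by \eqref{proj-null}, since $\Phi^{-1}$ could spike between sample points; the repair again uses boundedness of $a$, e.g.\ $\Phi^{-1}(s)\le e^{||a||_\infty}\int_s^{s+1}\Phi^{-1}(r)\,dr\le e^{||a||_\infty}G(s)\le e^{||a||_\infty}(M/\eta)\,\Phi^{-1}(t)e^{-\eta(s-t)/M}$ for $s\ge t$, but none of this appears in your text. So the argument is fixable, yet both halves of your $d)\Rightarrow a)$ currently stop short of the dichotomy estimates; the paper avoids the difficulty altogether by working with $v\equiv 1$ and the logarithmic identity, which produces the $(t,s)$-estimate in one line once the uniform lower bound on $|z|$ is in hand.
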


\begin{proof}
Proof of $\textnormal{a)} \Rightarrow \textnormal{b)}$:
If (\ref{proj-id}) holds, then any solution of (\ref{sca0}) has the form:
\begin{equation*}
%\label{sol-gral}x(t)= x(t_0) e^{\int_{t_0}^{t}a(r)\,dr} + \int_{t_0}^t v(s) e^{\int_{s}^{t}a(r)\,dr}\, ds.    
\end{equation*}
The first term of the right-hand side is bounded since it converges to $0$ when $t\to +\infty$, while the second term satisfies, for $t\ge t_0$:
$$\left|\int_{t_0}^t v(s) e^{\int_{s}^{t}a(r)\,dr}\, ds\right| \le \|v\|_\infty 
\int_{t_0}^t Ke^{-\alpha(t-s)}\, ds = \frac K\alpha \|v\|_\infty \left(1 - e^{-\alpha (t-t_0)}\right),
$$
and the boundedness on $[0,+\infty)$ of any solution of (\ref{sca0}) is verified. Now, if (\ref{proj-null}) holds instead, then it is readily verified that the expression
$$
x(t)= -\int_t^{+\infty} v(s) e^{-\int_{t}^{s}a(r)\,dr}\, ds
$$
is well defined and gives an explicit description for  the unique solution of (\ref{sca0})
which is bounded on $[0,+\infty)$. 

On the other hand, observe that the statements c) and d) are particular cases of b), which implies that $\textnormal{a)} \Rightarrow \textnormal{c)}$ and $\textnormal{a)} \Rightarrow \textnormal{d)}$ are straightforwardly verified.

Proof of $\textnormal{d)} \Rightarrow \textnormal{c)}$: Let $t\mapsto x(t)$ be a bounded solution of (\ref{inhp}) such that
\begin{displaymath}
m\leq x(t)\leq M \quad \textnormal{for any $t\in [0,+\infty)$},    
\end{displaymath}

By hypothesis, we have the existence of $v_{0}>0$ such that $|\mathfrak{v}(t)|>v_{0}$
for any $t\geq t_{0}$ with $t_{0}\geq 0$. Without loss of generality, we can suppose
$t_{0}=0$.

Firstly, we will assume that $v(t)\ge v_0 >0$ for any $t\geq 0$, 
then
the above solution $t\mapsto x(t)$ also verifies:
$$
x(t)=x_0 e^{\int_{0}^{t}a(r)\,dr } + \int_{0}^t e^{\int_{s}^{t}a(r)\,dr}v(s)ds\ge x_0 e^{\int_{0}^{t}a(r)\,dr} + v_0\int_{0}^{t} e^{\int_{s}^{t}a(r)\,dr}\,ds,
$$
which allow us to deduce:
$$
\frac {x(t)}{v_0} \ge 
\frac{x_0}{v_0} e^{\int_{0}^{t}a(r)\,dr} + \int_{0}^{t} e^{\int_{s}^{t}a(r)\,dr}\,ds:=z(t).
$$

We can see that $t\mapsto z(t)$ is solution of $\dot{z}=a(t)z+1$ passing through $x_{0}/v_{0}$ at $t=0$, which also verifies $z(t)\leq M/v_{0}$ for any $t\geq 0$.

Now, let us observe that
\begin{displaymath}
\begin{array}{rcl}
z(t)&=&\displaystyle \frac{x_0}{v_0} e^{\int_{0}^{t}\mathfrak{a}(r)\,dr} + \int_{0}^{t} e^{\int_{s}^{t}\mathfrak{a}(r)\,dr}\,ds \\\\
&=&\displaystyle ||v||_{\infty}^{-1}\left(x_{0}\frac{||v||_{\infty}}{v_0} e^{\int_{0}^{t}a(r)\,dr} + \int_{0}^{t} e^{\int_{s}^{t} a(r)\,dr}||v||_{\infty}\,ds \right)\\\\
&\geq & \displaystyle ||v||_{\infty}^{-1}\left(x_{0}\ e^{\int_{0}^{t}a(r)\,dr} + \int_{0}^{t} e^{\int_{s}^{t}a(r)\,dr}v(s)\,ds \right)\\\\
&\geq & ||v||_{\infty}^{-1}x(t),
\end{array}
\end{displaymath}
and we proved that 
\begin{displaymath}
m||v||_{\infty}^{-1}\leq z(t)\leq v_{0}^{-1}M \quad \textnormal{for any $t\in [0,+\infty)$}.   \end{displaymath}

%\textcolor{red}
{Secondly, if we assume that $-v(t) \geq v_{0}>0$ for any $t\geq 0$, we multiply (\ref{inhp}) by $-1$ and make the transformations $u=-x$ and $w(t)=-v(t)$, leading to $\dot{u}=a(t)u+w(t)$, which also has a bounded solution
and the above proof can be carried out}.

Proof of $\textnormal{c)} \Rightarrow \textnormal{a)}$: Let $t\mapsto z(t)$ be a solution
of $\dot{z}=a(t)z+1$ which is bounded on $[0,+\infty)$. Firstly, a noteworthy
property of the above equation is that: if $z(0)\geq 0$, the solution remains positive
for any $t>0$ while, if $z(0)<0$ the solution either remains negative or has a unique change of sign.

Secondly, let us verify that
\begin{equation}
\label{LIA}
\displaystyle\liminf_{t\to+\infty} |z(t)| >0.
\end{equation}

In fact, this property is immediately verified when $z(\cdot)$ is positive
after some value since $t\mapsto z(t)$ is increasing at any value of $t$ such that $|z(t)|\|a\|_\infty < 1$. Now, if $z(t)<0$ for any $t\geq 0$, we have to consider two cases: 

\medskip

\noindent \textit{Case i)} $\limsup\limits_{t\to+\infty} z(t)<0$, which is equivalent to (\ref{LIA}).

\noindent \textit{Case ii)} $\limsup\limits_{t\to+\infty} z(t)=0$, which cannot be possible, because $z(\cdot)$ is increasing after some finite time and, consequently, $z(t)=\textit{o}(1)$, which leads to $\liminf\limits_{t\to+\infty} z'(t) >0$, obtaining a contradiction with the boundedness of $t\mapsto z(t)$ and (\ref{LIA}) follows.

In consequence, we can assume that $0<c\le |z(t)|\le M$ for any $t\geq T$.  In the case that $z(\cdot)$ is positive for $t>s\geq T$ it follows that:
$$C\ge \ln\left(\frac{z(t)}{z(s)}\right) = \int_s^t a(r)\, dr + \int_s^t \frac 1{z(r)}\, dr,$$
which leads to:
$$\int_s^t a(r)\, dr \le  C - \frac 1M (t-s).$$

Similarly, if $z(\cdot)$ is negative  for any $t\leq s$ we will have that
$$
\int_t^s a(r)\, dr \ge  -C + \frac 1M (s-t),
$$
then the equation (\ref{sca1}) has an exponential dichotomy on $[T,+\infty)$
and the exponential dichotomy is verified on $[0,+\infty)$ as a consequence of Lemma \ref{completar}
from Chapter 3.

\end{proof}

We stress the remarkable simplicity of the proof of the above lemma
compared with the 
general $n$--dimensional case which has been addressed from a functional analysis approach in
\cite{Cop} and variational methods in \cite{Campos}.

\medskip

The exponential dichotomy spectrum has a simpler characterization in the scalar case when
$J=[0,+\infty)$.
\begin{proposition} 
\label{SPEC}
If $t\mapsto a(t)$ is bounded and continuous on $[0,+\infty)$ then the exponential dichotomy spectrum of \eqref{sca1} is characterized as follows:
\begin{displaymath}
\Sigma^{+}(a)=[\beta^{-}(a),\beta^{+}(a)] 
\end{displaymath}
where $\beta^{-}(a)$ and $\beta^{+}(a)$ are the lower and upper Bohl exponents, which are 
respectively defined by:
\begin{displaymath}
\beta^{-}(a):=\liminf\limits_{L,s\to +\infty}\frac{1}{L}\int_{s}^{s+L}a(r)\,dr\quad \textnormal{and} \quad \beta^{+}(a):=\limsup\limits_{L,s\to +\infty}\frac{1}{L}\int_{s}^{s+L}a(r)\,dr. 
\end{displaymath}
More precisely, 
\begin{itemize}
    \item The case \eqref{proj-id} holds for $a(\cdot)-\lambda$ if and only if $\lambda < \beta^{-}(a)$.
    \item The case \eqref{proj-null} holds for $a(\cdot)-\lambda$ if and only if $\lambda > \beta^{+}(a)$.
\end{itemize}
\end{proposition}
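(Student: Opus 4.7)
The plan is to work with the reformulation of Definition \ref{DiEx} via conditions \eqref{proj-id} and \eqref{proj-null} applied to the shifted function $a(\cdot)-\lambda$, since these are precisely the two possible forms of an exponential dichotomy for $\dot{x}=[a(t)-\lambda]x$ on $[0,+\infty)$, corresponding respectively to the identity and the null projector. Thus $\lambda\in\rho^{+}(a)$ iff one of the two chains of inequalities holds. Dividing by $t-s$ and taking the appropriate double limit will relate each case to one of the Bohl exponents.

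First I would prove the inclusion $[\beta^{-}(a),\beta^{+}(a)]\subseteq\Sigma^{+}(a)$. Suppose $\lambda\in\rho^{+}(a)$. If \eqref{proj-id} holds for $a(\cdot)-\lambda$, then for $t\geq s\geq 0$,
\[
\frac{1}{t-s}\int_{s}^{t}a(r)\,dr \;\leq\; \lambda-\alpha+\frac{C}{t-s};
\]
taking $\limsup$ as $s,t-s\to +\infty$ yields $\beta^{+}(a)\leq \lambda-\alpha<\lambda$. Symmetrically, if \eqref{proj-null} holds for $a(\cdot)-\lambda$, dividing by $s-t$ and passing to $\liminf$ as $t,s-t\to+\infty$ gives $\beta^{-}(a)\geq\lambda+\alpha>\lambda$. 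Hence $\rho^{+}(a)\subseteq(-\infty,\beta^{-}(a))\cup(\beta^{+}(a),+\infty)$, which is exactly the desired inclusion after complementing.

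For the reverse inclusion, I would show that $(-\infty,\beta^{-}(a))\cup(\beta^{+}(a),+\infty)\subseteq\rho^{+}(a)$, and along the way identify which of the two projectors is realized. Take $\lambda>\beta^{+}(a)$ and set $\alpha:=(\lambda-\beta^{+}(a))/2>0$. By definition of the upper Bohl exponent, there exists $N\geq 0$ such that
\[
\int_{s}^{s+L}[a(r)-\lambda]\,dr \;\leq\; -\alpha L \qquad \text{for all } s\geq N,\ L\geq N.
\]
For $t-s\geq N$ and $s\geq N$ this is precisely \eqref{proj-id} with constant $0$; the analogous argument with $\liminf$ handles $\lambda<\beta^{-}(a)$, yielding \eqref{proj-null}. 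Accordingly, the projector is $P=I$ when $\lambda>\beta^{+}(a)$ and $P=0$ when $\lambda<\beta^{-}(a)$, which gives the "more precisely" part of the statement.

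The only real obstacle is extending these asymptotic estimates from the region $\{s\geq N,\ t-s\geq N\}$ to all $t\geq s\geq 0$, in order to absorb the missing ranges into a single additive constant $C$. Here I would use that $a$ is bounded, say $|a(r)|\leq M$, so that on the complementary regions (either $s\leq N$, or $t-s\leq N$ with $s$ arbitrary) one has the trivial bound
\[
\Bigl|\int_{s}^{t}[a(r)-\lambda]\,dr\Bigr|\;\leq\;(M+|\lambda|)\cdot 2N,
\]
after splitting the integral at $r=N$ when necessary. Combining this with the asymptotic estimate on the good region and redefining $C$ accordingly yields \eqref{proj-id} (respectively \eqref{proj-null}) uniformly on $\{0\leq s\leq t\}$ (respectively $\{0\leq t\leq s\}$), completing the argument. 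Actually, by Lemma \ref{completar} of Chapter 3, it would even suffice to verify the dichotomy on $[N,+\infty)$, which removes the bookkeeping for small $s$; this shortcut is worth pointing out.
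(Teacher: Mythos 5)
Your proposal is correct and follows essentially the same route as the paper's proof: the necessity direction by dividing the dichotomy estimate for $a(\cdot)-\lambda$ by $t-s$ and passing to the Bohl limits, and the sufficiency direction by fixing $\alpha$ strictly between $\lambda$ and the relevant Bohl exponent, obtaining the estimate on the region $\{s\ge N,\ t-s\ge N\}$, and absorbing the remaining ranges into the additive constant via the boundedness of $a$ (the paper's cases a)--c) are exactly your splitting at $r=N$, and your remark about Lemma \ref{completar} is only a minor shortcut, not a different argument). One small point: your assignment of projectors ($P=I$ for $\lambda>\beta^{+}(a)$, $P=0$ for $\lambda<\beta^{-}(a)$) agrees with statements i)--ii) in the paper's proof, whereas the two bullets in the printed statement of the proposition have \eqref{proj-id} and \eqref{proj-null} interchanged; this is a typo in the statement, not a flaw in your argument.
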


\begin{proof}

{We will prove an equivalent result, namely:
$$
\mathbb{R}\setminus \Sigma^{+}(a)=(-\infty,\beta^{-}(a))\cup (\beta^{+}(a),+\infty).
$$}

{If $\lambda \notin \Sigma^{+}(a)$
it follows from Definition \ref{DefinicionEspectro} from Chapter 5 that $\dot{x}=[a(t)-\lambda]x$
has an exponential dichotomy on $[0,+\infty)$, that is either (\ref{proj-id}) or (\ref{proj-null}) holds for $a(\cdot)-\lambda$}. 

%\textcolor{red}
{The proof will be a consequence of the following statements:
\begin{itemize}
\item[i)] The property  (\ref{proj-null}) holds for $a(\cdot)-\lambda$
if and only if $\lambda \in (-\infty,\beta^{-}(a))$,
\item[ii)] The property  (\ref{proj-id}) holds for $a(\cdot)-\lambda$
if and only if $\lambda \in (\beta^{+}(a),+\infty)$.
\end{itemize}
}

Assume for example that (\ref{proj-null}) holds for $a(\cdot)-\lambda$, then for arbitrary $s>t$ we have
$$ \frac 1{s-t}\int_t^s a(r)\, dr \ge \lambda + \alpha - \frac C{s-t} 
$$
for some $\alpha>0$. Taking lower 
limit for $t\to+\infty$ and $L:=s-t\to +\infty$, we deduce that $\beta^-(a)\ge \lambda + \alpha>\lambda$. Conversely, for $\beta^-(a)> \lambda$ we may fix $\alpha\in (0,\beta^-(a)- \lambda)$
and $t_0, L_0>0$ such that 
$$ 
\frac 1{s-t}\int_t^s a(r)\, dr > \lambda + \alpha \quad \textnormal{with} \quad t>t_0 \quad  \textnormal{and} \quad s-t>L_0,  
$$
that is
\begin{equation}
\label{condition}
\int_t^s [a(r)-\lambda]\, dr >  \alpha(s-t) \quad \textnormal{with} \quad s-t>L_0 \quad \textnormal{and} \quad t>t_0
\end{equation}

We have to study the cases where the conditions $s-t>L_0$ and $t>t_0$ are not satisfied, namely: 
\begin{enumerate}
\item[a)] $0\le s-t\le L_0$, 
\item[b)] $t_{0}\geq s-L_{0}>t$,
\item[c)] $s-L_{0}>t_{0}\geq t$. 
\end{enumerate}

Firstly, the cases a) and b) imply that $0\le s-t\le L_0$ and $t\le s\le L_0+t_0$. Then if a) or b) are
satisfied we have that:
$$
\int_t^s [a(r)-\lambda]\, dr \ge -\|a - \lambda\|_\infty (L_0+t_0)
\ge \alpha (s-t) - (\|a - \lambda\|_\infty + \alpha)(L_0+t_0).
$$

Finally, if $t\le t_{0} < s-L_{0}$, then by using (\ref{condition}) we have
\begin{displaymath}
\begin{array}{rcl}
\displaystyle \int_t^s [a(r)-\lambda]\, dr &=& 
\displaystyle \int_{t_{0}}^{s} [a(r)-\lambda]\, dr + \int_{t}^{t_0} [a(r)-\lambda]\, dr \\
&>& \displaystyle \alpha (s-t_0) - \|a - \lambda\|_\infty t_0 \\
&\ge&  \displaystyle
\alpha (s-t) - (\|a - \lambda\|_\infty +\alpha) t_0.
\end{array}
\end{displaymath}

By gathering the above cases we have that (\ref{proj-null}) is verified with
$\alpha>0$ and $C=(\|a - \lambda\|_\infty +\alpha)t_{0}$, then the statement i) is verified.
The statement ii) can be proved in an analogous way.

\end{proof} 

The above result is usually stated without proof in the literature \cite[p.30]{APR} and \cite{Doan}.
Last but not least, we refer the reader to \cite{barabanov2001,Barabanov2015} and \cite{Dalecki} for more details about the Bohl exponents.

%\appendix 
\chapter{Matrix Norms}
\section*{B.1 Basic facts}

Given a matrix $A\in M_{n}(\mathbb{R})$, the matrix norm induced
by the euclidean vector norm $|\cdot|_{2}$ is defined as follows:
\begin{displaymath}
||A||_{2}:=\sup\limits_{x\neq 0}\frac{|Ax|_{2}}{|x|_{2}}=\sqrt{\lambda_{\max}(A^{*}A)},
\end{displaymath}
where $\lambda_{\max}(A^{*}A)$ is the largest eigenvalue of $A^{*}A$.

\section*{B.2 Positive definite matrices}

\begin{definition}
A square hermitian matrix $A=A^{*}\in M_{n}(\mathbb{C})$ is positive definite (semi--definite) if
$x^{*}Ax>0$ ($x^{*}Ax\geq 0$) for any $(n\times 1)$ vector $x\neq 0$.
\end{definition}

\begin{definition}
 A square symmetric matrix $A=A^{T}\in M_{n}(\mathbb{R})$ is positive definite (semi--definite) if
$x^{T}Ax>0$ ($x^{T}Ax\geq 0$) for any $(n\times 1)$ real vector $x\neq 0$.
\end{definition}

The notation $A>0$ ($A\geq 0$) is reserved to 
denote a positive definite (semi--definite) matrix.

The positive definite (semi--definite) matrices have interesting properties:
\begin{itemize}
    \item[1)] $A$ is positive (semi) definite if and only if all the eigenvalues of $A$ are
    positive (nonnegative).
    \item[2)] $A$ is positive (semi) definite if and only there exists a positive (semi) definite
    matrix $B\in M_{n}(\mathbb{C})$ such that $A=BB$, that is $B$ is a square root of $A$.
    \item[3)] $A$ is a real positive (semi) definite if and only there exists a real positive (semi) definite
    matrix $B\in M_{n}(\mathbb{R})$ such that $A=BB$, that is $B$ is a square root of $A$.
\end{itemize}

For properties 2) and 3) we refer the reader to Theorem 13 from \cite[Ch.9]{Hof},\cite[p.155]{Lutkepohl} and \cite[p.37]{Zhou} for details.

\chapter*{Appendix C}

The next result has been introduced (1919) by the Swedish mathematician Thomas Hakon Gr\"onwall
in \cite{Gronwall} as a tool to prove the continuity of the solution of a differential equation 
with respect to the initial conditions.

\section*{C.1 Gronwall's Lemma}
\begin{lemma}
Let $J\subset \mathbb{R}$ be an interval,$t_{0}\in J$ and $c>0$. Let $u,v\in C(J,(0,+\infty))$ such that for any $t\geq t_{0}$ with $t\in J$ it follows that;
\begin{equation}
\label{gronwall}
u(t)\leq c + \int_{t_{0}}^{t}v(s)u(s)\,ds,
\end{equation}
then for any $t\geq t_{0}$ we have that
\begin{equation}
\label{gronwall2}
u(t)\leq c\,e^{\int_{t_{0}}^{t}v(s)\,ds}.
\end{equation}
\end{lemma}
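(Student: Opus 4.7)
The plan is to reduce the integral inequality to a differential inequality by introducing the right--hand side as an auxiliary function. Concretely, I would define
$$
w(t):=c+\int_{t_{0}}^{t}v(s)u(s)\,ds \quad \text{for } t\geq t_{0},\ t\in J,
$$
so that the hypothesis \eqref{gronwall} reads $u(t)\leq w(t)$. Note that $w(t_{0})=c>0$, and since $u,v$ are continuous and strictly positive, $w$ is $C^{1}$, strictly positive, and increasing on $[t_{0},\sup J)$, with derivative $\dot w(t)=v(t)u(t)$.

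Next I would use the bound $u(t)\leq w(t)$ together with $v(t)>0$ to obtain the key differential inequality
$$
\dot w(t)=v(t)u(t)\leq v(t)w(t), \qquad t\geq t_{0}.
$$
Dividing by $w(t)>0$ this becomes $\dfrac{d}{dt}\ln w(t)\leq v(t)$. Integrating from $t_{0}$ to $t$ yields
$$
\ln w(t)-\ln c\leq \int_{t_{0}}^{t}v(s)\,ds,
$$
that is, $w(t)\leq c\,e^{\int_{t_{0}}^{t}v(s)\,ds}$. Combining with $u(t)\leq w(t)$ gives \eqref{gronwall2}.

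There is no real obstacle here; the only care needed is to justify that $w(t)>0$ on $[t_{0},\sup J)$ (immediate since $w(t_{0})=c>0$ and $w$ is nondecreasing because $v,u>0$), which in turn makes the division by $w$ legitimate and allows the use of $(\ln w)'=\dot w/w$. A minor remark: the hypothesis $c>0$ (rather than $c\geq 0$) is precisely what guarantees strict positivity of $w$ and permits taking logarithms directly; the $c=0$ case would require a limiting argument $c\downarrow 0$, but this is not needed under the stated assumptions.
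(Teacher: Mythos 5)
Your proof is correct and follows essentially the same route as the paper: both introduce the right--hand side $w(t)=c+\int_{t_{0}}^{t}v(s)u(s)\,ds$ (the paper implicitly, you explicitly), observe that the hypothesis gives the logarithmic-derivative bound $\dot w(t)/w(t)\leq v(t)$, integrate, and then use $u\leq w$ once more to conclude. Your explicit remark on the positivity of $w$ and the role of $c>0$ is a welcome clarification of a point the paper leaves implicit, but it is not a different argument.
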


\begin{proof}
Note that the positiveness of the functions and parameters imply that inequality (\ref{gronwall}) is equivalent to
\begin{displaymath}
\displaystyle \frac{u(t)v(t)}{ c + \int_{t_{0}}^{t}v(s)u(s)\,ds}\leq v(t).
\end{displaymath}

As the left side of the above inequality is a logarithmic derivative of $t\mapsto c+\int_{t_{0}}^{t}v(s)u(s)\,ds$. Now, by integrating between $t_{0}$ and $t$ and using again (\ref{gronwall}) we obtain
\begin{displaymath}
\ln(u(t))\leq \ln\left(c+\int_{t_{0}}^{t}v(s)u(s)\,ds\right)=\ln(c)+\int_{t_{0}}^{t}v(s)\,ds
\end{displaymath}
and (\ref{gronwall2}) can be deduced easily.
\end{proof}

An important variation of Gronwall's Lemma is given by the following result extracted from \cite[p.19]{Berthelin}:

\begin{lemma}
%\label{G2}
Let $J\subset \mathbb{R}$ be an interval,$t_{0}\in J$ and $c>0$. Let $u,v\in C(J,(0,+\infty))$ such that for any $t\in J$ it follows that;
\begin{equation}
\label{gronwall3}
u(t)\leq c + \left|\int_{t_{0}}^{t}v(s)u(s)\,ds\right|,
\end{equation}
then for any $t\in J$ we have that
\begin{equation*}
%\label{gronwall4}
u(t)\leq ce^{\left|\int_{t_{0}}^{t}v(s)\,ds\right|}
\end{equation*}
\end{lemma}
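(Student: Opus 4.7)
The plan is to split the argument into two cases according to the sign of $t-t_{0}$ and, in each case, reduce the hypothesis to the classical Gronwall inequality already established.

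For $t\geq t_{0}$ with $t\in J$, the absolute value in (\ref{gronwall3}) collapses to give
$$
u(t)\leq c+\int_{t_{0}}^{t}v(s)u(s)\,ds,
$$
which is precisely the hypothesis of the previous Gronwall's Lemma. Applying that lemma yields $u(t)\leq c\,e^{\int_{t_{0}}^{t}v(s)\,ds}=c\,e^{\left|\int_{t_{0}}^{t}v(s)\,ds\right|}$, settling this case.

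For $t\leq t_{0}$ with $t\in J$, I would reduce to the previous case by a time-reversal substitution. Define $\widetilde{J}:=\{-\tau:\tau\in J\}$, set $\widetilde{u}(\tau):=u(-\tau)$ and $\widetilde{v}(\tau):=v(-\tau)$ on $\widetilde{J}$, and put $\widetilde{t}_{0}:=-t_{0}$, $\widetilde{t}:=-t$; note that $\widetilde{u},\widetilde{v}\in C(\widetilde{J},(0,+\infty))$ and $\widetilde{t}\geq \widetilde{t}_{0}$. The substitution $r=-s$ gives
$$
\left|\int_{t_{0}}^{t}v(s)u(s)\,ds\right|=\int_{t}^{t_{0}}v(s)u(s)\,ds=\int_{\widetilde{t}_{0}}^{\widetilde{t}}\widetilde{v}(r)\widetilde{u}(r)\,dr,
$$
so the hypothesis (\ref{gronwall3}) becomes $\widetilde{u}(\widetilde{t})\leq c+\int_{\widetilde{t}_{0}}^{\widetilde{t}}\widetilde{v}(r)\widetilde{u}(r)\,dr$. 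The previous Gronwall's Lemma then yields $\widetilde{u}(\widetilde{t})\leq c\,e^{\int_{\widetilde{t}_{0}}^{\widetilde{t}}\widetilde{v}(r)\,dr}$, and undoing the substitution gives $u(t)\leq c\,e^{\int_{t}^{t_{0}}v(s)\,ds}=c\,e^{\left|\int_{t_{0}}^{t}v(s)\,ds\right|}$, as required.

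No step is a serious obstacle here; the only point that requires care is keeping the signs straight in the reflection argument and verifying that the positivity of $u,v$ is preserved under the substitution so that the previous lemma genuinely applies. An alternative (and equally short) route would be to work directly with the auxiliary function $\phi(t):=c+\left|\int_{t_{0}}^{t}v(s)u(s)\,ds\right|$, observing that on $\{t\leq t_{0}\}$ one has $\phi'(t)=-v(t)u(t)\geq -v(t)\phi(t)$, so that $t\mapsto \phi(t)\,e^{\int_{t_{0}}^{t}v(s)\,ds}$ is nondecreasing and hence bounded above by $\phi(t_{0})=c$ for $t\leq t_{0}$; but the reflection approach is cleaner because it simply reuses the lemma already proved.
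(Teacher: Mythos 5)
Your proof is correct. The case $t\geq t_{0}$ is handled exactly as in the paper: the absolute value drops and the classical Gronwall's Lemma applies verbatim. For $t\leq t_{0}$ your primary route differs from the paper's: you perform a time reversal ($\widetilde{u}(\tau)=u(-\tau)$, $\widetilde{v}(\tau)=v(-\tau)$, $\widetilde{t}_{0}=-t_{0}$) and reduce the backward case to the already-proved forward Gronwall inequality, whereas the paper argues directly, setting $w(t)=c-\int_{t_{0}}^{t}v(s)u(s)\,ds\geq u(t)$, computing $w'(t)=-v(t)u(t)\geq -v(t)w(t)$, and using the integrating factor $e^{\int_{t_{0}}^{t}v(s)\,ds}$ to conclude that $t\mapsto w(t)e^{\int_{t_{0}}^{t}v(s)\,ds}$ is nondecreasing on $\{t\leq t_{0}\}$, hence bounded by $w(t_{0})=c$. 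Your reflection argument has the advantage of reusing the previous lemma with no new differential estimate, at the cost of some sign and change-of-variables bookkeeping (which you carry out correctly, including checking that positivity and the range of the hypothesis are preserved); the paper's direct argument avoids the substitution entirely but redoes a Gronwall-type computation by hand. Note that the "alternative route" you sketch at the end with the auxiliary function $\phi$ is in fact precisely the proof given in the paper.
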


\begin{proof}
If $t\geq t_{0}$, the assumption (\ref{gronwall3}) is equivalent to (\ref{gronwall}) and by Gronwall's Lemma we can deduce that
$$
u(t)\leq c\,e^{\int_{t_{0}}^{t}v(s)\,ds}= c\,e^{\left|\int_{t_{0}}^{t}v(s)\,ds\right|}.
$$

Now, let us assume that $t\in J$ with $t< t_{0}$, let us note by (\ref{gronwall3}) that
$$
0<u(t)\leq w(t):=c+\left|\int_{t_{0}}^{t}v(s)u(s)\,ds\right|=c-\int_{t_{0}}^{t}v(s)u(s)\,ds
$$

We derivate $w(t)$ and obtain that
$$
w'(t)=-v(t)u(t)\geq -v(t)w(t),
$$
that is equivalent to
$$
w'(t)+v(t)w(t)\geq 0
$$
and also to
$$
w'(t)e^{\int_{t_{0}}^{t}v(s)\,ds}+v(t)e^{\int_{t_{0}}^{t}v(s)\,ds}w(t)\geq 0,
$$
which can be written as
$$
\frac{d}{dt}\left[w(t)e^{\int_{t_{0}}^{t}v(s)\,ds} \right]\geq 0.
$$

Then, the function $t\mapsto \phi(t)=w(t)e^{\int_{t_{0}}^{t}v(s)\,ds}$ is increasing for any $t\in J$ with $t\leq t_{0}$. In consequence, as $t\leq t_{0}$ we have that $\phi(t)\leq \phi(t_{0})=w(t_{0})=c$ or
$$
w(t)e^{\int_{t_{0}}^{t}v(s)\,ds}\leq w(t_{0})=c,
$$
then
$$
w(t)\leq ce^{-\int_{t_{0}}^{t}v(s)\,ds}.
$$

As $w(t)\geq u(t)>0$ we can conclude that 
$$
u(t)\leq w(t) \leq ce^{-\int_{t_{0}}^{t}v(s)\,ds}=ce^{\left|\int_{t}^{t_{0}}v(s)\,ds\right|}=ce^{\left|-\int_{t}^{t_{0}}v(s)\,ds\right|}=ce^{\left|\int_{t_{0}}^{t}v(s)\,ds\right|}
$$
\end{proof}


\begin{thebibliography}{8}
%[1]

\bibitem{Adrianova} L. Ya. Adrianova:
Introduction to Linear Systems of Differential Equations,
Translations of Mathematical Monographs, American Mathematical Society,
Providence RI (1995).

\bibitem{AA} E. Ait Dads and O. Arino:
Exponential dichotomy and existence of pseudo almost--periodic solutions of some differential
equations,
Nonlinear Anal., Theory Methods Appl. 27 (1996) 369--386.

\bibitem{Akhmet} M.A. Akhmet, M.U. Tleubergenova and A. Zafer:
Asymptotic equivalence of differential equations and asymptotically almost periodic solutions,
Nonlinear Analysis 67 (2007) 1870--1877.

\bibitem{APR} V. Anagnostopoulou, C. P\"otzsche and M. Rasmussen:
Nonautonomous Bifurcation Theory, Concept and Tools,
Springer (2023).

\bibitem{Anh} P. Anh, A. Czornik, T.S. Doan and S. Siegmund:
Proportional local assignability of dichotomy spectrum of one-sided continuous time--varying linear systems,
J. Differential Equations 309 (2022) 176--195.

\bibitem{Anh2} P. Anh, A. Czornik, T.S. Doan and N.T.T. Suong:
Assignement of the spectrum for time--varying linear control systems via kinematic equivalence,
Systems and Control Letters 186 (2024) article 105763.

\bibitem{Aul} B. Aulbach and S. Siegmund:
A spectral theory for nonautonomous differential equation,
in J. L\'opez--Fenner and M. Pinto (eds.), Proceedings of the 5th International Congress of Difference Equations and Applications
(Temuco, Chile, 2000), pp. 45--55, Taylor and Francis, London (2002).

\bibitem{Sie} B. Aulbach and S. Siegmund:
The dichotomy spectrum for noninvertible systems of linear difference equations,
Journal of Difference Equations and Applications 7 (2001) 895--913.

\bibitem{barabanov2001} E.A. Barabanov and A.V. Konyukh:
Bohl exponents of linear differential systems,
Mem. Differential Equations Math. Phys. 24 (2001) 151--158.

\bibitem{Barabanov2015} E.A. Barabanov and A.V. Konyukh:
The exact extreme boundaries of mobility of Bohl exponents of solution to linear differential system under small perturbation of its coefficient matrix (Russian),
Vesti NAN Belarusi. Ser. Phys i Matem. Nauk. 3 (2015) 1--12.

\bibitem{Barreira1} L. Barreira and C. Valls:
Stability of nonautonomous differential equations,
Lecture Notes in Mathematics 1926, Springer, Berlin (2008).

\bibitem{Barreira2} L. Barreira, D. Dragi\v{c}evi\'c and C. Valls:
Admissibility and hyperbolicity,
Springer Briefs in Mathematics, Springer (2018).

\bibitem{Battelli} F. Battelli and K.J. Palmer:
Criteria for exponential dichotomy for triangular systems,
J. Math. Anal. Appl. 428(1) (2015) 525--543.

\bibitem{Brauer} F. Brauer and J.A. Nohel:
The Qualitative Theory of Ordinary Differential Equations. An Introduction,
Dover Publications, New York (1969).

\bibitem{Bellman1} R. Bellman:
Stability Theory of Differential Equations,
Dover Publications, Mineola NY (1953).

\bibitem{Bellman2} R. Bellman:
Perturbation Techniques,
Dover Publications, Mineola NY (1966).

\bibitem{Bellman3} R. Bellman:
Introduction to Matrix Analysis,
SIAM Classics in Applied Mathematics 19, SIAM, Philadelphia PA (1997).

\bibitem{Berkey} D.D. Berkey:
An asymptotic Floquet theorem for linear almost periodic systems,
SIAM J. Math. Anal. 7 (1976) 843--847.

\bibitem{Berthelin} F. Berthelin:
\'Equations Diff\'erentielles,
Cassini, Paris (2017).

\bibitem{Blot} J. Ben Slimene and J. Blot:
Reducibility of systems and existence of solutions for almost periodic differential equations,
Electronic Journal of Differential Equations 2012 (2012) No. 75, 1--17.

\bibitem{BFS} E.M. Bonotto, M. Federson and F.L. Santos:
Dichotomies for generalized ordinary differential equations and applications,
Journal of Differential Equations 264(5) (2018) 3131--3173.

\bibitem{Burton-PAMS} T.A. Burton:
Linear differential equations with periodic coefficients,
Proc. Amer. Math. Soc. 17 (1966) 327--329.

\bibitem{Burton} T.A. Burton and J.S. Muldowney:
A generalized Floquet theory,
Arch. Math. 19 (1968) 188--194.

\bibitem{Burton76} T.A. Burton and J.J. Mallet--Paret:
Almost Floquet and generalized almost Floquet systems,
Czechoslovak Mathematical Journal 26 (1976) 13--19.

\bibitem{Bylov} B.F. Bylov:
Almost reducible systems of differential equations (Russian),
Sibirsk. Math. Zh. 3 (1962) 339--359.

\bibitem{Bylov2} B.F. Bylov:
Reduction of a linear system to block--diagonal form,
Differ. Equ. 23 (1987) 1359--1362.

\bibitem{Campos} J. Campos and M. Tarallo:
Exponential dichotomies by Ekeland's variational principle,
J. Dynam. Differential Equations 32 (2020) 1475--1509.

\bibitem{CLR} A.N. Carvalho, J.A. Langa and J.C. Robinson:
Attractor for Infinite--dimensional Non--autonomous Dynamical Systems,
Applied Mathematical Sciences 182, Springer, New York (2013).

\bibitem{CRJDE} \'A. Casta\~neda and G. Robledo:
Differentiability of Palmer's linearization Theorem and converse result for density functions,
J. Differential Equations 259 (2015) 4634--4650.

\bibitem{CMR} \'A. Casta\~neda, P. Monz\'on and G. Robledo:
Smoothness of topological equivalence on the half--line for nonautonomous systems,
Proc. Roy. Soc. Edinburgh 150 (2020) 2484--2502.

\bibitem{Monzon1} \'A. Casta\~neda, P. Monz\'on and G. Robledo:
Nonuniform contractions and density stability results via a smooth topological equivalence,
Dyn. Syst. 38 (2023) 197--196.

\bibitem{CT25} \'A. Casta\~neda and F. Torres:
A nonautonomous $C^{r}$--topological equivalence involving contractions and unbounded nonlinearities,
Math. Nachr. 298 (2025) 3893--3907.

\bibitem{CHR} \'A. Casta\~neda, I. Huerta and G. Robledo:
The dichotomy spectrum approach for a global nonuniform asymptotic stability problem: Triangular case via uniformization,
Canadian Journal of Mathematics, doi:10.4153/S0008414X25101569.

\bibitem{Chen} X. Chen and Y. Xia:
Topological conjugacy between two kinds of nonlinear differential equations via generalized exponential dichotomy,
Int. J. Diff. Eqs. (2011) article ID 871574.

\bibitem{Chu} J. Chu, F.--F. Liao, S. Siegmund, Y. Xia and W. Zhang:
Nonuniform dichotomy spectrum and reducibility for nonautonomous equations,
Bull. Sci. Math. 139 (2015) 538--557.

\bibitem{Coddington} E. Coddington and N. Levinson:
Theory of Ordinary Differential Equations,
McGraw--Hill, New York NY (1955).

\bibitem{Coles} W.J. Coles:
Linear and Riccati systems,
Duke Math. J. 22 (1955) 333--338.

\bibitem{Conti} R. Conti:
Linear Differential Equations and Control,
Academic Press, London and New York (1976).

\bibitem{Coppel1} W.A. Coppel:
Stability and Asymptotic Behavior of Differential Equations,
D.C. Heath and Company, Boston (1965).

\bibitem{Coppel67} W.A. Coppel:
Dichotomies and reducibility,
J. Differential Equations 3 (1967) 500--521.

\bibitem{Cop} W.A. Coppel:
Dichotomies in Stability Theory,
Lecture Notes in Mathematics 629, Springer--Verlag, Berlin (1978).

\bibitem{CK} S. Cs\'asz\'ar and S. Kov\'acs:
Exponential dichotomy and the stability of linear systems,
Annales Univ. Sci. Budapest, Sect. Comp. 47 (2018) 211--226.

\bibitem{Dalecki} J.L. Daleckii and M.G. Krein:
Stability of Solutions of Differential Equations on Banach Space,
Translations of Mathematical Monographs, American Mathematical Society,
Providence RI (1995).

\bibitem{Dangelo} H. D'Angelo:
Linear Time--Varying Systems: Analysis and Synthesis,
Allyn and Bacon, Boston MA (1970).

\bibitem{Demidovich} B.P. Demidovich:
Lectures on the mathematical stability theory (Russian),
Nauka, Moscow (1967).

\bibitem{Dieci} L. Dieci and E.S. Van Vleck:
Lyapunov and Sacker--Sell spectral intervals,
J. Dyn. Differ. Eqs. 19 (2009) 265--293.

\bibitem{Dieci2010} L. Dieci, C. Elia and E. Van Vleck:
Exponential dichotomy on the real line: SVD and QR methods,
Journal of Differential Equations 248 (2010) 287--308.

\bibitem{Dieci2011} L. Dieci, C. Elia and E. Van Vleck:
Detecting exponential dichotomy on the real line: SVD and QR algorithms,
BIT Numer Math 51 (2011) 555--579.

\bibitem{Diliberto} S.F. Diliberto:
On systems of ordinary differential equations,
in Contributions to the Theory of Nonlinear Oscillations, pp. 1--38,
Annals of Mathematics Studies 20, Princeton University Press, Princeton NJ (1950).

\bibitem{Doan} T.S. Doan:
Spectral Theory of Nonautonomous Dynamical Systems and Applications,
Springer, Singapore (2024).

\bibitem{Dragicevic2} D. Dragi\v{c}evi\'c, W. Zhang and W. Zhang:
Smooth linearization of nonautonomous differential equations with a nonuniform dichotomy,
Proc. Lond. Math. Soc. 121 (2020) 32--50.

\bibitem{Elaydi} S. Elaydi and O. Hajek:
Exponential dichotomy of nonlinear systems of ordinary differential equations,
in V. Lakshmikantham (ed.), Trends in the Theory and Practice of Non--Linear Analysis,
North-Holland Mathematics Studies, North--Holland (1985).

\bibitem{Elorreaga} H. Elorreaga, J.F. Pe\~na and G. Robledo:
Uniform $h$-dichotomies: noncritical uniformity and expansivity,
Mathematische Annalen 393 (2025) 1769--1795.

\bibitem{Erugin} N.P. Erugin:
Linear Systems of Ordinary Differential Equations with Periodic and Quasi--periodic Coefficients,
Academic Press, New York and London (1966).

\bibitem{Fink} A.M. Fink:
Almost Periodic Differential Equations,
Lecture Notes in Mathematics 377, Springer--Verlag, Berlin (1974).

\bibitem{Floquet} G. Floquet:
Sur les \'equations diff\'erentielles lin\'eaires \'a coefficients p\'eriodiques,
Annales de l'\'Ecole Normale Sup\'erieure 12 (1883) 47--88.

\bibitem{Freedman} H.I. Freedman and J.J. Mallet--Paret:
Almost Floquet and generalized almost Floquet systems,
Czechoslovak Mathematical Journal 26 (1976) 13--19.

\bibitem{Graef} J.R. Graef, J. Henderson, L. Kong and X.S. Liu:
Ordinary Differential Equations and Boundary Value Problems,
Trends in Abstract and Applied Analysis 7, World Scientific, Singapore (2018).

\bibitem{Grobman} D.M. Grobman:
Homeomorphism of systems of differential equations,
Dokl. Akad. Nauk SSSR \textbf{128} (1959) 880--881 (Russian).

\bibitem{Gronwall} T.H. Gronwall:
Note on the derivatives with respect to a parameter of the solutions of a system of differential equations,
Ann. of Math. 20 (1919) 292--296.

\bibitem{Hahn} W. Hahn:
Stability of Motion,
Springer--Verlag, New York (1967).

\bibitem{Halanay} A. Halanay:
Differential Equations. Stability, Oscillations and Time Lags,
Academic Press, New York and London (1966).

\bibitem{Harris-Miles} C.J. Harris and J.F. Miles:
Stability of Linear Systems: Some Aspects of Kinematic Similarity,
Academic Press, New York NY (1980).

\bibitem{Hartman} P. Hartman:
Ordinary Differential Equations,
Classics in Applied Mathematics 38, SIAM, Philadelphia PA (2002).

\bibitem{Hartman1} P. Hartman:
A lemma in the theory of structural stability of differential equations,
Proc. Amer. Math. Soc. 11 (1960) 610--620.

\bibitem{Hespanha} J.P. Hespanha:
Linear Systems Theory,
Princeton University Press, Princeton NJ (2018).

\bibitem{Hilger} S. Hilger:
Generalized theorem of Hartman--Grobman on measure chains,
J. Aust. Math. Soc., Ser. A 60, 157--191.

\bibitem{Hinrichsen} D. Hinrichsen and A.J. Pritchard:
Mathematical Systems Theory I. Modelling, State Space Analysis, Stability and Robustness,
Springer (2005).

\bibitem{Hof} K. Hoffman and R. Kunze:
Linear Algebra,
Prentice--Hall (1972).

\bibitem{Hou} S.H. Hou:
Classroom note: A simple proof of the Leverrier--Faddeev characteristic polynomial algorithm,
SIAM Review 40 (1998) 706--709.

\bibitem{Horn} R.A. Horn and C.R. Johnson:
Topics in Matrix Analysis,
Cambridge University Press, Cambridge (1991).

\bibitem{Householder} A.S. Householder:
The Theory of Matrices in Numerical Analysis,
Dover, New York NY (1964).

\bibitem{Huerta} I. Huerta:
Linearization of a nonautonomous unbounded system with nonuniform contraction: a spectral approach,
Discrete Contin. Dyn. Syst. 40 (2020) 5571--5590.

\bibitem{Jara} N. Jara:
Smoothness of class $C^2$ of nonautonomous linearization without spectral conditions,
J. Dynam. Differential Equations 36 (2022) 1759--1766.

\bibitem{Jiang} L. Jiang:
Generalized exponential dichotomy and global linearization,
J. Math. Anal. Appl. 315 (2006) 474--490.

\bibitem{Ju} N. Ju and S. Wiggins:
On roughness of exponential dichotomy,
J. Math. Anal. Appl. 262 (2001) 39--49.

\bibitem{Khalil} H. Khalil:
Nonlinear Systems,
Prentice--Hall, Upper Saddle River NJ (2005).

\bibitem{Klo} P.E. Kloeden and M. Rasmussen:
Nonautonomous Dynamical Systems,
Mathematical Surveys and Monographs 176, AMS, Providence RI (2011).

\bibitem{Krasovski} N.N. Krasovski:
On the theory of the second method of A.M. Lyapunov for the investigation of stability,
Mat. Sb. (N.S.) 82 (1956) 57--64 (Russian).

\bibitem{Lyapunov} A. Liapounoff:
Probl\`eme G\'en\'eral de la Stabilit\'e du Mouvement,
\'Editions Jacques Gabay, Paris (1988).

\bibitem{Lillo} J.C. Lillo:
Approximate similarity and almost periodic matrices,
Proc. Amer. Math. Soc. 12 (1961) 400--407.

\bibitem{Lin} F. Lin:
Spectrum and contractible set of linear differential systems (Chinese),
Chinese Journal of Contemporary Mathematics 11 (1990) 425--450.

\bibitem{Lin-2007} F. Lin:
Hartman's linearisation on nonautonomous unbounded system,
Nonlinear Analysis 66 (2007) 38--50.

\bibitem{Lin2} Z. Lin and Y.--X. Lin:
Linear Systems Exponential Dichotomy and Structure of Sets of Hyperbolic Points,
World Scientific, Singapore (2000).

\bibitem{LFP} J. L\'opez--Fenner and M. Pinto:
On a Hartman linearization theorem for a class of ODE with impulse effect,
Nonlinear Anal., Theory Methods Appl. 38 (1999) 307--325.

\bibitem{Lutkepohl} H. L\"utkepohl:
Handbook of Matrices,
Wiley, Chichester (1996).

\bibitem{Magnus} W. Magnus and S. Winkler:
Hill's Equation,
Dover Publications, New York NY (1979).

\bibitem{Markus} L. Markus:
Continuous matrices and the stability of differential systems,
Math. Z. 62 (1955) 310--319.

\bibitem{MY} L. Markus and H. Yamabe:
Global stability criteria for differential systems,
Osaka Math. J. 12 (1960) 305--317.

\bibitem{Martin} R.H. Martin Jr.:
Conditional stability and separation of solutions to differential equations,
J. Differ. Equ. 13 (1973) 81--105.

\bibitem{Massera} J.L. Massera and J.J. Sch\"affer:
Linear Differential Equations and Function Spaces,
Academic Press, New York -- London (1966).

\bibitem{Maulen} C. Maul\'en, S. Castillo, M. Kosti\'c and M. Pinto:
Remotely almost periodic solutions of ordinary differential equations,
J. Math. (2021) Art. ID 9985454.

\bibitem{Mitropolski} Y.A. Mitropolsky, A.M. Samoilenko and V.L. Kulik:
Dichotomies and stability in nonautonomous linear systems,
Taylor and Francis, London -- New York (2003).

\bibitem{Naulin-0} R. Naulin and M. Pinto:
Dichotomies and asymptotic integration of nonlinear differential systems,
Nonlinear Analysis TMA 23 (1994) 871--882.

\bibitem{Naulin-a} R. Naulin and M. Pinto:
Roughness of $(h,k)$-dichotomies,
J. Differential Equations 118 (1995) 20--35.

\bibitem{Naulin} R. Naulin:
Una aplicaci\'on del teorema de linealizaci\'on de Hartman,
Revista Colombiana de Matem\'aticas 25 (1991) 11--16.

\bibitem{Nemit} V.V. Nemitskii and V.V. Stepanov:
Qualitative Theory of Differential Equations,
Princeton University Press, Princeton (1960).

\bibitem{Palmer} K.J. Palmer:
A generalization of Hartman's linearization theorem,
J. Math. Anal. Appl. 41 (1973) 753--758.

\bibitem{Palmer80} K.J. Palmer:
On the reducibility of almost periodic systems of linear differential equations,
J. Differential Equations 36 (1980) 374--390.

\bibitem{Palmer82} K.J. Palmer:
Exponential separation, exponential dichotomy and spectra theory for linear systems of ordinary differential equations,
J. Differential Equations 46 (1982) 324--345.

\bibitem{Palmer84a} K.J. Palmer:
An ordering for linear differential systems and a characterization of exponential separation in terms of reducibility,
J. Differential Equations 53 (1984) 67--97.

\bibitem{Palmer2006} K.J. Palmer:
Exponential dichotomy and expansivity,
Annali di Matematica 185 (2006) S171--S185.

\bibitem{Papas} G. Papaschinopoulos:
A linearization result for a differential equation with piecewise constant argument,
Analysis 16 (1996) 161--170.

\bibitem{JFP} J.F. Pe\~na and S. Rivera--Villagr\'an:
On uniform asymptotic $h$--stability for linear non autonomous systems,
Electronic Journal of Qualitative Theory of Differential Equations (2025) Article 19, 13 pp.

\bibitem{Perko} L. Perko:
Differential Equations and Dynamical Systems,
Springer--Verlag NY (2001).

\bibitem{Perron} O. Perron:
Die Stabilitatsfrage bei Differentialgleichungen,
Math. Z. 32 (1930) 702--728.

\bibitem{Pinto-89} M. Pinto:
Nonautonomous semilinear differential systems: asymptotic behaviour and stable manifolds,
Preprint, Dept. Mat., Fac. Cie., Univ. Chile, Santiago (1990).

\bibitem{Pinto} M. Pinto and G. Robledo:
Cauchy matrix for linear almost periodic systems and some consequences,
Nonlinear Analysis 74 (2011) 5426--5439.

\bibitem{Plastock} R. Plastock:
Homeomorphisms between Banach spaces,
Trans. Amer. Math. Soc. \textbf{200} (1974) 1691--7183.

\bibitem{Potzsche-2008} C. P\"otzsche:
Topological decoupling, linearization and perturbation on inhomogeneous time scales,
J. Differ. Equations 245 (2008) 1210--1242.

\bibitem{Potzsche-2016} C. P\"otzsche:
Dichotomy spectra of triangular equations,
Discrete Contin. Dyn. Syst. 36(1) (2016) 423--450.

\bibitem{Pugh} C.C. Pugh:
On a theorem of P. Hartman,
Amer. J. Math. 91 (1969) 363--367.

\bibitem{Rajdavi} H. Rajdavi and P. Rosenthal:
Simultaneous Triangularization,
Universitext, Springer, New York (2000).

\bibitem{Reid} W.T. Reid:
A Pr\"ufer transformation for differential systems,
Pacific J. Math. 8 (1958) 575--584.

\bibitem{Reinfelds97} A. Reinfelds:
Grobman's--Hartman's theorem for time-dependent difference equations,
in J. Klovocs (ed.), Matem\=atika. Diferenci\=al\=alvien\=adojumi, Latvijas Uni\-ver\-si\-t\=ate, R\={\i}ga (1997) 9--13.

\bibitem{Reinfelds} A. Reinfelds and D. \u{S}teinberga:
Dynamical equivalence of quasilinear equations,
Int. J. Pure and Appl. Math. 98 (2015) 355--364.

\bibitem{Rollet} L. Rollet and P. Nabonnand:
Gaston Floquet, an Academic in the Provinces: Mathematics and the Promotion of Aviation in Nancy (French),
Cahiers Fran\c cois Vi\'ete III--16 (2024) 239--267.

\bibitem{Sac} R.J. Sacker and G.R. Sell:
A spectral theory for linear differential systems,
J. Differential Equations 27 (1978) 320--358.

\bibitem{Shi} J. Shi and K. Xiong:
On Hartman's linearization theorem and Palmer's linearization theorem,
J. Math. Anal. Appl. 92 (1995) 813--832.

\bibitem{Sideris} T.C. Sideris:
Ordinary Differential Equations and Dynamical Systems,
Atlantis Press, Paris (2014).

\bibitem{Siegmund-2002} S. Siegmund:
Dichotomy spectrum for nonautonomous differential equations,
J. Dynam. Differential Equations 14 (2002) 243--258.

\bibitem{Siegmund2} S. Siegmund:
Reducibility of nonautonomous linear differential equations,
J. London Math. Soc. 65 (2002) 397--410.

\bibitem{Silva} C.M. Silva:
Nonuniform $\mu$-dichotomy spectrum and kinematic similarity,
J. Differential Equations 375 (2023) 618--652.

\bibitem{Sternberg} R.L. Sternberg:
A theorem on Hermitian solutions for related matrix differential and integral equations,
Portugal. Math. 12 (1953) 135--139.

\bibitem{Wang-XZ} L. Wang, Y. Xia and N. Zhao:
A characterization of generalized exponential dichotomy,
Journal of Applied Analysis and Computation 5 (2015) 662--687.

\bibitem{Weikard} R. Weikard:
Floquet Theory for Linear Differential Equations with Meromorphic Solutions,
E.J. Qualitative Theory of Diff. Equ. (2000) 1--6.

\bibitem{Whyburn} W.M. Whyburn:
Matrix differential equations,
Amer. J. Math. 56 (1934) 587--592.

\bibitem{WX} M. Wu and Y. Xia:
Linearization of a nonautonomous unbounded system with hyperbolic linear part: A spectral approach,
Discrete and Continuous Dynamical Systems 44(2) (2024) 491--522.

\bibitem{Yakubovitch} V.A. Yakubovitch and V.M. Starzhinskii:
Linear Differential Equations with Periodic Coefficients,
Keter Publishing House, Jerusalem (1975).

\bibitem{Zhang-PAP} C.Y. Zhang:
Pseudo almost periodic solutions of some differential equations II,
J. Math. Anal. Appl. 192 (1995) 543--561.

\bibitem{ZFY} J. Zhang, M. Fan and L. Yang:
Nonuniform $(h,k,\mu,\nu)$--dicohotomy with applications to nonautonomous dynamical systems,
J. Math. Anal. Appl. 452 (2017) 505--551.

\bibitem{Zhang} X. Zhang:
Nonuniform dichotomy spectrum and normal forms for nonautonomous differential systems,
J. Funct. Anal. 267 (2014) 1889--1916.

\bibitem{Zhou} K. Zhou, J.C. Doyle and K. Glover:
Robust and Optimal Control,
Prentice Hall, Upper Saddle River NJ (1996).

\bibitem{ZS} C. Zou and J. Shi:
Generalization of the Floquet theory,
Nonlinear Analysis 71 (2009) 1100--1107.



%%%%%%%%%%%%%%%%%%%%%%%%%%%%%%%%%%%%%%%%%%%%%%%%%%%%%%%%%%%%%%%%%%%%%%%%%%%%%%%%%%%%%%%%%%%%%%%%%%%%%%
\end{thebibliography}
\end{document}